\documentclass[11pt,reqno]{amsart}
\usepackage[foot]{amsaddr}
\usepackage[margin=1in]{geometry}
\usepackage{amsmath}
\usepackage{amssymb}
\usepackage{amsthm}
\usepackage{enumitem}
\usepackage{mathrsfs}

\usepackage[dvipsnames]{xcolor} 
\usepackage{graphicx}
\usepackage{float}
\definecolor{TabBlue}{HTML}{1F77B4}
\definecolor{TabPurple}{HTML}{9467BD}

\usepackage[
colorlinks=true,
citecolor=BurntOrange, 
linkcolor=TabBlue,     
urlcolor=TabBlue
]{hyperref}

\usepackage[backend=biber,style=alphabetic,sorting=nyt,maxalphanames=4,minalphanames=3,maxbibnames=99]{biblatex}
\usepackage{tikz}
\usetikzlibrary{shapes.geometric, positioning, decorations.pathreplacing, decorations.text, calc, bending, arrows.meta}

\tikzset{
    tensor_node/.style={circle, draw=black, very thick, fill=yellow!20, minimum size=1.2cm, font=\bfseries},
    leaf_node/.style={circle, draw=blue!80, thick, fill=blue!10, minimum size=1cm},
    eigen_node/.style={circle, draw=red!80, thick, fill=red!10, minimum size=1cm},
    matrix_node/.style={circle, draw=green!60!black, thick, fill=green!10, minimum size=1cm, font=\small},
    conn/.style={thick, draw=gray!80},
    double_conn/.style={thick, draw=green!40!black},
    arrow/.style={-{Latex[scale=1.5]}, line width=2pt, draw=blue!50!black}
}

\def\R{\mathbb{R}}
\def\t{\mathbf{t}}
\def\a{\mathbf{a}}
\def\g{\mathbf{g}}
\def\e{\mathbf{e}}
\def\s{\mathbf{s}}
\def\u{\mathbf{u}}
\def\v{\mathbf{v}}
\def\x{\mathbf{x}}
\def\y{\mathbf{y}}
\def\z{\mathbf{z}}
\def\w{\mathbf{w}}

\def\P{\mathbb{P}}
\def\C{\mathbb{C}}
\def\E{\mathbb{E}}
\def\N{\mathbb{N}}
\def\R{\mathbb{R}}
\def\1{\mathbf{1}}
\def\cA{\mathcal{A}}
\def\cU{\mathcal{U}}
\def\cX{\mathcal{X}}
\def\cY{\mathcal{Y}}
\def\cZ{\mathcal{Z}}
\def\cB{\mathcal{B}}
\def\cE{\mathcal{E}}
\def\cF{\mathcal{F}}
\def\cG{\mathcal{G}}
\def\cQ{\mathcal{Q}}
\def\cP{\mathcal{P}}
\def\cC{\mathcal{C}}
\def\cK{\mathcal{K}}
\def\cM{\mathcal{M}}
\def\cV{\mathcal{V}}
\def\cL{\mathcal{L}}
\def\cI{\mathcal{I}}
\def\cR{\mathcal{R}}
\def\cT{\mathcal{T}}
\def\cN{\mathcal{N}}
\def\bD{\mathbf{D}}

\def\Im{\operatorname{Im}}
\def\Re{\operatorname{Re}}
\def\op{\mathrm{op}}
\def\matop{{\mathrm{mat}\text{-}\mathrm{op}}}
\def\eps{\varepsilon}
\def\Oprec{O_{\prec}}
\def\deg{\text{deg}}
\def\val{\text{val}}
\def\vec{\operatorname{vec}}
\def\mat{\operatorname{mat}}
\def\Tr{\operatorname{Tr}}
\def\Var{\operatorname{Var}}
\def\diag{\operatorname{diag}}
\def\dist{\operatorname{dist}}
\def\supp{\operatorname{supp}}
\def\rank{\operatorname{rank}}

\def\<{\langle} 
\def\>{\rangle}

\newtheorem{theorem}{Theorem}[section]
\newtheorem{lemma}[theorem]{Lemma}
\newtheorem{coro}[theorem]{Corollary}

\newtheorem{proposition}[theorem]{Proposition}

\newtheorem{assumption}{Assumption}

\theoremstyle{definition}
\newtheorem{defi}[theorem]{Definition}
\newtheorem{example}[theorem]{Example}
\newtheorem{remark}[theorem]{Remark}

\numberwithin{equation}{section}
\numberwithin{figure}{section}

\makeatletter
\renewcommand{\paragraph}{%
  \@startsection{paragraph}{4}%
  {\z@}%
  {1em \@plus 0.5ex \@minus 0.2ex}%
  {-1em}%
  {\normalfont\normalsize\bfseries}%
}
\makeatother

\newcommand{\Prto}{\xrightarrow{\P}}

\title{Anisotropic local law for non-separable sample covariance matrices}
\author{Zhou Fan$^{\ast}$}
\email{zhou.fan@yale.edu}
\author{Renyuan Ma$^{\ast}$}
\address{$^{\ast}$Department of Statistics and Data Science, Yale University}
\email{jack.ma.rm2545@yale.edu}
\author{Elliot Paquette$^{\dagger}$}
\address{$^{\dagger}$Department of Mathematics and Statistics, McGill University}
\email{elliot.paquette@mcgill.ca}
\author{Zhichao Wang$^{\ddagger}$}
\address{$^{\ddagger}$Department of Statistics and ICSI, University of California Berkeley}
\email{zhichao.wang@berkeley.edu}

\begin{document}

\begin{abstract}
We establish local laws for sample covariance matrices $K = N^{-1}\sum_{i=1}^N \g_i\g_i^*$ where the random vectors $\g_1, \ldots, \g_N \in \R^n$ are independent with common covariance $\Sigma$. Previous work has largely focused on the separable model $\g = \Sigma^{1/2}\w$ with $\w$ having independent entries, but this structure is rarely present in statistical applications involving dependent or nonlinearly transformed data. Under a concentration assumption for quadratic forms $\g^*A\g$, we prove an optimal averaged local law showing that the Stieltjes transform of $K$ converges to its deterministic limit uniformly down to the optimal scale $\eta \geq N^{-1+\eps}$. Under an additional structural assumption on the cumulant tensors of $\g$ --- which interpolates between the highly structured case of independent entries and generic dependence --- we establish the full anisotropic local law, providing entrywise control of the resolvent $(K-zI)^{-1}$ in arbitrary directions. We discuss several classes of non-separable examples satisfying our assumptions, including conditionally mean-zero distributions, the random features model $\g = \sigma(X\w)$ arising in machine learning, and Gaussian measures with nonlinear tilting. The proofs introduce a tensor network framework for analyzing fluctuation averaging in the presence of higher-order cumulant structure.
\end{abstract}

\maketitle

\begingroup
\hypersetup{linkcolor=TabPurple}
\tableofcontents
\endgroup
\hypersetup{linkcolor=TabBlue} 

\section{Introduction}

\subsection{Sample covariance matrices and the deformed Marchenko-Pastur law}

Random matrices arise naturally across mathematics, statistics, and physics. In statistics, Wishart \cite{wishart1928generalised} introduced random covariance matrices in 1928 for multivariate analysis, while in physics, Wigner \cite{wigner1955characteristic} employed random matrices in 1955 to model energy levels in heavy nuclei.

A fundamental random matrix in statistics involves the observation of $N$ independent random vectors $\g_1,\ldots,\g_N \in\R^n$ equal in distribution to a vector $\g \in \R^n$ with mean zero and covariance matrix $\Sigma$. Understanding the spectral properties of the associated sample covariance matrix
\[
K = \frac{1}{N}\sum_{i=1}^N \g_i\g_i^* = \frac{1}{N}GG^*,
\qquad \text{where } G = [\g_1,\ldots,\g_N] \in \R^{n \times N},
\]
is central to covariance estimation, principal component analysis, and numerous other statistical procedures \cite{bai2010spectral}. The associated \emph{Gram matrix}
\[
\widetilde{K} = \frac{1}{N}G^*G \in \R^{N \times N}
\]
is central to kernel methods and has the same nonzero eigenvalues as $K$. Many problems in optimization theory involving random data, as well as related questions in machine learning, can also be reduced to understanding the singular value and singular vector structure of the data matrix $G$ \cite{belkin2019reconciling,bartlett2020benign,mei2022generalization,paquette2025homogenization}. Beyond classical statistics, the spectral theory of such matrices has found applications in wireless communications, where the channel capacity of MIMO systems is determined by the log-determinant $\frac{1}{n}\log\det(I + \text{SNR} \cdot K)$ \cite{tulino2004random}, as well as in kernel methods \cite{el2010spectrum}, random features regression \cite{rahimi2007random}, and the analysis of neural networks \cite{pennington2017nonlinear,jacot2018neural}; see \cite{couillet2022random} for a comprehensive introduction to random matrix theory in machine learning.

In the high-dimensional regime where the dimension $n$ and sample size $N$ grow proportionally, $n/N \to \gamma \in (0,\infty)$, the classical theory initiated by Marchenko and Pastur \cite{marvcenko1967distribution} describes the limiting spectral distribution of $K$. When $\Sigma = I_n$ and $\g$ has i.i.d.\ entries with mean zero and variance one, the empirical spectral distribution of $K$ converges weakly to the \emph{Marchenko-Pastur distribution} with density
\[
\rho_{\mathrm{MP}}(x) = (1-1/\gamma)_+\delta_0(x)+\frac{1}{2\pi\gamma x}\sqrt{(x - \lambda_-)(\lambda_+ - x)}\,\1_{[\lambda_-,\lambda_+]}(x),
\]
where $\lambda_\pm = (1 \pm \sqrt{\gamma})^2$ are the spectral edges.

For general covariance $\Sigma$, a deterministic approximation for the spectral distribution is the \emph{deformed Marchenko-Pastur law} $\mu_0$, characterized through its Stieltjes transform $m_0(z) = \int (x-z)^{-1}\,d\mu_0(x)$ which satisfies the self-consistent equation
\begin{equation}\label{eq:intro:MP}
m_0(z) = \frac{1}{n}\sum_{\alpha=1}^n \frac{1}{\sigma_\alpha(1-\gamma-\gamma zm_0(z))-z},
\end{equation}
where $\sigma_1,\ldots,\sigma_n$ are the eigenvalues of $\Sigma$. This law can also be understood as the multiplicative free convolution of the standard Marchenko-Pastur law with the empirical eigenvalue distribution of $\Sigma$ \cite{voiculescu1987multiplication}.

A sufficient condition for the convergence of the empirical spectral distribution of $K$ to the deformed Marchenko-Pastur law was established by Bai and Zhou \cite{bai2008large}: the weak convergence in probability of quadratic forms
\begin{equation}\label{eq:intro:quadraticform}
\frac{1}{n}(\g^*A\g - \Tr \Sigma A) \Prto 0,
\end{equation}
for all matrices $A$ with bounded operator norm. This condition is anticipated by the classical literature, as it is essentially all that is needed for the arguments of \cite{silverstein1995strong}; see also \cite{yaskov2016necessary} for a necessary and sufficient formulation in the isotropic case.
This condition is also readily verified in simple cases, most notably in the \emph{separable} (or \emph{linear}) model where $\g = \Sigma^{1/2}\w$ for a vector $\w$ with i.i.d.\ standard entries, the setting of the foundational work of Silverstein and Bai \cite{silverstein1995empirical}. 

\subsection{Local laws: from global to optimal scale}

While the convergence of the empirical spectral distribution provides a global description of the spectrum, many applications in random matrix theory --- including universality of local eigenvalue statistics, eigenvalue rigidity, and eigenvector delocalization --- require understanding the spectrum at \emph{local} scales, down to the scale of individual eigenvalue spacings.

An \emph{averaged local law} asserts that the Stieltjes transform $m(z) = n^{-1}\Tr(K - zI)^{-1}$ remains close to its deterministic approximation $m_0(z)$ uniformly over spectral parameters $z = E + i\eta$ with $\eta$ as small as $N^{-1+\eps}$ for any $\eps > 0$. Specifically, in the setting where $\g$ has identity covariance and satisfies concentration of quadratic forms, the celebrated work of Pillai and Yin \cite{pillai2014universality} established
\begin{equation}\label{eq:intro:averaged}
|m(z) - m_0(z)| \prec (N\eta)^{-1}
\end{equation}
uniformly over $z$ in suitable regular spectral domains extending down to the optimal scale $\eta \geq N^{-1+\eps}$. (The $\prec$ notation means that the inequality holds except on an event of probability at most a power of $N$ and up to errors that grow slower than any power of $N$; see Section \ref{sec:fluctuation averaging} for details.)

For many applications, including eigenvector delocalization, the distribution of eigenvectors, and the analysis of finite-rank deformations, the averaged local law is insufficient. This leads to the \emph{isotropic local law} \cite{bloemendal2014isotropic} and more generally the \emph{anisotropic local law}. For the \emph{separable model} $\g = \Sigma^{1/2}\w$, Knowles and Yin \cite{knowles2017anisotropic}, building on \cite{bloemendal2014isotropic} in the case $\Sigma =I_n$, established that
\begin{equation}\label{eq:intro:anisotropic}
|\u^* R(z) \v - \u^* \Pi(z) \v| \prec \Psi(z),
\qquad \text{for all unit vectors } \u, \v \in \C^n
\end{equation}
where $\Pi(z) = (-zI - z\widetilde{m}_0(z)\Sigma)^{-1}$, $\Psi(z) = \sqrt{\Im \widetilde{m}_0(z)/(N\eta)} + (N\eta)^{-1}$ is the optimal error parameter, and $\widetilde{m}_0(z)$ is the Stieltjes transform of the limiting spectral distribution of the Gram matrix $\widetilde{K} = N^{-1}G^*G$.

The anisotropic local law has profound implications: it yields eigenvector delocalization in any fixed basis (not just the standard basis) and eigenvalue rigidity at the optimal scale.  Combined with a comparison argument, it leads to Tracy-Widom fluctuations for edge eigenvalues and bulk universality of local eigenvalue statistics \cite{lee2016bulk,lee2016tracy}. These results have made the anisotropic local law a cornerstone of modern random matrix theory.

\subsection{Beyond the separable model: the main question}

The separable model $\g = \Sigma^{1/2}\w$ with independent entries in $\w$ is restrictive in practice. Many statistical and machine learning applications naturally involve random vectors with more complex dependence structures. This brings us to the central question of the present work:
\vspace{0.5em}
\begin{center}
\emph{Do optimal local laws hold for distributions of $\g$ beyond the separable setting?}
\end{center}
\vspace{0.5em}

There are many possible answers to this question, depending on the class of distributions under consideration. One important class is the \emph{random features model}
\begin{equation}\label{eq:intro:RF}
\g = \sigma(X\w),
\end{equation}
where $X \in \R^{n \times d}$ is a deterministic feature matrix, $\w \in \R^d$ is a random vector with independent entries, and $\sigma: \R \to \R$ is a (possibly entrywise) nonlinear activation function. This model arises naturally in the study of random features regression, which is a statistical approximation of kernel methods \cite{hu2022universality,louart2018random,mei2022generalization,hastie2022surprises} and linear approximants for neural networks \cite{pennington2017nonlinear,fan2020spectra,benigni2021eigenvalue}. We discuss applications and related work on random features models further in Section~\ref{sec:randomfeatures} and further details in Section \ref{subsec:examples}.

The weak convergence condition \eqref{eq:intro:quadraticform} is not sufficient for the local law to hold at optimal scales, as a quantitative strengthening in terms of the rate of concentration in \eqref{eq:intro:quadraticform}
can be seen to be necessary for an averaged local law to hold (see Section \ref{subsec:examples}). A first contribution of our work is to show that such a strengthening is also sufficient for the averaged local law to hold. In brief, we work in a standard high-dimensional regime where $n/N$ remains bounded and $\|\Sigma\|_\op \leq C$, and we assume the strong concentration
\[
\g^*A\g - \Tr \Sigma A \prec \|A\|_F
\]
for all deterministic matrices $A$ (see Assumptions~\ref{assum:basic} and \ref{assum:concentration} in Section~\ref{sec:cumulant} for the precise formulation). This concentration is known to hold in the separable case under moment bounds on $\w$, and also for vectors satisfying a log-Sobolev inequality \cite{adamczak2015log} or log-concave distributions \cite{bao2025extreme}. We show that the averaged local law holds under this assumption alone (Theorem~\ref{thm:entrywise law}).

To establish the full anisotropic local law, which provides control of the resolvent in arbitrary directions, our analyses require additional structure on the higher-order cumulants of $\g$. We introduce Assumption~\ref{assum:cumulant} in Section~\ref{sec:cumulant}, a condition on the cumulant tensors of $\g$ that interpolates between the highly structured case of independent entries (where cumulant tensors are diagonal) and generic tensors (where no such structure exists). This assumption is significantly more general than requiring a separable model, yet remains verifiable for important examples including the random features model \eqref{eq:intro:RF}.

\subsection{Main contributions}

The main contributions of this paper are:

\begin{enumerate}[label=(\roman*)]
\item \textbf{Averaged local law under concentration of quadratic forms} (Theorem~\ref{thm:entrywise law}): Under Assumptions~\ref{assum:basic} and \ref{assum:concentration}, we establish the optimal averaged local law \eqref{eq:intro:averaged} together with an entrywise local law for the Gram matrix resolvent $\widetilde{R}(z)$. This extends the results of \cite{pillai2014universality} to the case of non-identity covariance.

\item \textbf{Anisotropic local law under cumulant structure} (Theorem~\ref{thm:anisotropic law}): Under the additional Assumption~\ref{assum:cumulant} on cumulant tensors, we establish the optimal anisotropic local law \eqref{eq:intro:anisotropic} for the linearized resolvent \eqref{eq:linearizedresolvent}. This 
extends results of \cite{knowles2017anisotropic} beyond the separable model, providing the first anisotropic local law down to optimal spectral scales $\eta \sim N^{-1+\eps}$ for a general class of non-separable sample covariance matrices.

\item \textbf{Verification for non-separable examples} (Section~\ref{subsec:examples}): We verify Assumptions~\ref{assum:concentration} and \ref{assum:cumulant} for several classes of non-separable distributions, including conditionally mean-zero distributions (Proposition~\ref{prop:sign change invariant}), the random features model $\g = \sigma(X\w)$ (Proposition~\ref{prop:RF}), and Gaussian measures with random features tilting (Proposition~\ref{prop:spiked cumulant}).

\item \textbf{Tensor network methodology}: Our proofs employ a novel tensor network framework for analyzing fluctuation averaging in the presence of higher-order cumulant structure. This framework, which systematically reduces complex moment expansions to graph-theoretic bounds, may be of independent interest for other problems in random matrix theory.  We give a brief introduction in Section \ref{subsec:proofideas}.
\end{enumerate}

As corollaries, we obtain eigenvalue rigidity at the optimal scale and delocalization of eigenvectors of both $K$ and $\widetilde{K}$ in any fixed basis (see Section \ref{subsec:anisotropic local law} for more details).

\subsection{Related work}\label{sec:relatedwork}\label{sec:randomfeatures}
\paragraph{Random features and Gaussian equivalence.}
The random features model \eqref{eq:intro:RF} was introduced by Rahimi and Recht \cite{rahimi2007random} as a computationally efficient approximation to kernel methods, and is equivalent to a two-layer neural network with frozen first-layer weights and trainable readout. A remarkable phenomenon, observed empirically and established rigorously in several settings \cite{hastie2022surprises,mei2022generalization,montanari2019generalization,goldt2020modelling,gerace2020generalisation,hu2022universality}, is that the random features model is asymptotically equivalent to a surrogate linear Gaussian model: in the proportional limit, the training and generalization errors of the nonlinear model match those of a Gaussian equivalent. The anisotropic local law established in this paper (Theorem~\ref{thm:anisotropic law}) can be viewed as a \emph{strong form of Gaussian equivalence} at the level of the resolvent: it asserts that $(K - zI)^{-1}$ is well-approximated at optimal scales by a deterministic matrix $\Pi(z)$ depending only on the first two moments of $\g$. This extends naturally to spectral filters $\varphi(K)$ for functions $\varphi$ smooth on any scale larger than the mean eigenvalue spacing. The model also extends to deep architectures with frozen intermediate layers \cite{schroder2023deterministic}; our cumulant assumption is designed to accommodate such compositional structure, though verifying it for deep models with many layers remains an open problem.

\paragraph{Covariance estimation and concentration.}
The estimation of covariance matrices in high dimensions has a vast literature; see \cite{bai2010spectral,vershynin2010introduction} for comprehensive treatments. A central question is: what conditions on the distribution of $\g$ ensure that the sample covariance $K = N^{-1}\sum_{i=1}^N \g_i\g_i^*$ concentrates around $\Sigma$ in operator norm?

For log-concave distributions, optimal $\|K - \Sigma\|_{\op} = O(\sqrt{n/N})$ rates were established by Adamczak, Litvak, Pajor, and Tomczak-Jaegermann \cite{adamczak2010quantitative}. The work of Srivastava and Vershynin \cite{srivastava2013covariance} showed that if all $k$-dimensional marginals of the whitened vector $\Sigma^{-1/2}\g$ have $(2+\eps)$-moment tails decaying like $t^{-(1+\eta)}$ for $t > Ck$, then $N = O(n)$ samples suffice for operator norm concentration. This was refined by Tikhomirov \cite{tikhomirov2018sample}, who obtained the optimal Bai-Yin rate under $L_p$-$L_2$ norm equivalence for $p > 4$, which gives moment growth estimates of \emph{linear forms}, i.e.\ uniform control of moments of $\<\u, \g\>$ in place of our \emph{quadratic forms} $\g^*A\g$.
Most recently, Abdalla and Zhivotovskiy \cite{abdalla2024covariance} proved a non-asymptotic, dimension-free Bai-Yin theorem: if $\|\g\|_2 \leq N^{1/2+\eps}$ and one-dimensional marginals satisfy $L_p$-$L_2$ norm equivalence for some $p > 4$, then the sample covariance achieves the optimal rate depending on the effective rank rather than the ambient dimension.

Hence, linear form estimates tend to be sufficient for the \emph{covariance estimation problem}.  Importantly, boundedness of linear forms  does not imply concentration of quadratic forms, even in the weak sense of \eqref{eq:intro:quadraticform}. A simple counterexample is the \emph{mixture model}: let $\g = \xi \cdot \z$ where $\z \sim N(0, I_n)$ is Gaussian and $\xi$ is an independent bounded scalar variable with unit second moment. Then $\E\g = 0$ and $\E\g\g^* = I_n$, and linear forms $\<\u, \g\>$ are sub-Gaussian. However, the variance of $\xi^2$ may induce fluctuations in the quadratic form $\g^*A\g = \xi^2 \z^*A\z$ that are large enough to cause the standard Marchenko-Pastur law to \emph{not} be a deterministic equivalent for the spectral distribution (see Example \ref{ex:mixture model} for further discussion).

\paragraph{Local laws for Wigner matrices and generalizations.}
Th\textbf{}e development of local laws began with Wigner matrices (symmetric matrices with i.i.d.\ entries), where the semicircle law holds globally. Pioneering work of Erd\H{o}s, Schlein, and Yau \cite{erdHos2009local,erdHos2009semicircle} established local semicircle laws down to scales $\eta \sim N^{-1+\eps}$. This was subsequently extended to generalized Wigner matrices and models of mild sparsity \cite{erdHos2012bulk,erdos2013local,erdHos2013averaging,erdos2013spectral}. A powerful generalization to less homogeneous matrices is the \emph{matrix Dyson equation} (MDE) formalism developed by Ajanki, Erd\H{o}s, and Kr\"uger \cite{ajanki2017universality,ajanki2019stability}. For a random matrix $H$ with mean $A = \E H$ and covariance operator $\mathcal{S}[R] = \E(H-A)R(H-A)$, the resolvent $G = (H - zI)^{-1}$ is approximated by a deterministic matrix $M$ satisfying
\[
I + (zI - A + \mathcal{S}[M])M = 0.
\]
This framework accommodates matrices with correlated entries (provided correlations decay sufficiently fast), non-identical variances, and non-zero means.

\paragraph{The characteristic flow method.}
A complementary approach to local laws proceeds dynamically: one interpolates the given matrix toward a Gaussian ensemble by running a Dyson Brownian motion, and studies how the resolvent evolves along this flow. The key observation, going back to Pastur \cite{pastur1972spectrum}, is that the Stieltjes transform of the empirical spectral measure satisfies a transport equation whose characteristics are curves in the upper half-plane contracting away from the spectral support as time increases; this allows one to propagate estimates that hold at larger scales to the optimal shorter scales $\eta \sim N^{-1+\eps}$. This method was pioneered for local laws by \cite{huang2019rigidity,adhikari2020dyson} in the context of $\beta$-ensembles.  \cite{bourgade2022extremegaps,benigni2020eigenvectors} used related stochastic advection equations to study   extreme gap statistics and eigenvector statistics respectively. The method has been also extended beyond the self-adjoint setting: \cite{bourgade2022liouville} employed it in the context of non-Hermitian dynamics, and \cite{cipolloni2022mesoscopic} introduced the \emph{zig-zag method}, which applies the characteristic flow iteratively to prove multi-resolvent local laws by alternating short flow steps with Greens function comparison estimates; this method has also been applied to generalized Wigner models \cite{erdHos2024eigenstate}, which are closely related to the linearized resolvents of sample covariance matrices. Although the present paper does not use the characteristic flow method, identifying the correct assumptions on non-separable random vectors that would allow one to run this dynamical argument remains interesting for future work, and we expect it would be similar in spirit to what we need here.  See \cite{vonSoosten2019random} for a short and elegant dynamical proof of a local semicircle law via the characteristic flow method.

\paragraph{Local laws for sample covariance and Gram matrices.}
For sample covariance matrices with identity covariance, the averaged local law program was initiated by Pillai and Yin \cite{pillai2014universality} (see also \cite{bao2015universality}). The work of Knowles and Yin \cite{knowles2017anisotropic} established the anisotropic local law for separable covariance matrices (c.f. Proposition \ref{prop:linear}) via a strategy closely related to the method here, though they use a sophisticated interpolation argument to show the full anisotropic local law. Alt, Erd\H{o}s, and Kr\"uger extended these results to general Gram matrices and correlated sample covariance models using the Dyson equation approach \cite{alt2017local,alt2019dyson,alt2021correlated}. An important statistical application of these local laws is to edge universality: Lee and Schnelli \cite{lee2016tracy} used local laws as input to establish Tracy-Widom fluctuations for the largest eigenvalue of sample covariance matrices with general population covariance, via a Green function comparison argument. This was extended to Tracy-Widom fluctuations at each regular edge of the spectrum by Fan and Johnstone \cite{fan2022tracy}.

\paragraph{Polynomial scaling regimes and spike eigenvalues.}
The proportional regime $n \asymp N$ considered in this paper is not the only scaling of interest. In the \emph{polynomial regime} $N \asymp d^\ell$ for integer $\ell \geq 1$, limiting spectral distributions of random inner-product kernel matrices have been analyzed by \cite{lu2025equivalence,xiao2022precise}; universality for general distributions was established in \cite{dubova2023universality,pandit2025universality}. Polynomial scalings of random features regression have also been studied \cite{hu2024asymptotics,defilippis2024dimension}, revealing subtleties such as the failure of Gaussian equivalence in the quadratic scaling regime \cite{wen2025conditional}.
Beyond the bulk spectrum, ``spike'' eigenvalues separated from the bulk are often the primary features of interest. The celebrated Baik-Ben Arous-P\'ech\'e (BBP) phase transition \cite{baik2005phase} describes when spike eigenvalues emerge from the bulk in spiked covariance models. For the separable model $\g = \Sigma^{1/2}\x$, these phenomena are well-understood \cite{bai2012sample,benaych2012singular}. The work of Wang, Wu, and Fan \cite{wang2024nonlinear} extends this to nonlinear models, characterizing how spiked structure in input data propagates through neural network layers and establishing a Gaussian equivalence for spike eigenvalue behavior for non-separable sample covariance matrix.

\subsection{Notation}\label{subsec:notation}
We use $C, c$ for positive constants that may change from line to line but depend only on fixed model parameters. The notation $a \asymp b$ means $c \leq a/b \leq C$ for constants $c, C > 0$. We write $\|\cdot\|_\op$ and $\|\cdot\|_F$ for the operator (spectral) norm and Frobenius norm of matrices, and $\|\cdot\|_2$ for the Euclidean norm of vectors. For a general tensor $T\in(\R^n)^{\otimes k}$, we write $\|T\|_\infty$ for the entrywise $l_\infty$ (maximum) norm, and $\|T\|_F$ for its Frobenius tensor norm, i.e.\ $\|\cdot\|_2$-norm of its vectorization. We write $X \prec Y$ to denote stochastic domination: for any $\eps, D > 0$, there exists $C \equiv C(\eps,D)>0$ such that $\P[|X| > N^\eps Y] \leq CN^{-D}$ for all $N \geq 1$. Properties of this notation are further reviewed in Section \ref{sec:fluctuation averaging}.

\section{Main results}

\subsection{Assumptions}\label{sec:cumulant}

We work in the following standard high-dimensional setting.
\begin{assumption}[Basic assumptions]\label{assum:basic}
There exist constants $C,c>0$ such that $c \leq n/N \leq C$,
$\|\Sigma\|_\op \leq C$, and $\Sigma$ is positive semidefinite with at most
$(1-c)n$ of its eigenvalues belonging to $[0,c]$.
\end{assumption}

The central assumption we make on quadratic forms is the following.
\begin{assumption}[Concentration of quadratic forms]\label{assum:concentration}
$\g_1,\ldots,\g_N$ are independent random vectors satisfying $\E\g_i=0$,
$\E \g_i\g_i^*=\Sigma$, and
\[\E\|\g_i\|_2^k \leq n^{C_k} \text{ for each } k \geq 1 \text{ and constants }
C_k>0.\]
Furthermore, for any $\eps,D>0$, there exists a constant $C \equiv C(\eps,D)>0$
such that for each $i=1,\ldots,N$ and any $A \in \R^{n \times n}$,
\[\P[|\g_i^* A\g_i-\Tr\Sigma A| \geq n^\eps \|A\|_F] \leq Cn^{-D}.\]
\end{assumption}
\noindent Assumption~\ref{assum:concentration} can equivalently be formulated in terms of $L^p$ bounds on the re-centered quadratic form $\g_i^*A\g_i - \Tr \Sigma A$, by Markov's inequality. It is known to hold in the separable case $\g_i=\Sigma^{1/2}\w_i$ under moment bounds on the entries of $\w_i$, and also for vectors satisfying a convex concentration inequality with at most logarithmic dependence on the dimension \cite{adamczak2015log}, including isotropic log-concave random vectors \cite{bao2025extreme}.

Under Assumptions~\ref{assum:basic} and \ref{assum:concentration}, we will
establish an optimal local law for the Stieltjes transforms of $K$ and
$\widetilde K$ and an entrywise local law for the resolvent of
$\widetilde K$ (c.f.\ Theorem \ref{thm:entrywise law})
over spectral domains of $\C^+$ satisfying suitable regularity conditions.

We will then establish an optimal anisotropic local law for the
resolvents of $K$ and $\widetilde K$ (c.f.\ Theorem
\ref{thm:anisotropic law}) under an additional assumption for the higher-order
cumulant tensors of $\g_1,\ldots,\g_N$. To formulate this assumption, we
introduce the following definition.

\begin{defi}[$\cU$-norm]
For any subset $\cU\subseteq\R^n$ containing all standard basis vectors
$\e_1,\ldots,\e_n$, and for any $k \geq 1$, we define a norm on tensors
$T \in (\R^n)^{\otimes k}$ by
\[\|T\|_\cU=\sup_{\x_1,\ldots,\x_k\in\cU}|\<\x_1\otimes\cdots\otimes\x_k,T\>|.\]
\end{defi}

Note that for $\cU$ containing the standard basis vectors $\e_1,\ldots,\e_n$
and having polynomial cardinality $|\cU| \leq n^C$, the norm $\|T\|_{\cU}$ may
be understood as a certain strengthening of the entrywise $\ell_\infty$ norm. Our additional assumption for the cumulant tensors of $\g_1,\ldots,\g_N$ is
the following.

\begin{assumption}\label{assum:cumulant}
Let $\kappa_k(\g_i) \in (\R^n)^{\otimes k}$ denote the $k$-th order cumulant
tensor of $\g_i$, whose entries are given by
\[
\langle\kappa_k(\g_i),\e_{\alpha_1}\otimes\cdots\otimes\e_{\alpha_k}\rangle=\kappa_k(\e_{\alpha_1}^*\g_i,\dots,\e_{\alpha_k}^*\g_i)
\]
where $\kappa_k(\cdot)$ on the right side is the $k^\text{th}$ mixed cumulant
of entries of $\g_i$.

Then for each $k\geq 3$, there exists a constant $C_k>0$ and a subset of
deterministic vectors $\cU_k\subset \R^n$ satisfying
    \[
        \{\e_1,\dots,\e_n\}\subseteq\cU_k,\quad |\cU_k|\leq n^{C_k},\quad
\|\x\|_2\leq C_k\;\text{ for all } \x\in\cU_k,
    \]
such that the following holds: For any $\eps>0$, there exists a constant
$C \equiv C(\eps,k)>0$ such that for each $i=1,\ldots,N$
and any $m \in \{1,\ldots,k-1\}$,
$\s_1,\dots,\s_m\in\R^n$, and $T\in(\R^n)^{\otimes k-m}$,
\begin{equation}\label{eq:cumulantassumption}
|\<\kappa_k(\g_i),\s_1\otimes\cdots \otimes \s_m\otimes T\>|\leq
Cn^\eps(\sqrt{n})^{k-m-1}\|T\|_{\cU_k}\prod_{t=1}^m\|\s_t\|_2.
\end{equation}
\end{assumption}


\begin{remark}\label{remark:genericcumulant}
To provide some intuition for Assumption \ref{assum:cumulant}, we note that a simple cumulant growth condition that would imply the needed concentration of quadratic forms in Assumption \ref{assum:concentration} is
\begin{equation}\label{eq:cumulantconditionanalogue}
|\<\kappa_k(\g),S \otimes T\>|
\prec \|S\|_F\|T\|_F
\end{equation}
for all fixed $k \geq 3$, $m \in \{1,\ldots,k-1\}$, $S \in (\R^n)^{\otimes m}$,
and $T \in (\R^n)^{\otimes k-m}$.
(That \eqref{eq:cumulantconditionanalogue}
implies Assumption \ref{assum:concentration} follows from the high moment estimate of Lemma
\ref{lem:concentration of quadratic form} and Markov's inequality.)
Specializing this growth condition
\eqref{eq:cumulantconditionanalogue} to $S=\s_1 \otimes \ldots \otimes \s_m$ and applying $\|T\|_F \leq \sqrt{n}^{k-m}\|T\|_\infty$ would give
\begin{equation}\label{eq:cumulantconditionanalogueimplication}
|\<\kappa_k(\g),\s_1 \otimes \ldots \otimes \s_m \otimes T\>|
\prec (\sqrt{n})^{k-m}\|T\|_\infty\prod_{t=1}^m \|\s_t\|_2,
\end{equation}
which is a version of \eqref{eq:cumulantassumption} with a weaker bound $\sqrt{n}^{k-m}$ in place of $\sqrt{n}^{k-m-1}$. Although the condition \eqref{eq:cumulantassumption} is not directly comparable to
\eqref{eq:cumulantconditionanalogue},
one may interpret \eqref{eq:cumulantassumption} as a certain strengthening of \eqref{eq:cumulantconditionanalogue} by an extra factor of $n^{-1/2}$ when restricted to rank-1 tensors $S$ and measured in a $\ell_\infty$-type norm $\|T\|_{\cU_k}$ rather than $\|T\|_F$.

One may also check that for a ``generic'' tensor $\kappa_k \in (\R^n)^{\otimes k}$ satisfying \eqref{eq:cumulantconditionanalogue}, Assumption \ref{assum:cumulant} with the stronger bound of $\sqrt{n}^{k-m-1}$ cannot hold. Indeed, given any set $\cU_k \subset \R^n$ satisfying the conditions of Assumption \ref{assum:cumulant}, there exists a unit vector $\s \in \R^n$ satisfying
\[\sup_{\u_2,\ldots,\u_k \in \cU_k} |\<\kappa_k,\s \otimes \u_2 \otimes \ldots \otimes \u_k\>|
\prec n^{-1/2}
\sup_{\u:\|\u\|_2=1}
\sup_{\u_2,\ldots,\u_k \in \cU_k}
|\<\kappa_k, \u \otimes \u_2 \otimes \ldots \otimes \u_k\>|
\prec n^{-1/2}\|\kappa_k\|_\text{inj}\]
where $\|\kappa_k\|_\text{inj}$ is the injective norm, and also
\[\|\s\cdot \kappa_k\|_F^2 \asymp n^{-1}\|\kappa_k\|_F^2\]
where $\s\cdot \kappa_k \in (\R^n)^{\otimes k-1}$ is the contraction of $\s$ with $\kappa_k$ in the first coordinate.
(It is easily checked that both conditions hold with high probability over a uniform random choice of $\s$ on the unit sphere.)
Then for $m=1$ and $T=\s \cdot \kappa_k$, we have
\[|\<\kappa_k,\s \otimes T\>|
=\|\s \cdot \kappa_k\|_F^2
\asymp n^{-1}\|\kappa_k\|_F^2,
\qquad \|T\|_{\cU_k}
\prec n^{-1/2}\|\kappa_k\|_\text{inj}.\]
If $\kappa_k$ is generic and satisfies \eqref{eq:cumulantconditionanalogue} --- e.g., $\kappa_k=n^{-(k-1)/2}Z_k$ where $Z_k \in (\R^n)^{\otimes k}$ is a symmetric Gaussian tensor with i.i.d.\ $N(0,1)$ entries up to symmetry --- then we would expect
$\|\kappa_k\|_F^2 \asymp n^{-(k-1)}\|Z_k\|_F^2 \asymp n$ and $\|\kappa_k\|_\text{inj}
\asymp n^{-(k-1)/2}\|Z_k\|_\text{inj}
\asymp n^{-k/2+1}$ (with high probability over $Z_k$). Thus \eqref{eq:cumulantassumption} can only hold with the weaker factor $\sqrt{n}^{k-1}=\sqrt{n}^{k-m}$ instead of
$\sqrt{n}^{k-m-1}$, so Assumption \ref{assum:cumulant} requires a certain non-genericity of each cumulant tensor $\kappa_k$.
\end{remark}

In the simplest setting of a separable model where $\g_i=\Sigma^{1/2}\w_i$ and $\w_i$ has independent
entries with bounded moments, it is easily checked (c.f.\ Proposition \ref{prop:linear}) that Assumption \ref{assum:cumulant} holds in a stronger form with $\sqrt{n}^{k-m-1}$ in \eqref{eq:cumulantassumption} replaced by $\sqrt{n}^{\1\{m=1\}}$.
Importantly, Assumptions \ref{assum:concentration} and \ref{assum:cumulant} are significantly more general than requiring this separable form, and we discuss several illustrative examples in Section
\ref{subsec:examples}.
We show also in Section
\ref{subsec:examples} that neither Assumption \ref{assum:concentration} or
\ref{assum:cumulant} strictly implies the other: for a Gaussian mixture model with random variance, there is a regime where Assumption \ref{assum:cumulant} holds while Assumption \ref{assum:concentration} fails; conversely, for the spherical $4$-spin model at very high temperature (e.g.\ $\beta=n^{-1/6}$), Assumption \ref{assum:concentration} holds while Assumption \ref{assum:cumulant} fails, precisely through the large gap between injective and Frobenius tensor norms discussed above.

\subsection{Deformed Marchenko-Pastur law and regular spectral domains}\label{subsec:mp}



Under general conditions encompassing those of Assumptions \ref{assum:basic} and
\ref{assum:concentration}, the empirical eigenvalue distributions of
$K,\widetilde K$ are well-approximated in large dimensions by
deterministic laws $\mu_0,\tilde \mu_0$
\cite{marvcenko1967distribution,silverstein1995strong,silverstein1995empirical}. We
review here relevant background regarding $\mu_0,\tilde \mu_0$ and their
regularity properties.

Denote the ordered eigenvalues of $\Sigma$ by
\[\sigma_1 \geq \ldots \geq \sigma_n \geq 0.\]
For each $z \in \C^+$, there exists a unique solution $\widetilde m_0(z) \in
\C^+$ \cite{marvcenko1967distribution} to the Marchenko-Pastur equation
\begin{equation}\label{eq:MP}
z=-\frac{1}{\widetilde{m}_0(z)}+\frac{1}{N}\sum_{\alpha=1}^n
\frac{\sigma_\alpha}{1+\sigma_\alpha\widetilde{m}_0(z)}.
\end{equation}
Define also $m_0(z) \in \C^+$ by
\begin{equation}\label{eq:m0m0tilde}
\widetilde{m}_0(z)=\gamma m_0(z)+(1-\gamma)(-1/z), \qquad \gamma=n/N.
\end{equation}
Then $\mu_0$ and $\tilde \mu_0$ are the (unique) probability distributions
whose Stieltjes transforms are given by $m_0,\widetilde m_0:\C^+ \to \C^+$,
\begin{equation}\label{eq:stieltjesdef}
m_0(z)=\int \frac{1}{x-z}\,d \mu_0(x),
\qquad \widetilde m_0(z)=\int \frac{1}{x-z}\,d \widetilde \mu_0(x).
\end{equation}
This coincides with \eqref{eq:intro:MP}, and moreover $\mu_0$ is the deformed Marchenko-Pastur law.
Note that the relation \eqref{eq:m0m0tilde} implies that
$\widetilde \mu_0=\gamma \mu_0+(1-\gamma)\delta_0$ (denoting a mixture of
$\mu_0$ and an atom at 0) if $\gamma \leq 1$, or
$\mu_0=(1/\gamma)\widetilde \mu_0+(1-1/\gamma)\delta_0$ if $\gamma \geq 1$. In
particular,
\[\supp(\mu_0) \cap (0,\infty)=\supp(\widetilde \mu_0) \cap (0,\infty).\]


It is shown in \cite{silverstein1995analysis} that $\widetilde \mu_0$
admits a continuous density at each point $x>0$, given by
\[\rho_0(x)=\lim_{z\in\C^+\rightarrow x}\frac{1}{\pi}\Im \widetilde
m_0(z).\]
Furthermore, $\supp(\widetilde \mu_0) \cap (0,\infty)$ is a finite union of
intervals characterized in the following way: Let $\bar n=\rank(\Sigma)$ and
consider the meromorphic function
with poles $P=\{0,{-}\sigma_1^{-1},\ldots,-\sigma_{\bar n}^{-1}\}$,
\begin{equation}\label{eq:z0m}
z_0(m)={-}\frac{1}{m}+\frac{1}{N}\sum_{\alpha=1}^n
\frac{\sigma_\alpha}{1+\sigma_\alpha m}, \qquad m \in \C \setminus P,
\end{equation}
which locally inverts $\widetilde m_0(z)$ defined by \eqref{eq:MP}.
Then under Assumption \ref{assum:basic} (c.f.\ \cite[Lemmas 2.4--2.6]{knowles2017anisotropic}),
$z_0$ has an even number $2p$ of critical points counting multiplicity on the
extended real line $\bar \R=\R \cup \{\infty\}$,
with exactly one critical point $m_1 \in
(-\sigma_1^{-1},0)$, either 0 or 2 critical points in each interval
$(-\sigma_{k+1}^{-1},-\sigma_k^{-1})$ where $\sigma_k \neq \sigma_{k+1}$,
and exactly one critical point $m_{2p} \in (-\infty,-\sigma_{\bar n}^{-1}) \cup
(0,\infty]$ which is $m_{2p}=\infty$ if $\bar n=N$. Ordering these critical points as
$0>m_1>m_2 \geq m_3>m_4 \geq m_5> \ldots >m_{2p-2} \geq m_{2p-1}>-\sigma_{\bar n}^{-1}$
and $m_{2p} \in (-\infty,-\sigma_{\bar n}^{-1}) \cup (0,\infty]$, we have
\begin{equation}\label{eq:edges}
C> x_1>x_2 \geq x_3>x_4 \geq x_5>\ldots>x_{2p-2}\geq x_{2p-1}>x_{2p} \geq 0,
\quad \text{ where } x_j=z_0(m_j),
\end{equation}
for a constant $C>0$ and
\[\supp(\widetilde \mu_0) \cap (0,\infty)=\Big(\bigcup_{k=1}^p
[x_{2k},x_{2k-1}]\Big) \cap (0,\infty)\subseteq[0,C].\]
If $m_j$ is a critical point of multiplicity 1, then $x_j$ is an edge of
$\widetilde \mu_0$ (i.e.\ a boundary point of $\supp(\widetilde \mu_0)$),
which is a right edge for $j$ odd and a left edge for $j$ even. If
$m_{2k}=m_{2k+1}$ is a critical point of multiplicity 2, then $x_{2k}=x_{2k+1}$
is a cusp of $\widetilde \mu_0$ (i.e.\ interior point $x \in \supp(\widetilde
\mu_0)$ where the density $\rho_0(x)=0$). We refer to $[x_{2k},x_{2k-1}]$ for
$k=1,\ldots,p$ as the bulk components of $\widetilde \mu_0$.

Corresponding to these bulk components and edges/cusps, and denoting $z=E+i\eta
\in \C^+$ where $E=\Re z$ and $\eta=\Im z$, 
we define for $\delta>0$ and $\tau \in (0,1)$ the domains
\begin{equation}\label{eq:spectraldomains}
\begin{aligned}
\bD_k^b(\delta,\tau)&=\{z \in \C^+:E \in [x_{2k}+\delta,x_{2k-1}-\delta],\,
N^{-1+\tau} \leq \eta \leq 1\}\\
\bD_j^e(\delta,\tau)&=\{z \in \C^+:E \in [x_j-\delta,x_j+\delta],\,
N^{-1+\tau} \leq \eta \leq 1\}\\
\bD^o(\delta,\tau)&=\{z \in \C^+:\dist(z,\supp(\widetilde \mu_0)) \geq \delta,\,|z| \geq \delta,\, |E| \leq \delta^{-1},\, N^{-1+\tau} \leq \eta \leq 1\}.
\end{aligned}
\end{equation}
Here, $\dist(z,U)=\inf_{x \in U} |x-z|$.
Our local law results will hold over those domains which satisfy appropriate
regularity properties as discussed in \cite{knowles2017anisotropic}, and which we
summarize in the following definition.

\begin{defi}[Regular spectral domain]\label{def:regular}
We call
$\bD \subset \{z \in \C^+:\eta \in (0,1]\}$
a \emph{spectral domain} if $z=E+i \eta \in \bD$ implies that $z'=E+i \eta' \in \bD$ for each $\eta' \in [\eta,1]$.
Corresponding to $z \in \bD$, denote
$\kappa=\min_{j=1}^{2p} |x_j-E|$, and define
\[L(z)=\{z\}\cup\{w\in\bD:\Re{w}=E,\,\Im{w}\in[\eta,1]\cap
\{N^{-5},2N^{-5},3N^{-5},\ldots,1\}\}.\]
Then the domain $\bD$ is \emph{regular} if there exist constants $C,c>0$ such
that the following hold:
\begin{enumerate}[label=(\alph*)]
\item (Basic bounds for $\widetilde m_0(z)$) For all $z \in\bD$,
\begin{equation}\label{eq:regularbounds}
\begin{gathered}
c \leq |z| \leq C,\qquad c\leq|\widetilde{m}_0(z)|\leq C,\qquad
\min_{\alpha\in[n]}|1+\sigma_\alpha\widetilde{m}_0(z)|\geq c,\\
cg(z) \leq \Im \widetilde m_0(z) \leq Cg(z)
\quad \text{ where } \quad g(z)=\begin{cases}
\sqrt{\kappa+\eta} & \text{if } E \in \supp(\widetilde \mu_0) \\
\frac{\eta}{\sqrt{\kappa+\eta}} & \text{if } E \notin \supp(\widetilde \mu_0)
\end{cases}
\end{gathered}
\end{equation}
\item (Stability of the Marchenko-Pastur equation)
Let $u:\C^+\rightarrow\C^+$ be the Stieltjes transform of any probability measure, and let $\Delta : \bD \rightarrow (0,\infty)$ be any deterministic function satisfying:
\begin{itemize}
    \item (Boundedness) $N^{-2}\leq \Delta(z)\leq (\log N)^{-1}$
for all $z\in \bD$,
    \item (Lipschitz continuity) $|\Delta(z)-\Delta(w)|\leq N^{2}|z-w|$
for all $z,w\in\bD$, and
    \item (Monotonicity) For each fixed $E$, the function
$\eta\mapsto\Delta(E+i\eta)$ is non-increasing over $\eta \in [N^{-1+\tau},1]$.
\end{itemize}
For each $z \in \bD$, if $|z_0(u(w))-w|\leq\Delta(w)$ for every $w\in L(z)$, then
\begin{equation}\label{eq:stability}
|u(z)-\widetilde{m}_0(z)|\leq\frac{C\Delta(z)}{\sqrt{\kappa+\eta}+\sqrt{\Delta(z)}}.
\end{equation}
\end{enumerate}
\end{defi}

The following result was shown in \cite{knowles2017anisotropic} for the regularity of the
preceding domains $\bD_k^b(\delta,\tau)$,
$\bD_j^e(\delta,\tau)$, and $\bD^o(\delta,\tau)$.

\begin{lemma}[\cite{knowles2017anisotropic}]\label{lem:properties of m}
Suppose Assumption \ref{assum:basic} holds. Fix any constant $\tau \in (0,1)$.
\begin{enumerate}[label=(\alph*)]
\item (Regular bulk component) Suppose, for a bulk component
$[x_{2k},x_{2k-1}]$, there exist constants $\delta,c_0>0$ such that $\rho_0(x)>c_0$
for all $x \in [x_{2k}+\delta,x_{2k-1}-\delta]$. Then $\bD_k^b(\delta,\tau)$ is
regular.
\item (Regular edge) Suppose, for an edge $x_j=z_0(m_j)$, there exists a
constant $\delta>0$ such that
\[x_j>\delta, \qquad |x_j-x_k|>\delta \text{ for each } k \neq j,
\qquad \min_{\alpha} |m_j+\sigma_\alpha^{-1}|>\delta.\]
Then there exists a constant $\delta'>0$ such that
$\bD_j^e(\delta',\tau)$ is regular.
\item (Outside the spectrum) Fix any constant $\delta>0$.
Then $\bD^o(\delta,\tau)$ is regular.
\end{enumerate}
(For each statement, Definition \ref{def:regular} holds with constants $C,c>0$
depending on $\tau,\delta,\delta',c_0$.)
\end{lemma}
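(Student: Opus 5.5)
This lemma is attributed to \cite{knowles2017anisotropic}, and our plan is to reduce it to the results established there rather than reprove them. The key observation is that Definition \ref{def:regular} involves only deterministic objects: $\Sigma$, the ratio $\gamma=n/N$, the function $\widetilde m_0$ defined by \eqref{eq:MP}, and its local inverse $z_0$ in \eqref{eq:z0m}. None of these depend on the distribution of $\g$. So the separable setting of \cite{knowles2017anisotropic}, under the same Assumption \ref{assum:basic}, yields exactly the same $\widetilde m_0$, the same edges $x_j$, and the same domains \eqref{eq:spectraldomains}. Our first step is therefore to match notation. We check that their $m$ and $z$ parametrization agrees with \eqref{eq:MP}, up to their convention of normalizing $\widetilde m_0$ through the $N\times N$ resolvent. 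We also check that their critical-point description (their Lemmas 2.4--2.6) gives the ordering \eqref{eq:edges}.

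Part (a) is then the statement that $\widetilde m_0$ is bounded and stays away from the poles $-\sigma_\alpha^{-1}$ on a bulk component where $\rho_0>c_0$. This follows from their Lemma A.4/A.5-type estimates: $|\widetilde m_0|\asymp 1$, $\Im \widetilde m_0\asymp 1\asymp g(z)$, and $\min_\alpha|1+\sigma_\alpha \widetilde m_0|\ge c$. The last bound comes from the imaginary part of $z_0$ near a pole. Part (b) follows from the square-root behavior $z_0(m)-x_j\asymp (m-m_j)^2$ near a nondegenerate critical point, which holds because $m_j$ is separated from poles and from the other critical points. This gives $\Im\widetilde m_0\asymp\sqrt{\kappa+\eta}$ inside the support and $\asymp\eta/\sqrt{\kappa+\eta}$ outside. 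Part (c) follows because $\widetilde m_0$ extends analytically to a neighborhood of the real points at distance $\ge\delta$ from the support, with $\Im\widetilde m_0\asymp\eta$ there. In each case, conditions (a) of Definition \ref{def:regular} are established by these estimates.

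The main obstacle is the stability condition (b). It is a continuity/bootstrap argument along the vertical path $L(z)$, and we would follow \cite[Lemma A.8 and its variants]{knowles2017anisotropic}. At $\eta=1$, the bound $|z_0(u)-w|\le\Delta\le(\log N)^{-1}$ together with $\Im u>0$ forces $u$ to be close to $\widetilde m_0$, since the inverse branch is unique there. Next we Taylor-expand $z_0(u)-z_0(\widetilde m_0)=z_0'(\widetilde m_0)(u-\widetilde m_0)+\tfrac12 z_0''(\widetilde m_0)(u-\widetilde m_0)^2+\dots$. Here $|z_0'(\widetilde m_0)|\asymp\sqrt{\kappa+\eta}$ near edges and $|z_0''|\asymp 1$. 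Solving this quadratic inequality gives the dichotomy $|u-\widetilde m_0|\lesssim \Delta/(\sqrt{\kappa+\eta}+\sqrt\Delta)$ versus the other root being far away. The Lipschitz and monotonicity hypotheses on $\Delta$, together with the $N^{-5}$ mesh, then let us step down in $\eta$ while staying on the correct branch by continuity, since the gap between the two roots exceeds the step size. The bound holds with constants depending only on $\tau,\delta,\delta',c_0$, as claimed.
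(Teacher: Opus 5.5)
Your reduction to \cite{knowles2017anisotropic} is the same route the paper takes. It rests on the observation that Definition~\ref{def:regular} involves only $\Sigma$, $\gamma$, $\widetilde m_0$ and $z_0$, and it correctly covers (a), (b), and part of (c). The gap is in (c). $\bD^o(\delta,\tau)$ is defined by $\dist(z,\supp(\widetilde\mu_0))\ge\delta$ and $|z|\ge\delta$, which are conditions on the complex point $z$, not on $E=\Re z$. So the domain contains points whose real part lies inside the support, within $\delta$ of it, or near $0$ (where $\widetilde\mu_0$ may have an atom and $\widetilde m_0$ blows up), provided $\eta$ is large enough. Your justification, analytic continuation of $\widetilde m_0$ across real points at distance $\ge\delta$ from the support, only covers the subset where $\dist(E,\supp(\widetilde\mu_0)\cup\{0\})\gtrsim\delta$. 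The outside-spectrum lemmas of \cite{knowles2017anisotropic} cover the same subset and no more. For the remaining points there is no analytic extension across the real axis near $E$, and the cited results do not apply.

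The missing step is easy but has to be made, and it is how the paper finishes (c). If $\dist(E,\supp(\widetilde\mu_0)\cup\{0\})<\delta/2$, then $\dist(z,\supp(\widetilde\mu_0))\ge\delta$ (respectively $|z|\ge\delta$) forces $\eta\ge\delta/2$.

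\emph{Basic bounds.} On these points, \eqref{eq:stieltjesdef} together with $\supp(\widetilde\mu_0)\subseteq[0,C]$ and $|E|\le\delta^{-1}$ gives $c\eta\le\Im\widetilde m_0\le|\widetilde m_0|\le\eta^{-1}$. Hence $|\widetilde m_0|\asymp 1$ and $\Im\widetilde m_0\asymp 1\asymp g(z)$. Also $|1+\sigma_\alpha\widetilde m_0|\ge\max(\sigma_\alpha\Im\widetilde m_0,\,1-\sigma_\alpha|\widetilde m_0|)\ge c$.

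\emph{Stability.} No continuity argument along $L(z)$ is needed. Write $z_0(u(z))=z+\Delta'$ with $|\Delta'|\le\Delta(z)\le(\log N)^{-1}$; then $z+\Delta'\in\C^+$. Uniqueness of the $\C^+$-solution of \eqref{eq:MP} gives $u(z)=\widetilde m_0(z+\Delta')$. The bound $|\widetilde m_0'(w)|\le(\Im w)^{-2}$ then yields $|u(z)-\widetilde m_0(z)|\le C\Delta(z)$. This implies \eqref{eq:stability}, since $\sqrt{\kappa+\eta}+\sqrt{\Delta(z)}\le C$ on this part of the domain.
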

\begin{proof}
Parts (a), (b), and regularity of $\bD^o(\delta,\tau) \cap \{z \in \C^+:\dist(E,\supp(\widetilde \mu_0) \cup \{0\}) \geq \delta/2\}$ in (c) follow from \cite[Lemmas A.4, A.5, A.6,
A.8]{knowles2017anisotropic}. The remaining points $z \in \bD^o(\delta,\tau)$ with $\dist(E,\supp(\widetilde \mu_0) \cup \{0\})<\delta/2$ must satisfy $\eta \geq \delta/2$: For these points, \eqref{eq:regularbounds} follows from the bounds $\Im \widetilde m_0 \geq c\eta$ and $|\widetilde m_0| \leq \eta^{-1}$ implied by \eqref{eq:stieltjesdef}, and \eqref{eq:stability} follows directly from the condition $|z_0(u(z))-z| \leq \Delta(z)$, uniqueness of the solution in $\C^+$ to \eqref{eq:MP} which implies
$u(z)=\widetilde{m}_0(z+\Delta)$ for some $|\Delta| \leq \Delta(z) \leq (\log N)^{-1}$, and the Lipschitz continuity $|\widetilde m_0'(z)| \leq \eta^{-2}$.
\end{proof}

We remark that the regularity conditions of Definition~\ref{def:regular} are satisfied in a wide range of examples; see \cite[Section 3]{knowles2017anisotropic} for detailed verifications under various conditions on the population covariance $\Sigma$.

\vspace{1em}

\subsection{Local law for the Stieltjes transform}\label{subsec:averaged local law}

This and the following section state the main results of our paper. Proofs of Theorems \ref{thm:entrywise law}, \ref{thm:anisotropic law}, \ref{thm:outside spec} and their corollaries will be given in Section \ref{sec:mainproofs}.

For spectral parameters $z=E+i\eta \in\C^+$, define the resolvents and Stieltjes
transforms of $K$ and $\widetilde{K}$ by
\[
    R(z) = (K - zI_n)^{-1}, \quad m(z) = n^{-1}\Tr R(z),
\]
\[
    \widetilde{R}(z) = (\widetilde{K}-zI_N)^{-1},\quad \widetilde{m}(z)=N^{-1}\Tr\widetilde{R}(z).
\]
We will denote the entries of these resolvents by
$R_{\alpha\beta}(z)=\e_\alpha^*R(z)\e_\beta$ and
$\widetilde{R}_{ij}(z)=\e_i^*\widetilde{R}(z)\e_j$.
For $z=E+i\eta \in \C^+$, define also the error control parameter
\[\Psi(z)=\sqrt{\frac{\Im{\widetilde{m}_0(z)}}{N\eta}}+\frac{1}{N\eta}\]
where we recall the bounds for $\Im \widetilde m_0(z)$
from \eqref{eq:regularbounds}.

The following theorem establishes a local law for the Stieltjes transforms
$m(z)$ and $\widetilde m(z)$, together with an entrywise local law for the
resolvent $\widetilde R(z)$ of the Gram matrix $\widetilde K$,
over a regular spectral domain.

\begin{theorem}(Averaged and entrywise local laws)\label{thm:entrywise law}
Suppose Assumptions \ref{assum:basic} and \ref{assum:concentration} hold,
and $\bD$ is a regular spectral domain. Then for any $\eps,D>0$, there
exists a constant $C \equiv C(\eps,D)>0$ such that
with probability at least $1-CN^{-D}$, for all $z=E+i\eta \in \bD$ we have
\begin{equation}
|m(z)-m_0(z)| \leq N^\eps \frac{\Psi(z)^2}{\sqrt{\kappa+\eta}+\Psi(z)}, \qquad
|\widetilde{m}(z)-\widetilde{m}_0(z)| \leq N^\eps
\frac{\Psi(z)^2}{\sqrt{\kappa+\eta}+\Psi(z)},
\label{eq:average law in bulk edge}
\end{equation}
\begin{equation}
\max_{1\leq i,j\leq N} |\widetilde{R}_{ij}(z)-\widetilde{m}_0(z)\1\{i=j\}|
\leq N^{\eps}\Psi(z).\label{eq:entry law in bulk edge}
\end{equation}
\end{theorem}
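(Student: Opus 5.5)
The plan is to follow the Schur complement / self-consistent equation approach of \cite{pillai2014universality,knowles2017anisotropic}, working with the Gram resolvent $\widetilde R(z)$. The only input needed about the distribution of the columns is the quadratic form concentration of Assumption~\ref{assum:concentration}. Independence of the columns means this is applied conditionally on $G^{(i)}$, the matrix with column $i$ removed.

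\textbf{Step 1: Schur complement and the self-consistent equation.} For each $i$, Schur's formula gives
\[
\widetilde R_{ii} = \frac{1}{-z - z N^{-1}\g_i^* R^{(i)}\g_i},
\qquad \widetilde R_{ij} = z\,\widetilde R_{ii}\widetilde R^{(i)}_{jj}\,N^{-1}\g_i^* R^{(ij)}\g_j \quad (i\neq j),
\]
where $R^{(i)}$ is the resolvent of $K$ with sample $i$ removed. Conditionally on $G^{(i)}$, apply Assumption~\ref{assum:concentration} with $A=R^{(i)}$. Using the Ward identity $\|R^{(i)}\|_F^2=\eta^{-1}\Im\Tr R^{(i)}$, this gives
\[
N^{-1}\g_i^*R^{(i)}\g_i = N^{-1}\Tr\Sigma R^{(i)} + O_\prec\Bigl(\sqrt{\Im \widetilde m/(N\eta)}\Bigr).
\]
For off-diagonal entries, apply the concentration to $A=R^{(ij)}$ acting between two independent vectors. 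This is done by the polarization trick, treating $(\g_i,\g_j)$ jointly, or by conditioning on $\g_j$ and bounding the linear-in-$\g_i$ quadratic form. It yields $|\widetilde R_{ij}|\prec \Psi_\Theta$, where $\Psi_\Theta=\sqrt{(\Im\widetilde m_0+\Theta)/(N\eta)}+(N\eta)^{-1}$. Removing the superscripts $(i)$ via rank-one resolvent identities costs $O(\Psi_\Theta^2)$.

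Next, relate $N^{-1}\Tr \Sigma R$ to $\widetilde m$ through the identity $R=-z^{-1}(I+\widetilde m(z)\Sigma+\ldots)^{-1}$ to leading order. Concretely, write $R$ via $\g_i$-expansions $I+zR=N^{-1}\sum_i \g_i\g_i^* R$ and use $R\g_i = R^{(i)}\g_i\cdot(1+N^{-1}\g_i^*R^{(i)}\g_i)^{-1}$. After averaging, this gives $z_0(\widetilde m)-z = O_\prec(\text{error})$. The error involves the averages $N^{-1}\sum_i Z_i$ of the fluctuations
\[
Z_i = N^{-1}\g_i^*R^{(i)}\g_i - N^{-1}\Tr\Sigma R^{(i)},
\]
together with a matrix-valued analogue involving $\Sigma R^{(i)}\Sigma$.

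\textbf{Step 2: Fluctuation averaging.} A naive bound gives $|z_0(\widetilde m)-z|\prec \Psi_\Theta$. The optimal rate requires showing that $N^{-1}\sum_i \widetilde R_{ii}^{-1}$-type averages of $Z_i$ are $O_\prec(\Psi_\Theta^2)$. I would do this by the standard high-moment expansion of \cite{erdHos2013averaging}. Compute $\E|N^{-1}\sum_i Q_i X_i|^{2p}$, with $Q_i$ the expectation-removal in $\g_i$, expand each factor in the other indices via resolvent identities, and use that $Q_i$ kills terms independent of $\g_i$. This only needs the quadratic-form bounds from Step 1 and high-probability a priori bounds, plus the polynomial moment bound $\E\|\g_i\|^k\le n^{C_k}$ to control the rare bad events. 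It does not need cumulant structure, because every quantity is a trace or a diagonal entry of $\widetilde R$ and never a generic direction in $\R^n$.

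\textbf{Step 3: Bootstrapping in $\eta$.} Start at $\eta=1$, where the trivial bounds hold. Descend along $L(z)$ with step $N^{-5}$, using Lipschitz continuity and a union bound over the grid. At each step the a priori bound $\Theta\le N^{-c}$ feeds Steps 1--2. These give $|z_0(\widetilde m)-z|\prec \Delta(z)$ with $\Delta \asymp N^\eps\Psi^2$, chosen monotone and Lipschitz as Definition~\ref{def:regular}(b) requires. Stability \eqref{eq:stability} then yields
\[
|\widetilde m-\widetilde m_0|\prec \Psi^2/(\sqrt{\kappa+\eta}+\Psi).
\]
Here one iterates, since $\Psi_\Theta$ depends on $\Theta$ and a fixed point gives the claimed rate. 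The bound for $m$ follows from \eqref{eq:m0m0tilde} and the exact relation $\widetilde m=\gamma m+(1-\gamma)(-1/z)$. The entrywise bound \eqref{eq:entry law in bulk edge} follows from Step 1 once $\widetilde m$ is controlled, by expanding $\widetilde R_{ii}$ around $\widetilde m_0$ using $|1+\sigma_\alpha\widetilde m_0|\ge c$.

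\textbf{Main obstacle.} I expect Step 2 to be the hardest, specifically the fluctuation averaging in the non-identity-covariance setting. The error term contains not just the scalar fluctuations $Z_i$ but matrix quantities $N^{-1}\Tr \Sigma (\ldots)$. These must be arranged to reduce to traces against deterministic bounded matrices so that only Assumption~\ref{assum:concentration} is invoked. A secondary delicacy is making the bootstrap uniform near edges, where $\sqrt{\kappa+\eta}$ is small. This is handled exactly by the form of the stability condition \eqref{eq:stability}.
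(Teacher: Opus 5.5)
Your proposal follows the same route as the paper. The paper uses the Silverstein identity for $z_0(\widetilde m)-z$ (Lemma \ref{lem:Approx_fixed_point}) and a crude $O_\prec(\Psi_\Theta)$ bound from Assumption \ref{assum:concentration}. It then runs a stochastic continuity argument from $\eta=1$ down the lattice $L(z)$, and applies fluctuation averaging together with the stability condition \eqref{eq:stability}, iterated in $\Theta$, to reach the optimal rate. The obstacle you flag is resolved as you anticipate: the paper Taylor-expands $(I+\widetilde m\Sigma)^{-1}$ around $(I+\widetilde m_0\Sigma)^{-1}$, so only the deterministic matrices $\Sigma^k(I+\widetilde m_0\Sigma)^{-k-1}$ appear (Lemma \ref{lem:alternative to L1 bound}). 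The fluctuation averaging itself (Lemma \ref{lem:fluctuation_averaging}) is done via Sherman--Morrison recursions, with the non-centred weights $(1+N^{-1}\g_i^*R^{(i)}\g_i)^{-1}$ expanded in $\cC^{(S)}$ and powers of $\cP_i^{(S)}$.
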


\begin{remark}
The bound \eqref{eq:regularbounds} implies $\Im \widetilde m_0(z) \leq
C\sqrt{\kappa+\eta}$ in all cases, so the right side of
\eqref{eq:average law in bulk edge} may be further bounded as
\[\frac{\Psi(z)^2}{\sqrt{\kappa+\eta}+\Psi(z)}
\leq \frac{2C\sqrt{\kappa+\eta}(N\eta)^{-1}+2(N\eta)^{-2}}
{\sqrt{\kappa+\eta}+(N\eta)^{-1}} \leq \frac{2(C+1)}{N\eta}.\]
The result stated in \eqref{eq:average law in bulk edge} is equivalent to a
bound of $N^\eps(N\eta)^{-1}$ for $E \in \supp(\tilde \mu_0)$ inside the
spectral support, where $\Im \widetilde m_0(z) \asymp
\sqrt{\kappa+\eta}$. The stronger form of \eqref{eq:average law in bulk edge} 
for $E \notin \supp(\tilde \mu_0)$ has served useful to show optimal concentration of the eigenvalues of $\widetilde K$ around
$\supp(\tilde \mu_0)$ (see e.g.\ \cite[Eq.\ (8.8)]{pillai2014universality}).
\end{remark}

We state here a few known implications 
of Theorem \ref{thm:entrywise law} on eigenvalue rigidity and eigenvector delocalization \cite{erdHos2009semicircle,erdHos2012rigidity}. Let
\[\lambda_1 \geq \ldots \geq \lambda_{\min(n,N)}\]
be the leading eigenvalues of $\widetilde K$ (or equivalently of $K$).
For any $i \in \{1,\ldots,\min(n,N)\}$, define the classical eigenvalue location
$\theta(i)>0$ through
\[N\int_{\theta(i)}^{\infty}\rho_0(x)\;dx=i-\frac{1}{2}\]
where we recall the density $\rho_0(x)$ of $\widetilde \mu_0$ on $(0,\infty)$.
Corresponding to each edge/cusp $x_j$ defined in \eqref{eq:edges}, denote
\begin{equation}\label{eq:Nj}
N_j=N\int_{x_j}^{\infty}\rho_0(x)\;dx.
\end{equation}
(It is shown in \cite[Lemma A.1]{knowles2017anisotropic} that $N_j$ is always an integer.)

\begin{coro}\label{coro:coro of entrywise law}
Suppose Assumptions \ref{assum:basic} and \ref{assum:concentration} hold.
Then for any $\delta,\eps,D>0$, there exist constants $\tau \equiv \tau(\eps)
\in (0,1)$ and $C \equiv C(\delta,\eps,D)>0$ such that
with probability at least $1-CN^{-D}$, the following hold:
\begin{enumerate}[label=(\alph*)]
\item (Spectral support) $K$ has no eigenvalues outside
$\{x \in \R:\dist(x,\supp(\mu_0)) \leq \delta\}$, and $\widetilde K$ has no eigenvalues outside
$\{x \in \R:\dist(x,\supp(\widetilde \mu_0)) \leq \delta\}$.
\item ($N^{-2/3}$-concentration at regular edges)
\label{eq:concentration near edge} If $x_{2k-1}$ is any right edge for which
$\bD_{2k-1}^e(\delta,1/3)$ is regular, then $K$ and $\widetilde K$ have no
eigenvalues in $[x_{2k-1}+N^{-2/3+\eps},x_{2k-1}+\delta]$.

The analogous statement holds for a left edge $x_{2k}$
and $[x_{2k}-\delta,x_{2k}-N^{-2/3+\eps}]$.

    \item (Rigidity of eigenvalues)\label{eq:Rigidity of eigenvalues}
    If $x_{2k-1}$ is any right edge for which
$\bD_{2k-1}^e(\delta,\tau)$ is regular, then
        \[|\lambda_i-\theta(i)| \leq (i-N_{2k-1})^{-1/3}N^{-2/3+\eps}
\text{ for all $i$ such that $\theta(i)\in [x_{2k-1}-\delta,x_{2k-1}]$.}\]
If $x_{2k}$ is any left edge for which $\bD_{2k}^e(\delta,\tau)$ is regular,
then
\[|\lambda_i-\theta(i)| \leq (N_{2k}-i)^{-1/3}N^{-2/3+\eps}
\text{ for all $i$ such that $\theta(i) \in [x_{2k},x_{2k}+\delta]$}.\]
If $[x_{2k},x_{2k-1}]$ is any bulk component for which
$\bD_k^b(\delta,\tau)$ is regular, and at least one edge domain
$\bD_{2k}^e(\delta,\tau)$ or $\bD_{2k-1}^e(\delta,\tau)$ is also regular, then 
\[|\lambda_i-\theta(i)| \leq N^{-1+\eps}
\text{ for all $i$ such that $\theta(i) \in [x_{2k}+\delta,x_{2k-1}-\delta]$.}\]
    \item ($l_\infty$-delocalization of eigenvectors of $\widetilde
K$)\label{eq:infty delocalization of right singular vector} If $x_j$ is any edge for which $\bD_j^e(\delta,\eps)$ is regular, then for each eigenvalue
$\lambda \in [x_j-\delta,x_j+\delta]$ of $\widetilde K$ and each associated eigenvector $\widetilde \x$,
\[\|\widetilde \x\|_\infty \leq \frac{N^{\eps}}{\sqrt{N}}.\]
If $[x_{2k},x_{2k-1}]$ is any bulk component for which $\bD_k^b(\delta,\tau)$ is regular, then the same holds for every eigenvector of $\widetilde K$ corresponding to an eigenvalue $\lambda \in [x_{2k}+\delta,x_{2k-1}-\delta]$.
\end{enumerate}
\end{coro}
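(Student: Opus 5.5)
The plan is to derive each part of Corollary~\ref{coro:coro of entrywise law} from Theorem~\ref{thm:entrywise law} by the standard arguments of \cite{erdHos2012rigidity,pillai2014universality,knowles2017anisotropic}, applied on the regular domains supplied by Lemma~\ref{lem:properties of m}. Since $K$ and $\widetilde K$ share nonzero eigenvalues and $\widetilde\mu_0$, $\mu_0$ differ only by an atom at $0$, it suffices to argue for $\widetilde K$ and then transfer to $K$. The zero eigenvalues and the point $0$ need a small separate check when $0\notin\supp(\mu_0)$, which is only possible if $n\le N$ with $\Sigma$ of full rank.

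\textbf{Parts (a) and (b).} For (a), first use the moment bounds of Assumption~\ref{assum:concentration}: a crude norm bound gives $\|\widetilde K\|_\op\le N^{C}$ with high probability. I would then tighten this to $\|\widetilde K\|_\op\le C'$ by a moment or net argument based on the quadratic-form concentration, or else by covering a large box with the domain $\bD^o(\delta,\tau)$ after a rescaling. Next, cover the region $\{x:\dist(x,\supp(\widetilde\mu_0))\ge\delta,\ |x|\le C'\}$ by $\bD^o(\delta/2,\tau)$, which is regular by Lemma~\ref{lem:properties of m}(c), and apply Theorem~\ref{thm:entrywise law} on a grid of $N^{O(1)}$ points with $\eta=N^{-1+\tau}$. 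Outside the support $\Im\widetilde m_0\asymp\eta$, so the averaged bound there is $\Psi^2/(\sqrt{\kappa+\eta}+\Psi)\lesssim N^{\eps}/(N\sqrt{\kappa+\eta}\,\cdot N\eta)+N^\eps(N\eta)^{-2}$. An eigenvalue at $E$ would instead force $\Im\widetilde m(E+i\eta)\ge (N\eta)^{-1}$, and this lower bound dominates $\Im\widetilde m_0+{}$error once $\tau$ is small relative to $\eps$. Part (b) is the same argument run on $\bD^e_{2k-1}(\delta,1/3)$ for $E\in[x_{2k-1}+N^{-2/3+\eps},x_{2k-1}+\delta]$. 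There one takes $\eta=N^{-1/2-\eps/4}\kappa^{1/4}$, following \cite[Sec.~8]{pillai2014universality} and \cite{knowles2017anisotropic}; with this choice $\Im\widetilde m_0\asymp\eta/\sqrt\kappa$ and the error are both $\ll(N\eta)^{-1}$, while a single eigenvalue at $E$ would give $\Im\widetilde m\ge(N\eta)^{-1}$.

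\textbf{Part (c).} I would integrate the averaged law through the Helffer--Sjöstrand formula to get counting-function estimates $|\mathfrak n(E)-\mathfrak n_0(E)|\le N^{\eps}/N$ for $E$ near the component. Boundary effects are handled using (a) and (b): there are no eigenvalues in the gaps, so counting from the regular edge is anchored. This is why the bulk statement requires one regular edge. Converting the counting estimates into eigenvalue locations uses the square-root behavior $\rho_0(x)\asymp\sqrt{|x-x_j|}$ near regular edges, which yields the $(i-N_j)^{-1/3}N^{-2/3+\eps}$ rate, and $\rho_0\ge c_0$ in the bulk, which yields $N^{-1+\eps}$. The integer property of $N_j$ from \cite[Lemma A.1]{knowles2017anisotropic} matches the indexing.

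\textbf{Part (d) and the main obstacle.} For (d), use the spectral bound $|\widetilde\x(i)|^2\le\eta\,\Im\widetilde R_{ii}(\lambda+i\eta)$ at $\eta=N^{-1+\eps}$. The entrywise law \eqref{eq:entry law in bulk edge} gives $\Im\widetilde R_{ii}\le C+N^{\eps}\Psi\le C$, so $\|\widetilde\x\|_\infty^2\le CN^{-1+\eps}$. The step I expect to be the main obstacle is the Helffer--Sjöstrand integration in (c) near edges. The error bound degrades like $(N\eta)^{-1}$ at small $\eta$, and the estimate must be patched with (b) and with the trivial bound for $\eta<N^{-1+\tau}$; this requires choosing the smooth cutoffs at scale $N^{-2/3}$ carefully.
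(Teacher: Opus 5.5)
Your treatment of (b), (d), and the Helffer--Sj\"ostrand counting in (c) matches the paper. There is, however, a genuine gap. Theorem~\ref{thm:entrywise law} only controls the spectrum inside regular domains, which are bounded and, for $\bD^o(\delta,\tau)$, stay at distance $\delta$ from $0$. You have no valid argument for the eigenvalues it cannot see.

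\textbf{Far and near-zero eigenvalues.} Your proposed fix is to tighten $\|\widetilde K\|_\op$ to a constant by ``a moment or net argument''. This does not work under Assumption~\ref{assum:concentration}. The quadratic-form concentration holds only at the $\prec$ level (failure probability $N^{-D}$), so a union bound over an $e^{cn}$-net is unavailable. The matrix Rosenthal bound of Lemma~\ref{lem:concentration}(b) yields only $\|K\|_\op\prec 1$, i.e.\ $\le N^{\eps}$. Rescaling does not produce a regular domain with fixed constants. Nor can the averaged law detect a single far-away eigenvalue, since its contribution to $\widetilde m(z)$ is $O(N^{-1})$, below the error. The same problem affects your ``small separate check'' at $0$.

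\textbf{Anchoring in (c).} This is the more serious issue. Knowing that a gap contains no eigenvalues tells you that $\widehat N(a)$ is constant across it, not what its value is. For any edge other than $x_1$, you must know $\widehat N(a)=N(a)$ \emph{exactly} for $a$ in the adjacent gap. The same holds for left edges such as $x_{2p}$, where one needs $\widehat N(a)=\min(N,\bar n)$ below the spectrum. Counting through the neighboring bulk components via the local law gives this only up to $N^{\eps}$, and it would require those components to be regular, which is not assumed. An error of even one in the count would displace $\lambda_{N_{2k-1}+1}$ by order one.

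\textbf{The missing ingredient.} It is the exact separation statement of Lemma~\ref{lem:exactseparation}: $\widehat N(a)=N(a)$ with high probability for every $a>0$ with $(a-\delta,a+\delta)\cap\supp(\widetilde\mu_0)=\emptyset$. The paper proves it by a continuity argument that never needs an a priori constant bound on $\|K\|_\op$:
\begin{itemize}
\item The averaged law on $\bD^o(\delta,\tau)$ gives the dichotomy \eqref{eq:dichotomy}.
\item In the Gaussian case, one interpolates $\Sigma$ to a two-level covariance $\diag(\sigma_1 I,0)$ while keeping the gap open, using monotonicity of $z_0(m_{2k}(t);t)-z_0(m_{2k+1}(t);t)$. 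For the endpoint, the count follows from extreme singular value bounds for Gaussian matrices.
\item One then interpolates $G(t)=\sqrt t\,G+\sqrt{1-t}\,\widetilde G$ in $N$ steps. Each step moves eigenvalues by $O(N^{-1/2+\eps})$, and \eqref{eq:dichotomy} holds at every step, so no eigenvalue can cross a gap.
\end{itemize}
This gives (a): $\widehat N(a)=0$ just above the support rules out eigenvalues beyond $\delta^{-1}$, and $\widehat N(\delta/2)=N$ (resp.\ $n$) rules out eigenvalues in $[0,\delta/2]$. It also supplies the exact anchor needed to turn your counting estimates into the rigidity bounds of (c).
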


\vspace{1em}

\subsection{Anisotropic local law for the linearized resolvent}\label{subsec:anisotropic local law}

For all $z\in\C^+$, define the linearized resolvent
\begin{equation}\label{eq:linearizedresolvent}
\Pi(z)=\begin{bmatrix}
        -zI_n & \frac{1}{\sqrt{N}}G \\
        \frac{1}{\sqrt{N}}G^* & -I_N
    \end{bmatrix}^{-1} \in\C^{(n+N)\times(n+N)}.
\end{equation}
Note that by Schur's complement, we have
\[\Pi(z)=\begin{bmatrix} R(z) & * \\ * & z\widetilde R(z) \end{bmatrix}\]
where the two diagonal blocks contain the resolvents of $K$ and $\widetilde K$.
Under the additional condition of Assumption \ref{assum:cumulant},
the following theorem establishes an optimal anisotropic
local law for $\Pi(z)$.

\begin{theorem}[Anisotropic local law]\label{thm:anisotropic law}
Suppose Assumptions \ref{assum:basic}, \ref{assum:concentration}, and
\ref{assum:cumulant} hold, and $\bD$ is a regular spectral domain. 
Then for any $\eps,D>0$, there exists a constant $C \equiv C(\eps,D)>0$ such
that the following holds:

Fix any deterministic unit vectors $\v_1,\v_2\in\C^N$ and $\u_1,\u_2\in\C^n$.
Then with probability at least $1-CN^{-D}$, for all $z=E+i\eta \in \bD$ we have
\[\left|\begin{bmatrix}
    \u_1^* & \v_1^*
    \end{bmatrix}\left(\Pi(z)-\begin{bmatrix}
        (-zI_n-z\widetilde{m}_0(z)\Sigma)^{-1} & 0\\
        0 & z\widetilde{m}_0(z)I_N
    \end{bmatrix}\right)\begin{bmatrix}
        \u_2\\
        \v_2
    \end{bmatrix}\right|\leq N^{\eps}\Psi(z).\]
\end{theorem}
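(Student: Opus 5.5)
The plan follows the Knowles--Yin strategy, with Theorem~\ref{thm:entrywise law} as the starting input and a new fluctuation-averaging step powered by Assumption~\ref{assum:cumulant}. Write $\Pi_0(z)$ for the deterministic block matrix in the statement.

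\textbf{Step 1: reduce the claim to three types of quadratic forms.} Use Schur complements in the column index $i$. Let $G^{(i)}$ be $G$ with column $i$ removed, $R^{(i)}=(N^{-1}G^{(i)}G^{(i)*}-z)^{-1}$, and $\widetilde R^{(i)}$ the analogous Gram resolvent. Then
\[
z\widetilde R_{ii}=-\bigl(1+N^{-1}\g_i^*R^{(i)}\g_i\bigr)^{-1}.
\]
The off-diagonal entries $\widetilde R_{ij}$ and the mixed blocks of $\Pi$ are expressed through $\g_i^*R^{(ij)}\g_j$ and $\u^*R^{(i)}\g_i$. For the $n\times n$ block we use the Sherman--Morrison identity $R=R^{(i)}-N^{-1}R^{(i)}\g_i\g_i^*R^{(i)}/(1+N^{-1}\g_i^*R^{(i)}\g_i)$. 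Averaged over $i$, this gives the resolvent identity
\[
\u_1^*R\u_2-\u_1^*\Pi_0\u_2=\u_1^*\Pi_0\Bigl[\tfrac1N\textstyle\sum_i z\widetilde R_{ii}\bigl(\g_i\g_i^*-\Sigma\bigr)\Bigr]R\,\u_2+(\text{terms controlled by }|\widetilde m-\widetilde m_0|).
\]
Expanding $R\u_2$ through $R^{(i)}$ turns this into averages over $i$ of centered quantities $Z_i=\u_1^*\Pi_0(\g_i\g_i^*-\Sigma)R^{(i)}\u_2$, plus error terms.

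\textbf{Step 2: entrywise and isotropic a priori bounds.} Assumption~\ref{assum:concentration} alone gives $\g_i^*A\g_i-\Tr\Sigma A\prec\|A\|_F$ with $\|R^{(i)}\|_F^2\le \Im\Tr R^{(i)}/\eta$. Combined with Theorem~\ref{thm:entrywise law}, this shows each individual $Z_i$ and each linear form $\u^*R^{(i)}\g_i$ is $\prec\sqrt{N}\,\Psi$ (Ward identity; $\u^*R^{(i)}\g_i$ has variance $\u^*R^{(i)}\Sigma R^{(i)*}\u\prec \Im(\u^*R\u)/\eta$). By itself this only gives an isotropic bound of order $O_\prec(1)$, i.e.\ isotropic a priori control of size $N^{\eps}$, on $\bD$. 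That bound comes from a bootstrap in $\eta$: start at $\eta=1$, use the Lipschitz lattice $L(z)$, and apply the stability of Definition~\ref{def:regular}(b).

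\textbf{Step 3: fluctuation averaging, the main obstacle.} The target is
\[
\tfrac1N\textstyle\sum_i w_i Z_i\prec\Psi
\]
for bounded weights $w_i$ depending weakly on the matrix, an improvement of $\sqrt N$ over the naive bound. I would compute a high moment $\E|\tfrac1N\sum_i Z_i|^{2p}$ and expand each $\g_i$-dependence in cumulants of $\g_i$, after fully decoupling via resolvent minors $R^{(T)}$. Each term becomes a tensor network: vertices are cumulant tensors $\kappa_k(\g_i)$, and edges are contractions through $R^{(T)}$, $\Pi_0$, $\u$'s and $\Sigma$. Second cumulants reproduce the separable (Gaussian) computation of Knowles--Yin. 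Higher cumulants are bounded with \eqref{eq:cumulantassumption}: the external legs carrying $\u_1,\u_2$-type vectors play the role of $\s_t$ with $\|\s_t\|_2$ control, and the remaining legs, which close into resolvent contractions, are measured in $\|\cdot\|_{\cU_k}$. Bounding those $\cU_k$ norms requires isotropic bounds on $\x^*R^{(T)}\y$ for $\x,\y\in\cU_k$. Since $|\cU_k|\le n^{C_k}$, a union bound lets us use the Step~2 a priori isotropic estimate simultaneously for all of $\cU_k$. The gain of $(\sqrt n)^{-1}$ relative to the Frobenius bound is exactly what pays for the summation over $i$. I expect the combinatorics of this graph counting, and checking that every cumulant vertex yields enough factors of $\Psi$, to be the hardest part.

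\textbf{Step 4: close the bootstrap.} The fluctuation-averaging estimate is fed back into Step~1, with the a priori isotropic bound $\Lambda$ improved to $\Psi+N^{-c}\Lambda$ and then iterated. This yields $|\u_1^*(R-\Pi_0)\u_2|\prec\Psi$, and similarly for the mixed and Gram blocks, using the Schur formulas together with polarization across the blocks of $[\u;\v]$. Uniformity over $z\in\bD$ then follows from a $N^{-5}$-net and Lipschitz continuity of the resolvent.
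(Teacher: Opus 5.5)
Your Step~3 has the right ingredients. The cumulant expansion turns each $\g_i$-dependence into a tensor network, Assumption~\ref{assum:cumulant} is applied with the $\u$-type legs as the $\s_t$ and the resolvent legs measured in $\|\cdot\|_{\cU_k}$, and a union bound covers the polynomially many $\x,\y\in\cU_k$. The gap is that the a priori isotropic bound Step~3 consumes is never obtained, and the source you name for it does not give it.

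\textbf{Step~2 is circular.} The bounds $Z_i\prec\sqrt N\Psi$ and $\u^*R^{(i)}\g_i\prec\sqrt N\Psi$ need $\|R\u\|_2^2=\Im(\u^*R\u)/\eta\prec N\Psi^2$, i.e.\ essentially $\Im\u^*R\u\prec1$. That is already an isotropic statement. Theorem~\ref{thm:entrywise law} controls $\widetilde R_{ij}$ and $\Tr R$, not $R$ in a fixed direction.

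\textbf{The named source does not help.} An isotropic bound $|\x^*R\y|\prec N^{\eps}$ on $\bD$ cannot come from the lattice $L(z)$ plus Definition~\ref{def:regular}(b). That is stability of the scalar equation $z_0(\widetilde m)=z$, already spent in Theorem~\ref{thm:entrywise law}, and there is no analogous scalar equation for $\u^*R\u$. The only self-consistent control of $\u^*R\u$ is the averaging estimate itself. Under Assumption~\ref{assum:concentration} alone it is of size $\sqrt N\Phi^2\approx(\sqrt N\eta)^{-1}$ (Lemma~\ref{lem:fluctuation_averaging_lowrank}(a)), which is useless once $\eta\ll N^{-1/2}$.

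\textbf{Step~4 has no starting point.} From the crude bound $\|R\|\le\eta^{-1}$, the averaging output with a priori bound $\Lambda$ is heuristically of order $\Lambda^{3/2}(N\eta)^{-1/2}$. This is not smaller than $\Lambda$ when $\eta\ll N^{-1/2}$, so the iteration $\Lambda\mapsto\Psi+N^{-c}\Lambda$ at fixed $z$ never begins to contract.

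\textbf{The missing idea} is the Knowles--Yin bootstrap in multiplicative steps $\eta\to\eta N^{-\delta}$, only $O(\delta^{-1})$ of them. It must be paired with an averaging lemma that tolerates a \emph{weak} a priori isotropic bound at the cost of only a linear loss. The paper's averaging lemma is a high-moment estimate with global $\prec$ hypotheses, which is why the scales are stepped this way rather than along an $N^{-5}$ lattice. The scheme runs as follows.
\begin{enumerate}
\item Suppose the anisotropic law, hence $\Im\x^*R\x\prec1$, holds on $\bD_k=\{\eta\ge N^{-k\delta}\}$.
\item On $\bD_{k+1}$, monotonicity of $\eta\mapsto\eta\Im\x^*R(E+i\eta)\x$ and a spectral decomposition over the scales $\eta N^{l\delta}$ give $|\x^*R\y|\prec N^{2\delta}$ and $\|R\u\|_2/\sqrt N\prec N^{(C_0/2+1)\delta}\Psi$ (Lemmas~\ref{lem:prec bounds on generalized entries} and~\ref{lem:bootstrap}).
\item Lemma~\ref{lem:fluctuation_averaging_lowrank2}(a) then yields the anisotropic law with error $N^{C_0\delta}\Psi\ll1$ on $\bD_{k+1}$. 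This restores $\Im\u^*R\u\prec1$ there, so the exponent $C_0\delta$ does not accumulate across steps.
\item At the end, send $\delta\downarrow0$.
\end{enumerate}

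For this to close, your Step~3 must track how many factors of the weak bound $\sup_{\x,\y\in\cU_k}|\x^*R\y|\prec N^{2\delta}$ each network incurs. In the paper's peeling, each factor $N^{-1/2}$ gained from Assumption~\ref{assum:cumulant} at a singular vertex costs at most one such factor, so the output degrades only to $N^{O(\delta)}\Phi$. You assumed the a priori bound was $O_\prec(1)$, so this accounting is absent from your argument.

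Relatedly, the weights $(1+N^{-1}\g_i^*R^{(i)}\g_i)^{-1}$ do not depend only weakly on $\g_i$. They must be expanded around $(1+N^{-1}\Tr\Sigma R^{(S)})^{-1}$ in powers of $N^{-1}\Tr(\g_i\g_i^*-\Sigma)R^{(S)}$. The resolvent legs that close back onto the same cumulant then require the additional input $N^{-1}\|R^{(S)}\|_1\prec1$.
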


This implies the delocalization of eigenvectors of both $K$ and $\widetilde K$
in any fixed basis, extending the entrywise delocalization for $\widetilde K$ in
Corollary \ref{coro:coro of entrywise law}(d).

\begin{coro}[Delocalization of eigenvectors of $K$ and $\widetilde K$]\label{coro:Singular vector lies in general position}
Suppose Assumptions \ref{assum:basic}, \ref{assum:concentration}, and
\ref{assum:cumulant} hold. Fix any deterministic unit vectors $\u \in \R^n$ and
$\v \in \R^N$. Then for any $\delta,\eps,D>0$, there exists a constant $C \equiv
C(\delta,\eps,D)>0$ such that with probability at least $1-CN^{-D}$, the following
holds:

If $x_j$ is any edge of $\widetilde \mu_0$ for which $\bD_j^e(\delta,\eps)$ is regular, then for each eigenvalue
$\lambda \in [x_j-\delta,x_j+\delta]$ and associated eigenvector $\x$ or $\widetilde \x$ of $K$ or $\widetilde K$,
\[|\u^*\x|\leq\frac{N^{\eps}}{\sqrt{N}},
\qquad
|\v^*\widetilde \x| \leq \frac{N^{\eps}}{\sqrt{N}}.\]
If $[x_{2k},x_{2k-1}]$ is any bulk component for which $\bD_k^b(\delta,\tau)$ is regular, then the same holds for every eigenvector $\x$ or $\widetilde \x$ of $K$ or $\widetilde K$ corresponding to an eigenvalue $\lambda \in [x_{2k}+\delta,x_{2k-1}-\delta]$.
\end{coro}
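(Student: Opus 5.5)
We derive the corollary from Theorem~\ref{thm:anisotropic law} and Corollary~\ref{coro:coro of entrywise law}, using the standard argument that converts an isotropic resolvent bound at scale $\eta\sim N^{-1+\eps}$ into delocalization.

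\emph{Spectral decomposition.} Write $K=\sum_k \lambda_k \x_k\x_k^*$, so that
\[\Im\, \u^*R(z)\u=\sum_k \frac{\eta\,|\u^*\x_k|^2}{(\lambda_k-E)^2+\eta^2}\geq \frac{|\u^*\x|^2}{\eta}\]
when $E=\lambda$ is the eigenvalue associated with $\x$. For $\widetilde K$ we use the lower-right block of $\Pi(z)$, which equals $z\widetilde R(z)$. Since $|z|\asymp 1$ on a regular domain, $\v^*\widetilde R(z)\v$ is controlled by $z^{-1}$ times that block. The same spectral bound then gives $|\v^*\widetilde \x|^2\leq \eta\,\Im\,\v^*\widetilde R(z)\v$. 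A zero eigenvalue cannot lie in the relevant windows: the windows are contained in $(0,\infty)$, since regular edges satisfy $x_j>\delta$ and bulk components are away from $0$.

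\emph{Bounding the resolvent.} Set $\eta=N^{-1+\eps}$. Apply Theorem~\ref{thm:anisotropic law} with $(\u_1,\v_1)=(\u_2,\v_2)=(\u,0)$, and separately with $(0,\v)$. This gives, with probability at least $1-CN^{-D}$,
\[|\u^*R(z)\u|\leq |\u^*(-z-z\widetilde m_0\Sigma)^{-1}\u|+N^{\eps}\Psi(z).\]
On a regular domain, \eqref{eq:regularbounds} gives $|z|\geq c$ and $\min_\alpha|1+\sigma_\alpha\widetilde m_0|\geq c$, so the deterministic term is $O(1)$. Also $\widetilde m_0=O(1)$, and $\Psi(z)\leq C(N\eta)^{-1/2}+(N\eta)^{-1}\leq C$. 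Hence $\Im\,\u^*R\u\leq C$ and $\Im\,\v^*\widetilde R\v\leq C$. Multiplying by $\eta$ yields $|\u^*\x|^2\leq CN^{-1+\eps}$, and the same for $\widetilde\x$. Renaming $\eps$ gives the claimed $N^{\eps}/\sqrt N$.

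\emph{Uniformity over random eigenvalues.} The main technical point is that $\lambda$ is random, while Theorem~\ref{thm:anisotropic law} gives a simultaneous bound over all $z\in\bD$ for fixed $\u,\v$. This already handles the issue, since we may take $z=\lambda+i\eta$ once we know $\lambda+i\eta\in\bD$. Membership in $\bD$ is clear for $\lambda$ in the edge window $[x_j-\delta,x_j+\delta]$ or the bulk window $[x_{2k}+\delta,x_{2k-1}-\delta]$, because $\bD_j^e(\delta,\eps)$ and $\bD_k^b(\delta,\tau)$ contain all $E$ in these windows with $\eta\geq N^{-1+\tau}$. In the bulk we choose $\eta=N^{-1+\tau}$ with $\tau\leq\eps$, or else shrink $\eps$ accordingly.

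The remaining subtlety is that the statement quantifies over domains that are not fixed in advance ("any edge for which\ldots is regular"). There are only finitely many edges $2p$, with $p\leq n$. The constants in Definition~\ref{def:regular} are uniform, so we take a union bound over the at most $O(n)$ domains; this costs only a factor $n$ in probability, which is absorbed by increasing $D$. Eigenvectors of degenerate eigenvalues are handled by the same inequality, since it holds for any unit vector in the eigenspace.
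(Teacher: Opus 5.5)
Your proposal is correct and is essentially the paper's argument: the paper also expands $\Im\,\u^*R\u$ and $\Im\,\v^*\widetilde R\v$ in the eigenbases of $K$ and $\widetilde K$ and evaluates Theorem~\ref{thm:anisotropic law} at $z=\lambda+iN^{-1+\eps}$, exactly as in the proof of Corollary~\ref{coro:coro of entrywise law}(d). Your extra remarks on the random eigenvalue (handled because the theorem holds simultaneously over all $z$ in the domain) and on taking a union over the at most polynomially many edge and bulk domains are details the paper leaves implicit; they are fine under the paper's convention that regularity constants are fixed model constants.
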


Although the primary focus of our work is on Theorems \ref{thm:entrywise law}
and \ref{thm:anisotropic law} for local spectral parameters $z \in \C^+$, let
us clarify that in the case of $z=E+i\eta$ where $E$ has
constant separation from the spectral support,
our arguments also establish the analogue of Theorem \ref{thm:anisotropic law}
under Assumptions \ref{assum:basic} and \ref{assum:concentration} alone. We
summarize this result here.

\begin{theorem}[Outside the spectrum]\label{thm:outside spec}
Suppose Assumptions \ref{assum:basic} and \ref{assum:concentration} hold.
Fix any constant $\delta>0$ and consider
\[\bar \bD^o(\delta)=\{z \in \C:
\dist(z,\supp(\widetilde{\mu}_0)) \geq \delta,\,|z| \geq \delta,\,|E| \leq \delta^{-1},\,
\eta \in [-1,1]\}.\]
Then for any $\eps,D>0$, there exists a constant
$C \equiv C(\delta,\eps,D)>0$ such that the following hold:

With probability $1-CN^{-D}$, for all $z \in \bar\bD^o(\delta)$ we have
\[|m(z)-m_0(z)| \leq \frac{N^\eps}{N},
\qquad |\widetilde m(z)-\widetilde m_0(z)| \leq \frac{N^\eps}{N}.\]
Furthermore, fix any deterministic unit vectors
$\v_1,\v_2\in\C^N$ and $\u_1,\u_2\in\C^n$.
With probability $1-CN^{-D}$, for all $z \in \bar\bD^o(\delta)$ we have
\[\left|\begin{bmatrix}
    \u_1^* & \v_1^*
    \end{bmatrix}\left(\Pi(z)-\begin{bmatrix}
        (-zI_n-z\widetilde{m}_0(z)\Sigma)^{-1} & 0\\
        0 & z\widetilde{m}_0(z)I_N
    \end{bmatrix}\right)\begin{bmatrix}
        \u_2\\
        \v_2
    \end{bmatrix}\right|\leq \frac{N^{\eps}}{\sqrt{N}}.\]
\end{theorem}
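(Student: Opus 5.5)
Theorem~\ref{thm:outside spec} is the regime where $\Pi(z)$ stays bounded and $\eta$ can be of order one, so no multi-scale bootstrapping is needed. The plan is to prove the statement at each fixed $z$ and then extend to all of $\bar\bD^o(\delta)$ by a union bound over an $N^{-5}$-net together with Lipschitz continuity. The first step is an a priori spectral bound. Using Theorem~\ref{thm:entrywise law} on the regular domain $\bD^o(\delta/2,\tau)$ from Lemma~\ref{lem:properties of m}(c), together with the standard argument behind Corollary~\ref{coro:coro of entrywise law}(a), $\widetilde K$ has no eigenvalues within distance $\delta/2$ of $\bar\bD^o(\delta)\cap\R$ with overwhelming probability. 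Also $\|K\|_\op\leq C$ holds with high probability, from the concentration of $\|\g_i\|_2^2$ and a net argument. On this event $R$, $\widetilde R$ and $\Pi$ are analytic on $\bar\bD^o(\delta)$, with $\|\Pi(z)\|_\op\leq C$ and Lipschitz constant $O(1)$. The case $\eta<0$ follows from $\eta>0$ by conjugation symmetry, and the real axis by continuity.

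For the averaged law, I would rerun the self-consistent equation argument used for Theorem~\ref{thm:entrywise law}. By Schur complements, $1/\widetilde R_{ii} = -z - z\,N^{-1}\g_i^* R^{(i)}\g_i$, where $R^{(i)}$ is the resolvent with column $i$ removed. Assumption~\ref{assum:concentration} then gives $N^{-1}\g_i^*R^{(i)}\g_i = N^{-1}\Tr\Sigma R^{(i)} + O_\prec(N^{-1}\|R^{(i)}\|_F)=N^{-1}\Tr\Sigma R + O_\prec(N^{-1/2})$. This shows $z_0(\widetilde m)-z=O_\prec(N^{-1/2})$. Fluctuation averaging (Section~\ref{sec:fluctuation averaging}) over $i$ then improves this to $O_\prec(N^{-1})$, since the errors $Z_i$ are centered and only weakly correlated, and resolvent bounds are $O(1)$ here rather than $\eta^{-1}$. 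Because $z$ is at distance $\geq\delta$ from the support, $z_0'(\widetilde m_0(z))$ is bounded away from $0$, so the equation is linearly stable and $|\widetilde m-\widetilde m_0|\prec N^{-1}$. The bound for $m$ follows from \eqref{eq:m0m0tilde}, using $|z|\geq\delta$.

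For the anisotropic bound at the rate $N^{-1/2}$ without Assumption~\ref{assum:cumulant}, I would handle the three blocks separately. For the $\v$-block, $\v_1^* z\widetilde R\v_2=\sum_{ij}\bar v_{1i}v_{2j}z\widetilde R_{ij}$. The diagonal terms reduce to the entrywise law $\widetilde R_{ii}-\widetilde m_0=O_\prec(N^{-1/2})$. For the off-diagonal terms, $\widetilde R_{ij}=-z\widetilde R_{ii}\widetilde R^{(i)}_{jj}N^{-1}\g_i^*R^{(ij)}\g_j$. Here one needs $\sum_{i\neq j}\bar v_{1i}v_{2j}(\cdots)\prec N^{-1/2}$, which I would obtain by a high-moment computation that expands in independent columns, using only second-moment structure and quadratic-form concentration. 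At order-one $\eta$ the crude bound $N^{-1}|\g_i^*B\g_j|\prec N^{-1/2}$ already loses only a factor that the moment expansion recovers. For the $\u$-block, I would write $R=(-z-z\widetilde m_0\Sigma)^{-1}+\ldots$ via the resolvent identity $R = \Pi_0 - \Pi_0 (K-\E\text{-term})R$, and expand $\u_1^*RN^{-1}\sum_i\g_i\g_i^*\ldots$ via Sherman--Morrison. This leaves terms $N^{-1}\sum_i[\u_1^*R^{(i)}\g_i\g_i^*\Pi_0\u_2/(1+N^{-1}\g_i^*R^{(i)}\g_i) - \ldots]$. These should be $O_\prec(N^{-1/2})$ because $\u^*R^{(i)}\g_i$ has conditional variance $O(1)$, by Assumption~\ref{assum:concentration} applied with the rank-one $A=R^{(i)*}\u\u^*R^{(i)}$, whose Frobenius norm is $O(1)$. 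The sum over $i$ of centered terms of size $N^{-1}$ then gives $N^{-1/2}$ by a moment estimate over independent columns. The off-diagonal block is handled similarly. This step is the main obstacle. Without the cumulant assumption, fluctuation averaging in general directions is only available to order $N^{-1/2}$, and care is needed that high-moment expansions never require more than quadratic-form concentration. Being away from the spectrum is exactly what makes the $N^{-1/2}$ (non-optimal-in-$\Psi$) rate sufficient.
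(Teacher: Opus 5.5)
Your outline follows the paper's proof. The ingredients are the same:
\begin{itemize}
\item an a priori event on which $K$ has no eigenvalues in $\{x:|x|\le 2\delta^{-1},\,\dist(x,\supp(\widetilde\mu_0)\cup\{0\})\ge\delta/2\}$, so that $\|R(z)\|_\op\le 2/\delta$ on $\bar\bD^o(\delta)$;
\item Frobenius and vector bounds at level $\Phi=N^{-1/2}$;
\item Lemma~\ref{lem:fluctuation_averaging} giving $|z_0(\widetilde m)-z|\prec N^{-1}$, followed by stability to get $|\widetilde m-\widetilde m_0|\prec N^{-1}$;
\item conjugation symmetry and Lipschitz continuity to pass from $\eta\ge N^{-1+\tau}$ to all of $\bar\bD^o(\delta)$.
\end{itemize}
The step you call the main obstacle is already Lemma~\ref{lem:fluctuation_averaging_lowrank}, which uses only Assumption~\ref{assum:concentration}. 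At $\Phi=N^{-1/2}$ its bounds $\sqrt N\Phi^2$, $N\Phi^3$, $\sqrt N\Phi^2$ all equal $N^{-1/2}$. The paper simply uses it in place of Lemma~\ref{lem:fluctuation_averaging_lowrank2} in the arguments for the $R$, $\widetilde R$ and $\Pi_o$ blocks. Your heuristic of a sum of $N$ centered terms of size $N^{-1}$ is right in spirit. However, the summands are not independent, since $R^{(i)}$ depends on all other columns, and handling that dependence is exactly the content of that lemma.

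What is missing is control of the minors $R^{(S)}$, $|S|\le L$. Every estimate you invoke concerns them:
\begin{itemize}
\item the expansion $N^{-1}\g_i^*R^{(i)}\g_i=N^{-1}\Tr\Sigma R^{(i)}+O_\prec(N^{-1}\|R^{(i)}\|_F)$;
\item the hypotheses $N^{-1}\|R^{(S)}\|_F\prec\Phi$ and $N^{-1/2}\|R^{(S)}\x\|_2\prec\Phi$ of the fluctuation averaging lemmas;
\item the bound $\widetilde R_{ii}-\widetilde m_0\prec N^{-1/2}$ you use for the diagonal of the $\v$-block. Theorem~\ref{thm:entrywise law} only gives $\Psi\asymp N^{-1/2}+(N\eta)^{-1}$ on $\bD^o$, which is worse when $\eta\ll N^{-1/2}$.
\end{itemize}
Your a priori step, however, only shows that $K$ and $\widetilde K$ have no eigenvalues near the domain. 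This does not transfer automatically. The minor $K^{(i)}=K-N^{-1}\g_i\g_i^*$ merely interlaces with $K$, so it may have an eigenvalue inside a gap of $K$, and then $\|R^{(i)}(z)\|_\op$ is of order $\eta^{-1}$ near it.

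The paper closes this as follows. Lemma~\ref{lem:prec bounds}, Theorem~\ref{thm:entrywise law} and $\Im\widetilde m_0\lesssim\eta$ give $|\widetilde m^{(S)}-\widetilde m|\prec N^{-1}+(N\eta)^{-2}$. Rerunning the argument for \eqref{eq:dichotomy} with $\widetilde m^{(S)}$ in place of $\widetilde m$ then yields $\|R^{(S)}\|_\op\prec 1$. Alternatively, the last two bounds of Lemma~\ref{lem:prec bounds} transfer $\|R^{(S)}\x\|_2\prec\|R\x\|_2\le C$. This already gives $N^{-1}\|R^{(S)}\|_F\lesssim N^{-1/2}\max_\alpha\|R^{(S)}\e_\alpha\|_2\prec N^{-1/2}$. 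You need to add one of these steps.

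Separately, $\|K\|_\op\le C$ does not follow from concentration of $\|\g_i\|_2^2$ plus a net argument. Assumption~\ref{assum:concentration} only gives failure probabilities $N^{-D}$, which is too weak for a union bound over an exponentially large net. You do not need this bound anyway, since the no-eigenvalue region above already controls $\|R(z)\|_\op$ on $\bar\bD^o(\delta)$.
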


This approximation for $R(z)$ (the upper-left block of $\Pi(z)$) was
established previously in \cite{wang2024nonlinear}, which used this result to
give a characterization of the outlier eigenvalues and eigenvectors of the
sample covariance matrix $K$ under spiked models for $\Sigma$.

\begin{remark}
If $n<N$ and $\dist(0,\supp(\mu_0)) \geq 2\delta$ for some constant
$\delta>0$, then Corollary \ref{coro:coro of entrywise law}(a) implies that with high probability, $m(z)$, $m_0(z)$, $R(z)$, and $(-zI_n-z\widetilde m_0(z)\Sigma)^{-1}$ are all analytic in a neighborhood of 0. Then the bounds $|m-m_0| \leq N^\eps/N$ and
$|\u_1^* (R-(-zI_n-z\widetilde m_0\Sigma)^{-1})\u_2| \leq N^\eps/\sqrt{N}$
in Theorem \ref{thm:outside spec} extend to $\{z \in \C:|z|\leq\delta\}$ by the maximum modulus principle.
Similarly, the bounds $|\widetilde m-\widetilde m_0| \leq N^\eps/N$
and $|\v_1^*(\widetilde R-\widetilde m_0I_N)\v_2| \leq
N^\eps/\sqrt{N}$ extend to $\{z \in \C:|z|\leq\delta\}$ when
$n>N$ and $\dist(0,\supp(\widetilde \mu_0)) \geq 2\delta$.
\end{remark}

\vspace{1em}
\subsection{Examples}\label{subsec:examples}
In this section, assuming $\g_1,\ldots,\g_N$ are equal in law to $\g \in \R^n$, we provide concrete examples of distributions for $\g$ that satisfy Assumptions \ref{assum:concentration} and \ref{assum:cumulant}. All proofs of results in this section are deferred to Section \ref{sec:analysis-examples}.

\subsubsection{Separable distributions}
In the separable setting of \cite{knowles2017anisotropic} where $\g=X\w$ for a vector
$\w \in \R^d$ having independent entries, the cumulant tensor $\kappa_k(\g)$
is a contraction of a diagonal tensor $\kappa_k(\w) \in (\R^d)^{\otimes k}$
with $X$ along each coordinate axis. It is easily checked that in this case,
Assumption \ref{assum:cumulant} holds in a stronger form as guaranteed in the
following proposition.

\begin{proposition}\label{prop:linear}
Suppose Assumption \ref{assum:basic} holds, where $\Sigma=XX^*$ for a matrix $X
\in \R^{n \times d}$ and $cn \leq d \leq Cn$ for constants $C,c>0$.
Suppose $\g=X\w$ where $\w=(\w[\alpha])_{\alpha=1}^d
\in\R^d$ has independent entries satisfying
$\E \w[\alpha]=0$, $\E \w[\alpha]^2=1$, and $\E |\w[\alpha]|^k<C_k$ for each $k
\geq 3$ and a constant $C_k>0$.

Then $\g$ satisfies the conditions of Assumptions \ref{assum:concentration} and
\ref{assum:cumulant}. Each set $\cU_k$
of Assumption \ref{assum:cumulant} may be taken as
\[\cU_k \equiv \cU=\{\e_1,\ldots,\e_n\}\cup\{X\e_1,\ldots,X\e_d\},\]
and the bound \eqref{eq:cumulantassumption} holds in the stronger form
(for any $k \geq 3$, $1 \leq m \leq k-1$, and a constant $C \equiv C(k)>0$)
\begin{equation}\label{eq:linearcumulant}
|\<\kappa_k(\g),\s_1\otimes\cdots \otimes \s_m\otimes T\>|\leq
C\sqrt{n}^{\1\{m=1\}}\|T\|_{\cU}\prod_{t=1}^m\|\s_t\|_2.
\end{equation}
\end{proposition}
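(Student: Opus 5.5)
The plan is to use multilinearity of cumulants together with independence of the entries of $\w$. Since $\g = X\w$, for any vectors $\y_1,\ldots,\y_k\in\R^n$ we have
\[\<\kappa_k(\g),\y_1\otimes\cdots\otimes\y_k\> = \kappa_k(\y_1^*X\w,\ldots,\y_k^*X\w).\]
Mixed cumulants of independent variables vanish unless all arguments are the same variable, so this equals
\[\sum_{\alpha=1}^d \kappa_k(\w[\alpha]) \prod_{t=1}^k (X^*\y_t)[\alpha] = \sum_{\alpha=1}^d \kappa_k(\w[\alpha])\prod_{t=1}^k \<X\e_\alpha,\y_t\>.\]
Extending by linearity gives the general identity
\[\<\kappa_k(\g),\s_1\otimes\cdots\otimes\s_m\otimes T\> = \sum_{\alpha=1}^d \kappa_k(\w[\alpha])\prod_{t=1}^m\<X\e_\alpha,\s_t\>\cdot\<(X\e_\alpha)^{\otimes(k-m)},T\>.\]
The moment bounds give $|\kappa_k(\w[\alpha])|\le C$. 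Each $X\e_\alpha$ lies in $\cU$, so the last factor is at most $\|T\|_\cU$. Also $\|\Sigma\|_\op\le C$ gives $\|X\|_\op\le C$, hence $\|X\e_\alpha\|_2\le C$, so $\cU$ satisfies the norm requirement. Clearly $|\cU|\le n+d\le Cn$.

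The bound then reduces to a bound on $\sum_\alpha \prod_{t=1}^m|(X^*\s_t)[\alpha]|$, which I would handle in three cases.
\begin{itemize}
\item For $m=1$, Cauchy--Schwarz gives $\sum_\alpha|(X^*\s_1)[\alpha]| \le \sqrt d\,\|X^*\s_1\|_2\le C\sqrt n\|\s_1\|_2$.
\item For $m=2$, Cauchy--Schwarz gives $\sum_\alpha|(X^*\s_1)[\alpha](X^*\s_2)[\alpha]|\le \|X^*\s_1\|_2\|X^*\s_2\|_2\le C\|\s_1\|_2\|\s_2\|_2$.
\item For $m\ge3$, bound the extra factors by $\|X^*\s_t\|_\infty\le\|X^*\s_t\|_2\le C\|\s_t\|_2$ and reduce to the case $m=2$.
\end{itemize}
This yields \eqref{eq:linearcumulant}. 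Since $\sqrt n^{\1\{m=1\}}\le \sqrt n^{k-m-1}$ whenever $m\le k-1$ (check $m=k-1$: the right side is $1$, but then $m\ge2$ because $k\ge3$), Assumption \ref{assum:cumulant} follows.

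For Assumption \ref{assum:concentration}, the mean-zero and covariance conditions are immediate. For moments, $\E\|\g\|_2^k\le C\,\E\|\w\|_2^k\le C d^{k/2}\max_\alpha\E|\w[\alpha]|^k$. For the quadratic form, write $\g^*A\g-\Tr\Sigma A = \w^*B\w-\Tr B$ with $B=X^*AX$, so $\|B\|_F\le C\|A\|_F$. Then apply the standard large deviation bound for quadratic forms in independent entries with bounded moments, e.g.\ via Burkholder/high moment estimates or the paper's Lemma \ref{lem:concentration of quadratic form} via cumulant expansion. This gives $\E|\w^*B\w-\Tr B|^p\le C_p\|B\|_F^p$, and Markov's inequality concludes.

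The main step needing care is this last quadratic-form concentration. I would split it into diagonal and off-diagonal parts:
\[\sum_\alpha B_{\alpha\alpha}(\w[\alpha]^2-1)\quad\text{and}\quad\sum_{\alpha\ne\beta}B_{\alpha\beta}\w[\alpha]\w[\beta].\]
Each part is controlled by moment (Rosenthal/Marcinkiewicz–Zygmund) inequalities, with the off-diagonal part handled through decoupling. This route is standard, and the remaining steps are routine.
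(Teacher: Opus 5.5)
Your proposal is correct and follows essentially the same route as the paper: multilinearity plus independence reduces $\<\kappa_k(\g),\s_1\otimes\cdots\otimes\s_m\otimes T\>$ to a single sum over $\alpha$ weighted by the diagonal cumulants of $\w$; the $T$-factor is bounded by $\|T\|_\cU$ because $X\e_\alpha\in\cU$; Cauchy--Schwarz produces the factor $\sqrt{n}$ only when $m=1$; and concentration is transferred from $\w$ via $B=X^*AX$ with $\|B\|_F\le\|X\|_\op^2\|A\|_F$ together with a standard large-deviation bound for quadratic forms in independent entries. The only cosmetic difference is that the paper first treats $X=I$ and then substitutes $\s_t'=X^*\s_t$ and $T'=(X^*)^{\otimes(k-m)}\cdot T$, whereas you carry $X$ along in a single step.
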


\subsubsection{Conditionally mean-zero distributions}

We next consider a more general class of distributions $\g=X\w$ where the entries of $\w$ need not be independent, but satisfy a conditional mean-zero assumption that $\E \prod_{i=1}^k \w[\alpha_i]=0$ whenever one index among $\alpha_1,\ldots,\alpha_k \in [d]$ is distinct from the rest. We check that in such settings, Assumption \ref{assum:cumulant} for $\g$ is implied by Assumption \ref{assum:concentration} for $\w$.

\begin{proposition}\label{prop:sign change invariant}
Suppose Assumption \ref{assum:basic} holds, where $\Sigma=XX^*$ for a matrix
$X \in \R^{n \times d}$ and $cn \leq d \leq Cn$ for constants $C,c>0$.
Suppose $\g=X\w$ where Assumption
\ref{assum:concentration} holds for $\w \in \R^d$, $\E \w\w^*=I$, and $\E \prod_{i=1}^k \w[\alpha_i]=0$
for any indices $\alpha_1,\ldots,\alpha_k \in [d]$ such that $\alpha_1 \notin \{\alpha_2,\ldots,\alpha_k\}$.

Then $\g$ satisfies
Assumptions \ref{assum:concentration} and \ref{assum:cumulant}, where each set $\cU_k$ may again be taken as
\[\cU_k \equiv \cU=\{\e_1,\ldots,\e_n\}\cup\{X\e_1,\ldots,X\e_d\}.\]
\end{proposition}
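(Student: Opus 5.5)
We split the proof into two parts: first that $\g=X\w$ inherits Assumption \ref{assum:concentration} from $\w$, and then the cumulant bound \eqref{eq:cumulantassumption}.

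\emph{Assumption \ref{assum:concentration} for $\g$.} This part is a direct transfer.
\begin{itemize}
\item[(i)] We have $\E\g=X\E\w$. Taking $k=1$ in the hypothesis gives $\E\w[\alpha]=0$, so $\E\g=0$ and $\E\g\g^*=XX^*=\Sigma$.
\item[(ii)] Since $\|X\|_\op^2=\|\Sigma\|_\op\le C$, we get $\E\|\g\|_2^k\le C^{k/2}\E\|\w\|_2^k\le n^{C_k'}$, using $d\asymp n$.
\item[(iii)] For any $A$, write $\g^*A\g-\Tr\Sigma A=\w^*(X^*AX)\w-\Tr X^*AX$. Moreover $\|X^*AX\|_F\le\|X\|_\op^2\|A\|_F\le C\|A\|_F$, so the concentration for $\w$ transfers directly.
\end{itemize}

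\emph{Reduction of the cumulant bound to moments.} By multilinearity,
\[\<\kappa_k(\g),\s_1\otimes\cdots\otimes\s_m\otimes T\>=\<\kappa_k(\w),X^*\s_1\otimes\cdots\otimes X^*\s_m\otimes (X^*)^{\otimes(k-m)}T\>.\]
We next simplify $\kappa_k(\w)$ using the conditional mean-zero hypothesis.
\begin{itemize}
\item Expand $\kappa_k(\w[\alpha_1],\ldots,\w[\alpha_k])$ by the moment–cumulant formula as a sum over partitions $\pi$ of $[k]$ of products of mixed moments over the blocks.
\item If some index value $\alpha_j$ appears exactly once, then in every partition the block containing $j$ has a moment with a singleton index. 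That moment vanishes by hypothesis.
\item Hence $\kappa_k(\w)$ is supported on index tuples in which every value appears at least twice.
\item Also $\kappa_k(\w)$ equals $\E\prod_{j}\w[\alpha_j]$ plus products of lower moments of sub-blocks, each of the same ``paired'' type.
\end{itemize}
So it suffices to bound, for each partition $\pi$, the contraction of $\prod_{B\in\pi}\E\prod_{j\in B}\w[\alpha_j]$. Here each block moment is nonzero only when its indices are paired within the block. Note that $\<(X^*)^{\otimes(k-m)}T,\e_{\beta_1}\otimes\cdots\>=\<T,X\e_{\beta_1}\otimes\cdots\>$. This is why $\cU=\{\e_\alpha\}\cup\{X\e_\beta\}$ is the natural choice, giving entries of $(X^*)^{\otimes}T$ bounded by $\|T\|_\cU$.

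\emph{Bounding each block term.} For a fixed block $B$, the moment tensor $M_B[\alpha_B]=\E\prod_{j\in B}\w[\alpha_j]$ is supported on tuples with no singleton value. The key is to control contractions of $M_B$ against some vectors $\u_j=X^*\s_j$ (with $\|\u_j\|_2\le C\|\s_j\|_2$) and some bounded entries, using concentration for $\w$.
\begin{itemize}
\item For example, $\E(\w^*D\w)^p$ with $D$ diagonal, or $\E\<\u,\w\>^2(\cdot)$, can be bounded via the $L^p$ version of Assumption \ref{assum:concentration}, which holds by Markov's inequality plus the polynomial moment bound.
\item Concretely, we enumerate the pairing pattern on $B$, i.e. the equivalence classes of equal indices, each of size at least $2$. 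Each free summation index ranges over $d\lesssim n$ values.
\item Let $k$ be the number of slots (across all blocks) with $m$ of them carrying the vectors $\s_t$, so the $T$-slots number $k-m$.
\item Each equivalence class containing a slot with a vector $\u_t$ costs at most $\|\u_t\|_2$ by Cauchy–Schwarz (two $\u$'s pair to give $\|\u\|\|\u'\|$; one $\u$ with $T$-slots gives $\sqrt n\cdot\|\u\|$ at worst).
\item Each class containing only $T$-slots costs a factor $n$ from its summation index. Since every class has size at least $2$, the number of such classes is at most $(k-m)/2$.
\end{itemize}

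\emph{Counting and the main obstacle.} Counting the powers of $n$ from the previous step, the total is at most $n^{(k-m)/2}\cdot$(a possible $\sqrt n$ saving or loss), and we must show it is at most $\sqrt n^{\,k-m-1}$. This power counting is the main obstacle. Naive counting gives $\sqrt n^{k-m}$ only when all $T$-slots pair among themselves with no vector slot attached to them. But each vector slot must share its class with another slot, so a class with a vector slot either consumes another vector (fine, since no $n$ is lost) or consumes a $T$-slot. In the latter case we lose one $T$-slot and gain at most $\sqrt n$ via $\sum_\alpha|\u[\alpha]|\le\sqrt n\|\u\|$, which nets a saving of $\sqrt n$ relative to $\sqrt n^{k-m}$.

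The remaining danger is when all vector slots pair among themselves. Then the term factorizes into a product over disconnected groups, and cumulants (as opposed to moments) kill exactly such disconnected contributions. I would therefore organize the bound at the cumulant level: $\kappa_k$ is the connected part, in the sense that the cumulant of a family splitting into two mutually uncorrelated-at-all-orders pieces vanishes. I expect to replace this with a direct argument: take the moment-tensor bounds, where higher moments of $\w$ are entrywise $n^{\eps}$-bounded and products of many $\w$'s with coinciding indices concentrate by Assumption \ref{assum:concentration}. Then show that in the surviving cumulant terms at least one class links a vector slot to a $T$-slot, yielding the required $n^{-1/2}$ gain and hence \eqref{eq:cumulantassumption}.
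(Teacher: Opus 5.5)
The transfer of Assumption~\ref{assum:concentration} from $\w$ to $\g=X\w$ is correct and agrees with the paper. So are the multilinear reduction to $\kappa_k(\w)$ with $\cU=\{\e_\alpha\}\cup\{X\e_\beta\}$ and the observation that $\kappa_k(\w)$ vanishes on index tuples in which some value appears once.

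The gap is in the power counting, where two of your claims fail.

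\emph{First claim.} You say a class linking a vector slot to a $T$-slot saves a factor $\sqrt n$. A class made of exactly one vector slot and one $T$-slot yields no saving in your accounting. For $k=4$, $m=2$ and the pattern $\alpha_1=\alpha_3\neq\alpha_2=\alpha_4$, entrywise bounds give $\sqrt n\|\u_1\|_2\cdot\sqrt n\|\u_2\|_2\cdot\|T\|_\infty=\sqrt{n}^{\,k-m}$.

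\emph{Second claim.} You say cumulants kill the patterns in which the vector slots pair only among themselves. This is false. Vanishing of a mixed cumulant requires \emph{independence} of the two groups, and here $\w[\beta]^2,\w[\gamma]^2$ are typically correlated: for $\beta\neq\gamma$,
\[\kappa_4(\w[\beta],\w[\beta],\w[\gamma],\w[\gamma])=\E[\w[\beta]^2\w[\gamma]^2]-1.\]
For a concrete instance, take $X=I$ and let $r$ be bounded with $\E r^2=1$ and $\Var(r^2)>0$. Let $s_1,\ldots,s_n$ be i.i.d.\ signs independent of $r$, and set $\w[1]=s_1r$ and $\w[\gamma]=s_\gamma(1+n^{-1/2}(r^2-1))^{1/2}$ for $\gamma\geq2$.
\begin{itemize}
\item This $\w$ is sign-invariant and isotropic.
\item It satisfies Assumption~\ref{assum:concentration}: the diagonal part of $\w^*A\w-\Tr A$ is $O(|A[1,1]|+n^{-1/2}|\Tr A|)$, and the rest is a Rademacher chaos conditional on $r$.
\item Yet $\kappa_4(\w[1],\w[1],\w[\gamma],\w[\gamma])=n^{-1/2}\Var(r^2)$ for all $\gamma\geq2$.
\end{itemize}
Hence $\<\kappa_4(\w),\e_1\otimes\e_1\otimes I_n\>\asymp\sqrt n$, carried (up to an $O(1)$ term) by the pattern $\alpha_1=\alpha_2\neq\alpha_3=\alpha_4$. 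These terms are not zero; they saturate the target $\sqrt{n}^{\,k-m-1}$.

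Bounding the moment products $\prod_{B\in\pi}\E\prod_{j\in B}\w[\alpha_j]$ one at a time is likewise hopeless. For example, $\E[\w[\alpha_1]\w[\alpha_2]]\,\E[\w[\alpha_3]\w[\alpha_4]]$ alone contracts to $\<\u_1,\u_2\>\Tr T$, which is of order $n$.

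\textbf{The missing ingredient} is an $\ell^2$ bound for cumulant slices along a repeated index (the paper's Lemma~\ref{lem:l2 norm of cumulant}):
\[\sum_{\beta=1}^d\kappa_{k+2}(\w[\beta],\w[\beta],\w[\alpha_1],\ldots,\w[\alpha_k])^2\prec1\quad\text{uniformly over }\alpha_1,\ldots,\alpha_k\in[d].\]
This is the precise form your remark about diagonal quadratic forms needs to take. It is proved as follows.
\begin{itemize}
\item For $\beta\notin\{\alpha_1,\ldots,\alpha_k\}$, the conditional mean-zero hypothesis forces the two $\beta$-slots into a common block of the moment--cumulant expansion.
\item So the entry equals $\kappa_{k+1}(\w[\alpha_1],\ldots,\w[\alpha_k],\w[\beta]^2-1)=\E[u\,(\w[\beta]^2-1)]$, for a random variable $u$ with $|u|\prec1$ that does not depend on $\beta$.
\item Then $\sum_\beta(\E[u(\w[\beta]^2-1)])^2=\sup_{\|\v\|_2=1}(\E[u\sum_\beta\v[\beta](\w[\beta]^2-1)])^2\prec1$, by Cauchy--Schwarz and Assumption~\ref{assum:concentration} for $\w$ with $A=\diag(\v)$.
\end{itemize}

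\textbf{How it closes the counting.} Let $K_0,K_1$ be the numbers of index classes containing zero, respectively one, vector slot. The entrywise bound $|\kappa_k|\prec1$ already handles patterns with $2K_0+K_1<k-m$. In the extremal case $2K_0+K_1=k-m$ there are two possibilities.
\begin{itemize}
\item Some class consists of exactly two $T$-slots. Its index sum then costs $\sqrt n(\sum_\beta\kappa^2)^{1/2}\prec\sqrt n$ instead of $n$.
\item Otherwise $K_1=k-m\geq1$, so some class consists of one vector slot and one $T$-slot. There $\sum_\beta|\kappa|\,|\u_1[\beta]|\leq\|\u_1\|_2(\sum_\beta\kappa^2)^{1/2}$ costs $\|\u_1\|_2$ instead of $\sqrt n\|\u_1\|_2$.
\end{itemize}
Either way one factor of $\sqrt n$ is saved, which gives \eqref{eq:cumulantassumption}.
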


\begin{example}
Suppose that $\w$ is a uniformly random vector on the sphere $\{\w \in \R^d:\|\w\|_2^2=d\}$. It is clear that all conditions of Proposition \ref{prop:sign change invariant} hold for $\w$.
\end{example}

\begin{example}
Suppose $\w$ is an isotropic ``unconditionally'' log-concave random vector, i.e.\ the log-density $\log p(\w) \in [-\infty,\infty)$ is a concave function on $\R^d$, $\E\w\w^*=I$,
and $\w$ is equal in law to $\w \odot \s$
(the entrywise product) for any sign vector $\s \in \{\pm 1\}^d$. The random Gram matrix
\[\widetilde K
=\frac{1}{N}G^*G,
\qquad G=[\w_1,\ldots,\w_N]\]
where $\w_1,\ldots,\w_N$ are i.i.d.\ copies of $\w$ was studied recently in
\cite{bao2025extreme}, who built upon an entrywise local law for $\widetilde K$ in \cite{pillai2014universality} to directly show Tracy-Widom fluctuations of its leading eigenvalue.

For such distributions, Assumption \ref{assum:concentration} for $\w$ follows from a log-concave isoperimetric inequality, as was shown in \cite[Lemma 3.3]{bao2025extreme}. Then it is readily checked that all conditions of Proposition \ref{prop:sign change invariant} hold for $\w$, so our results imply that an isotropic local law holds for $G=[\w_1,\ldots,\w_N]$, and also that an anisotropic local law holds for
$G=[\g_1,\ldots,\g_N]$ where $\g_i=X\w_i$.
\end{example}

\begin{example}\label{ex:mixture model}
Suppose $\w$ follows a mixture model where there exists a random variable $\lambda$ such that $\w=(\w[\alpha])_{\alpha=1}^d$ has i.i.d.\ entries conditional on $\lambda$, and for some constants $C_k>0$,
\begin{equation}\label{eq:mixtureconditionbasic}
\begin{gathered}
\E[\w[\alpha] \mid \lambda]=0,
\;\;\E[|\w[\alpha]|^k \mid \lambda] \leq C_k \quad \text{ for all } k \geq 2 \text{ and } \lambda \in \Lambda,\\
\E[\E[\w[\alpha]^2 \mid \lambda]]=1.
\end{gathered}
\end{equation}
Thus $\w$ has a mean-zero, isotropic, and exchangeable law, and the conditional law $(\w \mid \lambda)$ satisfies Assumption \ref{assum:concentration} uniformly over $\lambda \in \Lambda$. If, in addition,
\begin{equation}\label{eq:mixturecondition}
|\E[\w[\alpha]^2 \mid \lambda]-1| \prec 1/\sqrt{n},
\end{equation}
then all conditions of Proposition \ref{prop:sign change invariant} hold for $\w$. (See Section~\ref{sec:analysis-examples} for a verification of this and the remaining claims in this example.)

In general, the condition \eqref{eq:mixturecondition} is required for the local law estimates of Theorems \ref{thm:entrywise law} and \ref{thm:anisotropic law} to hold for such a model: Consider for example
\begin{equation}\label{eq:mixtureexample}
\P[\lambda=1]=\P[\lambda=-1]=1/2, 
\quad
(\w \mid \lambda) \sim N(0,(1+c_n\lambda)I) \text{ for some } c_n>0.
\end{equation}
Then
$|\E[\w[\alpha]^2 \mid \lambda]-1|=c_n$, so \eqref{eq:mixturecondition} requires $c_n \prec n^{-1/2}$. Note that if $c_n \equiv c$ is a fixed constant, then even the convergence in probability of \eqref{eq:intro:quadraticform} fails; in this setting, the standard Marchenko-Pastur law $\mu_0$ (corresponding to $\Sigma=I$) is not the correct deterministic equivalent for the spectral distribution of $K=N^{-1}\sum_{i=1}^N \w_i\w_i^*$, and one must instead view $K$ as a Gram matrix and take the ``companion" law of the deformed Marchenko-Pastur law that results from conditioning on the corresponding latent variables $\lambda_1,\ldots,\lambda_N$:
\[
    K=N^{-1}\widetilde{W}^*\diag(1+c\lambda_1,...,1+c\lambda_N)\widetilde{W},\quad \widetilde{W}_{ij}\mathop\sim^{\text{i.i.d.}} N(0,1).
\]
If $n^{-1/2} \ll c_n \ll 1$, then $\mu_0$ provides a global approximation for the spectral distribution of $K$, but Assumption \ref{assum:concentration} does not hold, and an averaged local law also does not hold in the quantitative form of Theorem \ref{thm:entrywise law}.

For this model \eqref{eq:mixtureexample}, one may also directly check that Assumption \ref{assum:cumulant} holds under the weaker condition $c_n \prec n^{-1/4}$. Thus for $n^{-1/2} \ll c_n \ll n^{-1/4}$, Assumption \ref{assum:cumulant} holds while Assumption \ref{assum:concentration} does not,
illustrating that Assumption \ref{assum:cumulant} is not strictly stronger than Assumption \ref{assum:concentration}.
\end{example}

\subsubsection{Random features models}
Consider the model $\g=\sigma(X\w)$ where $\w \in \R^d$
has independent entries and $\sigma(\cdot)$ is an entrywise nonlinear map.
This model has been of considerable interest in the statistical learning
literature on random features regression models
\cite{louart2018random,hastie2022surprises,mei2022generalization} and linear
approximants for neural networks \cite{pennington2017nonlinear,fan2020spectra,benigni2021eigenvalue}. The following proposition verifies
that Assumptions \ref{assum:concentration} and \ref{assum:cumulant} both hold
in a linear scaling regime $n \asymp d$ under suitable conditions for
$\sigma(\cdot)$.  We note that we allow the activation function to be different on each entry.

\begin{proposition}\label{prop:RF}
Let $X \in \R^{n \times d}$ be a deterministic matrix,
where $\|X\|_\op \leq C$ and $d \leq Cn$ for a constant $C>0$.
Let $\w \in \R^d$ be a random vector with independent entries, and let
\[\g=\sigma(X\w) \in \R^n\]
where $\sigma:\R^n \to \R^n$ is given by $\sigma(\x)=(\sigma_i(\x[i]))_{i=1}^n$
for scalar functions $\sigma_1,\ldots,\sigma_n:\R \to \R$ applied entrywise.
Suppose $\E\sigma(X\w)=0$ and $\E\sigma(X\w)\sigma(X\w)^*=\Sigma$, where
$\Sigma \in \R^{n \times n}$ satisfies Assumption~\ref{assum:basic}.

\begin{enumerate}[label=(\alph*)]
\item Suppose the entries of $\w=(\w[\alpha])_{\alpha=1}^d$ 
satisfy $\E \w[\alpha]=0$,
$\E \w[\alpha]^2=1$, and $\E|\w[\alpha]|^k \leq C_k$ for each $k \geq 3$ and a
constant $C_k>0$. If, for some constants $C,D>0$, each function
\[\sigma_i(x)=\sum_{l=0}^D a_{il}x^l\]
is a polynomial with degree at most $D$ and coefficients satisfying
$\max_{i=1}^n\max_{l=0}^D |a_{il}|<C$, then $\g$ satisfies
Assumptions \ref{assum:concentration} and \ref{assum:cumulant}.
\item Suppose that $\w \sim N(0,I_d)$ is a standard Gaussian vector.
If, for some constants $C>0$ and $\beta>1/2$, each function $\sigma_i$ admits a
(absolutely convergent) series representation
\[\sigma_i(x)=\sum_{l=0}^\infty a_{il}x^l\]
where $\max_{i=1}^n |a_{il}|<C(l!)^{-\beta}$, then $\g$ also
satisfies Assumptions \ref{assum:concentration} and \ref{assum:cumulant}.
\end{enumerate}
\end{proposition}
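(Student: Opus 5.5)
We need to verify Assumptions~\ref{assum:concentration} and~\ref{assum:cumulant} for $\g=\sigma(X\w)$. In case (a) each $\sigma_i$ is a polynomial, and in case (b) $\w$ is Gaussian with analytic $\sigma_i$. The plan is to write each coordinate of $\g$ as a finite (or rapidly convergent) sum of multilinear polynomials in the independent entries of $\w$, and to reduce both assumptions to estimates for such polynomial chaoses.

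\textbf{Expansion.} Write $\x_i\in\R^d$ for the $i$-th row of $X$, so $\g[i]=\sum_l a_{il}\<\x_i,\w\>^l$. Expanding the powers, $\g$ is a sum over degrees $l\le D$ of tensors $\x_i^{\otimes l}$ contracted against $\w^{\otimes l}$. The entrywise $\ell_\infty$ scale is controlled because $\|\x_i\|_2\le\|X\|_\op\le C$.

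\textbf{Concentration (Assumption~\ref{assum:concentration}).} The quadratic form $\g^*A\g-\Tr\Sigma A$ is a centered polynomial of degree $\le 2D$ in $\w$. I would bound its $L^p$ norms by hypercontractivity in the Gaussian case, and by moment bounds for polynomial chaos with independent entries having all moments in case (a). The required variance bound is $\Var(\g^*A\g)\lesssim\|A\|_F^2$. Expanding in Hermite polynomials (Gaussian case) or in Hoeffding/multilinear components (case (a)), the variance becomes a sum of terms of the form $\sum_{i,j,i',j'}A_{ij}A_{i'j'}\cdot(\text{products of }\<\x_i,\x_{i'}\>\text{ etc.})$. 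Each such term is a trace of products of $A$, $A^*$ and Hadamard powers $(XX^*)^{\circ r}$. Since $\|(XX^*)^{\circ r}\|_\op\le \max_i\|\x_i\|^{2(r-1)}\|XX^*\|_\op\le C$ by the Schur product theorem, every term is bounded by $C\|A\|_F^2$. The exception is the rank-type contributions from the constant (degree-0) components, and these are exactly cancelled by the centering $\Tr\Sigma A$ together with $\E\g=0$. Higher moments follow from the same hypercontractive or chaos argument, and Markov's inequality then gives the tail bound.

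\textbf{Cumulants (Assumption~\ref{assum:cumulant}).} I take $\cU_k=\{\e_\alpha\}$ together with the vectors needed below, but the natural choice is just the basis vectors plus possibly columns of $X$. By multilinearity, $\kappa_k(\g[i_1],\dots,\g[i_k])$ is a sum over choices of degrees $l_1,\dots,l_k$ of joint cumulants of the monomials $\<\x_{i_t},\w\>^{l_t}$. Using the Leonov--Shiryaev formula for cumulants of products, each such joint cumulant is a sum over \emph{connected} diagrams (in the Gaussian case, connected Wick pairings) on the multiset of $\sum_t l_t$ factors. Each edge of a diagram contributes an inner product $\<\x_{i_s},\x_{i_t}\>=(XX^*)_{i_si_t}$; for non-Gaussian $\w$ a higher-order vertex contributes $\sum_\alpha\prod X_{i_s\alpha}$. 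Connectedness across the $k$ blocks means that each diagram contains a spanning tree on $k$ vertices, that is, at least $k-1$ edges linking distinct indices. The contraction $\<\kappa_k,\s_1\otimes\cdots\otimes\s_m\otimes T\>$ then becomes a tensor network. I would bound it by choosing a spanning tree and bounding the remaining (cycle) edges entrywise by $C$. The $\s$-legs are then handled by operator norm bounds on $XX^*$ and its Hadamard powers, which costs $\prod\|\s_t\|_2$. The $T$-legs contribute $\|T\|_\infty$ times at most $\sqrt n$ per free summation beyond the tree structure. Counting gives at most $k-m-1$ free $\sqrt n$ factors, since at least one tree edge connects the $T$-part to an $\s$-leg and is absorbed via Cauchy--Schwarz. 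The main obstacle is this counting. Trees whose $T$-indices form leaves attached to one another need care: a star centred in $T$ with leaves $\s_t$ requires the bound $\sum_j|T_{j\cdots}|\prod\|\cdot\|$, which may cost $\sqrt n$ per summed index. The fix I would try is to root the tree at an $\s$-vertex and sum leaves inward, using $\sum_{j}|(XX^*)^{\circ r}_{ij}|^2\le C$ so that each non-root $T$-index costs $\sqrt n$ at most once.

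\textbf{Case (b) convergence.} In case (b) the series is infinite. Terms of degree $l$ carry Gaussian moment factors of order $(l!)^{1/2}C^l$ and coefficients $|a_{il}|\le C(l!)^{-\beta}$. Since $\beta>1/2$, the resulting sum over $l_1,\dots,l_k$ and over diagrams converges with constants depending only on $k$.
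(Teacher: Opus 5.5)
Your overall framework matches the paper's Lemma~\ref{lem:graph of random feature}: expand $\kappa_k(\mathbf{g})$ by multilinearity and Leonov--Shiryaev into connected diagrams, then sum over degrees using $\beta>1/2$. The gap is in the step that carries the content of Assumption~\ref{assum:cumulant}, namely the gain from $\sqrt n^{k-m}$ to $\sqrt n^{k-m-1}$. Your counting ends in a bound $\sqrt n^{k-m-1}\|T\|_\infty\prod_t\|\mathbf{s}_t\|_2$, and no such bound holds.

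Take $\mathbf{w}\sim N(0,I_n)$ and $X=M^{1/2}$, where $M=I+\mathbf{u}\mathbf{e}_1^*+\mathbf{e}_1\mathbf{u}^*$ and $\mathbf{u}=\frac{1}{2\sqrt{n-1}}\sum_{b\ge 2}\mathbf{e}_b$, so $\frac12 I\preceq M\preceq\frac32 I$. Let $\sigma_i(x)=x+x^2-1$. Then $\Sigma=M+2M\circ M$, and the hypotheses of both (a) and (b) hold. For Gaussian $X\mathbf{w}$,
\[\kappa_3(\mathbf{g})[a,b,c]=2(M_{ab}M_{ac}+M_{ba}M_{bc}+M_{ca}M_{cb})+8M_{ab}M_{bc}M_{ca}.\]
Set $m=1$, $\mathbf{s}_1=\mathbf{e}_1$ and $T[b,c]=\mathbf{1}\{b,c\ge 2\}$. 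Every summand is nonnegative, and those with $b\ne c$ equal $2M_{1b}M_{1c}=\frac{1}{2(n-1)}$. Hence $\langle\kappa_3(\mathbf{g}),\mathbf{e}_1\otimes T\rangle\ge (n-2)/2$, while $\|T\|_\infty=\|\mathbf{e}_1\|_2=1$.

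This is exactly the star you flagged: an $\mathbf{s}$-vertex with two $T$-leaves. Rooting at the $\mathbf{s}$-vertex and applying Cauchy--Schwarz still charges $\sqrt n$ per $T$-index, so it only reproduces the weak exponent $k-m$. The example is consistent with the proposition only because $\|T\|_{\mathcal U}\gtrsim n$ once $XX^*\mathbf{e}_1\in\mathcal U$, since $\langle T,XX^*\mathbf{e}_1\otimes XX^*\mathbf{e}_1\rangle=(n-1)/4$.

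Separately, bounding non-tree edges entrywise by $C$ forces absolute values on every factor. After that, operator-norm bounds on the tree edges are unavailable, since $\|\,|XX^*|\,\|_{\mathrm{op}}$ can be of order $\sqrt n$. For the cycle term $8M_{ab}M_{bc}M_{ca}$ with delocalized $M$, this gives $n^{3/2}$, worse than the weak bound.

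What is missing is the mechanism the paper uses, which has four parts.
\begin{itemize}
\item The $\sqrt n$ is recovered by contracting $T$ against structured vectors. The paper takes $\mathcal U(l)=\{M'\mathbf{e}_i:M'\in\mathcal M(l)\}$, where $\mathcal M(l)$ is generated by products, Hadamard products and transposes from $\bar X$, $\bar X^*$, the diagonal matrices $D_k$ (cumulant-weighted Hadamard powers of $X$), and $K_k$.
\item Each diagram is reduced (Lemma~\ref{lem:contract network into degree 3}) to a simple network whose internal vertices have degree $\ge3$. Degree-two chains ending at $t$ are absorbed into the label of $t$, so that $\|f(t)\|_\infty\le\|T\|_{\mathcal U(l)}$.
\item Networks are bounded with operator norms on \emph{all} edges via bipolar orientations (Lemmas~\ref{lem:bounds by bipolar orientation} and~\ref{lem:source-sink network}).
\item The extra $n^{-1/2}$ comes from a case analysis. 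Either $t$ has a repeated neighbour; or $k-m=1$, where one splits $s$; or $k-m\ge2$, where one finds a tree with two good leaves (Lemma~\ref{lem:good vertices}) and applies Cauchy--Schwarz over a bipartition of the edges.
\end{itemize}
In case (b), $\mathcal U(l)$ grows with $l$, so one must truncate at $L\asymp\log n$ to keep $|\mathcal U|\le n^C$ and use the $l^{(1/2-\beta)l}$ decay for the tail. Your convergence remark does not address this.

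Your route to Assumption~\ref{assum:concentration} also needs repair. $\mathrm{Cov}(\mathbf{g}_i\mathbf{g}_j,\mathbf{g}_{i'}\mathbf{g}_{j'})$ produces general four-vertex networks, not traces of products. In case (a), non-multilinear polynomials of variables with merely bounded moments are not uniformly hypercontractive, so you cannot pass from variance to higher moments that way. The paper instead proves $\|\kappa_j(\mathbf{g})\|_{\matop}\le C$ for all $j$ (Lemma~\ref{lem:bound i-to-j norm}). It then bounds $\E(\mathbf{g}^*A\mathbf{g}-\E\mathbf{g}^*A\mathbf{g})^{2k}$ directly (Lemma~\ref{lem:concentration of quadratic form}).
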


Condition (b) is equivalent to the condition that $\sigma_i(x)$ extends to an
entire function on $\C$ satisfying the growth $|\sigma_i(z)|\leq
C\exp(c|z|^{1/\beta})$ for some constants $C,c>0$ and $\beta>1/2$.
In particular, for $\beta=1$, this is the class of entire functions of
exponential type (i.e.\ having Fourier transform with compact support
\cite[Theorem 19.3]{rudin1987real}),
which has also been considered recently in a slightly different context of random inner-product kernel
matrices \cite{kogan2024extremal,lu2025equivalence}.

\subsubsection{Random features tilt}
A related class of examples arises when the random features structure appears not in the random vector itself but in its density. Consider a random vector $\w \in \R^n$ with density proportional to $\exp(-U(\w))$, where
\begin{equation}\label{eq:tiltpotential}
U(\w) = \frac{1}{2}\|\w\|_2^2 + \lambda\sum_{i=1}^d \sigma_i(\x_i^*\w)
\end{equation}
for deterministic directions $\x_1,\ldots,\x_d \in \R^n$, smooth nonlinearities $\sigma_i$, and a coupling constant $\lambda$. The following proposition establishes that for $|\lambda|$ sufficiently small, $d$ at most polynomial in $n$, and under a boundedness condition for the derivatives of $\sigma_i(\cdot)$, the centered vector $\g=\w-\E\w$ satisfies both Assumptions~\ref{assum:concentration} and \ref{assum:cumulant}.

\begin{proposition}\label{prop:spiked cumulant}
Let $X\in\R^{d\times n}$ be a deterministic matrix with rows $\{\x_i\}_{i=1}^d$
such that $\|X\|_\op\leq C$ and $d\leq n^C$ for a constant $C>0$. Suppose
$\w \in \R^n$ is a random vector having density proportional to $\exp(-U(\w))$,
where $U(\w)$ is given by \eqref{eq:tiltpotential}, and $\sigma_1,\ldots,\sigma_d:\R\rightarrow\R$ are smooth functions
satisfying
\[\sup_{\a\in\R^d}\|(\sigma_i^{(k)}(\a[i]))_{i=1}^d\|_2 \leq C^k \text{ for all } k
\geq 1\]
(where $\sigma_i^{(k)}$ is the $k^\text{th}$-order derivative). Let
\[\g=\w-\E \w, \qquad \Sigma=\E \g\g^*.\]
Then there exists a constant $\lambda_*>0$ such that whenever
$|\lambda|<\lambda_*$, Assumptions
\ref{assum:basic}, \ref{assum:concentration}, and \ref{assum:cumulant} hold for
$\Sigma$ and $\g$. Each set $\cU_k$ of Assumption \ref{assum:cumulant} may be
taken as
\[\cU_k\equiv\cU=\{\e_1,\ldots,\e_n\}\cup\{\x_1,\ldots,\x_d\},\]
and the bound \eqref{eq:cumulantassumption} holds in the stronger form
(for any $k \geq 3$, $1 \leq m \leq k-1$, and a constant $C \equiv C(k)>0$)
\[\<\kappa_k(\g),\s_1\otimes\cdots\otimes\s_m\otimes T\>\leq C
\left(\prod_{i=1}^m\|\s_i\|_2\right)\|T\|_\cU.\]
\end{proposition}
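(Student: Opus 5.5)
The plan is to treat $U$ as a small perturbation of the standard Gaussian potential and obtain all three assumptions from uniform convexity together with a cluster (cumulant) expansion in $\lambda$.

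\textbf{Step 1: Assumptions \ref{assum:basic} and \ref{assum:concentration}.} The Hessian is
\[
\nabla^2 U=I+\lambda X^*\diag(\sigma_i''(\x_i^*\w))X .
\]
Since $\|X\|_\op\le C$ and $\sup\|(\sigma_i'')_i\|_2\le C^2$ bounds $\sup_i|\sigma_i''|$, we get $\nabla^2U\succeq (1-C'|\lambda|)I\succeq \frac12 I$ once $|\lambda|<\lambda_*$. Hence the law of $\w$ is $\frac12$-strongly log-concave. Bakry--Émery then gives a log-Sobolev inequality with a dimension-free constant, and the Brascamp--Lieb inequality bounds the covariance: $\Sigma\preceq 2I$. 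A matching lower bound $\Sigma\succeq cI$ follows from $\nabla^2U\preceq 2I$ via the reverse (Cramér--Rao type) inequality $\mathrm{Cov}\succeq(\E\nabla^2U)^{-1}$. This gives Assumption \ref{assum:basic}, with all eigenvalues of $\Sigma$ in $[c,C]$. Moment bounds on $\|\g\|_2$ follow from Gaussian concentration of the Lipschitz function $\|\g\|_2$. Concentration of $\g^*A\g$ at scale $\|A\|_F$ follows from the log-Sobolev Hanson--Wright inequality of \cite{adamczak2015log}; this applies because $\g$ is centered and its covariance is bounded.

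\textbf{Step 2: cumulant structure.} Write $\sigma_i^{(k)}$ for the $i$-th derivative sequence. I will show a representation
\[
\kappa_k(\w)=\sum_{\text{trees/graphs}}\lambda^{|V|}\,\big(\text{contractions of } \x_{i}^{\otimes a}\text{ weighted by }\E\text{-averaged }\sigma_i^{(a)}\text{ and covariance factors}\big).
\]
Two routes to this are available. The first is to differentiate the cumulant generating function $\log\E e^{\<\t,\w\>}$ and apply integration by parts, i.e.\ the Stein identity
\[
\E[\w F]=\E[\nabla F]-\lambda\,\E\Big[F\sum_i \sigma_i'(\x_i^*\w)\x_i\Big].
\]
The second, cleaner route is to use the Helffer--Sjöstrand representation: cumulants of order $k\ge3$ are obtained by iterating $\mathrm{Cov}(F,G)=\E\<\nabla F,(L+\nabla^2U)^{-1}\nabla G\>$. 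Differentiating $\nabla^2U$ produces third derivatives $\nabla^3U=\lambda\sum_i\sigma_i'''(\x_i^*\w)\x_i^{\otimes3}$. Applying this inductively, every $\kappa_k$ with $k\ge3$ is an expectation of sums of contractions in which each external leg carries a bounded operator $M$ (a Helffer--Sjöstrand resolvent, $\|M\|_\op\le2$), and internal vertices are tensors of the form $\lambda\sum_i c_i(\w)\x_i^{\otimes a}$ with $\|(c_i)\|_2\le C^a$.

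\textbf{Step 3: bounding the pairing.} Pair such a term with $\s_1\otimes\cdots\otimes\s_m\otimes T$. The legs contracted with $\s_t$ give factors $\x_i^*M\s_t$, bounded by $C\|\s_t\|_2$ uniformly. The legs contracted with $T$ give expressions $\<T,\x_{i_1}'\otimes\cdots\>$, where $\x'$ denotes $M\x_i$ or $M\e_\alpha$.

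The main obstacle is exactly here. $M$ is random and not in $\cU$, so $\|T\|_\cU$ does not directly control these legs. I would try to avoid the issue by expanding $M$ itself: the Helffer--Sjöstrand operator is a Neumann-type series $\sum_j(\lambda X^*D_jX)^j$ around the Gaussian covariance. Each resolvent leg therefore decomposes into $\e_\alpha$ or $\x_i$ vectors joined by scalar weights, with geometric convergence in $|\lambda|$. After this, every leg into $T$ is a vector of $\cU$. The summation over indices $i$ is then controlled by Cauchy--Schwarz in the weight vectors $(c_i)$, using $\|X\|_\op\le C$, i.e.\ $\sum_i(\x_i^*\s)^2\le C\|\s\|^2$, together with $\|(c_i)\|_2\le C^a$. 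This yields $C\prod\|\s_t\|_2\|T\|_\cU$ with no $\sqrt n$ loss.

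Since $\g=\w-\E\w$ has the same cumulants for $k\ge3$, this gives the stronger form of the bound, and a fortiori \eqref{eq:cumulantassumption}. The polynomial bound $|\cU|\le n+d\le n^C$ and the bound $\|\x_i\|_2\le\|X\|_\op$ complete the proof.
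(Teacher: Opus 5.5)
Your Step 1 is essentially the paper's argument, and it is correct: Bakry--\'Emery plus the log-Sobolev Hanson--Wright bound give concentration, Brascamp--Lieb gives $\Sigma\preceq 2I$, and $\Sigma\succeq(\E\nabla^2U)^{-1}$ gives the lower bound. There is, however, a genuine gap in Steps 2--3, which is where the entire content of the cumulant bound (no $\sqrt n$ loss against $\|T\|_\cU$) sits.

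The Helffer--Sj\"ostrand operator is $(L+\nabla^2U)^{-1}$ with $L=-\Delta+\nabla U\cdot\nabla$. It acts on vector fields in $L^2(\mu;\R^n)$ and is not a pointwise random matrix $M(\w)$, so expressions such as $\x_i^*M\s_t$ or ``$T$ contracted with $M\x_i$'' are not defined. Its expansion is also not $\sum_j(\lambda X^*D_jX)^j$. That series inverts only $\nabla^2U=I+\lambda X^*D(\w)X$ and drops $L$. But $L$ does not commute with multiplication by $D(\w)=\diag(\sigma''(X\w))$, and it depends on $\lambda$ itself through the drift $\lambda X^*\sigma'(X\w)\cdot\nabla$. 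Already at $\lambda=0$, $(L_0+I)^{-1}$ is not the identity on non-constant fields: it maps $\e_\beta w_\gamma$ to $\tfrac12\e_\beta w_\gamma$.

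In addition, every expectation in the HS formula is under the tilted law $\mu$, and iterating the formula $k$ times produces nested resolvents and $\w$-derivatives of them. You give no bookkeeping showing three things:
\begin{itemize}
\item that each resulting term has all of its $T$-legs equal to $\e_\alpha$ or some $\x_i$;
\item that the remainder of each term is bounded in Frobenius norm, which is exactly what Cauchy--Schwarz against $\ell^2$ weight vectors requires (summing $|c_i|$ over $d\le n^C$ indices would lose polynomial factors);
\item that the resulting series converges with constants independent of $n$.
\end{itemize}
``Geometric convergence in $|\lambda|$'' is asserted, not derived. A naive expansion of $\log Z$ in $\lambda$ sums over all connected graphs and is not obviously uniformly convergent.

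The paper avoids all of this by never leaving the Gaussian reference measure. It expands $\log\E_{\w\sim N(0,I_n)}\exp(\<\t,\w\>-\lambda\sum_i\sigma_i(\x_i^*\w))$ using the BKAR forest formula together with Gaussian interpolation, $\partial_{y_{ij}}\E_Y\Phi=\E_Y\<\nabla_{\w_i},\nabla_{\w_j}\>\Phi$. The order-$l$ term is then $\frac1{l!}\sum_{G\in\cT_l}$ of explicit tree tensor networks, with the following labels:
\begin{itemize}
\item vertex labels $\diag(-\lambda\sigma^{(\deg v)}(X\w_i))$, whose $\ell^2$-norm is pointwise at most $|\lambda|C^{\deg v}$;
\item edge labels $XX^*$ or $I_d$;
\item leaves $X\t$ or $-\lambda\sigma'(X\w_i)$.
\end{itemize}
Differentiating in $\t$ turns the $T$-legs into $X^{\otimes k-m}\cdot T$, whose $\ell_\infty$ norm is at most $\|T\|_\cU$. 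Cauchy--Schwarz together with the bipolar-orientation lemma then bounds each network deterministically by $C^{l}|\lambda|^{l-k}\prod_i\|\s_i\|_2\|T\|_\cU$. Cayley's formula $|\cT_l|=l^{l-2}$ against $1/l!$ gives convergence for small $|\lambda|$.

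To repair your route, you would need an analogue of this: an explicit, uniformly convergent expansion whose terms are pointwise-bounded tensor networks, not a matrix Neumann series for the HS operator.
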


This model is of particular interest when $d$ is bounded and $\|\x_i\|_2 = 1$: the higher-order cumulant tensors of $\g$ then contain spikes along the directions $\{\x_i\}_{i=1}^d$.
Moreover this higher cumulant structure persists even in the 
whitened vector $\tilde{\g} = \Sigma^{-1/2}\g$ which has identity covariance and satisfies both Assumption \ref{assum:concentration} and \ref{assum:cumulant}. Our local law implies that the resolvent of the sample covariance of i.i.d.\ copies of $\tilde\g$ is well-approximated by the Marchenko-Pastur deterministic equivalent, showing that even refined local statistics of the eigenvalues and eigenvectors of the sample covariance matrix alone cannot recover the directions $\x_1,\ldots,\x_d$, in the regime of sample size $N$ proportional to the dimension $n$. This style of ``spiked cumulant'' model and the associated hidden-direction recovery task is closely related to sample-complexity and computational lower-bound results for non-Gaussian component analysis and higher-order correlation models \cite{wang2019onlineica,tan2018ngca,diakonikolas2023sq,diakonikolas2024sosngca,szekely2024higherorder}. 

\subsection{Negative examples}\label{sec:negexample}

For illustration, we discuss also some examples which do not satisfy the hypotheses of Assumptions \ref{assum:concentration} and \ref{assum:cumulant}, to illustrate the necessity of the assumptions and/or limitations of our current techniques.

\subsubsection{Chaos vectors}
Let $M,d$ be integers with $1 \leq M \leq d$.
For a vector $\x = (\x[1], \ldots, \x[d]) \in \R^d$, of i.i.d.\ standard normal random variables, define a vector
\[
\g[i,j] = \x[i]\x[j] \quad \text{for } 1 \leq i < j \leq \min\{i + M, d\}.
\]
(The case of $M=d$ corresponds to all pairwise products of entries of $X$.)
We set $n$ to be the size of the indexing set $\{(i,j):1 \leq i<j \leq \min(i+M,d)\}$, which is up to constants $n \asymp M d$.  This random vector provides an example which satisfies the weak concentration of quadratic forms \eqref{eq:intro:quadraticform} but does not have the rate in Assumption \ref{assum:concentration}; moreover, it is easily verified not to satisfy the averaged local law with the optimal rate.

More precisely, for any test matrix $A \in \R^{n \times n}$, it is readily checked that
\[
\E|\g^* A \g - \E \g^* A \g|^2 \asymp M \|A\|_F^2,
\]
and hence from hypercontractivity of the Gaussian measure, for any $k \geq 2$,
\[
\E |\g^* A \g - \E \g^* A \g|^k \leq C_k (\E |\g^* A \g - \E \g^* A \g|^2)^{k/2} \lesssim M^{k/2} \|A\|_F^k.
\]
This suffices for the Bai-Zhou theorem \cite{bai2008large} for any $M \leq d$, i.e.\ the Marchenko-Pastur law holds at the global spectral scale for the sample covariance matrix $K$ defined by $N \asymp n$ samples. However, it is also easily checked that
\[
\operatorname{Var}(\operatorname{Tr} K) \asymp M n/N \asymp M.
\]
By a simple fourth moment estimate for $\operatorname{Tr} K - \E \operatorname{Tr} K$, we can argue with probability bounded below independently of $n$ that
\[
    |\operatorname{Tr} K - \E \operatorname{Tr} K| \gtrsim \sqrt{M}.
\]
Hence when $M = d^{\alpha}$ for any $\alpha \in (0,1]$, this contradicts the eigenvalue rigidity that would follow from an optimal local law. Thus the rate of concentration assumed in Assumption \ref{assum:concentration} is needed to ensure that an averaged local law holds at the optimal scale.

\subsubsection{Spin glass models}
As an illustration of the role of Assumption~\ref{assum:cumulant}, consider a spherical spin glass model. Let $J \in (\R^n)^{\otimes p}$ be a symmetric random tensor with i.i.d.\ standard Gaussian entries (up to symmetry), scaled so that $\|J\|_F \asymp n^{p/2}$, and let $\g$ be drawn from the Gibbs measure
\[
d\mu(\g) \propto \exp\bigl(\tfrac{\beta}{n^{(p-1)/2}}\<J,\g^{\otimes p}\>\bigr)
\]
on the sphere $\{\g \in \R^n:\|\g\|_2^2=n\}$, where $\beta>0$ is the inverse temperature or coupling constant. For $\beta < \beta_0$ some absolute constant, the measure $\mu$ is easily checked to satisfy a log-Sobolev inequality with a constant independent of $n$, which implies concentration of quadratic forms; hence Assumption~\ref{assum:concentration} holds and the averaged local law of Theorem~\ref{thm:entrywise law} applies to the sample covariance matrix of i.i.d.\ draws from $\mu$.

However, for the spherical $4$-spin model, it can be quantified rigorously (c.f.\ \cite{fanma2026spherical4spin}) that
\[
\|\kappa_4\|_{\text{inj}} \leq C\Big(\beta^3 + \frac{\beta}{\sqrt{n}} + \frac{1}{n}\Big),
\qquad
\|\kappa_4\|_F \geq c\beta \sqrt{n}.
\]
At $\beta=n^{-1/6}$, this gives $\|\kappa_4\|_{\text{inj}}/\|\kappa_4\|_F^2 \leq C n^{-7/6}$, exhibiting a small ratio of injective norm to Frobenius norm.
Then by the reasoning discussed in Remark \ref{remark:genericcumulant},
Assumption \ref{assum:cumulant} does not hold, so our current results do not imply an optimal anisotropic local law in the form of Theorem \ref{thm:anisotropic law} for the sample covariance matrix of i.i.d.\ draws from such a model.
In this case of ``disordered'' cumulant tensors, we note that cancellations of fluctuations in the cumulant tensors themselves must be considered to obtain optimal estimates of various cumulant contractions, and this is not captured by the proof method we employ based on Assumption \ref{assum:cumulant}.

\vspace{1em}
\subsection{Proof ideas}\label{subsec:proofideas}

We conclude this section by explaining briefly the main novelties of the
proofs. For any subset $S \subset
[N]$, let $R^{(S)}=(N^{-1}\sum_{i \notin S} \g_i\g_i^*-zI)^{-1}$ denote the
resolvent of the sample covariance matrix leaving out the samples in $S$. Then
a classical argument of \cite{silverstein1995strong}, applying the definition of
$z_0(\widetilde m)$ and the Sherman-Morrison identity
$R=R^{(i)}-N^{-1}R^{(i)}\g_i\g_i^*R^{(i)}/(1+N^{-1}\g_i^*R^{(i)}\g_i)$, shows the
equality
\[z_0(\widetilde m)-z={-}\frac{1}{\widetilde m} \cdot \frac{1}{N}
\sum_{i=1}^n \frac{N^{-1}\Tr(\g_i\g_i^*-\Sigma)R^{(i)}(I+\widetilde
m\Sigma)^{-1}}{1+N^{-1}\g_i^*R^{(i)}\g_i}.\]
We introduce a control parameter $\Phi=\Phi(z) \in [N^{-1/2},1]$ depending on $z$ and $N$, whose explicit form is not important here (c.f.\ Lemma \ref{lem:fluctuation_averaging}).
Assuming the two a priori estimates
\begin{equation}\label{eq:apriori}
N^{-1}\|R^{(S)}\|_F \prec \Phi \text{ over all } S \subset [N]
\text{ of bounded cardinality},
\quad |\widetilde m-\widetilde m_0| \ll 1,
\end{equation}
it is readily checked from
Assumption \ref{assum:concentration} and regularity of the spectral domain that
\[\left|\frac{N^{-1}\Tr(\g_i\g_i^*-\Sigma)R^{(i)}(I+\widetilde
m\Sigma)^{-1}}{1+N^{-1}\g_i^*R^{(i)}\g_i}\right| \prec \Phi,\]
and hence also $|z_0(\widetilde m)-z| \prec \Phi$. The main technical step to
establish an optimal averaged local law is to improve this estimate to
\begin{equation}\label{eq:FAintro}
|z_0(\widetilde m)-z| \prec \Phi^2.
\end{equation}
The averaged local law then follows by applying the stability condition of \eqref{eq:stability} in conjunction with
a stochastic continuity argument as developed and used in
e.g.\ \cite{erdHos2009local,erdos2013local,erdos2013spectral,pillai2014universality,knowles2017anisotropic}.

To illustrate the fluctuation averaging mechanism that leads to
\eqref{eq:FAintro}, and also to explain our main novelties for extending this
to an anisotropic local law, consider the slightly simpler quantity
\begin{align*}
\Delta(A)=\sum_{i=1}^n N^{-1}\Tr(\g_i\g_i^*-\Sigma)R^{(i)}A
\end{align*}
for any deterministic matrix $A \in \C^{n \times n}$ with $\|A\|_\op \leq 1$.
The bound \eqref{eq:FAintro} is analogous to the simpler bound $|\Delta(A)| \prec
N\Phi^2$ (ignoring the factors in the denominator of $z_0(\tilde{m})-z$, which do not significantly alter the proof strategy).
Applying the Sherman-Morrison identity
to resolve the dependence of $R^{(i)}$ on $\g_j$ and vice versa,
\begin{align*}
&\E|\Delta(A)|^2=\sum_{i,j=1}^n
\E\Big[N^{-1}\Tr (\g_i\g_i^*-\Sigma)R^{(i)}A
\cdot \overline{N^{-1}\Tr (\g_j\g_j^*-\Sigma)R^{(j)}A}\Big]\\
&=\Oprec(N\Phi^2)+\frac{1}{N}\sum_{i \neq j}
\E\Bigg[
\Bigg(\underbrace{\Tr(\g_i\g_i^*-\Sigma)\frac{R^{(ij)}A}{\sqrt{N}}}_{:=\cY_i^{(ij)}}
-\underbrace{\Tr(\g_i\g_i^*-\Sigma)\frac{R^{(ij)}}{N}\g_j\g_j^*\frac{R^{(ij)}A}{\sqrt{N}}}_{:=Z_{ijj}^{(ij)}}
\cdot \underbrace{\frac{1}{1+N^{-1}\g_j^*R^{(ij)}\g_j}}_{:=\cQ_j^{(ij)}}\Bigg)\\
&\hspace{0.5in}
\times \overline{\Bigg(\underbrace{\Tr
(\g_j\g_j^*-\Sigma)\frac{R^{(ij)}A}{\sqrt{N}}}_{:=\cY_j^{(ij)}}
-\underbrace{\Tr (\g_j\g_j^*-\Sigma)\frac{R^{(ij)}}{N}\g_i\g_i^*\frac{R^{(ij)}A}{\sqrt{N}}}_{:=\cZ_{jii}^{(ij)}}
\cdot \underbrace{\frac{1}{1+N^{-1}\g_i^*R^{(ij)}\g_i}}_{:=\cQ_i^{(ij)}}\Bigg)}\Bigg]
\end{align*}
Observe that $\E[\cY_i^{(ij)}\overline{\cY_j^{(ij)}}]=0$,
$\E[\cY_i^{(ij)}\overline{\cZ_{jii}^{(ij)}\cQ_i^{(ij)}}]=0$, and
$\E[\cZ_{ijj}^{(ij)}\cQ_j^{(ij)}\overline{\cY_j^{(ij)}}]=0$.
Assumption~\ref{assum:concentration} and the a priori estimates
\eqref{eq:apriori} imply that $|\cZ_{jii}^{(ij)}|,|\cZ_{ijj}^{(ij)}| \prec
\sqrt{N}\Phi^2$ and $|\cQ_i^{(ij)}|,|\cQ_j^{(ij)}| \prec 1$. Hence
$\E|\Delta(A)|^2 \prec N^2\Phi^4$, and extending this to a high
moment estimate shows $|\Delta(A)| \prec N\Phi^2$ as desired. This fluctuation averaging idea was used in \cite{wang2024nonlinear} in the same context of a
non-separable sample covariance matrix model as our current work, to analyze outlier eigenvalues/eigenvectors in the setting of a spiked covariance $\Sigma$.

Showing an optimal anisotropic local law for the resolvent $R(z)$ will instead
require a stronger estimate that is analogous to a bound of the form
$|\Delta(\u\u^*)| \prec \Phi$ for
a rank-one matrix input $A=\u\u^*$, assuming the a priori conditions
in place of \eqref{eq:apriori},
\begin{equation}\label{eq:apriori2}
N^{-1/2}\|R^{(S)}\x\|_2 \prec \Phi \text{ over all deterministic unit vectors }
\x \in \C^n,
\quad |\widetilde m-\widetilde m_0| \ll 1.
\end{equation}
Applying the same expansion for $\E|\Delta(\u\u^*)|^2$, we may readily check as
above that
\begin{equation}\label{eq:Deltauu}
\E|\Delta(\u\u^*)|^2=\Oprec(\Phi^2)+\frac{1}{N}\sum_{i \neq j}
\E[\cZ_{ijj}^{(ij)}\cQ_j^{(ij)}\overline{\cZ_{jii}^{(ij}\cQ_i^{(ij)}}]
\end{equation}
where now $|\cZ_{ijj}^{(ij)}|,|\cZ_{jii}^{(ij)}| \prec \Phi^2$
and $|\cQ_i^{(ij)}|,|\cQ_j^{(ij)}| \prec 1$ under
\eqref{eq:apriori2}. However, the primary challenge is that this only
gives a bound of $\E|\Delta(\u\u^*)|^2 \prec N\Phi^4$, leading to
\begin{equation}\label{eq:FAbad}
|\Delta(\u\u^*)| \prec \sqrt{N}\Phi^2,
\end{equation}
which does not imply the desired bound
$|\Delta(\u\u^*)| \prec \Phi$ when $z \in \C^+$ is a local spectral parameter.
Overcoming this challenge was a central technical contribution of
\cite{knowles2017anisotropic},
who proposed instead a method of bootstrapping on the spectral scale in
multiplicative increments of $N^{-\delta}$ in the imaginary part, and
using the additional a priori estimate
\begin{equation}\label{eq:apriori3}
|\x^* R^{(S)}\y| \prec N^{2\delta} \text{ over all deterministic unit
vectors } \x,\y \in \C^n
\end{equation}
obtained from the preceding bootstrap iteration to establish the bound
\begin{equation}\label{eq:bootstrapintro}
|\x^* R^{(S)}\y-\x^*(-zI-z\widetilde m_0\Sigma)^{-1}\y| \prec
N^{C\delta}\Phi.
\end{equation}
This then implies also $|\x^* R^{(S)}\y| \prec 1$, which allows one to
continue the bootstrap. The argument in \cite{knowles2017anisotropic} for showing
\eqref{eq:bootstrapintro} from \eqref{eq:apriori3} was specific to a separable
model $\g=\Sigma^{1/2}\w$ where $\w$ has independent entries, and relied
on a technically intricate interpolation between $\w$ and a Gaussian vector
$\z \sim N(0,I)$.

In this work, we employ the same bootstrapping structure of
\cite{knowles2017anisotropic}, but
pursue a more general proof by directly establishing the desired fluctuation averaging bound
\begin{equation}\label{eq:FAimproved}
|\Delta(\u\u^*)| \prec N^{2\delta} \Phi
\end{equation}
under the a priori assumptions
\eqref{eq:apriori2} and \eqref{eq:apriori3}, without
interpolation. We note that the above estimates
$|\cZ_{ijj}^{(ij)}|,|\cZ_{jii}^{(ij)}| \prec \Phi^2$ 
and $|\cQ_i^{(ij)}|,|\cQ_j^{(ij)}| \prec 1$ leading to \eqref{eq:FAbad}
are sharp, and that this improved bound \eqref{eq:FAimproved} will instead arise from verifying that under the cumulant condition of
Assumption~\ref{assum:cumulant}, the expectation of
$\cZ_{ijj}^{(ij)}\cQ_j^{(ij)}\overline{\cZ_{jii}^{(ij)}\cQ_i^{(ij)}}$
is much smaller than its typical size.
We will show this through a power series expansion of
$\cQ_i^{(ij)}$ together with a moment-cumulant expansion of
$\E\cZ_{ijj}^{(ij)}\cQ_j^{(ij)}\overline{\cZ_{jii}^{(ij)}\cQ_i^{(ij)}}$. 

To illustrate this calculation, consider the simplest term
$\E\cZ_{ijj}^{(ij)}\overline{\cZ_{jii}^{(ij)}}$.
Recalling the forms of $\cZ_{ijj}^{(ij)}$ and $\cZ_{jii}^{(ij)}$ with
$A=\u\u^*$, we observe that $\E\cZ_{ijj}^{(ij)}\overline{\cZ_{jii}^{(ij)}}$
depends on the 4th-order moment tensors $\E \g_i^{\otimes 4}$ and
$\E \g_j^{\otimes 4}$, and one term in the resulting moment-cumulant expansion takes the form
\begin{align}\label{eq:momentcumulantintro}
\val:=&\sum_{\alpha_1,\ldots,\alpha_4,\beta_1,\ldots,\beta_4=1}^n
\kappa_4(\g_i)[\alpha_1,\ldots,\alpha_4]
\kappa_4(\g_j)[\beta_1,\ldots,\beta_4] \notag\\
&\hspace{1in}\times
\overline{\u}[\alpha_1]\frac{\overline{R^{(ij)}\u}}{\sqrt{N}}[\alpha_2]
\u[\beta_1]\frac{R^{(ij)}\u}{\sqrt{N}}[\beta_2]\frac{R^{(ij)}}{N}[\alpha_3,\beta_3]
\frac{\overline{R^{(ij)}}}{N}[\alpha_4,\beta_4].
\end{align}
We observe that $\val$ takes the form $\val=\langle \kappa_4(\g_i),\,\overline{\u} \otimes \frac{\overline{R^{(ij)}\u}}{\sqrt{N}} \otimes T_i\rangle$, where $T_i \in (\C^n)^{\otimes 2}$ is the order-2 tensor encoding the remaining contraction involving $\kappa_4(\g_j)$.
Applying Assumption~\ref{assum:cumulant} with $k=4$ and $m=2$ gives
\begin{equation}\label{eq:step1intro}
|\val| \prec (\sqrt{N})^{4-2-1}\,\|\u\|_2 \left\|\frac{R^{(ij)}\u}{\sqrt{N}}\right\|_2\,\|T_i\|_{\cU_4}.
\end{equation}
To bound the $\cU_4$-norm, we expand $\|T_i\|_{\cU_4}=\sup_{\x,\y \in \cU_4}|\langle \x \otimes \y,T_i\rangle|$.
For each fixed $\x,\y \in \cU_4$,
\[\langle \x \otimes \y,T_i\rangle
=\Big\langle \kappa_4(\g_j),\,\u \otimes \tfrac{R^{(ij)}\u}{\sqrt{N}}
\otimes \underbrace{\tfrac{R^{(ij)*}\x}{N}\otimes\tfrac{\overline{R^{(ij)}}^*\y}{N}}_{=:\,T_j(\x,\y)}\Big\rangle.\]
A second application of Assumption~\ref{assum:cumulant} with $k=4$ and $m=2$ then gives
\begin{align*}
|\langle \x \otimes \y,T_i\rangle|
&\prec (\sqrt{N})^{4-2-1}\,\|\u\|_2
\left\|\frac{R^{(ij)}\u}{\sqrt{N}}\right\|_2\,\|T_j(\x,\y)\|_{\cU_4}\\
&\leq (\sqrt{N})^{4-2-1}\,\|\u\|_2
\left\|\frac{R^{(ij)}\u}{\sqrt{N}}\right\|_2
\left(\sup_{\v,\w \in \cU_4}\left|\frac{\v^* R^{(ij)}\w}{N}\right|\right)^2.
\end{align*}
Since $|\cU_4| \leq n^{C_4}$ has polynomial cardinality, combining with \eqref{eq:step1intro} and applying the a priori conditions \eqref{eq:apriori2}
and \eqref{eq:apriori3} shows that $|\val|=\Oprec(N^{-1+4\delta}\Phi^2)$.

For general terms in the high moment expansion, to manage the algebraic complexity, we will introduce a tensor network formalism and systematically map the moment expansions to a
family of tensor networks. Note that the example above has the following diagrammatic representation:
\begin{figure}[H]
    \centering
    \resizebox{0.7\textwidth}{!}{
    \begin{tikzpicture}[
    node distance=2.5cm,
    leaf_node/.style={circle, draw=blue!80, thick, fill=blue!10, minimum size=0.9cm, font=\large},
    tensor_node/.style={circle, draw=black, very thick, fill=yellow!20, minimum size=1.2cm, font=\bfseries},
    matrix_node/.style={circle, draw=green!60!black, thick, fill=green!10, minimum size=1.0cm, font=\footnotesize},
    vec_node/.style={circle, draw=orange!80, thick, fill=orange!10, minimum size=1.0cm, font=\footnotesize},
    input_vec/.style={circle, draw=gray!80, thick, fill=gray!10, minimum size=0.8cm, font=\itshape},
    conn/.style={thick, draw=gray!80},
    arrow_style/.style={-{Latex[scale=1.5]}, line width=2pt, draw=black!70}
]


    \node[tensor_node] (A_ki) at (-3,0) {$\kappa_4(\g_i)$};

    \node[tensor_node] (A_kj) at (3,0) {$\kappa_4(\g_j)$};

    \node[matrix_node] (A_m1) at (0, 1.0) {$\frac{R^{(ij)}}{N}$};
    \node[matrix_node] (A_m2) at (0, -1.0) {$\frac{\overline{R^{(ij)}}}{N}$};

    \node[leaf_node] (A_ubar) at (-5.5, 1.0) {$\overline{\u}$};
    \node[vec_node] (A_Rubar) at (-5.5, -1.0) {$\frac{\overline{R^{(ij)}\u}}{\sqrt{N}}$};

    \node[leaf_node] (A_u) at (5.5, 1.0) {$\u$};
    \node[vec_node] (A_Ru) at (5.5, -1.0) {$\frac{R^{(ij)}\u}{\sqrt{N}}$};

    \draw[conn] (A_ubar) -- (A_ki) node[midway, above] {$\alpha_1$};
    \draw[conn] (A_Rubar) -- (A_ki) node[midway, below] {$\alpha_2$};
    \draw[conn] (A_ki) -- (A_m1) node[midway, above left] {$\alpha_3$};
    \draw[conn] (A_m1) -- (A_kj) node[midway, above right] {$\beta_3$};
    \draw[conn] (A_ki) -- (A_m2) node[midway, below left] {$\alpha_4$};
    \draw[conn] (A_m2) -- (A_kj) node[midway, below right] {$\beta_4$};
    \draw[conn] (A_kj) -- (A_u) node[midway, above] {$\beta_1$};
    \draw[conn] (A_kj) -- (A_Ru) node[midway, below] {$\beta_2$};

    \draw[arrow_style] (0, -2.2) -- (0, -3.8) 
        node[midway, right, xshift=0.3cm, align=left, font=\small\bfseries] {Applying\\Assumption \ref{assum:cumulant} on $\kappa_4(\g_i)$};

    \begin{scope}[yshift=-6cm]

        \node[tensor_node] (B_kj) at (3,0) {$\kappa_4(\g_j)$};

        \node[matrix_node] (B_m1) at (0, 1.0) {$\frac{R^{(ij)}}{N}$};
        \node[matrix_node] (B_m2) at (0, -1.0) {$\frac{\overline{R^{(ij)}}}{N}$};

        \node[input_vec] (B_x) at (-3, 1.0) {$\x$};
        \node[input_vec] (B_y) at (-3, -1.0) {$\y$};

        \node[leaf_node] (B_u) at (5.5, 1.0) {$\u$};
        \node[vec_node] (B_Ru) at (5.5, -1.0) {$\frac{R^{(ij)}\u}{\sqrt{N}}$};

        \draw[conn] (B_x) -- (B_m1) node[midway, above] {$\alpha_3$};
        \draw[conn] (B_y) -- (B_m2) node[midway, below] {$\alpha_4$};
        
        \draw[conn] (B_m1) -- (B_kj) node[midway, above right] {$\beta_3$};
        \draw[conn] (B_m2) -- (B_kj) node[midway, below right] {$\beta_4$};
        
        \draw[conn] (B_kj) -- (B_u) node[midway, above] {$\beta_1$};
        \draw[conn] (B_kj) -- (B_Ru) node[midway, below] {$\beta_2$};
    \end{scope}

\end{tikzpicture}
    }
\end{figure}
\noindent As illustrated in this example, we employ a recursive ``peeling'' strategy: we successively
remove high-degree vertices (representing higher-order cumulants) from the graph,
using Assumption~\ref{assum:cumulant} to control the cost of each removal,
until the network is reduced to simple chains and cycles that can be bounded
directly (see Lemmas \ref{lem:remove tensor weak}--\ref{lem:pairing case}).

Finally, we emphasize the dual role of our tensor network framework. Beyond driving the proofs of our main results, it provides the essential machinery for verifying Assumption~\ref{assum:cumulant} in complex nonlinear settings. While verification is relatively straightforward for linear or sign-invariant models (Propositions \ref{prop:linear} and \ref{prop:sign change invariant}), the random features model (Proposition \ref{prop:RF}) presents significant combinatorial challenges. In Section \ref{subsec:RF}, we address these by coupling a monomial expansion of $\g=\sigma(X\w)$ with a tensor network representation of the cumulant form $\langle\kappa(\g),\s_1 \otimes \ldots \otimes \s_m \otimes T\rangle$. This allows us to systematically bound the resulting diagrams and rigorously establish the required cumulant decay.

\section{Fluctuation averaging lemmas}\label{sec:fluctuation averaging}

The primary technical input for our main results consists of a sequence of
fluctuation averaging lemmas
that resolve the weak dependence of the
resolvent $R(z)$ on each individual sample $\g_j$ for $j \neq i$.
In this section, we first state and prove these
fluctuation averaging lemmas.

\begin{defi}[Minors]
For any $S \subseteq [N]$, let $G^{(S)} \in \R^{n \times N}$ be the matrix with
columns
\[G^{(S)}\e_i = \begin{cases} \g_i & \text{if $i\notin S$}\\
        0 & \text{otherwise}. \end{cases}\]
Let
\[
    K^{(S)} = \frac{1}{N}G^{(S)} G^{(S)*}=\frac{1}{N}\sum_{i\in [N]\setminus
S}\g_i\g_i^*,\qquad \widetilde{K}^{(S)}=\frac{1}{N}G^{(S)*} G^{(S)},
\]
and denote their resolvents and Stieltjes transforms
\[R^{(S)}(z)=(K^{(S)}-zI_n)^{-1}, \quad m^{(S)}(z) = n^{-1}\Tr R^{(S)}(z),\]
\[\widetilde{R}^{(S)}(z)=(\widetilde{K}^{(S)}-zI_N)^{-1},\quad
\widetilde{m}^{(S)}(z)=N^{-1}\Tr\widetilde{R}^{(S)}(z).\]
The expectation with respect to the columns $\{\g_i\}_{i \in S}$ of $G$ is
denoted as
\[\E_S[\cdot] := \E[\;\cdot \mid \{\g_i\}_{i \in [N] \setminus S}].\]
\end{defi}
We will often omit the spectral argument $z$ for brevity when the meaning is
clear.

\begin{defi}[Stochastic domination]
For $(N,n)$-dependent random variables $X \in \R$ and $Y \geq 0$, we write
\[X \prec Y \quad \text{ or } \quad X=\Oprec(Y)\]
if, for any constants $\eps,D>0$, there exists $C \equiv C(\eps,D)>0$ such that
$\P[|X| \geq N^\eps Y] \leq CN^{-D}$. (Equivalently, there exists $N_0 \equiv
N_0(\eps,D)$ such that $\P[|X| \geq N^\eps Y] \leq N^{-D}$ for all $N \geq
N_0$.) For $X \equiv X(u)$ and $Y \equiv Y(u)$ depending on a parameter $u \in
\cU^{(N)}$, we say that
\[X \prec Y \quad \text{ or } \quad X=\Oprec(Y) \qquad \text{ uniformly over }
u \in \cU^{(N)}\]
if the preceding holds for a uniform constant $C \equiv C(\eps,D)>0$ and all $u
\in \cU^{(N)}$.

This constant $C(\eps,D)$ may depend on other constant quantities in the
context of the statement, including $\tau$ and the regularity constants
$C,c>0$ in Definition \ref{def:regular} for a regular spectral domain.
\end{defi}

We will often use implicitly the following well-known properties of stochastic domination $\prec$. For the proof of the following lemma, we refer to \cite[Lemma D.2]{fan2022tracy}. 
\begin{lemma}\label{lemma:domination}
\phantom{ }
    \begin{enumerate}[label=(\alph*)]
        \item If $X(u,v)\prec \zeta(u,v)$ uniformly over $u\in U$ and $v\in V$,
and $|V|\leq n^C$ for some constant $C>0$, then uniformly over $u \in U$,
        \begin{align*}
            \sum_{v\in V} X(u,v) \prec \sum_{v\in V}\zeta(u,v)
        \end{align*}
        \item If $X_1 \prec \zeta_1$ and $X_2 \prec \zeta_2$
uniformly over $u \in U$, then also $X_1X_2\prec \zeta_1\zeta_2$
uniformly over $u \in U$.
        \item 
Suppose $X \prec \zeta$ uniformly over $u \in U$, where $\zeta$ is
deterministic, $\zeta>n^{-C}$,
and $\E[|X|^k] \leq n^{C_{k}}$ for all $k \in [1,\infty)$ and some constants
$C,C_{k}>0$. Then $\E[X \mid \mathscr{G}] \prec \zeta$ uniformly over
$u \in U$ and over all sub-sigma-fields $\mathscr{G} \subseteq \mathscr{F}$
of the underlying probability space $(\Omega,\mathscr{F},\P)$.
    \end{enumerate}
\end{lemma}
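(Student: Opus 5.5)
All three parts follow from union bounds and Markov's inequality, using only the definition of $\prec$ and the fact that $c\le n/N\le C$ (Assumption \ref{assum:basic}), so powers of $n$ and of $N$ are interchangeable. Throughout we fix $\eps,D>0$ and show the required tail bound with a constant independent of $u\in U$.

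For (a), note that on the event where $|X(u,v)|<N^{\eps}\zeta(u,v)$ for every $v\in V$, we have $|\sum_v X(u,v)|<N^\eps\sum_v\zeta(u,v)$. Hence a union bound gives
\[
\P\Big[\Big|\sum_{v}X(u,v)\Big|\ge N^\eps\sum_v\zeta(u,v)\Big]\le\sum_{v\in V}\P[|X(u,v)|\ge N^\eps\zeta(u,v)]\le n^{C}\,C(\eps,D')N^{-D'}.
\]
Choosing $D'=D+C'$, with $C'$ large enough that $n^C\le N^{C'}$, yields the bound $C''N^{-D}$ uniformly in $u$.

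For (b), the event $|X_1X_2|\ge N^\eps\zeta_1\zeta_2$ is contained in $\{|X_1|\ge N^{\eps/2}\zeta_1\}\cup\{|X_2|\ge N^{\eps/2}\zeta_2\}$. Apply the hypotheses with $\eps/2$ in place of $\eps$ and take a union bound.

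For (c), which is the only step needing care, write $\mathcal{B}=\{|X|\ge N^{\eps/2}\zeta\}$ and decompose
\[
\E[X\mid\mathscr G]=\E[X\1_{\mathcal B^c}\mid\mathscr G]+\E[X\1_{\mathcal B}\mid\mathscr G].
\]
The first term is deterministically at most $N^{\eps/2}\zeta$ in absolute value. For the second term, let $Y=\E[|X|\1_{\mathcal B}\mid\mathscr G]\ge0$. By Markov's inequality, Cauchy--Schwarz and the moment hypothesis,
\[
\P[Y\ge N^{-D_1}]\le N^{D_1}\E[|X|\1_{\mathcal B}]\le N^{D_1}(\E|X|^2)^{1/2}\P[\mathcal B]^{1/2}\le N^{D_1}n^{C_2/2}\big(C(\eps/2,D_2)N^{-D_2}\big)^{1/2}.
\]
Choose $D_1>C$, so that $N^{-D_1}\le n^{-C}<\zeta$ for large $N$. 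Then choose $D_2$ large enough that the right side is at most $C'N^{-D}$. On the complementary event,
\[
|\E[X\mid\mathscr G]|\le N^{\eps/2}\zeta+\zeta\le N^\eps\zeta
\]
for $N$ large, and small $N$ are absorbed into the constant. None of these choices depend on $u$ or on $\mathscr G$, which gives uniformity over both. The role of the hypothesis $\zeta>n^{-C}$ is exactly to let the polynomially small tail contribution be absorbed into $\zeta$; without it, part (c) could fail.
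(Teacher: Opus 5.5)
Your proof is correct. Parts (a) and (b) are the standard union bounds, and part (c) is the standard truncation argument: split on $\{|X|\ge N^{\eps/2}\zeta\}$, bound the bad part by Cauchy--Schwarz together with Markov's inequality applied to the nonnegative conditional expectation, and absorb the polynomially small remainder into $\zeta$ using $\zeta>n^{-C}$. The paper does not reprove this lemma but defers to \cite[Lemma D.2]{fan2022tracy}, which is the standard stochastic-domination lemma; I expect its argument is essentially this one, though the paper itself does not show it.
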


For any $z=E+i\eta\in \C^+$ and $S \subset [N]$,
let us define the control parameter
\[\Gamma^{(S)}(z)=\max_{i \notin S}|\widetilde{R}_{ii}^{(S)}(z)-\widetilde{m}_0(z)|.\]
The following lemma will be used to show the averaged local law of
Theorem \ref{thm:entrywise law}.

\begin{lemma}\label{lem:fluctuation_averaging}
Suppose Assumptions \ref{assum:basic} and \ref{assum:concentration} hold,
and $\bD \subset \C^+$ is a regular spectral domain. Suppose
there exists a constant $\tau'>0$ and a deterministic function
$\Phi:\bD \rightarrow [N^{-1/2},N^{-\tau'}]$ such that for any fixed
$L\geq 1$, uniformly over all $S\subseteq[N]$ with $|S|\leq L$,
all $z \in \bD$, we have
\[\Gamma^{(S)}\prec N^{-\tau'}, \qquad \frac{\|R^{(S)}\|_F}{N} \prec \Phi, \]
Then uniformly over all $z\in\bD$ and deterministic matrices $A\in\C^{n\times
n}$ with $\|A\|_\op\leq 1$,
\[\left|\sum_{i=1}^N \frac{N^{-1}\Tr(\g_i\g_i^*-\Sigma)R^{(i)}A}{1+N^{-1}\g_i^*R^{(i)}\g_i}\right|
\prec N\Phi^2.\]
\end{lemma}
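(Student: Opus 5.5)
We prove this by a high-moment fluctuation averaging argument in the style of \cite{erdHos2013averaging,pillai2014universality}. Write $\cY_i=N^{-1}\Tr(\g_i\g_i^*-\Sigma)R^{(i)}A$ and $\cQ_i=(1+N^{-1}\g_i^*R^{(i)}\g_i)^{-1}$. The target is $X_i=\cY_i\cQ_i$, and we will show $\E|\sum_i X_i|^{2p}\prec (N\Phi^2)^{2p}$ for each fixed $p$. Markov's inequality then gives the claim. Uniformity in $z$ and $A$ is automatic, since all constants are uniform.

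\textbf{Single-sample bounds.} By Assumption \ref{assum:concentration}, $|\cY_i|\prec N^{-1}\|R^{(i)}A\|_F\le N^{-1}\|R^{(i)}\|_F\prec\Phi$. For $\cQ_i$, we use the identity $z\widetilde R_{ii}=-\cQ_i$, which follows from Schur complement. Together with $\Gamma\prec N^{-\tau'}$ and $|\widetilde m_0|\asymp 1$, this gives $|\cQ_i|\prec 1$; the same bound holds for minors $\cQ_i^{(T)}$.

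Next we decompose $X_i=\E_i X_i+(X_i-\E_iX_i)$. Since $\E_i\cY_i=0$, we have $\E_iX_i=\E_i[\cY_i(\cQ_i-\E_i\cQ_i)]$. Moreover $\cQ_i^{-1}$ fluctuates around $1+N^{-1}\Tr\Sigma R^{(i)}$ by at most $\prec N^{-1}\|R^{(i)}\|_F\prec\Phi$. Hence $|\E_iX_i|\prec\Phi^2$, so $\sum_i\E_iX_i\prec N\Phi^2$ trivially. It therefore suffices to treat $Z_i:=(1-\E_i)X_i$, where $|Z_i|\prec\Phi$. Lemma \ref{lemma:domination}(c) applies here, since the polynomial moment bounds hold: $|\cQ_i|\le|z|/\eta\le N$ on $\bD$ and $\E\|\g\|^k\le n^{C_k}$.

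\textbf{High-moment expansion.} Expand $\E|\sum_i Z_i|^{2p}=\sum_{i_1,\dots,i_{2p}}\E\prod_a Z_{i_a}^{\#}$. Let $J=\{i_1,\dots,i_{2p}\}$. For each $a$ and each $j\in J\setminus\{i_a\}$, we remove the dependence of $Z_{i_a}$ on $\g_j$ using the resolvent expansion. Sherman--Morrison gives
\[R^{(i)}=R^{(ij)}-N^{-1}R^{(ij)}\g_j\g_j^*R^{(ij)}\cQ_j^{(i)},\]
and analogously $\cQ_i$ is expanded through $\cQ_i^{(j)}$. Iterating, we write $Z_{i_a}$ as a sum of a term independent of all $\g_j$ with $j\in J\setminus\{i_a\}$, plus correction terms. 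Each correction involving a given $j$ carries an extra factor $\prec\Phi$. For example,
\[N^{-2}\Tr(\g_i\g_i^*-\Sigma)R^{(ij)}\g_j\g_j^*R^{(ij)}A\prec \Phi\cdot N^{-1/2}\|R^{(ij)}\g_j\|\prec\Phi^2,\]
where we use concentration applied to $\g_i$ conditionally, and $N^{-1}\g_j^*R^{*}R\g_j\prec N^{-1}\|R\|_F^2+\dots$. The standard parity argument then applies: if some index $i_a$ appears only once in the product and $Z_{i_a}$ is replaced by its fully independent part, then $\E_{i_a}$ of the term vanishes, because $(1-\E_{i_a})$ kills it. Consequently every surviving term must contain, for each singleton index, at least one correction factor from another $Z$ depending on it.

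\textbf{Counting.} Suppose $J$ has $s$ singletons and $r$ distinct indices in total. A surviving term then has size $\prec\Phi^{2p+s}$, and the number of index tuples is $N^r$ with $r\le s+(2p-s)/2$. This yields
\[\E|\textstyle\sum Z_i|^{2p}\prec\sum_s N^{p+s/2}\Phi^{2p+s}\le (N\Phi^2)^{2p},\]
since $N^{1/2}\Phi\ge1$ (recall $\Phi\ge N^{-1/2}$).

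\textbf{Main obstacle.} The expansion must be truncated at finite depth: after $O(p)$ steps, the remainder terms are small enough via the crude bounds $\Phi\le N^{-\tau'}$. The real difficulty is controlling the correction factors uniformly over minors $R^{(T)}$ and $\cQ^{(T)}$ with $|T|\le 2p$. The hypotheses are stated for all $|S|\le L$, which is exactly what covers this, and boundedness of $\cQ^{(T)}$ follows from $\Gamma^{(T)}\prec N^{-\tau'}$.
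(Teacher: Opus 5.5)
Your argument is correct and is essentially the paper's: a $2p$-th moment expansion in which each summand is expanded by Sherman--Morrison over the index set, a singleton survives only by appearing in another factor at the cost of an extra $\Phi$, and the count $N^{p+s/2}\Phi^{2p+s}\le (N\Phi^2)^{2p}$ closes. Your one variation is to pre-center, discarding $\sum_i\E_iX_i\prec N\Phi^2$ trivially. This removes the paper's third case, a singleton hit only through $\cP_{i}^{(S)}$ or $\cB_{ij}^{(S)}$ from its own $\cQ_i$-expansion.

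Two things to tighten. First, your sample estimate should read $N^{-2}\|R^{(ij)}\g_j\|_2\|A^*R^{(ij)*}\g_j\|_2\prec N^{-2}\|R^{(ij)}\|_F^2\prec\Phi^2$; as written, $\Phi\cdot N^{-1/2}\|R^{(ij)}\g_j\|_2$ is only $\prec N^{1/2}\Phi^2$. Second, the size $\Phi^{2p+s}$ needs each \emph{distinct} newly introduced index to carry its own factor $\Phi$, which you assert rather than prove; the paper verifies exactly this in Lemma~\ref{lem:y_i q_i expansion}(c).
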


To establish anisotropic local laws, our fluctuation averaging lemmas will
assume, as a stronger input, the condition $N^{-1/2}\|R^{(S)}\x\|_2 \prec \Phi$
uniformly over unit vectors $\x \in \C^n$. We note that this implies
the preceding condition $N^{-1}\|R^{(S)}\|_F \prec \Phi$, since
$N^{-1}\|R^{(S)}\|_F \leq \max_{\alpha=1}^n N^{-1/2}\|R^{(S)}\e_\alpha\|_2$.
The following lemma will be used to show Theorem \ref{thm:outside spec} outside
the spectrum.

\begin{lemma}\label{lem:fluctuation_averaging_lowrank}
Suppose Assumptions \ref{assum:basic} and \ref{assum:concentration} hold,
and $\bD \subset \C^+$ is a regular spectral domain. Suppose
there exists a constant $\tau'>0$ and a deterministic function
$\Phi:\bD \rightarrow [N^{-1/2},N^{-\tau'}]$ such that for any fixed
$L\geq 1$, uniformly over all $S\subseteq[N]$ with $|S|\leq L$,
all $z \in \bD$, and all deterministic unit vectors $\x \in \C^n$, we have
\[\Gamma^{(S)}\prec N^{-\tau'}, \qquad \frac{\|R^{(S)}\x\|_2}{\sqrt{N}}
\prec \Phi.\]
Then 
\begin{enumerate}[label=(\alph*)]
\item Uniformly over all $z \in \bD$ and deterministic unit
vectors $\u \in \C^n$,
\[\left|\sum_{i=1}^N \frac{N^{-1}\u^*(\g_i\g_i^*-\Sigma)R^{(i)}\u}{1+N^{-1}\g_i^*R^{(i)}\g_i}\right|
\prec \sqrt{N}\Phi^2.\]
\item Uniformly over all $z\in\bD$ and deterministic unit vectors $\v\in\C^N$,
\[\left|\sum_{i\neq
j}\frac{\bar{\v}[i]\v[j]N^{-1}\g_i^*R^{(ij)}\g_j}{(1+N^{-1}\g_i^*R^{(i)}\g_i)(1+N^{-1}\g_j^*R^{(ij)}\g_j)}\right|\prec
N\Phi^3.\]
\item Uniformly over all $z\in\bD$ and deterministic unit vectors $\v\in\C^N$ and $\u\in\C^n$,
\[\left|\sum_{i=1}^N\frac{\bar{\v}(i)N^{-1/2}\g_i^*
R^{(i)}\u}{1+N^{-1}\g_i^*R^{(i)}\g_i}\right|\prec \sqrt{N}\Phi^2.\]
\end{enumerate}
\end{lemma}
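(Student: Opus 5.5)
The plan is to prove all three parts by the high-moment fluctuation averaging scheme sketched in Section \ref{subsec:proofideas}: bound $\E|X|^{2p}$ for each target sum $X$ and then apply Markov's inequality. Write $\cQ_i^{(S)}=(1+N^{-1}\g_i^*R^{(S\cup\{i\})}\g_i)^{-1}$.

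\textbf{Preliminary bounds.} First record the bounds that follow from $\Gamma^{(S)}\prec N^{-\tau'}$, the regularity bound $|1+\sigma_\alpha\widetilde m_0|\ge c$, and Schur complement identities:
\begin{itemize}
\item $|\cQ_i^{(S)}|\prec 1$, since $z\widetilde R^{(S)}_{ii}=-\cQ_i^{(S)}$ up to sign conventions and $|\widetilde m_0|\asymp 1$.
\item Assumption \ref{assum:concentration} applied to rank-one or low-rank matrices gives
\[
|N^{-1}\u^*(\g_i\g_i^*-\Sigma)R^{(S)}\u|\prec N^{-1/2}\|R^{(S)}\u\|_2+N^{-1}|\u^*\Sigma R^{(S)}\u|\prec \Phi
\]
for $i\in S$.
\item Similarly $|N^{-1/2}\g_i^*R^{(S)}\u|\prec \Phi$, since the bound is $\prec N^{-1/2}\|R^{(S)}\u\|_2$ after using that $\g_i^*\x$ concentrates at scale $\|\x\|_2$, which follows from Assumption \ref{assum:concentration} with $A=\x\x^*$.
\item $|N^{-1}\g_i^*R^{(S)}\g_j|\prec N^{-1}\|R^{(S)}\|_F\prec\Phi$ for $i\ne j\in S$, by conditioning on $\g_j$ and applying the quadratic-form bound to the matrix $R^{(S)}\g_j\g_j^*$.
\end{itemize}
Where $N^{-1}|\u^*\Sigma R\u|$ appears I will use $\|R\u\|_2\le \sqrt N\Phi$ and $\Phi\ge N^{-1/2}$. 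In each case the summand is then $O_\prec(\Phi)$ times the deterministic weight: $|\v[i]|$, or $|\v[i]\v[j]|$ in (b).

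\textbf{Moment expansion.} For (a), set $X=\sum_i Y_i$ with $Y_i$ the $i$th summand, and expand $\E|X|^{2p}=\sum_{i_1,\dots,i_{2p}}\E\prod_k Y_{i_k}$ (conjugating half the factors). For each index tuple with distinct-value set $J$, I expand each $Y_{i_k}$ by iterating Sherman--Morrison, which removes from $R^{(i_k)}$ and from $\cQ_{i_k}$ their dependence on every other $\g_j$, $j\in J$. Each expansion step produces either
\begin{itemize}
\item a fully independent term, with resolvent $R^{(J)}$, or
\item an additional factor of size $\prec\Phi$, coming from $N^{-1}\g_j^*R^{(J)}\g_l$ or $N^{-1/2}\g_j^*R^{(J)}\u$.
\end{itemize}
The standard counting argument, as in \cite{erdHos2013averaging,knowles2017anisotropic}, then applies: if an index $i$ appears exactly once in the tuple and its term is fully independent of $\g_i$ apart from the centered factor, then $\E_{i}$ of that factor vanishes, because $\E_i(\g_i\g_i^*-\Sigma)=0$ in (a) and $\E_i\g_i=0$ in (c). Consequently every singleton index must be ``hit'' by at least one expansion factor from another index, which gains an extra $\Phi$. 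The expansion is truncated at depth of order $p$, and the tail remainder is controlled by the polynomial moment bounds together with Lemma \ref{lemma:domination}(c).

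\textbf{Counting.} A tuple with $s$ singletons and $|J|=r$ contributes $\Phi^{2p+s}$ times its weights. For (a), the weights are all $1$, and summing over tuples gives $\sum_r N^{r}\Phi^{2p+s}$ with $r\le (2p+s)/2$. Since $N\Phi^2\ge1$, this is bounded by $(N\Phi^4)^{p}=(\sqrt N\Phi^2)^{2p}$, which is the claimed bound for (a). For (c), the weights $|\v[i]|$ satisfy $\sum_i|\v[i]|\le\sqrt N$ and $\sum_i|\v[i]|^2=1$, and the same counting gives $(\sqrt N\Phi^2)^{2p}$. For (b), the pair sum over $i\ne j$ is first decoupled: I will write $R^{(i)}$ in terms of $R^{(ij)}$ and expand as a double sum, after which each off-diagonal factor $N^{-1}\g_i^*R^{(ij)}\g_j$ is already $\prec\Phi$ and is centered in both $\g_i$ and $\g_j$. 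The same singleton analysis on the index multiset then yields $N\Phi^3$, using $\sum_{i,j}|\v[i]\v[j]|\le N$.

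\textbf{Main obstacle.} I expect the bookkeeping in step two to be the hardest part, specifically verifying that every Sherman--Morrison expansion factor genuinely gains $\Phi$ under only the input $\|R^{(S)}\x\|_2/\sqrt N\prec\Phi$. The delicate factors are those of the form $N^{-1}\u^*\g_j\g_j^*R\u$, where I will have to make sure they never reduce to a bare $\|R\|_{\op}$ bound.
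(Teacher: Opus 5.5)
Your route ($2p$-th moments, Sherman--Morrison recursions to strip the dependence on the other indices of the tuple, and a singleton count using $\sum_i|\v(i)|\le\sqrt N$) is the paper's, and your final counts for (b) and (c) are consistent with it. The derivation you give for (a), however, does not produce the claimed bound. The summand of (a) is $\Oprec(N^{-1/2}\Phi)$, not $\Oprec(\Phi)$. Indeed, Assumption~\ref{assum:concentration} with $A=R^{(i)}\u\u^*$ gives $N^{-1}|\u^*(\g_i\g_i^*-\Sigma)R^{(i)}\u|\prec N^{-1}\|R^{(i)}\u\|_2\prec N^{-1/2}\Phi$; your first term should carry $N^{-1}$, not $N^{-1/2}$. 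With $\Phi$ per summand, your count $N^{r}\Phi^{2p+s}$ with $r\le p+s/2$ only yields $(N\Phi^2)^{2p}$. That is the weaker bound $N\Phi^2$ already given by Lemma~\ref{lem:fluctuation_averaging}, and the inequality $\le(N\Phi^4)^p$ you assert fails already for $s=0$, $r=p$. With the correct normalization the count becomes $N^{r-p}\Phi^{2p+s}\le(N^{1/2}\Phi)^{s}\Phi^{2p}\le(\sqrt N\Phi^2)^{2p}$. (The paper writes the sum as $N^{-1/2}\sum_i\cY_i^{(i)}\cQ_i^{(i)}$ with $\cY_i^{(S)}[\u\u^*]\prec\Phi$ and $\cZ_{ijk}^{(S)}[\u\u^*]\prec\Phi^2$.)

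Second, your singleton mechanism has a hole. Stripping the dependence on $\g_j$, $j\neq i$, never leaves the $i$-th term independent of $\g_i$ apart from a centered factor. The reason is that $\cQ_i^{(S)}=(1+N^{-1}\g_i^*R^{(S)}\g_i)^{-1}$ (with $i\in S$) itself depends on $\g_i$, so $\E_i[\cY_i^{(S)}\cQ_i^{(S)}]\neq0$ in general; likewise for $\cX_i^{(S)}\cQ_i^{(S)}$ in (c) and the $\cQ$ factors in (b). The paper handles this with \eqref{eq:q_i to c_i}, expanding $\cQ_i^{(S)}$ in powers of $\cP_i^{(S)}\prec\Phi$ around $\cC^{(S)}$, which is independent of $\g_i$.

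As a result, your claim that every surviving singleton must be hit by a factor from another index is false. For $p=1$ and two distinct singletons $i,i'$, the monomials $\cY_i^{(S)}(\cC^{(S)})^2\cP_i^{(S)}$ and $\overline{\cY_{i'}^{(S)}(\cC^{(S)})^2\cP_{i'}^{(S)}}$ have a product with nonvanishing $\E_S$, yet there is no cross-index factor. What is true is that a surviving singleton either occurs in another monomial, or occurs on at least two factors of its own monomial: a $\cP_i$ factor, or, by parity, at least two $\cB_{ij}$ factors generated when $\cQ_i$ is expanded. One must then show that the number of such singletons is at most the number of extra $\Phi$-factors. This is exactly what properties (a)--(c) of Lemmas~\ref{lem:y_i q_i expansion}, \ref{lem:B_ij expansion}, \ref{lem:x_i q_i expansion} and the three-case analysis in each proof supply.

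In (b) this needs more than ``the same singleton analysis''. Both $i_l$ and $j_l$ may be singletons that are self-hit within one monomial and share its single leading factor $\cB_{i_lj_l}$. You need the odd parity of the occurrences of $i_l$ and $j_l$ to extract two extra factors of $\Phi$ from that monomial.
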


The following strengthening of Lemma \ref{lem:fluctuation_averaging} will show,
under the additional condition of Assumption \ref{assum:cumulant}, the full
anisotropic local law of Theorem \ref{thm:anisotropic law}.
For a resolvent with spectral decomposition
$R=\sum_\alpha (\lambda_\alpha-z)^{-1}\v_\alpha\v_\alpha^*$,
we write $\|R\|_1=\sum_\alpha |\lambda_\alpha-z|^{-1}$.

\begin{lemma}\label{lem:fluctuation_averaging_lowrank2}
Suppose, in addition to the conditions of Lemma
\ref{lem:fluctuation_averaging_lowrank}, that Assumption \ref{assum:cumulant}
also holds, and there exists a constant $\delta \geq 0$ such that for any fixed
$L\geq 1$, uniformly over all $S\subseteq[N]$ with $|S|\leq L$,
all $z \in \bD$, and all deterministic unit vectors $\x,\y \in \C^n$, we have
\[N^{-1}\|R^{(S)}\|_1 \prec 1, \qquad \qquad |\x^*R^{(S)}\y|\prec N^\delta.\]
Then
\begin{enumerate}[label=(\alph*)]
\item Uniformly over all $z\in\bD$ and 
deterministic unit vectors $\u\in\C^n$,
\begin{equation}\label{eq:L2 fluctuation averaging}
\left|\sum_{i=1}^N\frac{N^{-1}\u^*(\g_i\g_i^*-\Sigma)R^{(i)}\u}{1+N^{-1}\g_i^* R^{(i)}\g_i}\right|\prec N^\delta\Phi.
\end{equation}
\item Uniformly over all $z\in\bD$ and deterministic unit vectors $\v\in\C^N$,
\[\left|\sum_{i\neq j}\frac{\bar{\v}(i)\v(j)N^{-1}\g_i^*R^{(ij)}\g_j}{(1+N^{-1}\g_i^*R^{(i)}\g_i)(1+N^{-1}\g_j^*R^{(ij)}\g_j)}\right|
\prec N^{2\delta}\Phi.\]
\item Uniformly over all $z\in\bD$ and deterministic unit vectors $\u \in
\C^n$ and $\v\in\C^N$,
\[\left|\sum_{i=1}^N\frac{\bar{\v}(i)N^{-1/2}\g_i^*R^{(i)}\u}{1+N^{-1}\g_i^*R^{(i)}\g_i}\right|
\prec N^\delta\Phi.\]
\end{enumerate}
\end{lemma}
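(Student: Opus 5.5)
We prove all three parts by a high moment estimate: for each fixed even $p=2q$ we bound $\E|X|^{2q}$, where $X$ is the sum in question. This is done exactly as in the proof of Lemma~\ref{lem:fluctuation_averaging_lowrank}, followed by Markov's inequality. We focus on (a); parts (b) and (c) are structurally identical after writing $\g_i^*R^{(ij)}\g_j$ and $\g_i^*R^{(i)}\u$ as contractions of $\g_i,\g_j$ with resolvent minors.

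\textbf{Step 1: expansion.} Write $X=\sum_i \cY_i\cQ_i$ with $\cY_i=N^{-1}\u^*(\g_i\g_i^*-\Sigma)R^{(i)}\u$ and $\cQ_i=(1+N^{-1}\g_i^*R^{(i)}\g_i)^{-1}$. Expand $\E|X|^{2q}$ as a sum over index tuples $(i_1,\dots,i_{2q})$ and let $S$ be the set of distinct indices. For each factor we remove the dependence on $\g_j$, $j\in S\setminus\{i_t\}$, by iterating Sherman--Morrison $R^{(T)}=R^{(Tj)}-N^{-1}R^{(Tj)}\g_j\g_j^*R^{(Tj)}\cQ_j^{(Tj)}$ up to a depth $D$ depending on $q$. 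Each denominator $\cQ_j^{(S)}$ is replaced by a finite Taylor expansion around the deterministic value $(1+\gamma\, m_\Sigma)^{-1}$, with $m_\Sigma$ the a priori approximant of $N^{-1}\Tr\Sigma R^{(S)}$. The fluctuation $N^{-1}(\g_j^*R^{(S)}\g_j-\Tr\Sigma R^{(S)})\prec\Phi$ comes from Assumption~\ref{assum:concentration} together with $N^{-1}\|R^{(S)}\|_F\prec\Phi$; the remaining piece $N^{-1}\Tr\Sigma(R^{(S)}-\E R)$ is handled with $\Gamma^{(S)}\prec N^{-\tau'}$ and Taylor remainders of order $N^{-D\tau'}$. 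After this step each term is a polynomial in the vectors $\{\g_j\}_{j\in S}$, contracted against products of $R^{(S)}$, $\u$, $\Sigma$, and $\E_S$ acts only on the $\g$'s. Indices $i_t$ that appear exactly once and carry no resolved factor of another $\g$ give zero, since $\E_{i_t}\cY_{i_t}^{(S)}=0$. The naive bounds already used in Lemma~\ref{lem:fluctuation_averaging_lowrank} show that each "leaf" factor is $\prec\Phi$, and each resolved cross term costs an extra $\Phi$; these estimates are sharp individually.

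\textbf{Step 2: moment--cumulant expansion and tensor networks.} Take $\E_S$ of each monomial and expand every $\E\g_j^{\otimes k}$ into products of cumulants $\kappa_{k'}(\g_j)$ over partitions. Each resulting term is a tensor network: vertices are cumulants $\kappa_k(\g_j)$ and second-order pieces $\Sigma$; edges are $R^{(S)}/N$ (carrying conjugations); leaves are $\u$ or $R^{(S)}\u/\sqrt N$. The needed bound is that each term is $\prec (N^{\delta}\Phi)^{2q}$ times $N^{-(\#\text{free sums})}$ compensation. We count as follows: a lone index $i$ (appearing once in the tuple) forces a connection to other vertices, and every such connection either gains a factor $N^{-1/2}$ relative to the naive count or produces a centered factor with zero expectation. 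Summing over $N^{|S|}$ tuples then gives the usual combinatorics, as in \cite{erdHos2013averaging}.

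\textbf{Step 3: peeling (main obstacle).} Individual networks are bounded by recursively removing a highest-order cumulant vertex. If a vertex has $m\ge1$ legs attached to vectors of bounded $\ell_2$ norm (leaves $\u$, or $R\u/\sqrt N$ with norm $\prec\Phi$), apply \eqref{eq:cumulantassumption}. This replaces the vertex by a factor $N^{(k-m-1)/2}\prod\|\s_t\|_2$ and converts the remaining legs into test vectors $\x\in\cU_k$, at the cost of a $\sup$ over polynomially many choices, which is harmless by Lemma~\ref{lemma:domination}(a). The edges $R/N$ that met these legs become vectors $R^*\x/N$ with $\|R^*\x\|_2/N\prec N^{-1/2}\Phi$, or, where two test vectors meet through one edge, scalars $\x^*R\y/N\prec N^{\delta-1}$. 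We iterate until only chains and cycles of $R/N$ and $\Sigma$ remain. Cycles are bounded via $N^{-1}\|R\|_1\prec1$ (traces of products), and chains via $|\x^*R\y|\prec N^\delta$ or the $\Phi$-bound. The hardest part is the bookkeeping showing that the power of $N^{1/2}$ lost at each peeling, $N^{(k-m-1)/2}$, is always compensated by the $N^{-1}$ factors on the cut edges. The key point is that the $-1$ in the exponent exactly pays for the one edge that the structural assumption improves relative to the generic bound \eqref{eq:cumulantconditionanalogueimplication}. Tracking the $N^\delta$ losses, at most one per leaf, gives $N^{\delta}$ per factor in (a) and (c), and $N^{2\delta}$ in (b), where each factor contains two resolved leaves.

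I would first check the $k=4$, $m=2$ network from Section~\ref{subsec:proofideas} to calibrate the exponents. Then I would set up an induction on the number of cumulant vertices with the invariant "value $\prec N^{\delta\cdot\#\text{leaves}}\,\Phi^{\#\text{leaves}}\,N^{-\#\text{internal edges}/2}$." If a vertex has no leaf leg ($m=0$), I would first peel a neighbouring vertex so that the cut edges become $\ell_2$-bounded vectors.
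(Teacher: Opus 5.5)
\paragraph{Verdict.} Your outline has the same architecture as the paper: high moments, Sherman--Morrison down to a common minor, a moment--cumulant expansion into tensor networks, peeling cumulant vertices with Assumption~\ref{assum:cumulant}, and finally chains and cycles. However, the step that improves $\sqrt N\Phi^2$ to $N^\delta\Phi$ is missing, and the invariant you propose to induct on is false.

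\paragraph{The invariant and the matching of gains to lone indices.} An edge $R^{(S)}/N$ joining two cumulant vertices is only $\prec\Phi$, not $N^{-1/2}$. For example, the cycle $N^{-2}\Tr(\Sigma R^{(S)}\Sigma\overline{R^{(S)}})$ arising from $\E_{\{i,j\}}\cZ_{ijj}^{(S)}\overline{\cZ_{jii}^{(S)}}$ equals $N^{-2}\|R^{(S)}\|_F^2=\Im\Tr R^{(S)}/(N^2\eta)$ for $\Sigma=I$. In the bulk this is $\asymp(N\eta)^{-1}\gg N^{-1}$. The accounting your Step~3 mechanism actually supports is: a factor $\Phi$ per right leaf and per such edge, and a factor $N^{\delta-1/2}$ per strong removal. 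The real work is then to produce one such gain for each index occurring exactly once in the moment, and to repay every $\Phi$ sacrificed along the way by a surplus edge. Your Step~2 only asserts this. In the paper it is the content of Lemmas~\ref{lem:tensor network of sample covariance}, \ref{lem:tensor network of gram matrix} and \ref{lem:tensor network of off-diagonal block}. These construct sets $U^1_\cG,U^2_\cG$ with $|U^1_\cG|+|U^2_\cG|=|K|$ and $|V^{m2}_\cG|\ge|U^1_\cG|$ (with analogues for (b) and (c)), used together with a prescribed removal order. This must also cover lone indices whose cumulant vertex is a $\Sigma$, where Assumption~\ref{assum:cumulant} is unavailable. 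There the gain comes instead from $|\x^*R^{(S)}\y|\prec N^\delta$ at a chain end, e.g.\ $N^{-1/2}\u^*\Sigma R^{(S)}\u\prec N^{\delta-1/2}$ in place of $\Phi$.

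\paragraph{Vertices with no $\ell_2$-bounded leg.} Your peeling cannot start at such a vertex, and this situation is generic. In (b) the networks have no leaves at all, and in (c) an odd cumulant of a lone index may touch only matrix vertices. Your fallback, peeling a neighbour first, uses $m=0$, where Assumption~\ref{assum:cumulant} gives nothing beyond Lemma~\ref{lem:weak cumulant bound}. That neighbour therefore earns no $N^{-1/2}$, and it may itself be a lone-index vertex. Concretely, $(\cB_{ij}^{(S)})^3(\cC^{(S)})^4$ occurs in the expansion of $\cB_{ij}^{(ij)}\cQ_i^{(i)}\cQ_j^{(ij)}$. If $i,j$ occur nowhere else, $\E_S$ of it is $(\cC^{(S)})^4$ times $\langle\kappa_3(\g_i)\otimes\kappa_3(\g_j),(R^{(S)}/N)^{\otimes 3}\rangle$. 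For the count to close this needs to be $\prec N^{2\delta-1}\Phi$. Weak-then-strong peeling gives at best $N^{\delta-1/2}\Phi^2$, which is larger whenever $\Phi\gg N^{\delta-1/2}$.

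\paragraph{The missing idea.} Write one adjacent $R^{(S)}/N$ as $N^{-1}\sum_\alpha(\lambda_\alpha-z)^{-1}\v_\alpha\v_\alpha^*$, i.e.\ replace it by two unit-norm eigen-leaves at cost $N^{-1}\|R^{(S)}\|_1\prec1$. The vertex then has an $\ell_2$ leg and the strong bound applies with $m=1$. The price is one $\Phi$ (the consumed edge), which is exactly what the surplus $|V^{m2}_\cG|\ge 2L+|U^1_\cG|$ pays for.

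\paragraph{Self-loops and cycles.} The same decomposition should be used for self-loops, i.e.\ an $R^{(S)}/N$ from a $\cP_j^{(S)}$ factor with both legs on one cumulant. Contracting these into $T$ costs $N^\delta$ each, and their number is not controlled by $|K|$. This is where the hypothesis $N^{-1}\|R^{(S)}\|_1\prec1$ enters. It is not the right tool for cycles, which must be bounded via $N^{-1}\|R^{(S)}\|_F\prec\Phi$ so that they supply their factors of $\Phi$.
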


In the remainder of this section, we prove
Lemmas \ref{lem:fluctuation_averaging},
\ref{lem:fluctuation_averaging_lowrank}, and
\ref{lem:fluctuation_averaging_lowrank2}.

\subsection{Preliminaries}

\begin{lemma}[Resolvent identities]\label{lem:resolventidentities}
For any $S\subseteq[N]$:
\begin{enumerate}[label=(\alph*)]
    \item For any distinct $i,j\notin S$,
    \begin{equation}\label{eq:schur complement}
        \widetilde{R}^{(S)}_{ii}=-\frac{1}{z}\frac{1}{1+N^{-1}\g_i^*
R^{(iS)}\g_i},\qquad
\widetilde{R}^{(S)}_{ij}=z\widetilde{R}^{(S)}_{ii}\widetilde{R}^{(iS)}_{jj}
\cdot N^{-1}\g_i^* R^{(ijS)}\g_j.
    \end{equation}
    \item For any $i,j,k\notin S$ (including $i=j$) with $k\notin\{i,j\}$,
    \begin{equation}\label{eq:resolvent identity}
    \widetilde{R}^{(S)}_{ij}=\widetilde{R}^{(kS)}_{ij}+\frac{\widetilde{R}^{(S)}_{ik}\widetilde{R}^{(S)}_{kj}}{\widetilde{R}^{(S)}_{kk}},\qquad \frac{1}{\widetilde{R}_{ii}^{(S)}}=\frac{1}{\widetilde{R}^{(kS)}_{ii}}-\frac{\widetilde{R}^{(S)}_{ik}\widetilde{R}^{(S)}_{ki}}{\widetilde{R}^{(kS)}_{ii}\widetilde{R}^{(S)}_{kk}\widetilde{R}_{ii}^{(S)}}.
    \end{equation}
    \item (Sherman-Morrison) For any $i\notin S$,
\begin{equation}\label{eq:Sherman-Morrison}
R^{(S)}=R ^{(iS)}-\frac{N^{-1}R ^{(iS)}\g_i\g_i^* R^{(iS)}}
{1+N^{-1}\g_i^*R^{(iS)}\g_i}
=R^{(iS)}+z\widetilde{R}^{(S)}_{ii} \cdot N^{-1}R ^{(iS)}\g_i\g_i^* R ^{(iS)}.
\end{equation}
    \item (Ward's identity)
    \begin{equation}\label{eq:Ward's identity}
    \|R^{(S)}\|_F^2:=\Tr R^{(S)}R^{(S)*}=\frac{\Im\Tr R^{(S)}}{\Im z}.
    \end{equation}
\end{enumerate}
\end{lemma}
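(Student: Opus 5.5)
These are standard linear-algebra identities, and I will verify each one directly from block-matrix inversion (Schur complements). The key is the linearized matrix $H^{(S)}=\begin{bmatrix}-zI_n & N^{-1/2}G^{(S)}\\ N^{-1/2}G^{(S)*} & -I_N\end{bmatrix}$. Its inverse has lower-right block $z\widetilde R^{(S)}$, since the Schur complement is $-I_N+N^{-1}G^{(S)*}G^{(S)}/z=(\widetilde K^{(S)}-zI)/z$. It has upper-left block $R^{(S)}$.

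\textbf{Part (a).} I will apply the Schur complement formula to $\widetilde K^{(S)}-zI_N$, isolating the $i$-th row and column. The diagonal entry is $N^{-1}\|\g_i\|^2-z$. The off-diagonal row is $N^{-1}\g_i^*G^{(iS)}$. The remaining block, restricted to the coordinates $\notin\{i\}$, is $\widetilde K^{(iS)}-zI$. This gives
\[1/\widetilde R^{(S)}_{ii}=N^{-1}\g_i^*\g_i-z-N^{-2}\g_i^*G^{(iS)}\widetilde R^{(iS)}G^{(iS)*}\g_i.\]
Next I use the push-through identity $G\widetilde R G^*=N(I+zR)$, which follows from $G(G^*G/N-z)^{-1}G^*=N\,GG^*/N\,(GG^*/N-z)^{-1}$. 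It gives $N^{-1}G^{(iS)}\widetilde R^{(iS)}G^{(iS)*}=I+zR^{(iS)}$. Substituting, $1/\widetilde R^{(S)}_{ii}=-z(1+N^{-1}\g_i^*R^{(iS)}\g_i)$, which is the first formula. For the off-diagonal entry, the same Schur formula gives
\[\widetilde R^{(S)}_{ij}=-\widetilde R^{(S)}_{ii}\,N^{-1}\g_i^*G^{(iS)}\widetilde R^{(iS)}\e_j.\]
I then expand $\widetilde R^{(iS)}\e_j$ by isolating $j$ in the same way: $(\widetilde R^{(iS)})_{kj}=-\widetilde R^{(iS)}_{jj}N^{-1}(G^{(ijS)}\widetilde R^{(ijS)})^*_{k}\g_j$ for $k\ne j$. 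Combining this with the $k=j$ term and again using push-through, I expect $G^{(iS)}\widetilde R^{(iS)}\e_j=\widetilde R^{(iS)}_{jj}\big(\g_j-G^{(ijS)}\widetilde R^{(ijS)}G^{(ijS)*}\g_j/N\big)=-z\widetilde R^{(iS)}_{jj}R^{(ijS)}\g_j$. This yields the second formula.

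\textbf{Part (b).} This is the standard Schur identity for any invertible matrix $M$ with inverse $R$. Removing index $k$ gives $(M^{(k)})^{-1}_{ij}=R_{ij}-R_{ik}R_{kj}/R_{kk}$. One checks it by the block inversion formula, or by verifying that the right side, restricted off $k$, inverts $M$ restricted off $k$. The first identity follows because $\widetilde K^{(kS)}-zI$ is block diagonal: the $k$-th block is $-z$, and the rest is the restriction of $\widetilde K^{(S)}-zI$ off $k$. Taking $i=j$ in the first identity, I will write $\widetilde R^{(kS)}_{ii}=\widetilde R^{(S)}_{ii}-\widetilde R^{(S)}_{ik}\widetilde R^{(S)}_{ki}/\widetilde R^{(S)}_{kk}$. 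Dividing both sides by $\widetilde R^{(kS)}_{ii}\widetilde R^{(S)}_{ii}$ gives the second identity.

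\textbf{Part (c).} The first equality is Sherman--Morrison applied to $K^{(S)}-z=K^{(iS)}-z+N^{-1}\g_i\g_i^*$. The second equality then follows by substituting part (a), which gives $-1/(1+N^{-1}\g_i^*R^{(iS)}\g_i)=z\widetilde R^{(S)}_{ii}$.

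\textbf{Part (d).} I will use the spectral decomposition $R^{(S)}=\sum_\alpha(\lambda_\alpha-z)^{-1}\v_\alpha\v_\alpha^*$ with real $\lambda_\alpha$. Then $\Tr RR^*=\sum_\alpha|\lambda_\alpha-z|^{-2}$, while $\Im\Tr R=\sum_\alpha\eta|\lambda_\alpha-z|^{-2}$, so the two sides agree.

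The only step needing care is the off-diagonal formula in (a). There the bookkeeping of the two nested Schur complements, together with the sign and the factor $z$ coming from push-through, must be tracked exactly. I would double-check it against the $2\times2$ case $N=2$, $S=\emptyset$.
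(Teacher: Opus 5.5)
Your proof is correct and is essentially the paper's argument: Schur complements for (a)--(b), Sherman--Morrison combined with (a) for (c), and a spectral computation for (d), which is equivalent to the paper's resolvent identity for $R^{(S)}-R^{(S)*}$. The one difference is that you take Schur complements of $\widetilde K^{(S)}-zI_N$ and supply the push-through identity $G\widetilde R G^*=N(I+zR)$ by hand, whereas the linearization $\Pi(z)$ builds it in. One small fix: in the off-diagonal step the $k$-th entry should read $(\widetilde R^{(ijS)}G^{(ijS)*}\g_j)_k$, with a transpose rather than an adjoint, since $\widetilde R$ is complex symmetric. More quickly, push-through gives $G^{(iS)}\widetilde R^{(iS)}\e_j=R^{(iS)}\g_j$, and Sherman--Morrison with the diagonal formula then yields $-z\widetilde R^{(iS)}_{jj}R^{(ijS)}\g_j$.
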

\begin{proof}
Parts (a--b) follow from applying Schur's complement to the
linearized resolvent $\Pi(z)$. Part (c) is the standard Sherman-Morrison formula
for a rank-one update of the matrix inverse $R^{(S)}$.
Part (d) follows from the identity
    $R^{(S)}-R^{(S)*}=R^{(S)}(z-\bar{z})R^{(S)*}=(2\Im{z})R^{(S)}R^{(S)*}$.
\end{proof}

\begin{lemma}[Concentration]\label{lem:concentration}
Under Assumptions \ref{assum:basic} and \ref{assum:concentration}, we have
\begin{enumerate}[label=(\alph*)]
    \item (Linear and quadratic forms)
Uniformly over all deterministic $A\in\C^{n\times n}$, $\u\in\C^n$,
and $i,j\in[N]$ with $i \neq j$,
    \[
        \g_i^*A\g_j \prec \|A\|_F,
        \quad \|A\g_i\|_2 \prec \|A\|_F,
        \quad \u^*\g_i \prec \|\u\|_2.
    \]
    \item (Operator norm)
For every $S\subseteq[N]$, $\|K^{(S)}\|_\op \leq \|K\|_\op \prec 1$.
\end{enumerate}
\end{lemma}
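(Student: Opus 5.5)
The plan is to derive each bound directly from Assumption~\ref{assum:concentration}, using polarization and Markov-type arguments; the operator norm bound is the only step needing a separate idea. Throughout, I use the moment bound $\E\|\g_i\|_2^k\le n^{C_k}$ so that Lemma~\ref{lemma:domination}(c) applies when conditioning.

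\emph{Part (a).} For the linear form, I apply Assumption~\ref{assum:concentration} with $A=\u\u^*$. Since $\|\u\u^*\|_F=\|\u\|_2^2$, this gives $|\u^*\g_i|^2=\Tr\Sigma\u\u^*+\Oprec(\|\u\|_2^2)$. Together with $\Tr\Sigma\u\u^*\le\|\Sigma\|_\op\|\u\|_2^2\le C\|\u\|_2^2$, this yields $|\u^*\g_i|\prec\|\u\|_2$. For complex $\u$ I split into real and imaginary parts, since $\g_i$ is real.

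For $\|A\g_i\|_2$, I apply the assumption to $A^*A$, again splitting into real and imaginary parts (Hermitian parts). This gives $\|A\g_i\|_2^2=\Tr\Sigma A^*A+\Oprec(\|A^*A\|_F)$. Since $\Tr\Sigma A^*A\le C\|A\|_F^2$ and $\|A^*A\|_F\le\|A\|_F^2$, the claim follows.

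For the bilinear form with $i\ne j$, I condition on $\g_j$ and set $\u=A\g_j$, which is independent of $\g_i$. The linear-form bound, applied conditionally on $\g_j$ and uniformly over deterministic $\u$, gives $|\g_i^*A\g_j|\prec\|A\g_j\|_2$. Here the uniformity is used by integrating the tail bound over the law of $\g_j$, since the constant $C(\eps,D)$ does not depend on $\u$. Combining with $\|A\g_j\|_2\prec\|A\|_F$ then gives $\g_i^*A\g_j\prec\|A\|_F$.

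\emph{Part (b).} The inequality $\|K^{(S)}\|_\op\le\|K\|_\op$ holds because $K-K^{(S)}=N^{-1}\sum_{i\in S}\g_i\g_i^*\succeq0$. The main obstacle is $\|K\|_\op\prec1$, because the concentration assumption only controls fixed directions. I would use a high-moment trace bound: show that $\E\Tr K^{p}\le N^{C\eps p}C_p^p\, n$ for large fixed $p$. Then Markov's inequality gives $\|K\|_\op\le(\Tr K^p)^{1/p}\prec1$ after choosing $p$ large relative to the $n^{1/p}$ loss.

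Expanding $\Tr(GG^*/N)^p$, I write it as a sum over words $\g_{i_1}^*\g_{i_2}\g_{i_2}^*\g_{i_3}\cdots$. Then:
\begin{itemize}
\item The diagonal factors $\|\g_i\|_2^2$ are $\prec n$, by the quadratic form bound with $A=I$.
\item The off-diagonal factors $\g_i^*\g_j$ are $\prec\sqrt n$, by part (a).
\end{itemize}
Naive bounding of each word in absolute value is too crude: there are $N^p$ words and each would be bounded by about $n^{p/2}$ at best. So I would instead use a simpler net argument.

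\emph{Net argument.} I would first reduce to showing $\u^*K\u\prec1$ uniformly on a $1/2$-net of the unit sphere, of cardinality $5^n$. This is not polynomial, so Lemma~\ref{lemma:domination}(a) does not apply directly. The fallback is a Gram-matrix resolvent route:
\begin{itemize}
\item Bound $\widetilde K$ through its entries. We have $\widetilde K_{ii}=\|\g_i\|_2^2/N\prec1$ and $\widetilde K_{ij}\prec N^{-1/2}$.
\item Bound the trace moments $\E\Tr\widetilde K^p$ by the moment method. Off-diagonal factors now contribute $N^{-1/2}$ each.
\item Handle the combinatorics. Word paths with $k$ distinct off-diagonal steps give roughly $N^{\#\text{indices}}N^{-k/2}$. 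Conditional mean-zero cancellation, $\E_i[\g_i\mid\text{others}]=0$, kills indices appearing exactly once, exactly as in the Wigner moment method.
\end{itemize}
This step, controlling the combinatorics of the trace moments using only independence across columns and part (a), is where I expect the real work to lie.
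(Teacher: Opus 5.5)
Your part (a), including the conditioning argument for $\g_i^*A\g_j$ and the reduction of complex $A$ to real and imaginary parts, is correct and essentially matches the paper. So does the monotonicity $\|K^{(S)}\|_\op\le\|K\|_\op$ from $K-K^{(S)}\succeq 0$.

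The gap is the bound $\|K\|_\op\prec 1$, which you do not prove. The Gram-matrix moment method you fall back on relies on a cancellation that does not exist. Write
\[
\Tr\widetilde K^p=N^{-p}\sum_{i_1,\ldots,i_p}\g_{i_1}^*\g_{i_2}\,\g_{i_2}^*\g_{i_3}\cdots\g_{i_p}^*\g_{i_1}.
\]
Every index enters this word through the pair $\g_{i}\g_{i}^*$, whose conditional expectation is $\Sigma\neq 0$. So no index \emph{appearing once} is killed. For example, a word with all indices distinct has expectation $N^{-p}\Tr\Sigma^p$.

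The combinatorics is therefore of Wishart type, not Wigner type. With dependent coordinates you would also have to control expectations such as $\E[(\g_i^*M_1\g_i)(\g_i^*M_2\g_i)]$ along reduced words, and none of this is set up. The crude inputs $\|\g_i\|_2^2\prec n$ and $\g_i^*\g_j\prec\sqrt n$ only give a total of order $N^{s/2}$ for words with $s$ distinct indices. That is useless for $s\ge 3$: it yields $\|K\|_\op\prec\sqrt N$ at best.

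The missing idea is to recenter before expanding, since that is where the mean-zero cancellation actually lives. Write $K=\Sigma+\sum_i A_i$ with $A_i=N^{-1}(\g_i\g_i^*-\Sigma)$, which are independent and mean zero. Then apply a noncommutative moment inequality; the paper uses the matrix Rosenthal bound
\[
\E\,N^{-1}\Tr\Big(\sum_i A_i\Big)^{2k}\le C_k\Big\|\sum_i\E A_i^2\Big\|_\op^k+C_k\sum_i\E\|A_i\|_\op^{2k}.
\]
The inputs are exactly what part (a) gives you:
\begin{itemize}
\item $\|A_i\|_\op\prec 1$ follows from $\|\g_i\|_2^2\prec n$, so the second term is $\prec N$.
\item $\v^*\E A_i^2\v\le N^{-2}\E\big[|\v^*\g_i|^2\|\g_i\|_2^2\big]\prec N^{-1}$, so the first term is $\prec 1$.
\item The moment bounds in Assumption~\ref{assum:concentration} convert these $\prec$ bounds into bounds on expectations.
\end{itemize}
Finally, use $\|\sum_iA_i\|_\op^{2k}\le\Tr(\sum_iA_i)^{2k}$ and Markov's inequality with $k$ large relative to $\eps$. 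This absorbs the polynomial prefactor and gives $\|K\|_\op\le\|\Sigma\|_\op+N^{\eps}$ with high probability.
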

\begin{proof}
For (a), we have $|\u^*\g_i|^2=\g_i^*\u\u^*\g_i=\u^*\Sigma\u +
\Oprec(\|\u\|_2^2)=\Oprec(\|\u\|_2^2)$, where the second equality follows from
Assumption \ref{assum:concentration} applied to the matrix $\u\u^*$. Thus
$\u^*\g_i \prec \|\u\|_2$. Next, since $\g_i$ and $\g_j$ are independent, this
implies
\[
    \g_i^* A\g_j \prec \|A\g_j\|_2.
\]
Moreover,
\[
    \|A\g_j\|_2^2=\g_j^*A^*A\g_j=\Tr\Sigma A^*A + \Oprec(\|A^*A\|_F)=\Oprec(\|A\|_F^2),
\]
so $\|A\g_j\|_2 \prec \|A\|_F$ and thus $\g_i^*A\g_j \prec \|A\|_F$. This proves (a).

For (b), note that $0 \preceq K^{(S)} \preceq K$ in the positive-definite
ordering, so $\|K^{(S)}\|_\op \leq \|K\|_\op$. To show $\|K\|_\op\prec 1$, define $A_i=N^{-1}(\g_i\g_i^*-\Sigma)$. Then $K=\sum_{i=1}^NA_i+\Sigma$.
By Assumption \ref{assum:basic}, $\|\Sigma\|_\op\leq C$, so it
suffices to show $\|\sum_{i=1}^N A_i\|_\op \prec 1$. Since the $A_i$ are
independent, mean-zero, symmetric random matrices, the matrix Rosenthal inequality implies that for any $k\geq 1$, there exists a constant $C_k>0$ such that
\[
    \E\,\frac{1}{N}\Tr\left(\sum_{i=1}^N A_i\right)^{2k}\leq
C_k\underbrace{\left\|\sum_{i=1}^N\E
A_i^2\right\|_\op^k}_{=I}+C_k\underbrace{\sum_{i=1}^N\E\|A_i\|_\op^{2k}}_{=II}.
\]
By Assumption \ref{assum:basic}, for each $i\in[N]$,
    $\|A_i\|_\op\leq N^{-1}\|\g_i\|_2^2+N^{-1}\|\Sigma\|_\op \leq
N^{-1}\|\g_i\|_2^2+C/N$.
Moreover, $\|\g_i\|_2^2=\g_i^*I_n\g_i=\Tr\Sigma + \Oprec(\|I_n\|_F)=\Oprec(n)$,
so $N^{-1}\|\g_i\|_2^2\prec 1$. Thus $\|A_i\|_\op\prec 1$. It follows that
for any fixed $k \geq 1$, $\|A_i\|_\op^{2k}\prec 1$ and hence $II\prec N$.

For the first term $I$, consider any unit vector $\v\in\C^n$. Then,
since $0 \preceq \E A_i^2=N^{-2}[\E(\g_i\g_i^*)^2-\Sigma^2]
\preceq N^{-2}\E(\g_i\g_i^*)^2$, we have
$\sum_{i=1}^N\v^*\E A_i^2\v \leq \sum_{i=1}^N N^{-2}\E\v^*(\g_i\g_i^*)^2\v$.
Since also $\v^*\g_i \prec 1$ and $\|\g_i\|_2^2 \prec n$ as shown above,
this shows $\sum_{i=1}^N\v^*\E A_i^2\v  \prec 1$. Thus
for any fixed $k \geq 1$, $\|\sum_{i=1}^n \E A_i^2\|_\op^k \prec 1$ and hence
$I \prec 1$. Putting this together yields
\[
    \E\,\frac{1}{N}\Tr \left(\sum_{i=1}^N A_i\right)^{2k}\prec N.
\]
The claim now follows from Markov's inequality: For any $\eps,D>0$, there exist
constants $k=k(\eps,D)>0$ and $C \equiv C(\eps,D)>0$ large enough such that
\[\P\left(\left\|\sum_{i=1}^NA_i\right\|_\op\geq N^\eps\right)\leq N^{-2\eps
k}\E\left\|\sum_{i=1}^NA_i\right\|_\op^{2k}
\leq N^{-2\eps k}\E\Tr \left(\sum_{i=1}^NA_i\right)^{2k}
\leq CN^{-2\eps k}N^{2+\eps}\leq CN^{-D}.\]
\end{proof}

\subsection{Sherman-Morrison recursions}

For any $A \in \C^{n \times n}$, $\u \in \C^n$, $S \subset [N]$, and $i, j, k \in [N] \setminus S$, define
\begin{align*}
    \cY_i^{(S)}[A] &= \Tr (\g_i \g_i^* - \Sigma) \frac{R^{(S)}A}{\sqrt{N}}, \\
    \cZ_{ijk}^{(S)}[A] &= \Tr (\g_i \g_i^* - \Sigma) \frac{R^{(S)}}{N}\g_j
\g_k^* \frac{R^{(S)}A}{\sqrt{N}}, \quad i \notin \{j, k\}, \\
    \cX_i^{(S)}[\u] &= \g_i^* \frac{R^{(S)}\u}{\sqrt{N}}, \\
    \cB_{jk}^{(S)} &= \g_j^* \frac{R^{(S)}}{N} \g_k, \quad j \neq k, \\
    \cP_i^{(S)} &= \Tr (\g_i \g_i^* - \Sigma) \frac{R^{(S)}}{N}, \\
    \cQ_i^{(S)} &= \Big(1 + \g_i^* \frac{R^{(S)}}{N} \g_i\Big)^{-1}, \\
    \cC^{(S)} &= \Big(1 + \Tr \Sigma \frac{R^{(S)}}{N}\Big)^{-1}.
\end{align*}
Note that each $R^{(S)}$ in the expressions is scaled by $N^{-1}$, and each
$R^{(S)}A$ and $R^{(S)}\u$ is scaled by $N^{-1/2}$.
We remark that the vectors $\g_i$ are real and $R^{(S)}$ is symmetric (in the
real sense, without complex conjugation) so $\cB_{jk}^{(S)}=\cB_{kj}^{(S)}$.

We first establish basic bounds for these quantities under the conditions of
Lemmas \ref{lem:fluctuation_averaging} and
\ref{lem:fluctuation_averaging_lowrank}.

\begin{lemma}\label{lem:prec bounds on y_i}
Suppose the conditions of Lemma \ref{lem:fluctuation_averaging} hold.
Then for any fixed $L \geq 1$,
uniformly over all $S \subseteq [N]$ with $|S| \leq L$, all $i, j, k \in [N]
\setminus S$ with $i \notin \{j,k\}$, all $A \in \C^{n \times n}$ with
$\|A\|_\op \leq 1$, all unit vectors $\u \in \C^n$, and all $z=E+i\eta \in \bD$,
we have
\[|\cY_i^{(S)}[A]| \prec N^{1/2}\Phi,
\quad |\cZ_{ijk}^{(S)}[A]| \prec N^{1/2}\Phi^2,\]
\[|\cB_{ij}^{(S)}| \prec \Phi, \quad |\cP_i^{(S)}| \prec \Phi,
\quad |\cQ_i^{(S)}| \prec 1, \quad |\cC^{(S)}| \prec 1.\]
If, in addition, the condition $N^{-1/2}\|R^{(S)}\u\|_2 \prec \Phi$ of Lemma
\ref{lem:fluctuation_averaging_lowrank} holds, then
\[|\cY_i^{(S)}[\u\u^*]| \prec \Phi,
\quad |\cZ_{ijk}^{(S)}[\u\u^*]| \prec \Phi^2,
\quad |\cX_i^{(S)}[\u]| \prec \Phi.\]
\end{lemma}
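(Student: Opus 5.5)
The plan is to bound each quantity by conditioning on the samples not involved, so that the $\g_i$'s appearing in it are independent of the resolvent $R^{(S)}$. We then apply Lemma \ref{lem:concentration}(a) and Assumption \ref{assum:concentration} with the deterministic (conditional) matrix inserted. Since every bound has the form $X\prec Y$ with $Y$ deterministic, conditioning is harmless: the probability bounds hold conditionally uniformly, and they integrate out. The inputs are $\|R^{(S)}\|_F/N\prec\Phi$ (or $\|R^{(S)}\u\|_2/\sqrt N\prec\Phi$), the bound $\Gamma^{(S)}\prec N^{-\tau'}$, and the regularity bounds \eqref{eq:regularbounds}.

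\emph{Linear and quadratic terms.} For $\cY_i^{(S)}[A]$, we have $i\notin S$, but $R^{(S)}$ depends on $\g_i$. So we first use \eqref{eq:Sherman-Morrison} to write $R^{(S)}=R^{(iS)}+z\widetilde R^{(S)}_{ii}N^{-1}R^{(iS)}\g_i\g_i^*R^{(iS)}$. The leading part $\Tr(\g_i\g_i^*-\Sigma)R^{(iS)}A/\sqrt N$ is $\prec\|R^{(iS)}A\|_F/\sqrt N\le\|R^{(iS)}\|_F/\sqrt N\prec N^{1/2}\Phi$ by Assumption \ref{assum:concentration}. The correction term is controlled using $|z\widetilde R_{ii}^{(S)}|\prec1$, which follows from $\Gamma^{(S)}\prec N^{-\tau'}$ and $|\widetilde m_0|\le C$. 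A cleaner route is to use the identity $R^{(S)}\g_i=(z\widetilde R^{(S)}_{ii})\cdot(-1)\cdots R^{(iS)}\g_i$, i.e.\ $R^{(S)}\g_i=R^{(iS)}\g_i\,\cQ_i^{(iS)}$, so all terms reduce to forms in $R^{(iS)}$. I will read the definitions as intended with $i$ (and $j,k$) in $S$, as in the introduction's $R^{(ij)}$, so that $R^{(S)}$ is independent of $\g_i,\g_j,\g_k$. If instead $i\notin S$ literally, the Sherman--Morrison reduction above handles it.

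The remaining quantities follow from Lemma \ref{lem:concentration}(a). We get $\cB_{jk}\prec\|R\|_F/N\prec\Phi$ and $\cP_i\prec\|R\|_F/N\prec\Phi$, and also $\cX_i[\u]\prec\|R\u\|_2/\sqrt N\prec\Phi$. For $\cY_i[\u\u^*]$, we use $\|R\u\u^*\|_F=\|R\u\|_2$. For $\cZ_{ijk}[A]$, write it as $(\g_k^*\frac{RA}{\sqrt N}(\g_i\g_i^*-\Sigma)\frac{R}{N}\g_j)$ and condition first on $\g_i$. Then $\g_j,\g_k$ enter as a bilinear form. By Lemma \ref{lem:concentration}(a) applied to $\g_j,\g_k$ (or Assumption \ref{assum:concentration} if $j=k$, after centering, where the $\Tr\Sigma$ term is handled separately), this is $\prec\|\frac{R}{N}(\g_i\g_i^*-\Sigma)\frac{RA}{\sqrt N}\|_F$. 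That quantity is at most $N^{-3/2}(\|R\g_i\|_2\|A^*R^*\g_i\|_2+\|R\Sigma RA\|_F)$. Applying Lemma \ref{lem:concentration}(a) again, with $\|R\g_i\|_2\prec\|R\|_F$, gives $\prec N^{-3/2}\|R\|_F^2\prec N^{1/2}\Phi^2$. For $A=\u\u^*$, we instead factor as $(\u^*\g_k$-type terms$)$: $\cZ_{ijk}[\u\u^*]=\g_k^*\frac{R\u}{\sqrt N}\cdot\u^*(\g_i\g_i^*-\Sigma)\frac{R}{N}\g_j$. The first factor is $\prec\Phi$. The second is $\prec\|\frac{R}{N}(\g_i\g_i^*-\Sigma)\u\|_2\le N^{-1}(\|R\g_i\|_2|\g_i^*\u|+\|R\Sigma\u\|_2)\prec\|R\|_F/N\prec\Phi$, giving $\Phi^2$.

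\emph{Denominators.} For $\cQ_i^{(S)}$ with $i\notin S$, the Schur identity \eqref{eq:schur complement} gives $\cQ_i^{(S)}=-z\widetilde R_{ii}^{(S\setminus\{i\})}$. Here $\widetilde R_{ii}$ is within $N^{-\tau'}$ of $\widetilde m_0$ by the hypothesis on $\Gamma$, and $|z|,|\widetilde m_0|\le C$, so $|\cQ_i|\prec1$. For $\cC^{(S)}$, note that $\cQ_i^{(S)}$ differs from $\cC^{(S)}$ only through $\cP_i^{(S)}\prec\Phi\le N^{-\tau'}$ in the denominator: $1/\cQ_i=1/\cC+\cP_i$. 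Since $|\cQ_i|\prec1$, we have $|1/\cQ_i|$ bounded below. Since $1/\cQ_i=-1/(z\widetilde R_{ii})$, we get $|1/\cQ_i|\ge c$ because $|z\widetilde m_0|\le C$. Hence $|1/\cC|\ge c-O_\prec(N^{-\tau'})$, so $|\cC|\prec1$. The sample index $i$ here may be taken outside $S$ arbitrarily (using $|S|\le L$ and $N-L\ge1$). Uniformity follows from Lemma \ref{lemma:domination}, since all constants are uniform.

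I expect the main obstacle to be only bookkeeping of which resolvent minor is independent of which $\g$, together with the lower bound on $|1/\cQ_i|$ used for $\cC$. Neither step is deep.
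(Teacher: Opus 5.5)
Under the convention you settle on ($i,j,k\in S$, so that $R^{(S)}$ is independent of $\g_i,\g_j,\g_k$), your argument is correct and is essentially the paper's; this is also the convention the paper's proof actually uses, despite the ``$[N]\setminus S$'' in the statement. The common steps are concentration of the linear, bilinear and quadratic forms controlled by $\|R^{(S)}\|_F$ or $\|R^{(S)}\u\|_2$, the Schur identity $\cQ_i^{(S)}=-z\widetilde R_{ii}^{(S\setminus\{i\})}$ with $\Gamma\prec N^{-\tau'}$, and $1/\cQ_i^{(S)}=1/\cC^{(S)}+\cP_i^{(S)}$. For $\cZ_{ijk}$ the paper concentrates in $\g_i$ first, against the rank-one matrix $N^{-3/2}R^{(S)}\g_j\g_k^*R^{(S)}A$, which avoids your separate $j=k$ trace term.

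You should drop the fallback claims for $i\notin S$, including ``$\cQ_i^{(S)}$ with $i\notin S$'' and taking $i$ outside $S$ for $\cC^{(S)}$. The Schur identity and the concentration steps genuinely require $i\in S$, and for $i\notin S$ the bounds are false. Indeed, $N^{-1}\g_i^*R^{(S)}\g_i=a/(1+a)$ with $a=N^{-1}\g_i^*R^{(iS)}\g_i$, while $N^{-1}\Tr\Sigma R^{(S)}=a+\Oprec(\Phi)$. So $\cY_i^{(S)}[I]=-N^{1/2}a^2/(1+a)+\Oprec(N^{1/2}\Phi)$ is generically of order $N^{1/2}$, and no Sherman--Morrison bookkeeping rescues it.
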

\begin{proof}
For the first two bounds for $\cY_i^{(S)}$ and $\cZ_{ijk}^{(S)}$,
Assumption \ref{assum:concentration} and Lemma \ref{lem:concentration}(a) give
\begin{align*}
|\cY_i^{(S)}[A]| &\prec N^{-1/2} \|R^{(S)} A\|_F,\\
|\cZ_{ijk}^{(S)}[A]| &\prec \|N^{-1}R^{(S)}\g_j\g_k^* N^{-1/2}R^{(S)}A\|_F
\prec N^{-1}\|R^{(S)}\|_F \cdot N^{-1/2}\|R^{(S)}A\|_F.
\end{align*}
Since $\|R^{(S)} A\|_F \leq \|R^{(S)}\|_F\|A\|_\op \leq \|R^{(S)}\|_F$ and
$N^{-1}\|R^{(S)}\|_F \prec \Phi$, the first two bounds follow.
For a unit vector $\u$ and $A=\u\u^*$,
we have $\|R^{(S)}\u\u^*\|_F=\|R^{(S)}\u\|_2$, so
under the additional assumption $N^{-1/2}\|R^{(S)}\u\|_2 \prec \Phi$,
this shows $|\cY_i^{(S)}[\u\u^*]| \prec \Phi$ and $|\cZ_{ijk}^{(S)}[\u\u^*]|
\prec \Phi$ as well as
\[|\cX_i^{(S)}[\u]| \prec N^{-1/2} \|R^{(S)} \u\|_2 \prec \Phi.\]

The bounds $|\cB_{ij}^{(S)}| \prec \Phi$ and $|\cP_i^{(S)}| \prec \Phi$ follow
directly from Assumption \ref{assum:concentration} and the condition
$N^{-1}\|R^{(S)}\|_F \prec \Phi$.
For $\cQ_i^{(S)}$ and $\cC^{(S)}$, Lemma \ref{lem:resolventidentities}(a)
and the condition $\Gamma^{(S)} \prec N^{-\tau'}$ imply
$\cQ_i^{(S)}=-z \widetilde{R}_{ii}^{(S \setminus \{i\})}
=-z\widetilde m_0+\Oprec(\Gamma^{(S \setminus \{i\})})
=-z\widetilde m_0+\Oprec(N^{-\tau'})$.
The conditions $|z| \asymp 1$ and $|\widetilde m_0(z)| \asymp 1$
in Definition \ref{def:regular} for regularity of $\bD$ then imply that
for some constants $C,c>0$ and any $D>0$,
\[\P[c \leq |\cQ_i^{(S)}| \leq C] \geq 1-C(D)n^{-D}.\]
This shows $|\cQ_i^{(S)}| \prec 1$. Moreover,
\[
    (\cQ_i^{(S)})^{-1} = 1 + \g_i^* (N^{-1} R^{(S)}) \g_i = 1 + \Tr \Sigma (N^{-1} R^{(S)}) + \Tr (\g_i \g_i^* - \Sigma) (N^{-1} R^{(S)}) = (\cC^{(S)})^{-1} + \Oprec(\Psi).
\]
Since $\Psi \leq N^{-\tau'}$, the above bounds for $|\cQ_i^{(S)}|$ then also
imply $|\cC^{(S)}| \prec 1$.
\end{proof}

The following lemma describes a system of recursions for these quantities,
derived from the Sherman-Morrison identity \eqref{eq:Sherman-Morrison}.

\begin{lemma}\label{lem:sm recursions}
For any $A \in \C^{n \times n}$, $\u \in \C^n$, $S \subseteq [N]$, 
$i, j, k, l \in [N] \setminus S$ with $l \notin \{i,j,k\}$ and $i \notin
\{j,k\}$, and $z \in \C^+$, we have
\begin{align}
    \cY_i^{(S)}[A] &= \cY_i^{(Sl)}[A] - \cZ_{ill}^{(Sl)}[A] \cQ_l^{(Sl)}, \label{eq:y_i recursion} \\
    \cZ_{ijk}^{(S)}[A] &= \cZ_{ijk}^{(Sl)}[A] - \cZ_{ijl}^{(Sl)}[A] \cB_{kl}^{(Sl)} \cQ_l^{(Sl)} - \cZ_{ilk}^{(Sl)}[A] \cB_{lj}^{(Sl)} \cQ_l^{(Sl)} + \cZ_{ill}^{(Sl)}[A] \cB_{kl}^{(Sl)} \cB_{lj}^{(Sl)} (\cQ_l^{(Sl)})^2, \label{eq:z_ijk recursion} \\
    \cZ_{ikk}^{(S)}[A] &= \cZ_{ikk}^{(Sl)}[A] - \cZ_{ikl}^{(Sl)}[A] \cB_{kl}^{(Sl)} \cQ_l^{(Sl)} - \cZ_{ilk}^{(Sl)}[A] \cB_{lk}^{(Sl)} \cQ_l^{(Sl)} + \cZ_{ill}^{(Sl)}[A] \cB_{kl}^{(Sl)} \cB_{lk}^{(Sl)} (\cQ_l^{(Sl)})^2, \label{eq:z_ikk recursion} \\
    \cX_i^{(S)}[\u] &= \cX_i^{(Sl)}[\u] - \cX_l^{(Sl)}[\u] \cB_{il}^{(Sl)}, \label{eq:x_i recursion} \\
    \cB_{ij}^{(S)} &= \cB_{ij}^{(Sl)} - \cB_{il}^{(Sl)} \cB_{lj}^{(Sl)} \cQ_l^{(Sl)}. \label{eq:b_ij recursion}
\end{align}
Moreover, fix any constants $L \geq 1$ and $D>0$. 
If the conditions of Lemma \ref{lem:fluctuation_averaging} hold, then
uniformly over all $z \in \bD$, $S
\subseteq [N]$ with $|S| \leq L$, and $i,l \in [N] \setminus S$ with $i \neq l$,
\begin{align}
    \cQ_i^{(S)} &= \sum_{m=0}^D (\cQ_i^{(Sl)})^{m+1} (\cB_{il}^{(Sl)}
\cB_{li}^{(Sl)} \cQ_l^{(Sl)})^m + \Oprec(N^{-2\tau'(D+1)}), \label{eq:q_i recursion} \\
    \cQ_i^{(S)} &= \sum_{m=0}^D (\cC^{(S)})^{m+1} (-\cP_i^{(S)})^m +
\Oprec(N^{-\tau'(D+1)}). \label{eq:q_i to c_i}
\end{align}
\end{lemma}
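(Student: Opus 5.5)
The identities \eqref{eq:y_i recursion}--\eqref{eq:b_ij recursion} are purely algebraic. We obtain them by substituting the Sherman--Morrison formula \eqref{eq:Sherman-Morrison} in the form
\[
\frac{R^{(S)}}{N}=\frac{R^{(Sl)}}{N}-\frac{R^{(Sl)}}{N}\g_l\,\cQ_l^{(Sl)}\,\g_l^*\frac{R^{(Sl)}}{N}
\]
(with $N^{-1/2}$ scalings where appropriate) into each definition and expanding.

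\emph{$\cY_i^{(S)}[A]$.} Replacing $R^{(S)}$ gives $\cY_i^{(Sl)}[A]$ minus $\Tr(\g_i\g_i^*-\Sigma)\frac{R^{(Sl)}}{N}\g_l\g_l^*\frac{R^{(Sl)}A}{\sqrt N}\cdot\cQ_l^{(Sl)}$, and the subtracted trace is by definition $\cZ_{ill}^{(Sl)}[A]$.

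\emph{$\cX_i^{(S)}[\u]$ and $\cB_{ij}^{(S)}$.} Here a single substitution suffices. The cross term $\g_i^*\frac{R^{(Sl)}}{N}\g_l=\cB_{il}^{(Sl)}$ factors out. For $\cX$, the remaining factor is $\cQ_l^{(Sl)}\g_l^*\frac{R^{(Sl)}\u}{\sqrt N}$, and the identity $\cQ_l^{(Sl)}\g_l^*R^{(Sl)}=\g_l^*R^{(S)}$ (again from Sherman--Morrison) collapses it to the stated form. I would recheck which superscript appears on $\cX_l$; using $\cX_l^{(S)}$ if needed is the natural reading, or else one retains $\cQ_l$.

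\emph{$\cZ_{ijk}^{(S)}[A]$.} There are two copies of $R^{(S)}$, one on each side of $\g_j\g_k^*$. Substituting both and multiplying out yields four terms. In each cross term, $\g_l^*\frac{R^{(Sl)}}{N}\g_j$ and $\g_k^*\frac{R^{(Sl)}}{N}\g_l$ become $\cB$ factors (using the symmetry $\cB_{jk}=\cB_{kj}$), each accompanied by a factor $\cQ_l^{(Sl)}$. Careful sign bookkeeping gives \eqref{eq:z_ijk recursion}, and \eqref{eq:z_ikk recursion} is the special case $j=k$.

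For \eqref{eq:q_i recursion}, apply Sherman--Morrison to the quadratic form $\g_i^*\frac{R^{(S)}}{N}\g_i$. This gives
\[
(\cQ_i^{(S)})^{-1}=(\cQ_i^{(Sl)})^{-1}-\cB_{il}^{(Sl)}\cB_{li}^{(Sl)}\cQ_l^{(Sl)},
\]
so that
\[
\cQ_i^{(S)}=\cQ_i^{(Sl)}\bigl(1-x\bigr)^{-1},\qquad x=\cQ_i^{(Sl)}\cB_{il}^{(Sl)}\cB_{li}^{(Sl)}\cQ_l^{(Sl)}.
\]
Lemma \ref{lem:prec bounds on y_i} gives $|\cB|\prec\Phi\le N^{-\tau'}$ and $|\cQ|\prec1$, hence $|x|\prec N^{-2\tau'}$. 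The geometric series truncated at order $D$ has remainder $\cQ_i^{(Sl)}x^{D+1}/(1-x)$. Both this remainder and the prefactor $\cQ_i^{(Sl)}$ are $\prec1$, since $1-x$ is bounded below with high probability. The remainder is therefore $\Oprec(N^{-2\tau'(D+1)})$.

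\eqref{eq:q_i to c_i} is identical in structure. Write $(\cQ_i^{(S)})^{-1}=(\cC^{(S)})^{-1}+\cP_i^{(S)}$, so $\cQ_i^{(S)}=\cC^{(S)}(1+\cC^{(S)}\cP_i^{(S)})^{-1}$, and expand using $|\cP_i^{(S)}|\prec\Phi\le N^{-\tau'}$ and $|\cC^{(S)}|\prec1$.

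The only delicate point is making the remainder bound uniform in $\prec$ sense. On the high-probability event where all the $\prec$ bounds hold with $N^{\eps}$ slack, $|x|\le1/2$, and the remainder is then deterministically bounded by $2|\cQ|\,|x|^{D+1}$. This argument is routine once the event is fixed, and uniformity over $S$, $i$, $l$ and $z$ follows from Lemma \ref{lemma:domination}(a) together with a union bound over polynomially many indices. Uniformity over continuous $z$ is not needed beyond what Lemma \ref{lem:prec bounds on y_i} already provides.
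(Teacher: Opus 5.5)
Apart from one point, this is the paper's proof: Sherman--Morrison for the algebraic identities, and a truncated geometric series with exact remainder for \eqref{eq:q_i recursion}--\eqref{eq:q_i to c_i}. Your remainder $\cQ_i^{(Sl)}x^{D+1}/(1-x)$ equals $\cQ_i^{(S)}x^{D+1}$. You can therefore bound it directly by $|\cQ_i^{(S)}|\prec 1$ from Lemma \ref{lem:prec bounds on y_i}, as the paper does, with no separate lower bound on $|1-x|$.

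The point to fix is \eqref{eq:x_i recursion}, and you should not hedge there. The substitution gives
\[
\cX_i^{(S)}[\u]=\cX_i^{(Sl)}[\u]-\cX_l^{(Sl)}[\u]\,\cB_{il}^{(Sl)}\,\cQ_l^{(Sl)}.
\]
The identity $\cQ_l^{(Sl)}\g_l^*R^{(Sl)}=\g_l^*R^{(S)}$ rewrites this as $\cX_i^{(Sl)}[\u]-\cX_l^{(S)}[\u]\,\cB_{il}^{(Sl)}$, with superscript $(S)$ rather than $(Sl)$ on $\cX_l$. So it does not ``collapse to the stated form.''

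As printed, with $\cX_l^{(Sl)}$ and no factor $\cQ_l^{(Sl)}$, the identity differs from the true one by $\cX_l^{(Sl)}[\u]\,\cB_{il}^{(Sl)}\bigl(1-\cQ_l^{(Sl)}\bigr)$. It is therefore false generically; a scalar example with $n=1$ already shows this. The paper's one-line justification (that these ``follow directly from Sherman--Morrison'') passes over the discrepancy.

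The statement you should prove and record is the version with the factor $\cQ_l^{(Sl)}$. This form keeps every factor at the enlarged superscript, which is what the iterated expansion in Lemma \ref{lem:x_i q_i expansion} needs. The extra $\cQ_l^{(Sl)}$ is harmless there, because it is subsequently expanded via \eqref{eq:q_i recursion}--\eqref{eq:q_i to c_i} exactly like the other $\cQ$ factors.
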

\begin{proof}
The identities (\ref{eq:y_i recursion}-\ref{eq:b_ij recursion}) follow directly
from applying the Sherman-Morrison formula \eqref{eq:Sherman-Morrison} to the
left sides. For \eqref{eq:q_i recursion}, by Lemma \ref{lem:prec bounds on y_i}
we have
\[|\cQ_i^{(S)}|,|\cC^{(S)}| \prec 1, \quad |\cB_{ij}^{(S)}|,|\cP_i^{(S)}| \prec
\Phi \leq N^{-\tau'}.\]
Applying Sherman-Morrison to $(\cQ_i^{(S)})^{-1}$ gives
\[
    (\cQ_i^{(S)})^{-1} = 1 + \g_i^* (N^{-1} R^{(S)}) \g_i = (\cQ_i^{(Sl)})^{-1}
- \cB_{il}^{(Sl)} \cB_{li}^{(Sl)} \cQ_l^{(Sl)}.
\]
Using the identity for scalar values $a,b \neq 0$
\begin{equation}\label{eq:AB taylor expan}
    a^{-1} = \sum_{m=0}^D b^{-(m+1)} (b-a)^m + a^{-1} b^{-(D+1)} (b-a)^{D+1},
\end{equation}
we obtain
\begin{align*}
    \cQ_i^{(S)} &= \sum_{m=0}^D (\cQ_i^{(Sl)})^{m+1} (\cB_{il}^{(Sl)}
\cB_{li}^{(Sl)} \cQ_l^{(Sl)})^m + (\cQ_i^{(S)})(\cQ_i^{(Sl)})^{D+1}
(\cB_{il}^{(Sl)} \cB_{li}^{(Sl)} \cQ_l^{(Sl)})^{D+1} \\
    &= \sum_{m=0}^D (\cQ_i^{(Sl)})^{m+1} (\cB_{il}^{(Sl)} \cB_{li}^{(Sl)}
\cQ_l^{(Sl)})^m + \Oprec(N^{-2\tau'(D+1)}),
\end{align*}
since $|\cB_{il}^{(Sl)} \cB_{li}^{(Sl)} \cQ_l^{(Sl)}| \prec N^{-2\tau'}$ and
$|\cQ_i^{(S)}|, |\cQ_i^{(Sl)}| \prec 1$. For \eqref{eq:q_i to c_i}, apply \eqref{eq:AB taylor expan} to
\[
    (\cQ_i^{(S)})^{-1} = 1 + \g_i^* (N^{-1} R^{(S)}) \g_i = (\cC^{(S)})^{-1} +
\cP_i^{(S)},
\]
yielding similarly
\[
    \cQ_i^{(S)} = \sum_{m=0}^D (\cC^{(S)})^{m+1} (-\cP_i^{(S)})^m +
\Oprec(N^{-\tau'(D+1)}).
\]
\end{proof}

\subsection{Proof of Lemma \ref{lem:fluctuation_averaging}}

We fix a matrix $A \in \C^{n \times n}$ with $\|A\|_\op \leq 1$, and abbreviate
\[\cY_i^{(S)}=\cY_i^{(S)}[A], \qquad \cZ_{ijk}^{(S)}=\cZ_{ijk}^{(S)}[A].\]
All subsequent $\Oprec(\cdot)$ bounds are implicitly uniform over all such
matrices $A$.

The following lemma characterizes the combinatorics of an expansion of
$\cY_i^{(i)}\cQ_i^{(i)}$ to resolve the dependence on variables
$\{\g_j\}_{j \in S}$ in a set $S \subset [N]$, using the recursions of
Lemma \ref{lem:sm recursions}.

\begin{lemma}\label{lem:y_i q_i expansion}
Suppose the conditions of Lemma \ref{lem:fluctuation_averaging} hold.
Fix any $L \geq 1$ and $D>0$. Then uniformly over $z \in \bD$,
$S\subset [N]$ with $|S|\leq L$, and $i\in S$, the following holds:

Denote $S^{(i)}=S \setminus \{i\}$. There exists a collection of monomials
$\cM_{i,S}$ such that the quantity
$\cY_i^{(i)}\cQ_i^{(i)}$ can be expanded as
\begin{align}
    \cY_i^{(i)}\cQ_i^{(i)}&=\sum_{q\in\cM_{i,S}}
q\left(\cY_i^{(S)},\{\cZ_{ijk}^{(S)}\}_{j,k\in S^{(i)}},
\{\cB_{ij}^{(S)}\}_{j \in S^{(i)}},
\{\cB_{jk}^{(S)}\}_{j\neq k\in S^{(i)}},\cP_i^{(S)},
\{\cP_j^{(S)}\}_{j \in S^{(i)}},\cC^{(S)}\right)\notag\\
&\hspace{1in}+\Oprec(N^{-\tau'(D+1)+1/2}).\label{eq:y_i expansion}
\end{align}
Each monomial $q\in\cM_{i,S}$ is a product of $\pm 1$ and one or more of its
inputs, allowing repetition.
We have $q=\Oprec(N^{1/2}\Phi)$ uniformly over $q\in\cM_{i,S}$, and the
number of monomials $|\cM_{i,S}|$ is at most a constant depending only on
$L,D$.

Furthermore, for each $q \in \cM_{i,S}$, letting $m_Y,m_Z,m_B^*,m_B$
denote the numbers of factors of the forms $\cY_i^{(S)}$,
$\{\cZ_{ijk}^{(S)}\}_{j,k \in S^{(i)}}$,
$\{\cB_{ij}^{(S)}\}_{j \in S^{(i)}}$, and
$\{\cB_{jk}^{(S)}\}_{j\neq k\in S^{(i)}}$ in $q$, we have:
\begin{enumerate}[label=(\alph*)]
    \item $m_Y+m_Z=1$.
\item $m_B^*$ is even.
    \item The number of distinct indices of $S^{(i)}$ that appear as lower
indices across all factors of $q$ is at most $m_Z+m_B+\frac{1}{2}m_B^*$.
\end{enumerate}
\end{lemma}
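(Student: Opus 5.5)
We prove the expansion by induction on the indices of $S^{(i)}$ that are removed, one at a time. Order $S^{(i)}=\{l_1,\ldots,l_r\}$ with $r\leq L-1$, and set $S_t=\{i,l_1,\ldots,l_t\}$, so that $S_0=\{i\}$ and $S_r=S$. The inductive claim at stage $t$ is that $\cY_i^{(i)}\cQ_i^{(i)}$ equals a sum of a bounded number of signed monomials in the stage-$t$ quantities, plus an $\Oprec(N^{-\tau'(D+1)+1/2})$ error, and that properties (a)--(c) hold with $S^{(i)}$ replaced by $\{l_1,\ldots,l_t\}$. The stage-$t$ quantities are $\cY_i^{(S_t)}$, $\cZ^{(S_t)}_{ijk}$, $\cB^{(S_t)}_{ij}$, $\cB^{(S_t)}_{jk}$, $\cQ^{(S_t)}_{j}$ for $j\in\{i,l_1,\ldots,l_t\}$, and $\cC$-type factors.

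At stage $0$ the expansion is the single monomial $\cY_i^{(i)}\cQ_i^{(i)}$, which has $m_Y=1$, $m_Z=0$, and no $\cB$ factors. To pass from stage $t$ to stage $t+1$ with $l=l_{t+1}$, we substitute into every factor:
\begin{itemize}
\item the recursion \eqref{eq:y_i recursion} for $\cY_i$;
\item \eqref{eq:z_ijk recursion} or \eqref{eq:z_ikk recursion} for each $\cZ_{ijk}$;
\item \eqref{eq:b_ij recursion} for each $\cB$;
\item the truncated series \eqref{eq:q_i recursion} for each $\cQ_j$ with $j\neq l$. For $\cQ_l$ itself no step is needed, since $\cQ_l^{(S_t)}$ is already a stage-$(t+1)$ quantity ($l$ is excluded in its own definition).
\end{itemize}
Every recursion replaces a factor by a bounded sum of products, so the number of monomials stays bounded in terms of $L$ and $D$. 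For the truncation error, all factors other than $\cY$ and $\cZ$ are $\prec 1$, and $\cY,\cZ\prec N^{1/2}\Phi\le N^{1/2}$ by Lemma \ref{lem:prec bounds on y_i}. Hence each $\Oprec(N^{-2\tau'(D+1)})$ remainder from \eqref{eq:q_i recursion}, multiplied by the rest of its monomial, gives at most $\Oprec(N^{-\tau'(D+1)+1/2})$.

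After stage $r$, we eliminate every remaining $\cQ_j^{(S)}$ using \eqref{eq:q_i to c_i}. This introduces $\cC^{(S)}$ and $\cP_j^{(S)}$, at the cost of a further $\Oprec(N^{-\tau'(D+1)})$ error times an $N^{1/2}$-bounded cofactor. The bound $q\prec N^{1/2}\Phi$ holds because each monomial contains exactly one factor $\cY_i$ or $\cZ_{i\cdot\cdot}$, which is $\prec N^{1/2}\Phi$ by Lemma \ref{lem:prec bounds on y_i}, while every other factor is $\prec 1$.

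The structural properties are preserved by each substitution:
\begin{enumerate}[label=(\alph*)]
\item Every recursion for $\cY_i$ or $\cZ_{i\cdot\cdot}$ produces exactly one $\cY_i$ or $\cZ_{i\cdot\cdot}$ per term, and no other recursion creates one, so $m_Y+m_Z=1$ is kept.
\item Only $\cB$-factors carrying the index $i$ count in $m_B^*$. These arise from \eqref{eq:b_ij recursion} applied to a $\cB_{ij}$, which produces $\cB_{il}\cB_{lj}$: one starred factor is traded for one starred and one unstarred, so $m_B^*$ is unchanged. In \eqref{eq:q_i recursion} applied to $\cQ_i$, starred factors come in the pairs $(\cB_{il}\cB_{li})^m$. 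The recursions for $\cZ$ and for $\cB_{jk}$ with $j,k\neq i$ produce no starred factors. So the parity of $m_B^*$ is kept.
\item This is the main bookkeeping obstacle: we must show that each newly appearing index $l$ is paid for by a new $\cZ$, an unstarred $\cB$, or half of a starred pair.
\begin{itemize}
\item When $l$ enters through \eqref{eq:y_i recursion}, $\cY$ becomes $\cZ_{ill}$, so $m_Z$ rises by one.
\item When $l$ enters through $\cQ_i$, the factor $\cB_{il}\cB_{li}$ adds two to $m_B^*$, which pays for it.
\item When $l$ enters through a $\cZ$, $\cB$, or $\cQ_j$ ($j\neq i$) substitution, at least one new unstarred $\cB$ containing $l$ is created.
\end{itemize}
In the last case, when $\cB_{kl}\cB_{lj}$ are both created, $m_B$ rises by two for a single new index. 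Terms in which $l$ does not appear need no accounting, and indices already present are not double counted because we bound distinct indices by the count of factors.
\end{enumerate}

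The only delicate point, which I expect to require care, is that an index $l$ may occur only inside $\cQ_l$ or $\cP_l$ and not in any $\cB$ or $\cZ$. This does not happen: $\cQ_l$ is only generated together with a $\cB$ or $\cZ$ factor carrying $l$, and \eqref{eq:q_i to c_i} applied to such a $\cQ_l$ leaves that partner factor in place.
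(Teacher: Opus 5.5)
Your proposal is correct and follows the paper's proof essentially step for step. You expand stagewise in $l_1,\ldots,l_{|S|-1}$ via the Sherman--Morrison recursions, eliminate the remaining $\cQ$ factors with \eqref{eq:q_i to c_i}, and use the same bookkeeping for (a)--(c); your parity argument for (b) is even slightly more complete than the paper's, since it also tracks \eqref{eq:b_ij recursion} acting on the starred factors $\cB_{ij}$.

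One harmless misstatement: $\cQ_l^{(S_t)}$ is not equal to $\cQ_l^{(S_tl)}$ when $l\notin S_t$. This does not matter, because no factor carries the index $l=l_{t+1}$ at stage $t$; the new $\cQ_l$ factors are produced directly with superscript $(S_tl)$ by the recursions, so nothing needs handling there.
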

\begin{proof}
We arbitrarily order the indices of $S^{(i)}$ as
$l_1,\ldots,l_{|S|-1}$. Beginning with the term $\cY_i^{(i)}\cQ_i^{(i)}$,
iteratively for $j=1,\ldots,|S|-1$, we replace all factors with superscript
$(il_1\ldots l_{j-1})$ by a sum of terms with superscript $(il_1\ldots l_j)$,
using the recursions (\ref{eq:y_i recursion}-\ref{eq:z_ikk recursion}) for $\cY$
and $\cZ$, (\ref{eq:b_ij recursion}) for $\cB$, and (\ref{eq:q_i recursion}) for
$\cQ$. After we have replaced all superscripts to be $(S)=(il_1\ldots l_{|S|-1})$,
we then apply the recursion (\ref{eq:q_i to c_i}) for $\cQ$ to replace each
factor $\cQ_l^{(S)}$ by factors $\cC^{(S)}$ and $\cP_l^{(S)}$. It is then direct to check that this gives a representation of the form (\ref{eq:y_i expansion}), where:
\begin{itemize}
    \item Each application of (\ref{eq:y_i recursion}-\ref{eq:z_ikk recursion})
replaces a factor $\cY_i^{(\ldots)}$ or $\cZ_{ijk}^{(\ldots)}$ by
terms having exactly one such factor. Thus, each monomial $q\in\cM_{i,S}$ has
exactly one factor $\cY_i^{(S)}$ or $\cZ_{ijk}^{(S)}$, i.e.\ $m_Y+m_Z=1$.
    \item The number of total applications of (\ref{eq:y_i
recursion}-\ref{eq:z_ikk recursion}), (\ref{eq:b_ij recursion}),
(\ref{eq:q_i recursion}), and (\ref{eq:q_i to c_i}) is bounded by a constant
depending on $L,D$, so $|\cM_{i,S}|$ and the number of factors of each $q
\in \cM_{i,S}$ are also bounded by constants depending on $L,D$.
By the bounds of Lemma \ref{lem:prec bounds on y_i}, the factor
$\cY_i^{(S)}$ or $\cZ_{ijk}^{(S)}$ of $q$ is $\Oprec(N^{1/2}\Phi)$, and all
other factors are $\Oprec(1)$. Thus
each $q\in\cM_{i,S}$ satisfies $\Oprec(N^{1/2}\Phi)$, and the remainder in
(\ref{eq:y_i expansion}) is at most $\Oprec(N^{-\tau'(D+1)+1/2})$. 
\item Each application of (\ref{eq:q_i recursion}) replaces $\cQ_i^{(\ldots)}$
or $\cQ_j^{(\ldots)}$
by terms having an even number of factors $\{\cB_{ij}^{(\ldots)}\}_{j \in
S^{(i)}}$, and each application of
(\ref{eq:y_i recursion}-\ref{eq:z_ikk recursion}) does not change the number
of factors of the form $\{\cB_{ij}^{(\ldots)}\}_{j \in S^{(i)}}$.
Thus $m_B^*$ is even.
\item If an index of $S^{(i)}$ appears as a lower index on the left side of (\ref{eq:y_i recursion}-\ref{eq:z_ikk recursion}),
(\ref{eq:b_ij recursion}), or (\ref{eq:q_i recursion}), then it also appears as
a lower index on some factor of each term of the right side.
Furthermore, each term on the
right side of (\ref{eq:y_i recursion}-\ref{eq:z_ikk recursion}) and
(\ref{eq:b_ij recursion}) that contains any factor
with the new lower index $l$ has at least one more factor of the form
$\{\cZ_{ijk}^{(\ldots)}\}_{j,k \in S^{(i)}}$ or $\{\cB_{jk}^{(\ldots)}\}_{j \neq
k \in S^{(i)}}$ than the left side, and similarly each term on the right side of
(\ref{eq:q_i recursion}) that contains the new lower index $l$ has at least two 
more factors of the form $\{\cB_{ij}^{(\ldots)}\}_{j \in S^{(i)}}$
than the left side. I.e., whenever a new
lower index of $S^{(i)}$ is introduced, either $m_Z+m_B$ increases by at least
1, or $m_B^*$ increases by at least 2. Thus the number of distinct lower indices
of $S^{(i)}$ across all factors of $q$ is at most $m_Z+m_B+\frac{1}{2}m_B^*$.
\end{itemize}
Combining these observations yields the lemma.
\end{proof}

\begin{proof}[Proof of Lemma \ref{lem:fluctuation_averaging}]
Fix constants $L\geq 1$ and $D>0$. A high moment expansion gives
\begin{align*}
\E\left|\frac{1}{\sqrt{N}}\sum_{i=1}^N\cY_i^{(i)}\cQ_i^{(i)}\right|^{2L}
&=N^{-L}\sum_{i_1,\ldots,i_{2L}=1}^N\underbrace{\E\prod_{l=1}^L\cY_{i_l}^{(i_l)}\cQ_{i_l}^{(i_l)}
\prod_{l=L+1}^{2L}\overline{\cY_{i_l}^{(i_l)}\cQ_{i_l}^{(i_l)}}}_{\E m(i_1,\ldots,i_{2L})}
\end{align*}
Let $S=\{i_1,\ldots,i_{2L}\}$ denote the set of distinct indices in
$i_1,\ldots,i_{2L}$. Applying Lemma \ref{lem:y_i q_i expansion}
to expand each $\cY_{i_l}^{(i_l)}\cQ_{i_l}^{(i_l)}$ over $S$,
\begin{align*}
\E m(i_1,\ldots,i_{2L})&=\sum_{q_1\in\cM_{i_1,S}} \cdots
\sum_{q_{2L}\in\cM_{i_{2L,S}}}
\E\underbrace{\prod_{l=1}^L q_l \prod_{l=L+1}^{2L} \bar
q_l}_{:=q}+\Oprec(N^{-\tau'(D+1)+L}).
   \end{align*}
Hence, noting that the number of index tuples $(i_1,\ldots,i_{2L})$ with
$|\{i_1,\ldots,i_{2L}\}|=s$ is at most $C(L)N^s$ for some
constant $C(L)>0$,
\begin{equation}\label{eq:YQbound}
\E\left|\frac{1}{\sqrt{N}}\sum_{i=1}^N\cY_i^{(i)}\cQ_i^{(i)}\right|^{2L}\prec
\max_{\substack{i_1,\ldots,i_{2L} \in [N]\\
q_1\in\cM_{i_1,S},\ldots,q_{2L}\in\cM_{i_{2L},S}}}
\left\{N^{|S|-L}\E|\E_S q|\right\}+N^{-\tau'(D+1)+L}
\end{equation}
where $\E_S$ denotes the partial expectation over $\{\g_i:i \in S\}$.

We now bound $\E_S q=\E_S \prod_{l=1}^L q_l \prod_{l=L+1}^{2L} \bar q_l$.
Let $K\subseteq S$ correspond to the indices appearing exactly once in the list
$(i_1,\ldots,i_{2L})$. For each $l \in [2L]$, let
$m_Z(l),m_B(l),m_B^*(l)$ denote the counts $m_Z,m_B,m_B^*$ of Lemma
\ref{lem:y_i q_i expansion} for $q_l$, and let $m_P^*(l)$ denote also the number
of factors $\cP_{i_l}^{(S)}$ in $q_l$. Set
\begin{equation}\label{eq:qcounts}
m_Z=\sum_{l=1}^{2L} m_Z(l), \quad m_B=\sum_{l=1}^{2L} m_B(l),
\quad m_B^*=\sum_{l=1}^{2L} m_B^*(l), \quad m_P^*=\sum_{l=1}^{2L} m_P^*(l).
\end{equation}
We consider three cases for each index $i_l\in K$:
\begin{itemize}
\item $i_l$ does not appear as a lower index on any factor of $q_l$ other than
$\cY_{i_l}^{(S)}$ or $\cZ_{i_ljk}^{(S)}$, or on any factor of
$\{q_{l'}:l' \neq l\}$. In this case, Lemma \ref{lem:y_i q_i expansion}(a)
implies that the only factor of $q$
which depends on $\g_{i_l}$ is the (exactly one) factor $\cY_{i_l}^{(S)}$ or
$\cZ_{i_ljk}^{(S)}$ of $q_l$. Since
$\E_{i_l}[\cY_{i_l}^{(S)}]=\E_{i_l}[\cZ_{i_ljk}^{(S)}]=0$, it follows
that $\E_S q=0$.
\item $i_l$ appears as a lower index in some $\{q_{l'}:l' \neq l\}$. Since $i_l$
is distinct from $i_{l'}$ for each $l' \neq l$ and hence belongs to
$S^{(i_l')}$, Lemma \ref{lem:y_i q_i expansion}(c)
ensures that the total number of such indices $i_l \in K$
is at most $m_Z+m_B+\frac{1}{2}m_B^*$.
\item $i_l$ does not appear as a lower index on any $\{q_{l'}:l' \neq l\}$,
but appears as a lower index on at least one factor $\cP_{i_l}^{(S)}$
or $\{\cB_{i_lj}^{(S)}\}_{j \in S^{(i_l)}}$ of $q_l$. Then either $m_P^*(l) \geq
1$, or $m_P^*(l)=0$ in which case Lemma \ref{lem:y_i q_i expansion}(b)
ensures that $m_B^*(l) \geq 2$. So the number of such indices $i_l \in K$
is at most $m_P^*+\frac{1}{2}m_B^*$.
\end{itemize}
Combining these cases, we see that either $\E_S q=0$, or
\[m_Z+m_B+\frac{1}{2}m_B^*+m_P^*+\frac{1}{2}m_B^*
=m_Z+m_B+m_B^*+m_P^* \geq |K|.\]
In the latter case, using Lemma \ref{lem:prec bounds on y_i} to bound
$|\cY_i^{(S)}| \prec N^{1/2}\Phi$,
$|\cZ_{ijk}^{(S)}| \prec N^{1/2}\Phi^2$,
$|\cB_{ij}^{(S)}| \prec \Phi$ (for the factors counted by both $m_B$ and
$m_B^*$), $|\cP_i^{(S)}| \prec \Phi$,
and each other factor of $q$ by $\Oprec(1)$, since $q$ has exactly $2L$ factors
$|\cY_i^{(S)}|$ and $|\cZ_{ijk}^{(S)}|$ by Lemma \ref{lem:y_i q_i expansion}(a),
we get
\begin{equation}\label{eq:FAqbound}
|\E_S q| \prec \Phi^{|K|}(N^{1/2}\Phi)^{2L}.
\end{equation}
Hence
\begin{align*}
\E\left|\frac{1}{\sqrt{N}}\sum_{i=1}^N\cY_i^{(i)}\cQ_i^{(i)}\right|^{2L}\prec
N^{|S|}\Phi^{|K|+2L}+N^{-\tau'(D+1)+L}.
\end{align*}
Since indices in $K$ appear exactly once, the remaining $2L-|K|$ indices must
appear at least twice, so $(2L-|K|)/2+|K|=2L+|K|/2 \geq |S|$
(the number of distinct indices). Thus $N^{|S|}\Phi^{|K|+2L}
\leq (N\Phi^2)^{|K|/2+L} \leq (N\Phi^2)^{2L}$, where the last
inequality uses $|K| \leq 2L$. Since $\Phi \geq N^{-1/2}$, this is larger than
the second term $N^{-\tau'(D+1)+L}$ for a sufficiently large choice of
constant $D \equiv D(\tau',L)>0$. Thus
\[\E\left|\frac{1}{\sqrt{N}}\sum_{i=1}^N\cY_i^{(i)}\cQ_i^{(i)}\right|^{2L}
\prec (N\Phi^2)^{2L}.\]
Then Markov's inequality implies
$|N^{-1/2}\sum_{i=1}^N\cY_i^{(i)}\cQ_i^{(i)}| \prec N\Phi^2$, which shows
Lemma \ref{lem:fluctuation_averaging}.
\end{proof}

\subsection{Proof of Lemma \ref{lem:fluctuation_averaging_lowrank}}

Fixing a unit vector $\u \in \C^n$, let us now abbreviate
\[\cY_i^{(S)}=\cY_i^{(S)}[\u\u^*], \qquad
\cZ_{ijk}^{(S)}=\cZ_{ijk}^{(S)}[\u\u^*],
\qquad \cX_i^{(S)}=\cX_i^{(S)}[\u].\]
All subsequent $\Oprec(\cdot)$ bounds are implicitly uniform in $\u$.
We now show Lemma \ref{lem:fluctuation_averaging_lowrank} under the additional
condition that $N^{-1/2}\|R^{(S)}\u\|_2 \prec \Phi$.

\begin{proof}[Proof of Lemma \ref{lem:fluctuation_averaging_lowrank}(a)]
The proof is identical to Lemma \ref{lem:fluctuation_averaging}, where now
by Lemma \ref{lem:prec bounds on y_i} we have the bounds
$|\cY_i^{(S)}| \prec \Phi$ and $|\cZ_{ijk}^{(S)}| \prec \Phi^2$. This gives,
instead of \eqref{eq:FAqbound}, $|\E_S q| \prec \Phi^{|K|}\Phi^{2L}$, and hence
\begin{align*}
\E\left|\frac{1}{\sqrt{N}}\sum_{i=1}^N\cY_i^{(i)}\cQ_i^{(i)}\right|^{2L}\prec
N^{|S|-L}\Phi^{|K|+2L}+N^{-\tau'(D+1)+L}
\leq N^{-L}(N\Phi^2)^{2L}+N^{-\tau'(D+1)+L}
\end{align*}
Choosing $D$ large enough and applying Markov's inequality shows 
Lemma \ref{lem:fluctuation_averaging_lowrank}(a).
\end{proof}

We next show Lemma \ref{lem:fluctuation_averaging_lowrank}(b).
The following lemma is similar to Lemma \ref{lem:y_i q_i expansion}.

\begin{lemma}\label{lem:B_ij expansion}
Suppose the conditions of Lemma \ref{lem:fluctuation_averaging_lowrank} hold.
Fix any $L \geq 1$ and $D>0$. Then uniformly over $z \in \bD$, $S\subset[N]$ with $|S|\leq L$,
and $i,j\in S$ with $i\neq j$, the following holds:

Denote $S^{(ij)}=S\setminus\{i,j\}$. Then there exists a collection of monomials $\cM_{ij,S}$ such that $\cB_{ij}^{(ij)}\cQ_i^{(ij)}\cQ_j^{(ij)}$ can be expanded as
\begin{align}
\cB_{ij}^{(ij)}\cQ_i^{(i)}\cQ_j^{(ij)}&=\sum_{q\in\cM_{ij,S}}
q\left(\{\cB_{ij}^{(S)}\},\{\cB_{ik}^{(S)}\}_{k\in S^{(ij)}},
\{\cB_{jk}^{(S)}\}_{k \in S^{(ij)}},\{\cB_{kl}^{(S)}\}_{k\neq
l\in S^{(ij)}},\cP_i^{(S)},\cP_j^{(S)},\{\cP_k^{(S)}\}_{k \in
S^{(ij)}},\cC^{(S)}\right)\notag\\
&\hspace{1in}+\Oprec(N^{-\tau'(D+1)}).\label{eq:Bijq}
\end{align}
Each monomial $q\in\cM_{ij,S}$ is a product of $\pm 1$ and one or more 
of its inputs, allowing repetition.
We have $q=\Oprec(\Phi)$ uniformly over $q\in\cM_{ij,S}$, and the number
of monomials $|\cM_{ij,S}|$ is at most a constant depending only on $L,D$.

Furthermore, for each $q \in \cM_{ij,S}$, letting
$m_B^{**},m_B^{*1},m_B^{*2},m_B$ denote the numbers of factors of the forms
$\{\cB_{ij}^{(S)}\},\{\cB_{ik}^{(S)}\}_{k\in S^{(ij)}},
\{\cB_{jk}^{(S)}\}_{k \in S^{(ij)}},\{\cB_{kl}^{(S)}\}_{k\neq
l\in S^{(ij)}}$ appearing in $q$, we have:
\begin{enumerate}[label=(\alph*)]
    \item Either $m_B^{**} \geq 1$, or both $m_B^{*1} \geq 1$ and
$m_B^{*2} \geq 1$.
    \item The number of occurrences of $i$ as a lower index in all factors
$\cB_{...}^{(S)}$ of $q$ (i.e.\ $m_B^{**}+m_B^{*1}$) is odd. Similarly,
the number of such occurrences of $j$ (i.e.\ $m_B^{**}+m_B^{*2}$) is odd.
For each $k \in S^{(ij)}$, the number of
occurrences of $k$ as a lower index across all factors
$\cB_{...}^{(S)}$ of $q$ is even.
\item The number of distinct indices of $S^{(ij)}$ that appear as lower indices
across all factors of $q$ is at most $m_B+\frac{1}{2}(m_B^{*1}+m_B^{*2})$.
\end{enumerate}
\end{lemma}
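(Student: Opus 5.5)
The expansion mirrors the proof of Lemma \ref{lem:y_i q_i expansion}. Order the indices of $S^{(ij)}$ arbitrarily as $l_1,\ldots,l_{|S|-2}$ and start from $\cB_{ij}^{(ij)}\cQ_i^{(i)}\cQ_j^{(ij)}$. First bring $\cQ_i^{(i)}$ to superscript $(ij)$ with one application of \eqref{eq:q_i recursion} (removing $l=j$). This gives
\[\cQ_i^{(i)}=\sum_{m=0}^D(\cQ_i^{(ij)})^{m+1}(\cB_{ij}^{(ij)}\cB_{ji}^{(ij)}\cQ_j^{(ij)})^m+\Oprec(N^{-2\tau'(D+1)}).\]
Each term then has an odd number $1+2m$ of factors $\cB_{ij}$. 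Next, for $t=1,\ldots,|S|-2$, replace every factor with superscript $(ijl_1\ldots l_{t-1})$ by the expansion obtained from removing $l_t$. We use \eqref{eq:b_ij recursion} for the $\cB$ factors and \eqref{eq:q_i recursion} for the $\cQ$ factors. Once everything carries superscript $(S)$, apply \eqref{eq:q_i to c_i} to each $\cQ_k^{(S)}$, which converts it into powers of $\cC^{(S)}$ and $\cP_k^{(S)}$. By Lemma \ref{lem:prec bounds on y_i}, every $\cB$ and $\cP$ factor is $\Oprec(\Phi)\le N^{-\tau'}$ and every $\cQ,\cC$ factor is $\Oprec(1)$. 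So the number of steps, the number of monomials and the number of factors per monomial are bounded in terms of $L,D$, and the accumulated truncation remainders are $\Oprec(N^{-\tau'(D+1)})$.

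The claimed properties follow by tracking invariants through the recursions.
\begin{enumerate}[label=(\roman*)]
\item \emph{Bound $q=\Oprec(\Phi)$.} Every monomial contains at least one $\cB$ factor by (a), so this bound follows from (a).
\item \emph{Parity, part (b).} Assign each $\cB_{ab}$ a count of one at index $a$ and one at index $b$. The recursion \eqref{eq:b_ij recursion} replaces $\cB_{ab}$ by $\cB_{ab}$ or by $\cB_{al}\cB_{lb}$ (times a $\cQ_l$). This preserves the parity of the count at every index and adds an even count (two) at $l$. The recursion \eqref{eq:q_i recursion} applied to $\cQ_a$ introduces $(\cB_{al}\cB_{la})^m$, adding $2m$ at $a$ and $2m$ at $l$. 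The first $\cQ_i^{(i)}$ step and the conversion \eqref{eq:q_i to c_i} preserve these parities as well. The initial monomial $\cB_{ij}$ has odd counts at $i$ and $j$ and zero counts elsewhere, so (b) follows.
\item \emph{Part (a).} This follows from (b). If $m_B^{**}=0$, oddness of the count at $i$ forces some $\cB_{ik}$ with $k\in S^{(ij)}$, and likewise at $j$.
\item \emph{Part (c).} Argue as in Lemma \ref{lem:y_i q_i expansion}(c). Whenever a term first acquires the new index $l$, it gains either a $\cB_{kl}$ factor with both indices in $S^{(ij)}$, raising $m_B$ by one, or a pair such as $\cB_{il}\cB_{lj}$ or $(\cB_{il}\cB_{li})^m$. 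Such a pair raises $m_B^{*1}+m_B^{*2}$ by at least two, or raises $m_B$. Indices never disappear from later expansions, since each right-hand side retains the lower indices of the left-hand side.
\end{enumerate}

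I expect the main care to be needed in (c), in checking that a single new index can never be "paid for" twice. The terms $\cB_{il}\cB_{lj}$ from \eqref{eq:b_ij recursion} applied to $\cB_{ij}$ contribute one unit to each of $m_B^{*1}$ and $m_B^{*2}$, giving exactly $\tfrac12(1+1)=1$, which matches the claimed weight. The terms $\cB_{kl}\cB_{lj}$ with $k\in S^{(ij)}$ raise $m_B$ by one and suffice. I will verify case by case that every branch of \eqref{eq:b_ij recursion} and \eqref{eq:q_i recursion} introducing $l$ increases $m_B+\tfrac12(m_B^{*1}+m_B^{*2})$ by at least one.
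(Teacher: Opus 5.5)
Your proposal is correct and follows the paper's proof essentially step for step. You expand $\cQ_i^{(i)}$ in $j$ to obtain terms with $[\cB_{ij}^{(ij)}]^{2m+1}$, expand successively over $S^{(ij)}$ via \eqref{eq:b_ij recursion} and \eqref{eq:q_i recursion}, convert via \eqref{eq:q_i to c_i}, track parities for (b), and use the same nondecreasing-potential argument as in Lemma \ref{lem:y_i q_i expansion} for (c). The differences are cosmetic: you deduce (a) from (b) rather than tracking the $\cB_{ij}$ factor directly. Also, your remark that each right-hand side retains the lower indices of the left fails literally for the $m=0$ term of \eqref{eq:q_i to c_i}, but this is harmless, since losing an index only helps the inequality in (c).
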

\begin{proof}
We may first apply \eqref{eq:q_i recursion} to expand $\cQ_i^{(i)}$ in $j$,
to get
\begin{equation}\label{eq:BQQinitial}
    \cB_{ij}^{(ij)}\cQ_i^{(i)}\cQ_j^{(ij)}=\sum_{m=0}^D[\cB_{ij}^{(ij)}]^{2m+1}[\cQ_i^{(ij)}]^{m+1}[\cQ_j^{(ij)}]^{m+1}+\Oprec(N^{-2\tau'(D+1)}).
\end{equation}
We may then successively expand $\cB$ and $\cQ$ in the
indices of $S^{(ij)}$ using \eqref{eq:b_ij recursion} and \eqref{eq:q_i
recursion}, and finally expand each $\cQ_{\ldots}^{(S)}$ in $\cC^{(S)}$ and
$\cP_{\ldots}^{(S)}$ using \eqref{eq:q_i to c_i}. This gives a representation of
the form \eqref{eq:Bijq}.
Since $|\cB_{ij}^{(S)}| \prec \Phi$ and $|\cQ_i^{(S)}| \prec 1$, we have
$q=\Oprec(\Phi)$, and the remainder is $\Oprec(N^{-\tau'(D+1)})$.

Each term of \eqref{eq:BQQinitial} has at least one
factor of $\cB_{ij}^{(ij)}$. If \eqref{eq:b_ij recursion} is applied to expand
$\cB_{ij}^{(...)}$, then the resulting terms either have a factor
of $\cB_{ij}^{(...)}$ or have a factor each of $\cB_{ik}^{(...)}$ and
$\cB_{jl}^{(...)}$ for some $k,l \in S^{(ij)}$, and this is preserved
in later steps of the expansion. This shows property (a).

Property (b) holds because $i$ and $j$ each occur an odd number of times as a
lower index of $\cB_{...}^{(ij)}$ in \eqref{eq:BQQinitial}, and each $k \in
S^{(ij)}$ occurs 0 times. The parities of these occurrences do not change in the
applications of \eqref{eq:b_ij recursion} and \eqref{eq:q_i recursion}.

Property (c) holds as in Lemma \ref{lem:y_i q_i expansion}
because each application of \eqref{eq:b_ij recursion} or \eqref{eq:q_i
recursion} which introduces a new lower index introduces at least one additional
factor of $\{\cB_{kl}^{(...)}\}_{k,l \in S^{(ij)}}$
or two additional factors of
$\{\cB_{ik}^{(...)}\}_{k \in S^{(ij)}} \cup
\{\cB_{jk}^{(...)}\}_{k \in S^{(ij)}}$.
\end{proof}

\begin{proof}[Proof of Lemma \ref{lem:fluctuation_averaging_lowrank}(b)]
Fix any $L \geq 1$ and $D>0$. For any unit vector $\v \in \R^N$, we may expand
\begin{align*}
&\E\left|\sum_{i\neq
j}\bar{\v}(i)\v(j)\cB_{ij}^{(ij)}\cQ_i^{(i)}\cQ_j^{(ij)}\right|^{2L}\\
&=\sum_{i_1\neq j_1}\cdots \sum_{i_{2L}\neq j_{2L}}\prod_{l=1}^L
\bar{\v}(i_l)\v(j_l) \prod_{l=L+1}^{2L} \v(i_l)\bar{\v}(j_l)
\E\left[\prod_{l=1}^L\cB_{i_lj_l}^{(i_lj_l)}\cQ_{i_l}^{(i_l)}\cQ_{j_l}^{(i_lj_l)}\prod_{l=L+1}^{2L}\overline{\cB_{i_lj_l}^{(i_lj_l)}\cQ_{i_l}^{(i_l)}\cQ_{j_l}^{(i_lj_l)}}\right].
\end{align*}
Writing $S=\{i_1,\ldots,i_{2L},j_1,\ldots,j_{2L}\}$ for the set of distinct
indices, $q_l \in \cM_{i_lj_l,S}$ for the monomials in the expansion
of $\cB_{i_lj_l}^{(i_lj_l)}\cQ_{i_l}^{(i_l)}\cQ_{j_l}^{(i_lj_l)}$ in
Lemma \ref{lem:B_ij expansion}, and $q=\prod_{l=1}^L q_l
\prod_{L+1}^{2L} \bar q_l$, we have
\begin{align*}
\E\left|\sum_{i\neq
j}\bar{\v}(i)\v(j)\cB_{ij}^{(ij)}\cQ_i^{(i)}\cQ_j^{(ij)}\right|^{2L}
&\leq \sum_{i_1\neq j_1}\cdots \sum_{i_{2L}\neq j_{2L}}\prod_{l=1}^{2L}
|\v(i_l)\v(j_l)|\\
&\hspace{0.2in}
\left(\sum_{q_1 \in \cM_{i_1j_1,S},\ldots,q_{2L} \in \cM_{i_{2L}j_{2L},S}}
\E|\E_S q|
+\Oprec(N^{-\tau'(D+1)})\right).
\end{align*}
Let $K \subseteq S$ denote the subset of indices that appear exactly once in the
list $(i_1,\ldots,i_{2L},j_1,\ldots,j_{2L})$.
Applying $\sum_i |\v(i)| \leq \sqrt{N}$ and $\sum_i |\v(i)|^p \leq 1$ for each
$p \geq 2$, for any fixed $k \in \{1,\ldots,4L\}$ and a constant $C \equiv
C(L)>0$ we then have
\[\mathop{\sum_{i_1\neq j_1,\ldots,i_{2L}\neq j_{2L}}}_{|K|=k}
\prod_{l=1}^{2L} |\v(i_l)\v(j_l)|
\leq CN^{k/2} \leq CN^{2L}.\]
Thus
\begin{equation}\label{eq:vvBQQval}
\E\left|\sum_{i\neq
j}\bar{\v}(i)\v(j)\cB_{ij}^{(ij)}\cQ_i^{(i)}\cQ_j^{(ij)}\right|^{2L}
\prec \max_{\substack{i_1,\ldots,i_{2L},j_1,\ldots,j_{2L} \in [N]\\
q_1 \in \cM_{i_1j_1,S},\ldots,q_{2L} \in \cM_{i_{2L}j_{2L},S}}}
\Big\{N^{|K|/2}\E|\E_S q|\Big\}+\Oprec(N^{-\tau'(D+1)+2L}).
\end{equation}
Here $S=\{i_1,\ldots,i_{2L},j_1,\ldots,j_{2L}\}$ denotes the set of distinct
indices and $K \subseteq S$ denotes the subset appearing exactly once, both of
which depend implicitly on $(i_1,\ldots,i_{2L},j_1,\ldots,j_{2L})$.

For each $l\in[2L]$, let $m_B^{**}(l),m_B^{*1}(l),m_B^{*2}(l),m_B(l)$ denote the
counts $m_B^{**},m_B^{*1},m_B^{*2},m_B$ of Lemma \ref{lem:B_ij expansion} for
$q_l$, and let $m_P^{*1}(l),m_P^{*2}(l)$ denote also the total number of factors
$\cP_{i_l}^{(S)},\cP_{j_l}^{(S)}$ in $q_l$ respectively. Set
\[m_B^{**}=\sum_{l=1}^{2L} m_B^{**}(l),
\quad m_B^*=\sum_{l=1}^{2L} m_B^{*1}(l)+m_B^{*2}(l),
\quad m_B=\sum_{l=1}^{2L} m_B(l),
\quad m_P^*=\sum_{l=1}^{2L} m_P^{*1}(l)+m_P^{*2}(l),\]
and denote also
\[E(l)=m_B^{**}(l)+\frac{1}{2}(m_B^{*1}(l)+m_B^{*2}(l))
+m_P^{*1}(l)+m_P^{*2}(l)-1.\]
Note that Lemma \ref{lem:B_ij expansion}(a) implies $E(l) \geq 0$ for each
$l=1,\ldots,2L$. Consider three cases for an index $i_l \in K$ (or $j_l \in K$),
analogous to the proof of Lemma \ref{lem:fluctuation_averaging}:
\begin{itemize}
\item $i_l$ appears as a lower index on exactly one factor of
$q_l$ (which must be $\cB_{i_lk}^{(S)}$ for some $k \neq i_l$),
and it does not appear as a lower index on any factor of
$\{q_{l'}:l' \neq l\}$. Then $\E_S q=0$.
\item $i_l$ appears as a lower index in some $\{q_{l'}:l' \neq l\}$.
Since $i_l$ is distinct from $\{i_{l'},j_{l'}\}$ for each $l' \neq l$,
Lemma \ref{lem:B_ij expansion}(c) ensures that the total number of such
indices $i_l,j_l \in K$ is at most $m_B+\frac{1}{2}m_B^*$.
\item $i_l$ does not appear as a lower index on any $\{q_{l'}:l' \neq l\}$,
but appears as a lower index on at least two factors of $q_l$. Then Lemma
\ref{lem:B_ij expansion}(a--b) ensures that either $m_P^{*1}(l) \geq 1$
and $m_B^{**}(l)+\frac{1}{2}(m_B^{*1}(l)+m_B^{*2}(l)) \geq 1$, or
$m_P^{*1}(l)=0$ and $m_B^{**}(l)+\frac{1}{2}(m_B^{*1}(l)+m_B^{*2}(l)) \geq 2$
(since $m_B^{**}(l)+m_B^{*1}(l) \geq 2$ is odd, and also
$m_B^{*2}(l) \geq 1$ if $m_B^{**}(l)=0$). This implies $E(l) \geq 1$.

Furthermore, if also $j_l \in K$ and $j_l$ appears
as a lower index on at least two factors of $q_l$, then similarly, either
$m_P^{*1}(l)+m_P^{*2}(l) \geq 2$ and
$m_B^{**}(l)+\frac{1}{2}(m_B^{*1}(l)+m_B^{*2}(l)) \geq 1$, or
$m_P^{*1}(l)+m_P^{*2}(l)=1$ and
$m_B^{**}(l)+\frac{1}{2}(m_B^{*1}(l)+m_B^{*2}(l)) \geq 2$, or
$m_P^{*1}(l)+m_P^{*2}(l)=0$ and
$m_B^{**}(l)+\frac{1}{2}(m_B^{*1}(l)+m_B^{*2}(l)) \geq 3$. This
implies $E(l) \geq 2$.

Thus, the total number of such indices $i_l,j_l \in K$
is at most $\sum_{l=1}^{2L} E(l)$.
\end{itemize}
Combining these cases, we see that either $\E_S q=0$, or
\[m_B+\frac{1}{2}m_B^*+\sum_{l=1}^{2L} E(l)
=m_B^{**}+m_B^*+m_B+m_P^*-2L\geq |K|.\]
In the latter case, using
Lemma \ref{lem:prec bounds on y_i} to bound $|\cB_{ij}^{(S)}|,|\cP_i^{(S)}|\prec
\Phi$ and each other factor of $q$ by $\Oprec(1)$, we get $|\E_S q| \prec
\Phi^{2L+|K|}$, and hence
\[
    \E\left|\sum_{i\neq
j}\bar{\v}(i)\v(j)\cB_{ij}^{(ij)}\cQ_i^{(i)}\cQ_j^{(ij)}\right|^{2L}\prec
N^{|K|/2}\Phi^{2L+|K|}+N^{-\tau'(D+1)+2L}
\leq (\sqrt{N}\Phi)^{4L}\Phi^{2L}+N^{-\tau'(D+1)+2L}\]
where the last inequality applies $\sqrt{N}\Phi \geq 1$ and $|K| \leq 4L$.
For sufficiently large $D$, the second term is at most the first, so 
Lemma \ref{lem:fluctuation_averaging_lowrank}(b) again follows from Markov's
inequality.
\end{proof}

Finally, we show Lemma \ref{lem:fluctuation_averaging_lowrank}(c).
The following lemma is similar to Lemmas \ref{lem:y_i q_i expansion}
and \ref{lem:B_ij expansion}.

\begin{lemma}\label{lem:x_i q_i expansion}
Suppose the conditions of Lemma \ref{lem:fluctuation_averaging} hold.
Fix any $L \geq 1$ and $D>0$. Then uniformly over $z \in \bD$, $S\subset[N]$ with $|S|\leq L$,
and $i\in S$, the following holds:

Denote $S^{(i)}=S\setminus{i}$. There exists a collection of monomials
$\cM_{i,S}$ such that the quantity $\cX_i^{(i)}\cQ_i^{(i)}$ can be expanded as
\begin{align*}
    \label{eq:y_i expansion}
    \cX_i^{(i)}\cQ_i^{(i)}&=\sum_{q\in\cM_{i,S}} q\left(\cX_i^{(S)},
\{\cX_j^{(S)}\}_{j \in S^{(i)}},
\{\cB_{ij}^{(S)}\}_{j \in S^{(i)}},\{\cB_{jk}^{(S)}\}_{j\neq k\in S^{(i)}},
\cP_i^{(S)},\{\cP_j^{(S)}\}_{j \in S^{(i)}},\cC^{(S)}\right)\\
    &+ \Oprec(N^{-\tau'(D+1)})
\end{align*}
Each monomial $q\in\cM_{i,S}$ is a product of $\pm 1$ and one or more of its
inputs, allowing repetition. We
have $q=\Oprec(\Phi)$ uniformly over $q\in\cM_{i,S}$, and the
number of monomials $|\cM_{i,S}|$ is at most a constant depending only on $L,D$.

Furthermore, for each $q \in \cM_{i,S}$, letting $m_X^*,m_X,m_B^*,m_B$ denote
the numbers of factors of the forms $\cX_i^{(S)}$,
$\{\cX_j^{(S)}\}_{j \in S^{(i)}}$,
$\{\cB_{ij}^{(S)}\}_{j \in S^{(i)}}$, and
$\{\cB_{jk}^{(S)}\}_{j \neq k \in S^{(i)}}$ in $q$, we have:
\begin{enumerate}[label=(\alph*)]
\item $m_X^*+m_X=1$.
\item $m_X^*+m_B^*$ is odd, and for each $j \in S^{(i)}$, the number of
occurrences of $j$ as a lower index of the factors
$\{\cX_j^{(S)}\}_{j \in S^{(i)}}$ and
$\{\cB_{jk}^{(S)}\}_{j \neq k \in S^{(i)}}$ is even.
\item The number of distinct indices of $S^{(i)}$ that appear as a lower
index across all factors of $q$ is at most $\frac{1}{2}m_X+m_B+\frac{1}{2}m_B^*$.
\end{enumerate}
\end{lemma}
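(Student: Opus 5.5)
We will follow the template of the proofs of Lemmas \ref{lem:y_i q_i expansion} and \ref{lem:B_ij expansion}. Order $S^{(i)}$ as $l_1,\ldots,l_{|S|-1}$. Starting from $\cX_i^{(i)}\cQ_i^{(i)}$, for $t=1,\ldots,|S|-1$ we replace every factor with superscript $(il_1\cdots l_{t-1})$ by terms with superscript $(il_1\cdots l_t)$. For $\cX$ we use \eqref{eq:x_i recursion}; this applies both to $\cX_i$ and to any $\cX_j$ with $j\in S^{(i)}$ already generated, with $l\neq j$. For $\cB$ we use \eqref{eq:b_ij recursion}, and for $\cQ$ we use the truncated series \eqref{eq:q_i recursion}. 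Once all superscripts equal $(S)$, we apply \eqref{eq:q_i to c_i} to every $\cQ^{(S)}_\cdot$, which rewrites it through $\cC^{(S)}$ and $\cP^{(S)}_\cdot$.

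The number of recursion steps depends only on $L$ and $D$, so $|\cM_{i,S}|$ and the degree of each monomial are bounded. By Lemma \ref{lem:prec bounds on y_i}, each $\cX^{(S)}_\cdot$ is $\Oprec(\Phi)$ and every other factor is $\Oprec(1)$. Each truncation error is $\Oprec(N^{-\tau'(D+1)})$ times a product of $\Oprec(1)$ factors. This gives $q=\Oprec(\Phi)$ and the stated remainder.

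Next we track the combinatorics.

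For (a), \eqref{eq:x_i recursion} replaces one $\cX$ factor by terms that each contain exactly one $\cX$ factor. No other recursion creates or destroys $\cX$ factors. Hence $m_X^*+m_X=1$.

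For (b), we use a parity invariant: the number of occurrences of each index as a lower index of $\cX$ and $\cB$ factors, taken modulo $2$.
\begin{itemize}
\item Initially $i$ occurs once, in $\cX_i$, and each $j\in S^{(i)}$ occurs zero times.
\item In \eqref{eq:x_i recursion}, the term $\cX_l\cB_{il}$ keeps the parity of $i$ and adds $l$ twice.
\item In \eqref{eq:b_ij recursion}, the term $\cB_{il}\cB_{lj}\cQ_l$ keeps the parities of $i$ and $j$ and adds $l$ twice.
\item In \eqref{eq:q_i recursion}, $(\cB_{il}\cB_{li})^m$ adds $i$ and $l$ an even number of times each.
\item \eqref{eq:q_i to c_i} introduces only $\cP$ and $\cC$, which do not count.
\end{itemize}
So $i$ has odd total multiplicity, which equals $m_X^*+m_B^*$, and each $j\in S^{(i)}$ has even multiplicity among the $\cX_j$ and $\cB_{jk}$ factors.

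For (c) we argue as in Lemma \ref{lem:y_i q_i expansion}(c): whenever a step introduces a new index $l$, count what it adds.
\begin{itemize}
\item Expanding $\cX_i$ via \eqref{eq:x_i recursion} trades $\cX_i$ for $\cX_l\cB_{il}$. This adds $\tfrac12$ to $\tfrac12 m_X$ and $\tfrac12$ to $\tfrac12 m_B^*$, a total increase of $1$.
\item Expanding $\cX_j$ with $j\in S^{(i)}$ gives $\cX_l\cB_{jl}$, which adds one factor to $m_B$.
\item Expanding $\cB$ via \eqref{eq:b_ij recursion} adds at least one to $m_B$, or two to $m_B^*$ when the factor was $\cB_{ij}$. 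In the latter case the new term $\cB_{il}\cB_{lj}$ has one starred and one unstarred factor, so it contributes $+\tfrac12+1\geq 1$.
\item Expanding $\cQ$ via \eqref{eq:q_i recursion} adds at least two to $m_B^*$, or at least two to $m_B$.
\item Indices already present on the left side stay present on the right side.
\end{itemize}
Hence $\tfrac12 m_X+m_B+\tfrac12 m_B^*$ increases by at least $1$ each time a new index appears, which gives the bound.

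The main obstacle is this bookkeeping in (c), because of the half-weight assigned to $m_X$. The key check is that the term $\cX_l\cB_{il}$ from \eqref{eq:x_i recursion} costs a full unit in total. I would verify this first. If the accounting fails for the $\cX_j\mapsto\cX_l\cB_{jl}$ step, the fallback is the weaker bound $m_X+m_B+\tfrac12 m_B^*$, which I expect would still be enough for part (c) of Lemma \ref{lem:fluctuation_averaging_lowrank}.
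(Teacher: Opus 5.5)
Your proposal is correct and follows the paper's route. The paper's proof just says to repeat the argument of Lemmas~\ref{lem:y_i q_i expansion} and~\ref{lem:B_ij expansion} using \eqref{eq:x_i recursion} and $|\cX_j^{(S)}|\prec\Phi$, and your accounting for (a)--(c) is sound, with one slip: for $\cB_{ij}\mapsto\cB_{il}\cB_{lj}$ the net change in $\tfrac12 m_X+m_B+\tfrac12 m_B^*$ is $-\tfrac12+\tfrac12+1=1$, not ``two added to $m_B^*$.''

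Two further points. The bound $|\cX_j^{(S)}|\prec\Phi$ from Lemma~\ref{lem:prec bounds on y_i} needs the extra hypothesis $N^{-1/2}\|R^{(S)}\u\|_2\prec\Phi$ of Lemma~\ref{lem:fluctuation_averaging_lowrank}; the Frobenius condition of Lemma~\ref{lem:fluctuation_averaging} alone only gives $\Oprec(N^{1/2}\Phi)$. Also, your parity invariant counts the $\cB_{ij}$ factors too, so it proves the version of (b) that is true and is what the later proofs use. Read literally, with $\cB_{ij}$ excluded, (b) already fails for monomials $\cX_l^{(S)}\cB_{il}^{(S)}\cdots$ with $S=\{i,l\}$.
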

\begin{proof}
The proof is similar to the proofs of Lemmas \ref{lem:y_i q_i expansion}
and \ref{lem:B_ij expansion}, where we instead apply the recursion (\ref{eq:x_i
recursion}) and the bound $|\cX_j^{(S)}| \prec \Phi$.
We omit the details for brevity.
\end{proof}

\begin{proof}[Proof of Lemma \ref{lem:fluctuation_averaging_lowrank}(c)]
Fixing any $L \geq 1$ and $D>0$, a high moment expansion gives
\begin{align*}
\E\left|\sum_{i=1}^N \bar{\v}(i)\cX_i^{(i)}\cQ_i^{(i)}\right|^{2L}
&=\sum_{i_1,\ldots,i_{2L}=1}^N
\prod_{l=1}^L \bar \v(i_l) \prod_{l=L+1}^{2L} \v(i_l)
\E\prod_{l=1}^L\cX_{i_l}^{(i_l)}\cQ_{i_l}^{(i_l)}
\prod_{l=L+1}^{2L}\overline{\cX_{i_l}^{(i_l)}\cQ_{i_l}^{(i_l)}}\\
&\leq \sum_{i_1,\ldots,i_{2L}=1}^N
\prod_{l=1}^{2L} |\v(i_l)|
\left(\sum_{q_1 \in \cM_{i_1,S},\ldots,q_{2L} \in \cM_{i_{2L},S}}
\E|\E_S q|+\Oprec(N^{-\tau'(D+1)})\right),
\end{align*}
where $S=\{i_1,\ldots,i_{2L}\}$ is the set of distinct indices,
$q_l \in \cM_{i_l,S}$ are the monomials in the expansion of
$\cX_{i_l}^{(i_l)}\cQ_{i_l}^{(i_l)}$ in Lemma \ref{lem:x_i q_i expansion},
and $q=\prod_{l=1}^L q_l \prod_{l=L+1}^{2L} \bar q_l$.
Let $K \subseteq S$ be those indices appearing exactly once in
$(i_1,\ldots,i_{2L})$. Then, applying $\sum_i |\v(i)| \leq \sqrt{N}$
and $\sum_i |\v(i)|^p \leq 1$ for each $p \geq 2$, this implies as in
Lemma \ref{lem:fluctuation_averaging}(b) that
\begin{equation}\label{eq:vXQval}
\E\left|\sum_{i=1}^N \bar{\v}(i)\cX_i^{(i)}\cQ_i^{(i)}\right|^{2L}
\prec \mathop{\max_{i_1,\ldots,i_{2L} \in [N]}}_{q_1 \in \cM_{i_1,S},
\ldots,q_{2L} \in \cM_{i_{2L},S}}
\Big\{N^{|K|/2}\E|\E_S q|\Big\}+\Oprec(N^{-\tau'(D+1)+L}).
\end{equation}

For each $l\in[2L]$, let $m_X^*(l),m_X(l),m_B^*(l),m_B(l)$ denote the counts of
Lemma \ref{lem:x_i q_i expansion} for $q_l$, and let $m_P^*(l)$ denote the total
number of factors $\cP_{i_l}^{(S)}$ in $q_l$. Set
\[m_X^*=\sum_{l=1}^{2L} m_X(l),
\quad m_X=\sum_{l=1}^{2L} m_X(l),
\quad m_B^*=\sum_{l=1}^{2L} m_B^*(l),
\quad m_B=\sum_{l=1}^{2L} m_B(l),
\quad m_P^*=\sum_{l=1}^{2L} m_P^*(l),\]
and set also
\[E(l)=m_X^*(l)+\frac{1}{2}(m_X(l)+m_B^*(l))+m_P^*(l)-1.\]
Note that Lemma \ref{lem:x_i q_i expansion}(a) implies $E(l) \geq 0$ for each
$l=1,\ldots,2L$. Consider three cases for an index $i_l \in K$:
\begin{itemize}
\item $i_l$ appears as a lower index on exactly one factor of
$q_l$ (which must be $\cX_{i_l}^{(S)}$ or $\cB_{i_lk}^{(S)}$ for some
$k \neq i_l$), and it does not appear as a lower index on any factor of
$\{q_{l'}:l' \neq l\}$. Then $\E_S q=0$.
\item $i_l$ appears as a lower index in some $\{q_{l'}:l' \neq l\}$.
Lemma \ref{lem:x_i q_i expansion}(c) ensures that the total number of such
indices $i_l \in K$ is at most $\frac{1}{2}m_X+m_B+\frac{1}{2}m_B^*$.
\item $i_l$ does not appear as a lower index on any $\{q_{l'}:l' \neq l\}$,
but appears as a lower index on at least two factors of $q_l$. Then Lemma
\ref{lem:x_i q_i expansion}(a--b) ensures that either $m_P^*(l) \geq 1$
and $m_X^*(l)+\frac{1}{2}(m_X(l)+m_B^*(l)) \geq 1$, or $m_P^*(l)=0$ and
$m_X^*(l)+\frac{1}{2}(m_X(l)+m_B^*(l)) \geq 2$, so $E(l) \geq 1$.
Thus the total number of such indices $i_l \in K$ is at most $\sum_{l=1}^{2L}
E(l)$.
\end{itemize}
Combining these cases, either $\E_S q=0$, or
\[\frac{1}{2}m_X+m_B+\frac{1}{2}m_B^*+\sum_{l=1}^{2L} E(l)
=m_X^*+m_X+m_B^*+m_B+m_P^*-2L \geq |K|.\]
Bounding $|\cX_i^{(S)}|,|\cB_{ij}^{(S)}|,|\cP_i^{(S)}| \prec \Phi$
gives $|\E_S q| \prec \Phi^{2L+|K|}$. Then
\[\E\left|\sum_{i=1}^N \bar{\v}(i)\cX_i^{(i)}\cQ_i^{(i)}\right|^{2L}
\prec N^{|K|/2}\Phi^{2L+|K|}+N^{-\tau'(D+1)+L}
\leq (\sqrt{N}\Phi)^{2L}\Phi^{2L}+N^{-\tau'(D+1)+L}\]
where the last inequality uses $\sqrt{N}\Phi \geq 1$ and $|K| \leq 2L$.
For large enough $D>0$, the second term is at most the first, so 
Lemma \ref{lem:fluctuation_averaging_lowrank}(c)
follows again by Markov's inequality.
\end{proof}

\subsection{Proof of Lemma \ref{lem:fluctuation_averaging_lowrank2}}

\subsubsection{Weak cumulant tensor bound}

We will use implicitly the observation that if Assumption 
\ref{assum:cumulant} holds, then the condition \eqref{eq:cumulantassumption} is
valid also for complex inputs $\s_1,\ldots,\s_m \in \C^n$ and $T \in
(\C^n)^{\otimes k-m}$, as it may be applied separately to the real and complex
parts. In what follows, $\<\cdot,\cdot\>$ represents the non-conjugate, scalar product on $(\C^n)^{\otimes k}$.

The condition in Assumption \ref{assum:cumulant} pertains to settings where
$m,k-m \in \{1,\ldots,k-1\}$. The following lemma clarifies that for
all values of $m$ including $\{0,k\}$, Assumption \ref{assum:cumulant}
has the following weaker implication.

\begin{lemma}[Weak cumulant tensor bound]\label{lem:weak cumulant bound}
Suppose Assumption \ref{assum:cumulant} holds. Fix any $k \geq 3$,
let $\|\cdot\|_{\cU_k}$ be as in Assumption
\ref{assum:cumulant}, and for a scalar quantity $T \in (\C^n)^0$ define
$\|T\|_{\cU_k}=|T|$. Then for any $\eps>0$, there exists a constant $C \equiv
C(\eps,k)>0$ such that for all $i\in[N]$, 
$0\leq m\leq k$, $\s_1,\dots,\s_m\in\C^n$, and $T\in(\C^n)^{\otimes k-m}$,
\[
    |\<\kappa_k(\g_i),\s_1\otimes\cdots\s_m\otimes T\>|\leq
Cn^\eps(\sqrt{n})^{k-m}\|T\|_{\cU_k}\prod_{t=1}^m\|\s_t\|_2.
\]
\end{lemma}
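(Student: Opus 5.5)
We split according to the value of $m$. For $1\le m\le k-1$ the claim follows at once from Assumption \ref{assum:cumulant}: the assumed bound has the factor $(\sqrt n)^{k-m-1}$, which is at most $(\sqrt n)^{k-m}$. The complex extension comes from separating real and imaginary parts, as remarked above. What remains are the two endpoint cases $m=k$ and $m=0$.

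\emph{Case $m=k$.} Here $T$ is a scalar with $\|T\|_{\cU_k}=|T|$, and we must show
\[
|\<\kappa_k(\g_i),\s_1\otimes\cdots\otimes\s_k\>|\le Cn^\eps\prod_t\|\s_t\|_2 .
\]
Write $\s_k=\sum_\alpha \s_k[\alpha]\e_\alpha$ and view $\s_k$ as an order-one tensor $T'=\s_k$. Applying Assumption \ref{assum:cumulant} with $m'=k-1$ gives the bound
\[
Cn^\eps(\sqrt n)^{0}\|\s_k\|_{\cU_k}\prod_{t<k}\|\s_t\|_2 .
\]
Since every $\x\in\cU_k$ has $\|\x\|_2\le C_k$, we get $\|\s_k\|_{\cU_k}=\sup_{\x\in\cU_k}|\<\x,\s_k\>|\le C_k\|\s_k\|_2$, and this case is done.

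\emph{Case $m=0$.} Now $T\in(\C^n)^{\otimes k}$ is arbitrary, and we need
\[
|\<\kappa_k,T\>|\le Cn^\eps(\sqrt n)^k\|T\|_{\cU_k}.
\]
Expand the first coordinate in the standard basis:
\[
\<\kappa_k,T\>=\sum_{\alpha=1}^n\<\kappa_k,\e_\alpha\otimes T_\alpha\>,
\qquad T_\alpha=\<\e_\alpha,T\>\in(\C^n)^{\otimes k-1}
\]
(the contraction in the first slot). Each term is handled by Assumption \ref{assum:cumulant} with $m=1$, giving
\[
Cn^\eps(\sqrt n)^{k-2}\|T_\alpha\|_{\cU_k}.
\]
Because $\e_\alpha\in\cU_k$, we have $\|T_\alpha\|_{\cU_k}\le\|T\|_{\cU_k}$. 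Summing over the $n$ values of $\alpha$ yields
\[
Cn^\eps n(\sqrt n)^{k-2}\|T\|_{\cU_k}=Cn^\eps(\sqrt n)^{k}\|T\|_{\cU_k},
\]
which is exactly the claimed bound. The constant $C$ is uniform over $i$ since the assumption's constant is. No step is a real obstacle; the only point requiring care is that the contraction identity $\|T_\alpha\|_{\cU_k}\le\|T\|_{\cU_k}$ uses that the standard basis lies in $\cU_k$.
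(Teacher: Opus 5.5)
Your proof is correct and follows essentially the same route as the paper's. The range $1\le m\le k-1$ is immediate from Assumption~\ref{assum:cumulant}. For $m=k$ you treat $\s_k$ as an order-one $T$ and use $\|\s_k\|_{\cU_k}\le C_k\|\s_k\|_2$, and for $m=0$ you expand the first slot in the standard basis, apply the $m=1$ bound, and use $\e_\alpha\in\cU_k$ to get $\|T_\alpha\|_{\cU_k}\le\|T\|_{\cU_k}$.
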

\begin{proof}
For $1 \leq m \leq k-1$, the result is implied by Assumption
\ref{assum:cumulant}. For $m=k$, the result also follows by identifying a single
vector $\s_t$ as $T$ in Assumption \ref{assum:cumulant} and applying
$\|\s_t\|_{\cU_k}\leq C_k\|\s_t\|_2$.

For $m=0$, let $T\in(\C^n)^{\otimes k}$ be any $k$-th order tensor. We can write
\[
    T=\sum_{\alpha_1,\dots,\alpha_k}T[\alpha_1,\dots,\alpha_k]\e_{\alpha_1}\otimes\cdots\otimes\e_{\alpha_k}=\sum_{\alpha_1}\e_{\alpha_1}\otimes\underbrace{\sum_{\alpha_2,\dots,\alpha_k}T[\alpha_1,\dots,\alpha_k]\e_{\alpha_2}\otimes\cdots\otimes\e_{\alpha_k}}_{\widetilde{T}(\alpha_1)\in(\C^n)^{\otimes k-1}}.
\]
Fix $\eps>0$. By Assumption \ref{assum:cumulant}, for a constant $C \equiv
C(\eps,k)>0$, we have
\begin{align*}
    |\<\kappa_k(\g_i),T\>|&\leq\sum_{\alpha_1}\left|\<\kappa_k(\g_i),\e_{\alpha_1}\otimes\widetilde{T}(\alpha_1)\>\right|\\
    &\leq n \cdot
Cn^\eps(\sqrt{n})^{k-1-1}\max_{\alpha_1}\left(\|\e_{\alpha_1}\|_2\cdot\|\widetilde{T}(\alpha_1)\|_{\cU_k}\right)=
Cn^\eps(\sqrt{n})^{k}\max_{\alpha_1}\|\widetilde{T}(\alpha_1)\|_{\cU_k}.
\end{align*}
The proof is completed by the definition of the $\cU_k$-norm,
\begin{align*}
    \max_{\alpha_1}\|\widetilde{T}(\alpha_1)\|_{\cU_k}&=\max_{\alpha_1}\max_{\u_1,\dots,\u_{k-1}\in\cU_k}\left|\<\widetilde{T}(\alpha_1),\u_1\otimes\cdots\u_{k-1}\>\right|\\
    &=\max_{\alpha_1}\max_{\u_1,\dots,\u_{k-1}\in\cU_k}\left|\<T,\e_{\alpha_1}\otimes\u_1\otimes\cdots\u_{k-1}\>\right|\leq\|T\|_{\cU_k}.
\end{align*}
\end{proof}

\subsubsection{Moment-cumulant expansion}

For any random vector $\g\in\R^n$ with finite moments of all orders, any integer $k\in\N$, and any partition $\pi$ of $[k]$, we define the tensor $\kappa_\pi(\g)\in(\R^n)^{\otimes k}$ as
\[
\kappa_\pi(\g)=\bigotimes_{B\in\pi} \kappa_{|B|}(\g),
\]
where $\kappa_{|B|}(\g)$ is the $|B|$-th order cumulant tensor of $\g$, and
each factor $\kappa_{|B|}(\g)$ of the tensor product corresponds to the indices
of $B \in \pi$. We have the following moment-cumulant decomposition:

\begin{lemma}\label{lem:cumulant pairing}
Suppose $\g\in\R^n$ is a random vector with $\E \g=0$, $\E\g\g^*=\Sigma$,
and finite moments of all orders. Then for any integers $k,m \geq 0$, we have
\[
    \E[(\g\g^*-\Sigma)^{\otimes k}\otimes\g^{\otimes m}]=\sum_{\pi\in \dot{P}_{k,m}}\kappa_\pi(\g),
\]
where $\dot{P}_{k,m}$ is the set of all partitions of $[2k+m]$ that does not
have any singleton block or any of the cardinality-2 blocks
$\{1,2\},\{3,4\},\ldots,\{2k-1,2k\}$.
\end{lemma}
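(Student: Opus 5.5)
The plan is to reduce the statement to the classical moment--cumulant formula and then discard the partitions that vanish. Write $\h_t=\g$ for each of the $2k+m$ tensor slots. Expanding $(\g\g^*-\Sigma)^{\otimes k}$ multilinearly gives
\[
\E[(\g\g^*-\Sigma)^{\otimes k}\otimes\g^{\otimes m}]
=\sum_{A\subseteq[k]}(-1)^{k-|A|}\,\E\Big[\bigotimes_{a\in A}(\g\otimes\g)_{\{2a-1,2a\}}\otimes \g^{\otimes m}\Big]\otimes\bigotimes_{a\notin A}\Sigma_{\{2a-1,2a\}},
\]
where subscripts indicate the slot positions the factors occupy. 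This is an identity of tensors in $(\R^n)^{\otimes 2k+m}$, arranged in the natural slot ordering.

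For each $A$, I apply the multivariate moment--cumulant formula entrywise to the moment over the slots $V_A=\bigcup_{a\in A}\{2a-1,2a\}\cup\{2k+1,\ldots,2k+m\}$. This gives $\sum_{\sigma\in P(V_A)}\kappa_\sigma(\g)$. Since $\E\g=0$, any partition with a singleton block contributes zero. Since $\E\g\g^*=\Sigma$ and $\E\g=0$, we have $\kappa_2(\g)=\Sigma$. Hence each factor $\Sigma_{\{2a-1,2a\}}$ with $a\notin A$ may be identified with $\kappa_{\{2a-1,2a\}}(\g)$, and the $A$-term becomes
\[
(-1)^{k-|A|}\sum_{\pi}\kappa_\pi(\g),
\]
where $\pi$ ranges over partitions of $[2k+m]$ without singletons that contain the blocks $\{2a-1,2a\}$ for all $a\notin A$. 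To see this, take $\pi=\sigma\cup\{\{2a-1,2a\}:a\notin A\}$; the correspondence is bijective because $\sigma$ is recovered by deleting those blocks.

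Now I swap the order of summation. For a fixed singleton-free partition $\pi$ of $[2k+m]$, let $F(\pi)=\{a\in[k]:\{2a-1,2a\}\in\pi\}$. The partition $\pi$ arises from exactly those $A$ with $[k]\setminus A\subseteq F(\pi)$. Its total coefficient is therefore
\[
\sum_{B\subseteq F(\pi)}(-1)^{|B|}=\1\{F(\pi)=\emptyset\}.
\]
So the surviving partitions are precisely those with no singleton block and none of the blocks $\{2a-1,2a\}$, which is $\dot P_{k,m}$, with coefficient $1$.

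The only point needing care, and the main (mild) obstacle, is bookkeeping. One must check that the tensor-product ordering in $\kappa_\pi$ matches the slot positions. One must also check that the identification $\kappa_2=\Sigma$ places $\Sigma$ in the correct slots; this is immediate since $\Sigma$ is symmetric. Finally, the edge cases $k=0$ or $m=0$ (including the empty product) are consistent with the convention that the empty partition gives $1$.
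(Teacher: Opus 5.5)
Your proof is correct, and it reaches the result by a slightly different route from the paper.

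The paper inducts on $k$, with the hypothesis holding for all $m$ simultaneously. It splits only the last factor $(\g\g^*-\Sigma)$. This writes $\E[(\g\g^*-\Sigma)^{\otimes k}\otimes\g^{\otimes m}]$ as $\E[(\g\g^*-\Sigma)^{\otimes k-1}\otimes\g^{\otimes m+2}]$ minus a slot-rotated copy of $\E[(\g\g^*-\Sigma)^{\otimes k-1}\otimes\g^{\otimes m}]\otimes\Sigma$. The paper then applies the induction hypothesis to both pieces, with $m+2$ and with $m$. Since $\Sigma=\kappa_2(\g)$, the subtracted sum is exactly the set of admissible partitions that contain the block $\{2k-1,2k\}$, so the difference is indexed by $\dot P_{k,m}$.

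You instead expand all $k$ factors at once over subsets $A\subseteq[k]$ and apply the moment--cumulant formula once per term. You then collect coefficients through $\sum_{B\subseteq F(\pi)}(-1)^{|B|}=\1\{F(\pi)=\emptyset\}$. In effect you perform in a single step the inclusion--exclusion over the $k$ pair-blocks that the paper's induction carries out one block at a time.

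Each version has its own advantage:
\begin{itemize}
\item Yours is non-recursive. It needs no slot-rotation permutation and no strengthening of the induction to all $m$.
\item The paper's avoids the $2^k$-term expansion and the explicit binomial cancellation.
\end{itemize}
Both rest on the same two facts, $\kappa_1(\g)=0$ and $\kappa_2(\g)=\Sigma$. Your bijection $\sigma\mapsto\sigma\cup\{\{2a-1,2a\}:a\notin A\}$ and your handling of the empty-partition edge cases are correct.
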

\begin{proof}
We prove by induction on $k$: For $k=0$, let $P_m$ denote the set of all
partitions of $[m]$. By the moment-cumulant relations, for any vectors
$\v_1,\ldots,\v_m \in \R^n$,
\[\E\<\g^{\otimes m},\v_1\otimes \ldots \otimes \v_m\>
=\sum_{\pi \in P_m} \prod_{B \in \pi} \kappa_{|B|}(\g^* \v_i:i \in B),\]
hence $\E[\g^{\otimes m}]=\sum_{\pi \in P_m} \kappa_\pi(\g)$. If $\pi$ has a
singleton block, then $\kappa_\pi(\g)=0$ since $\kappa_1(\g)=\E[\g]=0$, and the
result follows for $m=0$.

Now suppose the claim holds for $k-1$ for all $m \geq 0$. Let $Q:(\R^n)^{2k+m}
\to (\R^n)^{2k+m}$ be the permutation for which
$Q(T)(i_1,\ldots,i_{2k+m})=T(i_1,\ldots,i_{2k-2},i_{2k+m-1},i_{2k+m},i_{2k-1},\ldots,i_{2k+m-2})$,
i.e.\ rotating the last two coordinates to positions $2k-1$ and $2k$.
Then
\begin{align*}
    \E[(\g\g^*-\Sigma)^{\otimes k}\otimes\g^{\otimes
m}]&=\E[(\g\g^*-\Sigma)^{\otimes k-1}\otimes\g^{\otimes
m+2}]-Q\Big(\E[(\g\g^*-\Sigma)^{\otimes k-1}\otimes \g^{\otimes m}]\otimes
\Sigma\Big)\\
    &=\sum_{\pi\in \dot{P}_{k-1,m+2}}\kappa_\pi(\g)-\sum_{\pi\in
\dot{P}_{k-1,m}} Q\Big(\kappa_\pi(\g)\otimes\Sigma\Big).
\end{align*}
where the second equality holds by the induction hypothesis.
The first sum is over partitions $\pi$ of $[2k+m]$ that do not contain
singletons or the blocks $\{1,2\},\ldots,\{2k-3,2k-2\}$. The second sum may be
understood as the sum of $\kappa_{\pi'}(\g)$ for partitions $\pi'$ of $[2k+m]$
that do not contain singletons or the blocks
$\{1,2\},\ldots,\{2k-3,2k-2\}$, but contains the block $\{2k-1,2k-2\}$. Hence
their difference is exactly $\sum_{\pi \in \dot P_{k,m}} \kappa_\pi(\g)$,
completing the induction.
\end{proof}

\subsubsection{Tensor network representation}

We again fix a deterministic unit vector $\u \in \C^n$ and abbreviate
\[\cY_i^{(S)}=\cY_i^{(S)}[\u\u^*], \qquad
\cZ_{ijk}^{(S)}=\cZ_{ijk}^{(S)}[\u\u^*], \qquad
\cX_i^{(S)}=\cX_i^{(S)}[\u].\]
All subsequent $\Oprec(\cdot)$ bounds are implicitly uniform over all such
vectors $\u$.

To prove Lemma \ref{lem:fluctuation_averaging_lowrank2},
we shall now develop a tensor network language to express the high moment
expansions of the quantities to be bounded.

For any tensor $T\in(\C^n)^{\otimes k}$, we write $\deg(T)=k$ to mean the degree
or order of the tensor. In particular, this means for vectors $\x\in\C^n$
that $\deg(\x)=1$ and for matrices $A\in\C^{n\times n}$ that $\deg(A)=2$.

\begin{defi}\label{def:tensor network}
For any integer $L\geq 1$ and $S\subset[N]$ with $|S|\leq L$,
we say that $(\cG, f_\cG)$ is a
\textit{$(S,L)$-valid tensor network} if there exist constants $M,C \geq 1$ 
depending only on $L$ such that the following holds: 

Let $\{\v_\alpha\}_{\alpha=1}^n$
be the (random, real) eigenvectors of $R^{(S)}$, let $\u \in \C^n$ be a
fixed deterministic unit vector, and let $\cU=\bigcup_{k=1}^M \cU_k$
where $\{\cU_k\}_{k \geq 1}$ are the sets of Assumption \ref{assum:cumulant}.
Then $\cG=(V_\cG,E_\cG)$ is an undirected multi-graph with no self loops
and $|V_\cG|<C$, and
$f_\cG:V_\cG\rightarrow\cT$ is a labeling function on the vertices $V_\cG$
taking values in
a set of tensors $\cT=\cL\cup\cR\cup\cE\cup\cI_m\cup\cI_t$, where
\[\cL:=\{\u,\Bar{\u}\},\quad \cR:=\cR_o\cup\cR_{n},\quad
\cR_o:=\left\{\frac{R^{(S)}}{\sqrt{N}}\u,\frac{\Bar{R}^{(S)}}{\sqrt{N}}\Bar{\u}\right\},\quad
\cR_{n}:=\bigcup_{\x\in \cU}\left\{\frac{R^{(S)}}{\sqrt{N}}\x,\frac{\Bar{R}^{(S)}}{\sqrt{N}}\x\right\},
\]
\[
\cE:=\{\v_1,\ldots,\v_n\},\quad
\cI_m:=\left\{\frac{R^{(S)}}{N},\frac{\Bar{R}^{(S)}}{N}\right\}
\quad
    \cI_t:=\bigcup_{k=2}^M\{\kappa_k(\g_1),\ldots,\kappa_k(\g_N)\}\text{
(where $\kappa_2(\g_i)=\Sigma$)}.
\]
We say that $v\in V_\cG$ is a \textit{left leaf}, a \textit{right leaf}, a \textit{right original leaf}, a \textit{right new leaf}, an \textit{eigen-leaf}, a \textit{matrix vertex}, or a \textit{tensor vertex} 
if $f_\cG(v)$ belongs to $\cL,\cR,\cR_o,\cR_{n},\cE,\cI_m$, or $\cI_t$
respectively. We denote the corresponding sets of vertices
\[
    V_\cG^{l},V_\cG^{r},V_\cG^{ro},V_\cG^{rn},V_\cG^{e},V_\cG^{m},V_\cG^t
\subseteq V_\cG.
\]
Moreover, we require that the following hold for $(\cG,f_\cG)$:
\begin{enumerate}[label=(\alph*)]
    \item For all $v\in V_\cG$, we have $\deg(v)=\deg(f_\cG(v))$, where
$\deg(v)$ is the vertex degree of $v \in V_\cG$ and $\deg(f_\cG(v))$ is the
degree/order of its label $f_\cG(v)$. In particular, $\cG$ has no vertices of
degree 0, each vertex of $V_\cG^l\cup V_\cG^r \cup V_\cG^e$ has degree 1,
and each vertex of $V_\cG^m$ has degree 2.
    \item $\cG$ is a bipartite multi-graph between 
$V_\cG^l \cup V_\cG^r \cup V_\cG^e \cup V_\cG^m$ and $V_\cG^t$, i.e., each
edge of $E_\cG$ connects some $u \in V_\cG^l \cup V_\cG^r \cup V_\cG^e \cup
V_\cG^m$ with some $v \in V_\cG^t$.
    \item If $u \in V_\cG^m$ and $v \in V_\cG^t$ have 2 edges between them,
then $\deg(v) \geq 3$. We refer to such a vertex $u \in V_\cG^m$ as a
\textit{type 1 matrix vertex}, and we refer to each other matrix vertex
$u \in V_\cG^m$ (connecting to two distinct vertices $v,v' \in V_\cG^t$)
as a \textit{type 2 matrix vertex}. We denote
the sets of such vertices by $V_\cG^{m1},V_\cG^{m2} \subseteq V_\cG^m$.
\end{enumerate}
\end{defi}

For each tensor vertex $v\in V_\cG^t$, we will write
$l(v),r(v),ro(v),rn(v),e(v),m(v),m1(v),m2(v)$ for the numbers of left leaves,
right leaves, right original leaves, right new leaves, eigen-leaves, matrix
vertices, type 1 matrix vertices, and type 2 matrix vertices adjacent to $v$.
(Each adjacent type 1 matrix neighbor contributes a count of 1 to $m1(v)$,
even though it has 2 edges connecting to $v$.)

The proof of Lemma \ref{lem:fluctuation_averaging_lowrank2}(a) 
will consider networks having only labels
$\cL,\cR_o,\cR_n,\cI_m,\cI_t$ (thus no eigen-leaves), while the proof
of Lemma \ref{lem:fluctuation_averaging_lowrank2}(b--c) will pertain to
networks having only labels $\cE,\cR_o,\cR_n,\cI_m,\cI_t$ (thus no left leaves).
For convenience, we state here several general results that pertain to networks
which may have vertices of all types.

\begin{defi}
For a $(S,L)$-valid tensor network $(\cG,f_\cG)$, we define its contracted value to be
\[
    \mathrm{val}(\cG,f_\cG):=\sum_{\alpha\in[n]^{E_\cG}}\prod_{v\in V_\cG}[f_\cG(v)]_{\alpha(\partial v)}
\]
where $\alpha(\partial v)$ denotes the multi-set of indices $\alpha_e$
associated with the edges $e$ incident to $v$. Note that this is well-defined
since all tensors $f_\cG(v) \in \cT$ are
symmetric (under permutations of coordinates, without complex conjugation).
\end{defi}

\begin{remark}\label{remk:splitting of network}
$\mathrm{val}(\cG,f_\cG)$ is multiplicative across disjoint connected components
of the graph. I.e., if $(\cG,f_\cG)$ is a $(S,L)$-valid tensor network such
that $\cG$ splits into two connected components $\cG_1,\cG_2$, then both
$(\cG_1,f_{\cG_1}), (\cG_2,f_{\cG_2})$ must be $(S,L)$-valid tensor network,
with $f_{\cG_i}$ defined as the restriction of $f_\cG$ to $\cG_i$, and
$\mathrm{val}(\cG,f_\cG)
=\mathrm{val}(\cG_1,f_{\cG_1}) \times \mathrm{val}(\cG_2,f_{\cG_2})$.
\end{remark}

We will proceed to bound the value of a $(S,L)$-valid tensor network via
operations that successively remove tensor vertices $v \in V_\cG^t$ with
$\deg(v) \geq 3$ from $\cG$.
This procedure is formalized by the following definition.

\begin{defi}\label{def:removev}
For a $(S,L)$-valid tensor network $(\cG,f_\cG)$ and $v\in V_\cG^t$ with
$\deg(v)\geq 3$, we say that $(\cG',\cF_{\cG'})$ is \textit{the family of
networks generated by removing $v$} if $G'$ is constructed by removing $v$, its
incident edges, and all of its adjacent leaves and adjacent
type 1 matrix vertices, and
$\cF_{\cG'}$ is the set of all labelings $f_{\cG'}:V_{\cG'}\rightarrow\cT$
satisfying the following:
\begin{itemize}
    \item If $u\in V_\cG$ is not adjacent to $v$, then $f_\cG(u)=f_{\cG'}(u).$
    \item If $u \in V_\cG^{m2}$ is adjacent to $v$, i.e.\ $u$ is a type 2 matrix
neighbor of $v$ (and hence remains in $\cG'$), then $f_{\cG'}(u) \in \cR_n$.
\end{itemize}
In words, each $(\cG',f_{\cG'})$ is constructed by
removing $v$ and replacing each type 2 matrix neighbor of $v$ by a
right new leaf, and $\cF_{\cG'}$ comprises all possible labelings of these right
new leaves.
\end{defi}

We note that this procedure of removing $v$ from $G$ does not change the degree
of any vertices which remain in $G$.
The following lemma gives a basic bound for $\mathrm{val}(\cG,f_\cG)$
via such a removal.

\begin{lemma}\label{lem:remove tensor weak}
Suppose the conditions of Lemma \ref{lem:fluctuation_averaging_lowrank2} hold.
Let $(\cG,f_\cG)$ be a $(S,L)$-valid tensor network, let
$v\in V_\cG^t$ be a tensor vertex satisfying $\deg(v) \geq 3$, and let
$(\cG',\cF_{\cG'})$ be the family of networks generated by removing $v$. Then
\[
    |\mathrm{val}(\cG,f_\cG)| \prec
\Phi^{r(v)}\times\max_{f_{\cG'}\in\cF_{\cG'}}|\mathrm{val}(\cG',f_{\cG'})|
\]
uniformly over $z\in\bD$ and all $S\subseteq[N]$ with $|S|\leq L$.
\end{lemma}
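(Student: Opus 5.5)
The plan is to isolate the single cumulant tensor at $v$, write $\mathrm{val}(\cG,f_\cG)$ as one contraction $\<\kappa_k(\g_i),\s_1\otimes\cdots\otimes\s_m\otimes T\>$, and apply the weak cumulant bound of Lemma~\ref{lem:weak cumulant bound}. That lemma allows every $0\le m\le k$, so the weak form suffices here. The $(\sqrt n)^{k-m}$ loss it incurs is paid for exactly by the $N^{-1/2}$ normalizations carried by the type 2 matrix neighbors.

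\emph{Setup.} Let $f_\cG(v)=\kappa_k(\g_i)$ with $k=\deg(v)\ge 3$. By bipartiteness (Definition~\ref{def:tensor network}(b)), the edges at $v$ come in three kinds.
\begin{itemize}
\item[(i)] Edges to adjacent leaves, i.e.\ left leaves, right leaves or eigen-leaves. Each such leaf is a vector.
\item[(ii)] Pairs of edges to type 1 matrix vertices, labeled $R^{(S)}/N$ or $\bar R^{(S)}/N$.
\item[(iii)] Single edges to type 2 matrix vertices $u$. The other edge of each such $u$ goes into the rest of the graph.
\end{itemize}

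\emph{Type 1 matrix neighbors.} I first dispose of these via the spectral decomposition
\[R^{(S)}/N=\sum_\alpha N^{-1}(\lambda_\alpha-z)^{-1}\v_\alpha\v_\alpha^*,\]
where the $\v_\alpha$ are real, so the same holds for $\bar R^{(S)}$ with conjugated weights. Expanding every type 1 neighbor this way writes $\mathrm{val}(\cG,f_\cG)$ as a weighted sum of contractions in which each type 1 vertex is replaced by two unit eigenvector slots $\v_\alpha\otimes\v_\alpha$. The total weight is at most $(N^{-1}\|R^{(S)}\|_1)^{m1(v)}\prec 1$ by the hypothesis of Lemma~\ref{lem:fluctuation_averaging_lowrank2}. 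It therefore suffices to bound each term uniformly over $\alpha$.

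\emph{The vector slots.} In each term, I collect as $\s_1,\ldots,\s_m$ the vectors at the slots of kinds (i) and (ii). Their norms are controlled as follows:
\begin{itemize}
\item left leaves $\u,\bar\u$ and eigenvectors are unit vectors;
\item a right leaf has norm $N^{-1/2}\|R^{(S)}\x\|_2\prec\Phi$ by the hypothesis of Lemma~\ref{lem:fluctuation_averaging_lowrank}, applied with $\x=\u$ or $\x\in\cU$ (note $\|\x\|_2\le C$).
\end{itemize}
Hence $\prod_t\|\s_t\|_2\prec\Phi^{r(v)}$.

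\emph{The tensor $T$.} The remaining $k-m=m2(v)$ slots define $T\in(\C^n)^{\otimes k-m}$, namely the contraction of the rest of $\cG$ (with $v$ deleted) through the type 2 matrix vertices. By definition of the $\cU_k$-norm, $\|T\|_{\cU_k}$ is a supremum over $\x_1,\ldots,\x_{k-m}\in\cU_k\subseteq\cU$ of the network in which each type 2 neighbor $u$ of $v$ is contracted with some $\x_t$. Symmetry of $R^{(S)}$ gives
\[\tfrac{R^{(S)}}{N}\x_t=N^{-1/2}\cdot\tfrac{R^{(S)}}{\sqrt N}\x_t .\]
So each such contraction equals $N^{-(k-m)/2}\,\mathrm{val}(\cG',f_{\cG'})$ for a labeling $f_{\cG'}\in\cF_{\cG'}$ as in Definition~\ref{def:removev}, with each type 2 neighbor relabeled as a right new leaf. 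Thus
\[\|T\|_{\cU_k}\le N^{-(k-m)/2}\max_{f_{\cG'}\in\cF_{\cG'}}|\mathrm{val}(\cG',f_{\cG'})|.\]

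\emph{Conclusion.} Lemma~\ref{lem:weak cumulant bound} yields a factor $n^{\eps}(\sqrt n)^{k-m}$, which cancels the $N^{-(k-m)/2}$ since $n\asymp N$. This gives the claimed bound, with the arbitrary $n^\eps$ absorbed into $\prec$.

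\emph{Main obstacle: bookkeeping with random vectors.} The cumulant inequality is deterministic and holds for arbitrary, possibly random, vectors $\s_t$ and tensors $T$, so randomness of eigenvectors and resolvents causes no issue. The points to check are the following.
\begin{itemize}
\item The maximum over the $|\cU|^{k}\le n^{C}$ labelings and over the eigenvector indices $\alpha$ is handled uniformly by Lemma~\ref{lemma:domination}(a), taking maxima over polynomially many quantities.
\item The right-leaf bounds hold uniformly over $\x\in\cU$ and over $S$ with $|S|\le L$.
\item Cases where $v$ has no neighbors of kind (i)/(ii) (so $m=0$), or no type 2 neighbors (so $m=k$ and $\cG'$ is the empty network with value $1$), are exactly the endpoint cases covered by Lemma~\ref{lem:weak cumulant bound}.
\end{itemize}
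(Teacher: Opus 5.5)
Your proposal is correct and follows essentially the same route as the paper. Both arguments expand each type 1 matrix neighbor spectrally at a cost of $(N^{-1}\|R^{(S)}\|_1)^{m1(v)}\prec 1$, apply Lemma~\ref{lem:weak cumulant bound}, and identify $\|(\sqrt{N})^{\deg(T)}T\|_{\cU_k}$ with a maximum of $|\mathrm{val}(\cG',f_{\cG'})|$ over right-new-leaf relabelings. The only small difference is that the paper first reduces to connected $\cG$ via Remark~\ref{remk:splitting of network}; your statement that $\cG'$ is empty with value $1$ when $m=k$ implicitly assumes this, and otherwise the other components simply enter $T$ as a scalar factor.
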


\begin{proof}
By Remark \ref{remk:splitting of network}, we may consider without loss of
generality the case where $\cG$ is a single connected component. A
representative form of $(\cG,f_\cG)$ is given by the following picture, where
$v \in V_\cG^t$ has label $\kappa_k(\g_i)$, $\x$ denotes a deterministic
vector in $\cU$, $\v_\alpha$ denotes an eigenvector of $R^{(S)}$,
and $T$ denotes the contraction of all tensors
connecting to $v$ via its neighboring type 2 matrix vertices:
\begin{figure}[H]
    \centering
    \resizebox{0.5\textwidth}{!}{
        \begin{tikzpicture}[
    node distance=3.5cm,
    center_node/.style={circle, draw=black, very thick, fill=yellow!20, minimum size=1.5cm, font=\bfseries},
    leaf_node/.style={circle, draw=blue!80, thick, fill=blue!10, minimum size=1.1cm},
    eigen_node/.style={circle, draw=red!80, thick, fill=red!10, minimum size=1.1cm},
    matrix_node/.style={circle, draw=green!60!black, thick, fill=green!10, minimum size=1.1cm, font=\small},
    conn/.style={thick, draw=gray!80},
    double_conn/.style={thick, draw=green!40!black}
]

    \node[center_node] (center) at (0,0) {$\kappa_k(\g_i)$};

    
    \foreach \angle/\i in {100/1, 120/2} {
        \node[leaf_node] (l\i) at (\angle:3.5cm) {$\u$};
        \draw[conn] (center) -- (l\i);
    }
    \node at (110:3cm) {$\dots$}; 
    \draw[|->, thick, dashed, blue!50!black] (130:4.2cm) arc (130:90:4.2cm) node[midway, above] {$l(v)$};

    \foreach \angle/\i in {50/1, 70/2} {
        \node[eigen_node] (e\i) at (\angle:3.5cm) {$\v_\alpha$};
        \draw[conn] (center) -- (e\i);
    }
    \node at (60:3cm) {$\dots$};
    \draw[|->, thick, dashed, red!50!black] (80:4.2cm) arc (80:40:4.2cm) node[midway, above] {$e(v)$};

    \node[matrix_node] (t1) at (0:3.5cm) {$\frac{R^{(S)}}{N}$};
    \node[matrix_node] (t2) at (-30:3.5cm) {$\frac{R^{(S)}}{N}$};
    \draw[conn] (center) -- (t1);
    \draw[conn] (center) -- (t2);
    \draw[dashed, shorten >=5pt] (t1) -- +(0:1.5cm); 
    \draw[dashed, shorten >=5pt] (t2) -- +(-30:1.5cm);
    \node at (-15:3cm) {$\dots$};
    \draw[|->, thick, dashed, green!40!black] (5:4.2cm) arc (5:-35:4.2cm) node[midway, right] {$T$};

    \node[matrix_node] (m1) at (-80:3.5cm) {$\frac{R^{(S)}}{N}$};
    \node[matrix_node] (m2) at (-100:3.5cm) {$\frac{R^{(S)}}{N}$};
    
    \draw[double_conn] (center) to[bend left=15] (m1);
    \draw[double_conn] (center) to[bend right=15] (m1);
    \draw[double_conn] (center) to[bend left=15] (m2);
    \draw[double_conn] (center) to[bend right=15] (m2);
    
    \node at (-90:3cm) {$\dots$};
    \draw[|->, thick, dashed, green!40!black] (-65:4.2cm) arc (-65:-115:4.2cm) node[midway, below] {$m1(v)$};

    \node[leaf_node, fill=orange!10, draw=orange!80] (ro1) at (150:3.5cm) {$\frac{R^{(S)}\u}{\sqrt{N}}$};
    \node[leaf_node, fill=orange!10, draw=orange!80] (ro2) at (175:3.5cm) {$\frac{R^{(S)}\u}{\sqrt{N}}$};
    \draw[conn] (center) -- (ro1);
    \draw[conn] (center) -- (ro2);
    \node at (162.5:3cm) {$\dots$};
    \draw[|->, thick, dashed, orange!80!black] (185:4.2cm) arc (185:140:4.2cm) node[midway, left] {$ro(v)$};
    
    \node[leaf_node, fill=purple!10, draw=purple!80] (rn1) at (210:3.5cm) {$\frac{R^{(S)}\x}{\sqrt{N}}$};
    \node[leaf_node, fill=purple!10, draw=purple!80] (rn2) at (235:3.5cm) {$\frac{R^{(S)}\x}{\sqrt{N}}$};
    \draw[conn] (center) -- (rn1);
    \draw[conn] (center) -- (rn2);
    \node at (222.5:3cm) {$\dots$};
    \draw[|->, thick, dashed, purple!80!black] (245:4.2cm) arc (245:200:4.2cm) node[midway, left] {$rn(v)$};

\end{tikzpicture}
    }
\end{figure}
\noindent
(Other forms of $(\cG,f_\cG)$ may replace certain copies of $R^{(S)}$ or $\u$
by their complex conjugates, have differing vectors $\x \in \cU$ for the
$rn(v)$ neighbors that are right new leaves, and/or have differing
eigenvectors $\v_\alpha$ for the $e(v)$ neighbors that are eigen-leaves.
For all such networks, $\mathrm{val}(\cG,f_\cG)$ may be bounded
similarly. Throughout this and the subsequent proofs, we will focus on a single
representative example of $\mathrm{val}(\cG,f_\cG)$ for notational convenience.)

Writing the spectral decomposition
$R^{(S)}=\sum_\alpha (\lambda_\alpha-z)^{-1}\v_\alpha^{\otimes 2}$,
$|\mathrm{val}(\cG,f_\cG)|$ for the above network may be bounded using
Lemma \ref{lem:weak cumulant bound} as
\begin{align*}
    &\left|\left\<\kappa_k(\g_i),(N^{-1}R^{(S)})^{\otimes
m1(v)}\otimes\v_\alpha^{\otimes e(v)}\otimes\u^{\otimes
l(v)}\otimes(N^{-1/2}R^{(S)}\u)^{\otimes
ro(v)}\otimes(N^{-1/2}R^{(S)}\x)^{\otimes rn(v)}\otimes T\right\>\right|\\
    &\leq N^{-m1(v)}\times\sum_{\alpha_1,\ldots,\alpha_{m1(v)}}\prod_{l=1}^{m1(v)}\frac{1}{|\lambda_{\alpha_l}-z|}\times\\
    &\quad\left|\left\<\kappa_k(\g_i),\bigotimes_{l=1}^{m1(v)}\v_{\alpha_l}^{\otimes
2}\otimes\v_\alpha^{\otimes e(v)}\otimes\u^{\otimes
l(v)}\otimes(N^{-1/2}R^{(S)}\u)^{\otimes
ro(v)}\otimes(N^{-1/2}R^{(S)}\x)^{\otimes rn(v)}\otimes T\right\>\right|\\
&\leq (N^{-1}\|R^{(S)}\|_1)^{m1(v)} \times\\
&\quad \max_{\alpha_1,\ldots,\alpha_{m1(v)}}
\left|\left\<\kappa_k(\g_i),\bigotimes_{l=1}^{m1(v)}\v_{\alpha_l}^{\otimes
2}\otimes\v_\alpha^{\otimes e(v)}\otimes\u^{\otimes
l(v)}\otimes(N^{-1/2}R^{(S)}\u)^{\otimes
ro(v)}\otimes(N^{-1/2}R^{(S)}\x)^{\otimes rn(v)}\otimes T\right\>\right|\\
    &\overset{(a)}{\prec} (N^{-1}\|R^{(S)}\|_1)^{m1(v)}\times
(\sqrt{N})^{\deg(T)} \times \|N^{-1/2}R^{(S)}\u\|_2^{ro(v)}
\times \|N^{-1/2}R^{(S)}\x\|_2^{rn(v)} \times\|T\|_{\cU_k}\\
    &\overset{(b)}{\prec} \Phi^{r(v)} \times
\|(\sqrt{N})^{\deg(T)}T\|_{\cU_k}
\overset{(c)}{\prec}
\Phi^{r(v)}\times\max_{f_{\cG'}\in\cF_{\cG'}}|\mathrm{val}(\cG',f_{\cG'})|.
\end{align*}
Here (a) applies Lemma \ref{lem:weak cumulant bound}, (b) applies the
bounds $N^{-1}\|R^{(S)}\|_1 \prec 1$,
 $N^{-1/2}\|R^{(S)}\u\|_2 \prec \Phi$, and $N^{-1/2}\|R^{(S)}\x\|_2 \prec
\Phi \|\x\|_2 \prec \Phi$ uniformly over all vectors $\x \in \cU$,
and (c) uses that $\|(\sqrt{N})^{\deg(T)}T\|_{\cU_k}$ is the maximum of
$|\mathrm{val}(\cG',f_{\cG'})|$ over a subset of the labelings $\cF_{\cG'}$ in
the network generated by removing $v$.
\end{proof}

For certain types of vertices $v \in V_\cG^t$, we will require a refinement of
the above bound using Assumption \ref{assum:cumulant} directly, instead of its
weaker implication of Lemma \ref{lem:weak cumulant bound}.

\begin{lemma}\label{lem:remove tensor strong}
Suppose the conditions of Lemma \ref{lem:fluctuation_averaging_lowrank2} hold.
Let $(\cG,f_\cG)$ be a $(S,L)$-valid tensor network, let $v\in V_\cG^t$ be
a tensor vertex satisfying $\deg(v)\geq 3$, and let
$(\cG',\cF_{\cG'})$ be the the family of networks generated by removing $v$.
Then:
\begin{enumerate}[label=(\alph*)]
    \item If $m1(v)\geq 1$, or if $l(v)+e(v)=1$
and $r(v)+m1(v)=0$, then
    \[
        |\mathrm{val}(\cG,f_\cG)|\prec\frac{N^\delta}{\sqrt{N}}\times\Phi^{r(v)}\times\max_{f_{\cG'}\in\cF_{\cG'}}|\mathrm{val}(\cG',f_{\cG'})|
    \]
uniformly over all $S \subset [N]$ with $|S|\leq L$ and over $z\in\bD$.
    \item If $r(v) \geq 1$, then
    \[
        |\mathrm{val}(\cG,f_\cG)|\prec\frac{N^\delta}{\sqrt{N}}\times\Phi^{r(v)-1}\times\max_{f_{\cG'}\in\cF_{\cG'}}|\mathrm{val}(\cG',f_{\cG'})|.
    \]
uniformly over all $S \subset [N]$ with $|S|\leq L$ and over $z\in\bD$.
\end{enumerate}
\end{lemma}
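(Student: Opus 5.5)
The plan is to follow the proof of Lemma~\ref{lem:remove tensor weak} verbatim, except at step (a). There, instead of Lemma~\ref{lem:weak cumulant bound}, which pays $(\sqrt N)^{\deg(T)}$ after absorbing all rank-one factors as $\s_t$'s, we invoke Assumption~\ref{assum:cumulant} directly with a judicious choice of which factor plays the role of the structured tensor $T$. Assumption~\ref{assum:cumulant} has exponent $k-m-1$ rather than $k-m$, and this is exactly what produces the extra factor $N^{-1/2}$. The price is that one factor, which would otherwise be bounded in $\ell_2$ norm, must now be measured in the $\cU_k$-norm. The factor $N^\delta$ appears because the a priori bound $|\x^*R^{(S)}\y|\prec N^\delta$ controls $\cU$-norms of resolvent pieces.

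For part (a), suppose first that $m1(v)\geq1$. After the spectral decomposition of all type 1 matrix vertices but one, I would keep one copy of $N^{-1}R^{(S)}$ intact and group it with the type 2 contraction $T$ into the tensor argument $T'=N^{-1}R^{(S)}\otimes T$. All remaining leaves become the $\s_t$'s, with $m\geq1$ whenever $\deg(v)\geq3$ and other neighbors exist. If all neighbors are type 1 or type 2 matrices, I would spectrally decompose one type 1 vertex so that its two eigenvector factors serve as the $\s$'s. Then \eqref{eq:cumulantassumption} gives a factor $(\sqrt N)^{\deg(T)+2-1}$. Moreover,
\[
\|T'\|_{\cU_k}\le \sup_{\x,\y\in\cU}|N^{-1}\x^*R^{(S)}\y|\,\|T\|_{\cU_k}\prec N^{\delta-1}\|T\|_{\cU_k}.
\]
Compared with the weak bound, where the type 1 vertex cost $N^{-1}\|R^{(S)}\|_1\prec1$ and its two indices were free, this gains $(\sqrt N)^{2-1}N^{\delta-1}=N^{\delta-1/2}$.

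In the second case of (a), where $l(v)+e(v)=1$ and $r(v)=m1(v)=0$, the unique unit-norm leaf $\u$ or $\v_\alpha$ is the single $\s_1$ ($m=1$), and everything else is $T$ built from type 2 matrix neighbors. Assumption~\ref{assum:cumulant} then gives $(\sqrt N)^{\deg(T)-1}\|T\|_{\cU_k}$ instead of $(\sqrt N)^{\deg(T)}$, so we gain $N^{-1/2}$ outright, and $N^\delta\geq1$ is harmless. In both cases, step (c) of the weak proof identifies $\|(\sqrt N)^{\deg T}T\|_{\cU_k}$ with $\max_{f_{\cG'}}|\val(\cG',f_{\cG'})|$ over labelings of right new leaves $N^{-1/2}R^{(S)}\x$.

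For part (b), with $r(v)\geq1$, I would remove one right leaf $N^{-1/2}R^{(S)}\w$ ($\w=\u$ or $\x\in\cU$) from the list of $\s$'s and attach it to $T$, so that $T'=N^{-1/2}R^{(S)}\w\otimes T$. The remaining $\Phi^{r(v)-1}$ comes from the other right leaves. Here the order of operations matters: the $\cU_k$-norm of $T'$ must factor as
\[
\sup_{\y\in\cU}|N^{-1/2}\y^*R^{(S)}\w|\,\|T\|_{\cU_k}\prec N^{\delta-1/2}\|T\|_{\cU_k},
\]
since $\u$ is a deterministic unit vector, so the bilinear a priori bound applies uniformly over the polynomially many $\y\in\cU$. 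The cumulant exponent is $\deg(T)+1-1=\deg(T)$, which matches the weak case, and the net gain is $N^{\delta-1/2}\Phi^{-1}$ relative to the weak bound. When $m$ would otherwise be $0$, meaning only the right leaf and type 2 neighbors are present, I would take $\s_1$ to be that right leaf instead. This gives exponent $\deg T-1$ and bound $\Phi\,N^{-1/2}(\sqrt N)^{\deg T}$, which is even better.

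The main obstacle is the bookkeeping needed to guarantee $1\le m\le k-1$ in every configuration, together with the extreme cases where $T$ is empty or all neighbors are of one kind. I would handle these by the spectral-decomposition trick above, converting a type 1 vertex into two eigenvector factors. I would also check that the $\max$ over eigen-indices remains compatible with using $\|R^{(S)}\|_1$ on the other type 1 vertices.
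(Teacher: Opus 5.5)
Your proposal is correct and is essentially the paper's proof. You apply Assumption~\ref{assum:cumulant} directly, keep one intact $N^{-1}R^{(S)}$ or one right leaf inside the $\cU_k$-measured tensor so that $|\x^*R^{(S)}\y|\prec N^\delta$ pays $N^{\delta-1}$ (resp.\ $N^{\delta-1/2}$) against the improved exponent $k-m-1$, and spectrally decompose type 1 vertices in the degenerate all-matrix configurations. The only difference is cosmetic: in part (b), when $v$ has a type 2 neighbor, the paper keeps all right leaves as the $\s_t$'s and gets the stronger bound $N^{-1/2}\Phi^{r(v)}$, whereas you move one right leaf into $T$; both give the stated estimate.
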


\begin{proof}
We may again consider the case where $\cG$ is a single connected component.

For (a), suppose first $m1(v)\geq 1$ and the only vertices adjacent to $v$ are
type 1 matrix vertices. This implies that in fact $m1(v) \geq 2$,
because $\deg(v) \geq 3$. Then $v$ and these adjacent vertices form a connected
component, and thus must be all of $\cG$ since $\cG$ is connected. 
In this case, $\mathrm{val}(\cG,f_\cG)$ takes the representative
form $|\<\kappa_k(\g_i),(N^{-1}R^{(S)})^{\otimes m1(v)}\>|$, for which we have
\begin{align*}
    &\left|\left\<\kappa_k(\g_i),(N^{-1}R^{(S)})^{\otimes m1(v)}\right\>\right|\\
    &\leq
N^{-(m1(v)-1)}\sum_{\alpha_1,\ldots,\alpha_{m1(v)-1}}\left(\prod_{t=1}^{m1(v)-1}\frac{1}{|\lambda_{\alpha_t}-z|}\right)\left|\left\<\kappa_k(\g_i),\bigotimes_{t=1}^{m1(v)-1}\v_{\alpha_t}^{\otimes
2}\otimes N^{-1}R^{(S)}\right\>\right|\\
    &\leq(N^{-1}\|R^{(S)}\|_1)^{m1(v)-1}\max_{\alpha_1,\ldots,\alpha_{m1(v)-1}}\left|\left\<\kappa_k(\g_i),\bigotimes_{t=1}^{m1(v)-1}\v_{\alpha_t}^{\otimes
2}\otimes N^{-1}R^{(S)}\right\>\right|\\
    &\prec (N^{-1}\|R^{(S)}\|_1)^{m1(v)-1}(\sqrt{N})^{2-1}\|N^{-1}
R^{(S)}\|_{\cU_k}\prec N^{-1/2}N^\delta.
\end{align*}
by Assumption \ref{assum:cumulant} and the bounds
$\|N^{-1}R^{(S)}\|_1 \prec 1$ and
$\|R^{(S)}\|_{\cU_k} \prec N^\delta$. This
implies the lemma since $r(v)=0$ and $\cG'$ is empty.

If $m1(v)\geq 1$ and the only vertices adjacent to $v$ are matrix vertices, of
which at least one is type 2, then a representative form of
$\mathrm{val}(\cG,f_\cG)$ is bounded using Assumption \ref{assum:cumulant} 
similarly as
\begin{align*}
    &\left|\left\<\kappa_k(\g_i),(N^{-1}R^{(S)})^{\otimes m1(v)}\otimes T\right\>\right|\\
    &\leq
N^{-m1(v)}\sum_{\alpha_1,\ldots,\alpha_{m1(v)}}\prod_{t=1}^{m1(v)}\frac{1}{|\lambda_{\alpha_t}-z|}\left|\left\<\kappa_k(\g_i),\bigotimes_{t=1}^{m1(v)}\v_{\alpha_t}^{\otimes
2}\otimes T\right\>\right|\\
    &\prec (N^{-1}\|R^{(S)}\|_1)^{m1(v)}(\sqrt{N})^{\deg(T)-1}\|T\|_{\cU_k}\prec
N^{-1/2}\times\max_{f_{\cG'}\in\cF_{\cG'}}|\mathrm{val}(\cG',f_{\cG'})|.
\end{align*}
This implies the lemma since  $r(v)=0$ and $\delta \geq 0$.

If $m1(v)\geq 1$ and $v$ is adjacent to at least one leaf vertex,
i.e.\ $l(v)+e(v)+r(v)\geq 1$, then a representative form of
$\mathrm{val}(\cG,f_\cG)$ is bounded similarly using
Assumption \ref{assum:cumulant} as
\begin{align*}
    &\left|\left\<\kappa_k(\g_i),\u^{\otimes l(v)}\otimes\v_\beta^{\otimes
e(v)}\otimes(N^{-1/2}R^{(S)}\u)^{ro(v)}\otimes(N^{-1/2}R^{(S)}\x)^{\otimes
rn(v)}\otimes(N^{-1}R^{(S)})^{\otimes m1(v)}\otimes T\right\>\right|\\
    &\leq
N^{-(m1(v)-1)}\sum_{\alpha_1,\ldots,\alpha_{m1(v)-1}}\left(\prod_{t=1}^{m1(v)-1}\frac{1}{|\lambda_{\alpha_t}-z|}\right)\\
    &\left|\left\<\kappa_k(\g_i),\u^{\otimes l(v)}\otimes\v_\beta^{\otimes
e(v)}\otimes(N^{-1/2}R^{(S)}\u)^{ro(v)}\otimes(N^{-1/2}R^{(S)}\x)^{\otimes
rn(v)}\otimes\bigotimes_{t=1}^{m1(v)-1}\v_{\alpha_t}^{\otimes
2}\otimes(N^{-1}R^{(S)})\otimes T\right\>\right|\\
    &\prec(N^{-1}\|R^{(S)}\|_1)^{m1(v)-1}(\sqrt{N})^{\deg(T)+2-1}
\|N^{-1/2}R^{(S)}\u\|_2^{ro(v)}\|N^{-1/2}R^{(S)}\x\|_2^{rn(v)}
\|N^{-1} R^{(S)}\|_{\cU_k}\|T\|_{\cU_k}\\
    &\prec
N^{-1/2}N^\delta \times \Phi^{r(v)} \times
\|(\sqrt{N})^{\deg(T)}T\|_{\cU_k}\\
&\prec N^{-1/2}N^\delta \times \Phi^{r(v)} \times
\max_{f_{\cG'}\in\cF_{\cG'}}|\mathrm{val}(\cG',f_{\cG'})|.
\end{align*}
This covers all cases where $m1(v) \geq 1$.

Next, suppose $l(v)+e(v)=1$ and $r(v)+m1(v)=0$. Then, since $\deg(v)\geq
3$, $v$ is adjacent to at least one type 2 matrix vertex, so a
representative form of $\mathrm{val}(\cG,f_\cG)$ is bounded as
\[
    \left|\left\<\kappa_k(\g_i),\u^{\otimes l(v)}\otimes\v_\beta^{\otimes
e(v)}\otimes T\right\>\right|\prec (\sqrt{N})^{\deg(T)-1}\|T\|_{\cU_k}\prec
N^{-1/2}\times\max_{f_{\cG'}\in\cF_{\cG'}}|\mathrm{val}(\cG',f_{\cG'})|.
\]
This again implies the lemma since $r(v)=0$ and $\delta \geq 0$.
This shows part (a).

For (b), if $m1(v)\geq 1$, then the statement is implied by (a) as $\Phi \leq
1$. Suppose then that $m1(v)=0$. If $v$ is adjacent to at least one 
type 2 matrix vertex,
then a representative form of $\mathrm{val}(\cG,f_\cG)$ is bounded as
\begin{align*}
    &\left|\left\<\kappa_k(\g_i),\u^{\otimes l(v)}\otimes\v_\beta^{\otimes
e(v)}\otimes(N^{-1/2}R^{(S)}\u)^{ro(v)}\otimes(N^{-1/2}R^{(S)}\x)^{\otimes
rn(v)}\otimes T\right\>\right|\\
    &\prec (\sqrt{N})^{\deg(T)-1}\Phi^{r(v)}\|T\|_{\cU_k}\prec
N^{-1/2}\times\Phi^{r(v)}\times\max_{(\cG',f_{\cG'})\in\cF_{\cG'}}|\mathrm{val}(\cG',f_{\cG'})|
\end{align*}
which again implies the lemma.
Finally, if $v$ is not adjacent to any matrix vertex, then all neighbors of $v$
have degree 1, and $v$ and its neighbors constitute all of $\cG$ which is a
star. Isolating one factor of $R^{(S)}\u$ or $R^{(S)}\x$ as $T$
in Assumption \ref{assum:cumulant},
a representative form of $\mathrm{val}(\cG,f_\cG)$ is bounded as
\begin{align*}
    &\left|\left\<\kappa_k(\g_i),\u^{\otimes l(v)}\otimes\v_\beta^{\otimes
e(v)}\otimes(N^{-1/2}R^{(S)}\u)^{ro(v)}\otimes(N^{-1/2}R^{(S)}\x)^{\otimes
rn(v)}\right\>\right|\\
    &\prec \max\{N^{-1/2}\|R^{(S)}\x\|_{\cU_k},N^{-1/2}\|R^{(S)}\u\|_{\cU_k}\}\times\max\{N^{-1/2}\|R^{(S)}\u\|_2,N^{-1/2}\|R^{(S)}\x\|_2\}^{r(v)-1}\\
    &\prec N^{-1/2}N^\delta\times\Phi^{r(v)-1}.
\end{align*}
This implies the lemma since $\cG'$ is empty, completing the proof.
\end{proof}

\subsubsection{Proof of Lemma \ref{lem:fluctuation_averaging_lowrank2}(a)}

We now focus attention on networks with left leaves and no eigen-leaves, and
show Lemma \ref{lem:fluctuation_averaging_lowrank2}(a).
In addition to Definition \ref{def:tensor network}, we define the following.

\begin{defi}\label{def:special vertices}
Let $(\cG,f_\cG)$ be a $(S,L)$-valid tensor network that contains no
eigen-leaves. We say that a left leaf $u \in V_\cG^l$ is \textit{singular} if
its adjacent tensor vertex $v \in V_\cG^t$ is not adjacent to any left leaves
other than $u$, and we call $v$ a \textit{singular tensor vertex}. We write
$V_\cG^{sl},V_\cG^{st}$ for the sets of such vertices, and $t(u)=v$
for the one-to-one correspondence between these two sets.
\end{defi}

The following lemma now expresses products of monomials arising from
Lemma \ref{lem:y_i q_i expansion} via tensor network values.

\begin{lemma}\label{lem:tensor network of sample covariance}
Fix any integer $L\geq 1$. For any $i_1,\ldots,i_{2L}\in[N]$, denote
$S=\{i_1,\ldots,i_{2L}\}$ the set of distinct indices. Let $q_1,\ldots,q_{2L}$
be a sequence of monomials satisfying the conditions of Lemma
\ref{lem:y_i q_i expansion}, where each $q_l$ is a monomial from the expansion
of $\cY_{i_l}^{(i_l)}\cQ_{i_l}^{(i_l)}$, and let
$q=\prod_{l=1}^L q_l \prod_{l=L+1}^{2L} \bar q_l$.
Then there exists a constant $C \equiv C(L)>0$, an exponent $m \in [0,C]$,
and a set $\cN$ of $(S,2L)$-valid tensor networks with $|\cN|\leq C$,
such that
\[|\E_S q|=|\cC^{(S)}|^m \cdot 
\sum_{(\mathcal{G},f_\cG)\in\cN}|\mathrm{val}(\cG,f_\cG)|\]
Moreover, for all $(\mathcal{G}, f_\cG)\in\cN$, there exist disjoint subsets of singular left leaves $U_\cG^1,U_\cG^2\subseteq V_\cG^{sl}$ such that the following holds:
\begin{enumerate}[label=(\alph*)]
    \item We have $|V_\cG^{l}|=|V_\cG^{r}|=|V_\cG^{ro}|=2L$ and
$|V_\cG^{rn}|=|V_\cG^{e}|=0$. (I.e., there are no right new leaves or
eigen-leaves, and exactly $2L$ left leaves and $2L$ right original leaves.)
    \item Let $K \subseteq S$ be the set of indices that appear exactly once
in the list $(i_1,\ldots,i_{2L})$. Then $|K|=|U_\cG^1|+|U_\cG^2|$.
    \item We have $|V_\cG^{m2}|\geq |U_\cG^1|$.
    \item For all $u \in U_\cG^2$, we have $\deg(t(u))\geq 3$
and $m1(t(u)) \geq 1$.
\end{enumerate}
\end{lemma}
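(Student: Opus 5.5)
The plan is to write each monomial $q_l$ from Lemma \ref{lem:y_i q_i expansion} as an explicit polynomial in the entries of the vectors $\g_j$, $j\in S$. All coefficients of this polynomial will be tensors built from $R^{(S)}$, $\u$ and $\Sigma$, which are independent of $\{\g_j\}_{j\in S}$. We then take $\E_S$ using the moment-cumulant decomposition of Lemma \ref{lem:cumulant pairing}, independently for each $j\in S$.

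\textbf{Step 1: writing each factor as a contraction.} Every factor except $\cC^{(S)}$ is a contraction of copies of $\g_j$ (or of $\g_j\g_j^*-\Sigma$) against the tensors $\u$, $N^{-1/2}R^{(S)}\u$ and $N^{-1}R^{(S)}$:
\begin{itemize}
\item $\cY_i^{(S)}$ contributes $(\g_i\g_i^*-\Sigma)$ contracted with $N^{-1/2}R^{(S)}\u$ and $\u$;
\item $\cZ_{ijk}^{(S)}$ contributes $(\g_i\g_i^*-\Sigma)$, $\g_j$ and $\g_k$, joined through two copies of $N^{-1}R^{(S)}$, together with one $N^{-1/2}R^{(S)}\u$ and one $\u$;
\item $\cB_{jk}^{(S)}$ contributes $\g_j$ and $\g_k$ joined by $N^{-1}R^{(S)}$;
\item $\cP_j^{(S)}$ contributes $(\g_j\g_j^*-\Sigma)$ contracted with $N^{-1}R^{(S)}$.
\end{itemize}
Complex conjugates give the barred labels. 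The factors $\cC^{(S)}$ are deterministic given $\{\g_j\}_{j\notin S}$, so we pull them out as $|\cC^{(S)}|^m$, with $m$ bounded by the number of factors.

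For each $j\in S$ we group all occurrences of $\g_j$ and apply Lemma \ref{lem:cumulant pairing}. This expresses $\E_S q$ as a sum, over partitions without singletons and without the forbidden diagonal pairs, of products of cumulant tensors $\kappa_{|B|}(\g_j)$. Each block $B$ becomes a tensor vertex, and its edges go to the matrix vertices $N^{-1}R^{(S)}$ and to the leaves that were contracted with that $\g_j$. We take absolute values termwise. The number of partitions is bounded in $L$.

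\textbf{Step 2: validity of the networks.}
\begin{itemize}
\item Bipartiteness holds because every $\g$ slot is joined either to a leaf or to a matrix.
\item Degrees match because the matrix vertices have degree $2$ and the leaves have degree $1$.
\item A block of size $2$ is $\Sigma$. A matrix joined twice to a degree-$2$ tensor vertex would come from a forbidden pair $\{2t-1,2t\}$ in $\cP_j$, or from the pair inside $\cZ$'s $(\g_i\g_i^*-\Sigma)$ closing against one matrix. Lemma \ref{lem:cumulant pairing} excludes exactly these, which gives condition (c) of Definition \ref{def:tensor network}.
\item A cycle through $\Sigma$ vertices and matrices could still violate the no-self-loop or type conditions. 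In that case we absorb the $\Sigma$'s into the matrices as a preliminary simplification, or else we enlarge $\cI_m$ implicitly; this is routine.
\end{itemize}
Since there are exactly $2L$ factors $\cY$ or $\cZ$, each carrying one $\u$ and one $R^{(S)}\u/\sqrt N$, we get $2L$ left leaves and $2L$ right original leaves, and no other leaves, which is (a).

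\textbf{Step 3: parts (b)--(d), the main obstacle.} This requires redoing the case analysis from the proof of Lemma \ref{lem:fluctuation_averaging} at the level of partitions. Fix $i_l\in K$. Its left leaf $\u$ sits in $\cY_{i_l}$ or $\cZ_{i_l\cdot\cdot}$, attached to a copy of $\g_{i_l}$. Index $i_l$ appears nowhere else in the other $q_{l'}$, and inside $q_l$ it can appear only through $\cB_{i_l j}$ and $\cP_{i_l}$, which carry no left leaves. So the tensor vertex adjacent to this $\u$ (a block of $\g_{i_l}$'s) has no other left leaf, and the leaf is singular. We will verify this carefully against Definition \ref{def:special vertices}.

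Put the leaf in $U^2$ if its tensor vertex has degree $\ge3$ and absorbs a type-1 matrix, and in $U^1$ otherwise. For $U^1$ we must produce an injection into type-2 matrix vertices. The $\g_{i_l}$ slots paired with the $\u$-slot in $(\g\g^*-\Sigma)$ must not form the forbidden pair, so the block contains another $\g_{i_l}$ slot. That slot comes from $\cB$ or from the other slot of $\cZ$/$\cP$, and it leads through a matrix $N^{-1}R^{(S)}$. If that matrix is not type 1, it is type 2 and we assign it to $u$.

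The delicate points are two. First, distinct leaves must receive distinct matrices; a matrix has two ends, so we orient it toward the $\g_{i_l}$ end, which is unique to $i_l$ because $i_l\in K$. Second, we must rule out the case where the block has degree $2$ and its partner slot sits on the right original leaf $R\u$ inside the same $\cY$. That case is a forbidden pair when $\cY$ uses $(\g\g^*-\Sigma)$, so it is excluded. I expect this bookkeeping to be the hardest step, and I would handle it by tracking, for each $i_l\in K$, the single $(\g_{i_l}\g_{i_l}^*-\Sigma)$ factor containing its $\u$.
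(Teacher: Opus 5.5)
Your choice of partition is fine. Putting $u$ into $U_\cG^2$ exactly when $\deg(t(u))\ge 3$ and $m1(t(u))\ge 1$ gives the largest $U_\cG^2$ that (d) allows. So (b) and (d) are automatic, and (c) is in fact true for this partition. The gap is in your proof of (c).

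That proof rests on the claim that an index $i_l\in K$ ``appears nowhere else in the other $q_{l'}$,'' and this claim is false. $K$ only says that $i_l$ occurs once in the list $(i_1,\ldots,i_{2L})$, but every $q_{l'}$ is expanded over all of $S$. So $i_l\in S^{(i_{l'})}$ can occur in $q_{l'}$ through $\cZ_{i_{l'}jk}^{(S)}$ with $j=i_l$ or $k=i_l$, through $\cB_{i_{l'}i_l}^{(S)}$ or $\cB_{i_lk}^{(S)}$, or through $\cP_{i_l}^{(S)}$. Singularity of the leaf survives, since none of these attaches a $\u$ to a copy of $\g_{i_l}$, but your local injection does not. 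Two examples with $L=1$ and $S=K=\{i,j\}$, $i\neq j$, show this:
\begin{enumerate}[label=(\roman*)]
\item Take $q_1=\cY_i^{(S)}(\cC^{(S)})^3(\cB_{ij}^{(S)})^2$ and $q_2=\cY_j^{(S)}\cC^{(S)}$. Pair the $\u$-slot of $\g_i$ with one $\cB$-slot and its partner slot with the other, and likewise for $\g_j$. This gives a chain $\u$--$\Sigma$--$N^{-1}R^{(S)}$--$\Sigma$--$\u$, together with a second chain joining the two right original leaves through the other matrix. Both singular leaves lie in your $U_\cG^1$, but their tensor vertices are adjacent only to the \emph{same} type 2 matrix. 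Both ends of that matrix are $K$-vertices, so ``orient toward the $\g_{i_l}$ end'' assigns it twice.
\item Take $q_1=-\cZ_{ijj}^{(S)}(\cC^{(S)})^2$ and $q_2=-\cZ_{jii}^{(S)}(\cC^{(S)})^2$. Pair each $\u$-slot with the copy of the same $\g$ that carries $N^{-1/2}R^{(S)}\u$ in the \emph{other} $\cZ$ factor, and pair the remaining copies together. Then both singular leaves sit on degree-2 vertices adjacent to no matrix at all, and the two type 2 matrices form a separate cycle, so there is nothing to assign.
\end{enumerate}
In both examples $|V_\cG^{m2}|=|U_\cG^1|=2$, so (c) holds only by a global count.

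What is missing is the global count from Lemma \ref{lem:y_i q_i expansion}(b)--(c). It must replace your local injection, not supplement it. Let $m_Z,m_B,m_B^*$ be the counts of that lemma summed over $l$, and let $m_P^*$ count the factors $\cP_{i_l}^{(S)}$ of $q_l$ whose matrix vertex is type 2.
\begin{itemize}
\item Consider the indices $i_l\in K$ that occur as a lower index in some $q_{l'}$ with $l'\neq l$. By Lemma \ref{lem:y_i q_i expansion}(c) applied to each $q_{l'}$, there are at most $m_Z+m_B+\frac12 m_B^*$ of them.
\item For the remaining indices, all neighbours of $t(u(i_l))$ come from $q_l$, and your local reasoning is then valid. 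The $\cY$ or $\cZ$ factor supplies at most one neighbour besides $\u$, and a degree-2 block cannot be the forbidden pair. Hence if $u(i_l)$ is in your $U_\cG^1$, then $t(u(i_l))$ is adjacent to a type 2 matrix coming from a factor $\cB_{i_lj}^{(S)}$ or a type 2 $\cP_{i_l}^{(S)}$ of $q_l$. Since $m_B^*(l)$ is even, this gives $\frac12 m_B^*(l)+m_P^*(l)\ge1$.
\end{itemize}
Summing, $|U_\cG^1|\le m_Z+m_B+m_B^*+m_P^*\le|V_\cG^{m2}|$, because each counted factor owns a distinct copy of $N^{-1}R^{(S)}$ that labels a type 2 matrix vertex.

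Separately, the hedge in your Step 2 is unnecessary, and absorbing $\Sigma$ into the matrices would take labels outside $\cT$. The graph is bipartite, so it has no self-loops, and cycles of length $\ge4$ are allowed. The only forbidden configuration is a matrix joined twice to a $\Sigma$. This can arise only from $\cP_j^{(S)}$, where $\dot P_{k,m}$ excludes it; the matrix in $\cZ_{ijk}^{(S)}$ always joins $\g_i$ to $\g_j$ with $i\neq j$.
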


\begin{proof}
For each distinct index $i \in S$, collecting all appearances of $\g_i$
and $(\g_i\g_i^*-\Sigma)$ across $q_1,\ldots,q_{2L}$ into a single tensor, and
then evaluating the expectation $\E_S$ over the independent vectors $\{\g_i\}_{i
\in S}$, it is clear from the forms of 
$\cY_i^{(S)},\cZ_{ijk}^{(S)},\cB_{jk}^{(S)},\cP_j^{(S)},\cC^{(S)}$
that $\E_S q$ may be expressed as a product of factors $\cC^{(S)}$,
$\overline{\cC^{(S)}}$ and contractions of tensors of the form
$\{\E[(\g_i\g_i^*-\Sigma)^{\otimes k}\otimes\g_i^{\otimes m}]:
i \in S \text{ and } k,m \in [0,C(L)]\}$ with matrices and vectors of the forms
\[\cL:=\{\u,\Bar{\u}\},\quad
\cR_o:=\left\{\frac{R^{(S)}\u}{\sqrt{N}},\frac{\overline{R^{(S)}\u}}{\sqrt{N}}\right\},\quad\cI_m:=\left\{\frac{R^{(S)}}{N},\frac{\overline{R^{(S)}}}{N}\right\}.
\]
For each distinct index $i \in S$, we may then decompose the corresponding
moment tensor using Lemma \ref{lem:cumulant pairing},
\[
    \E[(\g_i\g_i^*-\Sigma)^{\otimes k}\otimes\g_i^{\otimes m}]=\sum_{\pi\in
\dot{P}_{k,m}}\kappa_\pi(\g_i).
\]
Then expanding by multi-linearity, we claim that $\E_S q$ is a product of
factors $\cC^{(S)},\overline{\cC^{(S)}}$ with 
$\sum_{(\cG,f_\cG) \in \cN} |\mathrm{val}(\cG,f_\cG)|$ over a family $\cN$
of $(S,2L)$-valid tensor networks, where $|\cN| \leq C(L)$.

To see this claim, consider as an illustrative example
\[q_1=\cZ_{ijj}^{(ij)}\cC^{(ij)},
\qquad q_2=\cZ_{jii}^{(ij)}\cC^{(ij)},
\qquad q=q_1\bar{q}_2\]
for two distinct indices $S=\{i,j\}$. Then, defining a permutation
$Q:(\C^n)^8 \to (\C^n)^8$ such that
$Q(T)(i_1,\ldots,i_8)=T(i_1,i_2,i_7,i_8,i_5,i_6,i_3,i_4)$, we have
\begin{align*}
    \E_{\{ij\}}[q]&=|\cC^{(ij)}|^2\E_{\{ij\}}\Big[\u^*(\g_i\g_i^*-\Sigma)(N^{-1}R^{(ij)})\g_j\g_j^*(N^{-1/2}R^{(ij)}\u)
\times \\
&\hspace{2in}\overline{\u^*(\g_j\g_j^*-\Sigma)(N^{-1}R^{(ij)})\g_i\g_i^*(N^{-1/2}R^{(ij)}\u)}\Big]\\
    &=|\cC^{(ij)}|^2 \left\langle\E\left[(\g_i\g_i^*-\Sigma)\otimes\g_i^{\otimes 2}\right]\otimes\E\left[(\g_j\g_j^*-\Sigma)\otimes\g_j^{\otimes 2}\right],\right.\\
    &\hspace{1in}\left.Q\Big(\Bar\u\otimes N^{-1}R^{(ij)} \otimes N^{-1/2}R^{(ij)}\u
\otimes\u\otimes \overline{N^{-1}R^{(ij)}} \otimes \overline{N^{-1/2}R^{(ij)}\u}\Big)\right\rangle\\
    &=\sum_{\pi_1 \in \dot P_{1,2}}\sum_{\pi_2 \in \dot P_{1,2}}|\cC^{(ij)}|^2
\Big\langle\kappa_{\pi_1}(\g_i)\otimes\kappa_{\pi_2}(\g_j),\\
&\hspace{1.5in}Q\Big(\Bar\u\otimes N^{-1}R^{(ij)} \otimes N^{-1/2}R^{(ij)}\u
\otimes\u\otimes \overline{N^{-1}R^{(ij)}} \otimes
\overline{N^{-1/2}R^{(ij)}\u}\Big)\Big\rangle.
\end{align*}
The summand for each pair of partitions $(\pi_1,\pi_2)$ corresponds to
$|\cC^{(ij)}|^2$ times $\mathrm{val}(\cG,f_\cG)$ for a (possibly disconnected)
$(S,2L)$-valid tensor network. For example, the partitions
$\pi_1=\{\{1,3\},\{2,4\}\}$ and $\pi_2=\{\{1,2,3,4\}\}$ correspond to the
network
\begin{figure}[H]
    \centering
    \resizebox{0.9\textwidth}{!}{
        \begin{tikzpicture}[
    node distance=2.0cm, 
    tensor_node/.style={circle, draw=black, very thick, fill=yellow!20, minimum size=1.3cm, font=\bfseries},
    leaf_node/.style={circle, draw=blue!80, thick, fill=blue!10, minimum size=1.1cm},
    matrix_node/.style={circle, draw=green!60!black, thick, fill=green!10, minimum size=1.1cm, font=\small},
    conn/.style={thick, draw=gray!80},
    edge_label/.style={font=\footnotesize, fill=white, inner sep=1.5pt}
]

    \node[tensor_node] (center) at (0,0) {$\kappa_4(\g_j)$};
    
    \node[leaf_node] (top_u) [above=of center] {$\u$};
    \draw[conn] (center) -- (top_u) node[midway, right] {$i_5$};

    \node[leaf_node, fill=orange!10, draw=orange!80] (right_leaf) [right=of center] {$\frac{R^{(ij)}\u}{\sqrt{N}}$};
    \draw[conn] (center) -- (right_leaf) node[midway, above] {$i_4$};

    \node[matrix_node] (left_mat) [left=of center] {$\frac{R^{(ij)}}{N}$};
    \node[tensor_node] (left_ten) [left=of left_mat] {$\kappa_2(\g_i)$};
    \node[leaf_node] (left_u) [left=of left_ten] {$\u$};
    
    \draw[conn] (center) -- (left_mat) node[midway, above] {$i_6$};
    \draw[conn] (left_mat) -- (left_ten) node[midway, above] {$i_7$};
    \draw[conn] (left_ten) -- (left_u) node[midway, above] {$i_1$};

    \node[matrix_node] (bot_mat) [below=of center] {$\frac{R^{(ij)}}{N}$};
    \node[tensor_node] (bot_ten) [right=of bot_mat] {$\kappa_2(\g_i)$};
    \node[leaf_node, fill=orange!10, draw=orange!80] (bot_leaf) [right=of bot_ten] {$\frac{R^{(ij)}\u}{\sqrt{N}}$};

    \draw[conn] (center) -- (bot_mat) node[midway, right] {$i_3$};
    \draw[conn] (bot_mat) -- (bot_ten) node[midway, above] {$i_2$};
    \draw[conn] (bot_ten) -- (bot_leaf) node[midway, above] {$i_8$};

\end{tikzpicture}
    }
\end{figure}
\noindent while the
partitions $\pi_1=\{\{1,4\},\{2,3\}\}$ and $\pi_2=\{\{1,4\},\{2,3\}\}$
correspond to the network 
\begin{figure}[H]
    \centering
    \resizebox{0.7\textwidth}{!}{
        \begin{tikzpicture}[
    node distance=2cm,
    tensor_node/.style={circle, draw=black, very thick, fill=yellow!20, minimum size=1.2cm, font=\bfseries},
    leaf_node/.style={circle, draw=blue!80, thick, fill=blue!10, minimum size=1cm},
    matrix_node/.style={circle, draw=green!60!black, thick, fill=green!10, minimum size=1cm, font=\small},
    conn/.style={thick, draw=gray!80}
]

    \begin{scope}[yshift=2cm]
        \node[tensor_node] (d_left) at (0,0) {$\kappa_2(\g_i)$};
        \node[tensor_node] (d_right) at (5,0) {$\kappa_2(\g_j)$};
        
        \node[matrix_node] (d_top) at (2.5, 1.5) {$\frac{R^{(ij)}}{N}$};
        \node[matrix_node] (d_bot) at (2.5, -1.5) {$\frac{R^{(ij)}}{N}$};
        
        \draw[conn] (d_left) -- (d_top) node[midway, above left] {$i_2$};
        \draw[conn] (d_top) -- (d_right) node[midway, above right] {$i_3$};
        \draw[conn] (d_left) -- (d_bot) node[midway, below left] {$i_7$};
        \draw[conn] (d_bot) -- (d_right) node[midway, below right] {$i_6$};
        
    \end{scope}

    
    \begin{scope}[yshift=-3.5cm]
        \node[leaf_node] (l_u) at (0,0) {$\u$};
        \node[tensor_node] (l_ten) [right=of l_u] {$\kappa_2(\g_i)$};
        \node[leaf_node, fill=orange!10, draw=orange!80] (l_ru) [right=of l_ten] {$\frac{R^{(ij)}\u}{\sqrt{N}}$};
        
        \draw[conn] (l_u) -- (l_ten) node[midway, above] {$i_1$};
        \draw[conn] (l_ten) -- (l_ru) node[midway, above] {$i_8$};
        
    \end{scope}

    \begin{scope}[yshift=-1cm, xshift=6cm]
        \node[leaf_node] (r_u) at (0,0) {$\u$};
        \node[tensor_node] (r_ten) [right=of r_u] {$\kappa_2(\g_j)$};
        \node[leaf_node, fill=orange!10, draw=orange!80] (r_ru) [right=of r_ten] {$\frac{R^{(ij)}\u}{\sqrt{N}}$};
        
        \draw[conn] (r_u) -- (r_ten) node[midway, above] {$i_5$};
        \draw[conn] (r_ten) -- (r_ru) node[midway, above] {$i_4$};

    \end{scope}

\end{tikzpicture}
    }
\end{figure}
More generally, any such monomial $q$ may be decomposed in this way, where it is
clear by construction that each resulting $(\cG,f_\cG)$ satisfies conditions (a)
and (b) of Definition \ref{def:tensor network}, that $\cG=(V_\cG,E_\cG)$
is bipartite and $\deg(v)=\deg(f_\cG(v))$ for all $v \in V_\cG$.
It satisfies condition (c) by the following reasoning: If
$u \in V_\cG^m$, then its label $f_\cG(u)$ must be a copy of
$N^{-1}R^{(S)}$ or $\overline{N^{-1}R^{(S)}}$ arising from a term
$\cZ_{ijk}^{(S)}$, $\cB_{jk}^{(S)}$, or $\cP_j^{(S)}$. For $\cZ_{ijk}^{(S)}$,
this vertex $u$ must be type 2 by the constraint $i \neq j$ in $\cZ_{ijk}^{(S)}$, and the same holds for $\cB_{jk}^{(S)}$ by the constraint $j \neq k$.
Thus $u \in V_\cG^m$ can only be type 1 --- i.e.\ both edges connect to a
single neighbor $v \in V_\cG^t$ --- if 
$f_\cG(u)$ arises from a term $\cP_j^{(S)}$. In this case, its neighbor $v \in
V_\cG^t$ cannot have $\deg(v)=2$
because the partitions belonging to $\dot P_{k,m}$
cannot have a cardinality-2 block that
represents a pairing of the two copies of $\g_j$ in $\cP_j^{(S)}$. Thus
$\deg(v) \geq 3$, verifying condition (c). This shows the first claim of
the lemma, that
\[|\E_S q|=|\cC^{(S)}|^m \cdot \sum_{(\cG,f_\cG)\in\cN}
|\mathrm{val}(\cG,f_\cG)|\]
for some $m \in [0,C(L)]$ and family $\cN$ of $(S,2L)$-valid tensor networks
with $|\cN| \leq C(L)$.

We now prove the remaining claims of the lemma: Since each $q_l$ contains
exactly one factor $\cY_{i_l}^{(S)}$ or $\cZ_{i_ljk}^{(S)}$, there are exactly
$2L$ appearances of $\u$ and $2L$ appearances of 
$N^{-1/2}R^{(S)}\u$ in $q$. Thus $|V_\cG^l|=|V_\cG^r|=2L$. There are no
right new leaves or eigen-leaves in the above construction, so
$|V_\cG^{rn}|=|V_\cG^e|=0$, and hence also $|V_\cG^r|=|V_\cG^{ro}|$. This proves
(a).
    
Next, let $K\subseteq S$ be the subset of distinct indices which appear exactly
once in $(i_1,\ldots,i_{2L})$. For each $i_l \in K$, there is exactly 1 factor
of the form $\cY_{i_l}^{(S)}$ or $\cZ_{i_ljk}^{(S)}$ in $q$. It follows that
across all tensor vertices labeled by a cumulant of $\g_{i_l}$,
there is exactly one adjacent left leaf. This left leaf for each $i_l
\in K$ must then be singular.
Let $U_\cG \subseteq V_\cG^{sl}$ be the subset of all such
singular left leaves corresponding to the indices $i_l \in K$,
so $|U_\cG|=|K|$. We will partition $U_\cG$ into disjoint subsets
$U_\cG^1,U_\cG^2$, so that $|K|=|U_\cG^1|+|U_\cG^2|$, showing (b).

For each index $i_l \in K$, let $u(i_l) \in U_\cG$ be the corresponding singular
left leaf identified above, and let $v(i_l)=t(u(i_l)) \in V_\cG^t$ be its
tensor vertex neighbor. We define $U_\cG^1$ to be the set of $u(i_l) \in
U_\cG$ satisfying either:
\begin{enumerate}
    \item $i_l$ appears as a lower index in some $\{q_{l'}:l'\neq l\}$, or
    \item $i_l$ does not appear as a lower index in any $\{q_{l'}:l'\neq l\}$,
and either $\deg(v(i_l))=2$ or $ro(v(i_l))+m2(v(i_l)) \geq 2$.
\end{enumerate}
To bound $|U_\cG^1|$, let $m_Z(l),m_B^*(l),m_B(l)$ be the counts of Lemma
\ref{lem:y_i q_i expansion} for each $q_l$, and let
$m_P^*(l)$ be the number of factors $\cP_{i_l}^{(S)}$ in $q_l$
for which the copy of $N^{-1}R^{(S)}$ in $\cP_{i_l}^{(S)}$ labels a type 2
matrix vertex. (Note that this is different from the proof of Lemma
\ref{lem:fluctuation_averaging_lowrank} where $m_P^*(l)$ counted all factors of
$\cP_{i_l}^{(S)}$ in $q_l$.) Set
\[m_Z=\sum_{l=1}^{2L} m_Z(l),
\qquad m_B^*=\sum_{l=1}^{2L} m_B^*(l),
\qquad m_B=\sum_{l=1}^{2L} m_B(l),
\qquad m_P^*=\sum_{l=1}^{2L} m_P^*(l).\]
Lemma \ref{lem:y_i q_i expansion} ensures that the number of vertices
$u(i_l) \in U_\cG$ satisfying condition (1.) is at most
$m_Z+m_B+\frac{1}{2}m_B^*$.

For condition (2.), if $\deg(v(i_l))=2$, let $u'$ be the vertex other than
$u(i_l)$ which is incident to $v(i_l)$. Then $u'$ is not a left leaf because
$u(i_l)$ is singular. If $u'$ is a right leaf, then its label $f_\cG(u')$ must
be the copy of $N^{-1/2}R^{(S)}\u$ multiplying $\g_{i_l}^*$ in some factor
$\cZ_{ijk}^{(S)}$ of $q$ with $k=i_l$, or multiplying
$(\g_{i_l}\g_{i_l}^*-\Sigma)$ in the factor $\cY_{i_l}^{(S)}$ of $q_l$.
The former is not possible because $i_l$ does not appear as a lower index of any
$\{q_{l'}:l' \neq l\}$, and the latter is not possible because the partitions
belonging to $\dot P_{k,m}$ cannot pair the two copies of $\g_{i_l}$ in
$\cY_{i_l}^{(S)}$. Thus $u'$ is
also not a right leaf, so it must be a type 2 matrix vertex. Since
$i_l$ does not appear as a lower index of any $\{q_{l'}:l' \neq l\}$, and since
$\dot P_{k,m}$ cannot pair the two copies of $\g_{i_l}$ in either
$\cZ_{i_l}^{(S)}$ or $\cP_{i_l}^{(S)}$,
the label $f_\cG(u')$ must be the matrix $N^{-1}R^{(S)}$ arising from a factor
$\cB_{i_lj}^{(S)}$ of $q_l$. Since $m_B^*(l)$ is even by
Lemma \ref{lem:y_i q_i expansion}, this means $\frac{1}{2}m_B^*(l) \geq 1$.
By similar reasoning, if $\deg(v(i_l)) \geq 3$ and
$ro(v(i_l))+m2(v(i_l)) \geq 2$, then the factor $\cY_{i_l}^{(S)}$ or
$\cZ_{i_l}^{(S)}$ of $q$ can contribute at most one right original leaf
or type 2 matrix vertex neighbor for $v(i_l)$. Thus,
since $ro(v(i_l))+m2(v(i_l)) \geq 2$, $v(i_l)$ has some other
neighboring type 2 matrix vertex, which must arise from
a factor $\cP_{i_l}^{(S)}$ or $\cB_{i_lj}^{(S)}$ of $q_l$. Since $m_B^*(l)$ is
even, this means $\frac{1}{2}m_B^*(l)+m_P^*(l) \geq 1$. Thus the
number of vertices $u \in U_\cG$ satisfying condition (2.) is at
most $\frac{1}{2}m_B^*+m_P^*$. Putting this together,
\[|U_\cG^1| \leq \underbrace{m_Z+m_B+\frac{1}{2}m_B^*}_{\text{condition (1.)}}
+\underbrace{\frac{1}{2}m_B^*+m_P^*}_{\text{condition (2.)}}
=m_Z+m_B+m_B^*+m_P^* \leq |V_\cG^{m2}|,\]
the last inequality holding because each factor counted by
$m_Z,m_B,m_B^*,m_P^*$ has a copy of $N^{-1}R^{(S)}$ labeling a different type 2
matrix vertex. This shows (c).

Finally, by construction, all remaining vertices $u(i_l)
\in U_\cG^2=U_\cG \setminus U_\cG^1$ satisfy $\deg(v(i_l)) \geq 3$ and
$ro(v(i_l))+m2(v(i_l)) \leq 1$. Also
$rn(v(i_l))=e(v(i_l))=0$, and $l(v(i_l))=1$ since $u(i_l)$ is singular. Then
the condition $\deg(v(i_l)) \geq 3$ requires $m_1(v(i_l)) \geq 1$.
This shows (d) and completes the proof.
\end{proof}

After removing all tensor vertices $v \in V_\cG^t$ with $\deg(v) \geq 3$, 
the following lemma will be used to bound the value of a tensor network with
only chains and cycles.

\begin{lemma}\label{lem:pairing case}
Suppose the conditions of Lemma \ref{lem:fluctuation_averaging_lowrank2} hold.
Let $(\cG,f_\cG)$ be a $(S,L)$-valid tensor network that contains no
eigen-leaves, and such that $\deg(v)=2$ for all $v \in V_\cG^t$. (This
implies that $\cG$ is a simple graph with no type 1 matrix vertices and only
chains and cycles.) Then for any subset (possibly empty) of singular left
leaves $U_\cG\subseteq V_\cG^{sl}$, uniformly over all $|S|\leq L$ and $z\in\bD$,
\[
    |\mathrm{val}(\cG,f_\cG)|\prec\left(\frac{N^\delta}{\sqrt{N}}\right)^{|U_\cG|}\times\Phi^{|V_\cG^r|+|V_\cG^{m2}|-|U_\cG|}
\]
\end{lemma}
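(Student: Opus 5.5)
Since every tensor vertex has degree $2$, its label is $\kappa_2(\g_i)=\Sigma$, so by bipartiteness and Definition~\ref{def:tensor network}(c) (type 1 matrix vertices require a tensor neighbor of degree $\geq 3$) every matrix vertex is type 2, and $\cG$ is a disjoint union of alternating paths and cycles $\Sigma - N^{-1}R^{(S)} - \Sigma - \cdots$. Paths end at two leaves (left or right; there are no eigen-leaves). By Remark~\ref{remk:splitting of network} the value factors over connected components, and the quantities $|V_\cG^r|$, $|V_\cG^{m2}|$, $|U_\cG|$ are additive. It therefore suffices to prove the bound for a single chain or cycle, with $U_\cG$ restricted to it. A singular left leaf is automatically an endpoint of a chain, so each component contains at most $2$ elements of $U_\cG$.

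\emph{Cycles.} Here the value is $\Tr(\Sigma N^{-1}R^{(S)})^{k}$ (possibly with some conjugates), with $k=|V^{m2}|\geq 2$, since the graph has no multi-edges between a degree-$2$ tensor vertex and a type 2 matrix vertex. Using $\|\Sigma\|_\op\leq C$ and Hölder, I bound this by $C^kN^{-k}\|R^{(S)}\|_F^2\|R^{(S)}\|_\op^{k-2}$. Then $N^{-2}\|R^{(S)}\|_F^2\prec\Phi^2$ follows from $N^{-1/2}\|R^{(S)}\x\|_2\prec\Phi$ over $\x=\e_\alpha$. For the remaining factors, $N^{-1}\|R^{(S)}\|_\op\leq N^{-1}\eta^{-1}\leq N^{-\tau}$, and for a cleaner bound $N^{-1}\|R^{(S)}\x\|_2\leq N^{-1/2}\Phi\leq\Phi$. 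Each extra matrix factor contributes at most $\Phi$: peeling $N^{-1}\Sigma R^{(S)}$ and applying the vector bound column by column gives $\Phi^k$ overall. This matches the claimed $\Phi^{|V^{m2}|}$.

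\emph{Chains.} A chain has value $\a^*\Sigma(N^{-1}R^{(S)}\Sigma)^{k}\b$ with endpoints $\a,\b\in\{\u,\bar\u,N^{-1/2}R^{(S)}\u,\ldots\}$ and $k$ matrix vertices. I write it as $N^{-1/2}\cdot$ (a bilinear form), absorbing a $N^{-1/2}$ into each matrix and right leaf. The basic estimate is: each right leaf gives $\|N^{-1/2}R^{(S)}\u\|_2\prec\Phi$, and each matrix vertex gives a factor $\Phi$ by splitting $N^{-1}R = (N^{-1/2}R)\cdot N^{-1/2}$ and bounding $\|N^{-1/2}R^{(S)}\x\|_2\prec\Phi$ along the chain. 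This yields $\Phi^{r+m2}$. The issue is subtler when $k=0$ and both ends are left leaves, giving $|\u^*\Sigma\u|\leq C$, which is fine because then $r+m2=0$.

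For the singular leaves: if the chain has a singular left endpoint in $U_\cG$, I must gain $N^{\delta-1/2}/\Phi$ relative to the basic estimate. I will use the \emph{bilinear} a priori bound $|\x^*R^{(S)}\y|\prec N^\delta$ for a resolvent adjacent to the singular end. Concretely, $\u^*\Sigma N^{-1}R^{(S)}\w$ with $\|\w\|\lesssim1$ is $\prec N^{\delta-1}\|\Sigma\u\|\,\|\w\|$, which is better than $\Phi\cdot N^{-1/2}$ by exactly $N^{\delta-1/2}/\Phi\cdot(\ldots)$. If instead the neighbor is a right leaf $N^{-1/2}R^{(S)}\u$, then $|\u^*\Sigma R^{(S)}\u|N^{-1/2}\prec N^{\delta-1/2}$ replaces $\Phi$. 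When the chain has a singular leaf, it must have $r+m2\geq 1$: a singular left leaf cannot pair with another left leaf in a length-$2$ chain, since then that tensor vertex would have two left neighbors, contradicting singularity. So each singular end converts one $\Phi$ into $N^{\delta-1/2}$.

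The main obstacle is the case where both ends are singular, or the chain is short (e.g.\ $\u^*\Sigma N^{-1}R\Sigma\u$ with $r+m2=1$ but $|U|=2$). There the target is $N^{2\delta-1}\Phi^{-1}$. Since $\Phi\geq N^{-1/2}$, it suffices to get $N^{\delta-1}$, which the bilinear bound gives directly. In general I would distribute the resolvents, using the bilinear bound for one resolvent at each singular end and the $\Phi$-bounds in between. Where a single resolvent must serve both ends, I fall back on $\Phi\geq N^{-1/2}$ to absorb the deficit.
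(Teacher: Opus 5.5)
Your decomposition into components matches the paper, and so does your treatment of cycles, of chains with no endpoint in $U_\cG$, and of the short chains $\u^*\Sigma N^{-1/2}R^{(S)}\u$, $\u^*\Sigma N^{-1/2}R^{(S)}\x$, $N^{-1}\u^*\Sigma R^{(S)}\Sigma\u$ via the bilinear bound. The gap is in how you obtain the gain at a singular end of a \emph{longer} chain, i.e.\ when $|V_\cG^r|+|V_\cG^{m2}|\ge2$.

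The hypotheses $N^{-1/2}\|R^{(S)}\x\|_2\prec\Phi$ and $|\x^*R^{(S)}\y|\prec N^\delta$ hold uniformly over \emph{deterministic} unit vectors only. In your estimate $|\u^*\Sigma N^{-1}R^{(S)}\w|\prec N^{\delta-1}\|\Sigma\u\|_2\|\w\|_2$, the vector $\w=\Sigma(N^{-1}R^{(S)}\Sigma)^{k-1}\b$ is random and built from $R^{(S)}$. This estimate does not follow from the hypotheses. For example, for $\w$ parallel to $R^{(S)*}\Sigma\u$ the left side equals $N^{-1}\|R^{(S)*}\Sigma\u\|_2$. The hypotheses bound this only by $N^{-1/2}\Phi$, and by Ward's identity it is typically of order $N^{-1/2}\Psi\gg N^{\delta-1}$ at small $\eta$. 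Your closing plan (bilinear bound at each singular end, $\Phi$-bounds in between) relies on the same misapplication.

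The repair, which is the paper's argument, takes the gain from the spare $N^{-1/2}$ in $N^{-1}R^{(S)}$. Your splitting $N^{-1}R^{(S)}=N^{-1/2}\cdot N^{-1/2}R^{(S)}$ is legitimate precisely at the singular end, where the adjacent vector $\Sigma\u$ is deterministic. When $|V_\cG^r|+|V_\cG^{m2}|\ge2$, the vertex $t(u_1)$ must be followed by a matrix vertex, and
\[
|\u^*\Sigma N^{-1}R^{(S)}\w|\le N^{-1/2}\,\|N^{-1/2}\u^*\Sigma R^{(S)}\|_2\,\|\w\|_2\prec N^{-1/2}\,\Phi\cdot\Phi^{|V_\cG^r|+|V_\cG^{m2}|-1}.
\]
This meets the target $N^{\delta-1/2}\Phi^{|V_\cG^r|+|V_\cG^{m2}|-1}$ after discarding one factor $\Phi\le1$. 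With both ends singular and $|V_\cG^{m2}|\ge2$, peeling both ends the same way gives $N^{-1}\Phi^{|V_\cG^{m2}|}$. The bilinear bound is needed only in the short chains, where both vectors are deterministic.

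Relatedly, bound interior matrix vertices (and the cycle case) by $N^{-1}\|R^{(S)}\|_\op\le N^{-1}\|R^{(S)}\|_F\prec\Phi$. Do not use the vector bound on random vectors (``along the chain'', ``column by column''), and do not use $N^{-1}\|R^{(S)}\|_\op\le N^{-\tau}$, which is insufficient. Finally, in the case of two singular ends and one matrix vertex, $N^{\delta-1}$ suffices because $\Phi\le1$, not because $\Phi\ge N^{-1/2}$.
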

\begin{proof}
By Remark \ref{remk:splitting of network}, we may consider without loss of
generality the case where $\cG$ is a single connected component. 
We will use repeatedly that under the conditions of Lemma
\ref{lem:fluctuation_averaging_lowrank2}, we have also
\[N^{-1}\|R^{(S)}\|_F \leq \max_\alpha N^{-1/2}\|R^{(S)}\e_\alpha\|_2
\prec \Phi\]
Since $\deg(v)=2$ for each $v \in V_\cG^t$, all tensor vertices have label
$\Sigma$. If $\cG$ is a chain, we write $u_1,u_2$ for the two endpoints of the
chain. We then consider all possible cases for $\cG$:

(1) Suppose $\cG$ is a cycle. Then a representative form of $(\cG,f_\cG)$ is
\begin{figure}[H]
    \centering
    \resizebox{0.5\textwidth}{!}{
        \begin{tikzpicture}[
    node distance=2.5cm,
    matrix_node/.style={
        circle, 
        draw=green!60!black, 
        thick, 
        fill=green!10, 
        minimum size=1.1cm,
        font=\small
    },
    sigma_node/.style={
        circle, 
        draw=black, 
        very thick,        
        fill=yellow!20,    
        minimum size=1.1cm,
        font=\bfseries     
    },
    conn/.style={thick, draw=gray!80}
]

    \def\Rx{4cm} 
    \def\Ry{2.5cm} 

    
    \node[matrix_node] (m1) at (90:\Rx\space and \Ry) {$\frac{R^{(S)}}{N}$};
    
    \node[sigma_node] (s1) at (45:\Rx\space and \Ry) {$\Sigma$};
    \node[matrix_node] (m2) at (-10:\Rx\space and \Ry) {$\frac{R^{(S)}}{N}$};
    \node[sigma_node] (s2) at (-55:\Rx\space and \Ry) {$\Sigma$};
    
    \node[sigma_node] (s_last) at (135:\Rx\space and \Ry) {$\Sigma$};
    \node[matrix_node] (m_last) at (190:\Rx\space and \Ry) {$\frac{R^{(S)}}{N}$};

    \node[sigma_node] (s3) at (-130:\Rx\space and \Ry) {$\Sigma$};
    \node[matrix_node] (m_bot) at (-90:\Rx\space and \Ry) {$\frac{R^{(S)}}{N}$};

    \draw[conn] (m1) to[bend left=15] (s1);
    \draw[conn] (s1) to[bend left=15] (m2);
    \draw[conn] (m2) to[bend left=15] (s2);
    \draw[conn] (s2) to[bend left=15] (m_bot);
    
    \draw[conn] (s_last) to[bend left=15] (m1);
    \draw[conn] (m_last) to[bend left=15] (s_last);

    \draw[conn, dashed] (m_bot) to[bend left=15] (s3);
    \draw[conn] (s3) to[bend left=15] (m_last);
    

\end{tikzpicture}
    }
\end{figure}
\noindent The number of vertices with label $N^{-1}R^{(S)}$ in the cycle is $|V_\cG^{m2}|$,
which is at least two since these vertices are type 2. Thus the value is bounded
as
\[\Tr (N^{-1}R^{(S)}\Sigma)^{|V_\cG^{m2}|}
\leq\|\Sigma\|_\op^{|V_\cG^{m2}|}\times(N^{-1}\|R^{(S)}\|_F)^{|V_\cG^{m2}|}\prec\Phi^{|V_\cG^{m2}|}.\]
This implies the lemma, since $|U_\cG|=|V_\cG^r|=0$.
    
(2) Suppose $\cG$ is a chain, with endpoints $u_1,u_2\notin U_\cG$. These
endpoints may be left leaves, right original leaves, or right new leaves,
leading to the following 6 types of representative networks $(\cG,f_\cG)$:
\begin{figure}[H]
    \centering
    \resizebox{1\textwidth}{!}{
        \begin{tikzpicture}[
    node distance=1.2cm,
    leaf_node/.style={circle, draw=blue!80, thick, fill=blue!10, minimum size=1cm},
    sigma_node/.style={circle, draw=black, very thick, fill=yellow!20, minimum size=1cm, font=\bfseries},
    vec_u_node/.style={circle, draw=orange!80, thick, fill=orange!10, minimum size=1.1cm, font=\small},
    vec_x_node/.style={circle, draw=purple!80, thick, fill=purple!10, minimum size=1.1cm, font=\small},
    conn/.style={thick, draw=gray!80}
]


    \node[leaf_node] (l1_1) at (0,0) {$\u$};
    \node[sigma_node] (l1_2) [right=of l1_1] {$\Sigma$};
    \node (l1_dots) [right=0.5cm of l1_2] {$\dots$};
    \node[sigma_node] (l1_3) [right=0.5cm of l1_dots] {$\Sigma$};
    \node[leaf_node] (l1_4) [right=of l1_3] {$\u$};

    \draw[conn] (l1_1) -- (l1_2);
    \draw[conn] (l1_2) -- (l1_dots);
    \draw[conn] (l1_dots) -- (l1_3);
    \draw[conn] (l1_3) -- (l1_4);

    \node[leaf_node] (l2_1) [below=0.5cm of l1_1] {$\u$};
    \node[sigma_node] (l2_2) [right=of l2_1] {$\Sigma$};
    \node (l2_dots) [right=0.5cm of l2_2] {$\dots$};
    \node[sigma_node] (l2_3) [right=0.5cm of l2_dots] {$\Sigma$};
    \node[vec_x_node] (l2_4) [right=of l2_3] {$\frac{R^{(S)}\x}{\sqrt{N}}$};

    \draw[conn] (l2_1) -- (l2_2);
    \draw[conn] (l2_2) -- (l2_dots);
    \draw[conn] (l2_dots) -- (l2_3);
    \draw[conn] (l2_3) -- (l2_4);

    \node[vec_x_node] (l3_1) [below=0.5cm of l2_1] {$\frac{R^{(S)}\x}{\sqrt{N}}$};
    \node[sigma_node] (l3_2) [right=of l3_1] {$\Sigma$};
    \node (l3_dots) [right=0.5cm of l3_2] {$\dots$};
    \node[sigma_node] (l3_3) [right=0.5cm of l3_dots] {$\Sigma$};
    \node[vec_x_node] (l3_4) [right=of l3_3] {$\frac{R^{(S)}\x}{\sqrt{N}}$};

    \draw[conn] (l3_1) -- (l3_2);
    \draw[conn] (l3_2) -- (l3_dots);
    \draw[conn] (l3_dots) -- (l3_3);
    \draw[conn] (l3_3) -- (l3_4);

    
    \node[leaf_node] (r1_1) at (9.5,0) {$\u$};
    \node[sigma_node] (r1_2) [right=of r1_1] {$\Sigma$};
    \node (r1_dots) [right=0.5cm of r1_2] {$\dots$};
    \node[sigma_node] (r1_3) [right=0.5cm of r1_dots] {$\Sigma$};
    \node[vec_u_node] (r1_4) [right=of r1_3] {$\frac{R^{(S)}\u}{\sqrt{N}}$};

    \draw[conn] (r1_1) -- (r1_2);
    \draw[conn] (r1_2) -- (r1_dots);
    \draw[conn] (r1_dots) -- (r1_3);
    \draw[conn] (r1_3) -- (r1_4);

    \node[vec_u_node] (r2_1) [below=0.3cm of r1_1] {$\frac{R^{(S)}\u}{\sqrt{N}}$};
    \node[sigma_node] (r2_2) [right=of r2_1] {$\Sigma$};
    \node (r2_dots) [right=0.5cm of r2_2] {$\dots$};
    \node[sigma_node] (r2_3) [right=0.5cm of r2_dots] {$\Sigma$};
    \node[vec_u_node] (r2_4) [right=of r2_3] {$\frac{R^{(S)}\u}{\sqrt{N}}$};

    \draw[conn] (r2_1) -- (r2_2);
    \draw[conn] (r2_2) -- (r2_dots);
    \draw[conn] (r2_dots) -- (r2_3);
    \draw[conn] (r2_3) -- (r2_4);

    \node[vec_u_node] (r3_1) [below=0.3cm of r2_1] {$\frac{R^{(S)}\u}{\sqrt{N}}$};
    \node[sigma_node] (r3_2) [right=of r3_1] {$\Sigma$};
    \node (r3_dots) [right=0.5cm of r3_2] {$\dots$};
    \node[sigma_node] (r3_3) [right=0.5cm of r3_dots] {$\Sigma$};
    \node[vec_x_node] (r3_4) [right=of r3_3] {$\frac{R^{(S)}\x}{\sqrt{N}}$};

    \draw[conn] (r3_1) -- (r3_2);
    \draw[conn] (r3_2) -- (r3_dots);
    \draw[conn] (r3_dots) -- (r3_3);
    \draw[conn] (r3_3) -- (r3_4);

\end{tikzpicture}
}
\end{figure}
\noindent In each case, we may apply $\|\u\|_2=1$,
$\|\Sigma\|_\op \leq C$, $\|N^{-1}R^{(S)}\|_F \prec \Phi$,
$\|N^{-1/2}R^{(S)}\u\|_2 \prec \Phi$, and
$\|N^{-1/2}R^{(S)}\x\|_2 \prec \Phi$ to get
\[|\mathrm{val}(\cG,f_\cG)|\prec \Phi^{|V_\cG^r|+|V_\cG^{m2}|}.\]
We clarify that in particular, if $|V_\cG^r|+|V_\cG^{m2}|=0$, this is the bound
$|\u^*\Sigma\u|\prec 1$. Here, since $|U_\cG|=0$, the lemma also holds.

(3) Suppose $\cG$ is a chain and exactly one of $u_1,u_2$, say $u_1$, belongs to
$U_\cG$. Then $u_1$ is singular, so its adjacent tensor vertex $t(u_1)$ cannot
be adjacent to any other left leaves, meaning the other neighbor of $t(u_1)$ is
a matrix vertex or a right leaf. If $|V_\cG^r|+|V_\cG^{m2}|=1$, then we have the
cases
\begin{figure}[H]
    \centering
    \resizebox{0.8\textwidth}{!}{
        \begin{tikzpicture}[
    node distance=1.5cm,
    leaf_node/.style={circle, draw=blue!80, thick, fill=blue!10, minimum size=1cm},
    sigma_node/.style={circle, draw=black, very thick, fill=yellow!20, minimum size=1cm, font=\bfseries},
    matrix_node/.style={circle, draw=green!60!black, thick, fill=green!10, minimum size=1cm, font=\small},
    vec_u_node/.style={circle, draw=orange!80, thick, fill=orange!10, minimum size=1.1cm, font=\small},
    vec_x_node/.style={circle, draw=purple!80, thick, fill=purple!10, minimum size=1.1cm, font=\small},
    conn/.style={thick, draw=gray!80},
    label_text/.style={font=\Large}
]


    \node[leaf_node] (A_u) at (0,0) {$\u$};
    \node[above=0.1cm of A_u] {$u_1$};

    \node[sigma_node] (A_sig) [right=of A_u] {$\Sigma$};
    \node[above=0.1cm of A_sig] {$t(u_1)$};

    \node[vec_u_node] (A_end) [right=of A_sig] {$\frac{R^{(S)}\u}{\sqrt{N}}$};

    \draw[conn] (A_u) -- (A_sig);
    \draw[conn] (A_sig) -- (A_end);

    \begin{scope}[xshift=7cm]
        \node[leaf_node] (B_u) at (1,0) {$\u$};
        \node[above=0.1cm of B_u] {$u_1$};

        \node[sigma_node] (B_sig) [right=of B_u] {$\Sigma$};
        \node[above=0.1cm of B_sig] {$t(u_1)$};

        \node[vec_x_node] (B_end) [right=of B_sig] {$\frac{R^{(S)}\x}{\sqrt{N}}$};

        \draw[conn] (B_u) -- (B_sig);
        \draw[conn] (B_sig) -- (B_end);
    \end{scope}

    \begin{scope}[yshift=-2cm]
        \node[leaf_node] (D_u1) at (0,0) {$\u$};
        \node[above=0.1cm of D_u1] {$u_1$};

        \node[sigma_node] (D_sig1) [right=of D_u1] {$\Sigma$};
        \node[above=0.1cm of D_sig1] {$t(u_1)$};

        \node[matrix_node] (D_mat) [right=of D_sig1] {$\frac{R^{(S)}}{N}$};

        \node[sigma_node] (D_sig2) [right=of D_mat] {$\Sigma$};

        \node[leaf_node] (D_u2) [right=of D_sig2] {$\u$};

        \draw[conn] (D_u1) -- (D_sig1);
        \draw[conn] (D_sig1) -- (D_mat);
        \draw[conn] (D_mat) -- (D_sig2);
        \draw[conn] (D_sig2) -- (D_u2);
    \end{scope}

\end{tikzpicture}
}
\end{figure}
\noindent For these cases, we apply the bounds
\[
    \frac{1}{\sqrt{N}}|\u^* \Sigma R^{(S)}\u|\prec\frac{N^\delta}{\sqrt{N}},\qquad\frac{1}{\sqrt{N}}|\u^*\Sigma R^{(S)}\x|\prec\frac{N^\delta}{\sqrt{N}},
\qquad \frac{1}{N}|\u^*\Sigma R^{(S)}\Sigma\u|
\prec \frac{N^\delta}{N},\]
which imply the lemma since $|U_\cG|=1$ and $|V_\cG^r|+|V_\cG^{m2}|=1$.
If $|V_\cG^r|+|V_\cG^{m2}| \geq 2$, then $t(u_1)$ must be adjacent to a matrix vertex, and we  are left with the following possibilities:
\begin{figure}[H]
    \centering
    \resizebox{0.8\textwidth}{!}{
        \begin{tikzpicture}[
    node distance=1.2cm,
    leaf_node/.style={circle, draw=blue!80, thick, fill=blue!10, minimum size=1cm},
    sigma_node/.style={circle, draw=black, very thick, fill=yellow!20, minimum size=1cm, font=\bfseries},
    matrix_node/.style={circle, draw=green!60!black, thick, fill=green!10, minimum size=1cm, font=\small},
    vec_u_node/.style={circle, draw=orange!80, thick, fill=orange!10, minimum size=1.1cm, font=\small},
    vec_x_node/.style={circle, draw=purple!80, thick, fill=purple!10, minimum size=1.1cm, font=\small},
    conn/.style={thick, draw=gray!80}
]

    \node[leaf_node] (r1_u1) at (0,0) {$\u$};
    \node[above=0.1cm of r1_u1] {$u_1$};
    
    \node[sigma_node] (r1_t1) [right=of r1_u1] {$\Sigma$};
    \node[above=0.1cm of r1_t1] {$t(u_1)$};
    
    \node[matrix_node] (r1_m1) [right=of r1_t1] {$\frac{R^{(S)}}{N}$};
    
    \node (r1_dots) [right=0.5cm of r1_m1] {\huge $\dots$};
    
    \node[matrix_node] (r1_m2) [right=0.5cm of r1_dots] {$\frac{R^{(S)}}{N}$};
    \node[sigma_node] (r1_s2) [right=of r1_m2] {$\Sigma$};
    \node[leaf_node] (r1_u2) [right=of r1_s2] {$\u$};

    \draw[conn] (r1_u1) -- (r1_t1);
    \draw[conn] (r1_t1) -- (r1_m1);
    \draw[conn] (r1_m1) -- (r1_dots);
    \draw[conn] (r1_dots) -- (r1_m2);
    \draw[conn] (r1_m2) -- (r1_s2);
    \draw[conn] (r1_s2) -- (r1_u2);

    \begin{scope}[yshift=-2cm]
        \node[leaf_node] (r2_u1) at (0,0) {$\u$};
        \node[above=0.1cm of r2_u1] {$u_1$};
        
        \node[sigma_node] (r2_t1) [right=of r2_u1] {$\Sigma$};
        \node[above=0.1cm of r2_t1] {$t(u_1)$};
        
        \node[matrix_node] (r2_m1) [right=of r2_t1] {$\frac{R^{(S)}}{N}$};
        
        \node (r2_dots) [right=0.5cm of r2_m1] {\huge $\dots$};
        
        \node[sigma_node] (r2_s2) [right=0.5cm of r2_dots] {$\Sigma$};
        \node[vec_u_node] (r2_vec) [right=of r2_s2] {$\frac{R^{(S)}\u}{\sqrt{N}}$};

        \draw[conn] (r2_u1) -- (r2_t1);
        \draw[conn] (r2_t1) -- (r2_m1);
        \draw[conn] (r2_m1) -- (r2_dots);
        \draw[conn] (r2_dots) -- (r2_s2);
        \draw[conn] (r2_s2) -- (r2_vec);
    \end{scope}

    \begin{scope}[yshift=-4cm]
        \node[leaf_node] (r3_u1) at (0,0) {$\u$};
        \node[above=0.1cm of r3_u1] {$u_1$};
        
        \node[sigma_node] (r3_t1) [right=of r3_u1] {$\Sigma$};
        \node[above=0.1cm of r3_t1] {$t(u_1)$};
        
        \node[matrix_node] (r3_m1) [right=of r3_t1] {$\frac{R^{(S)}}{N}$};
        
        \node (r3_dots) [right=0.5cm of r3_m1] {\huge $\dots$};
        
        \node[sigma_node] (r3_s2) [right=0.5cm of r3_dots] {$\Sigma$};
        \node[vec_x_node] (r3_vec) [right=of r3_s2] {$\frac{R^{(S)}\x}{\sqrt{N}}$};

        \draw[conn] (r3_u1) -- (r3_t1);
        \draw[conn] (r3_t1) -- (r3_m1);
        \draw[conn] (r3_m1) -- (r3_dots);
        \draw[conn] (r3_dots) -- (r3_s2);
        \draw[conn] (r3_s2) -- (r3_vec);
    \end{scope}

\end{tikzpicture}
}
\end{figure}
\noindent In these cases, we may contract the labels $\u$ and $\Sigma$ on $u_1$
and $t(u_1)$ with the first $N^{-1}R^{(S)}$, to get
\[
    |\mathrm{val}(\cG,f_\cG)| \prec \frac{1}{\sqrt{N}}\Phi^{|V_\cG^r|+|V_\cG^{m2}|}.
\]
For example,
\begin{align*}
|\u^*\Sigma (N^{-1}R^{(S)}\Sigma)^{|V_\cG^{m2}|}(N^{-1/2}R^{(S)}\u)|
&\prec \frac{1}{\sqrt{N}}\|N^{-1/2}\u^*\Sigma R^{(S)}\|_2
\times \|N^{-1}R^{(S)}\|_F^{|V_\cG^{m2}|-1} \times \|N^{-1/2}R^{(S)}\u\|_2\\
&\prec \frac{1}{\sqrt{N}}\Phi^{|V_\cG^r|+|V_\cG^{m2}|}.
\end{align*}
This is stronger than the lemma, because $|U_\cG|=1$, $\Phi \leq 1$, and $\delta
\geq 0$.

(4) Finally, suppose both endpoints $u_1,u_2\in U_{\cG}$. Then $|V_\cG^r|=0$.
If $|V_\cG^{m2}|=1$, then a representative case is
\begin{figure}[H]
    \centering
    \resizebox{0.7\textwidth}{!}{
        \begin{tikzpicture}[
    node distance=1.5cm,
    leaf_node/.style={circle, draw=blue!80, thick, fill=blue!10, minimum size=1cm},
    sigma_node/.style={circle, draw=black, very thick, fill=yellow!20, minimum size=1cm, font=\bfseries},
    matrix_node/.style={circle, draw=green!60!black, thick, fill=green!10, minimum size=1cm, font=\small},
    conn/.style={thick, draw=gray!80}
]

    \node[leaf_node] (u1) at (0,0) {$\u$};
    \node[above=0.1cm of u1] {$u_1$};

    \node[sigma_node] (t1) [right=of u1] {$\Sigma$};
    \node[above=0.1cm of t1] {$t(u_1)$};

    \node[matrix_node] (m1) [right=of t1] {$\frac{R^{(S)}}{N}$};

    \node[sigma_node] (t2) [right=of m1] {$\Sigma$};
    \node[above=0.1cm of t2] {$t(u_2)$};

    \node[leaf_node] (u2) [right=of t2] {$\u$};
    \node[above=0.1cm of u2] {$u_2$};

    \draw[conn] (u1) -- (t1);
    \draw[conn] (t1) -- (m1);
    \draw[conn] (m1) -- (t2);
    \draw[conn] (t2) -- (u2);

\end{tikzpicture}
}
\end{figure}
\noindent We apply the bound
\[
    \frac{1}{N}|\u^*\Sigma R^{(S)}\Sigma\u|\prec\frac{N^\delta}{N}
\prec \left(\frac{N^\delta}{\sqrt{N}}\right)^2,
\]
which implies the lemma since $|U_\cG|=2$ and $\Phi \leq 1$.
If $|V_\cG^{m2}| \geq 2$, then since $u_1,u_2$ are singular left leaves, their
adjacent tensor vertices $t(u_1),t(u_2)$ must be adjacent to matrix vertices.
A representative case is
\begin{figure}[H]
    \centering
    \resizebox{0.8\textwidth}{!}{
        \begin{tikzpicture}[
    node distance=1.2cm,
    leaf_node/.style={circle, draw=blue!80, thick, fill=blue!10, minimum size=1cm},
    sigma_node/.style={circle, draw=black, very thick, fill=yellow!20, minimum size=1cm, font=\bfseries},
    matrix_node/.style={circle, draw=green!60!black, thick, fill=green!10, minimum size=1cm, font=\small},
    conn/.style={thick, draw=gray!80}
]

    \node[leaf_node] (u1) at (0,0) {$\u$};
    \node[above=0.1cm of u1] {$u_1$};

    \node[sigma_node] (t1) [right=of u1] {$\Sigma$};
    \node[above=0.1cm of t1] {$t(u_1)$};

    \node[matrix_node] (m1) [right=of t1] {$\frac{R^{(S)}}{N}$};

    \node (dots) [right=0.5cm of m1] {\huge $\dots$};

    \node[matrix_node] (m2) [right=0.5cm of dots] {$\frac{R^{(S)}}{N}$};

    \node[sigma_node] (t2) [right=of m2] {$\Sigma$};
    \node[above=0.1cm of t2] {$t(u_2)$};

    \node[leaf_node] (u2) [right=of t2] {$\u$};
    \node[above=0.1cm of u2] {$u_2$};

    \draw[conn] (u1) -- (t1);
    \draw[conn] (t1) -- (m1);
    \draw[conn] (m1) -- (dots);
    \draw[conn] (dots) -- (m2);
    \draw[conn] (m2) -- (t2);
    \draw[conn] (t2) -- (u2);

\end{tikzpicture}
}
\end{figure}
\noindent for which we may bound
\[
    |\u^*\Sigma (N^{-1}R^{(S)}\Sigma)^{|V_\cG^{m2}|}\u|\prec\frac{1}{N}
\|N^{-1/2}\u^*\Sigma R^{(S)}\|_2
\times \|N^{-1}R^{(S)}\|_F^{|V_\cG^{m2}|-2}
\times \|N^{-1/2}R^{(S)}\Sigma\u\|_2
\prec \frac{1}{N}\Phi^{|V_\cG^{m2}|}.
\]
This again implies the lemma since $|U_\cG|=2$, $\Phi \leq 1$, and $\delta \geq
0$. This completes the proof.
\end{proof}

\begin{proof}[Proof of Lemma \ref{lem:fluctuation_averaging_lowrank2}(a)]
We recall from \eqref{eq:YQbound} that, for any fixed constants $L \geq 1$
and $D>0$, we have
\begin{equation}\label{eq:YQbound2}
    \E\left|\frac{1}{\sqrt{N}}\sum_{i=1}^N\cY_i^{(i)}\cQ_i^{(i)}\right|^{2L}\prec
\max_{\substack{i_1,\ldots,i_{2L} \in [N]\\
q_1\in\cM_{i_1,S},\ldots,q_{2L}\in\cM_{i_{2L},S}}}\left\{N^{|S|-L}\E|\E_S
q|\right\}+N^{-\tau'(D+1)+L}.
\end{equation}
where $S=\{i_1,\ldots,i_{2L}\}$ is the set of distinct indices in
$i_1,\ldots,i_{2L}$, the monomials $q_l \in \cM_{i_l,S}$ are those arising in
the expansion of $\cY_{i_l}^{(i_l)}\cQ_{i_l}^{(i_l)}$ via
Lemma \ref{lem:y_i q_i expansion}, and $q=\prod_{l=1}^L q_l \prod_{l=L+1}^{2L}
\bar q_l$. By Lemma \ref{lem:tensor network of sample covariance} and the bound
$|\cC^{(S)}| \prec 1$, there exists a family $\cN$ of
$(S,2L)$-valid tensor networks for which
\begin{align*}
    |\E_S q|&\prec \sum_{(\mathcal{G},f_\cG)\in\cN}|\mathrm{val}(\mathcal{G},f_\cG)|
    \prec\max_{(\mathcal{G},f_\cG)\in\cN}|\mathrm{val}(\mathcal{G},f_\cG)|,
\end{align*}
where the second inequality applies $|\cN|\leq C$. It remains to bound
$\mathrm{val}(\mathcal{G},f_\cG)$ for each $(\cG,f_\cG) \in \cN$.

The proof for each $(\cG, f_\cG) \in \cN$ proceeds by reducing $(\cG,f_\cG)$ via
a sequence of four steps. In
each of the first three steps, we sequentially apply either
Corollary~\ref{lem:remove tensor weak} or Lemma~\ref{lem:remove tensor strong}
to remove a single tensor vertex $v \in V_\cG^t$ with $\deg(v) \geq 3$.
After removing all such vertices, the graph is reduced to chains and cycles, and
we apply Lemma~\ref{lem:pairing case} in the fourth step to complete the proof.
Letting $U_\cG^1,U_\cG^2 \subseteq V_\cG^{sl}$ be the disjoint
subsets of singular left leaves in Lemma \ref{lem:tensor network of sample
covariance} (which are fixed throughout the below process),
the four steps are as follows:
\begin{enumerate}
    \item While there exists a singular left leaf $u \in U_\cG^1\cup U_\cG^2$
incident to a tensor vertex $v=t(u) \in V_\cG^t$ such that $\deg(v) \geq 3$ and
either $m1(v)\geq 1$ or $r(v)+m1(v)=0$: Remove $v$, and apply Lemma \ref{lem:remove tensor strong}(a) to bound
    \[
        |\mathrm{val}(\cG,f_\cG)|\prec \frac{N^\delta}{\sqrt{N}}\Phi^{r(v)}
\max_{f_{G'}\in\cF_{\cG'}} |\mathrm{val}(\cG',f_{\cG'})|.
    \]
    \item While there exists a singular left leaf $u \in U_\cG^1$ incident to a
tensor vertex $v \in V_\cG^t$ such that $\deg(v) \geq 3$ and $r(v) \geq
1$: Remove $v$, and apply Lemma \ref{lem:remove tensor strong}(b) to bound
    \[
        |\mathrm{val}(\cG,f_\cG)| \prec \frac{N^\delta}{\sqrt{N}}\Phi^{r(v)-1}
\max_{f_{G'}\in\cF_{\cG'}} |\mathrm{val}(\cG',f_{\cG'})|.
    \]
    \item While there exists a tensor vertex $v\in V_\cG^t$ such that $\deg(v) \geq 3$: Remove $v$, and apply Lemma \ref{lem:remove tensor weak} to bound
    \[
        |\mathrm{val}(\cG,f_\cG)| \prec \Phi^{r(v)} \max_{f_{G'}\in\cF_{\cG'}}
|\mathrm{val}(\cG',f_{\cG'})|.
    \]
    \item At this point, the remaining network $(\cG,f_\cG)$ is such that all
$v \in V_\cG^t$ satisfy $\deg(v)=2$. Let
$|V_\cG^{ro}(4)|$, $|V_\cG^{rn}(4)|$, $|V_\cG^{m2}(4)|$, and $|U_\cG^1(4)|$
denote the numbers of right original leaves, right new leaves,
(type 2) matrix vertices, and singular left leaves of $U_\cG^1$ in $(\cG,f_\cG)$.
We apply Lemma \ref{lem:pairing case} to bound
    \[
        |\mathrm{val}(\cG,f_\cG)|\prec\left(\frac{N^\delta}{\sqrt{N}}\right)^{|U_\cG^1(4)|}\times\Phi^{|V_\cG^{ro}(4)|+|V_\cG^{rn}(4)|+|V_\cG^{m2}(4)|-|U_\cG^1(4)|}.
    \]
\end{enumerate}

We clarify that in the 4 inequalities above, $(\cG,f_\cG)$ on the left side
denotes the modified network after having applied all previous removals,
$r(v)$ on the right side denotes the number of right leaves
adjacent to $v$ in this modified network $(\cG,f_\cG)$, and
$(\cG',\cF_{\cG'})$ denotes the family of networks obtained by further
removing $v$ from $(\cG,f_\cG)$ according to
Definition \ref{def:removev}. Within Steps 1--3,
if there are multiple vertices that may be removed,
we may choose one to remove arbitrarily, but we do
this sequentially as each removal may increase $r(v')$ for the
remaining vertices $v' \in V_\cG^t$. For example, suppose two vertices $u_1,u_2
\in U_\cG^1$ with adjacent vertices $v_1=t(u_1)$ and $v_2=t(u_2)$ are
such that $\deg(v_1) \geq 3$ and $\deg(v_2) \geq 3$,
$r(v_1)+m1(v_1)=0$ and $r(v_2)+m1(v_2)=0$, and
$v_1,v_2$ are adjacent to a common type 2 matrix vertex $u \in V_\cG^m$. 
Pictorially, we have
\begin{figure}[H]
    \centering
    \resizebox{0.8\textwidth}{!}{
        \begin{tikzpicture}[
    node distance=2.0cm,
    leaf_node/.style={circle, draw=blue!80, thick, fill=blue!10, minimum size=1cm},
    tensor_node/.style={circle, draw=black, very thick, fill=yellow!20, minimum size=1.2cm, font=\small\bfseries},
    matrix_node/.style={circle, draw=green!60!black, thick, fill=green!10, minimum size=1.2cm, font=\small},
    vec_x_node/.style={circle, draw=purple!80, thick, fill=purple!10, minimum size=1.2cm, font=\small},
    conn/.style={thick, draw=gray!80},
    leg/.style={thick, draw=gray!80}
]

    
    \node[leaf_node] (u1) at (0,0) {$\u$};
    \node[above=0.1cm of u1] {$u_1$};

    \node[tensor_node] (t1) [right=of u1] {$\kappa_k(\g_i)$};
    \node[above=0.1cm of t1] {$v_1$};
    \draw[leg] (t1) -- +(-0.4, -0.8);
    \draw[leg] (t1) -- +(0, -0.8);
    \draw[leg] (t1) -- +(0.4, -0.8);
    \node at ($(t1)+(0,-1)$) {$\dots$};

    \node[matrix_node] (mat) [right=of t1] {$\frac{R^{(S)}}{N}$};
    \node[above=0.1cm of mat] {$u$}; 

    \node[tensor_node] (t2) [right=of mat] {$\kappa_k(\g_i)$};
    \node[above=0.1cm of t2] {$v_2$};
    \draw[leg] (t2) -- +(-0.4, -0.8);
    \draw[leg] (t2) -- +(0, -0.8);
    \draw[leg] (t2) -- +(0.4, -0.8);
    \node at ($(t2)+(0,-1)$) {$\dots$};

    \node[leaf_node] (u2) [right=of t2] {$\u$};
    \node[above=0.1cm of u2] {$u_2$};

    \draw[conn] (u1) -- (t1);
    \draw[conn] (t1) -- (mat);
    \draw[conn] (mat) -- (t2);
    \draw[conn] (t2) -- (u2);

    \draw[-{Latex[scale=1.5]}, line width=1.5pt] ($(mat.south)+(0,-0.5)$) -- ++(0,-1.5) 
        node[midway, right, xshift=0.2cm, font=\large] {Apply Step 1};

    
    \node[vec_x_node] (new_vec) at ($(mat)+(0,-4)$) {$\frac{R^{(S)}\x}{\sqrt{N}}$};
    \node[above=0.1cm of new_vec] {$u$};

    \node[tensor_node] (t2_b) at ($(t2)+(0,-4)$) {$\kappa_k(\g_i)$};
    \node[above=0.1cm of t2_b] {$v_2$};
    \draw[leg] (t2_b) -- +(-0.4, -0.8);
    \draw[leg] (t2_b) -- +(0, -0.8);
    \draw[leg] (t2_b) -- +(0.4, -0.8);
    \node at ($(t2_b)+(0,-1)$) {$\dots$};

    \node[leaf_node] (u2_b) at ($(u2)+(0,-4)$) {$\u$};
    \node[above=0.1cm of u2_b] {$u_2$};

    \draw[conn] (new_vec) -- (t2_b);
    \draw[conn] (t2_b) -- (u2_b);

\end{tikzpicture}
}
\end{figure}
\noindent Then Step 1 may first remove either $v_1$ or $v_2$.
Upon removing, say, $v_1$, the adjacent matrix vertex $u$
is replaced by a right new leaf according to
Definition \ref{def:removev}. Then $r(v_2) \geq 1$ in the
resulting graph, implying that $v_2$ is no longer a candidate for removal in
Step 1, and instead will be removed in Step 2.

Since the removal of any vertex $v \in V_\cG^t$ does not change $\deg(v')$ or
$m_1(v')$ and can only increase $r(v')$ for each remaining
vertex $v' \in V_\cG^t$, the above procedure is forward-progressing in the sense
that each removal of a tensor vertex in Steps 1--3 cannot result in a tensor
vertex being eligible for removal by a previous step.

For each Step $k \in \{1,2,3\}$, let $|V_\cG^{ro}(k)|$, $|V_\cG^{rn}(k)|$,
$|U_\cG^1(k)|$, and $|U_\cG^2(k)|$ denote the total numbers of right original
leaves, right new leaves, singular left leaves of $U_\cG^1$, and singular
left leaves of $U_\cG^2$ that are removed in Step $k$.
Importantly, all vertices $u \in U_\cG^2$ have $m1(t(u)) \geq 1$ by Lemma \ref{lem:tensor network of sample covariance}(d), so after all applications of Step 1, the network has
no remaining vertices in $U_\cG^2$. Thus,
$|U_\cG^2(2)|=|U_\cG^2(3)|=0$, and the number of total applications of Step 2
is $|U_\cG^1(2)|$. Also, after all applications of Steps 1 and 2, no singular
left leaf $u \in U_\cG^1$ has $\deg(t(u)) \geq 3$, so Step 3 does not
remove any vertices of $U_\cG^1$, i.e.\ $|U_\cG^1(3)|=0$.

Thus, letting $\mathrm{val}(\cG,f_\cG)$ denote the value
of the starting network, the above steps yield the bound
\begin{align*}
|\mathrm{val}(\cG,f_\cG)|&\prec
\underbrace{\Big(\frac{N^\delta}{\sqrt{N}}\Big)^{U_\cG^1(1)+U_\cG^2(1)}
\Phi^{|V_\cG^{ro}(1)|+|V_\cG^{rn}(1)|}}_{\text{Step 1}}
\times
\underbrace{\Big(\frac{N^\delta}{\sqrt{N}}\Big)^{U_\cG^1(2)}
\Phi^{|V_\cG^{ro}(2)|+|V_\cG^{rn}(2)|-|U_\cG^1(2)|}}_{\text{Step 2}}\\
&\hspace{1in}\times
\underbrace{\Phi^{|V_\cG^{ro}(3)|+|V_\cG^{rn}(3)|}}_{\text{Step 3}}
\times
\underbrace{\Big(\frac{N^\delta}{\sqrt{N}}\Big)^{|U_\cG^1(4)|}\Phi^{|V_\cG^{ro}(4)|+|V_\cG^{rn}(4)|+|V_\cG^{m2}(4)|-|U_\cG^1(4)|}}_{\text{Step
4}}.
\end{align*}
Furthermore,
letting $|V_\cG^{ro}|$, $|V_\cG^{rn}|$, $|V_\cG^{m2}|$, $|U_\cG^1|$, and
$|U_\cG^2|$ denote these quantities for the original network $(\cG,f_\cG)$, we
observe the following:
\begin{enumerate}
    \item The singular left leaves removed in Steps 1--2 and those remaining
in Step 4 form a partition of $U_\cG^1 \cup U_\cG^2$, so
    \[|U_\cG^1(1)|+|U_\cG^2(1)|+|U_\cG^1(2)|+|U_\cG^1(4)| =
|U_\cG^1|+|U_\cG^2|.\]
\item Similarly, the right original leaves removed in Steps 1--4 form a
partition of $V_\cG^{ro}$, so
\[\sum_{k=1}^4 |V_\cG^{ro}(k)|=|V_\cG^{ro}|.\]
    \item By the removal process of Definition \ref{def:removev}, every right
new leaf removed in Steps 1--4 must have been a type 2 matrix vertex in the
original network $(\cG,f_\cG)$. Together with the type 2 matrix vertices
remaining in Step 4, this accounts for all type 2 matrix vertices of 
$(\cG,f_\cG)$. Thus
    \[
        \sum_{k=1}^4 |V_\cG^{rn}(k)| + |V_\cG^{m2}(4)| = |V_\cG^{m2}|.
    \]

    \item It is clear that we must have $|U_\cG^1(2)|+|U_\cG^1(4)|\leq|U_\cG^1|$.
\end{enumerate}
Thus, we have the bound
\begin{align*}
    |\mathrm{val}(\cG,f_\cG)|&\prec\left(\frac{N^\delta}{\sqrt{N}}\right)^{|U_\cG^1|+|U_\cG^2|}\times\Phi^{|V_\cG^{ro}|+|V_\cG^{m2}|-|U_\cG^1|}\\
&\overset{(a)}{\leq} \left(\frac{N^\delta}{\sqrt{N}}\right)^{|K|}\times\Phi^{|V_\cG^{ro}|}
\overset{(b)}{\leq}N^{-|K|/2}\times(N^\delta\Phi)^{2L}
\overset{(c)}{\leq}N^{-|S|+L}\times(N^\delta\Phi)^{2L}
\end{align*}
where (a) applies $|U_\cG^1|+|U_\cG^2|=|K|$ and
$|V_\cG^{m2}| \geq |U_\cG^1|$ from
Lemma~\ref{lem:tensor network of sample covariance}(b--c),
(b) applies $|K| \leq 2L$ and $|V_\cG^{ro}|=2L$ by
Lemma~\ref{lem:tensor network of sample covariance}(a), and (c) applies the
bound $(2L-|K|)/2+|K| \geq |S|$.
Applying this bound to \eqref{eq:YQbound2}
and choosing $D>0$ large enough shows
\[\E\left|\sum_{i=1}^N N^{-1/2} \cY_i^{(i)} \cQ_i^{(i)}\right|^{2L} \prec
(N^\delta \Phi)^{2L},\]
and the desired bound follows from Markov's inequality.
\end{proof}

\subsubsection{Proof of Lemma \ref{lem:fluctuation_averaging_lowrank2}(b)}

The following lemma is similar to Lemma \ref{lem:tensor network of sample
covariance}.

\begin{lemma}\label{lem:tensor network of gram matrix}
Fix any integer $L\geq 1$. For any $i_1,\ldots,i_{2L},j_1,\ldots j_{2L}\in[N]$
where $i_l \neq j_l$, denote $S=\{i_1,\ldots,i_{2L}\}\cup\{j_1,\ldots,j_{2L}\}$ the set of distinct indices. Let $q_1,\ldots,q_{2L}$ be a sequence of monomials
satisfying the conditions of Lemma \ref{lem:B_ij expansion}, where each
$q_l$ is a monomial in the expansion of
$\cB_{i_lj_l}^{(i_lj_l)}\cQ_{i_l}^{(i_l)}\cQ_{j_l}^{(i_lj_l)}$. Let
$q=\prod_{l=1}^L \prod_{l=L+1}^{2L} \bar q_l$. Then there exists a constant
$C \equiv C(L)>0$, an exponent $m \in [0,C]$, and a set $\cN$ of $(S,4L)$-valid
tensor networks with $|\cN|\leq C$ such that
\[
    |\E_Sq|=|\cC^{(S)}|^m \cdot
\sum_{(\cG,f_\cG)\in\cN}|\mathrm{val}(\cG,f_\cG)|
\]
Moreover, for all $(\mathcal{G}, f_\cG)\in\cN$, there exist disjoint subsets of
tensor vertices $U_\cG^{1},U_\cG^{2}\subseteq V_\cG^{t}$ such that the following
holds:
\begin{enumerate}[label=(\alph*)]
    \item We have $|V_\cG^l|=|V_\cG^r|=0$. (I.e., there are no leaves, and the
only vertices are tensor vertices and matrix vertices.)
    \item Let $K \subseteq S$ be the subset of indices that appear exactly once
in the list $(i_1,\ldots,i_{2L},j_1,\ldots,j_{2L})$. Then $|K|=|U_\cG^{1}|+|U_\cG^{2}|$.
    \item For all $v\in U_\cG^{1}\cup U_\cG^{2}$, $\deg(v)$ is odd and $\deg(v)
\geq 3$.
    \item We have $|V_\cG^{m2}|\geq 2L+|U_\cG^{1}|$.
    \item For all $v\in U_\cG^{2}$, $m1(v) \geq 1$.
\end{enumerate}
\end{lemma}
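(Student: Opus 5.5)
The proof will follow that of Lemma \ref{lem:tensor network of sample covariance}, adapted to the monomials of Lemma \ref{lem:B_ij expansion}. First we build the networks. For each distinct index $i\in S$, we collect all appearances of $\g_i$ and of $(\g_i\g_i^*-\Sigma)$ across $q_1,\ldots,q_{2L}$. Their sources are the factors $\cB^{(S)}_{\ldots}$ (each contributing one $\g$ at each lower index) and $\cP^{(S)}_\cdot$. We then take $\E_S$ over the independent vectors $\{\g_i\}_{i\in S}$ and expand each moment tensor $\E[(\g_i\g_i^*-\Sigma)^{\otimes k}\otimes\g_i^{\otimes m}]$ by Lemma \ref{lem:cumulant pairing}. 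By multilinearity, $\E_S q$ becomes a power of $|\cC^{(S)}|$ times a sum of at most $C(L)$ contracted values. The only non-tensor labels are copies of $N^{-1}R^{(S)}$ and $N^{-1}\overline{R^{(S)}}$, because no $\u$ or $R^{(S)}\u$ ever appears; this gives (a). Conditions (a),(b) of Definition \ref{def:tensor network} hold by construction. Condition (c) holds for the same reason as before: a type 1 matrix vertex can only come from some $\cP_k^{(S)}$, since $\cB_{jk}$ has $j\neq k$. Partitions in $\dot P_{k,m}$ cannot pair the two copies of $\g_k$ inside $\cP_k^{(S)}$, so the adjacent tensor vertex has degree at least $3$.

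Next we identify $U_\cG^1,U_\cG^2$. For each $i_l\in K$ (or $j_l\in K$), the index appears in only one $q_l$. By Lemma \ref{lem:B_ij expansion}(b), it appears an odd number of times as a lower index of $\cB$-factors of $q_l$. Each $\cP$-factor contributes an even number ($2$) of $\g$'s, so the total number of copies of $\g_{i_l}$ is odd. Since $\kappa_1=0$, if $\E_Sq\neq0$ then some block of the partition for $\g_{i_l}$ has odd size at least $3$, and we take the corresponding tensor vertex $v(i_l)$ as the representative. I expect this is the intended meaning of (c), possibly up to choosing one odd block per index. This gives $|K|$ distinct vertices satisfying (b) and (c).

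We split these as before. $U_\cG^1$ consists of the $v(i_l)$ with either (1) $i_l$ appearing as a lower index in some $q_{l'}$ with $l'\neq l$, or (2) the vertex having two distinct neighbors attached by type 2 matrix vertices arising from extra factors $\cB_{i_lk}$ or $\cP_{i_l}$ of $q_l$. $U_\cG^2$ is the rest. For vertices in $U_\cG^2$, all neighbors except possibly one come from factors counted by $\cP_{i_l}$ that are type 1, and oddness with degree $\ge3$ forces $m1(v)\geq1$, giving (e).

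The main obstacle is (d), namely $|V_\cG^{m2}|\ge 2L+|U_\cG^1|$. The plan is to count type 2 matrix vertices via $m_B^{**}+m_B^*+m_B+m_P^*$, where $m_P^*$ counts only those $\cP$ whose matrix vertex is type 2; every $\cB$-factor yields a type 2 matrix vertex. By Lemma \ref{lem:B_ij expansion}(a), each $q_l$ contributes $m_B^{**}(l)+\frac12(m_B^{*1}(l)+m_B^{*2}(l))\ge1$, which accounts for the $2L$. Condition (1) vertices are bounded by $m_B+\frac12 m_B^*$ using Lemma \ref{lem:B_ij expansion}(c). Condition (2) vertices are bounded by the excess $\sum_l E(l)$, exactly as in the case analysis of the proof of Lemma \ref{lem:fluctuation_averaging_lowrank}(b). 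The parity statements of Lemma \ref{lem:B_ij expansion}(b) force the extra $\cB$-factors at $i_l$ to come in pairs, and the extra $\cP$'s to be type 2. Summing, $2L+|U_\cG^1|\le m_B^{**}+m_B^*+m_B+m_P^*\le|V_\cG^{m2}|$. The delicate part is checking that no type 2 matrix vertex is double counted between the $2L$ baseline and the two conditions.
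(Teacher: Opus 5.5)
Your proposal is correct and follows the paper's proof essentially step for step: the same network construction, the same choice of one odd-order cumulant vertex $v(i_l)$ per index of $K$, the same split of $U_\cG^1$ into indices reappearing in another $q_{l'}$ versus vertices with at least three type 2 matrix neighbors, and the same count $|U_\cG^1|\le m_B+\tfrac12 m_B^*+\sum_l E(l)=m_B^{**}+m_B^*+m_B+m_P^*-2L\le |V_\cG^{m2}|-2L$. This count also settles your double-counting worry, since each half of $m_B^*$ is spent exactly once.

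Two small repairs are needed. First, the oddness of the total number of copies of $\g_{i_l}$ also uses the evenness clause of Lemma~\ref{lem:B_ij expansion}(b) for the other $q_{l'}$, because an index of $K$ can still occur as a lower index there; your sentence that it ``appears in only one $q_l$'' is too strong. Second, condition (2) is cleanest stated as $m2(v(i_l))\ge 3$, counting neighbors coming from $\cB_{i_lj_l}$-factors as well. Then (e) follows because $\deg(v)=2\,m1(v)+m2(v)$ forces $m2(v)$ to be odd, so $m2(v)=1$ and hence $m1(v)\ge 1$ on $U_\cG^2$.
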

\begin{proof}
The proof is similar to Lemma~\ref{lem:tensor network of sample covariance},
so we omit some details. The same arguments as in
Lemma~\ref{lem:tensor network of sample covariance} show that
\[
|\E_S q|=|\cC^{(S)}|^m \cdot \sum_{(\cG, f_\cG) \in \cN} |\mathrm{val}(\cG, f_\cG)|
\]
for some $m \in [0,C(L)]$ and family $\cN$ of $(S,4L)$-valid tensor networks
where $|\cN| \leq C(L)$. These networks have no leaves since the terms
$\cB_{ij}^{(S)}$ and $\cP_i^{(S)}$ do not involve the vector $\u$.

Let $K$ be the set of indices appearing exactly once in
$(i_1,\ldots,i_{2L},j_1,\ldots,j_{2L})$. For each $i_l \in K$ (or $j_l \in K$),
Lemma \ref{lem:B_ij expansion}(b) ensures that $\g_{i_l}$ (resp.\ $\g_{j_l}$)
appears an odd number of times in $q_l$ and an even number of times in
$\prod_{l' \neq l} q_{l'}$.
Thus, in the moment-cumulant expansion, there must exist some cumulant tensor
$\kappa_k(\g_{i_l})$ (resp.\ $\kappa_k(\g_{j_l})$) with $k$ odd. Moreover, since
$\E\g_{i_l}=\E\g_{j_l}=0$, we must have $k \geq 3$. Let $U_\cG \subseteq
V_\cG^t$ be a choice of one odd-degree tensor vertex representing an odd-order
cumulant for each $i_l \in K$ or $j_l \in K$; we denote this choice by $v(i_l)
\in U_\cG$ or $v(j_l) \in U_\cG$. It follows that $|U_\cG|=|K|$.

We now partition $U_\cG$ into $U_\cG^{1}$ and $U_\cG^{2}$ satisfying the
conditions (d) and (e) of the lemma. Let $U_\cG^1$ be the set of $v(i_l)$ or
$v(j_l)$ in $U_\cG$ for which either
\begin{enumerate}
    \item $i_l$ (resp.\ $j_l$) appears as a lower index in some $\{q_{l'}:l'
\neq l\}$, or
\item $i_l$ (resp.\ $j_l$) does not appear as a lower index in any
$\{q_{l'}:l' \neq l\}$, and $v(i_l)$ (resp.\ $v(j_l)$) has at least three type 2
matrix neighbors.
\end{enumerate}
To bound $|U_\cG^1|$, let 
$m_B^{**}(l),m_B^{*1}(l),m_B^{*2}(l),m_B(l)$ denote the values of
$m_B^{**},m_B^{*1},m_B^{*2},m_B$ for $q_l$ as defined in
Lemma \ref{lem:B_ij expansion}, and set
\[m_B^{**}=\sum_{l=1}^{2L} m_B^{**}(l),
\qquad m_B^*=\sum_{l=1}^{2L} m_B^{*1}(l)+m_B^{*2}(l),
\qquad m_B=\sum_{l=1}^{2L} m_B(l).\]
Each $i_l \in K$ (or $j_l \in K$) is distinct from $\{i_{l'},j_{l'}:l' \neq l\}$,
so Lemma \ref{lem:B_ij expansion}(c) ensures that the
number of vertices $v(i_l),v(j_l) \in U_\cG$ satisfying condition (1.)
is at most $m_B+\frac{1}{2}m_B^*$.

For condition (2.), denote further
$m_P^{*1}(l)$ as the number of factors of $\cP_{i_l}^{(S)}$ appearing in $q_l$
for which its copy of $N^{-1}R^{(S)}$ labels a type 2 matrix vertex.
Define similarly $m_P^{*2}(l)$ as the number of such factors of
$\cP_{j_l}^{(S)}$ in $q_l$, and set
\[E(l)=m_B^{**}(l)+\frac{1}{2}(m_B^{*1}(l)+m_B^{*2}(l))
+m_P^{*1}(l)+m_P^{*2}(l)-1.\]
Consider any $l \in \{1,\ldots,2L\}$.
Suppose first that both $i_l,j_l \in K$, and $v(i_l),v(j_l) \in U_\cG$
and satisfy condition (2.).
Then either (i) $m_B^{**}(l) \geq 3$; (ii) $m_B^{**}(l)=2$ in
which case $m_B^{*1}(l),m_B^{*2}(l) \geq 1$ by Lemma
\ref{lem:B_ij expansion}(b); (iii) $m_B^{**}(l)=1$ in which case
$m_B^{*1}(l)+m_P^{*1}(l) \geq 2$ 
and $m_B^{*2}(l)+m_P^{*2}(l) \geq 2$ since $v(i_l),v(j_l)$ both have at least
three type 2 matrix neighbors; or (iv) $m_B^{**}(l)=0$ in which
case $m_B^{*1}(l)+m_P^{*1}(l) \geq 3$ 
and $m_B^{*2}(l)+m_P^{*2}(l) \geq 3$. In all four cases (i--iv),
we verify that $E(l) \geq 2$.
Now suppose that only $v(i_l) \in U_\cG$ satisfies condition (2.) (so
either $j_l \notin K$ or $v(j_l) \in U_\cG$ does not satisfy condition (2.)).
Then either (i) $m_B^{**}(l) \geq 2$;
(ii) $m_B^{**}(l)=1$ and $m_B^{*1}(l)+m_P^{*1}(l) \geq 2$;
or (iii) $m_B^{**}(l)=0$ and $m_B^{*1}(l)+m_P^{*1}(l) \geq 3$.
In case (iii), we have
also $m_B^{*2}(l) \geq 1$ since $m_B^{*2}(l)$ is odd by Lemma
\ref{lem:B_ij expansion}(b). Thus in all three cases
(i--iii), we verify that $E(l) \geq 1$. Finally, we have $E(l) \geq 0$ for every
$l=1,\ldots,2L$ by Lemma \ref{lem:B_ij expansion}(a). Thus, we conclude that the
number of vertices $v(i_l),v(j_l) \in U_\cG$ satisfying condition (2.) is at
most $\sum_{l=1}^{2L} E(l)$, and so
\[|U_\cG^1| \leq \underbrace{m_B+\frac{1}{2}m_B^*}_{\text{condition (1.)}}
+\underbrace{\sum_{l=1}^{2L} E(l)}_{\text{condition (2.)}}
\leq m_B+m_B^*+m_B^{**}+m_P^*-2L,\]
where we have set also $m_P^*=\sum_{l=1}^{2L} m_P^{*1}(l)+m_P^{*2}(l)$. Here
$m_B+m_B^*+m_B^{**}+m_P^* \leq |V_\cG^{m2}|$, the total number of type 2 matrix
vertices, showing statement (d) of the lemma.

Finally, each remaining vertex $v \in U_\cG^2=U_\cG \setminus U_\cG^1$ satisfies
$\deg(v) \geq 3$ and $m2(v) \leq 2$, implying that $m1(v) \geq 1$. This shows
(e) and completes the proof.
\end{proof}

\begin{proof}[Proof of Lemma \ref{lem:fluctuation_averaging_lowrank2}(b)]
Recall from \eqref{eq:vvBQQval} that, for any fixed constants $L \geq 1$ and
$D>0$,
\begin{equation}\label{eq:vvBQQval2}
\E\left|\sum_{i\neq
j}\bar{\v}(i)\v(j)\cB_{ij}^{(ij)}\cQ_i^{(i)}\cQ_j^{(ij)}\right|^{2L}
\prec \max_{\substack{i_1,\ldots,i_{2L},j_1,\ldots,j_{2L} \in [N]\\
q_1 \in \cM_{i_1j_1,S},\ldots,q_{2L} \in \cM_{i_{2L}j_{2L},S}}}
\Big\{N^{|K|/2}\E|\E_S q|\Big\}+\Oprec(N^{-\tau'(D+1)+2L}).
\end{equation}
By Lemma \ref{lem:tensor network of gram matrix}, there
exists a family $\cN$ of $(S,4L)$-valid tensor networks $(\cG,f_\cG)$
for which
\[|\E_S q| \prec \max_{(\cG,f_\cG) \in \cN} |\mathrm{val}(\cG,f_\cG)|.\]
Similar to the proof of Lemma~\ref{lem:fluctuation_averaging_lowrank2}(a),
we reduce $(\cG, f_\cG)$ in a sequence of
four steps. These steps will successively remove tensor vertices from the
network, introducing right new leaves and also eigen-leaves to
the network.
Letting $U^1_\cG,U_\cG^2 \subseteq V_\cG^t$ be the subsets of tensor
vertices in Lemma~\ref{lem:tensor network of gram matrix}, these steps are:
\begin{enumerate}
    \item While there exists a tensor vertex $v \in U_\cG^2$: Remove $v$ and
apply Lemma~\ref{lem:remove tensor strong}(a) to bound
    \[
        |\mathrm{val}(\cG,f_\cG)|\prec\frac{N^\delta}{\sqrt{N}}\Phi^{r(v)}\max_{f_{\cG'}
\in \cF_{\cG'}}|\mathrm{val}(\cG',f_{\cG'})|.
    \]
    \item While there exists a tensor vertex $v \in U_\cG^1$:
    \begin{enumerate}[label=(\alph*)]
        \item If there exists such a vertex with either $m1(v) \geq 1$,
$r(v) \geq 1$, or both $e(v)=1$ and $r(v)+m1(v)=0$, remove $v$ and
apply either Lemma~\ref{lem:remove tensor strong}(a) or (b) to bound
        \[
            |\mathrm{val}(\cG,f_\cG)|\prec\frac{1}{\sqrt{N}}\Phi^{r(v)-1}
\max_{f_{\cG'} \in \cF_{\cG'}}|\mathrm{val}(\cG',f_{\cG'})|.
        \]
(Lemma \ref{lem:remove tensor strong}(a) would give a stronger bound with
$r(v)$ in place of $r(v)-1$, and we weaken this to $r(v)-1$ above.)
        \item If all vertices $v \in U_\cG^1$ satisfy
$e(v)+r(v)+m1(v)=0$, then since $\deg(v) \geq 3$, each such $v$ is adjacent to
at least three type 2 matrix vertices. Pick one of them, say $u \in V_\cG^{m2}$.
Decomposing the label $N^{-1}R^{(S)}$ of $u$ as
$N^{-1}\sum_\alpha |\lambda_\alpha-z|^{-1}\v_\alpha\v_\alpha^*$,
let $(\cG'',\cF_{\cG''})$ denote the family of networks obtained
by replacing $u$ with two eigen-leaves having a common
label in $\{\v_\alpha:\alpha \in [N]\}$. Pictorially, this
corresponds to the following operation:
\begin{figure}[H]
    \centering
    \resizebox{0.9\textwidth}{!}{
        \begin{tikzpicture}[
    tensor_node/.style={circle, draw=black, very thick, fill=yellow!20, minimum size=1.3cm, font=\small\bfseries},
    matrix_node/.style={circle, draw=green!60!black, thick, fill=green!10, minimum size=1cm, font=\small},
    eigen_node/.style={circle, draw=red!80, thick, fill=red!10, minimum size=1cm, font=\small},
    conn/.style={thick, draw=gray!80},
    dashed_line/.style={thick, dashed, draw=gray!80}
]

    \begin{scope}
        \node[tensor_node] (c1) at (0,0) {$\kappa_k(\g_i)$};

        \node[matrix_node] (t1_top1) at (75:2.5cm) {$\frac{R^{(S)}}{N}$};
        \node[matrix_node] (t1_top2) at (105:2.5cm) {$\frac{R^{(S)}}{N}$};
        \draw[conn] (c1) -- (t1_top1);
        \draw[conn] (c1) -- (t1_top2);
        \node at (90:2.5cm) {$\dots$};
        \draw[dashed_line] (t1_top1) -- +(0, 1);
        \draw[dashed_line] (t1_top2) -- +(0, 1);

        \node[matrix_node] (t1_right) at (0:2.5cm) {$\frac{R^{(S)}}{N}$};
        \draw[conn] (c1) -- (t1_right);
        \draw[dashed_line] (t1_right) -- +(1, 0);

        \node[matrix_node] (t1_bot1) at (-75:2.5cm) {$\frac{R^{(S)}}{N}$};
        \node[matrix_node] (t1_bot2) at (-105:2.5cm) {$\frac{R^{(S)}}{N}$};
        \draw[conn] (c1) -- (t1_bot1);
        \draw[conn] (c1) -- (t1_bot2);
        \node at (-90:2.5cm) {$\dots$};
        \draw[dashed_line] (t1_bot1) -- +(0, -1);
        \draw[dashed_line] (t1_bot2) -- +(0, -1);

        \node[matrix_node] (t1_left1) at (165:2.5cm) {$\frac{R^{(S)}}{N}$};
        \node[matrix_node] (t1_left2) at (195:2.5cm) {$\frac{R^{(S)}}{N}$};
        \draw[conn] (c1) -- (t1_left1);
        \draw[conn] (c1) -- (t1_left2);
        \node at (180:2.5cm) {$\vdots$};
        \draw[dashed_line] (t1_left1) -- +(-1, 0);
        \draw[dashed_line] (t1_left2) -- +(-1, 0);
    \end{scope}

    \draw[-{Latex[scale=2.0]}, line width=2pt] (4, 0) -- (6, 0) 
        node[midway, above=0.2cm, align=center] {$\frac{1}{N} \sum_{\alpha} \frac{1}{|\lambda_\alpha - z|} \times$};

    \begin{scope}[xshift=10cm]
        \node[tensor_node] (c2) at (0,0) {$\kappa_k(\g_i)$};

        \node[matrix_node] (t2_top1) at (75:2.5cm) {$\frac{R^{(S)}}{N}$};
        \node[matrix_node] (t2_top2) at (105:2.5cm) {$\frac{R^{(S)}}{N}$};
        \draw[conn] (c2) -- (t2_top1);
        \draw[conn] (c2) -- (t2_top2);
        \node at (90:2.5cm) {$\dots$};
        \draw[dashed_line] (t2_top1) -- +(0, 1);
        \draw[dashed_line] (t2_top2) -- +(0, 1);

        \node[matrix_node] (t2_bot1) at (-75:2.5cm) {$\frac{R^{(S)}}{N}$};
        \node[matrix_node] (t2_bot2) at (-105:2.5cm) {$\frac{R^{(S)}}{N}$};
        \draw[conn] (c2) -- (t2_bot1);
        \draw[conn] (c2) -- (t2_bot2);
        \node at (-90:2.5cm) {$\dots$};
        \draw[dashed_line] (t2_bot1) -- +(0, -1);
        \draw[dashed_line] (t2_bot2) -- +(0, -1);

        \node[matrix_node] (t2_left1) at (165:2.5cm) {$\frac{R^{(S)}}{N}$};
        \node[matrix_node] (t2_left2) at (195:2.5cm) {$\frac{R^{(S)}}{N}$};
        \draw[conn] (c2) -- (t2_left1);
        \draw[conn] (c2) -- (t2_left2);
        \node at (180:2.5cm) {$\vdots$};
        \draw[dashed_line] (t2_left1) -- +(-1, 0);
        \draw[dashed_line] (t2_left2) -- +(-1, 0);

        
        \node[eigen_node] (eig1) at (-20:2.0cm) {$\v_\alpha$};
        \draw[conn, very thick] (c2) -- (eig1); 

        \node[eigen_node] (eig2) at (20:2.5cm) {$\v_\alpha$};
        \draw[dashed_line] (eig2) -- +(1.5, 0); 
        
    \end{scope}

\end{tikzpicture}
}
\end{figure}
By the given bound $N^{-1}\|R^{(S)}\|_1 \prec 1$, we have
\[|\mathrm{val}(\cG,f_\cG)| \prec \max_{f_{\cG''} \in \cF_{\cG''}}
|\mathrm{val}(\cG'',f_{\cG''})|\]
The vertex $v$ in $(\cG'',f_{\cG''})$ now satisfies $e(v)=1$ and $r(v)+m1(v)=0$,
so we may apply Lemma~\ref{lem:remove tensor strong}(a) to remove $v$ and bound
the resulting value. Letting $(\cG',\cF_{\cG'})$ denote the family of networks
obtained by removing $v$ from some $(\cG'',f_{\cG''})$ in this way, this yields
\[|\mathrm{val}(\cG,f_\cG)|\prec\frac{N^\delta}{\sqrt{N}}\Phi^{r(v)}
\max_{f_{\cG'} \in \cF_{\cG'}}|\mathrm{val}(\cG',f_{\cG'})|.\]
    \end{enumerate}
    \item While there exists a tensor vertex $v \in V_\cG^t$ such that $\deg(v) \geq 3$: Remove $v$, and apply Corollary~\ref{lem:remove tensor weak} to bound
    \[
        |\mathrm{val}(\cG,f_\cG)|\prec\Phi^{r(v)}\max_{f_{\cG'} \in \cF_{\cG'}}
|\mathrm{val}(\cG',f_{\cG'})|.
    \]
    \item At this point there are no type 1 matrix vertices,
and we have $\deg(v)=2$ for all $v \in V_\cG^t$ so
$\cG$ contains only chains and cycles. The endpoints of a chain can be either right (new) leaves or eigen-leaves. Then, applying the bounds
\begin{align*}
\Tr (N^{-1}R^{(S)}\Sigma)^k &\prec
\|N^{-1}R^{(S)}\|_F^k \prec \Phi^k,\\
(N^{-1/2}R^{(S)}\x)^*(N^{-1}R^{(S)})^k (N^{-1/2}R^{(S)}\y)
&\prec \|N^{-1}R^{(S)}\|_F^k
\|N^{-1/2}R^{(S)}\x\|_2
\|N^{-1/2}R^{(S)}\y\|_2
\prec \Phi^{k+2},\\
\v_\alpha^*(N^{-1}R^{(S)})^k (N^{-1/2}R^{(S)}\x)
&\prec \|N^{-1}R^{(S)}\|_F^k
\|N^{-1/2}R^{(S)}\x\|_2
\prec \Phi^{k+1},\\
\v_\alpha^*(N^{-1}R^{(S)})^k \v_\beta
&\prec \|N^{-1}R^{(S)}\|_F^k \prec \Phi^k
\end{align*}
uniformly over $\x,\y \in \cU$ and eigenvectors $\v_\alpha,\v_\beta$ of $R^{(S)}$, we have
\[
|\mathrm{val}(\cG,f_\cG)|\prec\Phi^{|V_{\cG}^{m2}(4)|+|V_{\cG}^r(4)|}.
\]
where $|V_{\cG}^{m2}(4)|,|V_{\cG}^r(4)|$ denote the numbers of type 2 matrix
vertices and right (new) leaves in the network at the start of Step 4.
\end{enumerate}

We clarify that as in Lemma~\ref{lem:fluctuation_averaging_lowrank2}(a),
Steps 1--3 are applied sequentially, where each application of
Step 2 applies Step 2(b) only when there are no vertices $v \in U_\cG^1$
satisfying the conditions of Step 2(a). At the conclusion of Step 1, all
vertices of $U_\cG^2$ are removed from the network.
Each time Step 2(b) is applied, it creates an eigen-leaf in the
resulting network $(\cG',f_{\cG'})$. If this eigen-leaf is adjacent to a
different vertex $v' \in U_\cG^1$, then Step 2(b) cannot be applied again until
$v'$ is removed by a further application of Step 2(a). Thus, Step 2 preserves
the property that every $v \in U_\cG^1$ always has a number of adjacent
eigen-leaves $e(v)$ that is 0 or 1. Then at the conclusion of Step 2, all
vertices of $U_\cG^1$ are removed from the network.

For each Step $k \in \{1,2,3\}$, let $|V_\cG^r(k)|$ denote the total
number of right (new) leaves removed in Step $k$. Denote also 
$|U_\cG^1(2a)|$ and $|U_\cG^1(2b)|$ as the total numbers of vertices in
$U_\cG^1$ that are removed in Step~2(a) and 2(b), respectively. Then the above
procedure gives the bound
\begin{align*}
|\mathrm{val}(\cG,f_\cG)|&\prec
\underbrace{\Big(\frac{N^\delta}{\sqrt{N}}\Big)^{|U_\cG^2|}
\Phi^{|V_\cG^r(1)|}}_{\text{Step 1}}
\times \underbrace{\Big(\frac{N^\delta}{\sqrt{N}}\Big)^{|U_\cG^1|}
\Phi^{|V_\cG^r(2)|-|U_\cG^1(2a)|}}_{\text{Step 2}}
\times \underbrace{\Phi^{|V_\cG^r(3)|}}_{\text{Step 3}}
\times \underbrace{\Phi^{|V_{\cG}^{m2}(4)|+|V_{\cG}^r(4)|}}_{\text{Step 4}}
\end{align*}
Each type 2 matrix vertex of the original network $(\cG,f_\cG)$ is either
turned into a right new leaf in Steps~1--4, turned into an eigen-leaf in
Step~2(b), or remains a type 2 matrix vertex in Step 4. Moreover, each application of Step~2(b) turns exactly one matrix vertex into an eigen-leaf.
Thus
\[\sum_{k=1}^4 |V_\cG^r(k)| + |V_\cG^{m2}(4)| = |V_\cG^{m2}|-|U_\cG^1(2b)|.\]
Combining with $|U_\cG^1(2a)|+|U_\cG^1(2b)|=|U_\cG^1|$ and
$|U_{\cG}^1| + |U_\cG^2| = |K|$ (by Lemma \ref{lem:tensor network of gram
matrix}) gives
\begin{align*}
    |\mathrm{val}(\cG,f_\cG)| &\prec \Big(\frac{N^\delta}{\sqrt{N}}\Big)^{|K|}
\Phi^{|V_\cG^{m2}|-|U_\cG^1|}
\leq N^{-|K|/2}(N^{2\delta}\Phi)^{2L},
\end{align*}
the second inequality applying $|V_\cG^{m2}| \geq 2L + |U_\cG^1|$ from
Lemma~\ref{lem:tensor network of gram matrix} and $|K| \leq 4L$.
Applying this to
\eqref{eq:vvBQQval2} and choosing $D>0$ large enough shows that 
\eqref{eq:vvBQQval2} is $\Oprec((N^{2\delta}\Phi)^{2L})$, so the result follows
again from Markov's inequality.
\end{proof}

\subsubsection{Proof of Lemma \ref{lem:fluctuation_averaging_lowrank2}(c)}

The following lemma is similar to Lemmas
\ref{lem:tensor network of sample covariance} and
\ref{lem:tensor network of gram matrix}.

\begin{lemma}\label{lem:tensor network of off-diagonal block}
Fix any integer $L\geq 1$. For any $i_1,\ldots,i_{2L}\in[N]$, denote
$S=\{i_1,\ldots,i_{2L}\}$ the set of distinct indices. Let $q_1,\ldots,q_{2L}$
be a sequence of monomials satisfying the conditions of
Lemma \ref{lem:x_i q_i expansion}, where each $q_l$ is a monomial from the
expansion of $\cX_{i_l}^{(i_l)}\cQ_{i_l}^{(i_l)}$, and let
$q=\prod_{l=1}^L q_l\prod_{l=L+1}^{2L} \bar q$.
Then there exists a constant $C \equiv C(L)>0$, an exponent $m \in [0,C]$, and a set
$\cN$ of $(S,2L)$-valid tensor networks with $|\cN|\leq C$, such that
\[
    |\E_Sq|=|\cC^{(S)}|^m \cdot
\sum_{(\mathcal{G},f_\cG)\in\cN}|\mathrm{val}(\cG,f_\cG)|
\]
Moreover, for all $(\mathcal{G}, f_\cG)\in\cN$, there exist disjoint subsets of tensor vertices $U_\cG^1,U_\cG^2\subseteq V_\cG^{t}$ such that the following holds:
\begin{enumerate}[label=(\alph*)]
    \item We have $|V_\cG^{ro}|=2L$ and
$|V_\cG^{l}|=|V_\cG^{rn}|=|V_\cG^{e}|=0$. (I.e., there are no left leaves, right
new leaves, or eigen-leaves, and only right original leaves.)
    \item For all $v\in U_\cG^{1}\cup U_\cG^{2}$, $\deg(v)$ is odd
and $\deg(v) \geq 3$.
    \item Let $K \subseteq S$ be the subset of indices appearing exactly once in
$(i_1,\ldots,i_{2L})$. Then $|K|=|U_\cG^{1}|+|U_\cG^{2}|$.
    \item We have $|V_\cG^{ro}|+|V_\cG^{m2}| \geq 2L+|U_\cG^{1}|$.
    \item For all $v\in U_\cG^{2}$, $m1(v) \geq 1$.
\end{enumerate}
\end{lemma}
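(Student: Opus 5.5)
The proof follows the template of Lemmas \ref{lem:tensor network of sample covariance} and \ref{lem:tensor network of gram matrix}. First, for each distinct index $i\in S$, we collect all appearances of $\g_i$ and $(\g_i\g_i^*-\Sigma)$ across the factors $\cX^{(S)}_\cdot,\cB^{(S)}_{\cdot\cdot},\cP^{(S)}_\cdot,\cC^{(S)}$ of $q_1,\ldots,q_{2L}$ into one moment tensor $\E[(\g_i\g_i^*-\Sigma)^{\otimes k}\otimes\g_i^{\otimes m}]$. We expand it by Lemma \ref{lem:cumulant pairing} and use multilinearity. This writes $\E_S q$ as $|\cC^{(S)}|^m$ times a sum of at most $C(L)$ network values. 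The labels are $N^{-1/2}R^{(S)}\u$ or its conjugate, one per factor $\cX$, together with $N^{-1}R^{(S)}$ or its conjugate, and cumulant tensors. Conditions (a)--(b) of Definition \ref{def:tensor network} hold by construction. Condition (c) holds as before: a type 1 matrix vertex can only come from a $\cP_j^{(S)}$ factor, and $\dot P_{k,m}$ forbids pairing its two copies of $\g_j$, so the adjacent cumulant has degree $\ge3$. Since each $q_l$ has exactly one $\cX$ factor by Lemma \ref{lem:x_i q_i expansion}(a), and no factor involves $\u$ on the left, we get $|V_\cG^{ro}|=2L$ and no left leaves, new leaves or eigen-leaves. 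This gives (a).

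Next, for $i_l\in K$, Lemma \ref{lem:x_i q_i expansion}(b) shows that $\g_{i_l}$ appears an odd number of times in $q_l$. Here $m_X^*+m_B^*$ is odd; $\cP$ contributes an even count, and the lower-index occurrences of $i_l$ in $\cX_j,\cB_{jk}$ for $j\ne i_l$ are even. Also, $\g_{i_l}$ appears an even number of times in $\prod_{l'\ne l}q_{l'}$, by the evenness clause of (b) applied to $q_{l'}$, since $i_l\in S^{(i_{l'})}$. Hence some block of the partition for $i_l$ is odd, and because $\E\g=0$ it has size $\ge3$. We pick one such vertex $v(i_l)$ to form $U_\cG$, with $|U_\cG|=|K|$, which gives (b)--(c).

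We then split $U_\cG$. Put $v(i_l)\in U_\cG^1$ if either (1) $i_l$ appears as a lower index in some $q_{l'}$ with $l'\ne l$, or (2) it does not, but $ro(v(i_l))+m2(v(i_l))\ge 3$. The remaining vertices have odd degree $\ge3$ and at most two neighbors that are right leaves or type 2 matrix vertices, so $m1\ge1$, which is (e).

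For (d) we count, with $m_P^*(l)$ the number of factors $\cP_{i_l}^{(S)}$ in $q_l$ whose matrix is type 2, and
\[E(l)=m_X^*(l)+\tfrac12(m_X(l)+m_B^*(l))+m_P^*(l)-1\ge0.\]
Case (1) is bounded by $\tfrac12m_X+m_B+\tfrac12m_B^*$ via Lemma \ref{lem:x_i q_i expansion}(c). For case (2), only factors of $q_l$ can feed $v(i_l)$; $\cX_{i_l}$ supplies one right leaf, and each $\cB_{i_lj}$ or type-2 $\cP_{i_l}$ supplies matrix neighbors. Having at least three such neighbors, together with the parity of $m_X^*+m_B^*$, forces $E(l)\ge1$. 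So case (2) is bounded by $\sum_lE(l)$, and
\[|U_\cG^1|\le m_X^*+m_X+m_B+m_B^*+m_P^*-2L.\]
Each factor counted by $m_B,m_B^*,m_P^*$ contributes a distinct type 2 matrix vertex, and each $\cX$ factor contributes a right original leaf. Therefore $m_X^*+m_X+m_B+m_B^*+m_P^*\le|V_\cG^{ro}|+|V_\cG^{m2}|$, and since $|V_\cG^{ro}|=2L$ this gives (d).

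The main obstacle is the case (2) parity bookkeeping. One must check that each configuration with $ro+m2\ge3$ at $v(i_l)$ and no external occurrence of $i_l$ really yields $E(l)\ge1$. The threshold $3$ rather than $2$ is needed to make (d) compatible with the odd degree. I would enumerate by the value of $m_X^*(l)\in\{0,1\}$, as in the four-case analysis of Lemma \ref{lem:tensor network of gram matrix}.
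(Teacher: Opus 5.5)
Your proposal is correct and matches the paper's proof essentially step for step. It uses the same moment--cumulant expansion, the same choice of an odd-order cumulant vertex $v(i_l)$ for each $i_l\in K$, the same split of $U_\cG$ via conditions (1)--(2) with threshold $ro+m2\ge 3$, and the same count through $E(l)$, yielding $|U_\cG^1|\le m_X^*+m_X+m_B+m_B^*+m_P^*-2L\le |V_\cG^{ro}|+|V_\cG^{m2}|-2L$. One correction to your closing remark: since $\deg(v(i_l))=ro+m2+2\,m1$ is odd, the thresholds $2$ and $3$ give the same split, so $3$ is not ``needed''. The actual benefit of writing the threshold as $3$ is that $E(l)\ge 1$ then follows directly from $m_B^*(l)+m_P^*(l)\ge 2$ (resp.\ $\ge 3$ when $m_X^*(l)=0$), without invoking the parity of $m_X^*+m_B^*$.
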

\begin{proof}
As in Lemmas \ref{lem:tensor network of sample covariance} and Lemma
\ref{lem:tensor network of gram matrix}, we have
\[
|\E_Sq|=|\cC^{(S)}|^m \cdot
\sum_{(\mathcal{G},f_\cG)\in\cN} |\mathrm{val}(\cG,f_\cG)|
\]
for some $m \in [0,C(L)]$ and family $\cN$ of $(S,2L)$-valid tensor networks
where $|\cN| \leq C(L)$. It is clear that $|V_\cG^{l}|=|V_\cG^{rn}|=|V_\cG^{e}|=0$ in
this representation. The claim
$|V_\cG^{ro}|=2L$ follows from the fact that there is exactly 1 factor of the
form $\cX_{i_l}^{(S)}$ or $\{\cX_j^{(S)}\}_{j \in S^{(i_l)}}$ in each $q_l$,
establishing (a).

As in Lemma \ref{lem:tensor network of gram matrix}, for each $i_l \in K$,
$\g_{i_l}$ appears an odd number of times in $q_l$ and an even number of times
in $\prod_{l' \neq l} q_{l'}$, so we may identify a subset $U_\cG=\{v(i_l):i_l
\in K\} \subseteq V_\cG^t$ of tensor vertices representing an odd-order cumulant
of $\g_{i_l}$ for each $i_l \in K$;
thus $|U_\cG|=|K|$, and $\deg(v) \geq 3$ and $\deg(v)$ is
odd for each $v \in U_\cG$.
We partition $U_\cG$ into $U_\cG^1 \cup U_\cG^2$ by letting $U_\cG^1$ be the set
of vertices $v(i_l)$ for which either
\begin{enumerate}
\item $i_l$ appears as a lower index in some $\{q_{l'}:l' \neq l\}$, or
\item $i_l$ does not appear as a lower index in any $\{q_{l'}:l' \neq l\}$, and
$ro(v(i_l))+m2(v(i_l)) \geq 3$.
\end{enumerate}
Let $m_X^*(l),m_X(l),m_B^*(l),m_B(l)$ denote the counts of Lemma
\ref{lem:x_i q_i expansion} for each $q_l$, and let $m_P^*(l)$ denote the
number of factors $\cP_{i_l}^{(S)}$ of $q_l$ which correspond to type 2 matrix
vertices. Set
\[m_X^*=\sum_{l=1}^{2L} m_X^*(l),
\quad m_X=\sum_{l=1}^{2L} m_X(l),
\quad m_B^*=\sum_{l=1}^{2L} m_B^*(l),
\quad m_B=\sum_{l=1}^{2L} m_B(l),
\quad m_P^*=\sum_{l=1}^{2L} m_P(l)\]
and set also
\[E(l)=m_X^*(l)+\frac{1}{2}(m_X(l)+m_B^*(l))+m_P^*(l)-1.\]
By Lemma \ref{lem:x_i q_i expansion}, the number of vertices $v(i_l) \in U_\cG$ 
satisfying (1.) is at most $\frac{1}{2}m_X+m_B+\frac{1}{2}m_B^*$. For each
$v(i_l) \in U_\cG$ satisfying (2.), we have either $m_X^*(l)=1$ in which case
$m_B^*(l)+m_P^*(l) \geq 2$, or $m_X^*(l)=0$ in which case $m_B^*(l)+m_P^*(l)
\geq 3$ and also $m_X(l)=1$ by Lemma \ref{lem:x_i q_i expansion}(a). Thus in
both cases, $E(l) \geq 1$. We have also $E(l) \geq 0$ for each $l=1,\ldots,2L$
by Lemma \ref{lem:x_i q_i expansion}, so the number of vertices $v(i_l) \in
U_\cG$ satisfying (2.) is at most $\sum_{l=1}^{2L} E(l)$, implying
\[|U_\cG^1| \leq \frac{1}{2}m_X+m_B+\frac{1}{2}m_B^*
+\sum_{l=1}^{2L} E(l)
\leq m_X^*+m_X+m_B^*+m_B+m_P^*-2L \leq |V_\cG^{ro}|+|V_\cG^{m2}|-2L.\]
This shows (d). For each remaining $v \in U_\cG^2$, we have $\deg(v) \geq 3$
and $ro(v)+m2(v) \leq 2$, implying $m1(v) \geq 1$, which shows (e).
\end{proof}

\begin{proof}[Proof of Lemma \ref{lem:fluctuation_averaging_lowrank2}(c)]

Recall from \eqref{eq:vXQval} that, for any fixed constants $L \geq 1$ and
$D>0$,
\begin{equation}\label{eq:vXQval2}
\E\left|\sum_{i=1}^N \bar{\v}(i)\cX_i^{(i)}\cQ_i^{(i)}\right|^{2L}
\prec \mathop{\max_{i_1,\ldots,i_{2L} \in [N]}}_{q_1 \in \cM_{i_1,S},
\ldots,q_{2L} \in \cM_{i_{2L},S}}
\Big\{N^{|K|/2}\E|\E_S q|\Big\}+\Oprec(N^{-\tau'(D+1)+L}).
\end{equation}
By Lemma \ref{lem:tensor network of off-diagonal block}, there exists a family
$\cN$ of $(S,2L)$-valid tensor networks for which
\begin{align*}
|\E_S q|& \prec
\max_{(\mathcal{G},f_\cG)\in\cN}|\mathrm{val}(\mathcal{G},f_\cG)|.
\end{align*}
By following the same four steps and arguments as in the proof of Lemma
\ref{lem:fluctuation_averaging_lowrank2}(b), we arrive at the same bound
\begin{align*}
\mathrm{val}(\cG,f_\cG)&\prec
\underbrace{\Big(\frac{N^\delta}{\sqrt{N}}\Big)^{|U_\cG^2|}
\Phi^{|V_\cG^r(1)|}}_{\text{Step 1}}
\times \underbrace{\Big(\frac{N^\delta}{\sqrt{N}}\Big)^{|U_\cG^1|}
\Phi^{|V_\cG^r(2)|-|U_\cG^1(2a)|}}_{\text{Step 2}}
\times \underbrace{\Phi^{|V_\cG^r(3)|}}_{\text{Step 3}}
\times \underbrace{\Phi^{|V_{\cG}^{m2}(4)|+|V_{\cG}^r(4)|}}_{\text{Step 4}},
\end{align*}
where the only difference is that $|V_\cG^r(k)|$ for each Step $k \in
\{1,2,3,4\}$ counts right original leaves in addition to right new leaves.
By the same arguments as in Lemma \ref{lem:fluctuation_averaging_lowrank2}(b), we have
\[\sum_{k=1}^4
|V_\cG^r(k)|+|V_\cG^{m2}(4)|=|V_\cG^{m2}|+|V_\cG^{ro}|-|U_\cG^1(2b)|,\]
$|U_\cG^1(2a)|+|U_\cG^1(2b)|=|U_\cG^1|$, and $|U_\cG^1|+|U_\cG^2|=|K|$, so
\begin{align*}
|\mathrm{val}(\mathcal{G},f_\cG)|&\prec \Big(\frac{N^\delta}{\sqrt{N}}\Big)^{|K|}
\Phi^{|V_\cG^{ro}|+|V_\cG^{m2}|-|U_\cG^1|} \leq N^{-|K|/2}(N^\delta\Phi)^{2L},
\end{align*}
the last inequality using $|V_\cG^{ro}|+|V_\cG^{m2}| \geq 2L+|U_\cG^1|$ and
$K \leq 2L$.
Applying this above and choosing $D>0$ large enough shows that
\eqref{eq:vXQval2} is $\Oprec((N^\delta\Phi)^{2L})$, so the result follows
again by Markov's inequality.
\end{proof}

\section{Proofs of the main results}\label{sec:mainproofs}

We now show Theorems \ref{thm:entrywise law}, \ref{thm:anisotropic law},
and \ref{thm:outside spec}. Many of the arguments are close to those of
\cite{silverstein1995strong,bai1998no,pillai2014universality,knowles2017anisotropic},
which we reproduce here for the reader's convenience.

\subsection{Resolvent bounds}

For each $z=E+i\eta\in \C^+$, let us define the control parameters
\[\Lambda=\max_{i,j=1}^N|\widetilde{R}_{ij}-\widetilde{m}_0\1\{i=j\}|,\qquad
\Theta =|\widetilde{m} -\widetilde{m}_0|,\qquad
\Psi_\Theta=\sqrt{\frac{\Im{\widetilde{m}_0}+\Theta}{N\eta}}.\]
Fixing a constant $C_0>0$, we define the ($z$-dependent) event
\[\Xi=\{\Lambda \leq N^{-\tau/C_0}\}.\]

\begin{lemma}[Global bounds]\label{lem:trivial_bounds}
Suppose Assumptions \ref{assum:basic} and \ref{assum:concentration} hold. Then
for all $i\in[N]$ and
$z=E+i\eta\in \C^+$ satisfying $|z|\leq C$ for a constant $C>0$, we have
\[
    |\widetilde{m}|\leq \eta^{-1},\quad |\widetilde{R}_{ii}|\leq \eta^{-1},
\quad
    |z\widetilde R_{ii}|=|1+N^{-1}\g_i^* R^{(i)}\g_i|^{-1}
\leq C\eta^{-1},\quad N^{-1/2}\|R^{(i)}\|_F\leq \eta^{-1},
\]
\[
\|(I+\widetilde{m}\Sigma)^{-1}\|_\op\leq C\eta^{-1},\quad \|(I+\widetilde{m}^{(i)}\Sigma)^{-1}\|_\op\leq C\eta^{-1}.
\]
Furthermore, uniformly over $i \in [N]$ and
$z=E+i\eta \in \C^+$ with $|z| \leq C$,
\[|\widetilde{m}|^{-1} \prec\eta^{-1},\quad
|\widetilde{m}-\widetilde{m}^{(i)}|\prec N^{-1}\eta^{-3}.\]
\end{lemma}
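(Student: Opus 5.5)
Most of these bounds follow deterministically from spectral decompositions and Schur complements; the remaining ones use the concentration estimates of Lemma~\ref{lem:concentration}.

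\emph{Stieltjes-transform bounds.} Since $\widetilde K$ is positive semidefinite with eigenvalues $\lambda_k$, we have $|\widetilde m|\le \max_k|\lambda_k-z|^{-1}\le\eta^{-1}$. Likewise $|\widetilde R_{ii}|\le\|\widetilde R\|_\op\le\eta^{-1}$.

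\emph{The quantity $z\widetilde R_{ii}$.} By Lemma~\ref{lem:resolventidentities}(a), $z\widetilde R_{ii}=-(1+N^{-1}\g_i^*R^{(i)}\g_i)^{-1}$, so the identity is immediate and the bound follows from $|z|\le C$ and $|\widetilde R_{ii}|\le\eta^{-1}$.

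\emph{Frobenius norm of $R^{(i)}$.} Use $\|R^{(i)}\|_F^2\le n\|R^{(i)}\|_\op^2\le n\eta^{-2}$ together with $n\le CN$. This gives $N^{-1/2}\|R^{(i)}\|_F\le C\eta^{-1}$; the constant can be absorbed, or one can use Ward's identity \eqref{eq:Ward's identity} instead.

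\emph{Bound on $(I+\widetilde m\Sigma)^{-1}$.} For each eigenvalue $\sigma_\alpha\ge0$, we need $|1+\sigma_\alpha\widetilde m|\ge c\eta$. Write $\widetilde m=\int(x-z)^{-1}d\widetilde\mu(x)$, where $\widetilde\mu$ is the empirical spectral measure supported on $[0,\infty)$.
\begin{itemize}
\item Imaginary part: $\Im\widetilde m=\eta\int|x-z|^{-2}d\widetilde\mu$.
\item Real part: $\Re(z\widetilde m)$ involves $\int x(x-E)|x-z|^{-2}\,d\widetilde\mu$, which we want to be nonnegative-ish.
\end{itemize}
The cleaner route is the standard fact that $-1/\widetilde m$ is again a Stieltjes-type function. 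I will instead use the fact that $z\widetilde m$ maps $\C^+$ into $\C^+$ for measures on $[0,\infty)$: $\Im(z\widetilde m)=\eta\int x|x-z|^{-2}d\widetilde\mu\ge0$. Then
\[|1+\sigma\widetilde m|\ge\frac{\Im(z(1+\sigma\widetilde m))}{|z|}=\frac{\eta+\sigma\Im(z\widetilde m)}{|z|}\ge\frac{\eta}{C}.\]
This gives $\|(I+\widetilde m\Sigma)^{-1}\|_\op\le C\eta^{-1}$. The identical argument applies to $\widetilde m^{(i)}$, the Stieltjes transform of $\widetilde K^{(i)}$, which is also positive semidefinite. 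I regard this as the only genuinely non-obvious step, and this trick handles it.

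\emph{Lower bound on $|\widetilde m|$.} We have $|\widetilde m|\ge\Im\widetilde m=\eta N^{-1}\sum_k|\lambda_k-z|^{-2}$. By Lemma~\ref{lem:concentration}(b), $\|\widetilde K\|_\op=\|K\|_\op\prec1$, so $|\lambda_k-z|\le\lambda_k+|z|\prec1$. Hence $|\widetilde m|\succ\eta$, that is, $|\widetilde m|^{-1}\prec\eta^{-1}$.

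\emph{Difference $\widetilde m-\widetilde m^{(i)}$.} The matrices $\widetilde K$ and $\widetilde K^{(i)}$ differ only in row and column $i$. Equivalently, via $\widetilde m=\gamma m-(1-\gamma)/z$, we get
\[\widetilde m-\widetilde m^{(i)}=N^{-1}\Tr(R-R^{(i)})=-N^{-1}\cdot\frac{N^{-1}\g_i^*(R^{(i)})^2\g_i}{1+N^{-1}\g_i^*R^{(i)}\g_i}\]
by Sherman–Morrison \eqref{eq:Sherman-Morrison}. We bound the three pieces:
\begin{itemize}
\item The denominator's inverse equals $-z\widetilde R_{ii}$, which is $O(\eta^{-1})$.
\item For the numerator, $|N^{-1}\g_i^*(R^{(i)})^2\g_i|\le N^{-1}\|\g_i\|_2^2\eta^{-2}$.
\item By Lemma~\ref{lem:concentration}, $N^{-1}\|\g_i\|_2^2\prec1$.
\end{itemize}
Multiplying these gives $|\widetilde m-\widetilde m^{(i)}|\prec N^{-1}\eta^{-3}$, uniformly in $i$ and $z$. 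Uniformity over $z$ needs only that the stochastic inputs ($\|K\|_\op$ and $\|\g_i\|_2^2$) do not depend on $z$.
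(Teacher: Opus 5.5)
Your proposal is correct and follows the paper's proof essentially line by line, including the key observation $\Im(z\widetilde m)\ge 0$ for the bound on $(I+\widetilde m\Sigma)^{-1}$ and the use of $\|K\|_\op\prec 1$ for $|\widetilde m|^{-1}$. The only deviation is in the last step: you bound $N^{-1}\g_i^*(R^{(i)})^2\g_i$ crudely by $N^{-1}\|\g_i\|_2^2\eta^{-2}$, whereas the paper uses quadratic-form concentration to get $\prec N^{-1}\|R^{(i)}\|_F^2$. Both give $N^{-1}\eta^{-3}$ here. As you note, the Frobenius bound really carries a factor $\sqrt{n/N}$, and Ward's identity does not remove it.
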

\begin{proof}
The first four bounds follow from
$\|\widetilde{R}\|_\op,\|\widetilde{R}^{(i)}\|_\op\leq\eta^{-1}$.
For $\|(I+\widetilde{m}\Sigma)^{-1}\|_\op$, letting
$\lambda_i$ denote the $i$-th eigenvalue of $\widetilde K$, note that
\[\Im{[z\widetilde{m}(z)]}=\frac{1}{N}\sum_{i=1}^N\frac{\eta\lambda_i}{|\lambda_i-z|^2}\geq
0.\]
Then for any $\alpha\in[n]$,
    $|z+z\sigma_\alpha\widetilde{m}(z)|\geq|\eta+\sigma_\alpha\Im{[z\widetilde{m}(z)]}|\geq\eta$,
so $\|(I+\widetilde{m}\Sigma)^{-1}\|_\op \leq |z|\eta^{-1} \leq C\eta^{-1}$,
and similarly
$\|(I+\widetilde{m}^{(i)}\Sigma)^{-1}\|_\op\leq C\eta^{-1}$.
For the bound on $|\widetilde{m}|^{-1}$,
\[
    |\widetilde{m}|\geq\Im{\widetilde{m}}=\frac{1}{N}\sum_{i=1}^N\frac{\eta}{(\lambda_i-E)^2+\eta^2}\geq\frac{\eta}{\max_i(\lambda_i-E)^2+\eta^2}.
\]
Applying $\|\widetilde K\|_\op=\|K\|_\op \prec 1$ from Lemma
\ref{lem:concentration}(b) and $|z|\leq C$ yields $\max_\alpha(\lambda_\alpha-E)^2+\eta^2\prec
1$, so $|\widetilde{m}|^{-1} \prec \eta^{-1}$.
For $|\widetilde{m}-\widetilde{m}^{(i)}|$, apply \eqref{eq:Sherman-Morrison} to obtain
\[
    |\widetilde{m}-\widetilde{m}^{(i)}|=\gamma|m-m^{(i)}|=|z\widetilde{R}_{ii}N^{-2}\g_i^*(R^{(i)})^2\g_i|\prec\eta^{-1}(N^{-1}\|R^{(i)}\|_F)^2\leq N^{-1}\eta^{-3},
\]
where we applied Assumptions \ref{assum:basic} and \ref{assum:concentration}
to show $\g_i^*(R^{(i)})^2\g_i
\prec |\Tr (R^{(i)})^2\Sigma|+\|(R^{(i)})^2\|_F \prec \|R^{(i)}\|_F^2$.
\end{proof}

\begin{lemma}[Local bounds]\label{lem:prec bounds}
Suppose Assumptions \ref{assum:basic} and \ref{assum:concentration} hold, and
$\bD=\{z \in \C^+:E \in U,\,\eta \in [N^{-1+\tau},1]$
is a regular spectral domain.
Then for any fixed $L \geq 0$, uniformly over all $z\in \bD$, $S\subseteq[N]$
with $|S|\leq L$, and $i,j \notin S$ with $i \neq j$,
\[
    |\widetilde{R}^{(S)}_{ii}|\1_\Xi\prec 1,\quad
|\widetilde{R}^{(S)}_{ii}|^{-1} \1_\Xi\prec 1,\quad
|z\widetilde{R}^{(S)}_{ii}|\1_\Xi=|1+N^{-1}\g_i^* R^{(iS)}\g_i|^{-1}
\1_\Xi\prec 1,
\quad |\widetilde{R}_{ij}^{(S)}|\1_\Xi\prec\Psi_\Theta,\]
\[|\widetilde{m}^{(S)}|\1_\Xi\prec 1,
\quad |\widetilde{m}^{(S)}|^{-1}\1_\Xi\prec 1,
\quad \|(I+\widetilde{m}^{(S)}\Sigma)^{-1}\|_\op\1_\Xi \prec 1.
\]
\[N^{-1}\|R^{(S)}\|_F\1_\Xi\prec\Psi_\Theta,
\quad N^{-1} \|R^{(S)}\|_1\1_\Xi \prec 1.\]
Moreover, uniformly also over deterministic unit vectors $\x,\y\in\C^n$,
\[|\widetilde{m}-\widetilde{m}^{(S)}|\1_\Xi\prec\Psi_\Theta^2,\quad
|\widetilde{R}_{ii}^{(S)}-\widetilde{m}^{(S)}|\1_\Xi\prec\Psi_\Theta,\]
\[|\x^*R\y-\x^*R^{(S)}\y|\1_\Xi \prec\frac{\|R\x\|_2}{\sqrt{N}} \cdot
\frac{\|R\y\|_2}{\sqrt{N}},\quad
\|R^{(S)}\x\|_2\1_\Xi \prec \|R\x\|_2.\]
\end{lemma}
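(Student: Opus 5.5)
The plan is an induction on $|S|$, working throughout on $\Xi$ and deriving each bound from the previous ones in a fixed order, so that nothing is used before it is proved. The starting point is that on $\Xi$ we have $|\widetilde R_{ii}-\widetilde m_0|\le N^{-\tau/C_0}$ and $|\widetilde R_{ij}|\le N^{-\tau/C_0}$. Together with $c\le|\widetilde m_0|\le C$ and $|z|\asymp1$ from Definition \ref{def:regular}(a), this gives $|\widetilde R_{ii}|\1_\Xi\asymp1$ and $|z\widetilde R_{ii}|^{\pm1}\1_\Xi\prec1$, which is the case $S=\emptyset$. To pass from $S$ to $kS$ I will use the second identity of \eqref{eq:resolvent identity},
\[1/\widetilde R^{(kS)}_{ii}=1/\widetilde R^{(S)}_{ii}+O\bigl(|\widetilde R^{(S)}_{ik}|^2/|\widetilde R^{(S)}_{kk}\widetilde R^{(kS)}_{ii}|\bigr),\]
together with the first identity of \eqref{eq:resolvent identity} for off-diagonal entries. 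Since the off-diagonal entries are $O(N^{-\tau/C_0})$ at each stage, the diagonal entries of all minors with $|S|\le L$ stay within $o(1)$ of $\widetilde m_0$, and their off-diagonal entries stay $O(N^{-\tau/C_0})$. Averaging the diagonal bound gives $|\widetilde m^{(S)}|^{\pm1}\1_\Xi\prec1$. Writing $(1+\widetilde m^{(S)}\sigma_\alpha)=(1+\widetilde m_0\sigma_\alpha)+o(1)\sigma_\alpha$ and using $\min_\alpha|1+\sigma_\alpha\widetilde m_0|\ge c$ gives the bound on $\|(I+\widetilde m^{(S)}\Sigma)^{-1}\|_\op$.

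Next come the $\Psi_\Theta$-scale bounds, in the following order.
\begin{enumerate}[label=(\roman*)]
\item For $\widetilde m-\widetilde m^{(S)}$, I average the first identity of \eqref{eq:resolvent identity} over the diagonal index $i$ (using the convention $\widetilde R^{(k)}_{kk}=-1/z$). Ward's identity applied to the Gram resolvent, $\sum_j|\widetilde R^{(S)}_{kj}|^2=\Im \widetilde R^{(S)}_{kk}/\eta$, then yields
\[|\widetilde m^{(S)}-\widetilde m^{(kS)}|\prec \frac{\Im\widetilde R^{(S)}_{kk}+\eta}{N\eta}+\frac1N.\]
\item Using $\Im\widetilde R^{(S)}_{kk}\le\Im\widetilde m_0+O(\Lambda)$ is too crude. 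Instead I write $\Im\widetilde R_{kk}=\Im\widetilde m+(\Im\widetilde R_{kk}-\Im\widetilde m)$ and treat the bracket as part of the quantity being bounded, closing the estimate with $\Im\widetilde m\le\Im\widetilde m_0+\Theta$. Together with $\eta\lesssim\Im\widetilde m_0$ (regularity), this gives $\Psi_\Theta^2$.
\item For the Frobenius norm, Ward's identity \eqref{eq:Ward's identity} and the relation $\gamma m=\widetilde m+(1-\gamma)/z$ give
\[N^{-2}\|R^{(S)}\|_F^2=\frac{\Im\widetilde m^{(S)}+O(\eta)}{N\eta}\prec\Psi_\Theta^2.\]
\item For off-diagonal entries, the second identity of \eqref{eq:schur complement} and Lemma \ref{lem:concentration}(a) give $|\widetilde R^{(S)}_{ij}|\prec N^{-1}\|R^{(ijS)}\|_F\prec\Psi_\Theta$. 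Using $|\widetilde R_{kk}-\widetilde m|\le N^{-1}\sum_i|\widetilde R_{kk}-\widetilde R_{ii}|$ with the Schur formula $1/\widetilde R_{ii}=-z(1+N^{-1}\g_i^*R^{(i)}\g_i)$ and Assumption \ref{assum:concentration}, I also get $|\widetilde R_{ii}^{(S)}-\widetilde m^{(S)}|\prec\Psi_\Theta$, since the fluctuation $N^{-1}(\g_i^*R^{(i)}\g_i-\Tr\Sigma R^{(i)})\prec N^{-1}\|R^{(i)}\|_F$.
\end{enumerate}

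For the anisotropic statements I take $S=\{i\}$ and iterate. By Sherman--Morrison \eqref{eq:Sherman-Morrison},
\[R^{(i)}=R+\frac{N^{-1}R\g_i\g_i^*R}{1-N^{-1}\g_i^*R\g_i}, \qquad 1-N^{-1}\g_i^*R\g_i=(1+N^{-1}\g_i^*R^{(i)}\g_i)^{-1}=-z\widetilde R_{ii}.\]
Hence $\x^*(R^{(i)}-R)\y=-(z\widetilde R_{ii})^{-1}N^{-1}(\x^*R\g_i)(\g_i^*R\y)$, where the prefactor is $\prec1$ on $\Xi$. Writing $R\g_i=-z\widetilde R_{ii}R^{(i)}\g_i$ and using independence of $\g_i$ and $R^{(i)}$ with Lemma \ref{lem:concentration}(a) gives $|\x^*R\g_i|\prec\|R^{(i)}\x\|_2$. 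This yields the first anisotropic claim with $\|R^{(i)}\x\|$ in place of $\|R\x\|$. To close, I apply the same Sherman--Morrison identity to the vector $R^{(i)}\x$: this shows $\|R^{(i)}\x\|_2\le\|R\x\|_2+\|R\g_i\|_2\,|\g_i^*R\x|/N\cdot O_\prec(1)$. Here $\|R\g_i\|_2/\sqrt N\prec N^{-1/2}\|R^{(i)}\|_F\prec\sqrt N\Psi_\Theta$, and $|\g_i^*R\x|/\sqrt N\prec\|R^{(i)}\x\|_2/\sqrt N$, so the correction is $O_\prec(\Psi_\Theta)\|R^{(i)}\x\|_2$ and can be absorbed since $\Psi_\Theta\ll1$. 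Induction on $|S|$ then finishes both statements.

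The step I expect to be the main obstacle is $N^{-1}\|R^{(S)}\|_1\1_\Xi\prec1$. Dyadically decompose in $|\lambda_\alpha-E|$; it suffices to bound the number of eigenvalues in each window of width $w=2^k\eta$ by $O_\prec(Nw)$. The event $\Xi$ at $z$ only controls $\Im\widetilde m(E+i\eta)$, which bounds the count at scale $w\asymp\eta$. At larger scales, the naive monotonicity $\Im\widetilde m(E+iw)\le(w/\eta)\Im\widetilde m(E+i\eta)$ loses a factor that blows up the dyadic sum. My first attempt is to use that every $E+iw$ with $w\in[\eta,1]$ also lies in $\bD$, where the lemma will be applied with $\Xi$ holding on the lattice $L(z)$ (as in the stochastic continuity argument). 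There $\Im\widetilde m(E+iw)\prec1$, giving counts $\prec Nw$ in each window, and summing $\sum_k N^{-1}\cdot Nw_k/w_k\prec\log N$ gives the claim. If the lemma must hold with $\Xi$ at the single point $z$, I would instead derive $\Im\widetilde m(E+iw)\prec1$ from the trace identities above at scale $w$. Combined with the bound $\|K\|_\op\prec1$ from Lemma \ref{lem:concentration}(b) and Lemma \ref{lem:trivial_bounds}, that is where I anticipate the real work.
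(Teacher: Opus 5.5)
\textbf{Gap in step (ii).} The gap is the bound $|\widetilde m-\widetilde m^{(S)}|\1_\Xi\prec\Psi_\Theta^2$, and your (iii) and (iv) depend on it. Your identity in (i) is correct: $\widetilde m^{(S)}-\widetilde m^{(kS)}=N^{-1}\bigl[(\widetilde R^{(S)})^2_{kk}/\widetilde R^{(S)}_{kk}+1/z\bigr]$. The trouble starts after you bound $|(\widetilde R^{(S)})^2_{kk}|\le\Im\widetilde R^{(S)}_{kk}/\eta$, because the numerator is then a \emph{single} diagonal entry. The bracket $\Im\widetilde R^{(S)}_{kk}-\Im\widetilde m$ is not controlled by the quantity $\widetilde m-\widetilde m^{(kS)}$ you are bounding, so there is nothing to absorb it into.

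The only available control on the bracket is $|\widetilde R^{(S)}_{kk}-\widetilde m^{(S)}|\prec\Psi_\Theta$ from (iv). Using it is circular, since (iv) needs (iii), which needs (ii). Even granting it, you pick up an extra term $\Psi_\Theta/(N\eta)$, which is not $\prec\Psi_\Theta^2$ when $N\eta\Psi_\Theta\ll1$. Concretely, in $\bD^o(\delta,\tau)$ with $\eta\ll N^{-1/2}$:
\begin{itemize}
\item $\Im\widetilde m_0\asymp\eta$, so $\Psi_\Theta\gtrsim N^{-1/2}$ and $\Psi_\Theta/(N\eta)\gg N^{-1}$;
\item $\Psi_\Theta^2$ is of order $N^{-1}$ when $\Theta\lesssim\eta$.
\end{itemize}
The crude bound $\Im\widetilde R^{(S)}_{kk}\lesssim1$ does no better. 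It gives $|\widetilde m-\widetilde m^{(S)}|\prec(N\eta)^{-1}$, which is not $\lesssim\Im\widetilde m_0+\Theta$ in that regime, so (iii) cannot be derived either.

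\textbf{The missing idea.} Compute the same difference on the $n\times n$ side, where only averaged quantities appear. By Sherman--Morrison, $\widetilde m-\widetilde m^{(k)}=N^{-1}(\Tr R-\Tr R^{(k)})=z\widetilde R_{kk}\,N^{-2}\g_k^*(R^{(k)})^2\g_k$. Assumption~\ref{assum:concentration} bounds this by $\Oprec(N^{-2}\|R^{(k)}\|_F^2)$, because $\|(R^{(k)})^2\|_F\le\|R^{(k)}\|_F^2$: the fluctuation is no larger than the mean. Ward's identity turns $N^{-2}\|R^{(k)}\|_F^2$ into $\bigl(\Im\widetilde m^{(k)}+O(\eta)\bigr)/(N\eta)$. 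Hence $|\widetilde m-\widetilde m^{(k)}|\prec\Psi_\Theta^2+|\widetilde m-\widetilde m^{(k)}|/(N\eta)$, and the last term is absorbed since $N\eta\ge N^\tau$. The induction on $|S|$ is identical. This gives (ii) and (iii) together and breaks the circularity. Within your Gram framework you could instead bound $\Im\widetilde R^{(S)}_{kk}$ by an averaged quantity such as $\eta+\Im m^{(kS)}$, but that needs the same observation $\|R^2\|_F\le\|R\|_F^2$.

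\textbf{The rest of the plan is sound.}
\begin{itemize}
\item The $O(1)$ bounds, obtained by iterating the resolvent identities.
\item The off-diagonal and $\widetilde R_{ii}-\widetilde m$ bounds, once (iii) is in hand.
\item The anisotropic argument. You absorb through $\|R\g_i\|_2\prec\|R^{(i)}\|_F$ rather than through Ward's identity as the paper does; both work.
\item For $N^{-1}\|R^{(S)}\|_1$, your dyadic argument is the paper's.
\end{itemize}
On the last point, you are right that the argument needs $\Im\widetilde m^{(S)}(E+i\eta_l)\prec1$ at the larger scales, and that $\Xi$ at the single point $z$ does not supply this. The paper asserts it without comment. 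It only uses the bound after Theorem~\ref{thm:entrywise law}, when the law holds on all of $\bD$, so your first resolution is the right one. Your fallback, deriving it from the identities at scale $w$ alone, would require a local law at that scale.
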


\begin{proof}
The bounds
\[|\widetilde{R}_{ii}^{(S)}|\1_\Xi\prec 1,\quad
|\widetilde{R}_{ii}^{(S)}|^{-1}\1_\Xi\prec 1,\quad
|z\widetilde{R}_{ii}^{(S)}|\1_\Xi=|1+N^{-1}\g_i^* R^{(iS)}\g_i|^{-1}\1_\Xi\prec 1\]
follow from $|z|,|\widetilde m_0(z)| \asymp 1$ in
Definition \ref{def:regular} for regularity of $\bD$,
the definition of $\Xi$, and iterative application of the resolvent identity
\eqref{eq:schur complement}.

We next prove $|\widetilde{m}-\widetilde{m}^{(S)}|\1_\Xi\prec\Psi_\Theta^2$ by induction on $|S|$. For $|S|=1$, say $S=\{i\}$, we have
\[
    |\widetilde{m}-\widetilde{m}^{(i)}|=\gamma|m-m^{(i)}|=|N^{-1}\Tr R-N^{-1}\Tr R^{(i)}|.
\]
Applying the Sherman-Morrison identity \eqref{eq:Sherman-Morrison} and
Assumption \ref{assum:concentration} yields
\[
    |\widetilde{m}-\widetilde{m}^{(i)}|\1_\Xi=|z\widetilde{R}_{ii}| |N^{-2}\g_i^*R^{(i)2}\g_i|\1_\Xi\prec N^{-2}\|R^{(i)}\|_F^2\1_\Xi\prec\Psi_\Theta^2+(N\eta)^{-1}|\widetilde{m}-\widetilde{m}^{(i)}|\1_\Xi,
\]
where the last bound uses Ward's identity \eqref{eq:Ward's identity},
\begin{align*}
    N^{-2}\|R^{(i)}\|_F^2&=\frac{\gamma\Im{m^{(i)}}}{N\eta}=\frac{\Im{\widetilde{m}^{(i)}}}{N\eta}+\frac{(1-\gamma)\eta|z|^{-2}}{N\eta}\\
    &\prec\frac{\Im{\widetilde{m}_0}+\Theta}{N\eta}+\frac{|\widetilde{m}-\widetilde{m}^{(i)}|}{N\eta}+\frac{1}{N}\prec\Psi_\Theta^2+\frac{|\widetilde{m}-\widetilde{m}^{(i)}|}{N\eta}.
\end{align*}
Since $(N\eta)^{-1}\leq N^{-\tau}\leq 1/2$ for large $N$, rearranging proves the
case $|S|=1$. Now
assume the claim holds for $|S|=s$. For $i\notin S$ with $|S|=s$, let $S=\{i_1,\dots,i_s\}$ and $S_k=\{i_1,\dots,i_k\}$. Iteratively applying \eqref{eq:Sherman-Morrison} gives
\begin{align*}
    |\widetilde{m}-\widetilde{m}^{(iS)}|\1_\Xi&=\left|z\widetilde{R}^{(S)}_{ii}N^{-2}\g_{i}^*(R^{(iS)})^2\g_{i}+\sum_{k=1}^sz\widetilde{R}^{(S_{k-1})}_{i_ki_k}N^{-2}\g_{i_k}^*(R^{(S_k)})^2\g_{i_k}\right|\1_\Xi\\
    &\prec \Psi_\Theta^2+(N\eta)^{-1}|\widetilde{m}-\widetilde{m}^{(iS)}|\1_\Xi+(N\eta)^{-1}\sum_{k=1}^s|\widetilde{m}-\widetilde{m}^{(S_k)}|\1_\Xi\\
    &\prec\Psi_\Theta^2+(N\eta)^{-1}|\widetilde{m}-\widetilde{m}^{(iS)}|\1_\Xi
\end{align*}
where the last line applies the inductive hypothesis.
Rearranging completes the induction.

Since $|\widetilde m-\widetilde m_0|=\Theta \leq \Lambda$,
the bounds $|\widetilde{m}^{(S)}|\1_\Xi \prec 1$,
$|\widetilde{m}^{(S)}|^{-1}\1_\Xi \prec 1$, and
$\|(I+\widetilde{m}^{(S)}\Sigma)^{-1}\|_\op\1_\Xi \prec 1$ then follow from
the definition of $\Xi$ and the corresponding statements for $\widetilde m_0$
in Definition \ref{def:regular} for the regularity of $\bD$.
The bound $N^{-1}\|R^{(S)}\|_F\1_\Xi\prec\Psi_\Theta$ follows from 
Ward's identity as applied above, and
$|\widetilde{R}_{ij}^{(S)}|\1_\Xi\prec\Psi_\Theta$ follows from
the resolvent identity \eqref{eq:resolvent identity} and Assumption
\ref{assum:concentration}.

For $N^{-1} \|R^{(S)}\|_1$, corresponding to $z=E+i\eta \in \bD$, define
\[
    L=\max \{ l \in \N : \eta 2^{l-1} < 1 \}, \quad \eta_l =
2^l \eta \text{ for } l = 0, \dots, L-1, \quad \eta_L = 1.
\]
Then $L \leq C \log N$ and $\eta_l / \eta_{l-1} \leq 2$. Let
$\{\lambda_\alpha\}_{\alpha=1}^n$ be the eigenvalues of $K^{(S)}$. Define $\eta_{-1}=0$, $\eta_{L+1}=\infty$, and
\[
    U_l=\{\alpha : \eta_{l-1} \leq |\lambda_\alpha - E| < \eta_l\}, \quad l = 0, \dots, L+1.
\]
Then
\[
    N^{-1} \|R^{(S)}\|_1 = \frac{1}{N} \sum_{\alpha=1}^n \frac{1}{|\lambda_\alpha - z|} = \sum_{l=0}^{L+1} \frac{1}{N} \sum_{\alpha \in U_l} \frac{1}{|\lambda_\alpha - z|}.
\]
For $l = 1, \dots, L$, since $\eta_{l-1} \leq |\lambda_\alpha - E| < \eta_l$ for
$\alpha \in U_l$ and $|\lambda_\alpha - z| \geq |\lambda_\alpha-E|$, we have
\begin{align*}
    \frac{1}{n}\sum_{\alpha \in U_l} \frac{1}{|\lambda_\alpha - z|} &\leq
\frac{1}{n}\sum_{\alpha \in U_l} \frac{\eta_l}{(\lambda_\alpha - E)^2} \leq
\frac{2}{n} \sum_{\alpha \in U_l} \frac{\eta_l}{(\lambda_\alpha - E)^2 + \eta_{l-1}^2}
    \leq \frac{2 \eta_l}{\eta_{l-1}} \Im m(E + i \eta_{l-1}).
\end{align*}
Since the map $\eta \mapsto \eta \Im m(E + i \eta)$ is nondecreasing,
$\Im m(E + i \eta_{l-1}) \leq \frac{\eta_l}{\eta_{l-1}} \Im m(E + i \eta_l)$.
Then, applying $\eta_l/\eta_{l-1} \leq 2$,
\[
    \frac{1}{n}\sum_{\alpha \in U_l} \frac{1}{|\lambda_\alpha - z|} \leq 8\Im m(E + i \eta_l).
\]
For $l = 0$, since $0 \leq |\lambda_\alpha - E| < \eta_0 = \eta$ for $\alpha \in
U_0$, we have similarly
\[
    \frac{1}{n}\sum_{\alpha \in U_0} \frac{1}{|\lambda_\alpha - z|} \leq
\frac{2}{n}\sum_{\alpha \in U_0} \frac{\eta}{(\lambda_\alpha - E)^2 + \eta^2}
\leq 2 \Im m(E + i \eta) \leq 4 \Im m(E + i \eta_1).
\]
For $l=L+1$, since $\eta_L=1$, $|\lambda_\alpha-E| \geq 1$ for $\alpha \in
U_{L+1}$, and $\max_\alpha \lambda_\alpha=\|K\|_\op \prec 1$
by Lemma \ref{lem:concentration}(b), we have
\[\frac{1}{n}\sum_{\alpha \in U_{L+1}} \frac{1}{|\lambda_\alpha - z|}
\leq \frac{1}{n}\sum_{\alpha \in U_{L+1}} \frac{|\lambda_\alpha -
E|+\eta}{(\lambda_\alpha - E)^2 + \eta^2} \prec \frac{1}{n}\sum_{\alpha \in
U_{L+1}}\frac{1}{(\lambda_\alpha - E)^2 + 1} \leq \Im m(E+i \eta_L).\]
Thus
\begin{equation}\label{eq:RL1bound}
    N^{-1} \|R^{(S)}\|_1 \prec \sum_{l=1}^L\Im m^{(S)}(E + i \eta_l).
\end{equation}
Since $\gamma m^{(S)} = \widetilde{m}^{(S)} + (1-\gamma) z^{-1}$ and $\gamma
\asymp 1$, this implies
\[N^{-1} \|R^{(S)}\|_1 \1_\Xi \prec \left(\sum_{l=1}^L \Im
\widetilde{m}^{(S)}(E + i \eta_l) + \frac{\eta_l}{|E+i\eta_l|^2}\right)\1_\Xi
\prec 1\]
where the last bound uses $\Im \widetilde{m}^{(S)}(E + i \eta_l)\1_\Xi \prec 1$,
$\eta_l/|E+i\eta_l|^2 \prec 1$ since $|E| \geq \delta$ by Definition
\ref{def:regular} for regularity of $\bD$, and
$L \leq C \log N \prec 1$.

For $|\widetilde{R}_{ii}^{(S)}-\widetilde{m}^{(S)}|$, applying the resolvent
identity \eqref{eq:schur complement}, we have on $\Xi$ that
\begin{align*}
    \frac{1}{\widetilde{R}_{ii}^{(S)}}&=-z-zN^{-1}\g_i^*R^{(iS)}\g_i\\
&=-z-zN^{-1}\Tr\Sigma R^{(iS)}+\Oprec(N^{-1}\|R^{(iS)}\|_F)\\
    &=-z-zN^{-1}\Tr\Sigma
R-z^2\widetilde{R}_{ii}^{(S)}N^{-2}\g_i^*R^{(iS)}\Sigma
R^{(iS)}\g_i+\Oprec(N^{-1}\|R^{(iS)}\|_F)\\
    &=-z-zN^{-1}\Tr\Sigma R-z^2\widetilde{R}_{ii}^{(S)}N^{-2}\Tr\Sigma
R^{(i)}\Sigma
R^{(iS)}+\Oprec(N^{-2}\|R^{(iS)}\|_F^2)+\Oprec(N^{-1}\|R^{(iS)}\|_F)\\
    &=-z-zN^{-1}\Tr\Sigma
R+\Oprec(N^{-2}\|R^{(iS)}\|_F^2)+\Oprec(N^{-1}\|R^{(iS)}\|_F).
\end{align*}
Thus, for $i\neq j$,
\[
    \left|\frac{1}{\widetilde{R}_{ii}^{(S)}}-\frac{1}{\widetilde{R}_{jj}^{(S)}}\right|\1_\Xi\prec\Psi_{\Theta}^2+\Psi_\Theta\prec\Psi_\Theta,
\]
and so
\[
    |\widetilde{R}_{ii}^{(S)}-\widetilde{m}^{(S)}|\1_\Xi\leq\frac{1}{N}\sum_{j=1}^N|\widetilde{R}_{ii}^{(S)}-\widetilde{R}_{jj}^{(S)}|\1_\Xi\leq\frac{1}{N}\sum_{j=1}^N|\widetilde{R}_{ii}^{(S)}\widetilde{R}_{jj}^{(S)}|\left|\frac{1}{\widetilde{R}_{ii}^{(S)}}-\frac{1}{\widetilde{R}_{jj}^{(S)}}\right|\1_\Xi\prec\Psi_\Theta.
\]

Finally, for $|\x^* R\y-\x^*R^{(S)}\y|$, the bound
$|\g_i^*R^{(S)}\x|\prec\|R^{(S)}\x\|_2$ in Lemma \ref{lem:concentration}(a)
and iterative application of the Sherman-Morrison identity
(\ref{eq:Sherman-Morrison}) yield
$|\x^* R\y-\x^*R^{(S)}\y|\1_\Xi \prec \|R\x\|_2\|R\y\|_2/N$.
The last claim for $\|R^{(S)}\x\|_2$ then follows from
\[
    \|R^{(S)}\x\|_2^2\1_\Xi=\frac{\Im{\x^*R^{(S)}\x}}{\eta}\1_\Xi
\prec\frac{\Im{\x^*R\x}}{\eta}+\frac{|\x^*R\x-\x^*R^{(S)}\x|}{\eta}\1_\Xi\prec \|R\x\|_2^2+\frac{\|R\x\|_2^2}{N\eta}\prec \|R\x\|_2^2.
\]
\end{proof}

\subsection{Proof of entrywise law (Theorem \ref{thm:entrywise law})}

We now begin analysis of the Marchenko-Pastur fixed point equation.

\begin{lemma}\label{lem:Approx_fixed_point}
For any $A\in\C^{n\times n}$ and $z\in\C^+$, we have
\begin{equation}\label{eq:deterministic equivalent}
    \Tr RA - \Tr(-zI-z\widetilde{m}\Sigma)^{-1}A=\frac{1}{z}\sum_{i=1}^N\frac{d_i(A)}{1+N^{-1}\g_i^* R^{(i)}\g_i},
\end{equation}
where
\[
    d_i(A) = N^{-1}\g_i^* R^{(i)}A(I+\widetilde{m}\Sigma)^{-1}\g_i-N^{-1}\Tr RA(I+\widetilde{m}\Sigma)^{-1}\Sigma.
\]
In particular, recalling the function $z_0(m)$ from \eqref{eq:z0m}
and setting $A=I$ in \eqref{eq:deterministic equivalent},
\begin{equation}\label{eq:fixed point equation}
    z_0(\widetilde{m})-z=-\frac{1}{\widetilde{m}}\cdot\frac{1}{N}\sum_{i=1}^N\frac{d_i(I)}{1+N^{-1}\g_i^* R ^{(i)}\g_i}.
\end{equation}
\end{lemma}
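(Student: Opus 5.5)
The identity is purely algebraic, and the plan is to derive it from the resolvent identity combined with Sherman-Morrison. Set $B=(-zI-z\widetilde m\Sigma)^{-1}$, so that $B^{-1}=-z(I+\widetilde m\Sigma)$. The second resolvent identity gives $R-B=R(B^{-1}-R^{-1})B$, and since $R^{-1}=K-zI$ we have
\[
B^{-1}-R^{-1}=-z\widetilde m\Sigma-K .
\]
Hence
\[
\Tr RA-\Tr BA=\Tr\bigl(R(-z\widetilde m\Sigma-K)BA\bigr)=\frac{1}{z}\Bigl(\Tr RK(I+\widetilde m\Sigma)^{-1}A+z\widetilde m\,\Tr R\Sigma(I+\widetilde m\Sigma)^{-1}A\Bigr),
\]
where $-B=z^{-1}(I+\widetilde m\Sigma)^{-1}$ and this inverse commutes with $\Sigma$.

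Next, expand $K=N^{-1}\sum_i\g_i\g_i^*$ and use the form of Sherman-Morrison $R\g_i=R^{(i)}\g_i/(1+N^{-1}\g_i^*R^{(i)}\g_i)$. This gives
\[
\Tr RK(I+\widetilde m\Sigma)^{-1}A=\sum_i\frac{N^{-1}\g_i^*(I+\widetilde m\Sigma)^{-1}AR^{(i)}\g_i}{1+N^{-1}\g_i^*R^{(i)}\g_i},
\]
which (reordering via cyclicity, or using symmetry of $R^{(i)}$ in the non-conjugate sense) matches the first term of $d_i(A)$ up to the placement of $A$. For the second term, use $z\widetilde m=-N^{-1}\sum_i (1+N^{-1}\g_i^*R^{(i)}\g_i)^{-1}$. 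This follows from $z\widetilde R_{ii}=-(1+N^{-1}\g_i^*R^{(i)}\g_i)^{-1}$, which is \eqref{eq:schur complement} with $S=\emptyset$, after averaging over $i$. Substituting converts $z\widetilde m\,\Tr R\Sigma(I+\widetilde m\Sigma)^{-1}A$ into $-\sum_i N^{-1}\Tr RA(I+\widetilde m\Sigma)^{-1}\Sigma/(1+\cdots)$. Adding the two pieces gives $z^{-1}\sum_i d_i(A)/(1+N^{-1}\g_i^*R^{(i)}\g_i)$, which is \eqref{eq:deterministic equivalent}.

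The only point needing care is the ordering of $A$ inside the traces, so the first term matches $\g_i^*R^{(i)}A(I+\widetilde m\Sigma)^{-1}\g_i$ exactly. This is the main obstacle, though a small one: I will handle it by writing the expansion as $R-B=B(B^{-1}-R^{-1})R$ instead, so that $\Tr(R-B)A=\Tr\bigl(B(B^{-1}-R^{-1})RA\bigr)$. Cycling the trace then puts $RA$ immediately before $B$, and $\Tr(\g_i\g_i^*RA(I+\widetilde m\Sigma)^{-1})=\g_i^*RA(I+\widetilde m\Sigma)^{-1}\g_i$. Applying Sherman-Morrison to $\g_i^*R$ (that is, to $R^{(i)}$ on the left) gives exactly the stated form, and the $\Sigma$ term becomes $\Tr RA(I+\widetilde m\Sigma)^{-1}\Sigma$ in the right order.

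For \eqref{eq:fixed point equation}, take $A=I$. Then $\Tr R=nm$, and
\[
\Tr(-zI-z\widetilde m\Sigma)^{-1}=-z^{-1}\sum_\alpha(1+\sigma_\alpha\widetilde m)^{-1}.
\]
Using \eqref{eq:m0m0tilde} at the empirical level, $nm=N\widetilde m+(N-n)/z$, together with the identity $(1+\sigma\widetilde m)^{-1}=1-\sigma\widetilde m(1+\sigma\widetilde m)^{-1}$, the left side becomes
\[
N\widetilde m+\frac{\widetilde m}{z}\sum_\alpha\frac{\sigma_\alpha}{1+\sigma_\alpha\widetilde m}+\frac{N}{z}=\frac{N\widetilde m}{z}\bigl(z-z_0(\widetilde m)\bigr),
\]
by the definition \eqref{eq:z0m} of $z_0$. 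Dividing both sides by $N\widetilde m/z$ yields \eqref{eq:fixed point equation}.
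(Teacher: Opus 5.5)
Your proof is correct and is essentially the paper's argument. The paper applies $\Tr M=\Tr(K-zI)RM$ with $M=A(I+\widetilde m\Sigma)^{-1}$, which after cycling is exactly your expansion of $\Tr\bigl(B(B^{-1}-R^{-1})RA\bigr)$. It then uses Sherman--Morrison on $\g_i^*R$ together with $z\widetilde m=-N^{-1}\sum_i(1+N^{-1}\g_i^*R^{(i)}\g_i)^{-1}$; the paper gets this identity by taking $M=I_n$ rather than from the Schur complement formula, which is an immaterial difference.

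One small slip: in the $A=I$ computation the middle term should be $-\frac{\widetilde m}{z}\sum_\alpha\frac{\sigma_\alpha}{1+\sigma_\alpha\widetilde m}$, not $+$. With that sign corrected, the equality with $\frac{N\widetilde m}{z}\bigl(z-z_0(\widetilde m)\bigr)$ holds and the rest goes through.
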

\begin{proof}
The argument is the same as \cite[Eq.\ (2.4)]{silverstein1995strong}: 
For any $B \in \C^{n \times n}$,
\begin{align*}
\Tr B=\Tr (K-zI)RB
&={-}z\Tr RB+\frac{1}{N}\sum_{i=1}^N \g_i^*RB\g_i
={-}z\Tr RB+\sum_{i=1}^N
\frac{N^{-1}\g_i^*R^{(i)}B\g_i}{1+N^{-1}\g_i^*R^{(i)}\g_i}
\end{align*}
where the last equality applies the Sherman-Morrison identity
\eqref{eq:Sherman-Morrison}. Taking $B=I_n$,
applying $\Tr R(z)=nm(z)=N\widetilde m(z)-(n-N)/z$, and rearranging gives
\[\widetilde m={-}\frac{1}{Nz}\sum_{i=1}^N
\frac{1}{1+N^{-1}\g_i^*R^{(i)}\g_i}.\]
Then taking $B=A(I+\widetilde m \Sigma)^{-1}$ and identifying $d_i(A)$ gives
\begin{align*}
\Tr (I+\widetilde m \Sigma)^{-1}A&=-z\Tr RA(I+\widetilde m\Sigma)^{-1}
+\sum_{i=1}^N \frac{d_i(A)+N^{-1}\Tr RA(I+\widetilde m\Sigma)^{-1}\Sigma}
{1+N^{-1}\g_i^*R^{(i)}\g_i}\\
&=-z\Tr RA(I+\widetilde m\Sigma)^{-1}
-z\widetilde m \Tr RA(I+\widetilde m\Sigma)^{-1}\Sigma
+\sum_{i=1}^N \frac{d_i(A)}{1+N^{-1}\g_i^*R^{(i)}\g_i}\\
&={-}z\Tr RA+\sum_{i=1}^N \frac{d_i(A)}{1+N^{-1}\g_i^*R^{(i)}\g_i}.
\end{align*}
Rearranging shows \eqref{eq:deterministic equivalent},
and specializing to $A=I$ shows \eqref{eq:fixed point equation}.
\end{proof}

The following lemma will allow us to simplify the analysis by replacing factors
of $(I+\widetilde m \Sigma)^{-1}$ in $d_i(A)$
with their deterministic counterpart $(I+\widetilde m_0 \Sigma)^{-1}$.

\begin{lemma}\label{lem:alternative to L1 bound}
Suppose Assumptions \ref{assum:basic} and  \ref{assum:concentration} hold, and
$\bD$ is a regular spectral domain.
Then there exists a constant $l\geq 1$ such that, uniformly over
all $z\in\bD$, deterministic matrices $A\in\C^{n\times n}$, and $i\in[N]$,
\[
    d_i(A)\1_\Xi = \sum_{k=0}^l(\widetilde{m}-\widetilde{m}_0)^k\left[N^{-1}\Tr(\g_i\g_i^*-\Sigma)R^{(i)}A\Sigma^k(I+\widetilde{m}_0\Sigma)^{-k-1}\right]\1_\Xi+\Oprec(\Psi_\Theta^2\|A\|_\op).
\]
Moreover, if $A=\u\v^*$ for vectors $\u,\v\in\C^n$, then
\begin{align*}
    d_i(\u\v^*)\1_\Xi &= \sum_{k=0}^l(\widetilde{m}-\widetilde{m}_0)^k\left[N^{-1}\Tr(\g_i\g_i^*-\Sigma)R^{(i)}\u\v^*\Sigma^k(I+\widetilde{m}_0\Sigma)^{-k-1}\right]\1_\Xi\\
    &\hspace{0.3in}+\Oprec(N^{-2}\|R^{(i)}\u\|_2\|\v^*\Sigma^{k+1}(I+\widetilde{m}_0\Sigma)^{-k-1}R^{(i)}\|_2)+\Oprec(N^{-1}\Psi_\Theta\|\u\|_2\|\v\|_2).
\end{align*}
\end{lemma}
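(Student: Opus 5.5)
The plan has three stages: first replace $R$ by $R^{(i)}$ in the trace part of $d_i(A)$, then expand $(I+\widetilde m\Sigma)^{-1}$ in a finite geometric series around $(I+\widetilde m_0\Sigma)^{-1}$, and finally control the Taylor remainder. Write $B=(I+\widetilde m\Sigma)^{-1}$ and $B_0=(I+\widetilde m_0\Sigma)^{-1}$. Then
\[d_i(A)=N^{-1}\Tr(\g_i\g_i^*-\Sigma)R^{(i)}AB+N^{-1}\Tr(R^{(i)}-R)AB\Sigma.\]

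\textbf{Step 1 (the difference term).} By Sherman--Morrison \eqref{eq:Sherman-Morrison},
\[N^{-1}\Tr(R^{(i)}-R)AB\Sigma=-z\widetilde R_{ii}\,N^{-2}\g_i^*R^{(i)}AB\Sigma R^{(i)}\g_i.\]
On $\Xi$ we have $|z\widetilde R_{ii}|\prec1$ and $\|B\|_\op\prec1$ (Lemma \ref{lem:prec bounds}). Assumption \ref{assum:concentration} gives
\[\g_i^*M\g_i\prec|\Tr\Sigma M|+\|M\|_F\le C\|R^{(i)}\|_F^2\|A\|_\op\]
for $M=R^{(i)}AB\Sigma R^{(i)}$. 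This yields $\Oprec(N^{-2}\|R^{(i)}\|_F^2\|A\|_\op)=\Oprec(\Psi_\Theta^2\|A\|_\op)$ by Lemma \ref{lem:prec bounds}. There is a subtlety: $B$ depends on $\g_i$ through $\widetilde m$. I would handle it by first replacing $\widetilde m$ with $\widetilde m^{(i)}$ at cost $|\widetilde m-\widetilde m^{(i)}|\prec\Psi_\Theta^2$, or simply by expanding as in Step 2 so that only deterministic $B_0$ appears before applying concentration.

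For $A=\u\v^*$ the same term is bounded directly:
\[|z\widetilde R_{ii}|\,N^{-2}|\g_i^*R^{(i)}\u|\,|\v^*B\Sigma R^{(i)}\g_i|\prec N^{-2}\|R^{(i)}\u\|_2\,\|\v^*B\Sigma R^{(i)}\|_2,\]
using Lemma \ref{lem:concentration}(a). After the expansion of $B$ in Step 2, this produces the stated terms $\v^*\Sigma^{k+1}B_0^{k+1}R^{(i)}$.

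\textbf{Step 2 (expanding $B$).} Use the resolvent-type identity
\[B=\sum_{k=0}^l(\widetilde m_0-\widetilde m)^k\Sigma^kB_0^{k+1}+(\widetilde m_0-\widetilde m)^{l+1}\Sigma^{l+1}B_0^{l+1}B.\]
The sign convention should be matched to the statement; the displayed lemma writes $(\widetilde m-\widetilde m_0)^k$, presumably absorbing the sign, and I would check this. Inserting the finite sum into the main term $N^{-1}\Tr(\g_i\g_i^*-\Sigma)R^{(i)}AB$ gives exactly the claimed sum. Here $\widetilde m$ is a scalar factor pulled out of the trace, so no concentration is needed for it.

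\textbf{Step 3 (remainder).} The remainder is $|\widetilde m-\widetilde m_0|^{l+1}$ times $N^{-1}|\Tr(\g_i\g_i^*-\Sigma)R^{(i)}A\Sigma^{l+1}B_0^{l+1}B|$. Because $B$ is random, I would bound that trace crudely by
\[N^{-1}\bigl(\|R^{(i)}\g_i\|_2\,\|\g_i\|_2+\|R^{(i)}\|_F\sqrt n\bigr)\|A\|_\op\prec N^{-1}\cdot\eta^{-1}\cdot N\|A\|_\op\le N\|A\|_\op,\]
using $\|\g_i\|_2\prec\sqrt n$, $\|R^{(i)}\|_\op\le\eta^{-1}$ and $\eta\geq N^{-1+\tau}$. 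On $\Xi$, $|\widetilde m-\widetilde m_0|\le N^{-\tau/C_0}$, so choosing $l$ with $(l+1)\tau/C_0\ge3$ makes the remainder at most $N^{-2}\|A\|_\op\le\Psi_\Theta^2\|A\|_\op$, since $\Psi_\Theta\ge N^{-1/2}\cdot c$. The rank-one case is identical: the remainder is $\le N^{-2}\|\u\|_2\|\v\|_2\le N^{-1}\Psi_\Theta\|\u\|_2\|\v\|_2$.

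\textbf{Main obstacle.} The main obstacle is Step 3, where the crude polynomial bound must be beaten by the factor $|\widetilde m-\widetilde m_0|^{l+1}$. This is why $l$ must depend on $\tau/C_0$ and cannot be taken absolute. The same randomness of $B$ in the Step 1 difference term is dealt with by the Step 2 expansion, whose errors are controlled in the same way.
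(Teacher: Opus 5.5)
Your proposal is correct and follows the paper's argument: expand $(I+\widetilde m\Sigma)^{-1}$ around $(I+\widetilde m_0\Sigma)^{-1}$ to a finite order $l$, kill the remainder with a crude polynomial bound using $\Theta\le N^{-\tau/C_0}$ on $\Xi$, and for each term (where only the deterministic $(I+\widetilde m_0\Sigma)^{-k-1}$ appears) use Sherman--Morrison together with Assumption~\ref{assum:concentration} on the $N^{-2}\g_i^*R^{(i)}(\cdots)R^{(i)}\g_i$ piece; your ``expand first, then apply concentration'' fix is exactly the paper's order of operations. Your sign $(\widetilde m_0-\widetilde m)^k$ is the correct one. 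The $(\widetilde m-\widetilde m_0)^k$ in the statement is a harmless slip, since every later use of the lemma only involves absolute values of these terms.
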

\begin{proof}
    Iteratively applying the identity $A^{-1}-B^{-1}=A^{-1}(B-A)B^{-1}$ gives, for any integer $l\geq 1$,
\[(I+\widetilde{m}\Sigma)^{-1}=\sum_{k=0}^l(\widetilde{m}-\widetilde{m}_0)^k\Sigma^k(I+\widetilde{m}_0\Sigma)^{-k-1}+(\widetilde{m}-\widetilde{m}_0)^{l+1}\Sigma^{l+1}(I+\widetilde{m}_0\Sigma)^{-l-1}(I+\widetilde{m}\Sigma)^{-1},\]
using the commutativity of $\Sigma$, $(I+\widetilde{m}_0\Sigma)^{-1}$, and $(I+\widetilde{m}\Sigma)^{-1}$. Thus, for any $i\in[N]$,
\begin{align*}
    &\frac{1}{N}\g_i^* R^{(i)}A(I+\widetilde{m}\Sigma)^{-1}\g_i-\frac{1}{N}\Tr RA(I+\widetilde{m}\Sigma)^{-1}\Sigma\\
    &= \sum_{k=0}^l(\widetilde{m}-\widetilde{m}_0)^k\left[\frac{1}{N}\g_i^* R^{(i)}A\Sigma^k(I+\widetilde{m}_0\Sigma)^{-k-1}\g_i-\frac{1}{N}\Tr RA\Sigma^k(I+\widetilde{m}_0\Sigma)^{-k-1}\Sigma\right]\\
    &\hspace{1in}+(\widetilde{m}-\widetilde{m}_0)^{l+1}\frac{1}{N}\g_i^* R^{(i)}A\Sigma^{l+1}(I+\widetilde{m}_0\Sigma)^{-l-1}(I+\widetilde{m}\Sigma)^{-1}\g_i\\
    &\hspace{1in}-(\widetilde{m}-\widetilde{m}_0)^{l+1}\frac{1}{N}\Tr RA\Sigma^{l+1}(I+\widetilde{m}_0\Sigma)^{-l-1}(I+\widetilde{m}\Sigma)^{-1}\Sigma.
\end{align*}
Since $\Theta\leq\Lambda\leq N^{-\tau/C_0}$ on $\Xi$, the remainder terms satisfy
\begin{align*}
    &\left|(\widetilde{m}-\widetilde{m}_0)^{l+1}\frac{1}{N}\g_i^* R^{(i)}A\Sigma^{l+1}(I+\widetilde{m}_0\Sigma)^{-l-1}(I+\widetilde{m}\Sigma)^{-1}\g_i\right|\1_\Xi\\
    &\leq|\widetilde{m}-\widetilde{m}_0|^{l+1}\frac{1}{N}\|\g_i\|_2^2 \|R^{(i)}\|_\op\|A\|_\op\|\Sigma\|_\op^{l+1}\|(I+\widetilde{m}_0\Sigma)^{-1}\|_\op^{l+1}\|(I+\widetilde{m}\Sigma)^{-1}\|_\op\1_\Xi\\
    &\prec N^{-(l+1)\tau/C_0}\eta^{-1}\|A\|_\op\leq N^{-(l+1)\tau/C_0+1}\|A\|_\op,
\end{align*}
using $\eta\geq N^{-1+\tau}>N^{-1}$, and similarly
\begin{align*}
    &\left|(\widetilde{m}-\widetilde{m}_0)^{l+1}\frac{1}{N}\Tr RA\Sigma^{l+1}(I+\widetilde{m}_0\Sigma)^{-l-1}(I+\widetilde{m}\Sigma)^{-1}\Sigma\right|\1_\Xi\prec N^{-(l+1)\tau/C_0+1}\|A\|_\op.
\end{align*}
For each $0\leq k \leq l$,
\begin{align*}
    &\frac{1}{N}\g_i^* R^{(i)}A\Sigma^k(I+\widetilde{m}_0\Sigma)^{-k-1}\g_i\1_\Xi-\frac{1}{N}\Tr RA\Sigma^k(I+\widetilde{m}_0\Sigma)^{-k-1}\Sigma\1_\Xi\\
    &=\frac{1}{N}\Tr(\g_i\g_i^*-\Sigma)R^{(i)}A\Sigma^k(I+\widetilde{m}_0\Sigma)^{-k-1}\1_\Xi+\frac{N^{-2}\g_i^*
R^{(i)}A\Sigma^{k+1}(I+\widetilde{m}_0\Sigma)^{-k-1}R^{(i)}\g_i}{1+N^{-1}\g_i^* R^{(i)}\g_i}\1_\Xi,
\end{align*}
and applying Assumption \ref{assum:concentration}, Cauchy-Schwarz, and Lemma \ref{lem:prec bounds} yields
\begin{align*}
    &\frac{N^{-2}|\g_i^* R^{(i)}A\Sigma^{k+1}(I+\widetilde{m}_0\Sigma)^{-k-1}R^{(i)}\g_i|}{|1+N^{-1}\g_i^* R^{(i)}\g_i|}\1_\Xi\\
    &\prec |N^{-2}\Tr\Sigma
R^{(i)}A\Sigma^{k+1}(I+\widetilde{m}_0\Sigma)^{-k-1}R^{(i)}|\1_\Xi+N^{-2}\|R^{(i)}A\Sigma^{k+1}(I+\widetilde{m}_0\Sigma)^{-k-1}R^{(i)}\|_F\1_\Xi\\
    &\prec (N^{-1}\|R^{(i)}\|_F)^2\|A\|_\op\1_\Xi\prec \Psi_\Theta^2\|A\|_\op.
\end{align*}
If $A=\u\v^*$, then we may apply instead
\begin{align*}
    &\frac{N^{-2}|\g_i^* R^{(i)}\u\v^*\Sigma^{k+1}(I+\widetilde{m}_0\Sigma)^{-k-1}R^{(i)}\g_i|}{|1+N^{-1}\g_i^* R^{(i)}\g_i|}\1_\Xi
    \prec
N^{-2}\|R^{(i)}\u\|_2\|\v^*\Sigma^{k+1}(I+\widetilde{m}_0\Sigma)^{-k-1}R^{(i)}\|_2\1_\Xi.
\end{align*}
The result follows by choosing $l$ large enough so that
$N^{-(l+1)\tau/C_0+1}\leq N^{-1}\Psi_\Theta\leq\Psi_\Theta^2$, noting that $l$
is a constant depending only on $\tau$ and $C_0$.
\end{proof}

\begin{lemma}\label{lem:fixed_point_error}
Suppose Assumptions \ref{assum:basic} and \ref{assum:concentration} hold, and
$\bD$ is a regular spectral domain.
Then there exists a constant $l\geq 1$ such that, uniformly over $z\in\bD$,
\begin{align*}
    [z_0(\widetilde{m})-z]\1_\Xi&=\Oprec(\Psi_\Theta^2)+\Oprec\left(\max_{0\leq k\leq l}\left|\frac{1}{N}\sum_{i=1}^N \frac{N^{-1}\Tr(\g_i\g_i^*-\Sigma)R^{(i)}\Sigma^k(I+\widetilde{m}_0\Sigma)^{-k-1}}{1+N^{-1}\g_i^* R^{(i)}\g_i}\right|\1_\Xi\right)\\
    &=\Oprec(\Psi_\Theta).
\end{align*}
\end{lemma}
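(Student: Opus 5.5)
The plan is to start from the exact identity \eqref{eq:fixed point equation} of Lemma \ref{lem:Approx_fixed_point}:
\[z_0(\widetilde m)-z=-\frac{1}{\widetilde m}\cdot\frac1N\sum_{i=1}^N\frac{d_i(I)}{1+N^{-1}\g_i^*R^{(i)}\g_i}.\]
On $\Xi$, Lemma \ref{lem:prec bounds} gives $|\widetilde m|^{-1}\1_\Xi\prec 1$ and $|1+N^{-1}\g_i^*R^{(i)}\g_i|^{-1}\1_\Xi=|z\widetilde R_{ii}|\1_\Xi\prec 1$. So it suffices to control $N^{-1}\sum_i d_i(I)\cQ_i^{(i)}$. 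For this I will substitute the expansion of Lemma \ref{lem:alternative to L1 bound} with $A=I$. That lemma writes $d_i(I)\1_\Xi$ as $\sum_{k=0}^l(\widetilde m-\widetilde m_0)^k$ times the centered quadratic form $N^{-1}\Tr(\g_i\g_i^*-\Sigma)R^{(i)}\Sigma^k(I+\widetilde m_0\Sigma)^{-k-1}$, plus an error $\Oprec(\Psi_\Theta^2)$ uniformly in $i$.

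The remainder contributes $N^{-1}\sum_i \Oprec(\Psi_\Theta^2)\cdot\Oprec(1)=\Oprec(\Psi_\Theta^2)$ after averaging, using Lemma \ref{lemma:domination}(a). For the main terms, $|\widetilde m-\widetilde m_0|^k\1_\Xi\le 1$ because $\Theta\le\Lambda\le N^{-\tau/C_0}$ on $\Xi$, and $l$ is a constant. Each sum over $i$ can therefore be pulled out, after dividing by $\widetilde m$ and multiplying by the bounded factor. Writing $A_k=\Sigma^k(I+\widetilde m_0\Sigma)^{-k-1}$, the matrix $A_k$ is deterministic with $\|A_k\|_\op\le C$ by regularity, since $\min_\alpha|1+\sigma_\alpha\widetilde m_0|\ge c$. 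This yields the first line, with the maximum over $0\le k\le l$ absorbing the finite sum.

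For the second equality, I bound each averaged term crudely, with no fluctuation averaging. Assumption \ref{assum:concentration} gives
\[|N^{-1}\Tr(\g_i\g_i^*-\Sigma)R^{(i)}A_k|\prec N^{-1}\|R^{(i)}A_k\|_F\le CN^{-1}\|R^{(i)}\|_F.\]
Lemma \ref{lem:prec bounds} then gives $N^{-1}\|R^{(i)}\|_F\1_\Xi\prec\Psi_\Theta$, uniformly over $i$. Combined with $|\cQ_i^{(i)}|\1_\Xi\prec 1$, the average over $i$ is $\Oprec(\Psi_\Theta)$. Since $\Psi_\Theta\le 1$ (indeed $\Psi_\Theta^2\lesssim(N\eta)^{-1}\le N^{-\tau}$), we have $\Psi_\Theta^2\le\Psi_\Theta$, and the total is $\Oprec(\Psi_\Theta)$.

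The only delicate point is that the bound from Assumption \ref{assum:concentration} is applied with $R^{(i)}$ random but independent of $\g_i$. I will handle this by conditioning on $\{\g_j\}_{j\ne i}$ and using the high-probability bound uniformly. The indicator $\1_\Xi$ is not measurable with respect to this conditioning, but this causes no difficulty: we bound $|\cdot|\1_\Xi\le|\cdot|$ and apply the $\Xi$-restricted bound for $\|R^{(i)}\|_F$ only as a multiplicative factor. Uniformity over $z\in\bD$ follows since all constants in Lemmas \ref{lem:prec bounds} and \ref{lem:alternative to L1 bound} are uniform. I expect no serious obstacle; this lemma is bookkeeping that sets up the later application of Lemma \ref{lem:fluctuation_averaging}.
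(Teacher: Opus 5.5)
Your proposal is correct and follows essentially the same route as the paper. You start from the identity \eqref{eq:fixed point equation}, insert the expansion of Lemma~\ref{lem:alternative to L1 bound} with $A=I$, and use $\Theta\le 1$ on $\Xi$ together with the bounds of Lemma~\ref{lem:prec bounds} to get the first line; you then bound each centered quadratic form crudely by $N^{-1}\|R^{(i)}\|_F\1_\Xi\prec\Psi_\Theta$ for the second, and your remarks on conditioning on $\{\g_j\}_{j\neq i}$ and on $\Psi_\Theta^2\le\Psi_\Theta$ holding on $\Xi$ simply make explicit what the paper leaves implicit.
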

\begin{proof}
By Lemma \ref{lem:Approx_fixed_point},
\[
    [z_0(\widetilde{m})-z]\1_\Xi=-\frac{1}{\widetilde{m}}\cdot\frac{1}{N}\sum_{i=1}^N\frac{d_i(I)}{1+N^{-1}\g_i^* R^{(i)}\g_i}\1_\Xi.
\]
Applying Lemma \ref{lem:alternative to L1 bound} with $A=I$ gives, for a sufficiently large integer $l\geq 1$,
\[
    d_i(I)\1_\Xi = \sum_{k=0}^l(\widetilde{m}-\widetilde{m}_0)^k\left[N^{-1}\Tr(\g_i\g_i^*-\Sigma)R^{(i)}\Sigma^k(I+\widetilde{m}_0\Sigma)^{-k-1}\right]\1_\Xi+\Oprec(\Psi_\Theta^2).
\]
Since $\Theta=|\widetilde{m}-\widetilde{m}_0|\leq\Lambda\leq N^{-\tau/C_0}\leq 1$ on $\Xi$ and $l$ is fixed, substituting the expansion of $d_i(I)$ into the expression for $z_0(\widetilde{m})-z$ and applying the bounds from Lemma \ref{lem:prec bounds} yield the first equality. 
Moreover, by Assumption \ref{assum:concentration}, uniformly over all $0\leq k\leq l$,
\[
    |N^{-1}\Tr(\g_i\g_i^*-\Sigma)R^{(i)}\Sigma^k(I+\widetilde{m}_0\Sigma)^{-k-1}|\1_\Xi\prec N^{-1}\|R^{(i)}\|_F\1_\Xi\prec\Psi_\Theta.
\]
Then the second equality follows since $\Psi_\Theta^2\prec\Psi_\Theta$.
\end{proof}

\begin{lemma}\label{lem:eta=1_error}
Suppose Assumptions \ref{assum:basic} and \ref{assum:concentration} hold, and
$\bD$ is a regular spectral domain. Then, uniformly over $z=E+i\eta\in\bD$
with $\eta=1$, we have $\Lambda(z)\prec N^{-1/4}$.
\end{lemma}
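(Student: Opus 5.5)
At $\eta=1$ every resolvent bound is deterministic: $\|R^{(S)}\|_\op,\|\widetilde R^{(S)}\|_\op\leq 1$ for all $S$. The plan is to get crude but polynomially small bounds on the diagonal and off-diagonal entries of $\widetilde R$, and then use stability of the Marchenko--Pastur equation at $\eta=1$ to identify the common diagonal value with $\widetilde m_0$. No bootstrap is needed, because at this scale the trivial bounds already give $N^{-1}\|R^{(S)}\|_F\leq N^{-1/2}$ for any $S$.

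\textbf{Step 1 (diagonal entries are close to $\widetilde m$).} By \eqref{eq:schur complement},
\[
\frac{1}{\widetilde R_{ii}}=-z-zN^{-1}\g_i^*R^{(i)}\g_i .
\]
Assumption~\ref{assum:concentration} gives $N^{-1}\g_i^*R^{(i)}\g_i=N^{-1}\Tr\Sigma R^{(i)}+\Oprec(N^{-1}\|R^{(i)}\|_F)$, and the error is $\Oprec(N^{-1/2})$. Next, by Sherman--Morrison, $N^{-1}\Tr\Sigma R^{(i)}-N^{-1}\Tr\Sigma R=O(N^{-1}\eta^{-1}|z\widetilde R_{ii}|)$, and $|z\widetilde R_{ii}|\leq C$ by Lemma~\ref{lem:trivial_bounds}. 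Hence
\[
\widetilde R_{ii}^{-1}=-z-zN^{-1}\Tr\Sigma R+\Oprec(N^{-1/2})
\]
uniformly in $i$. Lemma~\ref{lem:trivial_bounds} at $\eta=1$ also gives $|\widetilde R_{ii}|\leq 1$ and $|\widetilde R_{ii}|^{-1}\prec 1$. Therefore $|\widetilde R_{ii}-\widetilde R_{jj}|\prec N^{-1/2}$, and averaging over $j$ gives $|\widetilde R_{ii}-\widetilde m|\prec N^{-1/2}$.

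\textbf{Step 2 (off-diagonal entries).} By \eqref{eq:schur complement}, $\widetilde R_{ij}=z\widetilde R_{ii}\widetilde R^{(i)}_{jj}N^{-1}\g_i^*R^{(ij)}\g_j$. Lemma~\ref{lem:concentration}(a) gives $|N^{-1}\g_i^*R^{(ij)}\g_j|\prec N^{-1}\|R^{(ij)}\|_F\leq N^{-1/2}$, so $|\widetilde R_{ij}|\prec N^{-1/2}$.

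\textbf{Step 3 (identify $\widetilde m$ with $\widetilde m_0$).} This is the main point. Lemma~\ref{lem:Approx_fixed_point} writes
\[
z_0(\widetilde m)-z=-\widetilde m^{-1}N^{-1}\sum_i d_i(I)/(1+N^{-1}\g_i^*R^{(i)}\g_i).
\]
At $\eta=1$ we control every factor directly. Lemma~\ref{lem:trivial_bounds} gives $|\widetilde m|^{-1}\prec 1$, $\|(I+\widetilde m\Sigma)^{-1}\|_\op\leq C$, and $|1+N^{-1}\g_i^*R^{(i)}\g_i|^{-1}\leq C$. For $d_i(I)$, split it into $N^{-1}\Tr(\g_i\g_i^*-\Sigma)R^{(i)}(I+\widetilde m\Sigma)^{-1}$, which is $\Oprec(N^{-1/2})$, plus a difference $R^{(i)}$ versus $R$, which is $O(N^{-1})$. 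One subtlety is that $(I+\widetilde m\Sigma)^{-1}$ depends on $\g_i$. I would handle it by replacing $\widetilde m$ with $\widetilde m^{(i)}$, paying $|\widetilde m-\widetilde m^{(i)}|\prec N^{-1}$ from Lemma~\ref{lem:trivial_bounds}; the matrix is then independent of $\g_i$ and the concentration assumption applies. This gives $|z_0(\widetilde m)-z|\prec N^{-1/2}$.

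To conclude, apply the stability condition \eqref{eq:stability} from regularity of $\bD$, with $u=\widetilde m$ and a constant $\Delta=N^{-1/2+\eps}$. At $\eta=1$ the set $L(z)=\{z\}$, so the hypothesis is needed only at the single point $z$, and $\Delta$ trivially satisfies the boundedness, Lipschitz and monotonicity conditions. It yields
\[
|\widetilde m-\widetilde m_0|\leq C\Delta/(\sqrt{\kappa+\eta}+\sqrt\Delta)\leq C\sqrt{\Delta}\prec N^{-1/4}.
\]
In fact, since $\kappa+\eta\geq 1$, it gives the stronger bound $\prec N^{-1/2}$. Combining this with Steps 1 and 2 gives $\Lambda\prec N^{-1/4}$.

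Uniformity over $z\in\bD$ with $\eta=1$ follows from a union bound over an $N^{-3}$-net of such $z$, using the Lipschitz bounds $|\partial_z\widetilde R_{ij}|\leq\eta^{-2}$ and the analogous bound for $\widetilde m_0$. The only real care point is the $\g_i$-dependence of $(I+\widetilde m\Sigma)^{-1}$ inside $d_i(I)$, and the $\widetilde m^{(i)}$ substitution settles it.
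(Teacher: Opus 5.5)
Your proposal is correct and follows essentially the same route as the paper, which also splits $d_i(I)$ into a concentration piece, an $R^{(i)}$-versus-$R$ piece and the $\widetilde m$-versus-$\widetilde m^{(i)}$ swaps. It then uses the trivial $\eta=1$ bounds of Lemma~\ref{lem:trivial_bounds} to get $z_0(\widetilde m)-z\prec N^{-1/2}$, applies stability at the single point $L(z)=\{z\}$, and controls $\widetilde R_{ii}-\widetilde m$ and $\widetilde R_{ij}$ via Lemma~\ref{lem:prec bounds}. Your direct derivation of Steps 1--2 makes explicit why the $\1_\Xi$ restriction in that lemma is harmless at $\eta=1$. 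Your remark that $\sqrt{\kappa+\eta}\geq 1$ upgrades the conclusion to $\Lambda\prec N^{-1/2}$ is also correct.
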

\begin{proof}
Let $d_i(I)$ be as in Lemma \ref{lem:Approx_fixed_point}. As in \cite{bai1998no},
we decompose $d_i(I)=d_{i,1}+d_{i,2}+d_{i,3}+d_{i,4}$, where
\begin{align*}
    d_{i,1}&=N^{-1}\g_i^* R^{(i)}(I+\widetilde{m} \Sigma)^{-1}\g_i-N^{-1}\g_i^* R^{(i)}(I+\widetilde{m}^{(i)}\Sigma)^{-1}\g_i,\\
    d_{i,2}&=N^{-1}\g_i^* R^{(i)}(I+\widetilde{m}^{(i)}\Sigma)^{-1}\g_i-N^{-1}\Tr\Sigma R^{(i)}(I+\widetilde{m}^{(i)}\Sigma)^{-1},\\
    d_{i,3}&=N^{-1}\Tr\Sigma R^{(i)}(I+\widetilde{m}^{(i)}\Sigma)^{-1}-N^{-1}\Tr\Sigma R (I+\widetilde{m}^{(i)}\Sigma)^{-1},\\
    d_{i,4}&=N^{-1}\Tr\Sigma R (I+\widetilde{m}^{(i)}\Sigma)^{-1}-N^{-1}\Tr\Sigma R (I+\widetilde{m} \Sigma)^{-1}.
\end{align*}
Applying the bounds from Lemma \ref{lem:trivial_bounds} and Assumption \ref{assum:concentration} yields
\begin{align*}
    |d_{i,1}|&=|\widetilde{m}-\widetilde{m}^{(i)}||N^{-1}\g_i^* R^{(i)}(I+\widetilde{m} \Sigma)^{-1}\Sigma(I+\widetilde{m}^{(i)}\Sigma)^{-1}\g_i|\\
    &\prec |\widetilde{m} -\widetilde{m}^{(i)}|(\|R^{(i)}\|_\op+N^{-1}\|R^{(i)}\|_F)\|(I+\widetilde{m} \Sigma)^{-1}\|_\op\|(I+\widetilde{m}^{(i)}\Sigma)^{-1}\|_\op\\
    &\prec N^{-1}\eta^{-6},\\
    |d_{i,2}|&\prec N^{-1}\|R^{(i)}(I+\widetilde{m}^{(i)}\Sigma)^{-1}\|_F\leq N^{-1}\|R^{(i)}\|_F\|(I+\widetilde{m}^{(i)}\Sigma)^{-1}\|_\op\prec N^{-1/2}\eta^{-2},\\
    |d_{i,3}|&= \frac{|N^{-2}\g_i^* R^{(i)}(I+\widetilde{m}^{(i)}\Sigma)^{-1}\Sigma R^{(i)}\g_i|}{|1+N^{-1}\g_i^* R^{(i)}\g_i|}\prec N^{-2}\|R^{(i)}\|_F^2\eta^{-2}\prec N^{-1}\eta^{-4},\\
    |d_{i,4}|&=|\widetilde{m} -\widetilde{m}^{(i)}||N^{-1}\Tr\Sigma R (I+\widetilde{m}^{(i)}\Sigma)^{-1}\Sigma(I+\widetilde{m} \Sigma)^{-1}|\\
    &\leq|\widetilde{m} -\widetilde{m}^{(i)}|\|\Sigma\|_\op^2\|(I+\widetilde{m}^{(i)}\Sigma)^{-1}\|_\op\|(I+\widetilde{m} \Sigma)^{-1}\|_\op \|R \|_\op\prec N^{-1}\eta^{-6}.
\end{align*}
Since $\eta=1$, we have $|d_i(I)|\prec N^{-1/2}$. Thus, by Lemma \ref{lem:Approx_fixed_point},
\begin{align*}
    z_0(\widetilde{m})-z=-\frac{1}{\widetilde{m}}\cdot\frac{1}{N}\sum_{i=1}^N\frac{\Oprec(N^{-1/2})}{1+N^{-1}\g_i^* R^{(i)}\g_i}=\Oprec(N^{-1/2}).
\end{align*}
For any small $\eps>0$ and large $D>0$, there exists $N_0(\eps,D)$ such that for all $N\geq N_0$,
\[
    \P\left(|z_0(\widetilde{m})-z|\leq N^{\eps}N^{-1/2}\right)\geq 1-N^{-D}.
\]
Since $N^{\eps}N^{-1/2}$ satisfies the requirements for $\Delta$ in Definition
\ref{def:regular}, on this event,
\[
    |\widetilde{m} -\widetilde{m}_0|\leq\frac{C
N^{\eps}N^{-1/2}}{\sqrt{\kappa+\eta}+N^{\eps/2} N^{-1/4}}
\leq CN^{\eps/2}N^{-1/4}.
\]
Since $\eps$ is arbitrary, $\Theta\prec N^{-1/4}$. By Lemma \ref{lem:prec bounds} with $\eta=1$, uniformly over distinct $i,j\in[N]$,
\[
    |\widetilde{R}_{ii}-\widetilde{m}_0|\leq |\widetilde{R}_{ii}-\widetilde{m}|+\Theta\prec\Psi_\Theta+\Theta\prec N^{-1/4},\quad |\widetilde{R}_{ij}|\prec\Psi_\Theta\prec N^{-1/2}.
\]
\end{proof}

The following lemma now establishes a weak local law over all $z \in \bD$
using a stochastic continuity argument (see e.g.\ \cite[Lemma
6.12]{pillai2014universality}).

\begin{lemma}\label{lem:weak local law}
Suppose Assumptions \ref{assum:basic} and \ref{assum:concentration} hold, and
$\bD$ is a regular spectral domain. Then uniformly over $z=E+i\eta\in\bD$,
we have $\Lambda(z)\prec (N\eta)^{-1/4} \leq N^{-\tau/4}$.
\end{lemma}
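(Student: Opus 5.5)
The plan is the standard stochastic continuity (bootstrapping in $\eta$) argument, as in \cite[Lemma 6.12]{pillai2014universality}. Fix $E$ and the lattice $L(z)$ of spectral parameters $E+i\eta_k$ with $\eta_k\in[\eta,1]\cap N^{-5}\mathbb{Z}$, and descend from $\eta=1$. At $\eta=1$, Lemma \ref{lem:eta=1_error} gives $\Lambda\prec N^{-1/4}\le(N\eta)^{-1/4}$. Since $\bD$ is a spectral domain, every $z\in\bD$ has its whole vertical segment up to $\eta=1$ in $\bD$. We take a union bound over the polynomially many lattice points, and pass to all of $\bD$ using the Lipschitz bounds $|\partial_z\widetilde R_{ij}|\le\eta^{-2}\le N^2$ and $|\widetilde m_0'|\le C\eta^{-2}$ with mesh $N^{-5}$.

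\textbf{Induction step.} Suppose $\Lambda(z')\prec(N\eta')^{-1/4}$ at $z'=E+i(\eta+N^{-5})$. Lipschitz continuity gives $\Lambda(z)\le\Lambda(z')+N^{-3}$, so $\Xi$ holds at $z$ with high probability, provided $C_0>4$ so that $N^{-\tau/4}\ll N^{-\tau/C_0}$. On $\Xi$, Lemma \ref{lem:fixed_point_error} gives $|z_0(\widetilde m)-z|\1_\Xi\prec\Psi_\Theta$. Since $\Theta\le1$ and $\Im\widetilde m_0\le C$, we have $\Psi_\Theta\prec(N\eta)^{-1/2}$. This holds at every $w\in L(z)$ simultaneously, because the argument is uniform in $w$. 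We then apply the stability condition \eqref{eq:stability} with $u=\widetilde m$ and $\Delta(w)=N^{\eps}(N\Im w)^{-1/2}$. This $\Delta$ is monotone in $\eta$, Lipschitz, and bounded by $(\log N)^{-1}$ because $N\eta\ge N^{\tau}$. We obtain
\[\Theta\le \frac{C\Delta}{\sqrt{\kappa+\eta}+\sqrt\Delta}\le C\sqrt\Delta\prec (N\eta)^{-1/4}.\]

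\textbf{Passing to $\Lambda$.} Lemma \ref{lem:prec bounds} gives, on $\Xi$,
\[|\widetilde R_{ii}-\widetilde m|\prec\Psi_\Theta\prec(N\eta)^{-1/2},\qquad |\widetilde R_{ij}|\prec\Psi_\Theta.\]
Hence
\[\Lambda\1_\Xi\le\Big(\max_i|\widetilde R_{ii}-\widetilde m|+\Theta+\max_{i\ne j}|\widetilde R_{ij}|\Big)\1_\Xi\prec(N\eta)^{-1/4}.\]
Since $\Xi$ holds with high probability by the induction step, this gives $\Lambda(z)\prec(N\eta)^{-1/4}$, which closes the induction.

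\textbf{Main obstacle.} The delicate point is bookkeeping of the $\prec$ constants across roughly $N^5$ induction steps. Naively composing $\prec$ that many times would lose control of the exceptional probabilities and of the $N^\eps$ factors. To avoid this, we do not iterate: we fix $\eps,D$ once and define a single good event. On it, for every lattice $w$, the following all hold with the same $\eps$, off a total exceptional set of probability $N^{-D+C}$ by a union bound:
\begin{itemize}
\item the bound of Lemma \ref{lem:fixed_point_error};
\item the bounds of Lemma \ref{lem:prec bounds}, which are stated multiplied by $\1_{\Xi}$;
\item Lipschitz continuity.
\end{itemize}
On this event the descent is deterministic. The dichotomy $\Lambda\le N^{\eps}(N\eta)^{-1/4}\ll N^{-\tau/C_0}$ propagates step by step, because the stability bound yields the stronger estimate rather than merely preserving the weak one. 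A second issue is that the stability hypothesis requires $|z_0(u(w))-w|\le\Delta(w)$ along all of $L(z)$. This is supplied by the fact that $\Xi$ already holds at all higher lattice points by the time $z$ is reached.
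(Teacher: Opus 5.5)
Your proposal is correct and follows essentially the same stochastic continuity argument as the paper. You descend along the $N^{-5}$ lattice from $\eta=1$ (Lemma~\ref{lem:eta=1_error}), use Lipschitz continuity to obtain $\Xi$ at the next point, combine Lemma~\ref{lem:fixed_point_error} with stability for $\Delta(w)\asymp N^{\eps}(N\Im w)^{-1/2}$ to get $\Theta\lesssim\sqrt{\Delta}$, pass to $\Lambda$ via Lemma~\ref{lem:prec bounds}, and union-bound over the $O(N^5)$ steps. Your single-good-event bookkeeping, which explicitly secures the stability hypothesis along all of $L(z)$, is a slightly more careful version of the paper's sequential estimate $\P((\Omega_k\cap\cE_k)^c)\le(2k+1)N^{-D}$.
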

\begin{proof}
For any $z=E+i\eta\in\bD$, let $L(z)$ be as in Definition \ref{def:regular}.
Then $M=|L(z)|\leq N^5$. Define $z_0=E+i$, $z_M=z$, and $z_k:=E+i\eta_k=E+i(1-kN^{-5})$ for $1\leq k\leq M-1$. For a small enough $\eps>0$, introduce the events
\[
    \Omega_k:=\Omega(z_k):=\left\{|\widetilde{m}(z_k)-\widetilde{m}_0(z_k)|\leq\frac{CN^{2\eps}(N\eta_k)^{-1/2}}{\sqrt{\kappa+\eta_k}+N^\eps(N\eta_k)^{-1/4}}\right\},
\]
\[
    \cE_k:=\cE(z_k) := \left\{\Lambda(z_k)\leq N^{2\eps}(N\eta_k)^{-1/4}\right\},
\]
where $\eta_k=\Im z_k$. Choose a large enough $D>0$. We proceed with a
stochastic continuity argument. For $k=0$, by Lemma \ref{lem:eta=1_error} and
Definition \ref{def:regular}, we have
\[
    \P((\Omega_0\cap\cE_0)^c)\leq N^{-D}.
\]
For any $k\geq 1$, since $\Lambda$ is $2N^2$-Lipschitz continuous over $z \in
\bD$, and $|\eta_k - \eta_{k-1}| = N^{-5}$, on the event
$\Omega_{k-1}\cap\cE_{k-1}$ we have $\Lambda(z_k) \leq
N^{2\eps}(N\eta_{k-1})^{-1/4} + 2N^{2-5} \leq N^{2\eps-\tau/4}+2N^{-3}$,
implying that for $C_0>5$, small enough $\eps>0$, and large enough $N$,
the event $\Xi(z_k)=\{\Lambda(z_k) \leq N^{-\tau/C_0}\}$ holds. Thus, by Lemma \ref{lem:fixed_point_error},
\[
    \P\left(\Omega_{k-1}\cap\cE_{k-1}\cap\left\{|z_0(\widetilde{m}(z_k))-z_k|\geq N^{2\eps}(N\eta_k)^{-1/2}\right\}\right)\leq N^{-D},
\]
where we used $\Psi_\Theta(z_k)\1_{\Xi(z_k)}\prec (N\eta_k)^{-1/2}$. Since the
function $\Delta(w)=N^{2\eps}(N\Im w)^{-1/2}$ satisfies the requirements of
Definition \ref{def:regular}, it follows that
\[
    \P\left(\Omega_{k-1}\cap\cE_{k-1}\cap\left\{|\widetilde{m}(z_k)-\widetilde{m}_0(z_k)|\geq
\frac{CN^{2\eps}(N\eta_k)^{-1/2}}{\sqrt{\kappa+\eta_k}+N^\eps(N\eta_k)^{-1/4}}\right\}\right)\leq N^{-D},
\]
which implies $\P(\Omega_{k-1}\cap\cE_{k-1}\cap\Omega_k^c)\leq N^{-D}$. On the other hand, by Lemma \ref{lem:prec bounds}, on the event $\Omega_{k-1}\cap\cE_{k-1}\cap\Omega_k$ we have
\[
    \Lambda(z_k)\1_{\Omega_{k-1}\cap\cE_{k-1}\cap\Omega_k} \prec
(\Psi_\Theta(z_k)+\Theta(z_k))\1_{\Omega_{k-1}\cap\cE_{k-1}\cap\Omega_k} \prec N^\eps(N\eta_k)^{-1/4},
\]
which gives $\P(\Omega_{k-1}\cap\cE_{k-1}\cap\Omega_k\cap\cE_k^c) \leq N^{-D}$ for sufficiently large $N$. Therefore,
\begin{align*}
    &\P\left(\Omega_{k-1}\cap\cE_{k-1}\cap\left(\Omega_k\cap\cE_k\right)^c\right)\\
    &\leq\P(\Omega_{k-1}\cap\cE_{k-1}\cap\Omega_k^c)+\P\left(\Omega_{k-1}\cap\cE_{k-1}\cap\Omega_k\cap\cE_k^c\right)\leq
2N^{-D}.
\end{align*}
In general, for any $0\leq k\leq M$,
\begin{align*}
    \P((\Omega_k\cap\cE_k)^c)&\leq \P((\Omega_0\cap\cE_0)^c)+\sum_{j=1}^k\P\left(\Omega_{j-1}\cap\cE_{j-1}\cap\left(\Omega_j\cap\cE_j\right)^c\right)\\
    &\leq N^{-D}+2kN^{-D}\leq CN^{5}N^{-D}=CN^{5-D}.
\end{align*}
Since $D$ is arbitrary, this proves the claim.
\end{proof}

\begin{remark}
Let us take $C_0>5$ in the definition $\Xi=\{\Lambda \leq N^{-\tau/C_0}\}$.
Then this implies that $\1_\Xi \prec 1$ uniformly
over $z \in \bD$, so all preceding estimates hold with $\1_\Xi$ replaced by 1.
\end{remark}

\begin{proof}[Proof of Theorem \ref{thm:entrywise law}]
Applying Lemma \ref{lem:fixed_point_error} gives
\begin{equation}\label{eq:z0zdiff}
    z_0(\widetilde{m})-z=\Oprec(\Psi_\Theta^2)+\Oprec\left(\max_{0\leq k\leq l}\left|\frac{1}{N}\sum_{i=1}^N \frac{N^{-1}\Tr(\g_i\g_i^*-\Sigma)R^{(i)}\Sigma^k(I+\widetilde{m}_0\Sigma)^{-k-1}}{1+N^{-1}\g_i^*R^{(i)}\g_i}\right|\right).
\end{equation}
Now suppose for some constant $c \in (0,1]$ we have $\Theta \prec (N\eta)^{-c}$. Then
\[\Psi_\Theta\prec\sqrt{\frac{\Im{\widetilde{m}_0}+(N\eta)^{-c}}{N\eta}}:=\Phi.
\]
Note that Definition \ref{def:regular} implies $\eta \lesssim \Im \widetilde
m_0 \lesssim 1$, so $N^{-1/2} \lesssim \Phi \lesssim N^{-\tau/2}$.
By Lemma \ref{lem:prec bounds}, uniformly over all $S\subseteq[N]$ with $|S|\leq
L$ a fixed constant, we have $N^{-1}\|R^{(S)}\|_F \prec \Phi$, and also $\Gamma^{(S)}=\max_i |\widetilde R_{ii}^{(S)}-\widetilde m^{(S)}| \prec \Psi_\Theta+\Theta \prec N^{-c\tau/2}$.
Thus, by the fluctuation averaging result of
Lemma \ref{lem:fluctuation_averaging}, for all $0\leq k\leq l$,
\begin{align*}
    \left|\frac{1}{N}\sum_{i=1}^N
\frac{N^{-1}\Tr(\g_i\g_i^*-\Sigma)R^{(i)}\Sigma^k(I+\widetilde{m}_0\Sigma)^{-k-1}}{1+N^{-1}\g_i^*
R^{(i)}
\g_i}\right|&\prec\|\Sigma\|_\op^k\|(I+\widetilde{m}_0\Sigma)^{-1}\|_\op^{k+1}\Phi^2 \prec \Phi^2.
\end{align*}
This implies $|z_0(\widetilde{m})-z| \prec \Phi^2$. Then by the stability
condition of Definition \ref{def:regular} and the bound $\Im \widetilde m_0(z)
\leq C\sqrt{\kappa+\eta}$,
\begin{align*}
    \Theta=|\widetilde{m}-\widetilde{m}_0|\prec\frac{\frac{\Im{\widetilde{m}_0}+(N\eta)^{-c}}{N\eta}}{\sqrt{\kappa+\eta}
+\sqrt{\frac{\Im{\widetilde{m}_0}+(N\eta)^{-c}}{N\eta}}}&\prec\frac{1}{N\eta}+\frac{(N\eta)^{-c-1}}{(N\eta)^{-c/2-1/2}}
\prec (N\eta)^{-c/2-1/2}.
\end{align*}
Thus, we have shown the implication
$\Theta\prec (N\eta)^{-c} \Rightarrow \Theta\prec (N\eta)^{-c/2-1/2}$.
For any $\eps>0$, starting with $c=1/4$ and iterating this a constant $C_\eps$ number of times yields $\Theta\prec (N\eta)^{-1+\eps}$. Since $\eps>0$ is arbitrary, $\Theta\prec (N\eta)^{-1}$. Substituting into $\Psi_\Theta$ gives
\[\Psi_\Theta\prec\sqrt{\frac{\Im{\widetilde{m}_0}}{N\eta}}+\frac{1}{N\eta}:=\Psi.\]
Then by Lemma \ref{lem:prec bounds},
$|\widetilde{R}_{ii}-\widetilde{m}_0|\prec\Theta+\Psi_\Theta\prec\Psi$ and
$|\widetilde{R}_{ij}|\prec\Psi$, which shows \eqref{eq:entry law in bulk edge}.
For \eqref{eq:average law in bulk edge}, applying the above bounds with $c=1$
gives $|z_0(\widetilde{m})-z|\prec\Psi^2$,
so applying Definition \ref{def:regular} once more gives
\[|\widetilde{m}-\widetilde{m}_0|\prec\frac{\Psi^2}{\sqrt{\kappa+\eta}+\Psi}.\]
Since $\widetilde m-\widetilde m_0=\gamma(m-m_0)$ and $\gamma \asymp 1$,
the same bound holds for $m-m_0$, so this shows
\eqref{eq:average law in bulk edge}. Since all quantities in
\eqref{eq:entry law in bulk edge} and
\eqref{eq:average law in bulk edge} are $N^2$-Lipschitz over $z \in \bD$,
taking a union bound over a covering net shows that with probability
$1-CN^{-D}$, these statements hold simultaneously for every $z \in \bD$.
\end{proof}

Corollary \ref{coro:coro of entrywise law} now follows from known arguments,
which we summarize briefly here. The following lemma is a version of
\cite[Lemmas 10.3, 10.4]{knowles2017anisotropic} (see also the asymptotic exact
separation result of \cite{bai1999exact}),
clarifying that no bulk/edge regularity is needed for this statement, and that
it holds also for values $a \in (0,\infty)$ below the smallest left edge.

\begin{lemma}\label{lem:exactseparation}
Suppose Assumptions \ref{assum:basic} and \ref{assum:concentration} hold.
Let $\lambda_1,\ldots,\lambda_N$ be the eigenvalues of $\widetilde K$, and for
each $a \in (0,\infty)$, let
\begin{align}\label{eq:exactseparation}
\widehat N(a)=\sum_{i=1}^N \1\{\lambda_i \geq a\},
\qquad N(a)=N\int_a^\infty \rho_0(x)dx.
\end{align}
Then for any constants $\delta,D>0$, there exists a constant
$C \equiv C(\delta,D)>0$ such that if $a \in (0,\infty)$ satisfies
$(a-\delta,a+\delta) \cap \supp(\widetilde \mu_0)=\emptyset$, then with
probability at least $1-CN^{-D}$,
\[\widehat N(a)=N(a).\]
\end{lemma}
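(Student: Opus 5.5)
We will prove Lemma \ref{lem:exactseparation} by the standard interpolation (continuity) argument of \cite{bai1999exact} and \cite[Lemmas 10.3, 10.4]{knowles2017anisotropic}. The two inputs are the outside-spectrum averaged law of Theorem \ref{thm:outside spec} and the operator norm bound $\|K\|_\op\prec 1$ from Lemma \ref{lem:concentration}(b).

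\textbf{Step 1: no eigenvalues in the gap.} First we show that with probability $1-CN^{-D}$, $\widetilde K$ has no eigenvalues in $(a-\delta/2,a+\delta/2)$. For $E$ in this interval and $\eta=N^{-1+\eps'}$ with $\eps'$ small, the point $z=E+i\eta$ lies in $\bar\bD^o(\delta/2)$ (using $a>0$, and shrinking $\delta$ if necessary so that $|z|\geq\delta/2$). Theorem \ref{thm:outside spec} gives $|\widetilde m(z)-\widetilde m_0(z)|\le N^{\eps}/N$ uniformly on a fine grid of such $E$. Outside the support we have $\Im\widetilde m_0(z)\le C\eta$. On the other hand, an eigenvalue $\lambda_i$ within $\eta$ of $E$ forces $\Im\widetilde m(z)\ge (2N\eta)^{-1}=N^{-\eps'}/2$. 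This contradicts the bound once $\eps<\eps'$. Note that Theorem \ref{thm:outside spec} needs only Assumptions \ref{assum:basic} and \ref{assum:concentration}, so no regularity is required.

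\textbf{Step 2: continuity from a Gaussian model.} It remains to match the count $\widehat N(a)$ with $N(a)$, and here we deform the matrix. For $t\in[0,1]$ set
\[
\g_i(t)=\sqrt{t}\,\g_i+\sqrt{1-t}\,\Sigma^{1/2}\z_i,
\]
with independent $\z_i\sim N(0,I_n)$. Each $\g_i(t)$ has covariance $\Sigma$ and satisfies Assumption \ref{assum:concentration} uniformly in $t$, since quadratic forms split into the two concentrated pieces plus cross terms $\g_i^*A\Sigma^{1/2}\z_i\prec\|A\|_F$. Hence $\mu_0$ and $N(a)$ do not depend on $t$, and Step 1 applies to every $t$ in a grid of mesh $N^{-10}$, with a union bound over the grid. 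Eigenvalues move Lipschitz-continuously in $t$, with constant $\prec N^{C}$ by $\|K\|_\op\prec1$. Therefore, with high probability, no eigenvalue crosses $a$ along the path, and $\widehat N(a)$ is constant in $t$.

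\textbf{Step 3: the Gaussian endpoint.} At $t=0$ we have the separable Gaussian model $\Sigma^{1/2}\z$, for which $\widehat N(a)=N(a)$ with high probability. This is the exact separation result of \cite{bai1999exact}, recorded in \cite[Lemma 10.4]{knowles2017anisotropic}. Since both the theorem and that proof are uniform over all $a$ with a $\delta$-gap, the case of $a$ below the smallest left edge needs no separate treatment.

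\textbf{Main obstacle.} If one wants to avoid citing the Gaussian case, the endpoint is the hard part. We would handle it by a second continuity argument, now in $\Sigma$. Consider $\Sigma_s$ interpolating from $\Sigma$ to a matrix whose spectrum is clustered at finitely many values. The gap in $\supp\widetilde\mu_0$ around $a$ persists along the path, by the critical-point description \eqref{eq:edges}. For the clustered matrix, the count $\widehat N(a)$ reduces to counting eigenvalue multiplicities.
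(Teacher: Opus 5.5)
Your Steps 1 and 2 are essentially the paper's argument: the no-eigenvalue statement \eqref{eq:dichotomy}, followed by the interpolation $G(t)=\sqrt{t}\,G+\sqrt{1-t}\,\widetilde G$ toward $N(0,\Sigma)$ columns. The paper derives \eqref{eq:dichotomy} directly from Theorem~\ref{thm:entrywise law}. Theorem~\ref{thm:outside spec} is itself proved using \eqref{eq:dichotomy}, so your route is roundabout, though not circular. Also, $t\mapsto\sqrt t$ is only $\tfrac12$-H\"older at $t=0$, which still suffices on your grid.

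The gap is Step 3. That is where all the content of the lemma lies, and you obtain it only by citation. \cite{bai1999exact} is an almost-sure, eventually-in-$N$ statement for a fixed sequence of models with convergent spectral distributions of $\Sigma$. It gives no bound $1-CN^{-D}$ with $C$ depending only on $(\delta,D)$. The paper also says explicitly that it states Lemma~\ref{lem:exactseparation} because the versions in \cite{knowles2017anisotropic} need bulk/edge regularity and do not cover $a$ below the smallest left edge. It then treats that case separately. Your claim that the cited results are ``uniform over all $a$ with a $\delta$-gap'' is exactly what needs proof. As written, the Gaussian endpoint is not established in the required generality.

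Your fallback sketch does not close this. An arbitrary deformation of $\Sigma$ toward a clustered spectrum can merge bulk components and close the gap at $a$. Moreover, for a $\Sigma$ with several distinct nonzero clusters, counting sample eigenvalues above $a$ is again an exact-separation problem, not a multiplicity count. The paper's missing ingredients are the following.
\begin{enumerate}
\item[(i)] By \cite[Lemma A.1]{knowles2017anisotropic}, $N(a)$ equals the number of $\alpha$ with $\sigma_\alpha\neq0$ and $-\sigma_\alpha^{-1}\in(m_{2k},0)$.
\item[(ii)] The specific deformation $\Sigma(t)=\diag((1-t)\Sigma_1+t\sigma_1 I,(1-t)\Sigma_2)$ sends exactly those eigenvalues up to $\sigma_1$ and all others down to $0$.
\item[(iii)] A sign computation shows $z_0'(m;t)$ increases in $t$ on $(m_{2k+1},m_{2k})$. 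Hence the two critical points persist, and $\frac{d}{dt}[z_0(m_{2k}(t);t)-z_0(m_{2k+1}(t);t)]>0$, which is \eqref{eq:gapremains}.
\item[(iv)] At the endpoint, $\widetilde K(1)=\sigma_1N^{-1}Y^*Y$ with $Y$ having $N(a)\le(1-c)N$ rows. Smallest-singular-value concentration then gives exactly $N(a)$ eigenvalues above the threshold.
\end{enumerate}
Two further points are needed. Since $\Sigma(t)$ leaves Assumption~\ref{assum:basic} as $t\to1$, \eqref{eq:dichotomy} must be extended to $t$ near $1$ by a covering argument using $\|X\|_\op\le C\sqrt N$. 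And $a$ below the smallest left edge needs its own deformation $\diag((1-t)\Sigma_1+t\sigma_1I,0)$, with $N(a)=\min(N,\rank\Sigma)$ and $\rank\Sigma$ bounded away from $N$.
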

\begin{proof}
The averaged local law \eqref{eq:average law in bulk edge} applied
for $z=E+i\eta\in \bD^o(\delta,\tau)$ with any fixed constant
$\tau \in (0,1/2)$ implies $|\widetilde m(z)-\widetilde m_0(z)|
\ll (N\eta)^{-1}$. Then, since $\Im \widetilde m_0 \lesssim \eta
\ll (N\eta)^{-1}$ by \eqref{eq:regularbounds} and regularity of 
$\bD^o(\delta,\tau)$ (c.f.\ Lemma \ref{lem:properties of m}),
we have $\sum_{i=1}^N \1\{\lambda_i \in [E-\eta,E+\eta]\}
\leq 2N\eta \Im \widetilde m(z) \ll 1$, so
$\widetilde K$ has no eigenvalue in $[E-\eta,E+\eta]$. Thus, 
(\ref{eq:average law in bulk edge}) implies for some constant
$C \equiv C(\delta,D)>0$, with probability $1-CN^{-D}$,
\begin{equation}\label{eq:dichotomy}
\widetilde K \text{ has no eigenvalues in }
\{x \in \R:|x| \leq \delta^{-1},\dist(x,\supp(\widetilde{\mu}_0) \cup \{0\})
\geq \delta\}.
\end{equation}

To show (\ref{eq:exactseparation}), we begin with the case where $G$ is
Gaussian, i.e.\ $\widetilde K=N^{-1}X^*\Sigma X$ where $X \in \R^{n \times N}$
has i.i.d.\ $N(0,1)$ entries and assume without loss of generality $\Sigma=\diag(\sigma_1,\ldots,\sigma_n)$ with $\sigma_1 \geq \ldots \geq
\sigma_n$. Let $m_1,\ldots,m_{2p}$ and $x_1>x_2 \geq x_3>
\ldots>x_{2p} \geq 0$
be as defined in \eqref{eq:edges}. Note that if $a>x_1$ ($a$ is above the largest edge), then $N(a)=0$,
and \eqref{eq:dichotomy} together with 
$\|\widetilde K\|_\op \leq N^{-1}\|X\|_\op^2\|\Sigma\|_\op$ and the high-probability
bound $\|X\|_\op \leq C\sqrt{N}$ \cite[Corollary 5.35]{vershynin2010introduction}
shows $\widehat N(a)=N(a)=0$ with
probability $1-C(\delta,D)N^{-D}$. 

If $a \in (x_{2k+1},x_{2k})$ for some $k \in \{1,\ldots,p-1\}$ ($a$ is
between two separated bulk components), then \cite[Lemma A.1]{knowles2017anisotropic} shows that
$N(a)=\sum_{\alpha=1}^n\1\{\sigma_\alpha \neq 0,\,{-}\sigma_\alpha^{-1} \in
(m_{2k},0)\}$, i.e.\ $N(a)$ counts the number of non-zero poles of $z_0(m)$ to
the right of $m_{2k}$ with multiplicity. Let $\Sigma=\diag(\Sigma_1,\Sigma_2)$ where
$\Sigma_1=\diag(\sigma_1,\ldots,\sigma_{N(a)})$ contains the eigenvalues satisfying ${-}\sigma_\alpha^{-1} \in (m_{2k},0)$, and
$\Sigma_2=\diag(\sigma_{N(a)+1},\ldots,\sigma_n)$ contains the rest.
Define, for $t \in [0,1]$,
\[\Sigma(t)=\diag\big((1-t)\Sigma_1+t\sigma_1I,(1-t)\Sigma_2\big)
=:\diag\big(\sigma_1(t),\ldots,\sigma_n(t)\big),\]
which interpolates between $\Sigma(0)=\Sigma$ and
$\Sigma(1)=\diag(\underbrace{\sigma_1,\ldots,\sigma_1}_{N(a)},\underbrace{0,\ldots,0}_{N-N(a)})$, and let
\[\widetilde K(t)=N^{-1}X^*\Sigma(t)X \text{ with eigenvalues }
\lambda_1(t),\ldots,\lambda_N(t).\]
Observe that
\[\frac{d}{d\sigma_\alpha}z_0'(m)={-}\frac{1}{N}\frac{d}{d\sigma_\alpha}
\frac{1}{(\sigma_\alpha^{-1}+m)^2}\]
which is positive when ${-}\sigma_\alpha^{-1}>m$ and negative when
${-}\sigma_\alpha^{-1}<m$. Then, writing $z_0(m;t)$ and $z_0'(m;t)$ for
$z_0(m)$ and $z_0'(m)$ defined by $\Sigma(t)$,
we see that $z_0'(m;t)$ is increasing in $t$ for each
$m \in (m_{2k+1},m_{2k})$. This implies that there remain two distinct
critical points of $z_0(m;t)$ between the poles $-\sigma_{N(a)}(t)^{-1}$
and $-\sigma_{N(a)+1}(t)^{-1}$ for all $t \in (0,1)$. Writing
$m_{2k+1}(t)<m_{2k}(t)$ for these two critical points, we have
\[\frac{d}{dt}\Big[z_0(m_{2k}(t);t)-z_0(m_{2k+1}(t);t)\Big]
=\frac{1}{N}\sum_{\alpha=1}^n
\bigg(\frac{1}{(1+\sigma_\alpha(t)m_{2k}(t))^2}
-\frac{1}{(1+\sigma_\alpha(t)m_{2k+1}(t))^2}\bigg)
\frac{d}{dt} \sigma_\alpha(t).\]
Since
$\frac{1}{(1+\sigma_\alpha(t)m_{2k}(t))^2}-\frac{1}{(1+\sigma_\alpha(t)m_{2k+1}(t))^2}>0$
and $\frac{d}{dt}\sigma_\alpha(t)>0$ for $\alpha=1,\ldots,N(a)$, and similarly
both are negative for $\alpha=N(a)+1\ldots,n$, we have
$\frac{d}{dt}[z_0(m_{2k}(t);t)-z_0(m_{2k+1}(t);t)]>0$. Then
\begin{equation}\label{eq:gapremains}
z_0(m_{2k}(t);t)-z_0(m_{2k+1}(t);t)
>z_0(m_{2k}(0);0)-z_0(m_{2k+1}(0);0)>2\delta
\end{equation}
for all $t \in (0,1)$, by the starting assumption that
$(a-\delta,a+\delta) \cap \supp(\widetilde \mu_0)=\emptyset$.
Note that $\Sigma(t)$ satisfies Assumption \ref{assum:basic} for all $t \in
[0,1-c]$ and any constant $c>0$. Then \eqref{eq:dichotomy} holds
for each matrix $\widetilde K(t)$ and $t \in [0,1-c]$. Choosing $c \equiv
c(\delta)>0$ sufficiently small, the operator norm bound $\|X\|_\op \leq C\sqrt{N}$
and a standard covering
argument show that with probability $1-CN^{-D}$,
\eqref{eq:dichotomy} holds simultaneously for all $\{\widetilde K(t):t \in
[0,1]\}$. On this event, \eqref{eq:dichotomy} and
\eqref{eq:gapremains} imply
\begin{equation}\label{eq:hatNainterpolation}
\widehat N(a)=
\sum_{i=1}^N \1\Big\{\lambda_i(0) \geq z_0(m_{2k}(0);0)-\delta\Big\}
=\sum_{i=1}^N \1\Big\{\lambda_i(1) \geq z_0(m_{2k}(1);1)-\delta\Big\}
\end{equation}
where $m_{2k}(1)=\lim_{t \to 1} m_{2k}(t)$.
At $t=1$, denoting $Y \in \R^{N(a) \times N}$ as the
first $N(a)$ rows of $X$, we have
$\widetilde K(1)=\sigma_1 \cdot N^{-1}Y^*Y$,
and $z_0(m_{2k}(1);1)$ is the lower edge of the
single bulk component of the standard Marchenko-Pastur law, given by
$z_0(m_{2k}(1);1)=\sigma_1(1-\sqrt{N(a)/N})^2$. Then
\eqref{eq:gapremains} implies that $N(a) \leq
(1-c(\delta))N$ for some constant $c(\delta)>0$, and by concentration of the
smallest singular value of $Y$ \cite[Corollary 5.35]{vershynin2010introduction}, the right side of
\eqref{eq:hatNainterpolation} is exactly equal to $N(a)$ with probability
$1-CN^{-D}$.

Finally, if $a \in (0,x_{2p})$ ($a$ is below the smallest left edge), then \cite[Lemma A.1]{knowles2017anisotropic} shows $N(a)=\min(N,\bar{n})$ where we denote $\bar{n}=\rank(\Sigma)$. Write $\Sigma=\diag(\Sigma_1,0)$ where $\Sigma_1 \in \R^{\bar n \times \bar n}$, and consider the interpolation
\[\Sigma(t)=\diag\big((1-t)\Sigma_1+t\sigma_1 I,0\big).\]
For the (unique) critical point
$m_{2p}(t) \in (-\infty,-\sigma_{\bar n}(t)^{-1}) \cup (0,\infty]$, the above argument establishes $\frac{d}{dt}z_0(m_{2p}(t);t)>0$ and hence
\begin{equation}\label{eq:gapremains2}
z_0(m_{2p}(t);t)>z_0(m_{2k}(0);0)>\delta.
\end{equation}
Hence, on the event where \eqref{eq:dichotomy} holds for all $\{\widetilde K(t):t \in [0,1]\}$, we have that \eqref{eq:hatNainterpolation} holds with $k=p$. Here $\widetilde K(1)=\sigma_1 \cdot N^{-1}Y^*Y$ where $Y \in \R^{\bar n \times N}$ contains the first $\bar n$ rows of $X$, $z_0(m_{2p}(1);1)=\sigma_1(1-\sqrt{\bar n/N})^2$, and \eqref{eq:gapremains2} implies that $\bar n \notin (1 \pm c(\delta))N$ for some $c(\delta)>0$. Then again, the right side of \eqref{eq:hatNainterpolation} is equal to $N(a)=\min(N,\bar n)$ with probability $1-CN^{-D}$. This shows $\widehat N(a)=N(a)$ in all cases.

To extend this to the general non-Gaussian setting, let
$\widetilde K=N^{-1}G^*G$ where $G$ satisfies Assumptions \ref{assum:basic} and
\ref{assum:concentration}, let $\widetilde G \in \R^{n \times N}$ have columns
$\{\widetilde{\g}_i\}_{i=1}^N$ that are i.i.d.\ $N(0,\Sigma)$ and independent
from $G$, and consider the interpolation for $t \in
\{0,1/N,2/N,\ldots,1\}$
\[G(t)=\sqrt{t}G+\sqrt{1-t}\,\widetilde{G},\qquad
\widetilde{K}(t)=N^{-1}G(t)^*G(t).\]
Then $G(t)$ satisfies
Assumption \ref{assum:concentration} uniformly over all such $t$, so
\eqref{eq:dichotomy} holds with probability
$1-CN^{-D}$ for each such matrix $\widetilde K(t)$. It is easy to check that
for a constant $C>0$ and each $l=1,\ldots,N$,
\[\|\widetilde{K}(l/N)-\widetilde{K}((l-1)/N)\|_\op\leq CN^{-1/2}\max_{0\leq
s\leq N}\|\widetilde{K}(s/N)\|_\op,\]
and we have $\|\widetilde{K}(s/N)\|_\op \leq N^\eps$ for any
$\eps>0$ and each $s=0,1,\ldots,N$ with probability $1-C(\eps,D)N^{-D}$ by
Lemma \ref{lem:concentration}(b). Then on the intersection of these events,
\eqref{eq:exactseparation} for the Gaussian matrix
$\widetilde K(1)$ and \eqref{eq:dichotomy} for each matrix
$\widetilde K(l/n)$ implies \eqref{eq:exactseparation} for the original matrix
$\widetilde K=\widetilde K(0)$, completing the proof.
\end{proof}

\begin{proof}[Proof of Corollary \ref{coro:coro of entrywise law}]
For (a), by \eqref{eq:dichotomy}
and the implication of Lemma \ref{lem:exactseparation}
that $\widehat N(a)=N(a)=0$ for $a>\max(x \in \supp(\widetilde \mu_0))+\delta$,
we know that with probability $1-C(\delta,D)N^{-D}$,
$K$ and $\widetilde K$ have no eigenvalues outside $\{x \in
\R:\dist(x,\supp(\widetilde \mu_0 \cup \{0\})) \leq \delta\}$. If there is a
constant $\delta>0$ for which $\dist(0,\supp(\widetilde \mu_0))>\delta$, then
applying Lemma \ref{lem:exactseparation} with $a=\delta/2$ shows
$\widehat N(a)=N(a)=N$, and thus $\widetilde K$ also has no eigenvalues in
$[0,\delta/2]$. Similarly, if $\dist(0,\supp(\mu_0))>\delta$, then
$N \geq n$ and $\widetilde \mu_0=(n/N)\mu_0+(1-n/N)\delta_0$, so
applying Lemma \ref{lem:exactseparation} with $a=\delta/2$ shows
$\widehat N(a)=N(a)=n$. Then $K$ has full rank with no eigenvalues
in $[0,\delta/2]$. This shows all statements of (a).

For (b), the averaged local law \eqref{eq:average law in bulk edge} applied for
$z=E+i\eta\in \bD_j^e(\delta,\tau)$ with $\kappa=E-x_{2k-1} \geq N^{-2/3+\eps}$
and $\eta=N^{-1/2-\eps/4}\kappa^{1/4} \geq N^{-2/3}$ implies
$|\widetilde m(z)-\widetilde m_0(z)| \ll (N\eta)^{-1}$ (see
e.g.\ \cite[Proof of (3.4), Step 1]{pillai2014universality}). Then again
$\sum_{i=1}^N \1\{\lambda_i \in [E-\eta,E+\eta] \leq 2N\eta\Im \widetilde m(z)
\ll 1$, so $\widetilde K$ has no eigenvalue in $[E-\eta,E+\eta]$.
Applying this over a lattice of such values $z \in \bD_j^e(\delta,\tau)$ shows
(b).

For (c), if $\bD=\{z \in \C^+:E \in [E_1,E_2],\,\eta \in [N^{-1+\tau},1]\}$
is a regular domain, then applying \eqref{eq:average law in bulk edge} for
sufficiently small $\tau \equiv \tau(\eps) \in (0,1)$,
the argument of \cite[Proof of (3.4), Step 2]{pillai2014universality} implies
that for any $a,b \in [E_1,E_2]$,
\[|(\widehat N(b)-\widehat N(a))-(N(b)-N(a))| \leq C(\log N)N^\eps.\]
Thus, together with \eqref{eq:exactseparation}, this shows that if an edge $x_j$
is such that $\bD_j^e(\delta,\tau)$ is regular, then with probability
$1-CN^{-D}$ for some $C \equiv C(\delta,\eps,D)>0$,
$|\widehat N(a)-N(a)| \leq C(\log N)N^\eps$
for each $a \in [x_j-\delta,x_j+\delta]$. Furthermore if $\bD_k^b(\delta,\tau)$
is regular and either $\bD_{2k}^e(\delta,\tau)$ or
$\bD_{2k-1}^e(\delta,\tau)$ is regular, then this holds also
for each $a \in [x_{2k}+\delta,x_{2k-1}-\delta]$. Together with the square-root
decay of $\rho_0(x)$ at regular edges, this implies part (c)
(see e.g.\ \cite[Proof of (3.6)]{pillai2014universality}).

For (d), writing the spectral decomposition $\widetilde K=\sum_{i=1}^N \lambda_i\widetilde \x_i\widetilde \x_i^*$,
for any $j \in [N]$, we have
$\Im \widetilde R_{jj}=\sum_{i=1}^N |\widetilde \x_i[j]|^2 \eta/((\lambda_i-E)^2+\eta^2)$. If $\bD_j^e(\delta,\eps)$ is regular, then for each eigenvalue $\lambda_i \in [x_j-\delta,x_j+\delta]$, applying \eqref{eq:entry law in bulk edge} at $z=\lambda_i+iN^{-1+\eps}$ shows
$|\widetilde \x_i[j]|^2 \leq N^{-1+\eps}\Im \widetilde R_{jj} \prec N^{-1+\eps}$. The same argument holds for a regular bulk domain $\bD_k^b(\delta,\eps)$, showing part (d).
\end{proof}

\subsection{Proof of anisotropic law (Theorems \ref{thm:anisotropic law})}

We now prove Theorem \ref{thm:anisotropic law}
on anisotropic approximations for the linearized
resolvent $\Pi(z)$. Note that for $z\in\C^+$,
\[
    \Pi(z)=\begin{bmatrix}
        -zI_n & \frac{1}{\sqrt{N}}G \\
        \frac{1}{\sqrt{N}}G^* & -I_N
    \end{bmatrix}^{-1}=\begin{bmatrix}
        R(z) & \Pi_o(z)^* \\
        \Pi_o(z) & z\widetilde{R}(z)
    \end{bmatrix}\in\C^{(n+N)\times(n+N)}.
\]
Indexing $[N]$ by Roman indices $i,j,\ldots$ and $[n]$ by Greek indices
$\alpha,\beta,\ldots$, the
following resolvent identities hold by Schur complement:
\begin{equation}\label{eq:linear block}
    (\Pi(z))_{ij}=z\widetilde{R}_{ij}(z),\quad (\Pi(z))_{\alpha\beta}=R_{\alpha\beta}(z),\quad (\Pi(z))_{i\alpha}=(\Pi_o(z))_{i\alpha}=z\widetilde{R}_{ii}(z)\g_i^* R^{(i)}(z)\e_{\alpha}.
\end{equation}
We will first show an anisotropic local law for the upper-left block
$R(z)$, and then use this to show the result for
$z\widetilde{R}(z)$ and $\Pi_o(z)$.

Fix two constants $C_0>0$ and $\delta \in (0, \tau/2C_0)$.
Following the same argument as in \cite{knowles2017anisotropic},
we will bootstrap on the spectral scale in multiplicative increments of
$N^{-\delta}$: For any $\eta \geq N^{-1+\tau}$, define
\[
    L \equiv L(\eta)=\max\{ l \in \N : \eta N^{\delta(l-1)} < 1 \},
\qquad \eta_l = \eta N^{\delta l} \text{ for }
l = 0, \dots, L-1, \qquad \eta_L = 1.\]
Note that $L \leq \delta^{-1} + 1$. The following lemma, applying the
fluctuation averaging result of Lemma
\ref{lem:fluctuation_averaging_lowrank2}(a),
is the main input for the bootstrap argument.

\begin{lemma}\label{lem:self-consistent moment bound}
Suppose Assumptions \ref{assum:basic}, \ref{assum:concentration},
and \ref{assum:cumulant} hold, and $\bD$ is a regular spectral domain.
Suppose there exists a deterministic function $\Phi:\bD \to
[N^{-1/2},N^{-\tau/2}]$ and a constant $\delta > 0$
such that uniformly over all deterministic unit vectors $\x, \y \in \C^n$,
\[\frac{\|R\x\|_2}{\sqrt{N}} \prec \Phi, \qquad |\x^* R \y| \prec N^{2\delta}.\]
Then, uniformly over all deterministic unit vectors $\u \in \C^n$ and
$z \in \bD$,
\[|\u^* R \u - \u^* (-zI - z \widetilde{m}_0 \Sigma)^{-1} \u| \prec
\max(N^{2\delta}\Phi,\Psi)\]
where $\Psi$ is the bound in Theorem \ref{thm:entrywise law}.
\end{lemma}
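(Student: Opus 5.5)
Starting from Lemma \ref{lem:Approx_fixed_point} with $A=\u\u^*$, we have
\[\u^*R\u-\u^*(-zI-z\widetilde m\Sigma)^{-1}\u=\frac{1}{z}\sum_{i=1}^N\frac{d_i(\u\u^*)}{1+N^{-1}\g_i^*R^{(i)}\g_i}.\]
Our first step is to replace $\widetilde m$ by $\widetilde m_0$ on the left side. Theorem \ref{thm:entrywise law} gives $|\widetilde m-\widetilde m_0|\prec\Psi$, and regularity of $\bD$ gives $\min_\alpha|1+\sigma_\alpha\widetilde m_0|\ge c$. The resolvent difference $(-zI-z\widetilde m\Sigma)^{-1}-(-zI-z\widetilde m_0\Sigma)^{-1}$ therefore has operator norm $\prec\Psi$, so this replacement costs only $O_\prec(\Psi)$. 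Theorem \ref{thm:entrywise law} also removes the need for the event $\Xi$, since $\1_\Xi\prec 1$.

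Next we expand $d_i(\u\u^*)$ using the rank-one form of Lemma \ref{lem:alternative to L1 bound} with $\v=\u$. For each $0\le k\le l$, set $B_k=\Sigma^k(I+\widetilde m_0\Sigma)^{-k-1}$. The main term is then
\[\sum_{k=0}^l(\widetilde m-\widetilde m_0)^k\sum_i\frac{N^{-1}\u^*B_k(\g_i\g_i^*-\Sigma)R^{(i)}\u}{1+N^{-1}\g_i^*R^{(i)}\g_i}.\]
The remainders are of two kinds.
\begin{itemize}
\item The first kind is $N^{-2}\|R^{(i)}\u\|_2\|B_{k+1}'R^{(i)}\|$-type terms. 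After summing over $i$, each such term is $\prec N\cdot N^{-1}\Phi^2\prec\Phi^2$. Here we use Lemma \ref{lem:prec bounds} to pass from $R^{(i)}$ to $R$ and the hypothesis $\|R\x\|_2/\sqrt N\prec\Phi$.
\item The second kind is $N\cdot N^{-1}\Psi_\Theta=\Psi_\Theta\prec\Psi$.
\end{itemize}

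The $k\ge1$ main terms carry a factor $|\widetilde m-\widetilde m_0|^k\prec\Psi$. We bound their $i$-sums crudely, via Lemma \ref{lem:fluctuation_averaging_lowrank}(a) (after polarization), by $\sqrt N\Phi^2\le 1\cdot\Phi$. Multiplied by $\Psi\le 1$, this is negligible; alternatively we handle them with the same argument as for $k=0$. The conditions of Lemma \ref{lem:fluctuation_averaging_lowrank} (and of its refinement Lemma \ref{lem:fluctuation_averaging_lowrank2}) require bounds uniformly over minors $R^{(S)}$, and these follow from the hypotheses via Lemma \ref{lem:prec bounds}:
\begin{itemize}
\item $\|R^{(S)}\x\|_2\prec\|R\x\|_2$;
\item $|\x^*R^{(S)}\y-\x^*R\y|\prec\Phi^2\le 1$, which gives $|\x^*R^{(S)}\y|\prec N^{2\delta}$;
\item $N^{-1}\|R^{(S)}\|_1\prec1$;
\item $\Gamma^{(S)}\prec N^{-\tau'}$ from Theorem \ref{thm:entrywise law}.
\end{itemize}

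The key step is the $k=0$ main term (and likewise each $k$). This is a bilinear version of Lemma \ref{lem:fluctuation_averaging_lowrank2}(a), with $B_k^*\u$ on the left and $\u$ on the right. We reduce it to the quadratic case by polarization. Since $\|B_k\|_\op\le C$, the vector $\w=B_k^*\u$ has bounded norm, and
\[\w^*X\u=\tfrac14\sum_{\epsilon\in\{\pm1,\pm i\}}\bar\epsilon\,(\w+\epsilon\u)^*X(\w+\epsilon\u)\]
for the linear-in-$X$ expression. After normalizing, each polarized term is of the form in Lemma \ref{lem:fluctuation_averaging_lowrank2}(a) with the $N^{2\delta}$ bound in place of $N^\delta$. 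Applying that lemma with $\delta$ replaced by $2\delta$ gives a bound $\prec N^{2\delta}\Phi$.

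Collecting all the pieces gives $\prec N^{2\delta}\Phi+\Phi^2+\Psi\prec\max(N^{2\delta}\Phi,\Psi)$. The main obstacle is verifying that every hypothesis of Lemma \ref{lem:fluctuation_averaging_lowrank2} transfers uniformly to the minors, in particular the $\|R^{(S)}\|_1$ bound and the $N^{2\delta}$ entrywise bound. We also need to check that the rank-one remainder of Lemma \ref{lem:alternative to L1 bound} is genuinely of order $\Phi^2$ after summation.
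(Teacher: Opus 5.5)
Your proposal is correct and follows the paper's proof essentially step for step: you split off the $\widetilde m\to\widetilde m_0$ replacement at cost $\Psi$, expand $d_i(\u\u^*)$ via Lemma \ref{lem:alternative to L1 bound} with summed remainders $\prec\Phi^2+\Psi$, transfer the hypotheses to the minors via Lemma \ref{lem:prec bounds}, and bound each $k$-term by polarization plus Lemma \ref{lem:fluctuation_averaging_lowrank2}(a) with $\delta$ replaced by $2\delta$.

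The one slip is your ``crude'' option for the $k\ge1$ terms. Since $\Phi\ge N^{-1/2}$, the inequality $\sqrt N\Phi^2\le\Phi$ runs the wrong way, and $\Psi\sqrt N\Phi^2$ is not in general $\prec\max(N^{2\delta}\Phi,\Psi)$. So drop that route and use your stated alternative for every $k$: apply Lemma \ref{lem:fluctuation_averaging_lowrank2}(a) to each $k$-term and use $|\widetilde m-\widetilde m_0|^k\prec1$. This is exactly what the paper does.
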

\begin{proof}
We have
\begin{align*}
    &|\u^*R\u-\u^*(-z-z\widetilde{m}_0\Sigma)^{-1}\u|\\
    &\leq|\underbrace{\u^*R\u-\u^*(-z-z\widetilde{m}\Sigma)^{-1}\u}_{I}|+|\underbrace{\u^*(-z-z\widetilde{m}\Sigma)^{-1}\u-\u^*(-z-z\widetilde{m}_0\Sigma)^{-1}\u}_{II}|
\end{align*}
For $II$, by the identity $A^{-1}-B^{-1}=A^{-1}(B-A)B^{-1}$
\begin{align*}
    |II|&=|z||\widetilde{m}-\widetilde{m}_0||\u^*(-z-z\widetilde{m}\Sigma)^{-1}\Sigma(-z-z\widetilde{m}_0\Sigma)^{-1}\u|\\
    &\prec\Psi\|(-z-z\widetilde{m}\Sigma)^{-1}\|_\op\|(-z-z\widetilde{m}_0\Sigma)^{-1}\|_\op\prec\Psi,
\end{align*}
where we applied Lemma \ref{lem:prec bounds} and Theorem \ref{thm:entrywise
law}. For $I$, by Lemma
\ref{lem:Approx_fixed_point} and Lemma \ref{lem:alternative to L1 bound},
there exists a constant $l\geq 1$ such that
\begin{align*}
    &|\u^*R\u-\u^*(-z-z\widetilde{m}\Sigma)^{-1}\u|\\
&\prec\max_{0\leq k\leq l}
\left|\frac{1}{N}\sum_{i=1}^N\frac{\u^*\Sigma^k(I+\widetilde{m}_0\Sigma)^{-k-1}(\g_i\g_i^*-\Sigma)R^{(i)}\u}{1+N^{-1}\g_i^*
R^{(i)}\g_i}\right|+\max(\Phi^2,\Psi),
\end{align*}
where we have used the given condition and Lemma \ref{lem:prec bounds}
to bound the remainder in Lemma \ref{lem:alternative to L1 bound} as
$N^{-1/2}\|R^{(i)}\u\|_2 \prec N^{-1/2}\|R\u\|_2 \prec \Phi$
and similarly $N^{-1/2}\|\u^*\Sigma^{k+1}(I+\widetilde
m_0\Sigma)^{-k-1}R^{(i)}\|_2 \prec \Phi$.

Note that $\u^*\Sigma^k(I+\widetilde{m}_0\Sigma)^{-k-1}$
is a deterministic vector bounded in $\ell_2$-norm, and the conditions of Lemma \ref{lem:fluctuation_averaging_lowrank2} are satisfied by the given assumptions $N^{-1/2}\|R\x\|_2 \prec \Phi$, $|\x^* R\y| \prec N^{2\delta}$, and the conditions $\Gamma^{(S)} \prec \Psi$ and $N^{-1}\|R^{(S)}\|_1 \prec 1$ in light of Lemma \ref{lem:prec bounds} and Theorem \ref{thm:entrywise law}. Thus
the first term is $\Oprec(N^{2\delta}\Phi)$ by the
polarization identity and fluctuation averaging result of
Lemma \ref{lem:fluctuation_averaging_lowrank2}(a), concluding the proof.
\end{proof}

\begin{lemma}\label{lem:prec bounds on generalized entries}
Under Assumptions \ref{assum:basic} and \ref{assum:concentration}, uniformly over all $z = E + i\eta \in \bD$ and all deterministic unit vectors $\x, \y \in \C^n$,
\[
    |\x^* R(z) \y| \prec N^{2\delta} \sum_{l=1}^{L(\eta)} [\Im \x^* R(E + i\eta_l) \x + \Im \y^* R(E + i\eta_l) \y].
\]
\end{lemma}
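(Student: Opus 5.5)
Spectral decomposition plus a dyadic-type decomposition of eigenvalues by distance from $E$, in the spirit of the $\|R^{(S)}\|_1$ bound in Lemma \ref{lem:prec bounds}. Write $R(z)=\sum_\alpha(\lambda_\alpha-z)^{-1}\w_\alpha\w_\alpha^*$ with $\{\lambda_\alpha\}$ the eigenvalues of $K$ and $\{\w_\alpha\}$ orthonormal eigenvectors. By Cauchy--Schwarz,
\[
|\x^*R\y|\le \sum_\alpha \frac{|\x^*\w_\alpha||\w_\alpha^*\y|}{|\lambda_\alpha-z|}\le \frac12\sum_\alpha\frac{|\x^*\w_\alpha|^2+|\y^*\w_\alpha|^2}{|\lambda_\alpha-z|}.
\]
This reduces the claim to bounding $\sum_\alpha |\x^*\w_\alpha|^2/|\lambda_\alpha-z|$ by $N^{2\delta}\sum_{l=1}^L \Im \x^*R(E+i\eta_l)\x$ up to $\prec$. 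The quantity $\Im\x^*R(E+i\eta')\x=\sum_\alpha |\x^*\w_\alpha|^2\eta'/((\lambda_\alpha-E)^2+\eta'^2)$ plays the role of $\Im m$ there.

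Partition the eigenvalues by the scales $\eta_l=\eta N^{\delta l}$ ($\eta_{-1}=0$, $\eta_L=1$, $\eta_{L+1}=\infty$) into shells $U_l=\{\alpha:\eta_{l-1}\le|\lambda_\alpha-E|<\eta_l\}$ for $l=0,\dots,L+1$.
\begin{itemize}
\item For $1\le l\le L$ and $\alpha\in U_l$, use $|\lambda_\alpha-z|^{-1}\le |\lambda_\alpha-E|^{-1}$ and $|\lambda_\alpha-E|^2+\eta_l^2\le 2\eta_l^2$. This gives
\[
|\lambda_\alpha-z|^{-1}\le \frac{\eta_l}{\eta_{l-1}}\cdot\frac{2\eta_l}{(\lambda_\alpha-E)^2+\eta_l^2}\le 2N^{\delta}\cdot\frac{\eta_l}{(\lambda_\alpha-E)^2+\eta_l^2},
\]
using $\eta_l/\eta_{l-1}\le N^\delta$. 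The ratio is at most $N^\delta$ also for the last step, since $\eta_L=1\le \eta_{L-1}N^\delta$ by the definition of $L$. Summing against $|\x^*\w_\alpha|^2$, shell $U_l$ contributes at most $2N^\delta\Im\x^*R(E+i\eta_l)\x$.
\item For $l=0$, $|\lambda_\alpha-z|\ge \eta$ and $(\lambda_\alpha-E)^2+\eta^2\le 2\eta^2$. So shell $U_0$ contributes at most $2\Im\x^*R(E+i\eta)\x$. Monotonicity of $\eta\mapsto\eta\,\Im\x^*R(E+i\eta)\x$ then gives $\Im\x^*R(E+i\eta)\x\le (\eta_1/\eta)\Im\x^*R(E+i\eta_1)\x=N^\delta\,\Im\x^*R(E+i\eta_1)\x$.
\item For $l=L+1$, $|\lambda_\alpha-E|\ge1$, and $\|K\|_\op\prec1$ (Lemma \ref{lem:concentration}(b)) together with $|E|\le C$ gives $|\lambda_\alpha-E|\prec1$. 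Hence $|\lambda_\alpha-z|^{-1}\le 1\prec 2/((\lambda_\alpha-E)^2+1)$, and this shell contributes $\prec \Im\x^*R(E+i\eta_L)\x$.
\end{itemize}
Summing the shells gives $\sum_\alpha|\x^*\w_\alpha|^2/|\lambda_\alpha-z|\prec N^{\delta}\sum_{l=1}^L\Im\x^*R(E+i\eta_l)\x$. This is stronger than the claimed $N^{2\delta}$, and the slack absorbs constants. Uniformity over $z\in\bD$ and over unit vectors $\x,\y$ holds because every step is deterministic except the single event $\|K\|_\op\le N^{\eps}$, which does not depend on $z,\x,\y$.

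The main (minor) obstacle is bookkeeping at the endpoints of the scale ladder. One must check that $\eta_L/\eta_{L-1}\le N^\delta$ and handle the $U_0$ and $U_{L+1}$ shells, the latter being the only place randomness enters, through $\|K\|_\op\prec1$. The $\eta_{L+1}=\infty$ shell could alternatively be bounded directly using $|\lambda_\alpha - E|\le \|K\|_\op+|E|$. The argument needs neither Assumption \ref{assum:cumulant} nor any local law; it is a purely spectral estimate.
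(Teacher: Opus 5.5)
Your proposal is correct and follows the paper's proof: spectral decomposition of $K$, the bound $|\x^*\w_\alpha||\w_\alpha^*\y|\le\frac12(|\x^*\w_\alpha|^2+|\y^*\w_\alpha|^2)$, and the shell decomposition from \eqref{eq:RL1bound} with the ratio $\eta_l/\eta_{l-1}\le N^\delta$ replacing $2$. The one difference is that you compare each shell $U_l$ directly with $\Im \x^*R(E+i\eta_l)\x$ via $(\lambda_\alpha-E)^2+\eta_l^2\le 2\eta_l^2$. The paper instead compares with the lower scale $\eta_{l-1}$ and then uses monotonicity of $\eta\mapsto\eta\,\Im(\cdot)$, paying the ratio twice, which is why you obtain $N^{\delta}$ in place of $N^{2\delta}$.
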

\begin{proof}
Writing the spectral decomposition
$K=\sum_{\alpha=1}^n \lambda_\alpha \v_\alpha\v_\alpha^*$,
\[
    |\x^* R(z) \y| \leq \sum_{\alpha} \frac{|\x^* \v_\alpha| |\y^* \v_\alpha|}{|\lambda_\alpha - z|} \leq \sum_{\alpha} \frac{|\x^* \v_\alpha|^2}{|\lambda_\alpha - z|} + \sum_{\alpha} \frac{|\y^* \v_\alpha|^2}{|\lambda_\alpha - z|}.
\]
Define $\eta_{-1}=0$, $\eta_{L+1}=\infty$, and the index sets
$U_l=\{\alpha: \eta_{l-1} \leq |\lambda_\alpha - E| < \eta_l\}$ for
$l=0,\ldots,L+1$. Then the same argument as leading to \eqref{eq:RL1bound},
using $\eta_l/\eta_{l-1} \leq N^\delta$ in place of $\eta_l/\eta_{l-1} \leq 2$,
shows
\[\sum_{\alpha} \frac{|\x^* \v_\alpha|^2}{|\lambda_\alpha - z|} = \sum_{l=0}^{L+1} \sum_{\alpha \in U_l} \frac{|\x^* \v_\alpha|^2}{|\lambda_\alpha - z|}
\prec N^{2\delta}\sum_{l=1}^{L(\eta)} \Im \x^* R(E+i\eta_l)\x,\]
and the analogous bound holds for $\y$.
\end{proof}

\begin{lemma}\label{lem:bootstrap}
Suppose Assumptions \ref{assum:basic}, \ref{assum:concentration}, and
\ref{assum:cumulant} hold, and $\bD$ is a regular spectral domain. 
For $k \in \N$, define the domain
$\bD_k=\{ z \in \bD: \Im z \in [N^{-k\delta},1]\}$.
Let $\Psi$ be the bound in Theorem \ref{thm:entrywise law}.
Then the following holds:
\begin{enumerate}[label=(\alph*)]
    \item Uniformly over all $z \in \bD_0$ and all deterministic unit vectors $\u \in \C^n$,
    \[
        |\u^* R \u - \u^* (-z I - z \widetilde{m}_0 \Sigma)^{-1} \u| \prec N^{C_0 \delta} \Psi,
    \]
    \item For all $0 \leq k \leq \delta^{-1}$, if uniformly over all $z \in
\bD_k$ and all deterministic unit vectors $\u \in \C^n$,
    \[
        |\u^* R \u - \u^* (-z I - z \widetilde{m}_0 \Sigma)^{-1} \u| \prec N^{C_0 \delta} \Psi,
    \]
    then uniformly over all $z \in \bD_k$ and all deterministic unit vectors $\u \in \C^n$,
    \[
        \Im \u^* R \u \prec \Im \widetilde{m}_0 + N^{C_0 \delta} \Psi.
    \]
    \item For all $0 \leq k \leq \delta^{-1}$, if uniformly over all $z \in
\bD_k$ and all deterministic unit vectors $\u \in \C^n$,
    \[
        \Im \u^* R \u \prec \Im \widetilde{m}_0 + N^{C_0 \delta} \Psi,
    \]
    then uniformly over all $z \in \bD_{k+1}$ and all deterministic unit vectors $\u \in \C^n$,
    \[
        |\u^* R \u - \u^* (-z I - z \widetilde{m}_0 \Sigma)^{-1} \u| \prec N^{C_0 \delta} \Psi.
    \]
\end{enumerate}
\end{lemma}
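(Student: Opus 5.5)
We prove the three parts in order, each reducing to Lemma \ref{lem:self-consistent moment bound} together with the spectral-decomposition bound of Lemma \ref{lem:prec bounds on generalized entries}.

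\emph{Part (a).} On $\bD_0$ we have $\eta\geq N^{-\delta}$ (read $\bD_0$ as $\eta\geq N^{-\delta}$ so that the base case is nontrivial; for $\eta\asymp 1$ the claim is immediate). Here $\|R\|_\op\leq\eta^{-1}\leq N^{\delta}$, so $|\x^*R\y|\leq N^{\delta}\leq N^{2\delta}$ deterministically. Also $N^{-1/2}\|R\x\|_2\leq N^{-1/2}\eta^{-1}\leq N^{-1/2+\delta}$. We set $\Phi=N^{\delta}\Psi$, which lies in $[N^{-1/2},N^{-\tau/2}]$ after adjusting constants; since $\Psi\gtrsim N^{-1/2}$ when $\eta\asymp1$, this gives $N^{-1/2}\|R\x\|_2\prec\Phi$. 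Lemma \ref{lem:self-consistent moment bound} then yields $|\u^*R\u-\u^*\Pi_0\u|\prec N^{3\delta}\Psi$, where $\Pi_0=(-zI-z\widetilde m_0\Sigma)^{-1}$. This is at most $N^{C_0\delta}\Psi$ once $C_0\geq 3$.

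\emph{Part (b).} We write $\Im\u^*R\u\leq\Im\u^*\Pi_0\u+|\u^*R\u-\u^*\Pi_0\u|$. It then suffices to show $\Im\u^*\Pi_0\u\lesssim\Im\widetilde m_0$. To see this, diagonalize $\Sigma$: the quantity $\Im[(-z(1+\widetilde m_0\sigma_\alpha))^{-1}]$ is bounded by $C(\eta+\sigma_\alpha\Im\widetilde m_0)$, using $|z|\asymp1$ and $|1+\sigma_\alpha\widetilde m_0|\geq c$ from Definition \ref{def:regular}. Since $\eta\lesssim\Im\widetilde m_0$ by \eqref{eq:regularbounds}, this gives the claim.

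\emph{Part (c), the main step.} Fix $z=E+i\eta\in\bD_{k+1}$. Each $\eta_l$ with $l\geq1$ satisfies $\eta_l\geq\eta N^{\delta}\geq N^{-k\delta}$, so $E+i\eta_l\in\bD_k$ by the spectral-domain property. Lemma \ref{lem:prec bounds on generalized entries} combined with the hypothesis gives
\[
|\x^*R(z)\y|\prec N^{2\delta}\sum_{l=1}^{L}\bigl(\Im\widetilde m_0(E+i\eta_l)+N^{C_0\delta}\Psi(E+i\eta_l)\bigr).
\]
Both $\Im\widetilde m_0$ and $\Psi$ are bounded, and $L\leq\delta^{-1}+1$, so this is $\prec N^{2\delta}$. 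Here we must check that $N^{C_0\delta}\Psi\lesssim1$, which holds because $\delta<\tau/2C_0$ and $\Psi\leq N^{-\tau}$ roughly. This is the stated a priori bound up to the constant in the exponent, which we absorb into $\delta$.

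For the $\ell_2$ bound, we use Ward: $N^{-1}\|R\x\|_2^2=\Im\x^*R\x/(N\eta)$. We bound $\Im\x^*R(z)\x$ by monotonicity. The map $\eta\mapsto\eta\,\Im\x^*R(E+i\eta)\x$ is nondecreasing, so
\[
\Im\x^*R(z)\x\leq N^{\delta}\,\Im\x^*R(E+i\eta_1)\x\prec N^{\delta}\bigl(\Im\widetilde m_0(E+i\eta_1)+N^{C_0\delta}\Psi(E+i\eta_1)\bigr).
\]
This is the delicate point. We need $\Im\widetilde m_0(E+i\eta_1)\lesssim N^{C\delta}\Im\widetilde m_0(z)$, which we get from the comparable-growth bounds in \eqref{eq:regularbounds} ($g$ changes by at most a factor $N^{\delta/2}$ or $N^{\delta}$). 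We also need $\Psi(E+i\eta_1)\lesssim\Psi(z)$, which holds since $\Psi$ is roughly decreasing in $\eta$.

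Together these give $N^{-1/2}\|R\x\|_2\prec\Phi:=N^{c\delta}\bigl(\Psi+\sqrt{N^{C_0\delta}\Psi/(N\eta)}\bigr)$. Since $(N\eta)^{-1}\leq\Psi$ and $\Psi\leq1$, the second term is at most $N^{C_0\delta/2}\Psi$. Lemma \ref{lem:self-consistent moment bound} then gives an error $\prec N^{2\delta+c\delta+C_0\delta/2}\Psi$. This is at most $N^{C_0\delta}\Psi$ provided $C_0$ is chosen large compared to the fixed constants $c$ and $2$.

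The main obstacle is this exponent bookkeeping: the losses must close with a fixed $C_0$ independent of $k$, so that the bootstrap does not accumulate. The point is that each step re-derives $\Phi$ from the hypothesis at scale $\bD_k$, with only an additive loss of $O(\delta)$ in the exponent and an exponent of $C_0\delta/2$ inherited from the hypothesis.
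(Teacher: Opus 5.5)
Your proposal is correct and follows essentially the same route as the paper. Part (a) applies Lemma \ref{lem:self-consistent moment bound} with trivial a priori bounds. Part (b) uses the spectral bound $\Im\u^*(-zI-z\widetilde m_0\Sigma)^{-1}\u\lesssim\Im\widetilde m_0$. Part (c) combines Lemma \ref{lem:prec bounds on generalized entries}, monotonicity of $\eta\,\Im\u^*R(E+i\eta)\u$ to pass to a scale at most $N^{\delta}\eta$, Ward's identity, and Lemma \ref{lem:self-consistent moment bound}, closing with the same fixed-$C_0$ condition.

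One small correction: as defined in the statement, $\bD_0=\{\Im z=1\}$, and the bound there is not ``immediate'' because it still needs the fluctuation-averaging input. Your argument for $\eta\ge N^{-\delta}$ includes $\eta=1$, so nothing is lost.
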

\begin{proof}
For (a), note that for $z \in \bD_0$, we have $\eta=1$, so $\|R \x\|_2 /
\sqrt{N} \leq \|R\|_\op / \sqrt{N} \leq N^{-1/2}$ and
$|\x^* R \y| \leq \|R\|_\op \leq 1$ for any unit vectors $\x, \y \in \C^n$. By Lemma \ref{lem:self-consistent moment bound},
\[
    |\u^* R \u - \u^* (-z I - z \widetilde{m}_0 \Sigma)^{-1} \u| \prec \frac{N^{2 \delta}}{\sqrt{N}} = N^{2 \delta - 1/2} \leq N^{C_0 \delta} \Psi,
\]
where the last bound holds for sufficiently large $C_0>0$ and any $\delta \in
(0,\tau/2C_0)$. This proves (a).

For (b), note that uniformly over $z \in \bD_k$, (a) implies
\[
    \Im \u^* R \u \prec \Im \u^* (-z I - z \widetilde{m}_0 \Sigma)^{-1} \u + N^{C_0 \delta} \Psi.
\]
Denoting $\v_\alpha$ as the eigenvectors of $\Sigma$ with eigenvalues $\sigma_\alpha$,
\[
    \Im \u^* (-z I - z \widetilde{m}_0 \Sigma)^{-1} \u = \sum_{\alpha=1}^n \frac{\eta (1 + \Re \widetilde{m}_0) + E \Im \widetilde{m}_0}{|z|^2 |1 + \sigma_\alpha \widetilde{m}_0|^2} |\v_\alpha^* \u|^2 \leq C \Im \widetilde{m}_0,
\]
since, over a regular domain $\bD$, we have $\Im \widetilde{m}_0 \gtrsim \eta$,
$|z| \asymp 1$, $|\Re \widetilde{m}_0| \leq |\widetilde{m}_0| \lesssim 1$, and
$\min_\alpha |1 + \sigma_\alpha \widetilde{m}_0| \gtrsim 1$. This proves (b).

For (c), let $z = E + i \eta \in \bD_{k+1}$, so $\eta \geq N^{-(k+1) \delta}$.
By construction, $E + i \eta_l \in \bD_k$ for $l = 1, \dots, L(\eta)$, since
$\eta_l = \eta N^{\delta l} \geq N^{-(k+1) \delta} N^{\delta} = N^{-k \delta}$.
The assumption implies uniformly over all such $z=E+i\eta \in \bD_{k+1}$ and
unit vectors $\x \in \C^n$,
\[
    \Im \x^* R(E + i \eta_l) \x \prec \Im \widetilde{m}_0(E + i \eta_l) + N^{C_0
\delta} \Psi(E + i \eta_l) \prec 1,
\]
since $\Im \widetilde{m}_0(E + i \eta_l) \leq C$, $\Psi(E + i \eta_l) \leq
CN^{-\tau/2}$ for any $\eta_l \geq N^{-1+\tau}$, and $\delta \in (0,\tau/2C_0)$.
Then by Lemma \ref{lem:prec bounds on generalized entries}, uniformly over
unit vectors $\x, \y \in \C^n$, $|\x^* R(z) \y| \prec N^{2 \delta}$.
Since the function $\eta \mapsto \eta \Im s(z)$ is nondecreasing and $\eta
\mapsto \Im s(z) / \eta$ is nonincreasing for any Stieltjes transform $s(z)$,
letting $z_1 = E + i N^\delta \eta \in \bD_m$, we have
\[
    \Im \u^* R(z) \u \leq \frac{N^\delta \eta}{\eta} \Im \u^* R(z_1) \u \prec
N^\delta [\Im \widetilde{m}_0(z_1) + N^{C_0 \delta} \Psi(z_1)] \leq N^{2 \delta}
[\Im \widetilde{m}_0(z) + N^{C_0 \delta} \Psi(z)].
\]
Thus,
\[
    \frac{\|R \u\|_2}{\sqrt{N}} = \sqrt{\frac{\Im \u^* R \u}{N \eta}}
\prec N^\delta \sqrt{\frac{\Im \widetilde{m}_0 + N^{C_0 \delta} \Psi}{N
\eta}} \leq N^\delta (1 + N^{C_0 \delta / 2}) \Psi,
\]
the last inequality using $\sqrt{a+b} \leq \sqrt{a}+\sqrt{b}$ and $\sqrt{\Im
\tilde m_0(z)/N\eta} \leq \Psi(z)$.
Noting that $N^\delta (1 + N^{C_0 \delta / 2}) \leq 2N^{(C_0 / 2 + 1) \delta}$
and setting $\Phi = N^{(C_0 / 2 + 1) \delta} \Psi$, Lemma
\ref{lem:self-consistent moment bound} then shows that
\[
    |\u^* R \u - \u^* (-z I - z \widetilde{m}_0 \Sigma)^{-1} \u| \prec N^{2 \delta} \Phi = N^{(C_0 / 2 + 3) \delta} \Psi \leq N^{C_0 \delta} \Psi,
\]
the last inequality holding for $C_0>6$. This completes the proof.
\end{proof}

\begin{proof}[Proof of Theorem \ref{thm:anisotropic law}]
Lemma \ref{lem:bootstrap} implies, uniformly over $z \in \bD$ and all deterministic unit vectors $\u \in \C^n$,
\[|\u^* R \u - \u^* (-z I - z \widetilde{m}_0 \Sigma)^{-1} \u| \prec N^{C_0 \delta} \Psi.
\]
Since the constant $\delta \in (0,\tau/2C_0)$ here is arbitrarily small, this implies
\begin{equation}\label{eq:Rzblockbound}
|\u^* R \u - \u^* (-z I - z \widetilde{m}_0 \Sigma)^{-1} \u| \prec \Psi.
\end{equation}

Taking $\delta>0$ arbitrarily small within the arguments
of Lemma \ref{lem:bootstrap} shows also
\[
    \frac{\|R\x\|_2}{\sqrt{N}}\prec\Psi,\qquad |\x^*R\y|\prec 1
\]
uniformly over unit vectors $\x,\y\in\C^n$ and $z\in\bD$.
Then by the resolvent identity (\ref{eq:schur complement}),
uniformly over unit vectors $\v \in \C^N$,
\begin{align*}
    \v^*\widetilde{R}\v&=\widetilde{m}_0+\sum_{i=1}^N|\v(i)|^2(\widetilde{R}_{ii}-\widetilde{m}_0)+\sum_{i\neq j}\Bar{\v}(i)\v(j)\widetilde{R}_{ij}\\
    &=\widetilde{m}_0+\Oprec(\Psi)+z\sum_{i\neq j}\Bar{\v}(i)\v(j)\widetilde{R}_{ii}\widetilde{R}_{jj}^{(i)}N^{-1}\g_i^*R^{(ij)}\g_j\\
    &=\widetilde{m}_0+\Oprec(\Psi)+z^{-1}\sum_{i\neq
j}\frac{\Bar{\v}(i)\v(j)N^{-1}\g_i^*R^{(ij)}\g_j}{(1+N^{-1}\g_i^*R^{(i)}\g_i)(1+N^{-1}\g_j^*R^{(ij)}\g_j)}.
\end{align*}
Then by the fluctuation averaging result of
Lemma \ref{lem:fluctuation_averaging_lowrank2}(b) applied with $\delta=0$ and $\Phi \asymp \Psi$,
\begin{equation}\label{eq:tildeRzblockbound}
|\v^*\widetilde{R}\v-\widetilde m_0| \prec \Psi.
\end{equation}
Similarly, by the identity \eqref{eq:linear block},
\[
    \v^*\Pi_o\u=\sum_{i=1}^N\bar{\v}(i)N^{-1/2}z\widetilde{R}_{ii}\g_i^*R^{(i)}\u=-\sum_{i=1}^N\frac{\bar{\v}(i)N^{-1/2}\g_i^*R^{(i)}\u}{1+N^{-1}\g_i^*R^{(i)}\g_i},
\]
so by the fluctuation averaging result of
Lemma \ref{lem:fluctuation_averaging_lowrank2}(c) applied with $\delta=0$ and $\Phi \asymp \Psi$,
\begin{equation}\label{eq:Piozblockbound}
|\v^*\Pi_o\u| \prec \Psi.
\end{equation}
The theorem follows from \eqref{eq:Rzblockbound}, \eqref{eq:tildeRzblockbound},
\eqref{eq:Piozblockbound} and the polarization identity.
\end{proof}

\begin{proof}[Proof of Corollary \ref{coro:Singular vector lies in general position}]

Writing
$K=\sum_{\alpha=1}^n \lambda_\alpha \x_\alpha\x_\alpha^*$ and $\widetilde K=\sum_{i=1}^N \lambda_i \widetilde \x_i\widetilde \x_i^*$, for any $\u \in \R^n$ and $\v \in \R^N$ we have
$\Im \u^* R \u=\sum_{\alpha=1}^n \<\u,\x_\alpha\>^2\eta/((\lambda_\alpha-E)^2+\eta^2)$ and
$\Im \v^* \widetilde R \v=\sum_{i=1}^N \<\v,\widetilde \x_i\>^2\eta/((\lambda_i-E)^2+\eta^2)$. Thus the result follows from Theorem \ref{thm:anisotropic law} by the same argument as Corollary \ref{coro:coro of entrywise law}(d).
\end{proof}

\subsection{Proof outside the spectrum (Theorem \ref{thm:outside spec})}

\begin{proof}[Proof of Theorem \ref{thm:outside spec}]
Fixing $\eta \in (0,1)$ and $\delta>0$,
we first show the result over the (regular) spectral domain
$\bD^o \equiv \bD^o(\delta,\eta)$ defined in \eqref{eq:spectraldomains}.

\eqref{eq:dichotomy} shows that
$K$ has no eigenvalues in $\{x \in \R:|x| \leq 2\delta^{-1},\,\dist(x,\supp(\widetilde \mu_0 \cup \{0\})) \geq
\delta/2)\}$, with high probability. In light of Lemma \ref{lem:prec bounds}, Theorem \ref{thm:entrywise law}, and the bound $\Im \widetilde m_0 \lesssim \eta$ for $z \in \bD^o$, we have $|\widetilde m^{(S)}-\widetilde m| \prec \Psi_\Theta^2 \prec N^{-1}+(N\eta)^{-2}$. Then the argument for \eqref{eq:dichotomy} may be applied equally with $\widetilde m^{(S)}$ in place of $\widetilde m$, to show
that for all $S \subset [N]$ with $|S| \leq L$ a fixed constant, also $K^{(S)}$ has no eigenvalues in $\{x \in \R:|x| \leq 2\delta^{-1},\,\dist(x,\supp(\widetilde \mu_0 \cup \{0\})) \geq
\delta/2)\}$ with high probability. Thus
$\|R^{(S)}(z)\|_\op \prec 1$ uniformly over $z \in \bD^o$ and such subsets $S
\subset [N]$.

This implies $N^{-1}\|R^{(S)}(z)\|_F \leq N^{-1/2}\|R^{(S)}\|_\op \prec
N^{-1/2}$, so Lemmas \ref{lem:prec bounds},
\ref{lem:alternative to L1 bound}, and \ref{lem:fixed_point_error} all
hold with $N^{-1/2}$ in place of $\Psi_\Theta$.
Applying Lemma \ref{lem:fluctuation_averaging} with $\Phi=N^{-1/2}$ shows,
in place of \eqref{eq:z0zdiff}, that $|z_0(\widetilde{m})-z|\prec N^{-1}$.
Then regularity of $\bD^o$ and the observation $\kappa \asymp 1$ for
$z \in \bD^o$ implies that
\[|\widetilde{m}-\widetilde{m}_0| \prec
\frac{N^{-1}}{\sqrt{\kappa+\eta}+N^{-1/2}} \prec N^{-1},\]
and hence also $|m-m_0| \prec N^{-1}$.

We have also uniformly over unit vectors $\x,\y \in \C^n$,
$N^{-1/2}\|R^{(S)}\x\|_2 \leq N^{-1/2}\|R^{(S)}\|_\op \prec N^{-1/2}$
and $|\x^* R^{(S)}\y| \leq \|R^{(S)}\|_\op \prec 1$. Then applying Lemma
\ref{lem:fluctuation_averaging_lowrank} with $\Phi=N^{-1/2}$ in place of
Lemma \ref{lem:fluctuation_averaging_lowrank2} in the arguments of
Lemma \ref{lem:self-consistent moment bound},
\eqref{eq:tildeRzblockbound}, and \eqref{eq:Piozblockbound} shows
\[|\u^*R\u-\u^*(-zI-z\widetilde m_0\Sigma)^{-1}\u|
\prec N^{-1/2},
\quad |\v^*\widetilde R\v-\widetilde m_0| \prec N^{-1/2},
\quad |\v^*\Pi_o \u| \prec N^{-1/2}.\]
By the polarization identity, 
this implies all statements of the theorem for $z \in \bD^o$.

To extend these estimates to $\bar\bD^o(\delta)$, first note that by the conjugate symmetry $\overline{\widetilde{m}(\bar{z})}=\widetilde{m}(z)$ and $\overline{\Pi(\bar{z})}=\Pi(z)$ it suffices to extend to the domain $\bar\bD^o(\delta)\cap\overline{\C^+}$. Next, observe that
\eqref{eq:dichotomy} implies also, on an event of high
probability, $R(z)$ and $\widetilde{R}(z)$ are $2/\delta$-Lipschitz in the
operator norm over $\bar\bD^o(\delta)$. Applying this Lipschitz continuity and
the results for $\bD^o(\delta,\eta)$ shows that all statements of
Theorem \ref{thm:outside spec} hold over $\bar\bD^o(\delta)\cap\overline{\C^+}$ with an additional additive error of
$(2/\delta)N^{-1+\tau}$. Since $\tau>0$ is arbitrary, this implies the theorem.
\end{proof}

\section{Analysis of examples}\label{sec:analysis-examples}

\subsection{Separable distributions}

\begin{proof}[Proof of Proposition \ref{prop:linear}]
Suppose first that $d=n$, $X=I$, and $\g=\w$. Then Assumption
\ref{assum:concentration} holds by a standard high moment estimate, see e.g.\
\cite[Lemma 3.1]{bloemendal2014isotropic}.
For Assumption \ref{assum:cumulant}, set $\cU=\{\e_1,\ldots,\e_n\}$, so
$\|T\|_\cU=\|T\|_\infty$.
Since $\w$ has independent entries, $\kappa_k(\w) \in (\R^n)^{\otimes k}$
is diagonal. Then for any $1 \leq m \leq k-1$, $\s_1,\ldots,\s_m\in\R^n$, and
$T\in(\R^n)^{\otimes k-m}$,
\[|\<\kappa_k(\w),\s_1\otimes\cdots\otimes\s_m\otimes
T\>|\leq\sum_{\alpha=1}^n|\kappa_k(\w[\alpha])||T[\alpha,\ldots,\alpha]|\prod_{t=1}^m|\s_t[\alpha]|
\leq C\|T\|_\cU\sum_{\alpha=1}^n \prod_{t=1}^m|\s_t[\alpha]|,\]
where the last inequality holds for some $C \equiv C(k)>0$ by the moment bounds
for $\w[\alpha]$. We may bound $\sum_{\alpha=1}^n |\s_1[\alpha]| \leq
\sqrt{n}\|\s_1\|_2$ for $m=1$, and
$\sum_{\alpha=1}^n \prod_{t=1}^m |\s_t[\alpha]| \leq \prod_{t=1}^m \|\s_t\|_2$
for $m \geq 2$, showing Assumption \ref{assum:cumulant} in the form of
\eqref{eq:linearcumulant}, which is stronger since $k \geq 3$ and $m \leq k-1$.

For general $d \asymp n$ and $X \neq I$,
since $\|X\|_\op \leq \|\Sigma\|_\op^{1/2} \leq C$ for
a constant $C>0$, it is clear that Assumption \ref{assum:concentration} holds
for $\g$ since it holds for $\w$. For any
$1 \leq m \leq k-1$, $\s_1,\ldots,\s_m\in\R^n$, and
$T\in(\R^n)^{\otimes k-m}$, letting
$\s_t'=X^*\s_t \in \R^d$ and $T'=(X^*)^{\otimes (k-m)}\cdot T \in
(\R^d)^{\otimes k-m}$ (denoting the multiplication of $T$ along each axis
by $X^*$), it follows from multilinearity of the cumulants that 
\[\<\kappa_k(\g_i),\s_1\otimes\cdots\otimes\s_m\otimes
T\>=\<\kappa_k(\w_i),\s_1'\otimes\cdots\otimes\s_m'\otimes T'\>.\]
Setting $\cU=\{\e_1,\ldots,\e_n,X\e_1,\ldots,X\e_d\}$, we have
$\|\s_t'\|_2 \leq \|X\|_\op\|\s_t\|_2$ and $\|T'\|_\infty \leq \|T\|_\cU$,
so \eqref{eq:linearcumulant} follows from the above result for $X=I$.
\end{proof}

\subsection{Conditionally mean-zero distributions}

We show Proposition \ref{prop:sign change invariant}.

\begin{lemma}\label{lem:l2 norm of cumulant}
In the setting of Proposition \ref{prop:sign change invariant}, for any fixed $k
\geq 1$, uniformly over $\alpha_1,\ldots,\alpha_k \in [d]$, we have
\begin{equation}\label{eq:signinvariantkappabound}
|\kappa(\w[\alpha_1],\ldots,\w[\alpha_k])| \prec 1
\end{equation}
and
\begin{equation}\label{eq:signinvariantkappasumbound}
\sum_{\beta=1}^d
\kappa_{k+2}(\w[\beta],\w[\beta],\w[\alpha_1],\ldots,\w[\alpha_k])^2 \prec 1.
\end{equation}
\end{lemma}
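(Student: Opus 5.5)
The plan is to reduce both bounds to moment estimates for $\w$ via the moment-cumulant formula, using Assumption \ref{assum:concentration} for $\w$ and the conditional mean-zero (sign-type) vanishing property. A mixed cumulant $\kappa(\w[\alpha_1],\ldots,\w[\alpha_k])$ is a finite polynomial, with coefficients depending only on $k$, in the mixed moments $\E\prod_{t\in B}\w[\alpha_t]$ over subsets $B\subseteq[k]$. So for \eqref{eq:signinvariantkappabound} it suffices that every such moment is $\Oprec(1)$ uniformly in the indices. For this I would apply Assumption \ref{assum:concentration} with $A=\e_\alpha\e_\alpha^*$, where $\|A\|_F=1$. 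This gives $\w[\alpha]^2=1+\Oprec(1)$, hence $|\w[\alpha]|\prec 1$. Combining this with the polynomial moment bound $\E\|\w\|_2^k\le n^{C_k}$ and Lemma \ref{lemma:domination}(c) (or Hölder directly) yields $\E|\w[\alpha]|^p\prec 1$ for each fixed $p$. Then $\E\prod|\w[\alpha_t]|\prec 1$ by generalized Hölder, and \eqref{eq:signinvariantkappabound} follows.

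For \eqref{eq:signinvariantkappasumbound}, I would expand $\kappa_{k+2}(\w[\beta],\w[\beta],\w[\alpha_1],\ldots,\w[\alpha_k])$ over partitions $\pi$ of the $k+2$ slots as $\sum_\pi \mu(\pi)\prod_{B\in\pi}\E\prod_{t\in B}\w[\cdot]$. Call the two $\beta$-slots $b_1,b_2$. Suppose $\beta\notin\{\alpha_1,\ldots,\alpha_k\}$, which holds for all but at most $k$ values of $\beta$; the exceptional values contribute $\Oprec(1)$ by \eqref{eq:signinvariantkappabound}. In this case, any block containing exactly one of $b_1,b_2$ has zero moment, by the hypothesis that the expectation vanishes when one index is distinct from the rest. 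Hence only partitions in which $b_1,b_2$ lie in a common block survive. This is the key structural step.

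For such $\pi$, let $B_0$ be the block containing $\{b_1,b_2\}$ together with a subset $A_0$ of the $\alpha$-slots. The corresponding factor is $\E[\w[\beta]^2 Y]$, where $Y=\prod_{t\in A_0}\w[\alpha_t]$ and the other factors are $\Oprec(1)$. The needed bound is $\sum_\beta(\E[\w[\beta]^2Y]-c_\pi)^2\prec 1$, after subtracting the contributions for which the pairing is independent of $\beta$. Here I would use that the Möbius sum reorganizes into cumulant form, so the sum over $\pi$ should telescope to terms of the form $\E[(\w[\beta]^2-1)Y']$ with $Y'$ a centered polynomial in the $\alpha$-coordinates. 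The subtraction of $1=\E\w[\beta]^2$ arises because the partition splitting $B_0$ into $\{b_1,b_2\}\cup(A_0)$ contributes exactly $\E\w[\beta]^2\,\E Y$. The square sum is then bounded by duality: $\sum_\beta c_\beta\,\E[(\w[\beta]^2-1)Y']=\E[(\w^*D\w-\Tr D)Y']$ with $D=\diag(c_\beta)$. By Assumption \ref{assum:concentration} this is $\prec\|D\|_F(\E|Y'|^2)^{1/2}\prec\|c\|_2$, and taking the supremum over $\|c\|_2=1$ gives \eqref{eq:signinvariantkappasumbound}.

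I expect the main obstacle to be the combinatorial reorganization: showing that after the vanishing property, the surviving partition sum equals a fixed linear combination of terms $\E[(\w[\beta]^2-1)Y']\cdot(\text{$\beta$-free }\Oprec(1)\text{ factors})$. In particular, no bare $\E\w[\beta]^2$-type terms may remain uncancelled, since these would sum to order $d$. I would try induction on $k$ using the recursive cumulant formula conditioned on the block containing $b_1$.
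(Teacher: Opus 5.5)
Everything outside the central step matches the paper: the proof of \eqref{eq:signinvariantkappabound}, discarding the at most $k$ values $\beta\in\{\alpha_1,\ldots,\alpha_k\}$, the vanishing of every partition that separates the two $\beta$-slots, and the closing duality bound with $A=\diag(\v)$. The gap is the step that carries all the content. You need that the surviving partition sum equals $\E[u(\w[\beta]^2-1)]$ for a single $\beta$-independent $u$ with $|u|\prec 1$. You only assert this (``should telescope'') and defer it to an induction you do not carry out. As you note yourself, this step is essential. Individual surviving terms are not centered: the partitions having $\{b_1,b_2\}$ as a block contribute $\beta$-independent $O(1)$ constants, whose squares sum to order $d$. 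Without the exact cancellation, the duality argument has nothing to act on.

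The missing idea is a short identification, and no induction is needed. Merging $b_1,b_2$ into one slot is a bijection from partitions of $[k+2]$ with $b_1,b_2$ in a common block onto partitions of $[k+1]$. It preserves the number of blocks, hence the M\"obius weight $(-1)^{|\pi|-1}(|\pi|-1)!$. It also turns each block moment into the moment with $\w[\beta]^2$ treated as a single variable. So the surviving sum is exactly the moment--cumulant formula for $\kappa_{k+1}(\w[\alpha_1],\ldots,\w[\alpha_k],\w[\beta]^2)$.

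Since $k+1\ge 2$, this cumulant is unchanged by shifting an argument by a constant. It therefore equals $\kappa_{k+1}(\w[\alpha_1],\ldots,\w[\alpha_k],\w[\beta]^2-1)$. Expanding by linearity in the last argument gives $\E[u(\w[\beta]^2-1)]$, where $u$ is a polynomial in $\w[\alpha_1],\ldots,\w[\alpha_k]$ with $\Oprec(1)$ coefficients. This is precisely the form your duality step needs.

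Your subtraction idea can also be completed directly. Group the surviving partitions by the induced partition $\rho$ of the $\alpha$-slots, and write $\E[\w[\beta]^2Y_{A_0}]=\E[(\w[\beta]^2-1)Y_{A_0}]+\E Y_{A_0}$. For fixed $\rho$, the leftover constants then carry total weight $\mu(|\rho|+1)+|\rho|\,\mu(|\rho|)=0$, where $\mu(j)=(-1)^{j-1}(j-1)!$.
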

\begin{proof}
Applying Assumption \ref{assum:concentration} for $\w$
with $A=\e_\alpha\e_\alpha^*$ shows $|\w[\alpha]| \prec 1$. Together with the
moment bound for $\w$ in Assumption \ref{assum:concentration}, this implies
for any fixed integer $k \geq 1$ that $\E|\w[\alpha]|^k \prec 1$.
The first claim $|\kappa(\w[\alpha_1],\ldots,\w[\alpha_k])| \prec 1$ then
follows from the moment-cumulant relations.

For the second claim, let $S=\{\alpha_1,\ldots,\alpha_k\}$. 
Let $P_{k+2}$ denote the set of all partitions of $[k+2]$. For any
$\beta \notin S$, denoting $\alpha_{k+1}=\alpha_{k+2}=\beta$, the
moment-cumulant relations give
\[\kappa_{k+2}(\w[\alpha_1],\ldots,\w[\alpha_k],\w[\beta],\w[\beta])
=\sum_{\pi \in P_{k+2}} (-1)^{|\pi|-1}(|\pi|-1)!
\prod_{B \in \pi} \E\Big[\prod_{j \in B} \w[\alpha_j]\Big]\]
Observe that if any block $B \in \pi$ contains exactly one of the two indices
$\{k+1,k+2\}$, then $\E \prod_{j \in B} \w[\alpha_j]=0$ by the given condition for $\w$. Thus, denoting by $\dot P_{k+2}$ those
partitions which put $k+1,k+2$ in the same block, we have
\begin{align*}
\kappa_{k+2}(\w[\alpha_1],\ldots,\w[\alpha_k],\w[\beta],\w[\beta])
&=\sum_{\pi \in \dot P_{k+2}} (-1)^{|\pi|-1}(|\pi|-1)!
\prod_{B \in \pi} \E\Big[\prod_{j \in B} \w[\alpha_j]\Big]\\
&=\kappa_{k+1}(\w[\alpha_1],\ldots,\w[\alpha_k],\w[\beta]^2)\\
&=\kappa_{k+1}(\w[\alpha_1],\ldots,\w[\alpha_k],\w[\beta]^2-1).
\end{align*}
Now let $P_{k+1}$ be all partitions of $[k+1]$, and for all $\pi \in P_{k+1}$,
let $B^* \in \pi$ denote the block containing the last element $k+1$. Then
we may further expand this as
\begin{align*}
&\kappa_{k+1}(\w[\alpha_1],\ldots,\w[\alpha_k],\w[\beta]^2-1)\\
&=\sum_{\pi\in P_{k+1}}(-1)^{|\pi|-1}(|\pi|-1)!\left(\prod_{B\in\pi\setminus
B^*}\E\left[\prod_{j\in B} \w[\alpha_j]\right]\right)
\E\left[\left(\prod_{j\in B^*\setminus\{k+1\}}\w[\alpha_j]\right)
(\w[\beta]^2-1)\right]\\
&=\E\left[\underbrace{\left(\sum_{\pi\in P_{k+1}}(-1)^{|\pi|-1}(|\pi|-1)!\left(\prod_{B\in\pi\setminus
B^*}\E\left[\prod_{j\in B} \w[\alpha_j]\right]\right)
\prod_{j\in B^*\setminus\{k+1\}}\w[\alpha_j]\right)}_{:=u}
(\w[\beta]^2-1)\right]\\
&=\E[u(\w[\beta]^2-1)],
\end{align*}
where $u \in \R$ is a scalar random variable in the probability space of $\w$,
depending on $\alpha_1,\ldots,\alpha_k$, and satisfying $|u| \prec 1$
uniformly over $\alpha_1,\ldots,\alpha_k \in [d]$.

Then, applying also the first claim \eqref{eq:signinvariantkappabound} for
$\beta \in S$, we have
\begin{align*}
\sum_{\beta=1}^d \kappa_{k+2}(\w[\alpha_1],\ldots,\w[\alpha_k],
\w[\beta],\w[\beta])^2
&=\sum_{\beta \notin
S}\kappa_{k+1}(\w[\alpha_1],\ldots,\w[\alpha_k],\w[\beta]^2-1)^2
+\Oprec(1)\\
&=\sum_{\beta=1}^d \Big(\E[u(\w(\beta)^2-1)]\Big)^2+\Oprec(1).
\end{align*}
The proof is concluded by the observation
\begin{align*}
\sum_{\beta=1}^d \Big(\E[u(\w[\beta]^2-1)]\Big)^2
&=\sup_{\v \in \R^d:\|\v\|_2=1} \bigg(\sum_{\beta=1}^d \v[\beta]
\E[u(\w[\beta]^2-1)]\bigg)^2\\
&\leq \sup_{\v \in \R^d:\|\v\|_2=1} 
\E u^2 \cdot \E\bigg|\sum_{\beta=1}^d \v[\beta](\w[\beta]^2-1)\bigg|^2
\prec 1,
\end{align*}
where the last inequality uses $|u| \prec 1$ and Assumption
\ref{assum:concentration} for $\w$ with $A=\diag(\v)$.
\end{proof}

\begin{proof}[Proof of Proposition \ref{prop:sign change invariant}]
Suppose first that $d=n$, $X=I$, and $\g=\w$. Let
$\cU=\{\e_1,\ldots,\e_d\}$.
Fix any $k \geq 3$, $m \in \{1,\ldots,k-1\}$,
unit vectors $\v_1,\ldots,\v_m\in\R^n$, and $T\in(\R^n)^{\otimes k-m}$. Then
$\|T\|_\cU=\|T\|_\infty$, and
\begin{align*}
   |\<\kappa_k(\w), \v_1\otimes\cdots \otimes \v_m\otimes T\>|
    &\leq \|T\|_\cU \sum_{\alpha_1,\ldots,\alpha_k=1}^n
|\kappa_k(\w[\alpha_1],\ldots,\w[\alpha_k])|\prod_{l=1}^m|\v_l[\alpha_l]|.
\end{align*}
For any index tuple $\alpha=(\alpha_1,\ldots,\alpha_k) \in [n]^k$, write
$\pi(\alpha)$ for the partition of $[k]$ such that $i,j$ belong to the same
block of $\pi(\alpha)$ if and only if $\alpha_i=\alpha_j$.
If any block of $\pi(\alpha)$ has cardinality 1, then
$|\kappa_k(\w[\alpha_1],\ldots,\w[\alpha_k])|=0$ by
the given condition for $\w$.
Thus, letting $P_k^e$ be the set of partitions of $[k]$
where each block has cardinality at least 2,
\[|\<\kappa_k(\w), \v_1\otimes\cdots \otimes \v_m\otimes T\>|
\leq \|T\|_\cU \sum_{\pi\in P_k^e}\sum_{\alpha \in [n]^k:\pi(\alpha)=\pi}
|\kappa_k(\w[\alpha_1],\ldots,\w[\alpha_k])|
\prod_{l=1}^m|\v_l(\alpha_l)|.\]
Now consider any $\pi\in P_k^e$. Note that
\begin{equation}\label{eq:vlpbounds}
\sum_{\alpha=1}^n 1=n, \quad \sum_{\alpha=1}^n |\v_l[\alpha]| \leq
\sqrt{n}\|\v_l\|_2,
\quad \sum_{\alpha=1}^n |\v_{l_1}[\alpha] \ldots \v_{l_p}[\alpha]| \leq 
\|\v_{l_1}\|_2\ldots\|\v_{l_p}\|_2 \text{ for } p \geq 2.
\end{equation}
Let $K_0(\pi)$ and $K_1(\pi)$ be the numbers of blocks $B
\in \pi$ for which 0 and 1, respectively, of the first $m$ indices
$\alpha_1,\ldots,\alpha_m$ belong to $B$. Then, applying
\eqref{eq:signinvariantkappabound} and \eqref{eq:vlpbounds}, we get
\begin{equation}\label{eq:singinvariantkappadecomp}
|\<\kappa_k(\w), \v_1\otimes\cdots \otimes \v_m\otimes T\>|
\prec \|T\|_\cU \prod_{l=1}^m \|\v_l\|_2
\sum_{\pi \in P_k^e} \sqrt{n}^{2K_0(\pi)+K_1(\pi)}.
\end{equation}
Note that if 0 (or 1) such indices
belong to $B$, then $B$ has at least 2 (resp.\ 1) indices from
$m+1,\ldots,k$, so $2K_0+K_1\leq k-m$. This establishes a weaker
version of \eqref{eq:cumulantassumption}, with exponent $k-m$ in place of
$k-m-1$.

To improve this exponent to $k-m-1$, consider any
$\pi \in P_k^e$ such that $2K_0(\pi)+K_1(\pi)=k-m$.
Each block of $\pi$ counted by
$K_0$ must have exactly 2 indices in $m+1,\ldots,k$, and each block of $\pi$
counted by $K_1$ must have exactly 1 index in $m+1,\ldots,k$. If $K_0 \geq 1$,
let $B^* \in \pi$ be a block counted by $K_0$, and suppose for
notational convenience that $B^*=\{k-1,k\}$. Denote
$\alpha_{[k-2]}=(\alpha_1,\ldots,\alpha_{k-2})$. Then we may apply
\eqref{eq:signinvariantkappasumbound} to bound
\begin{align*}
&\sum_{\alpha \in [n]^k:\pi(\alpha)=\pi}
|\kappa_k(\w[\alpha_1],\ldots,\w[\alpha_k])|
\prod_{l=1}^m|\v_l[\alpha_l]|\\
&\leq \sum_{\alpha_{[k-2]}:\pi(\alpha_{[k-2]})=\pi \setminus B^*}
\prod_{l=1}^m|\v_l[\alpha_l]|\sum_{\beta=1}^n |\kappa(\w[\alpha_1],\ldots,
\w[\alpha_{k-2}],\w[\beta],\w[\beta])|\\
&\prec \sqrt{n}\sum_{\alpha_{[k-2]}:\pi(\alpha_{[k-2]})=\pi \setminus B^*}
\prod_{l=1}^m|\v_l[\alpha_l]|
\leq \sqrt{n} \cdot n^{K_0-1} \cdot n^{K_1/2}=\sqrt{n}^{k-m-1}.
\end{align*}
If $K_0=0$ and $K_1=k-m$,
let $B^* \in \pi$ be a block counted by $K_1$, and suppose for
notational convenience that $B^*=\{\alpha_1,\alpha_k\}$. Denote in this case
$\alpha_{[k-2]}=(\alpha_2,\ldots,\alpha_{k-1})$. Then we may apply
\eqref{eq:signinvariantkappasumbound} to bound
\begin{align*}
&\sum_{\alpha \in [n]^k:\pi(\alpha)=\pi}
|\kappa_k(\w[\alpha_1],\ldots,\w[\alpha_k])|
\prod_{l=1}^m|\v_l[\alpha_l]|\\
&\leq \sum_{\alpha_{[k-2]}:\pi(\alpha_{[k-2]})=\pi \setminus B^*}
\prod_{l=2}^m|\v_l[\alpha_l]| \sum_{\beta=1}^n
|\kappa(\w[\beta],\w[\alpha_2],\ldots,\w[\alpha_{k-1}],\w[\beta])||\v_1[\beta]|\\
&\leq \sum_{\alpha_{[k-2]}:\pi(\alpha_{[k-2]})=\pi \setminus B^*}
\prod_{l=2}^m|\v_l[\alpha_l]|\left(\sum_{\beta=1}^n
\kappa(\w[\beta],\w[\alpha_2],\ldots,\w[\alpha_{k-1}],\w[\beta])^2\right)^{1/2}
\|\v_1\|_2\\
&\prec \sum_{\alpha_{[k-2]}:\pi(\alpha_{[k-2]})=\pi \setminus B^*}
\prod_{l=2}^m|\v_l[\alpha_l]|
\leq n^{(K_1-1)/2}=\sqrt{n}^{k-m-1}.
\end{align*}
Applying these cases to \eqref{eq:singinvariantkappadecomp} for the partitions
$\pi \in P_k^e$ where $2K_0(\pi)+K_1(\pi)=k-m$, this shows
that Assumption \ref{assum:cumulant} holds.

The argument to extend to general $d \asymp n$ and $X \neq I$ is the same as in
Proposition \ref{prop:linear}.
\end{proof}

\begin{proof}[Verification of Example \ref{ex:mixture model}]

We verify the claims of Example \ref{ex:mixture model}.

We first check that the conditions of Proposition \ref{prop:sign change invariant} for $\w$
hold under \eqref{eq:mixtureconditionbasic}
and \eqref{eq:mixturecondition}: The conditions $\E\w\w^*=I$ and $\E \prod_{i=1}^k \w[\alpha_i]=0$ if $\alpha_1 \notin \{\alpha_2,\ldots,\alpha_k\}$ are immediate from \eqref{eq:mixtureconditionbasic}. Furthermore, \eqref{eq:mixtureconditionbasic} implies that the conditional law $(\w \mid \lambda)$ satisfies Assumption \ref{assum:concentration} uniformly over $\lambda \in \Lambda$.
To check Assumption \ref{assum:concentration} unconditionally, for any matrix $A \in \R^{n \times n}$, we may decompose
\begin{align}
\w^* A \w-\Tr A
&=(\w^* A \w-\E[\w^* A \w \mid \lambda])
+(\E[\w^* A \w \mid \lambda]-\Tr A)\notag\\
&=(\w^* A \w-\E[\w^* A \w \mid \lambda])
+(\E[\w[\alpha]^2 \mid \lambda]-1) \Tr A.
\label{eq:conditionalconcentration}
\end{align}
The first term is $\Oprec(\|A\|_F)$ by Assumption \ref{assum:concentration} conditional on $\lambda$, while the second is $\Oprec(\|A\|_F)$ by \eqref{eq:mixturecondition} and the bound $|\Tr A| \leq \sqrt{n}\|A\|_F$.

Next, for the specific example of \eqref{eq:mixtureexample}, we note that $\E[\w[\alpha]^2 \mid \lambda]-1=c_n\lambda$. Then taking $A=I$ in \eqref{eq:conditionalconcentration} shows that if $c_n \gg n^{-1/2}$, then Assumption \ref{assum:concentration} does not hold. In this case, we also have
\[\Var[\Tr K]
=N^{-1}\Var[\|\w_i\|_2^2]
=N^{-1}\Big(\E[\Var[\|\w_i\|_2^2 \mid \lambda]]
+\Var[\E[\|\w_i\|_2^2 \mid \lambda]]\Big)
\asymp 1+Nc_n^2 \gg 1.\]
Then Theorem \ref{thm:entrywise law} cannot hold, as the eigenvalue rigidity in Corollary \ref{coro:coro of entrywise law} must instead imply $|\Tr K-\E \Tr K| \prec 1$ for concentration of the linear spectral statistic $\Tr K$.

Finally, let us check that for this model \eqref{eq:mixtureexample}, Assumption \ref{assum:cumulant} holds as long as $c_n \prec n^{-1/4}$: Since $\w$ is sign-invariant, we have $\kappa_k(\w)=0$ for any odd $k$. It is therefore enough to consider the case where $k \geq 4$ is even. Let $\gamma=1+c_n\lambda$,
denote $P_k$ the set of all possible partitions of $[k]$, and let $P_k'$ be the set of all possible pairings of $[k]$. For any $T\in(\R^n)^{\otimes k}$, we have by the law of total cumulance
\begin{align*}
    \<\kappa_k(\w),T\>&=\sum_{\alpha_1,\ldots,\alpha_k}\kappa(\w[\alpha_1],\ldots,\w[\alpha_k])T[\alpha_1,\ldots,\alpha_k]\\
    &=\sum_{\alpha_1,\ldots,\alpha_k}\sum_{\pi\in P_k}\kappa(\kappa(\w[\alpha_b]:b\in B \mid \gamma):B\in\pi)T[\alpha_1,\ldots,\alpha_k]\\
&=\sum_{\alpha_1,\ldots,\alpha_k}\sum_{\pi\in P_k'}\kappa_{k/2}(\gamma)\prod_{\{p,q\}\in\pi}\kappa(\w[\alpha_p],\w[\alpha_q]) T[\alpha_1,\ldots,\alpha_k]\\
    &=\kappa_{k/2}(\gamma)\sum_{\pi\in P_k'}\sum_{\alpha_1,\ldots,\alpha_k}T[\alpha_1,\ldots,\alpha_k]\prod_{\{p,q\}\in\pi}\1_{\alpha_p=\alpha_q}
\end{align*}
where we used the fact that $(\w \mid \gamma)$ has i.i.d.\ $N(0,\gamma)$ entries, so $\kappa(\w[\alpha_b]:b\in B \mid \gamma)=\gamma$ 
if $B=\{p,q\}$ where $\alpha_p=\alpha_q$,
and $\kappa(\w[\alpha_b]:b\in B \mid \gamma)=0$ otherwise.

Then, for any even $k\geq 4$, $1\leq m\leq k-1$, $\s_1,\ldots,\s_m\in\R^n$, and $T\in(\R^n)^{\otimes k-m}$, we have
\begin{align*}
    \<\kappa_k(\w),\s_1\otimes\dots\otimes\s_m\otimes T\>
    &=\kappa_{k/2}(\gamma)\sum_{\pi\in P_k'}\sum_{\alpha_1,\ldots,\alpha_k}\s_1[\alpha_1]\cdots\s_m[\alpha_m]T[\alpha_{m+1},\ldots,\alpha_k]\prod_{\{p,q\}\in\pi}\1_{\alpha_p=\alpha_q}.
\end{align*}
Note that
$|\kappa_{k/2}(\gamma)|=c_n^{k/2} |\kappa_{k/2}(\lambda)| \prec c_n^{k/2} \prec n^{-1/2}$ where the last bound holds for $c_n \prec n^{-1/4}$ and $k \geq 4$.
Thus we can bound, similar to the proof of Proposition \ref{prop:sign change invariant},
\begin{align*}
    &|\<\kappa_k(\w),\s_1\otimes\cdots\otimes\s_m\otimes T\>| \prec n^{-1/2}\|T\|_\infty\max_{\pi\in P_k'}\sum_{\alpha_1,\ldots,\alpha_k}|\s_1[\alpha_1]|\cdots|\s_m[\alpha_m]|\prod_{\{p,q\}\in\pi}\1_{\alpha_p=\alpha_q}\\
    &\prec n^{-1/2}\|T\|_\infty n^{(k-m)/2}\|\s_1\|_2\cdots\|\s_m\|_2 \prec n^{(k-m-1)/2}\|\s_1\|_2\cdots\|\s_m\|_2\|T\|_\infty,
\end{align*}
establishing Assumption \ref{assum:cumulant}.
\end{proof}

\subsection{Random features model}\label{subsec:RF}

We now show Proposition \ref{prop:RF}.
To simplify notation, we will assume that
$X \in \R^{n \times d}$ and $\w \in \R^d$ have equal dimensions $n=d$.
This is without loss of generality, as if $n \neq d$, it
is equivalent to show Proposition \ref{prop:RF} with $X$, $\w$, and/or
$\s_1,\ldots,\s_m,T$ extended by 0-padding to have all dimensions equal to
$\max(n,d)$.

\subsubsection{Tensor networks and bipolar orientations}

\begin{defi}\label{def:tensornetwork}(Tensor network)
We say that a tuple $(G,f)$ is a tensor network if the following holds:
\begin{enumerate}
\item $G=(\cV,\cE)$ is a multigraph with no self-loops.
Each vertex $v \in \cV$ has an ordered list $\partial v \subseteq
\cE$ of its incident edges, and each edge $e=(u,v)$ has an ordering of its 2
incident vertices $u,v \in \cV$. We denote by $\deg(v)=|\partial v|$ the
vertex degree, counting edge multiplicity.
\item $f$ is a labeling of $\cV \cup \cE$ such that $f(v) \in (\R^n)^{\otimes
\deg(v)}$ is a tensor of order $\deg(v)$
for each vertex $v \in \cV$, and $f(e)=\R^{n \times n}$ is a
matrix for each edge $e \in \cE$.
\end{enumerate}
The contracted value of this tensor network is then given by
\[\mathrm{val}(G,f)=\sum_{\substack{i_{v,e} \in [n]:
\,v\in \cV,\,e\in\partial v}}
\,\prod_{v \in \cV}f(v)[i_{v,e}:e\in\partial v]\prod_{e=(u,v)\in \cE}
f(e)[i_{u,e},i_{v,e}]\]
where the summation is over one index $i_{v,e} \in [n]$ for each vertex-edge
pair $(v,e)$ such that $v$ is incident to $e$, and
$[i_{v,e}:e \in \partial v]$ denotes the ordered tuple of such indices for a
given vertex $v \in \cV$.
\end{defi}

Note that if one permutes the order of the edges $\partial v$ incident to $v \in
\cV$, and permutes correspondingly the indices of the tensor label $f(v)$,
then $\val(G,f)$ remains unchanged. Similarly, if one
exchanges the vertex order of $e=(u,v)$ and replaces $f(e)$ by the
transpose $f(e)^*$, then $\val(G,f)$ also remains unchanged. If two tensor
networks $(G,f)$ and $(G',f')$ are equivalent under some such permutations of
their vertex/edge orderings and labels, we will consider $(G,f)$ and $(G',f')$
to be the same tensor network. Otherwise, $(G,f)$ and $(G',f')$ are distinct.
We will omit the specification of the ordering of $\partial v$ or
$e=(u,v)$ if the tensor/matrix label $f(v)$ or $f(e)$ is symmetric.

\begin{defi}(Bipolar orientation)
Let $G=(\cV,\cE)$ be a multigraph with no self-loops. $G$ is
\textit{biconnected} if it consists of a single connected component and
furthermore $G$ is not broken into disconnected components by removing any
single vertex $v \in \cV$ together with its incident edges $\partial v$.

For two vertices $s,t \in \cV$, $G$ admits a \emph{$(s,t)$-bipolar orientation}
if there is a choice of direction for each edge such that $G$ has no
directed cycles, $s$ is the unique source (vertex with no incoming edges),
and $t$ is the unique sink (vertex with no outgoing edges).
\end{defi}

We note that the existence of a $(s,t)$-bipolar orientation depends only on
which pairs of vertices are connected by an edge in $G$, and not on the edge
multiplicities. It is well-known that the following are equivalent (see e.g.\ 
\cite[Section 2]{rosenstiehl1986rectilinear}):
\begin{enumerate}
\item $G$ admits a $(s,t)$-bipolar orientation.
\item $G$ admits a $(s,t)$-numbering, i.e.\ a numbering of its vertices
$v_1,\ldots,v_m$ with $m=|\cV|$ such that $s=v_1$, $t=v_m$, and each
other vertex is adjacent to both a lower-numbered and a higher-numbered vertex.
\item The multigraph $G^+=(\cV,\cE \cup (s,t))$ adding the additional edge
$(s,t)$ is biconnected.
\end{enumerate}

For any tensor $T \in (\R^n)^{\otimes k}$ and any partition of $\{1,\ldots,k\}$
into two disjoint subsets $I,J$, we denote by $\mat_{I,J}(T) \in \R^{n^{|I|}}
\times \R^{n^{|J|}}$ its matricization or flattening
with respect to this index partition, and by
$\vec(T) \in \R^{n^k}$ its vectorization. We define the norms
\[\|T\|_F=\|\vec(T)\|_2, \qquad \|T\|_{I \to J}
=\|\mat_{I,J}(T)\|_\op, \qquad
\|T\|_{\matop}
=\mathop{\max_{I,J:I \sqcup J=[k]}}_{|I| \geq 1,|J| \geq 1} \|T\|_{I \to J}.\]
Thus $\|T\|_{\matop}$ is the maximum of the matrix operator norm
$\|T\|_{I \to J}$ over all flattenings of $T$ corresponding to all possible
partitions of $\{1,\ldots,k\}$ into two non-empty index sets.

The following lemma is similar to the analyses of \cite{mingo2012sharp},
and allows us to bound the value of any tensor network
with bipolar orientation.

\begin{lemma}\label{lem:bounds by bipolar orientation}
Let $(G,f)$ be a tensor network, where $G=(\cV,\cE)$ admits a
$(s,t)$-bipolar orientation. Then
\[
|\mathrm{val}(G,f)| \leq 
\|f(s)\|_F\|f(t)\|_F
\prod_{v \in \cV \setminus \{s,t\}}
\|f(v)\|_{\matop} \prod_{e\in \cE}\|f(e)\|_\op.\]
\end{lemma}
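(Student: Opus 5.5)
We will use the $(s,t)$-numbering $v_1=s,v_2,\ldots,v_m=t$ that is equivalent to the bipolar orientation, and contract the network one vertex at a time in this order. The idea is to treat the partially contracted network as a linear map from the "incoming" edge indices to the "outgoing" edge indices, and to bound its operator norm along the way.

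\textbf{Step 1 (cut vectors).} For each $j\in\{1,\ldots,m\}$, let $\cV_j=\{v_1,\ldots,v_j\}$. Let $C_j\subseteq\cE$ be the multiset of edges with one endpoint in $\cV_j$ and the other in $\cV\setminus\cV_j$. Orient all edges from lower to higher number. Define $W_j\in(\R^n)^{\otimes |C_j|}$ as the tensor obtained by contracting every vertex label in $\cV_j$ and every edge label with both endpoints in $\cV_j$. We also apply each cut edge's matrix $f(e)$, so that the free indices of $W_j$ sit at the far endpoint of the edge. Then $W_m=\val(G,f)$ is a scalar.

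\textbf{Step 2 (induction).} We claim
\[
\|W_j\|_F\le \|f(s)\|_F\prod_{i=2}^{j}\|f(v_i)\|_{\matop}\prod_{e\text{ contracted}}\|f(e)\|_\op .
\]
\begin{itemize}
\item Base case $j=1$: $W_1$ is $f(s)$ with the matrix $f(e)$ applied along each incident edge. A Kronecker product of matrices acting on one index each has operator norm equal to the product of their operator norms, which gives the bound.
\item Inductive step: $W_{j+1}$ is formed by contracting $W_j$ with $f(v_{j+1})$ along the incoming edges $I$ of $v_{j+1}$, then applying $f(e)$ on the outgoing edges $J$ of $v_{j+1}$. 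The remaining cut edges are untouched. This is a matrix product $\mat(W_j)\cdot \mat_{I,J}(f(v_{j+1}))$ between the flattening of $W_j$ (with the $I$-indices as columns) and the flattening of $f(v_{j+1})$.
\item Since $\|AB\|_F\le\|A\|_F\|B\|_\op$, each step costs at most $\|f(v_{j+1})\|_{I\to J}\le\|f(v_{j+1})\|_{\matop}$.
\item The $(s,t)$-numbering property guarantees that $I\neq\emptyset$ and $J\neq\emptyset$ for every $2\le j+1\le m-1$. Hence the flattening uses a partition with both parts non-empty, which is exactly what $\|\cdot\|_{\matop}$ controls.
\end{itemize}

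\textbf{Step 3 (final vertex).} For $t=v_m$, every incident edge is incoming. So $\val(G,f)=\langle W_{m-1},f(t)\rangle$, restricted to the edges into $t$. If some cut edges of $W_{m-1}$ did not end at $t$, the vertex set would not be exhausted, so all of them end at $t$. Cauchy–Schwarz then gives $|\val|\le\|W_{m-1}\|_F\|f(t)\|_F$. Each edge matrix is applied exactly once, which yields the stated bound.

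\textbf{Main obstacle.} The delicate point is the bookkeeping in the inductive step. Edges from earlier vertices directly to $t$, or to later vertices, must pass through as identity factors that cost nothing. This works because the other indices of $W_j$ simply ride along as extra rows of the flattening. Multi-edges are harmless, since they only enlarge $I$ or $J$.
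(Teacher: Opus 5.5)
Your proposal is correct and is essentially the paper's proof. The paper also numbers the vertices by an $(s,t)$-numbering, writes $\mathrm{val}(G,f)=\mathrm{vec}(f(v_1))^\top E_2M_2\cdots E_{m-1}M_{m-1}E_m\,\mathrm{vec}(f(v_m))$ with identity padding for edges that skip over intermediate vertices, and bounds each factor by an operator norm; your cut-tensor induction is the same computation organized through Frobenius norms. The one detail you leave implicit is that an edge label may need transposing to match the orientation, which costs nothing since $\|f(e)^*\|_\op=\|f(e)\|_\op$.
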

\begin{proof} 
Number the vertices $\cV=\{v_1,\ldots,v_m\}$ according to a $(s,t)$-numbering,
and consider the bipolar orientation such that each edge directs from a
lower-numbered vertex to a higher-numbered vertex. By replacing the label $f(e)$
with $f(e)^*$, we may assume that the vertices for each edge $e=(u,v)$ in
Definition \ref{def:tensornetwork} are also ordered such that $u$ has
lower number than $v$.

For each $k \in \{2,\ldots,m-1\}$, let $I_k,O_k$ be the partition of
$\{1,\ldots,\deg(v_k)\}$ corresponding to the incoming and outgoing
(ordered) edges of $v_k$, respectively. We may then express
\[\mathrm{val}(G,f)=\mathrm{vec}(f(v_1))^\top E_2M_2 \cdots E_{m-1}M_{m-1}
E_m\mathrm{vec}(f(v_m))\]
where
\begin{itemize}
\item Each $M_k$ for $k=2,\ldots,m-1$ is a matrix given by the tensor product
of $\mat_{I_k,O_k}(f(v_k))$ and a number of identity matrices.
\item Each $E_k$ for $k=2,\ldots,m$ is a matrix given by the tensor product of
the matrices $\{f(e):e \text{ is an incoming edge of } v_k\}$
and a number of identity matrices.
\end{itemize}
As an example, for the following network with $V=\{v_1,\ldots,v_6\}$
\begin{figure}[H]
    \centering
    \resizebox{0.6\textwidth}{!}{
        \begin{tikzpicture}[
    node distance=2.5cm,
    tensor_node/.style={
        circle, 
        draw=black, 
        very thick, 
        fill=yellow!20, 
        minimum size=1.2cm, 
        font=\bfseries
    },
    conn/.style={
        thick, 
        draw=gray!80, 
        -{Latex[scale=1.2]} 
    },
    edge_label/.style={
        font=\footnotesize, 
        midway, 
        sloped, 
        above, 
        text=black
    }
]

    
    \node[tensor_node] (v1) at (0,0) {$f(v_1)$};
    
    \node[tensor_node] (v3) at (3, 1.5) {$f(v_3)$};
    \node[tensor_node] (v2) at (3, -1.5) {$f(v_2)$};
    
    \node[tensor_node] (v4) at (6, 1.5) {$f(v_4)$};
    \node[tensor_node] (v5) at (6, -1.5) {$f(v_5)$};
    
    \node[tensor_node] (v6) at (9, 0) {$f(v_6)$};

    
    \draw[conn] (v1) -- (v3) node[edge_label] {$f(\{1,3\})$};
    \draw[conn] (v1) -- (v2) node[edge_label, below] {$f(\{1,2\})$}; 
    
    \draw[conn] (v2) -- (v3) node[midway, right, font=\footnotesize] {$f(\{2,3\})$};
    
    \draw[conn] (v4) -- (v5) node[midway, left, font=\footnotesize] {$f(\{4,5\})$};
    
    \draw[conn] (v3) -- (v4) node[edge_label] {$f(\{3,4\})$};
    \draw[conn] (v2) -- (v5) node[edge_label, below] {$f(\{2,5\})$};
    
    \draw[conn] (v4) -- (v6) node[edge_label] {$f(\{4,6\})$};
    \draw[conn] (v5) -- (v6) node[edge_label, below] {$f(\{5,6\})$};

\end{tikzpicture}
}
\end{figure}
\noindent the matrices $M_2,\ldots,M_{m-1}$ and $E_2,\ldots,E_m$ are given by
\begin{figure}[H]
    \centering
    \resizebox{1\textwidth}{!}{
        \begin{tikzpicture}[
    node distance=2.0cm,
    tensor_node/.style={
        circle, 
        draw=black, 
        very thick, 
        fill=yellow!20, 
        minimum size=1.1cm, 
        font=\bfseries
    },
    identity_node/.style={
        circle,
        draw=black,
        thick,
        fill=white,
        minimum size=0.8cm,
        font=\small
    },
    conn/.style={
        thick, 
        draw=gray!80, 
        -{Latex[scale=1.2]}
    },
    layer_line/.style={
        dashed,
        draw=blue!50,
        thick
    },
    layer_label/.style={
        font=\small\bfseries,
        text=blue!60!black,
        anchor=south
    },
    math_annot/.style={
        font=\footnotesize,
        text=blue!40!black,
        align=center,
        anchor=north
    }
]

    
    \node[tensor_node] (v1) at (0, 0) {$f(v_1)$};
    \node[tensor_node] (v2) at (3, 0) {$f(v_2)$};
    \node[tensor_node] (v3) at (6, 0) {$f(v_3)$};
    \node[tensor_node] (v4) at (9, 0) {$f(v_4)$};
    \node[tensor_node] (v5) at (12, 0) {$f(v_5)$};
    \node[tensor_node] (v6) at (15, 0) {$f(v_6)$};

    \node[identity_node] (I_13) at (3, 2) {$I$};
    
    \node[identity_node] (I_25a) at (6, -2) {$I$};
    \node[identity_node] (I_25b) at (9, -2) {$I$};

    \node[identity_node] (I_46) at (12, 2) {$I$};

    
    \draw[conn] (v1) -- (v2) node[midway, above] {$f(\{1,2\})$};
    \draw[conn] (v1) -- (I_13) node[midway, above left] {$f(\{1,3\})$};
    
    \draw[conn] (v2) -- (v3) node[midway, above] {$f(\{2,3\})$};
    \draw[conn] (v2) -- (I_25a) node[midway, below left] {$f(\{2,5\})$};
    
    \draw[conn] (I_13) -- (v3) node[midway, above right] {$f(\{1,3\})$};
    
    \draw[conn] (v3) -- (v4) node[midway, above] {$f(\{3,4\})$};
    
    \draw[conn] (I_25a) -- (I_25b) node[midway, above] {$I$};
    
    \draw[conn] (v4) -- (v5) node[midway, above] {$f(\{4,5\})$};
    \draw[conn] (v4) -- (I_46) node[midway, above left] {$f(\{4,6\})$}; 

    \draw[conn] (I_46) -- (v6) node[midway, above] {$I$};
    
    \draw[conn] (I_25b) -- (v5) node[midway, below right] {$I$};
    
    \draw[conn] (v5) -- (v6) node[midway, above] {$f(\{5,6\})$};
    
    

    \draw[layer_line] (-1, 3) -- (-1, -3.5);
    
    \draw[layer_line] (1.5, 3) -- (1.5, -3.5);
    
    \draw[layer_line] (4.5, 3) -- (4.5, -3.5);
    
    \draw[layer_line] (7.5, 3) -- (7.5, -3.5);
    
    \draw[layer_line] (10.5, 3) -- (10.5, -3.5);

    \draw[layer_line] (13.5, 3) -- (13.5, -3.5);
    
    \draw[layer_line] (16, 3) -- (16, -3.5);

    
    \node[math_annot] at (0, -1) {$\vec(f(v_1))$};
    
    \node[math_annot] at (3, 3.5) {$I \otimes \mat_{1,2}(f(v_2))$};
    
    \node[math_annot] at (6, -3) {$\mat_{2,1}(f(v_3)) \otimes I$};
    
    \node[math_annot] at (9, -3) {$\mat_{1,2}(f(v_4)) \otimes I$};
    
    \node[math_annot] at (12, 3.5) {$I \otimes \mat_{2,1}(f(v_5))$};
    
    \node[math_annot] at (15, -1) {$\vec(f(v_6))$};

\end{tikzpicture}
}
\end{figure}
\noindent 
More generally, for each directed edge $e=(v_{k'},v_k)$, we introduce a
factor of the identity matrix $I \in \R^{n \times n}$ in the tensor product
defining each matrix $M_{k'+1},\ldots,M_{k-1}$ and $E_{k'+1},\ldots,E_{k-1}$,
and a factor of $f(e) \in \R^{n \times n}$ into that defining $E_k$, to form a
path of degree-2 vertices from $v_{k'}$ to $v_k$ whose labels have product
$f(e)$.

Then, since $\|M_k\|_\op=\|f(v_k)\|_{I_k \to O_k}$
and $\|E_k\|_\op=\prod_{\text{incoming edges $e$ of } v_k} \|f(e)\|_\op$,
where each edge appears in exactly one such term $\|E_k\|_\op$,
this implies the bound
\begin{align*}
|\mathrm{val}(G,f)|&\leq \|\mathrm{vec}(f(v_1))\|_2\|\mathrm{vec}(f(v_m))\|_2
\prod_{k=2}^{m-1}\|M_k\|_\op \prod_{k=2}^m \|E_k\|_\op\\
&\leq \|f(s)\|_F\|f(t)\|_F
\prod_{v \in V \setminus \{s,t\}}
\|f(v)\|_{\matop} \prod_{e\in E}\|f(e)\|_\op.
\end{align*}
\end{proof}

Many of the networks that arise in our proof will be of the following form, which by the following lemma admits a bipolar orientation:
\begin{lemma}\label{lem:source-sink network}
Suppose $(G,f)$ is a tensor network where
$G=(\{s,t\} \cup \cA \cup \cB,\cE)$ has its vertices partitioned into three
disjoint sets $\{s,t\},\cA,\cB$, such that
\begin{itemize}
\item The subgraph induced by $\cA \cup \cB$ is connected.
\item Each vertex of $\{s,t\}$ has at least 1 incident edge connecting to $\cA$,
and each vertex $v \in \cA$ has at least 1 incident edge connecting to
$\{s,t\}$.
\item Each vertex $u \in \cB$ has at least 2 distinct neighbors in $\cA$.
\end{itemize}
Then $G$ admits a $(s,t)$-bipolar orientation.
\end{lemma}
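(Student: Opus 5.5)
We use the equivalence recalled above: $G$ admits an $(s,t)$-bipolar orientation if and only if $G^+=(\cV,\cE\cup\{(s,t)\})$ is biconnected. So we will show that $G^+$ is connected and has no cut vertex. Since bipolar orientability depends only on adjacency, edge multiplicities play no role.

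\textbf{Connectivity.} The induced subgraph on $\cA\cup\cB$ is connected by hypothesis. Each of $s,t$ has a neighbor in $\cA$. If $\cA$ were empty, this condition could not hold, so $\cA\neq\emptyset$ and $G$ itself is connected.

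\textbf{No cut vertex.} We remove a single vertex $w$ and check that $G^+\setminus w$ stays connected, splitting into three cases.

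\begin{enumerate}
\item[(i)] $w\in\{s,t\}$, say $w=s$. Then $G^+\setminus s=G\setminus s$ contains the connected set $\cA\cup\cB$ together with $t$, which is attached to it through its neighbor in $\cA$.
\item[(ii)] $w\in\cB$. The edge $(s,t)$ keeps $s$ and $t$ together, and each $a\in\cA$ is joined to $s$ or $t$. So all of $\{s,t\}\cup\cA$ lies in one component. Each remaining $u\in\cB\setminus\{w\}$ has a neighbor in $\cA$, hence also joins this component.
\item[(iii)] $w\in\cA$. As in (ii), $s$ and $t$ are joined by the edge $(s,t)$, and every $a\in\cA\setminus\{w\}$ is adjacent to $s$ or $t$. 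Each $u\in\cB$ has at least two distinct neighbors in $\cA$, so at least one of them is not $w$. Hence $u$ is adjacent to some $a\in\cA\setminus\{w\}$, and therefore to the component containing $s,t$.
\end{enumerate}

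Thus $G^+$ is biconnected, and the equivalence gives an $(s,t)$-bipolar orientation of $G$. Two degenerate points need checking. First, $G$ has no self-loops by assumption. Second, adding the edge $(s,t)$ is harmless even if $s,t$ are already adjacent, since only adjacency matters. The main point to be careful about is case (iii), which is precisely where the hypothesis that each $\cB$-vertex has two distinct $\cA$-neighbors is needed.

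Alternatively, one could build an $(s,t)$-numbering directly: take $s$ first, then all of $\cA$, then $\cB$, then $t$. This fails, however, because vertices of $\cB$ may have no higher-numbered neighbor. The biconnectivity route avoids that issue, so that is the argument we use.
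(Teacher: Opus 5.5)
Your proposal is correct and follows the paper's proof essentially verbatim. Both reduce the claim to biconnectivity of $G^+$ and then check removal of a vertex in $\{s,t\}$, in $\cB$, and in $\cA$; the hypothesis that each $\cB$-vertex has two distinct $\cA$-neighbors is used exactly in the last case. Your preliminary observation that $\cA\neq\emptyset$ (so $G^+$ is connected with at least three vertices) is a small point the paper leaves implicit.
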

\begin{proof}
It suffices to check that the augmented multigraph $G^+=(\{s,t\} \cup \cA \cup
\cB,\cE \cup (s,t))$ is biconnected, i.e.\ it remains connected upon removing
any vertex $v$ and its incident edges. We denote this graph by
$G^+ \setminus \{v\}$.

For $a \in \{s,t\}$, $G^+ \setminus \{a\}$ remains connected since the
subgraph induced by $\cA \cup \cB$ is connected, and the remaining vertex
of $\{s,t\}$ different from $a$ is a neighbor of $\cA$.

For $u \in \cB$, note that all vertices of $\cA$ are neighbors of $\{s,t\}$,
which are neighbors of each other via the added edge $(s,t)$.
Thus $\{s,t\} \cup \cA$ forms
a single connected component. Upon removing $u$, each remaining vertex of $\cB$
is a neighbor of $\cA$, so $G^+ \setminus \{u\}$ is connected. 

For $v \in \cA$, by the same reasoning, all
vertices of $\{s,t\} \cup (\cA \setminus \{v\})$ form a single connected
component of $G^+ \setminus \{v\}$. Upon removing $v$ and its incident edges,
each $u \in \cB$ remains a neighbor of $\cA \setminus \{v\}$, since $u$ has at
least 2 distinct neighbors in $\cA$. Thus $G^+ \setminus \{v\}$ is connected.
\end{proof}

\subsubsection{Tensor network representation}

\begin{lemma}\label{lem:graph of random feature}
Let $\x \in \R^n$ be a random vector having mean 0 and
finite moments of all orders, and let $\x^{\odot l}=(\x[i]^l)_{i=1}^n \in \R^n$
be its entrywise $l^\text{th}$ power.
Let $k \geq 1$, $l_1,\ldots,l_k \geq 1$, and
$U\in (\R^n)^{\otimes k}$. Then
\[\<\kappa_k(\x^{\odot l_1},...,\x^{\odot l_k}),U\>
=\sum_{(G,f)\in \cG} \mathrm{val}(G,f)\]
where $\cG \equiv \cG(l_1,\ldots,l_k,U)$ is the set of all distinct tensor
networks $(G,f)$ satisfying the properties:
\begin{enumerate}
\item $G=(\{t\} \cup \cK \cup \cV,\cE)$ where the vertices are partitioned
into disjoint sets $\{t\},\cK,\cV$. Each edge $e \in \cE$ has label $f(e)=I
\in \R^{n \times n}$, and connects one vertex of $\{t\} \cup \cK$ with one
vertex of $\cV$ (hence $G$ is bipartite). Furthermore,
the bipartite subgraph induced by $\cK \cup \cV$ is connected.
\item $\cV=\{v_1,\ldots,v_k\}$ where each $v_i$ has degree $\deg(v_i)=l_i+1$ 
and label $f(v_i)=I \in (\R^n)^{\otimes l_i+1}$. Here $I$ is the
order-$(l_i+1)$ identity tensor,
having all diagonal entries 1 and off-diagonal entries 0.
\item The vertex $t$ has $\deg(t)=k$, label $f(t)=U$, and its ordered edges
connect to $v_1,\ldots,v_k$.
\item Each vertex $u \in \cK$ has $\deg(u) \geq 2$ and label
$f(u)=\kappa_{\deg(u)}(\x) \in (\R^n)^{\otimes \deg(u)}$.
\end{enumerate}
\end{lemma}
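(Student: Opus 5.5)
We expand the joint cumulant $\kappa_k(\x^{\odot l_1},\ldots,\x^{\odot l_k})$ entrywise via the classical formula (Leonov--Shiryaev) for cumulants of products of random variables, and then encode each resulting term as a tensor network. Writing the left side in coordinates,
\[\<\kappa_k(\x^{\odot l_1},\ldots,\x^{\odot l_k}),U\>=\sum_{\alpha_1,\ldots,\alpha_k\in[n]} U[\alpha_1,\ldots,\alpha_k]\,\kappa\Big(\prod_{j=1}^{l_1}\x[\alpha_1],\ldots,\prod_{j=1}^{l_k}\x[\alpha_k]\Big).\]
Each argument $\x[\alpha_i]^{l_i}$ is a product of $l_i$ copies of a single coordinate. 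Let the ground set be $\Omega=\bigsqcup_{i=1}^k\{(i,1),\ldots,(i,l_i)\}$, with the $k$ rows $B_i=\{(i,j)\}_j$. The Leonov--Shiryaev formula gives
\[\kappa\Big(\prod_{j}\x[\alpha_1],\ldots\Big)=\sum_{\sigma}\prod_{C\in\sigma}\kappa\big(\x[\alpha_{i}]:(i,j)\in C\big),\]
where $\sigma$ ranges over partitions of $\Omega$ such that $\sigma\vee\{B_1,\ldots,B_k\}=\hat 1$, i.e. the blocks of $\sigma$ connect all rows. Blocks of size $1$ vanish since $\E\x=0$, so we may restrict to $\sigma$ with all blocks of size $\ge 2$.

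Next, map each $\sigma$ to a network. Create $\cV=\{v_1,\ldots,v_k\}$, one vertex per row, and $\cK=\{u_C:C\in\sigma\}$ with label $\kappa_{|C|}(\x)$ and degree $|C|$. Each element $(i,j)\in\Omega$ becomes an edge between $u_C\ni(i,j)$ and $v_i$, and we add $k$ edges from $t$ (label $U$) to $v_1,\ldots,v_k$. All edge labels are $I$, so $\deg(v_i)=l_i+1$. Giving $v_i$ the identity tensor forces all indices on its edges to equal a common $\alpha_i$. Thus the contracted value reproduces $\sum_\alpha U[\alpha]\prod_C\kappa_{|C|}(\x)[\alpha_{i}:(i,j)\in C]$, since the cumulant tensor is symmetric and indexed exactly by the coordinates $\x[\alpha_i]$. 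Note that a block $C$ may contain several elements from the same row, which yields a multi-edge between $u_C$ and $v_i$; this is allowed, as multigraphs without self-loops are permitted and bipartiteness holds. The connectivity condition on $\sigma$ is exactly connectivity of the bipartite subgraph on $\cK\cup\cV$. Properties (1)--(4) are therefore verified.

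It remains to match the multiplicities, which I expect to be the main bookkeeping obstacle. The sum over $\sigma$ is a sum over labeled partitions, whereas $\cG$ counts distinct networks up to the stated equivalence, namely permutations of the orderings in $\partial v$ together with the corresponding relabeling of symmetric tensors. Permuting the $l_i$ elements within a row, i.e. the edges at $v_i$, preserves the network, because $f(v_i)=I$ is symmetric. So I must check that $\sigma\mapsto(G,f)$ induces a bijection between partitions $\sigma$ and distinct networks, rather than overcounting by within-row symmetry factors. I would argue this by showing the map is injective on partitions and surjective onto $\cG$. Given a network, the edge multiset at each $v_i$ determines how many elements of row $i$ go to each $u_C$. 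The resulting partition is then determined only up to relabeling the elements within each row.

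Under the paper's convention that equivalent networks are the same, distinct $\sigma$ differing by a within-row permutation give the same network. I would therefore recount: Leonov--Shiryaev sums over set partitions of labeled elements, so such $\sigma$ appear separately in that sum, and the statement as written then requires reading $\cG$ as counting these labeled configurations (distinct edge-orderings). I would resolve this by identifying each edge of $v_i$ with its element $(i,j)$, i.e. treating the ordered list $\partial v_i$ as part of the data. Then networks in $\cG$ are in exact bijection with admissible $\sigma$, and summing gives the claim.
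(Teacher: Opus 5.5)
Your argument is correct, but it reaches the formula by a different route than the paper.

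You invoke the Leonov--Shiryaev (Malyshev) formula for joint cumulants of products in one step. The admissible terms are then immediately the partitions $\sigma$ of the $l_1+\cdots+l_k$ labeled factors that have no singletons and satisfy $\sigma\vee\{B_1,\ldots,B_k\}=\hat 1$. What remains is translating each $\sigma$ into a network, and your checks of the degrees, labels, and connectivity are all fine.

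The paper instead inducts on $k$, using only the basic moment--cumulant relation:
\begin{enumerate}
\item the mixed moment $\E\prod_m \x[i_m]^{l_m}$ equals the sum over all networks, with the connectivity requirement dropped;
\item for each nontrivial partition $\pi$ of $[k]$, the induction hypothesis expresses $\prod_{B\in\pi}\kappa_{|B|}(\x[i_b]^{l_b}:b\in B)$ as the sum over networks whose $\cK\cup\cV$ components are indexed by the blocks of $\pi$;
\item subtracting these leaves exactly the connected networks.
\end{enumerate}
That induction is in effect a proof of Leonov--Shiryaev written in network language. Your version is shorter at the cost of citing the classical result; the paper's is self-contained.

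On the multiplicity issue you raise, your resolution is the reading the paper implicitly relies on. Its base case asserts without comment that $\cG(l,U)$ is in one-to-one correspondence with the singleton-free partitions of $[l]$. That holds only if each edge at $v_i$ is identified with its factor $(i,j)$ and each $\cK$-vertex carries no identity beyond its set of incident edges. Otherwise, within-row permutations would merge distinct partitions, and labeling the $\cK$-vertices would overcount a $j$-block partition by $j!$. This labeled count is also what enters the later bounds through $|\cG(l_1,\ldots,l_k)|$, for example in Lemma~\ref{lem:bound i-to-j norm} and Proposition~\ref{prop:RF}(b).
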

\begin{proof}
By linearity, it suffices to show the lemma when $U=\e_{i_1} \otimes \ldots
\otimes \e_{i_k}$ is a standard basis element for every choice of indices
 $i_1,\ldots,i_k \in [n]$.

We induct on $k$. For $k=1$ and $U=\e_i$,
we have by the moment-cumulant relation
\[\<\kappa_1(\x^{\odot l}),U\>
=\kappa_1(\x[i]^l)=\E[\x[i]^l]=\sum_{\pi\in
P_l}\prod_{B\in\pi}\kappa_{|B|}(\x[i]),\]
where $P_l$ is the set of all partitions of $\{1,\ldots,l\}$.
Since $\x$ has mean 0, the summand corresponding to $\pi \in P_l$ is 0 if $\pi$
has a singleton block. Each remaining summand corresponding to
$\pi=\{B_1,\ldots,B_j\}$
may be understood as $\val(G,f)$ for the tensor network described by the lemma,
where $\cK$ has $j$ vertices of degrees $|B_1|,\ldots,|B_j|$ with all edges
connecting to the single vertex $v \in \cV$. The set $\cG(l,U)$
of such tensor networks is in 1-to-1 correspondence with such summands $\pi
\in P_l$. This proves the lemma for $k=1$.

Now suppose the lemma holds for all $k\leq t$, and consider $k=t+1$ and
$U=\e_{i_1} \otimes \ldots \otimes \e_{i_{t+1}}$. By the
moment-cumulant relation for a mixed moment of order $t+1$,
\[
\E\left[\prod_{m=1}^{t+1}\x[i_m]^{l_m}\right]=\sum_{\pi\in P_{t+1}}
\prod_{B\in\pi}\kappa_{|B|}(\x[i_b]^{l_b}:b\in B).
\]
Let $1_{t+1} \in P_{t+1}$ denote the partition having one block containing
all $t+1$ elements. Then rearranging gives 
\begin{align*}
\<\kappa_{t+1}(\x^{\odot l_1},\ldots,\x^{\odot l_{t+1}}),U\>
&=\kappa_{t+1}(\x[i_1]^{l_1},\ldots,\x[i_{t+1}]^{l_{t+1}})\\
&=\E\left[\prod_{m=1}^{t+1}\x[i_m]^{l_m}\right]
-\sum_{\pi\in P_{t+1}\setminus 1_{t+1}}\prod_{B\in\pi}
\kappa_{|B|}(\x[i_b]^{l_b}:b\in B).
\end{align*}

On the other hand, denote by $\cA(l_1,\ldots,l_k,U)$
the set of all distinct tensor networks $(G,f)$ satisfying
all properties of the lemma except the property that $\cK \cup \cV$ is
connected. Let $L=l_1+\ldots+l_{t+1}$, and let $j_1=\ldots=j_{l_1}=i_1$,
$j_{l_1+1}=\ldots=j_{l_1+l_2}=i_2$, ...,
$j_{l_t+1}=\ldots=j_{l_{t+1}}=i_{t+1}$. Applying the moment-cumulant relation
for a mixed moment of order $L$, we have
\[
\E\left[\prod_{m=1}^{t+1}\x[i_m]^{l_m}\right]
=\E\left[\prod_{m=1}^L\x[j_m]\right]=\sum_{\pi\in
P_L}\prod_{B\in\pi}\kappa_{|B|}(\x[j_b]:b\in
B)=\sum_{(G,f)\in\cA(l_1,\ldots,l_{t+1},U)}\val(G,f).
\]
For each $\pi\in P_{t+1}\setminus 1_{t+1}$ and block $B \in \pi$, let
$U_B=\otimes_{b \in B} \e_{i_b}$ be the corresponding factors of $U$.
Then by the induction hypothesis, we have
\[
    \kappa_{|B|}(\x[i_b]^{l_b}:b\in B)=\sum_{(G,f)\in\cG((l_b:b\in B),\,U_b)}
\mathrm{val}(G,f)
\]
Taking the product over blocks $B \in \pi$, we get
\[
\prod_{B\in\pi}\kappa_{|B|}(\x[i_b]^{l_b}:b\in
B)=\sum_{(G,f)\in\cG_\pi(l_1,\ldots,l_{t+1},U)}\mathrm{val}(G,f)
\]
where, if $\pi=\{B_1,\ldots,B_j\}$, then
$\cG_\pi(l_1,\ldots,l_{t+1},U)$ is the set of tensor networks in
$\cA(l_1,\ldots,l_{t+1},U)$ for which $\cK \cup \cV$ splits exactly into
$j$ connected components, with one component corresponding to each block $B \in
\pi$ that contains the vertices $\{v_b \in \cV:b \in B\}$.
Then, since $\cG(l_1,\ldots,l_{t+1},U)
=\cG_{1_{t+1}}(l_1,\ldots,l_{t+1},U)$ is precisely the set of tensor
networks with a single connected component,
\[
    \cA(l_1,\ldots,l_{t+1},U)\setminus\cG(l_1,\ldots,l_{t+1},U)=
\bigcup_{\pi\in P_{t+1}\setminus 1_{t+1}}\cG_\pi(l_1,\ldots,l_{t+1},U).
\]
Thus
\begin{align*}
\<\kappa_{t+1}(\x^{\odot l_1},\ldots,\x^{\odot l_{t+1}}),U\>
&=\sum_{(G,f)\in\cA(l_1,\ldots,l_{t+1},U)}\mathrm{val}(G,f)-\sum_{(G,f)\in\cA(l_1,\ldots,l_{t+1},U)\setminus\cG(l_1,\ldots,l_{t+1},U)}\mathrm{val}(G,f)\\
    &=\sum_{(G,f)\in\cG(l_1,\ldots,l_{t+1},U)}\mathrm{val}(G,f).
\end{align*}
This completes the inductive argument.
\end{proof}

\subsubsection{Proof of Assumption \ref{assum:concentration}}

\begin{lemma}\label{lem:concentration of quadratic form}
Let $\x \in \R^n$ be a random vector having mean 0 and finite moments of all
orders. Then for any $k\in\N$ and any deterministic, symmetric matrix
$A\in \R^{n\times n}$,
\[\E(\x^*A\x-\E\x^*A\x)^{2k}\leq \sum_{_{\text{partitions } \pi \text{ of } [4k]}}\prod_{B \in \pi} \|\kappa_{|B|}(\x)\|_\matop\|A\|_F^{2k}.\]
\end{lemma}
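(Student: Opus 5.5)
We expand the centered moment as a tensor contraction and decompose it into cumulants. Write $\x^*A\x-\E\x^*A\x=\<\x\x^*-\E\x\x^*,A\>$, so that
\[
\E(\x^*A\x-\E\x^*A\x)^{2k}=\Big\<\E\big[(\x\x^*-\E\x\x^*)^{\otimes 2k}\big],A^{\otimes 2k}\Big\>.
\]
Here $\E\x\x^*=\kappa_2(\x)$ because $\E\x=0$. We then apply Lemma \ref{lem:cumulant pairing} with $m=0$. Its proof uses only mean zero and finite moments, with $\Sigma$ replaced by $\E\x\x^*$. This gives
\[
\E\big[(\x\x^*-\E\x\x^*)^{\otimes 2k}\big]=\sum_{\pi\in\dot P_{2k,0}}\kappa_\pi(\x),
\]
where $\dot P_{2k,0}$ consists of the partitions of $[4k]$ with no singletons and none of the blocks $\{2j-1,2j\}$. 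It therefore suffices to show, for each such $\pi$,
\[
|\<\kappa_\pi(\x),A^{\otimes 2k}\>|\le \prod_{B\in\pi}\|\kappa_{|B|}(\x)\|_\matop\,\|A\|_F^{2k}.
\]
Summing over $\pi$ and enlarging the index set to all partitions of $[4k]$ then gives the claimed bound. The extra terms are nonnegative, since each term is a product of norms.

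Each term $\<\kappa_\pi(\x),A^{\otimes 2k}\>$ is the value of a tensor network in the sense of Definition \ref{def:tensornetwork}. Its vertices are the blocks $B\in\pi$, labelled $\kappa_{|B|}(\x)$, and for each $j$ there is an edge labelled $A$ joining the block containing $2j-1$ to the block containing $2j$. No edge is a self-loop, because a self-loop would require the pair $\{2j-1,2j\}$ to lie in one block. That can only happen inside a block of size at least $3$, and we handle this case below. The value factors over connected components, so we may assume the network is connected.

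We then bound each connected component with Lemma \ref{lem:bounds by bipolar orientation}. The difficulty is that this lemma gives operator norms $\|A\|_\op$ on the edges and Frobenius norms $\|f(s)\|_F$ on the endpoints, whereas we want $\|A\|_F$ on every edge and $\matop$ norms on every vertex. To fix this, we split each $A$-edge into a new degree-2 vertex labelled $A$ with identity edges on either side. The new vertex satisfies $\|A\|_{\matop}=\|A\|_\op\le\|A\|_F$. Self-loops inside a block also become such degree-2 vertices, which removes them. We then choose $s$ and $t$ to be two of these $A$-vertices lying in the same component, so that their Frobenius norms contribute $\|A\|_F^2$, and every other vertex contributes at most its $\matop$ norm. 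The remaining degree-2 vertices labelled $A$ contribute $\|A\|_\op\le\|A\|_F$ each, so every vertex contributes within the claimed bound.

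The main obstacle is showing that the subdivided graph admits an $(s,t)$-bipolar orientation for a suitable choice of $s,t$. By the equivalence stated before Lemma \ref{lem:bounds by bipolar orientation}, this is the same as showing that adding the edge $(s,t)$ makes the graph biconnected. This need not hold: a cut vertex, such as a block shared by two otherwise separate cycles of $A$-edges, may block it. If it fails, we split the network at its cut vertices into pieces and bound each piece separately. Each piece is then either a cycle of $A$'s through cumulant vertices, whose value is bounded via a trace inequality giving $\|A\|_F^{2}$ times $\matop$ norms, or a biconnected piece containing at least two $A$-vertices, to which Lemma \ref{lem:bounds by bipolar orientation} applies. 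A cut vertex has to be treated as a matricization of its cumulant across the split, and its $\matop$ norm should be charged only once. I expect the careful bookkeeping here to be the real work. If splitting proves awkward, a fallback is to induct on the number of blocks, peeling off a leaf piece of the block--cut tree and contracting it into the cut vertex, since contraction with a piece bounded in Frobenius norm does not increase the $\matop$ norm of the remaining tensor.
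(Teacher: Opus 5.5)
Your reduction is the same as the paper's. You expand with Lemma \ref{lem:cumulant pairing}, read each $\langle\kappa_\pi(\x),A^{\otimes 2k}\rangle$ as a tensor network, factor over components, and finish with Lemma \ref{lem:bounds by bipolar orientation}.

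\textbf{The gap.} The proof breaks at the step you flag as the main obstacle, and that step is the crux of the lemma. If $s,t$ are chosen among the subdivided $A$-vertices, adding the single edge $(s,t)$ makes a component biconnected only if its block--cut tree is a path with $s,t$ in the two end blocks. This already fails for $k=3$ and $\pi=\{\{1,3,5,7,9,11\},\{2,4\},\{6,8\},\{10,12\}\}$. There the $\kappa_6$-vertex is a common cut vertex of three cycles, each passing through $\kappa_6$, an $A$-vertex, a $\Sigma$-vertex and a second $A$-vertex. No single added edge makes this graph biconnected.

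For such cases you give only a sketch and explicitly defer the bookkeeping, so the argument is incomplete. The pieces also cannot literally be bounded separately, because the cumulant at a cut vertex does not factor across them. An induction over the block--cut tree would need $\|\kappa\cdot W\|_\matop\le\|\kappa\|_\matop\|W\|_F$ for contraction along a proper subset of axes. That inequality is true, and the induction can probably be completed. You would still have to handle:
\begin{itemize}
\item cut vertices that are themselves $A$-vertices;
\item leaf pieces that contract to vectors;
\item the choice of Frobenius-controlled endpoints in the final block.
\end{itemize}

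\textbf{The missing idea.} The paper avoids cut vertices altogether with one extra step:
\begin{itemize}
\item Write $A=U\diag(\v)U^*$, so that $\|\v\|_2=\|A\|_F$ and $\|U\|_\op=1$.
\item Relabel each $A$-vertex as the order-$3$ diagonal identity tensor, which has $\matop$ norm $1$. Put $U$ on its two original edges and attach a new pendant leaf labelled $\v$.
\item In each component, merge these leaves into two nonempty groups: a source $s$ labelled $\v^{\otimes i}$ and a sink $t$ labelled $\v^{\otimes j}$.
\end{itemize}

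Every cumulant vertex has at least two distinct $A$-neighbours. In degree $2$ this holds because $\dot P_{2k,0}$ excludes the blocks $\{2j-1,2j\}$. In degree $\ge 3$ it holds because each degree-$2$ $A$-vertex accounts for at most two of its edges. Hence each component contains at least two $A$-vertices, so both groups can be made nonempty.

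Now every $A$-vertex is adjacent to $\{s,t\}$. Lemma \ref{lem:source-sink network}, with $\cA$ the $A$-vertices and $\cB$ the cumulant vertices, then gives the bipolar orientation with no case analysis. Lemma \ref{lem:bounds by bipolar orientation} yields the bound $\|A\|_F^{i+j}\prod_b\|\kappa_{\deg(b)}(\x)\|_\matop$ directly. Every factor $\|A\|_F$ comes from $\|f(s)\|_F\|f(t)\|_F$.
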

\begin{proof}
Let $\Sigma = \E\x \x^*$. We can rewrite the expectation as
\[
    \E(\x^* A \x - \E \x^* A \x)^{2k} = \E [\Tr((\x \x^* - \Sigma) A)]^{2k} = \< \E [(\x \x^* - \Sigma)^{\otimes 2k}], A^{\otimes 2k} \>.
\]
Applying Lemma \ref{lem:cumulant pairing},
the above value may be decomposed as
\[
    \< \E [(\x \x^* - \Sigma)^{\otimes 2k}], A^{\otimes 2k} \>=\sum_{(G,f)\in\cN}\val(G,f)
\]
where $\cN$ is the set of all distinct tensor networks $(G,f)$ satisfying the properties:
\begin{enumerate}
    \item $G=(\cV,\cE)$ where the vertices $\cV=\cA \cup \cB$
are partitioned into disjoint sets $\cA,\cB$. Each edge $e\in\cE$ has label
$f(e)=I\in\R^{n\times n}$, and connects one vertex of $\cA$ with one vertex of
$\cB$ (hence $G$ is bipartite).
    \item $\cA=\{a_1,\ldots,a_{2k}\}$ where each vertex $a_i$ has degree $2$ and label $f(a_i)=A\in\R^{n\times n}$.
    \item $\cB=\{b_1,\ldots,b_m\}$ for some $1\leq m\leq 2k$, where each vertex
$b_i$ satisfies $2\leq\deg(b_i)\leq 4k$ and has label $f(b_i)=\kappa_{\deg(b_i)}(\x)$.
    \item If any $b_i$ has degree 2, then the two edges incident to $b_i$
connect to distinct neighbors in $\cA$.
\end{enumerate}
Here $\deg(b_i) \geq 2$ because $\E \x=\kappa_1(\x)=0$, so only diagrams without
$\kappa_1(\x)$ contribute. Additionally, Lemma \ref{lem:cumulant pairing}
ensures that no copy of $A$ is contracted with a single cumulant tensor
$\kappa_2(\x)=\Sigma$, implying property 4. Note that these properties ensure
each connected component of $G$ contains at least 2 vertices from $\cA$,
and each vertex of $\cB$ has at least 2 distinct neighbors in $\cA$.

Since $\val(G,f)$ factorizes across connected components of $G$,
we may bound $\val(G',f)$ for each connected component $G'$ of $G$.
Let us modify $G'$ to yield a
network satisfying the conditions of Lemma \ref{lem:source-sink network}:
Let $A=U\diag(\v)U^*$ be the spectral decomposition of $A$. For each $a\in\cA
\cap G'$,
denote its two incident edges $e_1,e_2$. We introduce a new vertex $v$, connect
$v$ to $a$ via a new edge $e_3$ so that $\deg(v)=1$ and $\deg(a)=3$,
and relabel as
$f(v)=\v$, $f(a)=I\in(\R^n)^{\otimes 3}$ (the diagonal order-3 tensor with all
diagonal entries 1),
$f(e_3)=I\in\R^{n\times n}$, and $f(e_1)=f(e_2)=U\in\R^{n\times n}$. Note that
this does not change $\val(G',f)$.
Let $\cV$ be the set of newly added vertices. Then $|\cV|=|\cA \cap G'|\geq 2$. We
partition $\cV$ into two non-empty sets $\cV_1,\cV_2$ of cardinalities $i,j \geq
1$, merge the vertices in
$\cV_1$ into a single vertex $s$ having label $f(s)=\v^{\otimes i}$, and merge
the vertices in $\cV_2$ into a single vertex $t$ having label $f(t)=\v^{\otimes
j}$. This also does not change $\val(G',f)$.

It is readily checked that $G'$ now satisfies the condition of Lemma
\ref{lem:source-sink network}. Thus by Lemma \ref{lem:bounds by bipolar
orientation},
\[|\val(G',f)| \leq \|\v\|_2^i\|\v\|_2^j\|U\|_\op^{2|\cA \cap G'|}
\prod_{b \in \cB \cap G'}\|\kappa_{\deg(b)}(\x)\|_{\matop}
=\|A\|_F^{|\cA \cap G'|}\prod_{b \in \cB \cap
G'}\|\kappa_{\deg(b)}(\x)\|_{\matop},\]
where we have used $i+j=|\cV|=|\cA \cap G'|$ and $\|\v\|_2=\|A\|_F$.
Multiplying across all connected components $G'$ of $G$ and then
summing over all $(G,f) \in \cN$ completes the proof.
\end{proof}

\begin{lemma}\label{lem:bound i-to-j norm}
Fix any $k \geq 1$ and $l_1,\ldots,l_k \geq 1$.
Let $|\cG(l_1,\ldots,l_k)|$ be the cardinality of the set $\cG \equiv
\cG(l_1,\ldots,l_k,U)$ in Lemma \ref{lem:graph of random feature} (where this
cardinality does not depend on $U$). Let $l=l_1+\ldots+l_k$, and set
$C_l(\w)=\sup_{\text{partitions } \pi \text{ of } [l]}
\prod_{B \in \pi} \|\kappa_{|B|}(\w)\|_\infty$. Then
\[\|\kappa_k((X\w)^{\odot l_1},\ldots,(X\w)^{\odot l_k})\|_{\matop}
\leq C_l(\w)\|X\|_\op^l |\cG(l_1,\ldots,l_k)|.\]
\end{lemma}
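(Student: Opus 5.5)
The plan is to apply Lemma \ref{lem:graph of random feature} to $\x=X\w$ and then bound each resulting tensor network in the operator norm of an arbitrary flattening. Fix a partition $I\sqcup J=[k]$ with both parts non-empty. By duality,
\[
\|T\|_{I\to J}=\sup\{\<T,U\>:U=\vec^{-1}(\a\b^*)\}
\]
over unit vectors $\a\in\R^{n^{|I|}}$ and $\b\in\R^{n^{|J|}}$. Here $\a$ and $\b$ are regarded as tensors $A\in(\R^n)^{\otimes|I|}$ and $B\in(\R^n)^{\otimes|J|}$ with $\|A\|_F=\|B\|_F=1$. For such $U=A\otimes B$ (arranged according to $I,J$), Lemma \ref{lem:graph of random feature} gives
\[
\<\kappa_k((X\w)^{\odot l_1},\ldots),U\>=\sum_{(G,f)\in\cG}\val(G,f),
\]
with $|\cG|=|\cG(l_1,\ldots,l_k)|$ terms. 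It therefore suffices to show that each $|\val(G,f)|\le C_l(\w)\|X\|_\op^l$.

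In each network I would first split the vertex $t$, labelled $U=A\otimes B$, into two vertices $s$ (label $A$, edges to $\{v_i\}_{i\in I}$) and $t$ (label $B$, edges to $\{v_j\}_{j\in J}$). This leaves the value unchanged. Next I would push $X$ onto the cumulants: $\kappa_m(X\w)=X^{\otimes m}\cdot\kappa_m(\w)$, so every $\kappa$-vertex $u\in\cK$ gets label $\kappa_{\deg u}(\w)$ and each of its incident edges gets label $X$ (suitably transposed). The edges into $s,t$ keep label $I$. Each $v_i$ is then incident to exactly $l_i$ edges labelled $X$, so the product of edge operator norms is $\|X\|_\op^{l}$.

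Next I would verify that the hypotheses of Lemma \ref{lem:source-sink network} hold with $\cA=\cV=\{v_1,\ldots,v_k\}$ and $\cB=\cK$. The subgraph induced by $\cK\cup\cV$ is connected by Lemma \ref{lem:graph of random feature}. Each of $s,t$ has an edge into $\cV$ because $I,J\neq\emptyset$, and each $v_i$ has an edge to $s$ or $t$. The remaining condition, that every $u\in\cK$ has at least two \emph{distinct} neighbours in $\cV$, is the step I expect to be the main obstacle. A cumulant vertex whose edges all go to one $v_i$ (for instance from $\E\x[i]^{l}$ in the $k=1$ case) violates it. My fix is to absorb such vertices into $v_i$. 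If $u\in\cK$ has all $\deg(u)$ edges going to $v_i$, contract $u$ into $v_i$: the new label of $v_i$ is the diagonal tensor contracted against $X^{\otimes\deg u}\cdot\kappa(\w)$ along the diagonal. This amounts to multiplying the identity tensor entrywise by the vector $\big((X^{\otimes d}\kappa_d(\w))[\alpha,\ldots,\alpha]\big)_\alpha$. I would bound its $\matop$ norm by $\|\kappa_d(\w)\|_\infty\|X\|_\op^{d}$, using the fact that diagonal tensors have $\matop$ norm equal to the sup of their entries, together with $\sum_\beta X_{\alpha\beta}^2\le\|X\|_\op^2$ and a crude $\ell^\infty$ bound. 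This $\ell^\infty$ step will need care, and may force bounding via $\|X\|_\op$ with the row structure. If it fails, I would instead keep $X$ on the $v$-side, writing the diagonal-identity-then-$X$ block as a single $\matop$-bounded piece.

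After these reductions, Lemma \ref{lem:bounds by bipolar orientation} gives
\[
|\val|\le\|A\|_F\|B\|_F\prod_{v}\|f(v)\|_\matop\prod_e\|f(e)\|_\op .
\]
The identity tensors have $\matop$ norm $1$. Each cumulant factor contributes $\|\kappa_{|B|}(\w)\|_\matop$, which equals $\|\kappa_{|B|}(\w)\|_\infty$ because $\w$ has independent entries, so its cumulant tensors are diagonal and every flattening is diagonal with operator norm equal to the max entry. The edge norms contribute $\|X\|_\op^l$. The cumulant orders form a partition of $[l]$, so their product is at most $C_l(\w)$. Summing over $\cG$ and maximizing over $I,J$ then gives the claim.
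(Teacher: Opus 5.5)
Your proposal is correct and follows the paper's proof essentially step for step: split $t$ into two vertices carrying $S$ and $T$, move $X$ onto the edges of the (diagonal) cumulant vertices, and contract each cumulant vertex having a single distinct neighbour $v\in\cV$ into $v$. You then apply Lemma~\ref{lem:source-sink network} with $\cA=\cV$, $\cB=\cK$, together with Lemma~\ref{lem:bounds by bipolar orientation}. The $\ell^\infty$ step you flagged is exactly the paper's estimate $\sum_j |X_{ij}|^{d}\,|\kappa_d(\w)[j]| \le \|\kappa_d(\w)\|_\infty \|X\|_\op^{d-2}\sum_j X_{ij}^2 \le \|\kappa_d(\w)\|_\infty\|X\|_\op^{d}$, using $d\ge 2$ and $|X_{ij}|\le\|X\|_\op$, so no fallback is needed.
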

\begin{proof}
Consider any partition of $[k]$ into two disjoint non-empty sets $I,J$,
and suppose $U \in (\R^n)^{\otimes k}$ factorizes as $S \otimes T$ with respect
to this partition, for some $S\in(\R^n)^{\otimes |I|}$ and
$T\in(\R^n)^{\otimes |J|}$. We have by Lemma \ref{lem:graph of random feature}
\[
    \<\kappa_k((X\w)^{\odot l_1},\ldots,(X\w)^{\odot l_k}),U\>=\sum_{G\in\cG}\val(G,f)
\]
where $\cG\equiv\cG(l_1,\ldots,l_k,U)$. We proceed to modify each tensor network $(G,f)\in\cG$ via a sequence of steps that do not change $\val(G,f)$, to yield a network satisfying the requirements of Lemma \ref{lem:source-sink network}:
\begin{enumerate}
    \item The vertex $t$ in Lemma \ref{lem:graph of random feature} has 
label $f(t)=U=S\otimes T$. Since $f(t)$ factorizes, we may split $t$ into two
new vertices $s,t$ with labels $f(s)=S$ and $f(t)=T$, such that edges
corresponding to $I$ connect to $s$ and edges corresponding to $J$ connect to
$t$.
\item Each vertex $u \in \cK$ has label $f(u)=\kappa_{\deg(u)}(X\w)$, with
$\deg(u)$ incident edges having label $I$. By
multilinearity of mixed cumulants, we may relabel
$f(u)=\kappa_{\deg(u)}(\w)$ and relabel each incident edge $e=(v,u)$ with $X$.
Note that $f(u)$ is then diagonal because $\w$ has independent entries.
    \item For each vertex $u\in\cK$ having a single unique neighbor
$v\in\cV$: Let its incident edges be $e_1,\ldots,e_{\deg(u)}=(v,u)$. Note that
$\deg(v)\geq\deg(u)+2$ since $v$ is connected to at least one of $\{s,t\}$
and to $(\cK\cup\cV)\setminus\{u,v\}$. (If $\cK \cup \cV=\{u,v\}$, this
continues to hold as then $v$ is connected to both $\{s,t\}$.)
We contract $u$ by removing $u$ and
$e_1,\ldots,e_{\deg(u)}$ from $(G,f)$, and replacing the diagonal tensor label $f(v)\equiv\diag(\widetilde{\v})\in(\R^n)^{\otimes\deg(v)}$ with the new label $\diag(\hat{\v})\in(\R^n)^{\otimes \deg(v)-\deg(u)}$ having entries
\[\hat\v[i]=\tilde \v[i] \times \bigg(\sum_{j=1}^n X[i,j]^{\deg(u)}
\kappa_{\deg(u)}(\w)[j]\bigg).\]
\end{enumerate}
At the end of Step 2, we have
\begin{equation}\label{eq:matopcondition}
\|f(s)\|_F\|f(t)\|_F \prod_{v \in \cK \cup \cV} \|f(v)\|_\infty
\prod_{e \in \cE} \|f(e)\|_\op \leq C_l(\w)\|X\|_\op^l\|S\|_F\|T\|_F.
\end{equation}
For each application of Step 3, since $\deg(u) \geq 2$, we have the bound
\begin{align*}
\|\hat\v\|_\infty &\leq \|\tilde\v\|_\infty \|\kappa_{\deg(u)}(\w)\|_\infty
\|X\|_\infty^{\deg(u)-2} \max_{i=1}^n \sum_{j=1}^n X[i,j]^2\\
&\leq \|\tilde\v\|_\infty\|\kappa_{\deg(u)}(\w)\|_\infty\|X\|_\op^{\deg(u)},
\end{align*}
and thus the inequality \eqref{eq:matopcondition} continues to hold after Step
3. It is readily checked that after all three steps of the above procedure, $G$
satisfies the condition of Lemma \ref{lem:source-sink network} with
$\cA\equiv\cV$ and $\cB\equiv\cK$. So applying Lemma \ref{lem:bounds by bipolar orientation} gives
\begin{align*}
&|\<\kappa_k(\x^{\odot l_1},\ldots,\x^{\odot
l_k}),U\>|\leq\sum_{G\in\cG}|\val(G,f)| \leq C_l(\w)\|X\|_\op^l |\cG(l_1,\ldots,l_k)| \cdot \|S\|_F\|T\|_F.
\end{align*}
Since $I,J$ and $S,T$ defining $U=S \otimes T$ are arbitrary,
by definition of $\|\cdot\|_\matop$, this completes the proof.
\end{proof}

\begin{proof}[Proof of Proposition \ref{prop:RF},
Assumption \ref{assum:concentration}]
Denote $\a_l=(a_{il})_{i=1}^n$ (where in the setting of Proposition
\ref{prop:RF}(a), $\a_l=0$ for $l>D$). For any fixed $k \geq 1$ and
$l_1,\ldots,l_k \geq 1$, we may expand by multilinearity of the mixed cumulant
\begin{align*}
\kappa_k(\a_{l_1} \odot (X\w)^{\odot l_1},\ldots,\a_{l_k} \odot
(X\w)^{\odot l_k})
=(\a_{l_1} \otimes \ldots \otimes \a_{l_k}) \odot
\kappa_k((X\w)^{\odot l_1},\ldots,(X\w)^{\odot l_k}).
\end{align*}
Thus, noting that $\g=\sigma(X\w)=\sum_{l=0}^\infty \a_l \odot (X\w)^{\odot l}$
and any mixed cumulant $\kappa_k(\ldots)$ is 0 if an argument is constant
(non-random), we have
\begin{align*}
\|\kappa_k(\g)\|_\matop&=\left\|\sum_{l_1,\ldots,l_k=1}^\infty
(\a_{l_1} \otimes \ldots \otimes \a_{l_k}) \odot
\kappa_k((X\w)^{\odot l_1},\ldots,(X\w)^{\odot l_k})\right\|_\matop\\
&\leq \sum_{l_1,\ldots,l_k=1}^\infty \Big(\prod_{i=1}^k
\|\a_{l_i}\|_\infty\Big)
\|\kappa_k((X\w)^{\odot l_1},\ldots,(X\w)^{\odot l_k})\|_\matop\\
&\leq \sum_{l_1,\ldots,l_k=1}^\infty \Big(\prod_{i=1}^k \|\a_{l_i}\|_\infty\Big)
C_l(\w)\|X\|_\op^l |\cG(l_1,\ldots,l_k)|,
\end{align*}
the last inequality using Lemma \ref{lem:bound i-to-j norm}.

In the setting of Proposition \ref{prop:RF}(a), $C_l(\w)$, $\|X\|_\op^l$,
$|\cG(l_1,\ldots,l_k)|$, and $\prod_{i=1}^k \|\a_{l_i}\|_\infty$
 are all bounded by a constant depend only on $k,D$, so
$\|\kappa_k(\g)\|_\matop \leq C$ for a constant $C \equiv C(k,D)>0$.

In the setting of Proposition \ref{prop:RF}(b) where $\w \sim N(0,I)$, we have
$\kappa_2(\w)=I$ and $\kappa_k(\w)=0$ for all $k \geq 3$. Then
$C_l(\w)=1$ for all even $l \geq 2$, and $C_l(\w)=0$ for odd $l$. In Lemma
\ref{lem:graph of random feature}, also $\val(G,f)=0$ unless $l=l_1+\ldots+l_k$
is even and each vertex $u \in \cK$ has degree exactly 2, in which
case $|\cG(l_1,\ldots,l_k)| \leq l!! \leq C^l l^{l/2}$
where $l!!$ is the number of partitions of $l$ into $l/2$ blocks of size 2.
Applying the given condition
$\|\a_{l_i}\|_\infty<C(l_i!)^{-\beta}<C(l_i/e)^{-\beta l_i}$ and
$\sum_{i=1}^k l_i \log l_i>l \log (l/k)$,
we have that $\prod_{i=1}^k \|\a_{l_i}\|_\infty
<(Cke^\beta)^l l^{-\beta l}$. Then, for a constant $C'>0$ not depending on
$k,l$ and for any $l \geq k$,
\begin{equation}\label{eq:GaussianRFbound}
\mathop{\sum_{l_1,\ldots,l_k \geq 1}}_{l_1+\ldots+l_k=l}
\Big(\prod_{i=1}^k \|\a_{l_i}\|_\infty\Big)
C_l(\w)\|X\|_\op^l |\cG(l_1,\ldots,l_k)|
\leq \mathop{\sum_{l_1,\ldots,l_k \geq 1}}_{l_1+\ldots+l_k=l}
(Ck)^l l^{(\frac{1}{2}-\beta)l}
\leq (Ck^2)^l l^{(\frac{1}{2}-\beta)l}.
\end{equation}
Since $\beta>1/2$, this is summable over $l \geq k$, so also
$\|\kappa_k(\g)\|_\matop \leq C$ for a constant $C \equiv C(k,\beta)>0$ in this
case.

Then by Lemma \ref{lem:concentration of quadratic form}, for any
$A \in \R^{n \times n}$, we have $\E(\x^* A\x-\E\x^* A\x)^{2k}
\leq C\|A\|_F^{2k}$ for each fixed $k \geq 1$ and a constant $C \equiv
C(k)>0$ (where this holds also for asymmetric $A$ upon applying the lemma
with $(A+A^*)/2$). This implies Assumption \ref{assum:concentration} by
Markov's inequality.
\end{proof}

\subsubsection{Proof of Assumption \ref{assum:cumulant}}

For each integer $l \geq 0$, let us now define a set of vectors
$\cU(l) \subset \R^n$ for verifying Assumption \ref{assum:cumulant},
as follows: Define first the normalized matrices and
scalar cumulants, for $k \geq 2$,
\[\bar X=X/\|X\|_\op, \qquad \bar
\kappa_k(\w[j])=\kappa_k(\w[j])/\|\kappa_k(\w)\|_\infty.\]
(If $\|\kappa_k(\w)\|_\infty=0$, we set $\bar\kappa_k(\w[j])=0$.)
Define the diagonal matrices
\[D_k=\diag\bigg(\bigg(\sum_{j=1}^n \bar X[i,j]^k
\bar\kappa_k(\w[j])\bigg)_{i=1}^n\bigg),
\qquad
K_k=\diag\big(\big(\bar\kappa_k(\w[j])\big)_{j=1}^n\big).\]
Let $\cM_0=\{I,\bar X,\bar X^*\} \cup \{D_k\}_{k \geq 2} \cup
\{K_k\}_{k \geq 2}$. We say that $I$ has degrees 0 in both $X$ and $\w$, that
$\bar X,\bar X^*$ have degree 1 in $X$ and 0 in $\w$,
that $D_k$ has degrees $k$ in both
$X$ and $\w$, and that $K_k$ has degree 0 in $X$ and $k$ in $\w$.
For each $j \geq 1$, let $\cM_j=\{AB,A \odot B:A,B \in \cM_{j-1}\}$ (where $AB$
is the matrix product and $A \odot B$ is the entrywise product). Each
set $\cM_j$ is closed under matrix transpose, since this holds for $\cM_0$.
If $A,B$ have degrees $a_x,b_x$ in $X$ and $a_w,b_w$ in $\w$, then
we say that $AB$ and $A \odot B$ have degrees $a_x+b_x$ in $X$ and $a_w+b_w$
in $\w$. Let $\cM=\bigcup_{j \geq 0} \cM_j$, i.e.\ $\cM$ consists of all
matrices obtained from $I,\bar X,D_k,K_k$ through combinations of matrix
products, entrywise products, and matrix transposes, and let
$\cM(l) \subset \cM$ be
those elements with degree at most $l$ in $X$ and at most $l$ in $\w$. We set
\begin{equation}\label{eq:Uldef}
\cU(l)=\{M\e_i:M \in \cM(l),\,M \neq 0,\,i \in [n]\}.
\end{equation}

The following lemma is the main step to verify Assumption \ref{assum:cumulant}.

\begin{lemma}\label{lem:RFmonomials}
Fix any $k \geq 3$ and $l_1,\ldots,l_k \geq 1$, and let $l=l_1+\ldots+l_k$.
Let $|\cG(l_1,\ldots,l_k)|$ and $C_l(\w)$ be as defined in
Lemma \ref{lem:bound i-to-j norm}.
Then for any $m \in \{1,\ldots,k-1\}$, $\s_1,\ldots,\s_m \in \R^n$ and
$T \in (\R^n)^{\otimes k-m}$,
\begin{equation}\label{eq:RFmonomialsbound}
\Big|\<\kappa_k((X\w)^{\odot l_1},\ldots,(X\w)^{\odot l_k}),
\s_1 \otimes \ldots \otimes \s_m \otimes T\>\Big|
\leq C_l(\w)\|X\|_\op^l |\cG(l_1,\ldots,l_k)| \times
\sqrt{n}^{k-m-1}\|T\|_{\cU(l)}\prod_{i=1}^m \|\s_i\|_2.
\end{equation}
\end{lemma}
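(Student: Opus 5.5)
The plan is to run the same tensor-network reduction as in Lemma \ref{lem:bound i-to-j norm}, but now contracting against $\s_1\otimes\cdots\otimes\s_m\otimes T$ instead of a factorized $S\otimes T$. By Lemma \ref{lem:graph of random feature}, the left side of \eqref{eq:RFmonomialsbound} equals $\sum_{(G,f)\in\cG}\val(G,f)$ with $U=\s_1\otimes\cdots\otimes\s_m\otimes T$. It therefore suffices to show that each network satisfies $|\val(G,f)|\leq C_l(\w)\|X\|_\op^l\sqrt n^{k-m-1}\|T\|_{\cU(l)}\prod\|\s_i\|_2$.

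For each network, first split the vertex $t$ into $m$ degree-one leaves labeled $\s_1,\ldots,\s_m$, attached to $v_1,\ldots,v_m$, and one vertex $t'$ of degree $k-m$ labeled $T$, attached to $v_{m+1},\ldots,v_k$. Then apply Steps 2 and 3 of Lemma \ref{lem:bound i-to-j norm}: relabel each $\kappa$-vertex by the diagonal $\kappa_{\deg(u)}(\w)$ with incident edges $X$, and absorb every cumulant vertex having a unique neighbor in $\cV$ into a rescaled diagonal label. These rescalings are exactly the matrices $D_k$, up to normalization, which is why $D_k\in\cM_0$.

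Next, expand $T$ in the only way the $\cU(l)$-norm allows. Write $\val=\langle T, W\rangle$, where $W\in(\R^n)^{\otimes k-m}$ is the contraction of everything except $t'$. The goal is to bound $|\langle T,W\rangle|\leq \|T\|_{\cU(l)}\sum|\text{coefficients}|$ by decomposing $W$ as a sum of rank-one tensors $\x_1\otimes\cdots\otimes\x_{k-m}$ with $\x_j\in\cU(l)$. To do this, sum over the index of one internal vertex at a time. Fixing a diagonal-vertex index $i$ turns every path emanating from it into a vector $M\e_i$, where $M$ is a product or Hadamard product of $\bar X,\bar X^*,D_k,K_k$. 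These vectors lie in $\cU(l)$, since the degrees in $X$ and $\w$ are bounded by $l$. Concretely, I would choose a spanning structure: fix indices at the vertices adjacent to the $\s$-leaves and at a minimal set of other vertices, so that every branch hanging toward $t'$ becomes a single column $M\e_i$. The remaining sum over fixed indices, weighted by $\s$-entries and entries of cumulant diagonals, is then bounded using $\sum_i|\s[i]|\leq\sqrt n\|\s\|_2$ for isolated leaves and Cauchy--Schwarz, $\sum_i|\s_a[i]\s_b[i]|\leq\|\s_a\|\|\s_b\|$, for merged ones. The normalizations $\bar X,\bar\kappa$ account for the prefactor $C_l(\w)\|X\|_\op^l$.

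The main obstacle is the counting of $\sqrt n$ factors. A naive count gives one factor of $\sqrt n$ per free summation over indices at $t'$-adjacent vertices, i.e.\ $\sqrt n^{k-m}$ as in Remark \ref{remark:genericcumulant}. Saving one factor requires connectivity of $\cK\cup\cV$: since $m\geq1$, at least one path links some $v_j$ with $j\leq m$ to the $T$-side. Along that path, the vector attached to the $T$-edge can be absorbed as $M$ applied to a contraction with $\s_j$ rather than a standard basis vector. Alternatively, one $T$-leg index is summed against an $\s$-weighted vector via Cauchy--Schwarz, using $\|\bar X\|_\op\leq1$ and $\|D_k\|_\op,\|K_k\|_\op\leq1$, instead of paying $\sqrt n$. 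I would first try to organize this via a breadth-first tree from the $\s$-leaves: each $T$-leg not on the chosen connecting path costs at most $\sqrt n$, while the connecting leg costs $O(1)$ times $\|\s\|_2$. This case analysis, together with ensuring the resulting vectors genuinely lie in $\cU(l)$ rather than being non-normalized combinations, is where I expect the difficulty. The Hadamard products in $\cM_j$ are designed exactly to handle vertices where several branches meet at a shared index.
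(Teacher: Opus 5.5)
Your reformulation is sound as far as it goes. Bounding $|\langle T,W\rangle|$ by $\|T\|_{\cU(l)}$ times the $\ell_1$-norm of coefficients in an expansion of $W$ into atoms $\x_1\otimes\cdots\otimes\x_{k-m}$ with $\x_j\in\cU(l)$ is in effect what the paper does through $\|f(t)\|_\infty$. The gap is that the step you flag as the obstacle is the whole content of the lemma, and the mechanisms you sketch for it do not work.

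\emph{Absorbing into a $T$-leg.} Absorbing ``$M$ applied to a contraction with $\s_j$'' into a $T$-leg produces $M\s_j$, which is not in $\cU(l)$; only columns $M\e_i$ are. So $\|T\|_{\cU(l)}$ controls it only after expanding $\s_j=\sum_i\s_j[i]\e_i$, which costs $\|\s_j\|_1\le\sqrt n\|\s_j\|_2$. That is precisely the factor you are trying to save.

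\emph{Per-leg accounting.} This breaks down once a cumulant vertex touches several legs. Take $k=3$, $m=1$, $l_1=l_2=l_3=2$, with two $\kappa_3$-vertices $u_1,u_2$ each adjacent to $v_1,v_2,v_3$. Then $W[a,b]=\sum_{j_1,j_2}c[j_1,j_2]X[a,j_1]X[a,j_2]X[b,j_1]X[b,j_2]$, where $c[j_1,j_2]=\kappa_3(\w)[j_1]\kappa_3(\w)[j_2]\sum_i\s_1[i]X[i,j_1]X[i,j_2]$. Fixing $j_1,j_2$ leaves legs $X\e_{j_1}\odot X\e_{j_2}$. These are generically not in $\cU(l)$: there are order $n^2$ of them, while $|\cU(l)|\le C^l n$. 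So you are driven to fix the leg indices $a,b$ and must then show $\sum_{a,b}|W[a,b]|\lesssim\sqrt n\|\s_1\|_2$. The crude bound $\sum_{a,b}|W[a,b]|\le n\|W\|_F$ misses by exactly the factor in question.

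\emph{Entrywise absolute values.} Summing absolute values through $X$ destroys operator-norm control, since $\|\,|X|\,\|_\op$ can be $\sqrt n\|X\|_\op$. Your rule $\sum_i|\s[i]|\le\sqrt n\|\s\|_2$ for isolated leaves also charges factors the target cannot afford, because its $\sqrt n$-exponent does not depend on $m$. So nothing in the plan establishes even the weak $\sqrt n^{k-m}$ bound for a general network topology.

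The missing ingredients are two.
\begin{enumerate}
\item[(1)] Keep all contractions signed and bound them by operator norms of flattenings along an $(s,t)$-bipolar orientation (Lemmas \ref{lem:source-sink network} and \ref{lem:bounds by bipolar orientation}). First reduce the network as in Lemma \ref{lem:contract network into degree 3}: contract unique-neighbor and degree-$2$ cumulant vertices, merge parallel edges into Hadamard products, and push degree-$2$ chains into $s$ or $t$. Afterwards every internal vertex has degree $\ge 3$, the internal graph is simple, and $\|f(t)\|_\infty\le\|T\|_{\cU(l)}$ up to the scalar prefactors. The class $\cM(l)$ enters only through these chains, absorbed into $T$ one axis at a time, not through arbitrary branches. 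This already gives $\sqrt n^{k-m}$ via $\|f(t)\|_F\le\sqrt n^{k-m}\|f(t)\|_\infty$.
\item[(2)] The saving of one $\sqrt n$ comes from a case split.
\begin{itemize}
\item If two edges of $t$ share an endpoint, merge them so that $f(t)$ has order $k-m-1$.
\item If $k-m=1$, absorb $f(t)$ into the diagonal label of its neighbor. Then use $m\ge 2$ (from $k\ge3$) to split the $\s$-side into a source $s'$ and a sink $s''$, so no $\sqrt n$ is paid at all.
\item Otherwise, use Lemma \ref{lem:good vertices} to find a tree $H\supseteq\cV_T$ with two good leaves. Let $\cE_1$ be the edges of a suitable component of $G'\setminus H$ and $\cE_2$ the remaining edges of $G'$. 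Apply Cauchy--Schwarz over the indices shared by the two halves, and bound $\|U_1\|_F,\|U_2\|_F$ by bipolar orientations. This gives $\sqrt n^{k_1+k_2+m_1+m_2}$ with $k_1+k_2+m_1+m_2\le k-m-1$.
\end{itemize}
\end{enumerate}
In the example above, $\cE_2$ consists of the edges $\{v_2,u_1\}$ and $\{u_1,v_3\}$, and the shared indices are those of $v_2,v_3,u_1$. One gets $\|U_2\|_F\le\sqrt n\|X\|_\op^2$ and $\|U_1\|_F\lesssim\|\s_1\|_2$, hence the required single factor $\sqrt n$. Neither (1) nor (2) is in your plan, so it reduces the lemma to its hard core without resolving it.
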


We proceed to show Lemma \ref{lem:RFmonomials}.
For the later combinatorial arguments and casework, it will be convenient
to first specialize Lemma \ref{lem:graph of random feature} to this setting 
where $U=\s_1 \otimes \ldots \otimes \s_m \otimes T$,
and to simplify the networks by contracting
vertices with 1 or 2 unique neighbors. The structure of the resulting simplified
networks is captured by the following lemma.

\begin{lemma}\label{lem:contract network into degree 3}
In the setting of Lemma \ref{lem:RFmonomials}, there exists a set $\cG$ of
tensor networks $(G,f)$ having cardinality $|\cG(l_1,\ldots,l_k)|$ such that
\[\<\kappa_k((X\w)^{\odot l_1},\ldots,(X\w)^{\odot l_k}),
\s_1 \otimes \ldots \otimes \s_m \otimes T\>
=\sum_{(G,f)\in \cG} \mathrm{val}(G,f),\]
and each $(G,f) \in \cG$ satisfies the following properties:
\begin{enumerate}
\item $G=(\{s,t\} \cup \cK \cup \cV,\cE)$ where the vertices are
partitioned into disjoint sets $\{s,t\},\cK,\cV$. The label
$f(v)$ for each $v \in \cK \cup \cV$ is a diagonal tensor,
the label $f(s)$ is a rank-1 tensor (or a vector if $\deg(s)=1$),
and the label $f(e)$ for each edge incident to $\{s,t\}$ is $I \in \R^{n
\times n}$.
\item The subgraph induced by $\cK \cup \cV$ is connected. Every
edge that is incident to a vertex in $\{s,t\} \cup \cK$ connects to a
vertex in $\cV$. (Vertices of $\cV$ may also connect to each other, so
$G$ is not necessarily bipartite.)
The vertices $\{s,t\}$ have degrees $\deg(s)=m$
and $\deg(t)=k-m$, and each vertex of $\cV$ is incident to
at least one edge connecting to $\{s,t\}$.
\item Each vertex $u \in \cK \cup \cV$ has $\deg(u) \geq 3$, and any two
vertices $u,v \in \cK \cup \cV$ are connected by at most 1 edge (i.e.\ the
subgraph induced by $\cK \cup \cV$ is simple).
\item Let $\cU(l)$ be as defined in \eqref{eq:Uldef}, and let
$C_l(\w)$ be as defined in Lemma \ref{lem:RFmonomials}. Then
\begin{equation}\label{eq:RFnetworkmainbound}
\|f(s)\|_F\|f(t)\|_\infty
\prod_{v \in \cK \cup \cV}
\|f(v)\|_\infty \prod_{e \in \cE} \|f(e)\|_\op
\leq C_l(\w)\|X\|_\op^l \|T\|_{\cU(l)} \prod_{i=1}^m \|\s_i\|_2
\end{equation}
\end{enumerate}
\end{lemma}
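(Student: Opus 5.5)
We begin from Lemma~\ref{lem:graph of random feature} applied to $\x=X\w$ with $U=\s_1\otimes\cdots\otimes\s_m\otimes T$. This gives $|\cG(l_1,\ldots,l_k)|$ networks. In each one we perform the same first two modifications as in the proof of Lemma~\ref{lem:bound i-to-j norm}. First, split the vertex $t$ into $s$, with label $\s_1\otimes\cdots\otimes\s_m$ and degree $m$, and $t$, with label $T$ and degree $k-m$. Second, push $X$ onto the edges incident to $\cK$, so that each $u\in\cK$ carries the diagonal label $\kappa_{\deg(u)}(\w)$. At that point $\cK$ and $\cV$ have diagonal labels, every edge touching $\{s,t\}$ is labeled $I$, and every edge touching $\cK$ is labeled $X$ or $X^*$. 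The $\ell_\infty$-type product bound holds with the constant $C_l(\w)\|X\|_\op^l$, as in \eqref{eq:matopcondition}. For $f(s)$ we measure the Frobenius norm, $\|f(s)\|_F=\prod_i\|\s_i\|_2$; for $f(t)$ we still keep the full tensor $T$.

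Next we contract vertices with few distinct neighbors. We do this so as to preserve the value and also the invariant that every label can be expressed through matrices in $\cM(l)$.

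\emph{Step~A.} A vertex $u\in\cK$ with a single neighbor $v\in\cV$ is absorbed into $v$ exactly as in Step~3 of Lemma~\ref{lem:bound i-to-j norm}. This multiplies $f(v)$ entrywise by the diagonal of $\|X\|_\op^{\deg u}\|\kappa_{\deg u}(\w)\|_\infty D_{\deg u}$, which has $\ell_\infty$ norm at most $1$ after normalization.

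\emph{Step~B.} A vertex $u\in\cK$ of degree $\geq 2$ with exactly two distinct neighbors $v,v'$ is replaced by a single edge $(v,v')$. Its new matrix label is $\bar X^{\odot a}K_{\deg u}(\bar X^{*})^{\odot b}$ up to a scalar. That scalar is bounded by $\|\kappa_{\deg u}(\w)\|_\infty\|X\|_\op^{\deg u}$, and the resulting matrix lies in $\cM(l)$ and has operator norm $\leq 1$. The powers $a,b\geq 2$ are handled by the bound $\|\bar X^{\odot a}\|_\op\leq 1$.

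\emph{Step~C.} A vertex $v\in\cV$ of degree $2$ is removed, and its two incident edge labels are merged through a diagonal matrix.

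\emph{Step~D.} Parallel edges between $\cK\cup\cV$ vertices are merged by entrywise products. Entrywise products of matrices of operator norm $\leq 1$ keep operator norm $\leq 1$ here, because each factor has the form $\bar X\cdots$ with rows and columns bounded in $\ell_2$ norm. This point needs care, and I would verify it via Schur products of positive factorizations.

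The crucial difficulty is the edges incident to $t$. An edge from $t$ to $v\in\cV$, whenever $v$'s remaining connections lead into matrix chains, must eventually be absorbed into $T$ so that $\|f(t)\|_\infty\leq\|T\|_{\cU(l)}$. The new label of $t$ is then $T$ contracted along some axes with vectors $M\e_i$, $M\in\cM(l)$, and the degree count stays within $l$. This is exactly why $\cU(l)$ was defined by closure under products, Hadamard products, and transposes. Iterating Steps~A--D until no vertex of $\cK\cup\cV$ has degree $\leq 2$ or fewer than three distinct neighbors yields properties (1)--(3). Connectivity of $\cK\cup\cV$ is preserved throughout, and each $v\in\cV$ still touches $\{s,t\}$ since $\cV$ vertices originally did and we never delete those edges.

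Finally, property (4) follows by tracking the constant. Each contraction moves factors of $\|\kappa_j(\w)\|_\infty$ and $\|X\|_\op$ out of the labels, and in total these multiply to at most $C_l(\w)\|X\|_\op^l$. The number of networks is unchanged because the contractions act bijectively on the networks, giving the cardinality $|\cG(l_1,\ldots,l_k)|$.

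I expect the main obstacle to be the bookkeeping showing that every vector ever contracted into $T$ belongs to $\cU(l)$, with degrees in $X$ and $\w$ both at most $l$, and that the operator-norm bounds of the merged edge matrices are at most $1$ rather than depending on $n$. I would first try to establish this by induction on the number of contraction steps, with the invariant "every edge label is a scalar multiple of an element of $\cM$ of operator norm $\leq 1$, and the degrees sum to at most $l$."
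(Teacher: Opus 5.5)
Your plan follows the paper's proof: the same split of $t$ and relabeling of $\cK$, then iterated contractions, with the same bookkeeping through $\cM(l)$ and $\cU(l)$. Your A is the paper's 3(a), your B is 3(b) followed by 3(c), and your D is 3(b).

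\textbf{The gap is in Step C.} Every vertex of $\cV$ keeps an edge to $\{s,t\}$. So a degree-2 vertex $v\in\cV$ has exactly one edge $e$ to $s$ or $t$ and one edge $e'$ to some $u\in\cK\cup\cV$. Merging these two edges "through a diagonal matrix" therefore does three things wrong:
\begin{itemize}
\item It leaves an edge at $s$ or $t$ labeled $f(v)f(e')$ rather than $I$, so property 1 fails.
\item If $u\in\cK$, it leaves a $\cK$-vertex adjacent to $\{s,t\}$, so property 2 fails. This also breaks the edge-label form you use in Step B.
\item It contradicts your own claim that edges to $\{s,t\}$ are never deleted.
\end{itemize}
The correct contraction pushes the matrix into the endpoint label instead:
\begin{itemize}
\item Give the new edge $(s,u)$ or $(t,u)$ the label $I$, and move $u$ into $\cV$ if it was in $\cK$.
\item For $s$: replace $f(s)=\tilde\s_1\otimes\cdots\otimes\tilde\s_m$ by the tensor whose last factor is $f(e')^*f(v)\tilde\s_m$. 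This keeps $f(s)$ rank one, which property 1 requires and Cases 2--3 of Lemma~\ref{lem:RFmonomials} use. The left side of property 4 does not increase, because $\|f(e')^*f(v)\tilde\s_m\|_2\le\|f(e')\|_\op\|f(v)\|_\infty\|\tilde\s_m\|_2$ and those two factors leave the product.
\item For $t$: contract $f(t)$ along the corresponding axis with $f(v)f(e')$.
\end{itemize}
Your paragraph on $t$ supplies the $t$ case. The absorption into $f(s)$ and the reassignment of $u$ to $\cV$ are missing.

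\textbf{Two smaller points.}

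First, the bound $\|A\odot B\|_\op\le\|A\|_\op\|B\|_\op$ holds for arbitrary matrices, since $A\odot B$ is a submatrix of $A\otimes B$. Step D therefore needs no structural argument about rows and columns of $\bar X$.

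Second, your stopping rule cannot always be reached. Suppose a degree-2 neighbor of $v$ that is also adjacent to $t$ gets absorbed into $t$. Then $v\in\cV$ can end with two edges to $t$ and one edge to some $u$. It has only two distinct neighbors, yet none of A--D applies to it.

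Property 3 asks only for degree $\ge 3$ and simplicity of the subgraph induced by $\cK\cup\cV$. So iterate until no step applies, and derive these two facts from the non-applicability of the steps, as the paper does:
\begin{itemize}
\item For $\cK$-vertices: at least two distinct neighbors, no parallel edges, and not exactly two distinct neighbors, hence degree $\ge 3$.
\item For $\cV$-vertices: degree $\neq 2$, plus the connectivity of $\cK\cup\cV$ and $k\ge 3$, hence degree $\ge 3$.
\end{itemize}
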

\begin{proof}
We begin with the representation of Lemma \ref{lem:graph of random feature}
applied to $\x=X\w$,
\[\<\kappa_k((X\w)^{\odot l_1},\ldots,(X\w)^{\odot l_k}),
\s_1 \otimes \ldots \otimes \s_m \otimes T\>
=\sum_{(G,f) \in \cG} \val(G,f)\]
where $\cG \equiv \cG(l_1,\ldots,l_k,\s_1 \otimes \ldots \otimes \s_m \otimes
T)$.
We proceed to modify each tensor network $(G,f) \in \cG$ via a sequence of
steps that do not change $\val(G,f)$, to yield a network satisfying the stated
properties:
\begin{enumerate}
\item As in Lemma \ref{lem:bound i-to-j norm},
we split the vertex $t$ with label
$f(t)=\s_1 \otimes \ldots \otimes \s_m \otimes T$ into
two vertices $s,t$ with labels $f(s)=\s_1 \otimes \ldots \otimes \s_m$
and $f(t)=T$, such that the first $m$ edges are
incident to $s$ and the last $k-m$ edges are incident to $t$.
\item As in Lemma \ref{lem:bound i-to-j norm}, we relabel each $u \in \cK$ by
(the diagonal label) $f(u)=\kappa_{\deg(u)}(\w)$ and relabel each incident edge 
$e=(v,u)$ with $X$. This resulting network satisfies
Conditions 1--2 of Lemma \ref{lem:contract network into degree 3}.
\item We then repeatedly apply the following steps. It is readily checked
that each step preserves $\val(G,f)$ and Conditions 1--2:
\begin{enumerate}[label=(\alph*)]
\item As in Lemma \ref{lem:bound i-to-j norm}, if
a vertex $u \in \cK$ has a single unique neighbor
$v \in \cV$ with incident edges $e_1,\ldots,e_{\deg(u)}=(v,u)$,
we contract $u$ by removing $u$ and
$e_1,\ldots,e_{\deg(u)}$ from $(G,f)$, and replacing the label
label $f(v) \equiv \diag(\tilde \v) \in (\R^n)^{\otimes \deg(v)}$
with the new label $\diag(\hat\v) \in (\R^n)^{\otimes \deg(v)-\deg(u)}$
having entries
\[\hat\v[i]=\tilde \v[i] \times \bigg(\underbrace{\sum_{j=1}^n X[i,j]^{\deg(u)}
\kappa_{\deg(u)}(\w)[j]}_{=\|X\|_\op^{\deg(u)} D_{\deg(u)}[i,i]}\bigg).\]
\item Suppose two vertices $u,v \in \cK \cup \cV$ each have at least 2
distinct neighbors, and $u,v$ are also connected by
$a \geq 2$ edges $e_1,\ldots,e_a=(u,v)$. We replace these edges by a
single edge $e=(u,v)$ having label $f(e)=f(e_1) \odot \ldots \odot f(e_a)$ (the
entrywise product), and reduce the orders of $f(u),f(v)$ by $a-1$ while keeping
their diagonal entries the same.
\item Suppose a vertex $u \in \cK$ has $\deg(u)=2$ and two distinct
neighbors $v,v' \in \cV$. Let $e=(v,u)$ and $e'=(u,v')$.
We contract $u$ by removing $u$ and $e,e'$, and replacing these with the
single new edge $e''=(v,v')$ labeled by $f(e'')=f(e)f(u)f(e')$.
\item Suppose a vertex $v \in \cV$ has $\deg(v)=2$ and is adjacent to one
vertex in $\{s,t\}$ --- say, $s$ via its last incident edge --- and
one vertex $u \in \cK \cup \cV$. Let $e=(s,v)$ and $e'=(v,u)$,
and note that $f(u)$ is diagonal and $f(e)=I$. We contract $v$ by removing
$v$ and $e,e'$, replacing these by a single new edge $e''=(s,u)$ with
label $f(e'')=I$, 
reassigning $u$ to the vertex set $\cV$ if it belongs to $\cK$ (so that each
edge incident to $s$ still connects to $\cV$), and replacing
$f(s) \equiv \widetilde S \in (\R^n)^{\otimes \deg(s)}$ by the new label
$\widehat S \in (\R^n)^{\otimes \deg(s)}$ given by the contraction
\[\widehat S[i_1,\ldots,i_{\deg(s)}]
=\sum_{j=1}^n \widetilde S[i_1,\ldots,i_{\deg(s)-1},j]f(v)[j,j]
f(e')[j,i_{\deg(s)}]\]
Note that if
$\widetilde S=\tilde \s_1 \otimes \ldots \otimes \tilde \s_{\deg(s)}$,
then $\widehat S=\tilde \s_1 \otimes \ldots \otimes \tilde \s_{\deg(s)-1}
\otimes (f(e')^*f(v)\tilde \s_{\deg(s)})$, so
$\widehat S$ remains rank-1, as required in Condition 1.
\end{enumerate}
We apply the above steps (a--d) iteratively in any order, until none of these
steps are further applicable. (The number of such applications is finite, since
each step removes at least one vertex or edge.)
\end{enumerate}

We now check Conditions 3--4 for this resulting network: Note first that since
Conditions 1--2 hold, it is either the case that every $v \in \cV$ has at
least 1 neighbor in $\{s,t\}$ and at least 1 neighbor in $(\cK \cup \cV)
\setminus \{v\}$, or $\cK \cup \cV=\{v\}$ consists of the single vertex $v$
which has $m$ edges connecting to $s$ and $k-m$ edges connecting to $t$.
In both cases, $v$ has at least 2 distinct neighbors.
We have $\deg(v) \geq 3$ in the former case because Step 3(d) is no longer
applicable, and $\deg(v) \geq 3$ in the latter case because $k \geq 3$.
For each $u \in \cK$, we also have that $u$ has at least 2 distinct neighbors
and $\deg(u) \geq 3$, because Steps 3(a,c) are no
longer applicable. Then any two vertices $u,v \in \cK \cup \cV$ are connected
by at most 1 edge, as Step 3(b) is no longer applicable. This shows Condition 3.

To check Condition 4, note that the network $(G,f)$ after Step 2 has $f(s)=\s_1
\otimes \ldots \otimes \s_m$, $f(t)=T$, and
\begin{equation}\label{eq:Condition4initial}
\|f(s)\|_F\|f(t)\|_\infty \prod_{v \in \cK \cup \cV}
\|f(v)\|_\infty \prod_{e \in \cE} \|f(e)\|_\op
=\Big(\prod_{i=1}^m \|\s_i\|_2\Big)\|T\|_\infty
\Big(\prod_{u \in \cK} \|\kappa_{\deg(u)}(\w)\|_\infty\Big)
\|X\|_\op^l.
\end{equation}
Thus Condition 4 holds for this network. For each application of Step 3(a),
since $\deg(u) \geq 2$, we have
\begin{align*}
\|\hat\v\|_\infty &\leq \|\tilde\v\|_\infty \|\kappa_{\deg(u)}(\w)\|_\infty
\|X\|_\infty^{\deg(u)-2} \max_{i=1}^n \sum_{j=1}^n X[i,j]^2\\
&\leq \|\tilde\v\|_\infty\|\kappa_{\deg(u)}(\w)\|_\infty\|X\|_\op^{\deg(u)}.
\end{align*}
For each application of Step 3(b), we have
\[\|f(e)\|_\op 
=\|f(e_1) \odot \ldots \odot f(e_a)\|_\op
\leq \|f(e_1)\|_\op \ldots \|f(e_a)\|_\op.\]
For each application of Step 3(c), we have
\[\|f(e'')\|_\op=\|f(e)f(u)f(e')\|_\op
\leq \|f(e)\|_\op\|f(u)\|_\infty\|f(e')\|_\op.\]
For each application of Step 3(d) to $s$, we have
\[\|\widehat S\|_F
=\|\tilde\s_1\|_2\ldots\|\tilde \s_{\deg(s)-1}\|_2
\|f(e')^*f(u)\tilde \s_{\deg(s)}\|_2
\leq \|f(e')\|_\op\|f(u)\|_\infty \|\widetilde S\|_F.\]
Thus, each of these operations do not increase the left side of
\eqref{eq:Condition4initial}.

Each time Step 3(d) is applied to $t$, a matrix of
the form $f(v)f(e')$ is contracted with one axis of the tensor $f(t) \in
(\R^n)^{\otimes k-m}$.
Here, $f(v)f(e')$ is obtained from successively applying matrix products,
entrywise products, and transposes to the matrices $\|X\|_\op^k D_k$,
$\diag((\kappa_k(\w[j]))_{j=1}^n)=\|\kappa_k(\w)\|_\infty K_k$, and
$X=\|X\|_\op\bar X$ arising in the preceding applications of Steps 3(a--c).
Hence, the final label $f(t)$ is given by the contraction of $T$ along each
axis $j=1,\ldots,k-m$ with a matrix of the form
$\|X\|_\op^{a_j} \prod_{k \in B_j}
\|\kappa_k(\w)\|_\infty \cdot M_j$ for some $a_j \geq 0$, some list $B_j$ (possibly empty) of integers $\geq 2$,
and some matrix $M_j \in \cM$ having degree $a_j$ in $X$ and
$b_j:=\sum_{k\in B_j} k$ in $\w$, where $\cM$ and this notion of degree are as
defined preceding \eqref{eq:Uldef}. In the network after Step 2, we have $l$
edges with label $X$, and also $\sum_{u \in \cK} \deg(u)=l$. Thus, for the
label $f(t)$ after the conclusion of the above steps, we must have
$a_1+\ldots+a_{k-m} \leq l$ and $b_1+\ldots+b_{k-m} \leq l$, and in particular
$M_j \in \cM(l)$ for each $j=1,\ldots,k-m$. Thus
\[\|f(t)\|_\infty \leq \|T\|_{\cU(l)} \prod_{j=1}^{k-m}
\left(\|X\|_\op^{a_j} \prod_{k \in B_j} \|\kappa_k(\w)\|_\infty\right).\]
Whenever Step 3(d) is applied to contract a matrix of the form
$f(v)f(e')=\|X\|_\op^a \prod_{k \in B} \|\kappa_k(\w)\|_\infty M$
into $f(t)$, this factor of
$\|X\|_\op^a \prod_{k \in B} \|\kappa_k(\w)\|_\infty$ is removed
from the bound \eqref{eq:Condition4initial} for the remaining factors
on the left side of \eqref{eq:Condition4initial}. Thus, for the final network
$(G,f)$, we have the bound
\[\|f(s)\|_F\|f(t)\|_\infty \prod_{v \in \cK \cup \cV}
\|f(v)\|_\infty \prod_{e \in \cE} \|f(e)\|_\op
\leq \Big(\prod_{i=1}^m \|\s_i\|_2\Big)\|T\|_{\cU(l)}
\Big(\prod_{u \in \cU} \|\kappa_{\deg(u)}(\w)\|_\infty\Big)
\|X\|_\op^l\]
which shows Condition 4 since
$\prod_{u \in \cK} \|\kappa_{\deg(u)}(\w)\|_\infty \leq C_l(\w)$.
\end{proof}

Note that each network $(G,f)$ of
Lemma \ref{lem:contract network into degree 3} satifies
the conditions of Lemma \ref{lem:source-sink network} with $\cA=\cV$ and
$\cB=\cK$. Then, by Lemma \ref{lem:bounds by bipolar orientation},
we have
\[|\val(G,f)| \leq \|f(s)\|_F\|f(t)\|_F
\prod_{u \in \cK \cup \cV} \|f(u)\|_\matop \prod_{e \in \cE} \|f(e)\|_\op.\]
Applying also $\|f(t)\|_F \leq \sqrt{n}^{k-m}\|f(t)\|_\infty$ for any $f(t) \in
(\R^n)^{\otimes k-m}$, and $\|f(u)\|_\matop=\|f(u)\|_\infty$ because $f(u)$ is
diagonal, Lemma \ref{lem:contract network into degree 3} then implies
the bound
\begin{align*}
&\Big|\<\kappa_k((X\w)^{\odot l_1},\ldots,(X\w)^{\odot l_k}),
\s_1 \otimes \ldots \otimes \s_m \otimes T\>\Big|
\leq \sum_{(G,f) \in \cG} |\val(G,f)|\\
&\hspace{2in}\leq \sum_{(G,f) \in \cG} \sqrt{n}^{k-m}\|f(s)\|_F\|f(t)\|_\infty
\prod_{u \in \cK \cup \cV} \|f(u)\|_\infty
\prod_{e \in \cE} \|f(e)\|_\op\\
&\hspace{2in}\leq C_l(\w)\|X\|_\op^l|\cG(l_1,\ldots,l_k)| \times \sqrt{n}^{k-m}
\|T\|_{\cU(l)}\prod_{i=1}^m \|\s_i\|_2.
\end{align*}
This shows a weaker form of \eqref{eq:RFmonomialsbound} with factor
$\sqrt{n}^{k-m}$ instead of $\sqrt{n}^{k-m-1}$.

We conclude the proof of Lemma \ref{lem:RFmonomials} by
improving this factor to $\sqrt{n}^{k-m-1}$ using a more involved combinatorial argument and some casework.

\begin{lemma}\label{lem:good vertices}
Let $(G,f) \in \cG$ be any tensor network satisfying the properties of
Lemma \ref{lem:contract network into degree 3}. Suppose that $k-m \geq 2$,
and that the vertex $t$ connects to $k-m$ distinct neighbors $\cV_T \subseteq
\cV$. Let $G'$ be the subgraph of $G$ induced by $\{s\} \cup \cK \cup \cV$.

Then $G'$ contains a tree $H$ satisfying following property:
Call a leaf vertex $v$ of $H$ ``good'' if $v \in \cV_T$ and
$v$ is connected via a path of edges in $G' \setminus H$
to either a different vertex $u \in \cV_T$ or to $s$.
Then $H$ has at least 2 good leaf vertices.
\end{lemma}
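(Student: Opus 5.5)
The plan is to build $H$ as a spanning tree of a suitable connected subgraph and then choose it carefully so that two leaves lie in $\cV_T$ and retain extra connections outside the tree. The structural facts from Lemma \ref{lem:contract network into degree 3} that I will use are these. The subgraph induced by $\cK\cup\cV$ is connected and simple. Every vertex of $\cK\cup\cV$ has degree at least $3$. Every vertex of $\cV$ has a neighbor in $\{s,t\}$. Edges incident to $\{s,t\}\cup\cK$ go to $\cV$.

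Since $t$ has $k-m\ge 2$ distinct neighbors, each $v\in\cV_T$ has exactly one edge to $t$ and therefore at least two further edges inside $G'$. Moreover $G'$ is connected, because $s$ has $m\ge1$ edges into $\cV$.

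I will construct $H$ as a minimal subtree of $G'$ connecting the vertex set $\cV_T$, that is, a Steiner tree. Its leaves then all lie in $\cV_T$, and since $|\cV_T|\ge2$ there are at least two leaves. It remains to show that at least two leaves are good. Take a leaf $v$. It is incident to exactly one tree edge, so, being in $\cV_T$, it has at least one further edge $e$ in $G'\setminus H$. I follow a walk starting along $e$ in $G'\setminus H$ and aim to show that it reaches $s$ or a vertex of $\cV_T$ other than $v$.

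The main obstacle is this reachability claim. The component $C$ of $v$ in $G'\setminus E(H)$ might in principle contain only $v$ together with vertices not in $\cV_T\cup\{s\}$. I will argue by degree counting. Every vertex of $C$ other than $v$ that is neither in $\cV_T$ nor $s$ has all of its $G$-edges inside $G'$, since such a vertex is not adjacent to $t$. If such a vertex is not in $H$, all of its edges (at least $3$) lie in $C$. This degree condition alone does not force $C$ to escape, because a finite pocket of degree-$\ge 3$ vertices can exist.

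So I will instead exploit the freedom in choosing $H$. I first choose any spanning tree $H_0$ of $G'$ and then prune it, repeatedly deleting leaves not in $\cV_T\cup\{s\}$. This yields a tree whose leaves lie in $\cV_T\cup\{s\}$. Now a leaf $v\in\cV_T$ has a non-tree edge $e=(v,w)$. Since $H_0$ was spanning, the vertex $w$ was connected in $H_0$, and the pruned subtree of $H_0$ hanging at $w$ ends at leaves in $\cV_T\cup\{s\}$. That subtree consists of edges not in $H$, so it gives a path in $G'\setminus H$ to $s$ or to some $u\in\cV_T$. If the path would return to $v$ itself, I instead pick a different branch. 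This is where the degree $\ge3$ condition is used, since it provides at least two non-tree edges at a leaf. I also need the fact that the subgraph on $\cK\cup\cV$ is simple, so that these two edges go to distinct neighbors.

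Finally I must ensure there are two such leaves in $\cV_T$ rather than possibly $s$. I will choose $H_0$ as a BFS tree rooted at $s$ and prune only down to $\cV_T$. This ensures at least two leaves lie in $\cV_T$ whenever $|\cV_T|\ge2$, possibly after rerooting at a vertex of $\cV_T$ if $H$ would otherwise be a path ending at $s$. The remaining case analysis, namely the case where $H$ is a path whose ends are $s$ and a single vertex of $\cV_T$, I expect to handle directly using $|\cV_T|\ge 2$.
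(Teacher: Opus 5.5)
There is a genuine gap: the step that is supposed to make the leaves good is false, and your construction can output a tree with no good leaf at all.

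\textbf{Where the argument breaks.} The claim that ``the pruned subtree of $H_0$ hanging at $w$ ends at leaves in $\cV_T\cup\{s\}$'' contradicts the pruning itself. You only ever delete vertices outside $\cV_T\cup\{s\}$, so the pruned part of $H_0$ contains no vertex of $\cV_T\cup\{s\}$. Walking along pruned edges from $w$ can therefore only bring you back into $H$ at the vertex where that branch was attached. That vertex may be $v$ itself, or an internal vertex of $H$ outside $\cV_T$ whose other edges are all tree edges.

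The fallback ``pick a different branch'' rests on a miscount. A leaf $v\in\cV_T$ has degree at least $3$ in $G$, but one edge goes to $t$ and one is the tree edge. So only one non-tree edge of $G'$ at $v$ is guaranteed, not two.

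\textbf{A counterexample to your construction.} Let $\cV=\cV_T=\{a,b,c\}$ and $\cK=\{w_1,w_2\}$. Let $\cK\cup\cV$ induce the complete bipartite graph between $\{w_1,w_2\}$ and $\{a,b,c\}$, let $t$ be adjacent to $a,b,c$, and let $s$ be adjacent only to $c$ (so $m=1$, $k=4$). This satisfies the structural properties of Lemma~\ref{lem:contract network into degree 3}.
\begin{itemize}
\item Take the spanning tree $H_0$ with edges $(s,c),(c,w_1),(c,w_2),(w_1,a),(w_2,b)$. It is even a shortest-path tree from $s$, and it has nothing to prune.
\item Its leaves $a,b$ have the single non-tree edges $(a,w_2)$ and $(b,w_1)$. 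The remaining edges at $w_2$ (resp.\ $w_1$) are tree edges.
\item So the components of $a$ and $b$ in $G'\setminus H$ are $\{a,w_2\}$ and $\{b,w_1\}$, and neither leaf is good.
\item Pruning $s$ as well gives the path $a-w_1-c-w_2-b$, which has the same defect.
\end{itemize}
By contrast, the minimum Steiner tree here is the star at $w_1$, and all of its leaves are good.

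\textbf{What is missing.} You need the local observation that confines bad leaves, plus an exchange argument. Your first instinct, a Steiner tree, was the right start.
\begin{enumerate}
\item Take $H$ with the fewest vertices among trees containing $\cV_T$ in the subgraph induced by $\cK\cup\cV$, not in $G'$. Then $s\notin H$, and every edge at $s$ is a non-tree edge.
\item Write $\cV_S$ for the neighbors of $s$. Every vertex of $\cV$ lies in $\cV_S\cup\cV_T$. So if a leaf $v$ has a non-tree edge $(v,w)$ with $w\in\cV$, then $v$ is good.
\item If instead $w\in\cK\setminus H$, all of the at least three edges of $w$ are non-tree edges to distinct vertices of $\cV$. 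So $v$ is again good.
\item Hence a bad leaf has all its non-tree edges ending in $H\cap\cK$. Your ``pocket'' can only live inside $H$.
\item If $H$ is a single edge or a star, this is impossible by simplicity, so all leaves are good.
\item Otherwise take a path $u-r-r'-v$ in $H$, and the deepest leaves of the two subtrees on either side of $(r,r')$. Suppose on one side none of them is good. Detach the sibling leaves $v'$ of such a deepest leaf from their parent $p(v)$, and reattach each to its neighbor $w(v')\in H\cap\cK$.
\item This gives a tree on the same vertex set in which $p(v)$ is a leaf, so $p(v)\in\cV_T$ by minimality. Now $p(v)$ and each $v'$ are good via the freed edges $(v',p(v))$.
\end{enumerate}
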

\begin{proof}
Since the subgraph induced by $\cK \cup \cV$ is connected, there
exists at least one tree in this subgraph containing $\cV_T$. Among all such
trees, let $H$ be one having the smallest number of vertices.
Then each leaf of $H$ belongs to $\cV_T$, and all edges incident to $s$ belong
to $G' \setminus H$.

Let $\cV_S \subseteq \cV$ be the vertices neighboring $s$. Note that
$\cV=\cV_S \cup \cV_T$, and $\cV_S \cap \cV_T$ may be non-empty as a vertex
can neighbor both $s$ and $t$.
Let us call an edge of $G' \setminus H$ a ``non-tree edge''.
Since $|\cV_T|=\deg(t)=k-m$, the vertex  $t$ has only a single
edge in $G$ connecting to each vertex of $\cV_T$. Then, since every
$v \in \cV_T$ has degree at least 3 in $G$,
every leaf $v$ of $H$ must have at least one incident non-tree edge $(v,w)$.
If $w=s$, $w \in \cV_T$, or $w \in \cV_S$ (and hence connects to $s$ via a
non-tree edge), then by definition $v$ is
good. If $w \in \cK$ and $w\notin H$, then letting $u \in \cV_S \cup \cV_T$
be any neighbor of $w$ different from $v$,
$(w,u)$ must be a non-tree edge, so $v$ is also good. Thus we have
established the following claim:
\begin{center}
If a leaf vertex $v$ of $H$ is not good,
then there exists a non-tree edge $(v,w)$ where $w \in H \cap \cK$.
\end{center}
Note that if $H$ consists of two leaf vertices
connected by a single edge, or if $H$ is a star consisting of a single vertex $u
\in \cK \cup \cV$ connecting by an edge to all other vertices of $H$
(which are leaves and hence belong to $\cV_T$),
then no such non-tree edge $(v,w)$ can exist. Then all leaf vertices
of $H$ are good, so in particular $H$ has at least 2 good leaves, and the lemma
holds for $H$.

For any other tree $H$, there is a path of at least 3 distinct edges
in $H$, say $u-r-r'-v$. Let us call the two components of $H$ connected to $r$
and to $r'$ upon removing the edge $r-r'$ the
``sub-trees rooted at $r$ and $r'$'' respectively. Let $\cL$ be the set of
leaves in the sub-tree rooted at $r$ that have maximal distance from $r$,
and define similarly $\cL'$ for the sub-tree rooted at $r'$. If $\cL$ and $\cL'$
each contain a good leaf, then again $H$ has at least 2 good leaves, so the
lemma holds for $H$.

Otherwise, at least one of $\cL$ and $\cL'$ --- say, $\cL$ --- does not contain
any good leaf. We then consider the following modification of $H$:
Take any $v \in \cL$, let $p(v)$ be its parent in the sub-tree rooted at $r$,
and let $\cL_v$ be the set of all children of $p(v)$ (including $v$).
Each $v' \in \cL_v$ is a leaf vertex that is not good, so by the above claim,
for each $v' \in \cL_v$, there exists a non-tree edge $(v',w(v'))$ where
$w(v') \in H \cap \cK$. We consider the graph
$\tilde H$ that removes the edges $\{(v',p(v)):v' \in \cL_v\}$ from $H$ and adds
the new edges $\{(v',w(v')):v' \in \cL_v\}$. Note that $\tilde H$ is a tree
having the same vertices as $H$, that each $v' \in \cL_v$ and $p(v)$ itself are
now leaves of $\tilde H$, and that each leaf of $\tilde H$ other than $p(v)$
was a leaf of $H$ and hence belongs to $\cV_T$.
We must have $p(v) \in \cV_T$, because otherwise
we may remove $p(v)$ from $\tilde H$ to obtain a smaller tree containing
$\cV_T$, contradicting that $H$ was such a tree having the smallest number of
vertices. Thus all leaves of $\tilde H$ belong to $\cV_T$. Furthermore,
each $v' \in \cL_v$ connects to $p(v)$ by an edge not belonging to
$\tilde H$, so $p(v)$ and all vertices of $\{v' \in \cL_v\}$ are good leaves of
$\tilde H$. Thus $\tilde H$ has at least 2 good leaves, so the lemma holds for
$\tilde H$.
\end{proof}

\begin{proof}[Proof of Lemma \ref{lem:RFmonomials}]
Consider any tensor network $(G,f) \in \cG$ given in
Lemma \ref{lem:contract network into degree 3}. 
Recall that the vertex $t$ has $\deg(t)=k-m$ and label $f(t)
\in (\R^n)^{\otimes k-m}$. We will establish the bound
\begin{equation}\label{eq:valGfmainbound}
|\val(G,f)| \leq \sqrt{n}^{k-m-1}\|f(s)\|_F\|f(t)\|_\infty
\prod_{v \in \cK \cup \cV} \|f(v)\|_\infty
\prod_{e \in \cE}\|f(e)\|_\op.
\end{equation}

{\bf Case 1:} 
Suppose that $t$ has strictly fewer than $k-m$ distinct neighbors,
i.e.\ that some two edges incident to $t$ --- say, its last two edges ---
connect to
the same vertex of $\cV$. Consider the modified network $(\tilde G,\tilde f)$
that replaces the last two edges incident to $t$ by a single edge with
label $\tilde f(e)=I$, and replaces $f(t)$ by $\tilde f(t) \in
(\R^n)^{\otimes k-m-1}$ having entries
\[\tilde f(t)[i_1,\ldots,i_{k-m-1}]
=f(t)[i_1,\ldots,i_{k-m-2},i_{k-m-1},i_{k-m-1}].\]
Then $\val(G,f)=\val(\tilde G,\tilde f)$. Lemma \ref{lem:source-sink network}
(with $\cA=\cV$ and $\cB=\cK$)
implies that $\tilde G$ still admits a $(s,t)$-bipolar orientation. Then,
denoting by $\tilde \cE$ the edges of $\tilde G$, we have
by Lemma \ref{lem:bounds by bipolar orientation} that
\begin{align*}
|\val(G,f)|=|\val(\tilde G,\tilde f)| &\leq \|\tilde f(s)\|_F\|\tilde f(t)\|_F
\prod_{v \in \cK \cup \cV} \|\tilde f(v)\|_\infty
\prod_{e \in \tilde \cE}\|\tilde f(e)\|_\op\\
&\leq \sqrt{n}^{k-m-1}\|f(s)\|_F\|f(t)\|_\infty
\prod_{v \in \cK \cup \cV} \|f(v)\|_\infty
\prod_{e \in \cE}\|f(e)\|_\op,
\end{align*}
the second inequality applying
$\|\tilde f(t)\|_F \leq \sqrt{n}^{k-m-1}\|\tilde f(t)\|_\infty
=\sqrt{n}^{k-m-1}\|f(t)\|_\infty$.\\

{\bf Case 2:}
Suppose now that $t$ has exactly $k-m$ distinct neighbors, and $k-m=1$.
Let $v_t \in \cV$ be the vertex adjacent to $t$. We modify the network $(G,f)$
into a new network $(\tilde G,\tilde f)$ via two steps:
\begin{itemize}
\item We contract $f(t) \in \R^n$
by removing $f(t)$ and its incident edge, and replacing
the diagonal label $f(v_t) \equiv \diag(\v) \in (\R^n)^{\otimes \deg(v)}$ by
$\tilde f(v_t)=\diag(\v \odot f(t)) \in (\R^n)^{\otimes \deg(v)-1}$.
\item Note that $m \geq 2$, since $k \geq 3$. Let $e_1,\ldots,e_m$ be the
ordered edges incident to $s$ in $G$, and suppose its (rank-1) label factorizes
as $f(s)=\tilde \s_1 \otimes \ldots \otimes \tilde \s_m$. We 
split $s$ into two vertices $s',s''$ of $\tilde G$,
where $s'$ is incident to $e_1$ with label $\tilde f(s')=\tilde \s_1$, and
$s''$ is incident to $e_2,\ldots,e_m$ with label $\tilde f(s'')=\tilde \s_2 \otimes \ldots \otimes \tilde \s_m$.
\end{itemize}
Then $\val(G,f)=\val(\tilde G,\tilde f)$. We claim that $(\tilde G,\tilde
f)$ admits a $(s',s'')$-bipolar orientation. Indeed, if $v_t$ has an edge
connecting to either $s'$ or $s''$ in $\tilde G$, then Lemma
\ref{lem:source-sink network} (with $\cA=\cV$ and $\cB=\cK$) applies to show
$(\tilde G,\tilde f)$ has a $(s',s'')$-bipolar orientation.
Otherwise, each edge of $\tilde G$
incident to $v_t$ must connect to a distinct neighbor in $\cK \cup \cV$,
and there are at least 2 such edges, with each such neighbor
having degree $\geq 3$. Then
the graph $\tilde G \setminus \{v_t\}$ removing $v_t$ and its incident edges
admits a $(s',s'')$-bipolar orientation by Lemma \ref{lem:source-sink network}
applied with $\cA=\cV \setminus \{v_t\}$ and $\cB=\cK$.
Since $v_t$ has at least 2 distinct neighbors in 
$\tilde G \setminus \{v_t\}$, this implies that $\tilde G$ also admits
a $(s',s'')$-bipolar orientation, verifying our claim.
Then, denoting by $\tilde \cE$ the edges of $(\tilde G,\tilde f)$, we have
by Lemma \ref{lem:bounds by bipolar orientation} that
\begin{align*}
|\val(G,f)|=|\val(\tilde G,\tilde f)|
&\leq \|\tilde f(s')\|_F\|\tilde f(s'')\|_F
\prod_{v \in \cK \cup \cV} \|\tilde f(v)\|_\infty
\prod_{e \in \tilde \cE} \|\tilde f(e)\|_\infty\\
&\leq \|f(s)\|_F\|f(t)\|_\infty
\prod_{v \in \cK \cup \cV} \|f(v)\|_\infty
\prod_{e \in \cE} \|f(e)\|_\infty,
\end{align*}
the second inequality applying
$\|\tilde f(s')\|_F\|\tilde f(s'')\|_F=\|f(s)\|_F$ and
$\|\tilde f(v_t)\|_\infty
\leq \|f(t)\|_\infty \|f(v_t)\|_\infty$.\\

{\bf Case 3:} Suppose $t$ has exactly $k-m$ distinct neighbors,
and $k-m \geq 2$. Let $\partial s$ and $\partial t$ denote the edges incident
to $s$ and $t$, respectively, where $|\partial s|=m$ and $|\partial t|=k-m$.
Let $\cE_{\cK \cup \cV}$ denote all remaining edges, which belong to the
connected subgraph induced by $\cK \cup \cV$.
For each edge $e \in \partial s \cup \partial t$, denote by $v_e \in \cV$
the vertex other than $\{s,t\}$ that is incident to $e$. As $f(s)$ is rank-1,
it admits a factorization $f(s)=\otimes_{e \in \partial s} \tilde \s_e$
for some $m$ vectors $(\tilde \s_e:e \in \partial s)$ in $\R^n$. Then, since
$f(e)=I$ for each $e \in \partial s \cup \partial t$ and $f(v)$ is diagonal
for each vertex $v \in \cK \cup \cV$, we may express $\val(G,f)$ as
\begin{align*}
\val(G,f)=\sum_{i_v \in [n]:v \in \cK \cup \cV}
f(t)[i_{v_e}:e \in \partial t]
\prod_{e \in \partial s} \tilde \s_e[i_{v_e}]
\prod_{v \in \cK \cup \cV} f(v)[i_v,\ldots,i_v]
\prod_{e=(u,v) \in \cE_{\cK \cup \cV}} f(e)[i_u,i_v]
\end{align*}
where the summation is over one distinct index $i_v \in [n]$ for each vertex of
$\cK \cup \cV$.

Let $\cV_T=\{v_e:e \in \partial t\} \subseteq \cV$ 
and $\cV_S=\{v_e:e \in \partial s\} \subseteq \cV$ denote the sets of vertices
adjacent to $t$ and $s$, where $\cV=\cV_T \cup \cV_S$ and $|\cV_T|=k-m \geq 2$. 
(We recall that $\cV_S \cap \cV_T$ may be non-empty.)
We will next partition the edges of $\cE_{\cK \cup \cV} \cup \partial s$ ---
including those incident to $s$ but not those incident to $t$ --- into two
non-empty
disjoint sets $\cE_1 \cup \cE_2$, so as to bound the above expression using
Cauchy-Schwarz. Given any such partition $\cE_1 \cup \cE_2$, let us
split the vertices of $\cK \cup \cV$ that are
incident to $\cE_1$ into two types: Let $\cU_1^* \subseteq \cK \cup \cV$ be
those vertices incident to some edge of $\cE_1$, but not belonging to $\cV_T$
or incident to any edge of $\cE_2$.
Let $\cU_1 \subseteq \cK \cup \cV$ be those remaining vertices which are
incident to an edge of $\cE_1$ and either belong to $\cV_T$ or are incident
to an edge of $\cE_2$. We define a tensor $U_1 \in (\R^n)^{\otimes |\cU_1|}$ of
order $|\cU_1|$, which sums over vertices of $\cU_1^*$ and is 
indexed by $(i_v:v \in \cU_1)$ corresponding to vertices of $\cU_1$. This tensor
$U_1$ has entries
\[U_1[i_v:v \in \cU_1]=\sum_{i_v \in [n]:v \in \cU_1^*}
\prod_{e \in \partial s \cap \cE_1}\tilde \s_e[i_{v_e}]
\prod_{v \in \cU_1^*}f(v)[i_v,\ldots,i_v]
\prod_{e \in (u,v) \in \cE_1 \cap \cE_{\cK \cup \cV}} f(e)[i_u,i_v].\]
We define analogously the sets $\cU_2^*,\cU_2 \subseteq \cK \cup \cV$ and tensor
$U_2$ for $\cE_2$. Note that $\cK \cup \cV$ is then the disjoint union of the
three sets $\cU_1^*$, $\cU_2^*$, and $\cU_1 \cup \cU_2$, where $\cV_T \subseteq
\cU_1 \cup \cU_2$ because each vertex of $\cV_T$ must be incident to at least
one edge of the connected subgraph $\cK \cup \cV$. Then
\[\val(G,f)=\sum_{i_v \in [n]:v \in \cU_1 \cup \cU_2}
f(t)[i_{v_e:e \in \partial t}]\,
\bigg(\prod_{v \in \cU_1 \cup \cU_2} f(v)[i_v,\ldots,i_v]\bigg)
U_1[i_v:v \in \cU_1]\,U_2[i_v:v \in \cU_2].\]
Applying Cauchy-Schwartz over the indices of $\cU_1 \cup \cU_2$,
\begin{align*}
|\val(G,f)|
&\leq \|f(t)\|_\infty
\bigg(\prod_{v \in \cU_1 \cup \cU_2} \|f(v)\|_\infty\bigg)\\
&\hspace{1in}\bigg(\sum_{i_v \in [n]:v \in \cU_1 \cup \cU_2}
U_1[i_v:v \in \cU_1]^2\bigg)^{1/2}
\bigg(\sum_{i_v \in [n]:v \in \cU_1 \cup \cU_2}
U_2[i_v:v \in \cU_2]^2\bigg)^{1/2}.
\end{align*}
Note that $(\cU_1 \cup \cU_2) \setminus \cU_1$ consists of the vertices of
$\cV_T$ that are not incident to any edge of $\cE_1$. Let $k_1$ be the
number of such vertices, and define similarly $k_2$ as the number of vertices of
$\cV_T$ that are not incident to any edge of $\cE_2$. Then the above yields the
bound
\begin{equation}\label{eq:Case3valGfbound}
|\val(G,f)| \leq \sqrt{n}^{k_1+k_2}\|f(t)\|_\infty
\bigg(\prod_{v \in \cU_1 \cup \cU_2} \|f(v)\|_\infty\bigg)
\|U_1\|_F\|U_2\|_F.
\end{equation}

To further bound $\|U_1\|_F$,
consider the connected components of the subgraph induced by $\cE_1$ (i.e.\ the
edges $\cE_1$, their incident vertices $\cU_1,\cU_1^*$, and the vertex $s$ if
$\cE_1 \cap \partial s$ is non-empty).
If $\cE_1 \cap \partial s$ is non-empty, and the component of $s$
breaks into $j$ pieces upon removing $s$, let us further split this
component of $s$ into $j$ separate components, each containing a copy of $s$.
Let $\cC$ be the set of all components. For each component $C \in \cC$, let
$\cU_C^* \subseteq \cU_1^*$, $\cU_C \subseteq \cU_1$,
$\cE_C \subseteq \cE_1 \cap \cE_{\cK \cup \cV}$,
and $\partial s_C \subseteq \partial s$ be the vertices and edges belonging to
$C$, where we set $\partial s_C=\emptyset$ if $C$ does not contain (a copy of)
$s$. Note that $\cU_C$ must be non-empty, because 
Lemma \ref{lem:contract network into degree 3} ensures that
the subgraph $(\cK \cup \cV,\cE_{\cK \cup \cV})$ is connected. 
Then $U_1$ factorizes as
\[U_1=\bigotimes_{C \in \cC} U_C\]
where each $U_C \in (\R^n)^{\otimes |\cU_C|}$ is the tensor indexed by vertices
of $\cU_C$, with entries
\[U_C[i_v:v \in \cU_C]=\sum_{i_v \in [n]:v \in \cU_C^*}
\prod_{e \in \partial s_C} \tilde \s_e[i_{v_e}]
\prod_{v \in \cU_C^*} f(v)[i_v,\ldots,i_v]
\prod_{e=(u,v) \in \cE_C} f(e)[i_u,i_v].\]
Thus,
\begin{equation}\label{eq:U1bound}
\|U_1\|_F=\prod_{C \in \cC} \|U_C\|_F
=\prod_{C \in \cC}\bigg(\sup_{T' \in (\R^n)^{\otimes |\cU_C|}:
\|T'\|_F=1} \<U_C,T'\>\bigg).
\end{equation}

For each component $C \in \cC$ where $C$ contains (a copy of) $s$,
let $\tilde G$ be the graph that adds a sink vertex $t'$ to $C$,
connected by an edge to each vertex of $\cU_C$.
We claim that $\tilde G$ admits a $(s,t')$-bipolar
orientation: Let $\cA$ be the vertices of $\tilde G \setminus \{s,t'\}$
that are connected to $\{s,t'\}$ by an edge, and let $\cB$ be all remaining
vertices of $\tilde G \setminus \{s,t'\}$. The subgraph induced by
$\cA \cup \cB$ in $\tilde G$ is connected,
by the construction of $C$ as a single component in $\cC$. 
By definition, any vertex $v \in \tilde G \cap \cU_C$ is adjacent to $t'$, hence
$v \in \cA$. Recalling the definition of $\cU_1$, note that any vertex
$v \in \tilde G \cap \cV_T$ must be in $\cU_C$, hence also $v \in \cA$.
Any vertex $v \in \tilde G \cap \cV_S$ must either neighbor $s$
in $\tilde G$ (if the edge $(s,v)$ belongs to $\cE_1$ and hence to $\tilde G$)
or neighbor $t'$ in $\tilde G$ (if $(s,v)$ belongs to $\cE_2$ and hence
$v \in \cU_C$); thus also $v \in \cA$. Then $\tilde G \cap (\cU_C \cup \cV) \subseteq\cA$, implying that $\cB \subseteq
\cU_C^* \cap \cK$. Any such vertex $u \in \cB$ has at least 3
incident edges, all of these edges belong to $\tilde G$ (since $u \in \cU_C^*$), and each such edge
must connect to a distinct neighbor in $\cV=\cV_S \cup \cV_T$ (since $u \in \cK$) which
must belong to $\cA$. Thus, $\tilde G$ satisfies the conditions of Lemma
\ref{lem:source-sink network}, implying that $\tilde G$ has a $(s,t')$-bipolar
orientation as claimed. We observe that
$\<U_C,T'\>=\val(\tilde G,\tilde f)$ for a labeling $\tilde f$ where
$\tilde f(s)=\otimes_{e \in \partial s_C} \tilde \s_e$ and
$\tilde f(t')=T'$. Then, applying Lemma \ref{lem:bounds by bipolar orientation}
and $\|\tilde f(s)\|_F=\prod_{e \in \partial s_C} \|\tilde \s_e\|_2$, we get
\begin{equation}\label{eq:CCbound1}
|\<U_C,T'\>|
=|\val(\tilde G,\tilde f)|
\leq \|T'\|_F\prod_{e \in \partial s_C}\|\tilde \s_e\|_2
\prod_{v \in \cU_C^*} \|f(v)\|_\infty \prod_{e \in \cE_C} \|f(e)\|_\op.
\end{equation}

For each $C \in \cC$ where $C$ does not contain $s$,
let $\tilde G$ be the graph that adds two vertices $s',t'$ to $C$,
where $s'$ connects by an edge to any single vertex of $\cU_C$,
and $t'$ connects by an edge to every vertex of $\cU_C$. The same arguments as
above show that $\tilde G$ satisfies the conditions of
Lemma \ref{lem:source-sink network} (with $\cA$ being the vertices adjacent to
$\{s',t'\}$ and $\cB$ being the rest), and hence admits a $(s',t')$-bipolar
orientation. We observe that
$\<U_C,T'\>=\val(\tilde G,\tilde f)$ for a labeling $\tilde f$ where
$\tilde f(s)=(1,1,\ldots,1) \in \R^n$ and
$\tilde f(t')=T'$. Applying
Lemma \ref{lem:bounds by bipolar orientation} and $\|\tilde f(s)\|_F=\sqrt{n}$,
we get
\begin{equation}\label{eq:CCbound2}
|\<U_C,T'\>|=|\val(\tilde G,\tilde f)|
\leq \sqrt{n}\|T'\|_F
\prod_{v \in \cU_C^*} \|f(v)\|_\infty \prod_{e \in \cE_C} \|f(e)\|_\op.
\end{equation}
Let $m_1,m_2$ be the numbers of connected components in the subgraphs induced by
$\cE_1,\cE_2$ that do not contain $s$.
Then, applying the bound \eqref{eq:CCbound1}
or \eqref{eq:CCbound2} to each component $C \in \cC$ of \eqref{eq:U1bound}, we
obtain
\[\|U_1\|_F \leq \sqrt{n}^{m_1} \prod_{e \in \partial s \cap \cE_1}
\|\tilde \s_e\|_2 \prod_{v \in \cU_1^*}\|f(v)\|_\infty
\prod_{e \in \cE_{\cK \cup \cV} \cap \cE_1} \|f(e)\|_\op,\]
and similarly for $\|U_2\|_F$ and $m_2$. Applying these bounds back to
\eqref{eq:Case3valGfbound} and recalling that $\cU_1^*$, $\cU_2^*$, and $\cU_1
\cup \cU_2$ form a partition of $\cK \cup \cV$,
\[|\val(G,f)| \leq \sqrt{n}^{k_1+k_2+m_1+m_2}
\|f(s)\|_F \|f(t)\|_\infty \prod_{v \in \cK \cup \cV}\|f(v)\|_\infty
\prod_{e \in \cE_{\cK \cup \cV}} \|f(e)\|_\op.\]

Finally, to show \eqref{eq:valGfmainbound}, let us argue that we may choose
the edge partition $\cE_1 \cup \cE_2$ of $\cE_{\cK \cup \cV} \cup \partial s$
to ensure that $k_1+k_2+m_1+m_2 \leq k-m-1$. Let $G'$ be the subgraph induced
by $\{s\} \cup \cK \cup \cV$, i.e.\ containing the vertices
$\{s\} \cup \cK \cup \cV$ and edges $\cE_{\cK \cup \cV} \cup \partial s$.
Since $t$ has exactly $k-m$
distinct neighbors in $\cV_T$ and $k-m \geq 2$, Lemma \ref{lem:good vertices}
implies that there exists a tree $H$ in $G'$
with at least two good leaves in $\cV_T$. Let $G''$ be the graph $G'$ removing
all edges of $H$; here $G''$ may be disconnected and/or have isolated vertices.
By definition, each good leaf $v \in \cV_T$
is connected by a path in $G''$ either to another vertex $u \in \cV_T$
or to $s$. We consider two cases:

(a) Suppose there is a good leaf $v \in \cV_T$ whose connected component in
$G''$ does not contain $s$. Let $\cE_1$ be all edges of this connected
component, and let $\cE_2$ be all remaining edges of $G'$.
Since $v$ is connected by a path in $G''$ to some other $u \in \cV_T$,
the (connected) subgraph induced by $\cE_1$ has at least two vertices of
$\cV_T$, so $k_1 \leq k-m-2$ and $m_1=1$. The subgraph induced by $\cE_2$ is
also connected, because all connected components of $G''$ must be
connected via the removed tree $H$. Furthermore, this subgraph contains both $s$
and all vertices of $\cV_T$, since $\cE_2$ contains all edges of $H$.
Thus $k_2=0$ and $m_2=0$, so $k_1+k_2+m_1+m_2 \leq k-m-1$.

(b) Suppose all good leaves $v \in \cV_T$ belong to the connected component of
$s$ in $G''$. Again let $\cE_1$ be all edges of this connected component,
and let $\cE_2$ be all remaining edges of $G'$.
The subgraph induced by $\cE_1$ has at least two good leaves in $\cV_T$ and also
contains $s$, so $k_1 \leq k-m-2$ and $m_1=0$. The subgraph induced by $\cE_2$
again consists of a single connected component which contains all vertices of
$\cV_T$, so $k_2=0$ and $m_2=1$. Thus, we also have
$k_1+k_2+m_1+m_2 \leq k-m-1$.\\

This shows that \eqref{eq:valGfmainbound} holds in all cases. 
The desired bound \eqref{eq:RFmonomialsbound} then follows from
\eqref{eq:valGfmainbound}, \eqref{eq:RFnetworkmainbound}, and 
$|\cG|=|\cG(l_1,\ldots,l_k)|$ in Lemma
\ref{lem:contract network into degree 3}, completing the proof of
Lemma \ref{lem:RFmonomials}.
\end{proof}

\begin{proof}[Proof of Proposition \ref{prop:RF}, Assumption \ref{assum:cumulant}]
Denote $\a_l=(a_{il})_{i=1}^n$.
We may expand by multilinearity of the mixed cumulant
\begin{align*}
&\<\kappa_k(\g),\,\s_1 \otimes \ldots \otimes \s_m \otimes T\>\\
&=\sum_{l_1,\ldots,l_k=1}^\infty
\Big\<\kappa_k(\a_{l_1} \odot (X\w)^{\odot l_1},\ldots,\a_{l_k} \odot
(X\w)^{\odot l_k}),\,\s_1 \otimes \ldots \otimes \s_m \otimes T\Big\>\\
&=\sum_{l_1,\ldots,l_k=1}^\infty
\Big\<(\a_{l_1} \otimes \ldots \otimes \a_{l_k}) \odot
\kappa_k((X\w)^{\odot l_1},\ldots,(X\w)^{\odot l_k}),
\s_1 \otimes \ldots \otimes \s_m \otimes T\Big\>\\
&=\sum_{l_1,\ldots,l_k=1}^\infty
\Big\<\kappa_k((X\w)^{\odot l_1},\ldots,(X\w)^{\odot
l_k}),(\a_{l_1} \odot \s_1) \otimes \ldots \otimes (\a_{l_m} \odot \s_m)
\otimes ((\a_{l_{m+1}} \otimes \ldots \otimes \a_{l_k}) \odot T)\Big\>.
\end{align*}
Then Lemma \ref{lem:RFmonomials} implies
\begin{align*}
&\<\kappa_k(\g),\,\s_1 \otimes \ldots \otimes \s_m \otimes T\>\\
&\leq \sum_{l_1,\ldots,l_k=1}^\infty
C_l(\w)\|X\|_\op^l|\cG(l_1,\ldots,l_k)|
\sqrt{n}^{k-m-1}
\|(\a_{l_{m+1}} \otimes \ldots \otimes \a_{l_k}) \odot T\|_{\cU(l)}
\prod_{i=1}^m \|\a_{l_i} \odot \s_i\|_2.
\end{align*}
In the definitions preceding \eqref{eq:Uldef}, we have
$\|\bar X\|_\op \leq 1$, $\|K_k\|_\op \leq 1$, and
$\|D_k\|_\op \leq \|K_k\|_\op \cdot \max_{i=1}^n \sum_{j=1}^n |\bar X[i,j]|^k
\leq 1$ for all $k \geq 2$. Then each $M \in \cM_0$ has $\|M\|_\op \leq 1$,
implying that also each $M \in \cM=\bigcup_{j \geq 0} \cM_j$ has $\|M\|_\op \leq
1$. Thus, $\sup_{\x \in \cU(l)} \|\x\|_2 \leq 1$. Furthermore $I \in \cM(l)$,
so $\e_1,\ldots,\e_n \in \cU(l)$. Let us define
$\e=(1,\ldots,1) \in \R^n$, and set
\[\cU'(l)=\{(\a/\|\a\|_\infty) \odot \x:\a \in \{\e,\a_1,\ldots,\a_l\},\,\x \in
\cU(l)\}.\]
Then also $\e_1,\ldots,\e_n \in \cU'(l)$,
$\sup_{\x \in \cU'(l)} \|\x\|_2 \leq 1$, and we have
$\|(\a_{l_{m+1}} \otimes \ldots \otimes \a_{l_k}) \odot
T\|_{\cU(l)} \leq  \|T\|_{\cU'(l)}\prod_{i=k-m}^k \|\a_{l_i}\|_\infty$.
Applying this and
$\|\a_{l_i} \odot \s_i\|_2 \leq \|\a_{l_i}\|_\infty\|\s_i\|_2$,
\begin{align*}
&\<\kappa_k(\g),\,\s_1 \otimes \ldots \otimes \s_m \otimes T\>\\
&\leq \sum_{l_1,\ldots,l_k=1}^\infty
C_l(\w)\|X\|_\op^l|\cG(l_1,\ldots,l_k)|
\bigg(\prod_{i=1}^k \|\a_{l_i}\|_\infty\bigg)
\sqrt{n}^{k-m-1} \|T\|_{\cU'(l)} \prod_{i=1}^m \|\s_i\|_2.
\end{align*}

Under Proposition \ref{prop:RF}(a),
noting that $C_l(\w)$, $\|X\|_\op^l$, $|\cG(l_1,\ldots,l_k)|$, and
$\prod_{i=1}^k \|\a_{l_i}\|_\infty$ are all bounded by a constant, this shows
for a constant $C \equiv C(k,D)>0$ that
\begin{align*}
|\<\kappa_k(\g),\s_1 \otimes \ldots \otimes \s_m \otimes T\>|
&\leq C\sqrt{n}^{k-m-1}\|T\|_{\cU'(kD)} \prod_{i=1}^m \|\s_i\|_2.
\end{align*}
Since also $|\cU'(kD)| \leq C(k,D)n$ for a constant $C(k,D)>0$,
this shows Assumption \ref{assum:cumulant}.

Under Proposition \ref{prop:RF}(b), recalling the bound
\eqref{eq:GaussianRFbound}, we have
\begin{align*}
|\<\kappa_k(\g),\,\s_1 \otimes \ldots \otimes \s_m \otimes T\>|
\leq \sum_{l=k}^\infty
\underbrace{(Ck^2)^l l^{(\frac{1}{2}-\beta)l} \sqrt{n}^{k-m-1}\|T\|_{\cU'(l)}
\prod_{i=1}^m \|\s_i\|_2}_{:=F(l)}.
\end{align*}
Note that in the definitions preceding \eqref{eq:Uldef}, 
the number of distinct non-zero matrices in $\cM(l)$ is at most the number of
strings in the symbols $I,\bar X,\bar X^*,D_2,K_2,\odot,\times,(,)$ of
length $Cl$, for some absolute constant $C>0$. Then for some constants
$C',C''>0$, we have $|\cM(l)| \leq {C'}^l$ and $|\cU'(l)| \leq {C''}^l$.
For a sufficiently large constant $C_0>0$, let us set
\[L=\lfloor C_0k\log n+k^{C_0} \rfloor, \qquad \cU \equiv \cU'(L).\]
Then $|\cU| \leq n^C$ for a constant $C \equiv C(C_0,k)>0$.
For all $l>L$, recalling that $\sup_{\x \in \cU'(l)} \|\x\|_2 \leq 1$,
we may bound $\|T\|_{\cU'(l)} \leq \|T\|_F \leq \sqrt{n}^{k-m}\|T\|_\infty
\leq \sqrt{n}^k\|T\|_\cU$. This gives
\begin{align*}
\sum_{l>L} F(l)
\leq \sqrt{n}^k \cdot \sqrt{n}^{k-m-1}\|T\|_{\cU}
\prod_{i=1}^m \|\s_i\|_2 \times
\sum_{l>L} (Ck^2)^l l^{(\frac{1}{2}-\beta)l}.
\end{align*}
Under the given condition $\beta>1/2$, choosing a large enough constant
$C_0>0$ that defines $L$ ensures
$\sum_{l>L} (Ck^2)^l l^{(\frac{1}{2}-\beta)l}<\sqrt{n}^{-k}$, and thus
\[\sum_{l>L} F(l)<\sqrt{n}^{k-m-1}\|T\|_{\cU}
\prod_{i=1}^m \|\s_i\|_2.\]
For $l \leq L$, since $\cU'(l) \subseteq \cU$, we may bound
$\|T\|_{\cU'(l)} \leq \|T\|_\cU$. Then for a constant $C(k,\beta)>0$,
\[\sum_{l=k}^L F(l)
\leq \sqrt{n}^{k-m-1}\|T\|_{\cU} \prod_{i=1}^m \|\s_i\|_2
\sum_{l=1}^\infty (Ck^2)^l l^{(\frac{1}{2}-\beta)l}
\leq C(k,\beta)\sqrt{n}^{k-m-1}\|T\|_{\cU} \prod_{i=1}^m \|\s_i\|_2.\]
This implies
$|\<\kappa_k(\g),\s_1 \otimes \ldots \otimes \s_m \otimes T\>|
\leq (C(k,\beta)+1)\sqrt{n}^{k-m-1}\|T\|_{\cU} \prod_{i=1}^m \|\s_i\|_2$,
which again shows Assumption \ref{assum:cumulant}.
\end{proof}

\subsection{Random features tilt}\label{subsec:random features tilt}

To prove Proposition \ref{prop:spiked cumulant}, we use techniques developed in constructive field
theory. In particular, we follow a similar approach as outlined in \cite[Section
A]{gurau2014tensor}. The main technical tool we rely on is the universal
Brydges-Kennedy-Abdesselam-Rivasseau forest formula \cite[Theorem
III.1]{10.1007/3-540-59190-7_20}, which we state here for the reader's
convenience:

\begin{lemma}[BKAR forest formula]\label{lem:BKAR forest formula}
For some $l\geq 1$, let $f:[0,1]^{l(l-1)/2} \rightarrow\R$
be a smooth function. Then
\begin{align*}
    f(\underbrace{1,\ldots,1}_{l(l-1)/2})&=\sum_{G
\in\cF_l}\int_{[0,1]^{l(l-1)/2}}\left.\frac{\partial^{|\cE(G)|}}{\prod_{(i,j)\in
\cE(G)}\partial
y_{ij}}f(y)\right|_{y=y(G,u)}\prod_{i<j}du_{ij}
\end{align*}
where $\cF_l$ is the set of all forests (i.e.\ simple graphs $G$ with no loops
and cycles) on $l$ labeled vertices, $\cE(G)$ is the edge set of $G$,
$u=(u_{ij})_{i<j}$, and $y(G,u)=(y_{pq}(G,u))_{p<q} \in \R^{l(l-1)/2}$
has the coordinates
\[y_{pq}(G,u)=\min_{(i,j)\in 
\cE(G) \cap \{\text{unique path from } p \text{ to } q\}} u_{ij}\]
if $p,q$ belong to the same connected component of $\cF_l$,
or $y_{pq}(G,u)=0$ otherwise.

Moreover, the symmetric matrix $Y(G,u)$ with entries
\[
    Y(G,u)[p,q]=\begin{cases}
        1 & \text{if $p=q$}\\
        y_{pq}(G,u) & \text{if $p<q$}\\
        y_{qp}(G,u) & \text{if $p>q$}\\
    \end{cases}
\]
is positive-semidefinite for all $0\leq u_{ij}\le 1$.
\end{lemma}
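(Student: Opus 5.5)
This is the universal BKAR forest formula, which the paper cites from \cite{10.1007/3-540-59190-7_20}. The plan is to prove it by induction on the number of vertices $l$, using an iterated fundamental theorem of calculus organized along a growing forest. It is convenient to prove a slightly stronger statement in which the top-level value $(1,\ldots,1)$ is replaced by a general point, since the induction needs it.

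The basic step is the ``one-tree'' interpolation. Fix vertex $1$ and introduce a parameter $t\in[0,1]$. Replace every $y_{1j}$ by $t$, keeping the other coordinates. Then apply $f(\ldots,t=1)=f(\ldots,t=0)+\int_0^1\partial_t f\,dt$. The derivative $\partial_t$ produces a sum over edges $(1,j)$, which are the first edges of the tree containing $1$. At $t=0$, vertex $1$ is decoupled, and we recurse on the remaining $l-1$ vertices.

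On the other branch we pick the new edge $(1,j)$ with parameter $u_{1j}$. We then interpolate again, with a parameter $s$ multiplying the couplings between the current cluster $\{1,j\}$ and the outside, capped at $u_{1j}$. Concretely, when a new vertex $k$ is reached through the path, we set $y_{ik}=\min(\text{path parameters})$. Continuing the iteration, each step either adds an edge, and hence a vertex, to the current tree, or closes off the component. The process terminates after at most $l-1$ edges.

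The bookkeeping step is to verify that the assignment $y_{pq}=\min$ over the unique path in the final forest is exactly what the nested interpolations produce. It is also needed to check that each forest $G$ arises exactly once, with the integration region $[0,1]^{|\cE(G)|}$; the unused variables $u_{ij}$ integrate trivially. This matching is the main obstacle. To handle it I would follow the standard ordering argument: order the vertices by a breadth-first exploration from the smallest label in each component. The nested constraints on the parameters, of the form $u_{\text{new}}\le u_{\text{parent}}$, are then absorbed by rewriting through the min-over-path parametrization. The key fact is that the Jacobian is $1$ and every ordering of the parameters is covered.

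Positive semidefiniteness of $Y(G,u)$ is proved separately. It factors over components, so it suffices to treat a tree $T$. Sort the distinct edge values $u_{(1)}>\cdots>u_{(r)}$ and set $u_{(0)}=1$, $u_{(r+1)}=0$. For each threshold $a$, let $\pi_a$ be the partition of the vertices into components of $T$ restricted to edges with $u\ge a$. Then $y_{pq}=\sup\{a: p\sim_{\pi_a}q\}$. This gives the decomposition
\[
Y=\sum_{k=0}^{r}(u_{(k)}-u_{(k+1)})\,\mathbf{1}_{\pi_{u_{(k)}}},
\]
where $\mathbf{1}_\pi$ is the block matrix of ones on the blocks of $\pi$. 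Each $\mathbf{1}_\pi$ is a direct sum of all-ones blocks, so it is PSD. The coefficients are nonnegative, so $Y\succeq 0$. The diagonal entries equal $\sum_k (u_{(k)}-u_{(k+1)})=1$, as required. This part is routine; the combinatorial matching in the expansion is where the care goes.
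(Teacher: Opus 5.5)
The paper does not prove this lemma; it quotes it from Abdesselam--Rivasseau \cite{10.1007/3-540-59190-7_20}. Your positive-semidefiniteness argument is correct and complete. The forest formula itself has a genuine gap, exactly at the step you flag as the main obstacle, and the resolution you sketch would not work.

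\textbf{The second interpolation does not start from the state left by the first.} After $(1,j)$ is chosen at $t=u_{1j}$, you have $y_{1k}=u_{1j}$ but still $y_{jk}=1$ for $k\notin\{1,j\}$.
\begin{itemize}
\item A common cap $\min(s,u_{1j})$ on all cluster-to-outside couplings cannot reproduce $y_{jk}=1$ at $s=1$. It also drops the trees in which $j$'s next edge lies above $u_{1j}$. For $l=3$ the missing piece is $\int_{u_{12}}^{1}\partial_{12}\partial_{23}f(y)\,dc$ with $y_{12}=y_{13}=u_{12}$, $y_{23}=c$.
\item Multiplying by $s$ instead gives a different, product-type tree formula. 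For the forest $\{12,23\}$ it produces $y_{13}=s\,u_{12}$ rather than $\min(u_{12},s)$.
\end{itemize}
A root-grown scheme can be made to work, but it needs vertex-dependent caps $y_{pk}=\min(\beta_p,s)$. Here $\beta_p$ is the smallest height among edges added after $p$ joined the cluster. You would also need an inductive check that these values equal the path minima when a new vertex attaches to $p$ below $\beta_p$.

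\textbf{The matching is not governed by a breadth-first order on labels with constraints $u_{\mathrm{new}}\le u_{\mathrm{parent}}$.} In such a process, the order in which the edges of a fixed tree appear is forced by the heights: it is Prim's maximum-spanning-tree order. A new edge from $p$ must lie below every edge added after $p$ joined, not below the parent edge. A label-determined order gives each forest only a proper subregion of $[0,1]^{\cE(G)}$. The full cube comes from the fact that almost every $u$ has exactly one admissible history.

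\textbf{Suggested repair.} The cleanest fix is the Kruskal-type argument of Abdesselam--Rivasseau.
\begin{itemize}
\item At each stage, set every link between distinct blocks of the current forest to a common $h\in[0,h_r]$ and apply the fundamental theorem of calculus.
\item The newly chosen link then has the smallest height so far. So the links between the two merged blocks, all equal to $h_{r+1}$, are exactly the path minima.
\item This yields a sum over ordered forests with $1\ge h_1\ge\cdots\ge h_k\ge 0$. Every edge ordering of a forest is admissible, and the $k!$ ordered simplices tile $[0,1]^{\cE(F)}$.
\end{itemize}
This needs neither induction on $l$ nor your unspecified ``general point'' version. That version cannot hold verbatim anyway: for $l=2$ the right side equals $f(1)$ whatever the point.
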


We also need the following simple identity.
\begin{lemma}\label{lem:Gaussian differentiation}
Fix any $l\geq 1$, and let $Y=(y_{ij})_{1\leq i,j\leq l}\in\R^{l\times l}$
be any positive-semidefinite matrix. For a smooth function $\Phi:\R^{nl}
\rightarrow \R$ defined by $(\w_1,\ldots,\w_l) \in \R^{nl}
\mapsto \Phi(\w_1,\ldots,\w_l)$, denote
\[\<\nabla_{\w_i},\nabla_{\w_j}\>\Phi
=\sum_{\alpha=1}^n \frac{\partial^2}{\partial \w_i[\alpha]\partial \w_j[\alpha]}
\Phi.\]
If $(\w_1,\ldots,\w_l) \sim N(0,Y \otimes I_n)$, then for any $i<j$,
\[\frac{\partial}{\partial
y_{ij}}\E[\Phi(\w_1,\ldots,\w_l)]=\E[\<\nabla_{\w_i},\nabla_{\w_j}\>\Phi(\w_1,\ldots,\w_l)].\]
\end{lemma}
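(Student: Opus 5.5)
This is the classical Gaussian interpolation (Wick/heat-kernel) identity, and I would prove it directly from the explicit form of the Gaussian density, or more cleanly via Fourier representation. First, I would reduce to the case where $Y$ is positive definite, so that $N(0,Y\otimes I_n)$ has a density. The general positive-semidefinite case then follows by replacing $Y$ with $Y+\epsilon I$ and letting $\epsilon\downarrow 0$. This limit requires mild growth assumptions on $\Phi$ and its second derivatives, say polynomial growth, so that dominated convergence applies to both sides. I would state this implicit integrability assumption explicitly, since the lemma as written only says ``smooth''. The derivative $\partial/\partial y_{ij}$ for $i<j$ is understood as perturbing the symmetric pair of entries $(i,j),(j,i)$ simultaneously.

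For positive definite $Y$, the cleanest route is to note that the Gaussian density
\[
p_Y(w)=\int e^{\mathrm{i}\langle \xi,w\rangle}\exp\Big(-\tfrac12\sum_{p,q}y_{pq}\langle \xi_p,\xi_q\rangle\Big)\frac{d\xi}{(2\pi)^{nl}},\qquad w=(\w_1,\ldots,\w_l),\quad \xi=(\xi_1,\ldots,\xi_l)\in\R^{nl},
\]
satisfies the heat-type equation $\partial_{y_{ij}}p_Y=\langle\nabla_{\w_i},\nabla_{\w_j}\rangle p_Y$ for $i<j$. Indeed, differentiating the exponent in $y_{ij}=y_{ji}$ produces the factor $-\langle\xi_i,\xi_j\rangle$. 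This factor is exactly what $\sum_\alpha\partial_{\w_i[\alpha]}\partial_{\w_j[\alpha]}$ produces when it acts on $e^{\mathrm{i}\langle\xi,w\rangle}$.

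Then I would write $\E\Phi=\int\Phi\, p_Y\,dw$ and differentiate under the integral, which is justified by the Gaussian decay of $p_Y$ and its $y$-derivatives locally uniformly in $Y$ near a positive definite point. Substituting the heat equation and integrating by parts twice in $\w_i[\alpha],\w_j[\alpha]$, with boundary terms vanishing by the Gaussian decay, gives $\int \langle\nabla_{\w_i},\nabla_{\w_j}\rangle\Phi\; p_Y\,dw$, which is the claim.

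An alternative I would keep as a backup is the Gaussian integration by parts $\E[\w_i[\alpha]F]=\sum_q y_{iq}\E[\partial_{\w_q[\alpha]}F]$. This can be used together with the representation $w=(Y^{1/2}\otimes I)z$ and differentiation of $Y^{1/2}$, but that is messier. The main obstacle is only the technical justification of the interchanges and of the passage to singular $Y$, which I would handle by imposing polynomial growth on $\Phi$ and its derivatives up to order two. This growth condition holds in the application, where $\Phi$ comes from the tilted exponential weights.
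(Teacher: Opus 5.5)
Your argument is correct, but it is organized differently from the paper's. Both proofs rest on the same symbol identity: differentiating $\exp(-\tfrac12\sum_{p,q}y_{pq}\langle\xi_p,\xi_q\rangle)$ in $y_{ij}=y_{ji}$ produces the factor $-\langle\xi_i,\xi_j\rangle$, and this is the Fourier symbol of $\langle\nabla_{\w_i},\nabla_{\w_j}\rangle$.

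The paper puts the Fourier transform on $\Phi$ rather than on the density. It writes $\E\Phi=(2\pi)^{-nl}\int\cF[\Phi](\xi)\,\hat\mu_Y(\xi)\,d\xi$, where $\hat\mu_Y$ is the characteristic function of $N(0,Y\otimes I_n)$. It then differentiates $\hat\mu_Y$ and converts $-\langle\xi_i,\xi_j\rangle\cF[\Phi]$ back into $\cF[\langle\nabla_{\w_i},\nabla_{\w_j}\rangle\Phi]$. Since $\hat\mu_Y$ exists for every positive-semidefinite $Y$, this treats singular covariances directly, with no density, no integration by parts and no regularization; that is why the paper chose it. The price is that $\cF[\Phi]$ and the inversion/Fubini step are only formal for the polynomially growing $\Phi$ used later. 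For singular $Y$, $\hat\mu_Y$ is not a Schwartz function, so even a distributional reading of the paper's argument needs extra care.

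Your route is a heat equation for $p_Y$ followed by two integrations by parts in physical space. It is fully rigorous for positive-definite $Y$ under the growth hypothesis you state, at the cost of the $Y+\eps I_l$ approximation. In that step, be precise on two points:

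\begin{enumerate}
\item At a singular $Y$, the perturbation $Y+hE_{ij}$ can leave the positive-semidefinite cone, where $E_{ij}$ is the symmetric matrix with ones at $(i,j)$ and $(j,i)$. So the derivative there is one-sided.
\item Commuting $\eps\downarrow 0$ with $\partial_{y_{ij}}$ is not plain dominated convergence. Instead, set $F(Y)=\E_Y\Phi$ and $G(Y)=\E_Y[\langle\nabla_{\w_i},\nabla_{\w_j}\rangle\Phi]$ and write
\[F(Y+hE_{ij})-F(Y)=\lim_{\eps\downarrow 0}\int_0^h G(Y+\eps I_l+sE_{ij})\,ds=\int_0^h G(Y+sE_{ij})\,ds.\]
This uses that $Y+\eps I_l+sE_{ij}$ is positive definite along the segment, and that $F$ and $G$ are continuous on the cone; continuity follows, for example, from the coupling $(Y^{1/2}\otimes I_n)\z$ with $\z$ standard Gaussian. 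Then let $h\to 0$.
\end{enumerate}
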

\begin{proof}
Since the covariance $Y \otimes I_n$ can be singular, we shall prove this via
the Fourier transform. Let $\w=(\w_1,\ldots,\w_l) \sim N(0,Y \otimes I_n)$.
Its characteristic function, for any $\t=(\t_1,\ldots,\t_l)\in\R^{nl}$,
is given by
\[\mu_\w(\t)=\E\exp(i\<\t,\w\>)=\exp\left({-}\frac{1}{2}\t^*(Y\otimes
I_n)\t\right)=\exp\left({-}\frac{1}{2}\sum_{i=1}^l
y_{ii}\|\t_i\|_2^2-\sum_{i<j} y_{ij}\t_i^*\t_j\right).\]
Thus,
\[\frac{\partial}{\partial y_{ij}}\mu_\w(\t)=-(\t_i^*\t_j)\mu_\w(\t).\]
Denote by
\[\cF[\Phi](\t)=\int_{-\infty}^\infty \Phi(\w)e^{{-}i\<\t,\w\>}d\w,
\quad \cF[\<\nabla_{\w_i},\nabla_{\w_j}\>\Phi](t)
=\int_{-\infty}^\infty \<\nabla_{\w_i},\nabla_{\w_j}\>
\Phi(\w)e^{{-}i\<\t,\w\>}d\w\]
the Fourier transforms of $\Phi$ and $\<\nabla_{\w_i},\nabla_{\w_j}\>\Phi$.
Then by standard properties of the Fourier transform, for any $i<j$,
\[
    \cF[\<\nabla_{\w_i},\nabla_{\w_j}\>\Phi](\t)=\sum_{\alpha=1}^n\cF\left[\frac{\partial^2}{\partial
\w_i[\alpha]\partial \w_j[\alpha]}\Phi\right](\t)=\sum_{\alpha=1}^n
(-it_i[\alpha])(-it_j[\alpha])\cF[\Phi](\t)=-\t_i^*\t_j \cF[\Phi](\t).
\]
So by the Fourier inversion formula,
\begin{align*}
\frac{\partial}{\partial y_{ij}}\E[\Phi(\w)]
&=\frac{\partial}{\partial y_{ij}}
\E\left[\frac{1}{(2\pi)^{nl}}\int \cF[\Phi](\t)e^{i\<\t,\w\>} d\t\right]\\
&=\frac{1}{(2\pi)^{nl}}\int\cF[\Phi](\t)\frac{\partial}{\partial y_{ij}}\mu_\w(\t)\;d\t
    =\frac{1}{(2\pi)^{nl}}\int(-\t_i^*\t_j)\cF[\Phi](\t)\mu_\w(\t)\;d\t\\
    &=\frac{1}{(2\pi)^{nl}}\int\cF[\<\nabla_{\w_i},\nabla_{\w_j}\>\Phi](\t)\mu_\w(\t)\;d\t=\E[\<\nabla_{\w_i},\nabla_{\w_j}\>\Phi(\w)].
\end{align*}
\end{proof}

For any $l\geq 1$, any tree $G$ on $l$ vertices, and any $u
\in[0,1]^{l(l-1)/2}$, let $Y(G,u)\in\R^{l\times l}$ be defined as in Lemma
\ref{lem:BKAR forest formula}. We write $\E_{Y(G,u)}$ for the expectation with
respect to $(\w_1,\ldots,\w_l) \sim N(0,Y(G,u) \otimes I_n)$. We will assume without loss of generality in Proposition \ref{prop:spiked cumulant} that $\sigma_i$ is centered such that
\begin{equation}\label{eq:meanzerosigma}
\E_{\w \sim N(0,I_n)} \sigma_i(\x_i^*\w)=0 \text{ for all } i=1,\ldots,d
\end{equation}
Using the above two lemmas, we now derive the following series expansion of the cumulant generating function. 

\begin{lemma}\label{lem:formal expansion of CGF}
In the setting of Proposition \ref{prop:spiked cumulant}, for any $\w,\t \in
\R^n$, denote
\[\Phi(\w,\t,\lambda)=\<\t,\w\>-\lambda\sum_{i=1}^d\sigma_i(\x_i^*\w).\]
Let $\cT_l$ be the set of all trees on $l$ labeled vertices. For each $G \in
\cT_l$ and $\t \in \R^n$, let
\[F(G,\lambda,\t)=\int_{[0,1]^{l(l-1)/2}}\E_{Y(G,u)}\left[\prod_{(i,j)\in
\cE(G)}\<\nabla_{\w_i},\nabla_{\w_j}\>\prod_{i=1}^l\Phi(\w_i,\t,\lambda)\right]\prod_{i<j}du_{ij}.\]
Then there exists a constant $c_*>0$ such that if $\max(|\lambda|,\|\t\|_2)<c_*$ and
$\sum_{l=1}^\infty \sum_{G \in \cT_l} \frac{1}{l!} |F(G,\lambda,\t)|<\infty$,
then
\begin{align*}
\log\E_{\w \sim N(0,I_n)} \exp \Phi(\w,\t,\lambda)
=\sum_{l=1}^\infty\sum_{G\in\cT_l} \frac{1}{l!}\,F(G,\lambda,\t).
\end{align*}
\end{lemma}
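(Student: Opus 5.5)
The plan is the standard constructive-field-theory derivation of the Mayer/tree expansion for $\log Z$, where $Z(\lambda,\t)=\E_{\w\sim N(0,I_n)}\exp\Phi(\w,\t,\lambda)$.

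First, expand the exponential: for fixed $(\lambda,\t)$,
\[Z=\sum_{l\ge 0}\frac{1}{l!}\E[\Phi(\w,\t,\lambda)^l].\]
Introduce $l$ replicas $\w_1,\ldots,\w_l$, so that $\E[\Phi^l]=\E_{Y}\big[\prod_{i=1}^l\Phi(\w_i,\t,\lambda)\big]$ with $Y=\mathbf{1}\mathbf{1}^*$, i.e.\ all off-diagonal $y_{ij}=1$ and perfectly correlated replicas. Set $f(y)=\E_{Y(y)}[\prod_i\Phi(\w_i,\t,\lambda)]$ as a function of the off-diagonal entries $y=(y_{ij})_{i<j}$, with unit diagonal. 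Apply Lemma \ref{lem:BKAR forest formula} to $f$ at $y=(1,\ldots,1)$. The matrices $Y(G,u)$ are PSD, so $\E_{Y(G,u)}$ is a genuine (possibly degenerate) Gaussian expectation. Iterating Lemma \ref{lem:Gaussian differentiation} converts each derivative $\partial/\partial y_{ij}$, $(i,j)\in\cE(G)$, into the Laplacian coupling $\<\nabla_{\w_i},\nabla_{\w_j}\>$ acting on $\prod_i\Phi(\w_i)$. This gives $\E[\Phi^l]=\sum_{G\in\cF_l}F_{\mathrm{for}}(G)$, where $F_{\mathrm{for}}(G)$ is the analogue of $F$ for forests.

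Second, factorize over the components of the forest. If $G$ has components $G_1,\ldots,G_r$, then $Y(G,u)$ is block diagonal: $y_{pq}=0$ across components, and within a component $y_{pq}$ depends only on that tree's $u$-variables. So the replicas in distinct components are independent. The integrand therefore factors into per-tree integrands, and the $u_{ij}$ variables not attached to a tree integrate to $1$. Hence $F_{\mathrm{for}}(G)=\prod_{a}F(G_a)$, with each $G_a$ relabeled as a tree on $|G_a|$ vertices. The exponential formula for labeled structures (summing over set partitions of $[l]$ with weight $1/l!$) then gives
\[Z=\exp\Big(\sum_{l\ge 1}\sum_{G\in\cT_l}\frac{1}{l!}F(G,\lambda,\t)\Big),\]
valid as an identity of absolutely convergent series under the summability hypothesis. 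Taking logarithms yields the claim.

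The main obstacle is justifying the analytic steps rather than the combinatorics. Three points need care:
\begin{itemize}
\item Interchanging $\E$ with the series for $e^{\Phi}$.
\item Applying Lemma \ref{lem:Gaussian differentiation} to a $\Phi$ that is unbounded: it is linear in $\w$ through $\<\t,\w\>$, and $\sigma_i$ has bounded derivatives.
\item Rearranging the double series and taking $\log$ on the correct branch.
\end{itemize}
This is where $c_*$ enters. For $|\lambda|,\|\t\|_2<c_*$, $\Phi$ is Lipschitz with constant $\le\|\t\|_2+|\lambda|\|X\|_\op C$, so by Gaussian concentration $\E e^{|\Phi|}<\infty$ and $\sum_l\E|\Phi|^l/l!<\infty$. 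Together with the assumed absolute summability of the tree series, Fubini and the exponential formula then apply to absolutely convergent sums. For the moment growth of derivatives, $\partial^k$ of $\prod\Phi$ involves $\sigma_i^{(k)}$ controlled by $C^k$, which supports differentiating under the integral; one can approximate $\Phi$ by Schwartz functions if the Fourier argument of Lemma \ref{lem:Gaussian differentiation} needs integrability. Finally, $Z>0$ is real, and the tree series is real and continuous in $(\lambda,\t)$ with value $0$ at $\lambda=0,\t=0$ (using \eqref{eq:meanzerosigma}). So the exponential identity fixes the logarithm unambiguously, giving the stated expansion.
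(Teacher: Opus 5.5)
Your proposal is correct and follows the paper's proof essentially step for step: expand $e^{\Phi}$ into moments, and write $\E[\Phi^l]$ using fully correlated replicas. Then apply the BKAR forest formula with Lemma \ref{lem:Gaussian differentiation}, factorize over the forest's components, and use the exponential formula to pass from forests to trees. Your justifications are minor variants of the paper's (which uses a moment bound and a ``formal series'' identity): you use Gaussian concentration of the Lipschitz function $\Phi$ for the exchange of $\E$ and the series, and you apply the exponential formula directly to absolutely convergent real series, which makes the branch issue moot since $Z>0$ and the tree sum is real.
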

\begin{proof}
To begin, we expand $\E \exp \Phi(\w,\t,\lambda)$ as
\begin{equation}\label{eq:expand exponential}
    \E\exp \Phi(\w,\t,\lambda)=\sum_{l=0}^\infty\frac{1}{l!}\E
\Phi(\w,\t,\lambda)^l.
\end{equation}
This exchange of summation and expectation
is justified by the Fubini-Tonelli theorem: Since $\w\sim N(0,I_n)$, we have by \cite[eq.\ (2.4)]{PS_2017__21__467_0} that 
under the conditions of Proposition \ref{prop:spiked cumulant} and the assumption \eqref{eq:meanzerosigma}, for
some constants $C,C'>0$,
\begin{align*}
    (\E|\Phi(\w,\t,\lambda)|^l)^{1/l}&=(\E|\t^*\w-\lambda\1_d^*\sigma(X\w)|^l)^{1/l}\leq(\E|\t^*\w|^l)^{1/l}+|\lambda|(\E|\1_d^*\sigma(X\w)|^l)^{1/l}\\
    &\leq C\sqrt{l}(\|\t\|_2+|\lambda|(\E\|X^*\sigma'(X\w)\|_2^l)^{1/l})\\
    &\leq C'\sqrt{l}(\|\t\|_2+|\lambda|)
\end{align*}
For $\max(\|\t\|_2,|\lambda|)<c_*$ sufficiently small,
this implies
$\sum_{l=0}^\infty\frac{1}{l!}\E|\Phi(\w,\t,\lambda)|^l<\infty$,
which justifies \eqref{eq:expand exponential}.

We may then write
\[
    \E \Phi(\w,\t,\lambda)^l=\E\prod_{i=1}^l\Phi(\w_i,\t,\lambda)
\]
where $\w_1=\ldots=\w_l=\w$, i.e.\ $(\w_1,\ldots,\w_l) \sim
N(0,1_l1_l^*\otimes I_n)$. Let $S_l \subset [0,1]^{l \times l}$ be the space of
positive-semidefinite matrices such that $S_l[i,i]=1$ for all $i \in [l]$
and $S_l[i,j] \in [0,1]$ for all $i \neq j$, and define the function
$f:S_l \rightarrow\R$ by
\[f(Y)=\E_Y \prod_{i=1}^l\Phi(\w_i,\t,\lambda).\]
Identifying $S_l$ with a subset of $[0,1]^{l(l-1)/2}$ given by its
upper-triangular entries, we have by Lemmas \ref{lem:BKAR forest formula}
and \ref{lem:Gaussian differentiation}
\begin{align*}
    &\E \Phi(\w,\t,\lambda)^l=f(1_l1_l^*)\\
    &=\sum_{G
\in\cF_l}\int_{[0,1]^{l(l-1)/2}}\left.\frac{\partial^{|\cE(G)|}}{\prod_{(i,j)\in
\cE(G)}\partial
y_{ij}}f(Y)\right|_{Y=Y(G,u)}\prod_{i<j}du_{ij}\\
    &=\sum_{G\in\cF_l}\int_{[0,1]^{l(l-1)/2}}\E_{Y(G,u)}\left[\prod_{(i,j)\in
\cE(G)}\<\nabla_{\w_i},\nabla_{\w_j}\>\prod_{i=1}^l\Phi(\w_i,\t,\lambda)\right]\prod_{i<j}du_{ij}.
\end{align*}
Note that this is well-defined since the matrix $Y(G,u)$ is
positive-semidefinite. If $i,j \in [l]$ belong to two disconnected components of
$G$, then by construction we have $Y(G,u)[i,j]=0$, so $\w_i,\w_j$ are
independent. Thus, $\E_{Y(G,u)}$ factorizes over the the connected components of
$G$. By a standard combinatorial identity for exponential
generating functions (see e.g.\ \cite[eq.(109)]{gurau2014tensor} and
\cite[pg.3]{Rivasseau2008constructive}), if a function is a sum over forests of
contributions which factorize over its connected trees, then its logarithm is
the sum over trees of these individual factors. That is, as an identity of
formal series,
\begin{align}
    &\log \E\exp \Phi(\w,\t,\lambda)
=\log \sum_{l=0}^\infty \frac{1}{l!}\E \Phi(\w,\t,\lambda)^l\nonumber\\
    &=\sum_{l=1}^\infty\frac{1}{l!}\sum_{G \in\cT_l}
\underbrace{\int_{[0,1]^{l(l-1)/2}}\E_{Y(G,u)}\left[\prod_{(i,j)\in
\cE(G)}\<\nabla_{\w_i},\nabla_{\w_j}\>\prod_{i=1}^l\Phi(\w_i,\t,\lambda)\right]\prod_{i<j}du_{ij}}_{=F(G,\lambda,\t)}
\end{align}
where $\cT_l$ is the set of all trees on $l$ labeled vertices. This holds as an
identity of analytic functions as long as the right side is absolutely
convergent, i.e.\
$\sum_{l=1}^\infty\sum_{G\in\cT_l}\frac{1}{l!}\,|F(G,\lambda,\t)|<\infty$.
\end{proof}

We now show that the series expansion established in Lemma \ref{lem:formal
expansion of CGF} is indeed absolutely convergent, and represent its terms using
the tensor network definitions/notions developed in Section \ref{subsec:RF}.
Here, all dimensions of tensors and matrices in the network are $d$ rather than
$n$.

\begin{lemma}\label{lem:CGF converges absolutely}
In the setting of Proposition \ref{prop:spiked cumulant}, 
let $\sigma^{(k)}(X\w)=(\sigma_i^{(k)}(\x_i^*\w))_{i=1}^d$ for all $k \geq 1$.
Then there exists a constant $c_*>0$ such that if $\max(|\lambda|,\|\t\|_2)<c_*$, then
\begin{equation}\label{eq:CGFseries}
    \log\E\exp
\Phi(\w,\t,\lambda)=\sum_{l=1}^\infty\frac{1}{l!}\sum_{G\in\cT_l}\sum_{f\in\cF(G)}\int_{[0,1]^{l(l-1)/2}}\E_{Y(G,u)}[\val(G,f)]\prod_{i<j}du_{ij}.
\end{equation}
Here $\cF(G)$ is a set of labelings of cardinality $|\cF(G)| \leq 2^l$.
Denoting the vertices of $G$ by $v_1,\ldots,v_l$, for each $f \in \cF(G)$,
$(G,f)$ is a tensor network satisfying:
\begin{enumerate}
    \item If $v_i\in G$ is such that $\deg(v_i)=1$ (it is a leaf), then $f(v_i)
\in\{X\t,-\lambda\sigma^{(1)}(X\w_i)\} \subset \R^d$.
    \item If $v_i\in G$ is such that $\deg(v_i)\ge 2$, then
$f(v_i)=\diag({-}\lambda\sigma^{(\deg(v_i))}(X\w_i))\in(\R^d)^{\otimes \deg(v_i)}$.
    \item For all edges $e=(v_i,v_j)\in \cE(G)$, if $f(v_i)=X\t$ or
$f(v_j)=X\t$, then $f(e)=I_d$, and otherwise $f(e)=XX^* \in \R^{d \times d}$.
\end{enumerate}
Moreover, for any $k\geq 1$ and $\alpha_1,\ldots,\alpha_k\in[n]$,
the derivative
$\frac{\partial}{\partial \t[\alpha_1]}\cdots\frac{\partial}{\partial
\t[\alpha_k]}\log\E\exp \Phi(\w,\t,\lambda)$ is given by differentiating the
series \eqref{eq:CGFseries} term-by-term.
\end{lemma}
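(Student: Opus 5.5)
The plan has three parts: identify each tree term $F(G,\lambda,\t)$ of Lemma \ref{lem:formal expansion of CGF} with a sum of tensor network values, bound those values uniformly enough to prove absolute convergence, and then justify differentiation in $\t$.

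\textbf{Identifying the terms.} Start from $F(G,\lambda,\t)$ and compute the derivatives $\prod_{(i,j)\in\cE(G)}\<\nabla_{\w_i},\nabla_{\w_j}\>$ acting on $\prod_i\Phi(\w_i,\t,\lambda)$. Since $G$ is a tree, each factor $\Phi(\w_i,\cdot)$ is hit by exactly $\deg(v_i)$ gradient operators in $\w_i$. We have $\nabla_{\w}\Phi=\t-\lambda X^*\sigma^{(1)}(X\w)$, and higher gradients of $\Phi$ are $-\lambda$ times contractions of the diagonal tensor $\sigma^{(k)}(X\w)$ with $k$ copies of $X^*$. Equivalently, $\t=X^*(\cdot)$ is not literally available, so I would handle the leaf case by writing the linear term as a $d$-vector contracted through the edge label. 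For a leaf, the two choices $X\t$ (via edge $I_d$ contracted with the $X^*$ from the neighbour) or $-\lambda\sigma^{(1)}$ give the factor $2^l$. For $\deg(v_i)\ge2$ the linear term vanishes, leaving $\diag(-\lambda\sigma^{(\deg)}(X\w_i))$. Each edge $\<\nabla_{\w_i},\nabla_{\w_j}\>$ pairs an $X^*$ from each endpoint, producing $XX^*$, or $I_d$ when one endpoint is the $\t$ leaf (with $\t$ paired against $X^*$, giving $X\t$). One small care point: the case $l=1$, a single vertex with no edges, gives $\E\Phi$. This is handled by \eqref{eq:meanzerosigma} and the convention $\deg=0$; I would just include it as the value $\E\Phi=0$ term, or treat it separately. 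This yields \eqref{eq:CGFseries} with $|\cF(G)|\le 2^l$, conditional on absolute convergence.

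\textbf{Absolute convergence.} Bound $|\val(G,f)|$ deterministically, using only the tree structure. Root the tree at a leaf and contract from the leaves inward, as a bipolar-type chain argument in the spirit of Lemma \ref{lem:bounds by bipolar orientation}. Each vertex label is a diagonal tensor whose order-$k$ diagonal vector has sup norm at most $|\lambda|\,\sup\|\sigma^{(k)}\|_\infty\le|\lambda|C^k$ (the assumed $\ell_2$ bound implies the sup bound). Each edge contributes $\|XX^*\|_\op\le C$. Leaves contribute $\ell_2$ norms $\|X\t\|_2\le C\|\t\|_2$ or $|\lambda|\|\sigma^{(1)}\|_2\le C|\lambda|$.

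A tree with diagonal internal tensors, for $\deg(v)\ge2$, can be bounded by $\ell_2$ leaf norms times $\ell_\infty$ internal norms times operator norms. The reason is that contracting the children subtrees gives vectors $\mathbf a_1,\dots,\mathbf a_r$, and $\|\diag(\mathbf v)\cdot(\mathbf a_1\otimes\cdots\otimes\mathbf a_r)\|_2\le\|\mathbf v\|_\infty\prod\|\mathbf a_i\|_2$. This uses $\|\mathbf a\odot\mathbf b\|_2\le\|\mathbf a\|_2\|\mathbf b\|_\infty\le\|\mathbf a\|_2\|\mathbf b\|_2$. The result is $|\val|\le (C\max(|\lambda|,\|\t\|_2))^{l}C^{\sum\deg}\le (C'c_*)^l$, since $\sum\deg=2(l-1)$.

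The number of labeled trees is $l^{l-2}$ by Cayley, so $\frac1{l!}|\cT_l|2^l(C'c_*)^l\le (2eC'c_*)^l$. This is summable for small $c_*$. The $u$-integral has measure $1$, and the bound holds pointwise in $\w$, so it survives $\E_{Y(G,u)}$. Lemma \ref{lem:formal expansion of CGF} then applies.

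\textbf{Termwise differentiation.} Each term is a polynomial in $\t$ of degree at most $l$, namely through the $X\t$ leaves. The bound above holds uniformly for complex $\t$ with $\|\t\|<c_*$. So the series converges uniformly on a complex polydisc and defines an analytic function there, and Weierstrass lets us differentiate term by term.

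I expect the main obstacle to be the careful bookkeeping in the first step: producing exactly the stated labels, with the $I_d$ versus $XX^*$ edge rule and the mixed leaf/edge pairing. The analytic bound itself is routine.
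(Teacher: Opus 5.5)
Your proposal is correct and follows essentially the same route as the paper: the same derivative bookkeeping, in which only leaves can retain the linear term $\t^*\w_i$ so that $|\cF(G)|\le 2^l$. It then uses the same deterministic tree bound with Cayley's formula, and the same complexification-plus-Weierstrass argument for termwise differentiation. Your leaf-to-root contraction, which uses $\ell_\infty$ norms on the diagonal vertices, is just the tree case of the paper's appeal to Lemma~\ref{lem:bounds by bipolar orientation}.

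One bookkeeping caveat is shared with the paper's statement. For $l=2$ with both leaves taking the linear term, $\<\nabla_{\w_1},\nabla_{\w_2}\>(\t^*\w_1\,\t^*\w_2)=\t^*\t$ rather than $(X\t)^*(X\t)$, so item 3 is not literally right there. This term is quadratic in $\t$ and bounded by $\|\t\|_2^2$, so it is harmless both for convergence and for the cumulants of order $k\ge 3$ used later.
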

\begin{proof}
In the series expansion developed in Lemma \ref{lem:formal expansion of CGF},
we first show that for any $l\geq 1$ and $G \in\cT_l$,
\begin{equation}\label{eq:network representation of CGF}
    \prod_{(i,j)\in
\cE(G)}\<\nabla_{\w_i},\nabla_{\w_j}\>\prod_{i=1}^l\Phi(\w_i,\t,\lambda)=\sum_{f\in\cF(G)}\val(G,f)
\end{equation}
for a family of tensor networks $\{(G,f)\}_{f \in \cF(G)}$ satisfying the given
conditions.
We will then use this tensor network representation to check the condition of
absolute convergence in Lemma \ref{lem:formal expansion of CGF}.

Expanding $\Phi(\w_i,\t,\lambda)=\t^*\w_i-\lambda\1_d^*\sigma(X\w)$, we have
\begin{equation}\label{eq:differential operator}
    \prod_{(i,j)\in
\cE(G)}\<\nabla_{\w_i},\nabla_{\w_j}\>\prod_{i=1}^l\Phi(\w_i,\t,\lambda)=\sum_{S\subseteq[l]}\prod_{(i,j)\in
\cE(G)}\<\nabla_{\w_i},\nabla_{\w_j}\>\prod_{i\in S}(\t^*\w_i)\prod_{j\in[l]\setminus S}(-\lambda\1_d^*\sigma(X\w_j))
\end{equation}
If $S \subseteq [l]$ contains a vertex $i$ that is not a leaf of the tree $G$,
then the term corresponding to $S$ in \eqref{eq:differential operator} is 0
because the differential $\nabla_{\w_i}$ is applied at least twice. For all
remaining subsets $S$, we may repeatedly apply the identities,
for any $k,m \geq 0$ and $\u,\v \in \R^d$,
\begin{align*}
&\<\nabla_{\w_i},\nabla_{\w_j}\>
\left(\t^*\w_i \cdot \u^*[{-}\lambda\sigma^{(k)}(X\w_j)]\right)
=(X\t)^*\diag({-}\lambda\sigma^{(k+1)}(X\w_j))\u,\\
&\<\nabla_{\w_i},\nabla_{\w_j}\>
\left(\u^*[{-}\lambda\sigma^{(k)}(X\w_i)] \cdot
\v^*[{-}\lambda\sigma^{(m)}(X\w_j)]\right)\\
&\hspace{1in}=\u^* \diag({-}\lambda\sigma^{(k+1)}(X\w_i))
XX^*\diag({-}\lambda\sigma^{(m+1)}(X\w_j))\v,
\end{align*}
to evaluate $\<\nabla_{\w_i},\nabla_{\w_j}\>$ for each edge of $G$.
Then writing $\diag({-}\lambda\sigma^{(1)}(X\w_j))\1_d
={-}\lambda \sigma^{(1)}(X\w_j) \in \R^d$ for the leaf vertices of $G$ belonging
to $[l] \setminus S$, we see that the summand
for $S$ in \eqref{eq:differential operator} takes the form $\val(G,f)$
for some labeling $f$ satisfying the given properties. Letting $\cF(G)$ denote the set of
labelings corresponding to all such non-zero summands, we then have
$|\cF(G)| \leq 2^l$. This shows (\ref{eq:network representation of CGF}).

We now check absolute convergence by bounding $\val(G,f)$:
We may identify one leaf vertex of $G$ as a source $s$ and merge
all remaining leaf vertices $u_1,\ldots,u_p \in G$ into a sink $t$ with label
$f(t)=f(u_1) \otimes \ldots \otimes f(u_p)$. This does not change $\val(G,f)$,
and the resulting network clearly admits a $(s,t)$-bipolar orientation since $G$
is a tree. Then, further bounding the operator norm of a diagonal matrix by the 
Euclidean norm of its vector of diagonal entries,
\[
    |\val(G,f)|\leq\prod_{i=1}^l\|f(v_i)\|_2\times
\max\{\|X^*X\|_\op,1\}^{|\cE(G)|}
\]
Here, by the conditions of Proposition \ref{prop:spiked cumulant},
$\|f(v_i)\|_2 \leq C\max(\|\t\|_2,|\lambda|C^{\deg(v_i)})$ for a constant $C>0$.
Since $G$ is a tree on $l$ vertices, it has $l-1$ edges, and $\sum_{i=1}^l
\deg(v_i)=2|\cE(G)|=2(l-1)$. Then this implies
$|\val(G,f)| \leq {C'}^l\max(\|\t\|_2,\lambda)^l$ (with probability 1 over
$(\w_1,\ldots,\w_l)$), so also
\[|F(G,\lambda,\t)|
\leq \int_{[0,1]^{l(l-1)/2}}
\E_{Y(G,u)} \left[\sum_{f \in \cF(G)} |\val(G,f)|\right]
\prod_{i<j} du_{ij} \leq (C'')^l \max(\|\t\|_2,\lambda)^l\]
for a constant $C''>0$. Using Cayley's formula $|\cT_l|=l^{l-2}$
and $l! \geq (l/e)^l$, we obtain for a constant $C>0$ that
\[\sum_{l=1}^\infty \sum_{G \in \cT_l} \frac{1}{l!} |F(G,\lambda,\t)|
\leq \sum_{l=1}^\infty \frac{1}{l^2}\,C^l\max(\|\t\|_2,\lambda)^l.\]
This is finite for $\max(\|\t\|_2,|\lambda|)<c_*$ sufficiently small,
so Lemma \ref{lem:formal expansion of CGF} applies to show \eqref{eq:CGFseries}.

All preceding arguments apply equally with $\z \in \C^n$ in place of
$\t \in \R^n$, so for each fixed $\lambda$ with $|\lambda|<c_*$, the Weierstrass M-test implies $\sum_{l=1}^\infty \sum_{G \in \cT_l} \frac{1}{l!} F(G,\lambda,\z)$ is
uniformly convergent to an analytic limit $F(\z)$ over the disk
$\{\z \in \C^n:\|\z\|_2<c_*\}$. This implies the uniform convergence of the
partial derivatives in $\z$, hence also
\[\frac{\partial}{\partial \t[\alpha_1]}\cdots\frac{\partial}{\partial
\t[\alpha_k]}\log\E\exp \Phi(\w,\t,\lambda)=\sum_{l=1}^\infty \sum_{G \in \cT_l}
\frac{1}{l!}\frac{\partial}{\partial \t[\alpha_1]}\cdots\frac{\partial}{\partial
\t[\alpha_k]}F(G,\lambda,\t).\]
\end{proof}

We now show that the quantity to be bounded in Assumption \ref{assum:cumulant}
admits a series expansion in terms of expected tensor network values.

\begin{lemma}\label{lem:series expansion of cumulant}
In the setting of Proposition \ref{prop:spiked cumulant},
there exist a constant $\lambda_*>0$ such that for all $|\lambda|<\lambda_*$ the
following holds: For any $k\geq 3$, $1\leq m\leq k-1$, any
$\s_1,\ldots,\s_m\in\R^n$, and any $T \in(\R^n)^{\otimes k-m}$, we have the convergent series expansion
\[
    \<\kappa_k(\g),\s_1\otimes\cdots\otimes\s_m\otimes
T\>=\sum_{l=k+1}^\infty\frac{1}{l!}\sum_{G\in\cT_l}\sum_{(H,f)\in\cG(G)}
\int_{[0,1]^{l(l-1)/2}}\E_{Y(G,u)}[\val(H,f)]\prod_{i<j}du_{ij}
\]
where $\cG(G)$ is a family of tensor networks satisfying $|\cG(G)| \leq 2^lk!$,
and for all $(H,f)\in\cG(G)$, the following holds:
\begin{enumerate}
    \item $H=(\{s_1,\ldots,s_m,t\}\cup\cA,\cE)$ is a connected, simple graph,
and the subgraph induced by the vertices $\cA$ is a tree on $l-k$ vertices.
    \item For all $1\leq i\leq m$, we have $\deg(s_i)=1$ and $f(s_i)=X\s_i$.
Similarly, $\deg(t)=k-m$ and $f(t)=X^{\otimes k-m}\cdot T$ (the contraction of
$T$ with $X$ along each axis). Moreover, the neighbors of $s_1,\ldots,s_m,t$ are
all in $\cA$, and the edges connecting them to $\cA$ all have label $f(e)=I_d$.
    \item Each vertex $v\in\cA$ has label
$f(v)=\diag({-}\lambda\sigma^{(\deg(v))}(X\w_{i_v}))\in(\R^d)^{\otimes \deg(v)}$
for some $i_v \in [l]$, and each edge $e$ between two vertices of $\cA$ has
label $f(e)=XX^*$.
\end{enumerate}
\end{lemma}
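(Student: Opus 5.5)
The plan is to extract the cumulant from the cumulant generating function of Lemma \ref{lem:CGF converges absolutely} by differentiation in $\t$. Since $\g=\w-\E\w$ and $\w$ has density proportional to $\exp(-\frac12\|\w\|_2^2-\lambda\sum_i\sigma_i(\x_i^*\w))$, I would first write
\[\log\E\exp\<\t,\w\>=\log\E_{\w\sim N(0,I_n)}\exp\Phi(\w,\t,\lambda)-\log\E_{\w\sim N(0,I_n)}\exp\Phi(\w,0,\lambda),\]
where the second term does not depend on $\t$. For $k\geq 2$ the cumulants of $\g$ and $\w$ coincide, so
\[\<\kappa_k(\g),\s_1\otimes\cdots\otimes\s_m\otimes T\>=\sum_{\alpha_1,\ldots,\alpha_k}(\s_1\otimes\cdots\otimes\s_m\otimes T)[\alpha_1,\ldots,\alpha_k]\,\partial_{\t[\alpha_1]}\cdots\partial_{\t[\alpha_k]}\log\E\exp\Phi\Big|_{\t=0}.\]
The last statement of Lemma \ref{lem:CGF converges absolutely} allows this derivative to be taken term by term in \eqref{eq:CGFseries}. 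This is valid for $|\lambda|<c_*$, and we take $\lambda_*\le c_*$.

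Next I compute each term. In a network $(G,f)$ with $f\in\cF(G)$, the only $\t$-dependence sits in the leaves labeled $X\t$, and each such label is linear in $\t$. After setting $\t=0$, only the labelings with exactly $k$ leaves labeled $X\t$ survive. Differentiating in $\t[\alpha_1],\ldots,\alpha_k$ and contracting with the test tensor then amounts to assigning the $k$ slots of $\s_1\otimes\cdots\otimes\s_m\otimes T$ bijectively to these $k$ leaves, which gives at most $k!$ assignments. A leaf assigned to slot $i\le m$ gets label $X\s_i$ and is renamed $s_i$. The $k-m$ leaves assigned to $T$ are merged into a single vertex $t$ with label $X^{\otimes k-m}\cdot T$; this uses the multilinear contraction and the identity $(X\t)[\cdot]$ with $f(e)=I_d$. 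The remaining $l-k$ vertices form $\cA$, and their labels are the diagonal tensors $\diag(-\lambda\sigma^{(\deg)}(X\w_i))$. Each edge between two vertices of $\cA$ carries $XX^*$, while each edge to an $X\t$-leaf carries $I_d$, as in Lemma \ref{lem:CGF converges absolutely}(3).

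Now I check the structural claims.
\begin{itemize}
\item Removing $k$ leaves from a tree leaves a tree on $\cA$. This requires $\cA$ to be nonempty, and it is: if $l\le k$ and all vertices were leaves, then $l=2$, $k=2$, which is excluded by $k\ge 3$. Hence $l\ge k+1$.
\item Every $X\t$-leaf is adjacent to a vertex of $\cA$, because two adjacent leaves would form an isolated edge, again forcing $l=2$.
\item Merging the $T$-leaves into $t$ keeps the graph simple provided no two of them share a neighbor in $\cA$. If they do, that vertex of $\cA$ would be joined to $t$ by a double edge.
\end{itemize}

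This last point is the main obstacle. I would handle it in one of two ways. The first is to allow multi-edges, which are harmless for the later bounds. The second, which I would try first, is to contract such double edges: a diagonal label at the shared vertex $v$ absorbs the repeated index, which replaces $T$ by a partial diagonal of $T$ and so changes $\deg(t)$. Neither option matches the claim $\deg(t)=k-m$ exactly, so the statement should be read with the understanding that distinct leaves correspond to distinct edge slots at $t$. The bookkeeping that remains is the count $|\cG(G)|\le|\cF(G)|\cdot k!\le 2^lk!$. Convergence of the series follows from the absolute convergence bound $(C'')^l\max(\|\t\|,|\lambda|)^l$ proved in Lemma \ref{lem:CGF converges absolutely}, applied to the differentiated series through Cauchy estimates on the analytic $\z$-disk.
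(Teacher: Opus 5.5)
Your proposal is correct and follows the paper's proof essentially step by step. You write the cumulant as the $k$-th $\t$-derivative at $\t=0$ of the series in Lemma~\ref{lem:CGF converges absolutely}, differentiated term by term. You keep only labelings with exactly $k$ leaves labeled $X\t$, which forces $l\geq k+1$. You then distribute the slots of $\s_1\otimes\cdots\otimes\s_m\otimes T$ over these leaves in at most $k!$ ways, merging the $T$-leaves into $t$ with label $X^{\otimes k-m}\cdot T$.

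Your caveat about simplicity is well founded: already for $k=3$, $m=1$, $l=4$ the star tree gives a double edge between $t$ and its center. Simply allowing this multi-edge does keep $\deg(t)=k-m$ exactly, since the paper counts degree with multiplicity. It is also harmless for the later Cauchy--Schwarz and bipolar-orientation bound in the proof of Proposition~\ref{prop:spiked cumulant}, because a repeated neighbor only replaces $f(t)$ by a partial diagonal whose $\ell_\infty$-norm is still at most $\|T\|_\cU$.
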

\begin{proof}
Note that $\kappa_k(\g)=\kappa_k(\w)$ since $\g=\w-\E\w$ and $k \geq 3$.
We begin with an entrywise understanding of $\kappa_k(\w)$. For any
$\alpha_1,\ldots,\alpha_k\in[n]$, Lemma \ref{lem:CGF converges absolutely}
implies we can differentiate term-by-term to get
\begin{align}
    &\kappa(\w[\alpha_1],...,\w[\alpha_k])=\left.\frac{\partial}{\partial \t[\alpha_1]}\cdots\frac{\partial}{\partial\t[\alpha_k]}\log\E_{\w
\sim N(0,I_n)}\exp \Phi(\w,\t,\lambda)\right|_{\t=0}\notag\\
    &=\sum_{l=1}^\infty\frac{1}{l!}\sum_{G\in\cT_l}\sum_{f\in\cF(G)}\int_{[0,1]^{l(l-1)/2}}\E_{Y(G,\u)}\left[\left.\frac{\partial}{\partial\t[\alpha_1]}\cdots\frac{\partial}{\partial\t[\alpha_k]}\val(G,f)\right|_{\t=0}\right]\prod_{i<j}du_{ij}\label{eq:cumulantseries}
\end{align}
The above summand corresponding to $G \in \cT_l$ and $f \in \cF(G)$ is
0 if the number of
leaves of $G$ having label $X\t$ is less than $k$. Since the minimum number of
vertices to form a tree with $k$ leaves is $k+1$, the above sum must start with
$l=k+1$. Moreover, since we evaluate the sum at $\t=0$, the non-zero summands
must correspond to $(G,f)$ where exactly $k$ leaves of $G$ are labeled as $X\t$.
For any such $(G,f)$, observe that
$\frac{\partial}{\partial\t[\alpha_1]}\cdots\frac{\partial}{\partial\t[\alpha_k]}
\val(G,f)$ is the sum over $k!$ tensor network values that replace the $k$
labels of $X\t$ in $(G,f)$ by the labels
$X\e_{\alpha_1},\ldots,X\e_{\alpha_k}$ in some order. Then
\[\sum_{f \in \cF(G)} \sum_{\alpha_1,\ldots,\alpha_k=1}^n
\frac{\partial}{\partial\t[\alpha_1]}\cdots\frac{\partial}{\partial\t[\alpha_k]}
\val(G,f)\s_1[\alpha_1]\cdots\s_m[\alpha_m]T[\alpha_{m+1},\ldots,\alpha_k]
=\sum_{(H,f) \in \cG(G)} \val(H,f)\]
where $\cG(G)$ is the set of at most $2^l k!$
tensor networks that take some labeling $f \in \cF(G)$, replace the labels of
some choice of $m$ vertices having label $X\t$ by
$X\s_1,\ldots,X\s_m$, and merge the remaining $k-m$ vertices
having label $X\t$ into a single vertex with (incident edges in some
order and) label $X^{\otimes k-m} \cdot T$. This network $(H,f)$ satisfies all
properties of the lemma, by the properties established
for $G$ in Lemma \ref{lem:CGF converges absolutely}. Since
\[\<\kappa_k(\w),\s_1\otimes\cdots\otimes\s_m\otimes
T\>=\sum_{\alpha_1,\ldots,\alpha_k=1}^n\kappa(\w[\alpha_1],\ldots,\w[\alpha_k])\s_1[\alpha_1]\cdots\s_m[\alpha_k]T[\alpha_{m+1},\ldots,\alpha_k],
\]
the lemma follows from applying this representation to
\eqref{eq:cumulantseries}.
\end{proof}

We are now ready to prove Proposition \ref{prop:spiked cumulant}.

\begin{proof}[Proof of Proposition \ref{prop:spiked cumulant}]
To check Assumption \ref{assum:basic} for $\Sigma$ and Assumption
\ref{assum:concentration} for $\g$,
note that the Hessian of the potential satisfies
\[
    \nabla^2U(\w)=\frac{1}{2}I_n+\lambda\sum_{i=1}^d\sigma_i''(\x_i^*\w)\x_i\x_i^*=\frac{1}{2}I_n+\lambda X^*\diag(\sigma''(X\w))X.
\]
Then for $|\lambda|<\lambda_*$ sufficiently small,
$(1/4)I_n\preceq \nabla^2 U(\w) \preceq (3/4)I_n$. Therefore the distribution of
$\g-\E \g$ is strongly log-concave, so Assumption \ref{assum:concentration}
follows from the Bakry-Emery criterion and a standard Lipschitz extension
argument. Furthermore by the Brascamp-Lieb inequality,
\[\Sigma=\E\g\g^*\preceq 4I_n.\]
To prove a lower bound on $\Sigma$, consider the random vector
$\z=(\w-\E\w,\nabla U(\w)) \in \R^{2n}$. We have
\[
    \E\z\z^*=\begin{bmatrix}
        \Sigma & \E(\w-\E\w)\nabla U(\w)^*\\
        \E\nabla U(\w)(\w-\E\w)^* & \E\nabla U(\w)\nabla U(\w)^*
    \end{bmatrix}\succeq 0.
\]
We deduce, from integration by parts
\[
    \E(\w-\E\w)\nabla U(\w)^*=\E\nabla U(\w)(\w-\E\w)^*=I_n,\quad \E\nabla U(\w)\nabla U(\w)^*=\E\nabla^2U(\w).
\]
Since both $\E\z\z^*$ and $\E\nabla U(\w)\nabla U(\w)^*=\E\nabla^2U(\w)$ are
positive-semidefinite, Schur's complement implies
$\Sigma_\w-I_n[\E\nabla^2 U(\w)]^{-1}I_n \succeq 0$, hence
\[\Sigma \succeq (4/3)I_n.\]
This implies the conditions for $\Sigma$ in Assumption \ref{assum:basic}.

Finally we check Assumption \ref{assum:cumulant}. It suffices to bound
$\val(H,f)$ in Lemma \ref{lem:series expansion of cumulant}, and we will do so deterministically over $(\w_1,\ldots,\w_l)$. For any $l\geq
k+1$, any $G \in\cT_l$, and any $(H,f)\in\cG(G)$, let $v_1,\ldots,v_{k-m}\in\cA$
be the vertices adjacent to $t$ in $H$.
We modify $(H,f)$ as follows: For each such
vertex $v_i$ adjacent to $t$, add a degree-1 vertex $v_i'$ that connects to
$v_i$ by an edge with label $I_d$, label $v_i'$ by the label 
$f(v_i)=\diag({-}\lambda\sigma^{(\deg(v_i))}(X\w_{j_i}))$ of $v_i$, and
re-label $f(v_i)$ by the identity tensor. This does not change $\val(H,f)$.
Now, letting $K\in(\R^d)^{\otimes p}$ be the tensor representing the contraction
of internal edges of
$H\setminus\{t,v_1',...,v_{k-m}'\}$, we apply Cauchy-Schwartz to bound
\begin{align*}
    |\val(H,f)|&=\left|\sum_{\alpha_1,\ldots,\alpha_{k-m}=1}^d
K[\alpha_1,\ldots,\alpha_{k-m}]f(v_1')[\alpha_1]\cdots
f(v_{k-m}')[\alpha_{k-m}]f(t)[\alpha_1,\ldots,\alpha_{k-m}]\right|\\
    &\leq \|K\|_F\|f(v_1')\|_2\cdots\|f(v_{k-m}')\|_2\|f(t)\|_\infty
\end{align*}
Here, since $f(t)=X^{\otimes k-m} \cdot T$, we have
\[\|f(t)\|_\infty \leq \|T\|_\cU\]
by definition of $\cU=\{\e_1,\ldots,\e_n\} \cup \{\x_1,\ldots,\x_d\}$.
To bound $\|K\|_F$, we have by duality
\[
    \|K\|_F=\sup_{\|U\|_F=1}\<U,K\>=\val(H',f)
\]
where $(H',f)$ is formed by removing $t,v_1',...,v_{k-m}'$ from $H$, adding a
new vertex $t'$ with label $f(t')=U$, and adding $k-m$ edges between $t'$ and
$v_1,\ldots,v_{k-m}$ with labels $I_d$. Note that these vertices
$v_1,\ldots,v_{k-m}$ are leaves of the tree $H' \setminus \{t'\}$.
Let $u_1,\ldots,u_p$ be the remaining
leaves of $H' \setminus \{t'\}$, which include the leaves $s_1,\ldots,s_m$ of $H$ together with any leaves in $\cA$. We set $s=u_1$ as a source, and
further merge the leaves $u_2,\ldots,u_p$ with $t'$ into a sink $t''$
having label
$f(t'')=U \otimes f(u_2)\otimes\cdots\otimes f(u_p)$. This does not change
$\val(H',f)$, and the resulting network admits a $(s,t'')$-bipolar orientation
because $H'\setminus \{t'\}$ is a tree. Then applying
Lemma \ref{lem:bounds by bipolar orientation} and combining with the above,
\begin{align*}
    |\val(H,f)|&\leq
\prod_{i=1}^m \|X\s_i\|_2\prod_{v\in\cA}\|{-}\lambda\sigma^{(\deg(v))}(X\w_{i_v})\|_2
\times \|XX^*\|_\op^{|\cE(\cA)|}\|T\|_\cU\\
    &\leq C^{m+2(l-k-1)}|\lambda|^{l-k}\left(\prod_{i=1}^m\|\s_i\|_2\right)\|T\|_\cU
\end{align*}
where we applied that $\cA$ is a tree on $l-k$ vertices so
$\sum_{v\in\cA}\deg(v)=2|\cE(\cA)|=2(l-k-1)$. Substituting this back into the
series expansion of Lemma \ref{lem:series expansion of cumulant}, and applying
$|\cG(G)| \leq 2^lk!$, a
similar counting argument as in the proof of Lemma \ref{lem:CGF converges
absolutely} gives
\begin{align*}
    |\<\kappa_k(\w),\s_1\otimes\cdots\otimes\s_m\otimes
T\>|&\leq\sum_{l=k+1}^\infty\frac{1}{l!}\sum_{G\in\cT_l}\sum_{(H,f)\in\cG(G)}
C^{m+2(l-k-1)}|\lambda|^{l-k}\left(\prod_{i=1}^m\|\s_i\|_2\right)\|T\|_\cU\\
    &\leq
k!C^m\left(\prod_{i=1}^m\|\s_i\|_2\right)\|T\|_\cU\sum_{l=k+1}^\infty\frac{1}{l^2}{C'}^l|\lambda|^{l-k}.
\end{align*}
For $|\lambda|<\lambda_*$ sufficiently small, this is at most
$C_k(\prod_{i=1}^m\|\s_i\|_2)\|T\|_\cU$ for a constant $C_k>0$,
completing the proof.
\end{proof}

\subsection*{Acknowledgments} RM would like to thank Garrett G. Wen for many insightful discussions on the random features model, and Jing Guo for a helpful discussion on elements of graph theory. ZF was supported in part by NSF DMS-2142476 and a Sloan Research Fellowship. EP was supported by an NSERC Discovery Grant RGPIN-2025-04643, an FRQNT-NSERC NOVA Grant, a CIFAR Catalyst Grant, and a gift from Google Canada.

\printbibliography

\end{document}